\providecommand{\U}[1]{\protect\rule{.1in}{.1in}}
\theoremstyle{definition}
\newtheorem{theo}{Theorem}[section]
\newenvironment{theorem}[1][]
{\begin{theo}[#1]\begin{leftbar}}
{\end{leftbar}\end{theo}}
\newtheorem{lem}[theo]{Lemma}
\newenvironment{lemma}[1][]
{\begin{lem}[#1]\begin{leftbar}}
{\end{leftbar}\end{lem}}
\newtheorem{prop}[theo]{Proposition}
\newenvironment{proposition}[1][]
{\begin{prop}[#1]\begin{leftbar}}
{\end{leftbar}\end{prop}}
\newtheorem{defi}[theo]{Definition}
\newenvironment{definition}[1][]
{\begin{defi}[#1]\begin{leftbar}}
{\end{leftbar}\end{defi}}
\newtheorem{remk}[theo]{Remark}
\newenvironment{remark}[1][]
{\begin{remk}[#1]\begin{leftbar}}
{\end{leftbar}\end{remk}}
\newtheorem{coro}[theo]{Corollary}
\newenvironment{corollary}[1][]
{\begin{coro}[#1]\begin{leftbar}}
{\end{leftbar}\end{coro}}
\newtheorem{conv}[theo]{Convention}
\newenvironment{convention}[1][]
{\begin{conv}[#1]\begin{leftbar}}
{\end{leftbar}\end{conv}}
\newtheorem{exam}[theo]{Example}
\newenvironment{example}[1][]
{\begin{exam}[#1]\begin{leftbar}}
{\end{leftbar}\end{exam}}
\newenvironment{statement}{\begin{quote}}{\end{quote}}
\newenvironment{fineprint}{\begin{small}}{\end{small}}
\newenvironment{proof}[1][Proof]{\noindent\textbf{#1.} }{\ \rule{0.5em}{0.5em}}
\let\sumnonlimits\sum
\let\prodnonlimits\prod
\let\oplusnonlimits\bigoplus
\renewcommand{\sum}{\sumnonlimits\limits}
\renewcommand{\prod}{\prodnonlimits\limits}
\renewcommand{\bigoplus}{\oplusnonlimits\limits}
\newenvironment{verlong}{}{}
\newenvironment{vershort}{}{}
\newenvironment{noncompile}{}{}
\begin{document}

\title{The enriched $q$-monomial basis of the quasisymmetric functions}
\author{Darij Grinberg and Ekaterina A. Vassilieva}
\date{1 June 2026}
\maketitle

\begin{abstract}
\textbf{Abstract.} We construct a new family $\left(  \eta_{\alpha}^{\left(
q\right)  }\right)  _{\alpha\in\operatorname*{Comp}}$ of quasisymmetric
functions for each element $q$ of the base ring. We call them the
\textquotedblleft enriched $q$-monomial quasisymmetric
functions\textquotedblright. When $r:=q+1$ is invertible, this family is a
basis of $\operatorname*{QSym}$. It generalizes Hoffman's \textquotedblleft
essential quasi-symmetric functions\textquotedblright\ (obtained for $q=0$)
and Hsiao's \textquotedblleft monomial peak functions\textquotedblright%
\ (obtained for $q=1$), but also includes the monomial quasisymmetric
functions as a limiting case.

We describe these functions $\eta_{\alpha}^{\left(  q\right)  }$ by several
formulas, and compute their products, coproducts and antipodes. The product
expansion is given by an exotic variant of the shuffle product which we call
the \textquotedblleft stufufuffle product\textquotedblright\ due to its
ability to pick several consecutive entries from each composition. This
\textquotedblleft stufufuffle product\textquotedblright\ has previously
appeared in recent work by Bouillot, Novelli and Thibon, generalizing the
\textquotedblleft block shuffle product\textquotedblright\ from the theory of
multizeta values. \medskip

\textbf{Keywords:} quasisymmetric functions, peak algebra, shuffles,
combinatorial Hopf algebras, noncommutative symmetric functions.

\textbf{Mathematics Subject Classification 2020:} 05E05, 05A30, 11M32.

\end{abstract}
\tableofcontents

\section{\label{sec.intro}Introduction}

Among the combinatorial Hopf algebras that consist of power series in
commuting indeterminates, one of the largest and most all-embracing is that of
the \textit{quasisymmetric functions}, called $\operatorname*{QSym}$.
Originally introduced by Gessel in 1984 \cite{Gessel84}, it has since found
applications (e.g.) to enumerative combinatorics (\cite[Chapter 8]{Sagan20},
\cite[\S 7.19--7.23]{Stanley-EC2}, \cite{GesZhu18}), multizeta values (e.g.,
\cite{Hoffma15}), algebraic geometry (\cite{Oesing18}) and the representation
theory of $0$-Hecke algebras (\cite[\S 6.2]{Meliot17}).

It was observed by Ehrenborg (\cite[Lemma 4.2]{Ehrenb96}; see \cite[\S 3.3]%
{Biller10} for a survey) that quasisymmetric functions can also be used to
encode the \textquotedblleft flag $f$-vector\textquotedblright\ of a finite
graded poset -- i.e., essentially, the number of chains over a given sequence
of ranks, for each possible sequence of ranks. Soon after, work of Bergeron,
Mykytiuk, Sottile and van Willigenburg (\cite[Example 5.3]{BMSW00}, but see
\cite[\S 3.4]{Biller10} for an explicit statement) showed that if the graded
poset is Eulerian (a property shared by face posets of polytopes and
simplicial spheres), then the resulting quasisymmetric function is not
arbitrary but rather belongs to a certain subalgebra of $\operatorname*{QSym}$
called \textit{Stembridge's Hopf algebra} or the \textit{peak algebra} or the
\textit{odd subalgebra }$\Pi_{-}$ \textit{of }$\operatorname*{QSym}$. It was
initially defined by Stembridge \cite[\S 3]{Stembr97} in order to find a
fundamental expansion of the Schur $P$- and $Q$-functions, and has since been
studied by others for related and unrelated reasons (\cite[\S 6, particularly
Proposition 6.5]{AgBeSo14}, \cite{BMSW99}, \cite[\S 5]{BMSW00}, \cite{Hsiao07}
etc.); among other properties, it is a Hopf subalgebra of
$\operatorname*{QSym}$.

Almost all bases of $\operatorname*{QSym}$ constructed so far are indexed by
\textit{compositions} (i.e., tuples of positive integers), and their structure
constants are often governed by versions of shuffle products and
deconcatenation coproducts. The peak algebra is smaller, and its bases are
often indexed by \textit{odd compositions}, i.e., compositions whose entries
are all odd. One of its simplest bases is defined as follows (for the sake of
simplicity, we use $\mathbb{Q}$ as a base ring here): If $n\in\mathbb{N}$ and
if $\alpha=\left(  \alpha_{1},\alpha_{2},\ldots,\alpha_{\ell}\right)  $ is a
composition of $n$ (that is, a tuple of positive integers with $\alpha
_{1}+\alpha_{2}+\cdots+\alpha_{\ell}=n$), then we define the formal power
series%
\begin{align}
\eta_{\alpha}  &  =\sum_{\substack{1\leq g_{1}\leq g_{2}\leq\cdots\leq
g_{n};\\g_{i}=g_{i+1}\text{ for each }i\in E\left(  \alpha\right)
}}2^{\left\vert \left\{  g_{1},g_{2},\ldots,g_{n}\right\}  \right\vert
}x_{g_{1}}x_{g_{2}}\cdots x_{g_{n}}\label{eq.intro.eta=}\\
&  \in\mathbb{Q}\left[  \left[  x_{1},x_{2},x_{3},\ldots\right]  \right]
,\nonumber
\end{align}
where $E\left(  \alpha\right)  $ denotes the set $\left\{  1,2,\ldots
,n-1\right\}  \setminus\left\{  \alpha_{1}+\alpha_{2}+\cdots+\alpha_{i}%
\ \mid\ 0<i<\ell\right\}  $. This $\eta_{\alpha}$ belongs to the $\mathbb{Q}%
$-algebra $\operatorname*{QSym}$ of quasisymmetric functions over $\mathbb{Q}%
$. If we let $\alpha$ range over all \textit{odd compositions} (i.e.,
compositions $\left(  \alpha_{1},\alpha_{2},\ldots,\alpha_{\ell}\right)  $
whose entries $\alpha_{i}$ are all odd), then the $\eta_{\alpha}$ form a basis
of the peak algebra over $\mathbb{Q}$.

In this form, the power series $\eta_{\alpha}$ have been introduced by Hsiao
(\cite[Proposition 2.1]{Hsiao07}, although his $\eta_{\alpha}$ differ from
ours by a sign), who called them the \textit{monomial peak functions}. Hsiao
computed their products, coproducts (in the sense of Hopf algebra) and
antipodes, and obtained some structural results for the peak algebra.

In this paper, we generalize the $\eta_{\alpha}$ by replacing the power of $2$
in (\ref{eq.intro.eta=}) by a power of an arbitrary element $r$ of the base
ring. We furthermore study the resulting quasisymmetric functions for all
compositions $\alpha$ (not only for the odd ones). Thus we obtain a new family
$\left(  \eta_{\alpha}^{\left(  q\right)  }\right)  _{\alpha\in
\operatorname*{Comp}}$ of quasisymmetric functions for each element $q$ of the
base ring. When $r:=q+1$ is invertible, this family is a basis of
$\operatorname*{QSym}$. It generalizes Hoffman's \textquotedblleft essential
quasi-symmetric functions\textquotedblright\ (obtained for $q=0$) and Hsiao's
monomial peak functions\ (obtained for $q=1$), but also includes the monomial
quasisymmetric functions as a limiting case.

We call our functions $\eta_{\alpha}^{\left(  q\right)  }$ the
\textit{enriched }$q$\textit{-monomial quasisymmetric functions}. We describe
them by several formulas, and compute their products, coproducts and antipodes
(generalizing Hsiao's results). The product expansion is the most interesting
one, as it is given by an exotic variant of the shuffle product which we call
the \textquotedblleft stufufuffle product\textquotedblright\ due to its
ability to pick several consecutive entries from each composition. This
\textquotedblleft stufufuffle product\textquotedblright\ has previously
appeared in recent work by Bouillot, Novelli and Thibon \cite[(1)]{BoNoTh22},
where it was proposed as a generalization of the \textquotedblleft block
shuffle product\textquotedblright\ from the theory of multizeta values
(\cite{HirSat22}). While the authors of \cite{BoNoTh22} have already found a
basis of $\operatorname*{QSym}$ that multiplies according to this product,
ours is simpler and more natural. The coproduct and antipode formulas for
$\eta_{\alpha}^{\left(  q\right)  }$ are fairly simple (the coproduct is given
by deconcatenation, whereas the antipode involves the parameter $q$ being
replaced by its reciprocal $1/q$ and the composition $\alpha$ being reversed).
We also express the functions $\eta_{\alpha}^{\left(  q\right)  }$ in terms of
the monomial and fundamental bases of $\operatorname*{QSym}$ and vice versa.
Finally, we discuss how Hopf subalgebras of $\operatorname*{QSym}$ can be
constructed by picking a subset of the set of all compositions. (This
generalizes the peak subalgebra.)

This paper is the first of (at least) two. The next shall extend the theory of
extended $P$-partitions to incorporate the parameter $q$, which will shed a
new light on the enriched $q$-monomial quasisymmetric functions $\eta_{\alpha
}^{\left(  q\right)  }$ while also leading to a new basis of
$\operatorname*{QSym}$.

Several results in this paper have appeared (mostly without proof) in the
extended abstracts \cite{GriVas21} and \cite{GriVas22}.

\subsection{Structure of the paper}

This paper is organized as follows:

We begin by recalling the definition of quasisymmetric functions (and some
concomitant notions) in Section \ref{sec.qsym}.

Then, in Section \ref{sec.eta}, we define the quasisymmetric functions
$\eta_{\alpha}^{\left(  q\right)  }$ and prove their simplest properties
(conversion formulas to the $M$- and $L$-bases, formulas for antipode and
coproduct). In particular, we show that the family of these functions
$\eta_{\alpha}^{\left(  q\right)  }$ (where $\alpha$ ranges over all
compositions) forms a basis of $\operatorname*{QSym}$ if and only if $r:=q+1$
is invertible in the base ring.

Consequently, in Section \ref{sec.dual}, we introduce and study the basis of
$\operatorname*{NSym}$ dual to this basis of $\operatorname*{QSym}$.

In Section \ref{sec.prod}, we use this to express the product $\eta_{\delta
}^{\left(  q\right)  }\eta_{\varepsilon}^{\left(  q\right)  }$ in three
equivalent ways.

Finally, we discuss some applications in Section \ref{sec.app}, and establish
one last formula for $\eta_{\alpha}^{\left(  q\right)  }$ in Section
\ref{sec.Rq}.

\begin{verlong}
You are reading the \textbf{detailed version} of the present paper. In this
version, some proofs in Section \ref{sec.eta} have been expanded to full
detail. (Other sections are mostly unaffected.)
\end{verlong}

\subsubsection*{Acknowledgments}

We thank Marcelo Aguiar, G\'{e}rard H. E. Duchamp, Daniel Harrer, Angela
Hicks, Vasu Tewari, Alexander Zhang, and Yan Zhuang for interesting and
helpful conversations (in particular, Angela Hicks suggested the possibility
of an explicit expression that became Proposition \ref{prop.eta.F-through}).
Two referees have contributed corrections and helpful comments. The SageMath
computer algebra system \cite{SageMath} helped discover some of the results.

DG is grateful to Sara Billey, Petter Br\"{a}nd\'{e}n, Sylvie Corteel, and
Svante Linusson for organizing the Spring Semester 2020 in Algebraic and
Enumerative Combinatorics at the Institut Mittag-Leffler, at which parts of
this paper have been written.

\begin{fineprint}
This material is based upon work supported by the Swedish Research Council
under grant no. 2016-06596 while the first author was in residence at Institut
Mittag-Leffler in Djursholm, Sweden during Spring 2020.
\end{fineprint}

\section{\label{sec.qsym}Quasisymmetric functions}

\subsection{Formal power series and quasisymmetry}

We will use some of the standard notations from \cite[Chapter 5]{GriRei}. Namely:

\begin{itemize}
\item We let $\mathbb{N}=\left\{  0,1,2,\ldots\right\}  $.

\item We fix a commutative ring $\mathbf{k}$.

\item We consider the ring $\mathbf{k}\left[  \left[  x_{1},x_{2},x_{3}%
,\ldots\right]  \right]  $ of formal power series in countably many commuting
variables $x_{1},x_{2},x_{3},\ldots$. A \emph{monomial} shall mean a formal
expression of the form $x_{1}^{\alpha_{1}}x_{2}^{\alpha_{2}}x_{3}^{\alpha_{3}%
}\cdots$, where $\alpha=\left(  \alpha_{1},\alpha_{2},\alpha_{3}%
,\ldots\right)  \in\mathbb{N}^{\infty}$ is a sequence of nonnegative integers
such that only finitely many $\alpha_{i}$ are positive. Formal power series
are formal infinite $\mathbf{k}$-linear combinations of such monomials.

\item Each monomial $x_{1}^{\alpha_{1}}x_{2}^{\alpha_{2}}x_{3}^{\alpha_{3}%
}\cdots$ has degree $\alpha_{1}+\alpha_{2}+\alpha_{3}+\cdots$.

\item A formal power series $f\in\mathbf{k}\left[  \left[  x_{1},x_{2}%
,x_{3},\ldots\right]  \right]  $ is said to be \emph{of bounded degree} if
there exists some $d\in\mathbb{N}$ such that each monomial in $f$ has degree
$\leq d$ (that is, each monomial of degree $>d$ has coefficient $0$ in $f$).
\end{itemize}

For example, the formal power series $\left(  x_{1}+x_{2}+x_{3}+\cdots\right)
^{3}$ is of bounded degree, but the formal power series $\dfrac{1}{1-x_{1}%
}=1+x_{1}+x_{1}^{2}+x_{1}^{3}+\cdots$ is not.

We shall now introduce the notion of \textit{pack-equivalent monomials}. Let
us first illustrate it by an example:

\begin{example}
\textbf{Question:} What do the monomials $x_{1}^{4}x_{3}^{7}x_{4}x_{9}^{2}$
and $x_{3}^{4}x_{4}^{7}x_{10}x_{16}^{2}$ and $x_{5}^{4}x_{6}^{7}x_{8}x_{9}%
^{2}$ have in common (but not in common with $x_{1}^{7}x_{3}^{4}x_{4}x_{9}%
^{2}$) ?

\textbf{Answer:} They have the same sequence of nonzero exponents (when the
variables are ordered in increasing order -- i.e., if $i<j$, then $x_{i}$ goes
before $x_{j}$). Or, to put it differently, they all have the form $x_{a}%
^{4}x_{b}^{7}x_{c}x_{d}^{2}$ for some $a<b<c<d$. We shall call such pairs of
monomials \textit{pack-equivalent}.
\end{example}

Let us define this concept more rigorously:

\begin{definition}
Two monomials $\mathfrak{m}$ and $\mathfrak{n}$ are said to be
\emph{pack-equivalent} if they can be written in the forms%
\[
\mathfrak{m}=x_{i_{1}}^{a_{1}}x_{i_{2}}^{a_{2}}\cdots x_{i_{\ell}}^{a_{\ell}%
}\ \ \ \ \ \ \ \ \ \ \text{and}\ \ \ \ \ \ \ \ \ \ \mathfrak{n}=x_{j_{1}%
}^{a_{1}}x_{j_{2}}^{a_{2}}\cdots x_{j_{\ell}}^{a_{\ell}}%
\]
for some $\ell\in\mathbb{N}$, some positive integers $a_{1},a_{2}%
,\ldots,a_{\ell}$ and two strictly increasing $\ell$-tuples $\left(
i_{1}<i_{2}<\cdots<i_{\ell}\right)  $ and $\left(  j_{1}<j_{2}<\cdots<j_{\ell
}\right)  $ of positive integers.
\end{definition}

For example, the monomials $x_{1}^{4}x_{3}^{7}x_{4}x_{9}^{2}$ and $x_{3}%
^{4}x_{4}^{7}x_{10}x_{16}^{2}$ are pack-equivalent, since they can be written
as $x_{1}^{4}x_{3}^{7}x_{4}x_{9}^{2}=x_{i_{1}}^{a_{1}}x_{i_{2}}^{a_{2}}\cdots
x_{i_{\ell}}^{a_{\ell}}$ and $x_{3}^{4}x_{4}^{7}x_{10}x_{16}^{2}=x_{j_{1}%
}^{a_{1}}x_{j_{2}}^{a_{2}}\cdots x_{j_{\ell}}^{a_{\ell}}$ for $\ell=4$ and
$\left(  a_{1},a_{2},\ldots,a_{\ell}\right)  =\left(  4,7,1,2\right)  $ and
$\left(  i_{1}<i_{2}<\cdots<i_{\ell}\right)  =\left(  1,3,4,9\right)  $ and
$\left(  j_{1}<j_{2}<\cdots<j_{\ell}\right)  =\left(  3,4,10,16\right)  $.

We are now ready to define the quasisymmetric functions:

\begin{definition}
\begin{enumerate}
\item[\textbf{(a)}] A formal power series $f\in\mathbf{k}\left[  \left[
x_{1},x_{2},x_{3},\ldots\right]  \right]  $ is said to be
\emph{quasisymmetric} if it has the property that any two pack-equivalent
monomials have the same coefficient in $f$ (that is: if $\mathfrak{m}$ and
$\mathfrak{n}$ are two pack-equivalent monomials, then the coefficient of
$\mathfrak{m}$ in $f$ equals the coefficient of $\mathfrak{n}$ in $f$).

\item[\textbf{(b)}] A \emph{quasisymmetric function} means a formal power
series $f\in\mathbf{k}\left[  \left[  x_{1},x_{2},x_{3},\ldots\right]
\right]  $ that is quasisymmetric and of bounded degree.
\end{enumerate}
\end{definition}

Quasisymmetric functions have been introduced by Gessel in \cite{Gessel84}
(for $\mathbf{k}=\mathbb{Z}$ at least, but the general case is not much
different). Introductions to their theory can be found in \cite[Chapters
5--6]{GriRei}, \cite[\S 7.19]{Stanley-EC2}, \cite[Chapter 8]{Sagan20},
\cite[\S 4]{Malven93} and various other texts.

It is known (see \cite[Corollaire 4.7]{Malven93} or \cite[Proposition
5.1.3]{GriRei}) that the set of all quasisymmetric functions is a $\mathbf{k}%
$-subalgebra of $\mathbf{k}\left[  \left[  x_{1},x_{2},x_{3},\ldots\right]
\right]  $. It is denoted by $\operatorname*{QSym}$ and called the \emph{ring
of quasisymmetric functions}. It has several bases (as a $\mathbf{k}$-module),
most of which are indexed by compositions.

\subsection{Compositions}

A \emph{composition} means a finite list $\left(  \alpha_{1},\alpha_{2}%
,\ldots,\alpha_{k}\right)  $ of positive integers. The set of all compositions
is denoted by $\operatorname*{Comp}$. The \emph{empty composition
}$\varnothing$ is the composition $\left(  {}\right)  $, which is a $0$-tuple.

The \emph{length }$\ell\left(  \alpha\right)  $ of a composition
$\alpha=\left(  \alpha_{1},\alpha_{2},\ldots,\alpha_{k}\right)  $ is defined
to be the number $k$.

If $\alpha=\left(  \alpha_{1},\alpha_{2},\ldots,\alpha_{k}\right)  $ is a
composition, then the nonnegative integer $\alpha_{1}+\alpha_{2}+\cdots
+\alpha_{k}$ is called the \emph{size} of $\alpha$ and is denoted by
$\left\vert \alpha\right\vert $. For any $n\in\mathbb{N}$, we define a
\emph{composition of }$n$ to be a composition that has size $n$. We let
$\operatorname*{Comp}\nolimits_{n}$ be the set of all compositions of $n$ (for
given $n\in\mathbb{N}$). For example, $\left(  1,5,2,1\right)  $ is a
composition with size $9$ (since $\left\vert \left(  1,5,2,1\right)
\right\vert =1+5+2+1=9$), so that $\left(  1,5,2,1\right)  \in
\operatorname*{Comp}\nolimits_{9}$.

\begin{verlong}
Some authors use the notation \textquotedblleft$\alpha\models n$%
\textquotedblright\ for \textquotedblleft$\alpha\in\operatorname*{Comp}%
\nolimits_{n}$\textquotedblright. Thus, for example, $\left(  1,5,2,1\right)
\in\operatorname*{Comp}\nolimits_{9}$ can be rewritten as $\left(
1,5,2,1\right)  \models9$.
\end{verlong}

For any $n\in\mathbb{Z}$, we let $\left[  n\right]  $ denote the set $\left\{
1,2,\ldots,n\right\}  $. This set is empty whenever $n\leq0$, and otherwise
has size $n$.

It is well-known that any positive integer $n$ has exactly $2^{n-1}$
compositions. This has a standard bijective proof (\textquotedblleft stars and
bars\textquotedblright) which is worth recalling, as the bijection itself will
be used a lot:

\begin{definition}
\label{def.comps.D-comp}Let $n\in\mathbb{N}$. Let $\mathcal{P}\left(  \left[
n-1\right]  \right)  $ be the powerset of $\left[  n-1\right]  $ (that is, the
set of all subsets of $\left[  n-1\right]  $).

\begin{enumerate}
\item[\textbf{(a)}] We define a map $D:\operatorname*{Comp}\nolimits_{n}%
\rightarrow\mathcal{P}\left(  \left[  n-1\right]  \right)  $ by
\begin{align*}
D\left(  \alpha_{1},\alpha_{2},\ldots,\alpha_{k}\right)   &  =\left\{
\alpha_{1}+\alpha_{2}+\cdots+\alpha_{i}\ \mid\ i\in\left[  k-1\right]
\right\} \\
&  =\left\{  \alpha_{1},\ \ \alpha_{1}+\alpha_{2},\ \ \alpha_{1}+\alpha
_{2}+\alpha_{3},\ \ \ldots,\ \ \alpha_{1}+\alpha_{2}+\cdots+\alpha
_{k-1}\right\}  .
\end{align*}

\item[\textbf{(b)}] We define a map $\operatorname*{comp}:\mathcal{P}\left(
\left[  n-1\right]  \right)  \rightarrow\operatorname*{Comp}\nolimits_{n}$ as
follows: For any $I\in\mathcal{P}\left(  \left[  n-1\right]  \right)  $, we
set%
\[
\operatorname*{comp}\left(  I\right)  =\left(  i_{1}-i_{0},i_{2}-i_{1}%
,\ldots,i_{m}-i_{m-1}\right)  ,
\]
where $i_{0},i_{1},\ldots,i_{m}$ are the elements of the set $I\cup\left\{
0,n\right\}  $ listed in increasing order (so that $i_{0}<i_{1}<\cdots<i_{m}$,
therefore $i_{0}=0$ and $i_{m}=n$ and $\left\{  i_{1}<i_{2}<\cdots
<i_{m-1}\right\}  =I$).
\end{enumerate}

The maps $D$ and $\operatorname*{comp}$ are mutually inverse bijections. (See
\cite[detailed version, Proposition 10.17]{Grinbe15} for a detailed proof of this.)
\end{definition}

For example, for $n=8$, we have $D\left(  2,1,3,2\right)  =\left\{
2,\ 2+1,\ 2+1+3\right\}  =\left\{  2,3,6\right\}  $ and $\operatorname*{comp}%
\left\{  2,3,6\right\}  =\left(  2-0,\ 3-2,\ 6-3,\ 8-6\right)  =\left(
2,1,3,2\right)  $. Note that the meaning of $\operatorname*{comp}\left(
I\right)  $ for a given set $I$ depends on $n$, and thus the notation is
ambiguous unless $n$ is specified. In contrast, the notation $D\left(
\alpha\right)  $ is unambiguous, since $\alpha\in\operatorname*{Comp}%
\nolimits_{n}$ uniquely determines $n$ to be $\left\vert \alpha\right\vert $.

The notation $D$ in Definition \ref{def.comps.D-comp} presumably originates in
the word \textquotedblleft\textbf{d}escent\textquotedblright, but the
connection between $D$ and actual descents is indirect and rather misleading.
We prefer to call $D$ the \textquotedblleft partial sum map\textquotedblright%
\ (as $D\left(  \alpha\right)  $ consists of the partial sums of the
composition $\alpha$) and its inverse $\operatorname*{comp}$ the
\textquotedblleft interstitial map\textquotedblright\ (as
$\operatorname*{comp}\left(  I\right)  $ consists of the lengths of the
intervals into which the elements of $I$ split the interval $\left[  n\right]
$). Note that Stanley, in \cite[\S 7.19]{Stanley-EC2}, writes $S_{\alpha}$ for
$D\left(  \alpha\right)  $ and writes $\operatorname*{co}\left(  I\right)  $
for $\operatorname*{comp}\left(  I\right)  $.

Note that every composition $\alpha$ of size $\left\vert \alpha\right\vert >0$
satisfies $\left\vert D\left(  \alpha\right)  \right\vert =\ell\left(
\alpha\right)  -1$, so that $\left\vert D\left(  \alpha\right)  \right\vert
+1=\ell\left(  \alpha\right)  $. But this fails if $\alpha$ is the empty
composition $\varnothing=\left(  {}\right)  $ (since $D\left(  {}\right)
=\varnothing$ and $\ell\left(  {}\right)  =0$).

\subsection{The monomial and fundamental bases of $\operatorname*{QSym}$}

We will only need two bases of $\operatorname*{QSym}$: the monomial basis and
the fundamental basis.

If $\alpha=\left(  \alpha_{1},\alpha_{2},\ldots,\alpha_{\ell}\right)  $ is a
composition, then we define the \emph{monomial quasisymmetric function}
$M_{\alpha}\in\operatorname*{QSym}$ by%
\begin{equation}
M_{\alpha}=\sum_{i_{1}<i_{2}<\cdots<i_{\ell}}x_{i_{1}}^{\alpha_{1}}x_{i_{2}%
}^{\alpha_{2}}\cdots x_{i_{\ell}}^{\alpha_{\ell}}=\sum_{\substack{\mathfrak{m}%
\text{ is a monomial}\\\text{pack-equivalent}\\\text{to }x_{1}^{\alpha_{1}%
}x_{2}^{\alpha_{2}}\cdots x_{\ell}^{\alpha_{\ell}}}}\mathfrak{m}.
\label{eq.Malpha.def}%
\end{equation}

For example,%
\[
M_{\left(  2,1\right)  }=\sum_{i<j}x_{i}^{2}x_{j}=x_{1}^{2}x_{2}+x_{1}%
^{2}x_{3}+x_{2}^{2}x_{3}+x_{1}^{2}x_{4}+x_{2}^{2}x_{4}+x_{3}^{2}x_{4}+\cdots.
\]

The family $\left(  M_{\alpha}\right)  _{\alpha\in\operatorname*{Comp}}$ is a
basis of the $\mathbf{k}$-module $\operatorname*{QSym}$, and is known as the
\emph{monomial basis} of $\operatorname*{QSym}$.

For any composition $\alpha$, we define the \emph{fundamental quasisymmetric
function }$L_{\alpha}\in\operatorname*{QSym}$ by
\begin{equation}
L_{\alpha}=\sum_{\substack{\beta\in\operatorname*{Comp}\nolimits_{n}%
;\\D\left(  \beta\right)  \supseteq D\left(  \alpha\right)  }}M_{\beta},
\label{eq.Lalpha.def}%
\end{equation}
where $n=\left\vert \alpha\right\vert $ (so that $\alpha\in
\operatorname*{Comp}\nolimits_{n}$). It is not hard to rewrite this
as\footnote{See \cite[detailed version, Corollary 10.18]{Grinbe15} for a proof
that the right hand side of (\ref{eq.Lalpha.xprod}) equals the right hand side
of (\ref{eq.Lalpha.def}).}%
\begin{equation}
L_{\alpha}=\sum_{\substack{i_{1}\leq i_{2}\leq\cdots\leq i_{n};\\i_{j}%
<i_{j+1}\text{ whenever }j\in D\left(  \alpha\right)  }}x_{i_{1}}x_{i_{2}%
}\cdots x_{i_{n}} \label{eq.Lalpha.xprod}%
\end{equation}
(again with $n=\left\vert \alpha\right\vert $). This quasisymmetric function
$L_{\alpha}$ was originally called $F_{\alpha}$ in Gessel's paper
\cite{Gessel84} (and in some later work such as \cite{Malven93}), but the
notation $L_{\alpha}$ has since spread more widely.

The family $\left(  L_{\alpha}\right)  _{\alpha\in\operatorname*{Comp}}$ is a
basis of the $\mathbf{k}$-module $\operatorname*{QSym}$, and is known as the
\emph{fundamental basis} of $\operatorname*{QSym}$.

\begin{verlong}

\begin{remark}
Using M\"{o}bius inversion on the Boolean lattice $\mathcal{P}\left(  \left[
n-1\right]  \right)  $, the definition (\ref{eq.Lalpha.def}) of the
fundamental basis can be turned around to obtain an expression of the
$M_{\alpha}$ in the fundamental basis. Namely, if $\alpha$ is a composition,
and if $n=\left\vert \alpha\right\vert $, then%
\[
M_{\alpha}=\sum_{\substack{\beta\in\operatorname*{Comp}\nolimits_{n}%
;\\D\left(  \beta\right)  \supseteq D\left(  \alpha\right)  }}\left(
-1\right)  ^{\ell\left(  \beta\right)  -\ell\left(  \alpha\right)  }L_{\beta
}.
\]
(See \cite[Proposition 5.2.8]{GriRei} for more details of the proof. In a
nutshell, the equality follows from M\"{o}bius inversion using the fact that
$\left\vert D\left(  \beta\right)  \setminus D\left(  \alpha\right)
\right\vert =\ell\left(  \beta\right)  -\ell\left(  \alpha\right)  $ whenever
$\alpha,\beta\in\operatorname*{Comp}\nolimits_{n}$ satisfy $D\left(
\beta\right)  \supseteq D\left(  \alpha\right)  $.)
\end{remark}
\end{verlong}

\section{\label{sec.eta}The enriched $q$-monomial functions}

\subsection{Definition and restatements}

\begin{convention}
\label{conv.q-r}From now on, we fix an element $q$ of the base ring
$\mathbf{k}$. We set%
\[
r:=q+1.
\]

\end{convention}

We shall now introduce a new family of quasisymmetric functions depending on
$q$:

\begin{definition}
\label{def.etaalpha}For any $n\in\mathbb{N}$ and any composition $\alpha
\in\operatorname*{Comp}\nolimits_{n}$, we define a quasisymmetric function
$\eta_{\alpha}^{\left(  q\right)  }\in\operatorname*{QSym}$ by%
\begin{equation}
\eta_{\alpha}^{\left(  q\right)  }=\sum_{\substack{\beta\in
\operatorname*{Comp}\nolimits_{n};\\D\left(  \beta\right)  \subseteq D\left(
\alpha\right)  }}r^{\ell\left(  \beta\right)  }M_{\beta}.
\label{eq.def.etaalpha.def}%
\end{equation}
We shall refer to $\eta_{\alpha}^{\left(  q\right)  }$ as the \emph{enriched
$q$-monomial function} corresponding to $\alpha$.
\end{definition}

\begin{example}
\label{exa.etaalpha.1}\ \ 

\begin{enumerate}
\item[\textbf{(a)}] Setting $n=5$ and $\alpha=\left(  1,3,1\right)  $ in this
definition, we obtain%
\begin{align*}
\eta_{\left(  1,3,1\right)  }^{\left(  q\right)  }  &  =\sum_{\substack{\beta
\in\operatorname*{Comp}\nolimits_{5};\\D\left(  \beta\right)  \subseteq
D\left(  1,3,1\right)  }}r^{\ell\left(  \beta\right)  }M_{\beta}\\
&  =\sum_{\substack{\beta\in\operatorname*{Comp}\nolimits_{5};\\D\left(
\beta\right)  \subseteq\left\{  1,4\right\}  }}r^{\ell\left(  \beta\right)
}M_{\beta}\ \ \ \ \ \ \ \ \ \ \left(  \text{since }D\left(  1,3,1\right)
=\left\{  1,4\right\}  \right) \\
&  =r^{\ell\left(  5\right)  }M_{\left(  5\right)  }+r^{\ell\left(
1,4\right)  }M_{\left(  1,4\right)  }+r^{\ell\left(  4,1\right)  }M_{\left(
4,1\right)  }+r^{\ell\left(  1,3,1\right)  }M_{\left(  1,3,1\right)  }%
\end{align*}
(since the compositions $\beta\in\operatorname*{Comp}\nolimits_{5}$ satisfying
$D\left(  \beta\right)  \subseteq\left\{  1,4\right\}  $ are $\left(
5\right)  $, $\left(  1,4\right)  $, $\left(  4,1\right)  $ and $\left(
1,3,1\right)  $). This simplifies to%
\[
\eta_{\left(  1,3,1\right)  }^{\left(  q\right)  }=rM_{\left(  5\right)
}+r^{2}M_{\left(  1,4\right)  }+r^{2}M_{\left(  4,1\right)  }+r^{3}M_{\left(
1,3,1\right)  }.
\]

\item[\textbf{(b)}] For any positive integer $n$, we have
\[
\eta_{\left(  n\right)  }^{\left(  q\right)  }=rM_{\left(  n\right)  },
\]
because the only composition $\beta\in\operatorname*{Comp}\nolimits_{n}$
satisfying $D\left(  \beta\right)  \subseteq D\left(  n\right)  $ is the
composition $\left(  n\right)  $ itself (since $D\left(  n\right)  $ is the
empty set $\varnothing$) and has length $\ell\left(  n\right)  =1$. Likewise,
the empty composition $\varnothing=\left(  {}\right)  $ satisfies
\[
\eta_{\varnothing}^{\left(  q\right)  }=M_{\varnothing}=1.
\]

\end{enumerate}
\end{example}

The quasisymmetric function $\eta_{\alpha}^{\left(  q\right)  }$ generalizes
several known power series. For $q=0$, the series $\eta_{\alpha}^{\left(
q\right)  }=\eta_{\alpha}^{\left(  0\right)  }$ is the \textquotedblleft
essential quasi-symmetric function\textquotedblright\ $E_{I}$ (for $I=D\left(
\alpha\right)  $) defined in \cite[(8)]{Hoffma15}. When $\alpha$ is an odd
composition (i.e., all entries of $\alpha$ are odd) and $q=1$, the series
$\eta_{\alpha}^{\left(  q\right)  }=\eta_{\alpha}^{\left(  1\right)  }$ is
precisely the $\eta_{\alpha}$ defined in \cite[(6.1)]{AgBeSo14}, and differs
only in sign from the $\eta_{\alpha}$ given in \cite[(2.1)]{Hsiao07} (because
of \cite[Proposition 2.1]{Hsiao07}). (This is the reason for the notation
$\eta_{\alpha}^{\left(  q\right)  }$.) Finally, in an appropriate sense, we
can view $M_{\alpha}$ as the \textquotedblleft$q\rightarrow\infty$
limit\textquotedblright\ of $\eta_{\alpha}^{\left(  q\right)  }$; to be
precise, this is saying that when $\eta_{\alpha}^{\left(  q\right)  }$ is
considered as a polynomial in $q$ (over $\operatorname*{QSym}$), its leading
term is $q^{\ell\left(  \alpha\right)  }M_{\alpha}$ (which is obvious from
(\ref{eq.def.etaalpha.def}) and $r=q+1$).

The following two propositions are essentially restatements of
(\ref{eq.def.etaalpha.def}):

\begin{proposition}
\label{prop.eta.through-x}Let $n\in\mathbb{N}$ and $\alpha\in
\operatorname*{Comp}\nolimits_{n}$. Then,%
\begin{equation}
\eta_{\alpha}^{\left(  q\right)  }=\sum_{\substack{g_{1}\leq g_{2}\leq
\cdots\leq g_{n};\\g_{i}=g_{i+1}\text{ for each }i\in\left[  n-1\right]
\setminus D\left(  \alpha\right)  }}r^{\left\vert \left\{  g_{1},g_{2}%
,\ldots,g_{n}\right\}  \right\vert }x_{g_{1}}x_{g_{2}}\cdots x_{g_{n}},
\label{eq.prop.eta.through-x.eq}%
\end{equation}
where the sum is over all weakly increasing $n$-tuples $\left(  g_{1}\leq
g_{2}\leq\cdots\leq g_{n}\right)  $ of positive integers that satisfy $\left(
g_{i}=g_{i+1}\text{ for each }i\in\left[  n-1\right]  \setminus D\left(
\alpha\right)  \right)  $.
\end{proposition}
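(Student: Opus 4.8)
The plan is to start from the definition (\ref{eq.def.etaalpha.def}) and expand each monomial quasisymmetric function $M_\beta$ using its own monomial expansion (\ref{eq.Malpha.def}), then reindex the resulting double sum over weakly increasing tuples. The key observation is that the monomials appearing in $\eta_\alpha^{(q)}$ are exactly the products $x_{g_1}x_{g_2}\cdots x_{g_n}$ for weakly increasing tuples $(g_1\le g_2\le\cdots\le g_n)$, and each such tuple determines a unique composition $\beta$ (the one recording where the strict increases occur) to which it contributes.

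\medskip

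\textbf{Main steps.} First I would fix a weakly increasing tuple $(g_1\le g_2\le\cdots\le g_n)$ and describe the set of positions where it strictly increases, namely $S:=\left\{i\in[n-1]\ \mid\ g_i<g_{i+1}\right\}$. Writing $\beta:=\operatorname{comp}(S)$ (in the sense of Definition \ref{def.comps.D-comp}, for this fixed $n$), one checks that $D(\beta)=S$, and that the monomial $x_{g_1}x_{g_2}\cdots x_{g_n}$ is precisely one of the monomials appearing in $M_\beta$ under the description (\ref{eq.Malpha.def}) — indeed, collapsing each maximal run of equal $g_i$'s into a single exponent recovers the tuple of parts of $\beta$, so $x_{g_1}\cdots x_{g_n}$ is pack-equivalent to $x_1^{\beta_1}x_2^{\beta_2}\cdots x_{\ell(\beta)}^{\beta_{\ell(\beta)}}$. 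Conversely, every monomial of $M_\beta$ arises from exactly one such tuple. Second, I would observe the numerical identities $\left\vert\left\{g_1,g_2,\ldots,g_n\right\}\right\vert=\ell(\beta)$ (the number of distinct values equals the number of runs, which is the length of $\beta$) and that the condition $D(\beta)\subseteq D(\alpha)$ is equivalent to the constraint $\left(g_i=g_{i+1}\text{ for each }i\in[n-1]\setminus D(\alpha)\right)$ imposed on the tuple. Putting these together, summing $r^{\ell(\beta)}M_\beta$ over all $\beta$ with $D(\beta)\subseteq D(\alpha)$ becomes summing $r^{\left\vert\left\{g_1,\ldots,g_n\right\}\right\vert}x_{g_1}\cdots x_{g_n}$ over all the admissible tuples, which is exactly the right-hand side of (\ref{eq.prop.eta.through-x.eq}).

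\medskip

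\textbf{The main obstacle} will be setting up the bijective bookkeeping cleanly: I must verify that the two index sets match up precisely, i.e. that a tuple $(g_1\le\cdots\le g_n)$ satisfies $g_i=g_{i+1}$ for all $i\in[n-1]\setminus D(\alpha)$ if and only if its associated strict-increase set $S=D(\beta)$ is contained in $D(\alpha)$. This is a contrapositive rephrasing — $g_i=g_{i+1}$ forbids $i$ from lying in $S$, so $S\subseteq D(\alpha)$ exactly when no strict increase occurs outside $D(\alpha)$ — but it needs to be stated carefully so that the two sums range over genuinely the same data. The exponent identity $\left\vert\left\{g_1,\ldots,g_n\right\}\right\vert=\ell(\beta)$ is essentially immediate once the run structure is understood, so it poses no real difficulty; the delicate point is simply ensuring the reindexing is a true bijection and that no monomials are double-counted or omitted.

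\medskip

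An alternative and perhaps cleaner route would bypass the intermediate composition $\beta$ entirely: one can define directly, for each admissible tuple $(g_1\le\cdots\le g_n)$, a composition via its run-length decomposition and invoke (\ref{eq.Malpha.def}) only to confirm that the coefficients of pack-equivalent monomials agree. Either way, the proof is a reindexing argument, and I expect it to be short once the correspondence between weakly increasing tuples and compositions (via their strict-ascent sets) is made explicit.
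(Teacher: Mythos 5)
Your proposal is correct and follows essentially the same route as the paper's own proof: expand each $M_{\beta}$ as a sum of monomials indexed by weakly increasing tuples whose strict-ascent set equals $D\left(  \beta\right)  $, note that $\left\vert \left\{  g_{1},g_{2},\ldots,g_{n}\right\}  \right\vert =\ell\left(  \beta\right)  $ for such tuples, and reindex the double sum using the bijection $D$ together with the equivalence between $\operatorname*{Asc}\mathbf{g}\subseteq D\left(  \alpha\right)  $ and the constraint $g_{i}=g_{i+1}$ for $i\in\left[  n-1\right]  \setminus D\left(  \alpha\right)  $. The bookkeeping points you flag as the main obstacle are exactly the ones the paper verifies, so no gap remains.
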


\begin{vershort}

\begin{proof}
If $\mathbf{g}=\left(  g_{1}\leq g_{2}\leq\cdots\leq g_{n}\right)  $ is a
weakly increasing $n$-tuple of positive integers, then we let
$\operatorname*{Asc}\mathbf{g}$ denote the set of all $i\in\left[  n-1\right]
$ satisfying $g_{i}<g_{i+1}$ (or, equivalently, $g_{i}\neq g_{i+1}$). Thus,
$\operatorname*{Asc}\mathbf{g}$ is the set of all positions at which the
entries of $\mathbf{g}$ get larger (or, equivalently, change values). For
instance, $\operatorname*{Asc}\left(  3,3,4,6,8,8,8\right)  =\left\{
2,3,4\right\}  $.

Let $\beta\in\operatorname*{Comp}\nolimits_{n}$. Write $\beta$ as
$\beta=\left(  \beta_{1},\beta_{2},\ldots,\beta_{\ell}\right)  $; thus,
$\ell\left(  \beta\right)  =\ell$ and $\beta_{1}+\beta_{2}+\cdots+\beta_{\ell
}=n$ and%
\begin{equation}
M_{\beta}=\sum_{i_{1}<i_{2}<\cdots<i_{\ell}}x_{i_{1}}^{\beta_{1}}x_{i_{2}%
}^{\beta_{2}}\cdots x_{i_{\ell}}^{\beta_{\ell}}
\label{pf.prop.eta.through-x.Mbeta=}%
\end{equation}
(by the definition of $M_{\beta}$). Moreover, the definition of $D\left(
\beta\right)  $ yields that
\begin{align*}
D\left(  \beta\right)   &  =\left\{  \beta_{1},\ \ \beta_{1}+\beta
_{2},\ \ \beta_{1}+\beta_{2}+\beta_{3},\ \ \ldots,\ \ \beta_{1}+\beta
_{2}+\cdots+\beta_{\ell-1}\right\} \\
&  =\left\{  \beta_{1}<\beta_{1}+\beta_{2}<\beta_{1}+\beta_{2}+\beta
_{3}<\cdots<\beta_{1}+\beta_{2}+\cdots+\beta_{\ell-1}\right\}  ;
\end{align*}
thus, the elements of $D\left(  \beta\right)  $ subdivide the interval
$\left\{  1,2,\ldots,n\right\}  $ into $\ell$ subintervals of sizes $\beta
_{1},\beta_{2},\ldots,\beta_{\ell}$ (from left to right).

A weakly increasing $n$-tuple $\mathbf{g}=\left(  g_{1}\leq g_{2}\leq
\cdots\leq g_{n}\right)  $ satisfies $\operatorname*{Asc}\mathbf{g}=D\left(
\beta\right)  $ if and only if its entries are constant on each of these
$\ell$ subintervals (i.e., we must have $g_{i}=g_{j}$ whenever $i$ and $j$
belong to the same subinterval) but strictly increase as we pass from one
subinterval to the next (since $\operatorname*{Asc}\mathbf{g}$ is the set of
all positions at which the entries of $\mathbf{g}$ change values). Hence, a
weakly increasing $n$-tuple $\mathbf{g}=\left(  g_{1}\leq g_{2}\leq\cdots\leq
g_{n}\right)  $ satisfies $\operatorname*{Asc}\mathbf{g}=D\left(
\beta\right)  $ if and only if it begins with $\beta_{1}$ copies of a positive
integer $i_{1}$, then continues with $\beta_{2}$ copies of a larger positive
integer $i_{2}$, then continues further with $\beta_{3}$ copies of a
yet-larger positive integer $i_{3}$, and so on. Consequently, for any weakly
increasing $n$-tuple $\mathbf{g}=\left(  g_{1}\leq g_{2}\leq\cdots\leq
g_{n}\right)  $ that satisfies $\operatorname*{Asc}\mathbf{g}=D\left(
\beta\right)  $, we can rewrite the monomial $x_{g_{1}}x_{g_{2}}\cdots
x_{g_{n}}$ as $x_{i_{1}}^{\beta_{1}}x_{i_{2}}^{\beta_{2}}\cdots x_{i_{\ell}%
}^{\beta_{\ell}}$ using these positive integers $i_{1}<i_{2}<\cdots<i_{\ell}$.

Conversely, any monomial of the form $x_{i_{1}}^{\beta_{1}}x_{i_{2}}%
^{\beta_{2}}\cdots x_{i_{\ell}}^{\beta_{\ell}}$ with $i_{1}<i_{2}%
<\cdots<i_{\ell}$ can be rewritten in the form $x_{g_{1}}x_{g_{2}}\cdots
x_{g_{n}}$ for a unique weakly increasing $n$-tuple $\mathbf{g}=\left(
g_{1}\leq g_{2}\leq\cdots\leq g_{n}\right)  $ that satisfies
$\operatorname*{Asc}\mathbf{g}=D\left(  \beta\right)  $ (indeed, it has degree
$\beta_{1}+\beta_{2}+\cdots+\beta_{\ell}=n$, and so can be written in the form
$x_{g_{1}}x_{g_{2}}\cdots x_{g_{n}}$ for a unique weakly increasing $n$-tuple
$\mathbf{g}=\left(  g_{1}\leq g_{2}\leq\cdots\leq g_{n}\right)  $; but then,
the property $\operatorname*{Asc}\mathbf{g}=D\left(  \beta\right)  $ follows
from the arguments in the previous paragraph).

Combining the results of the previous two paragraphs, we conclude that the
monomials $x_{i_{1}}^{\beta_{1}}x_{i_{2}}^{\beta_{2}}\cdots x_{i_{\ell}%
}^{\beta_{\ell}}$ that appear on the right hand side of
(\ref{pf.prop.eta.through-x.Mbeta=}) are precisely the monomials $x_{g_{1}%
}x_{g_{2}}\cdots x_{g_{n}}$ for all weakly increasing $n$-tuples
$\mathbf{g}=\left(  g_{1}\leq g_{2}\leq\cdots\leq g_{n}\right)  $ satisfying
$\operatorname*{Asc}\mathbf{g}=D\left(  \beta\right)  $. Therefore, we can
rewrite (\ref{pf.prop.eta.through-x.Mbeta=}) as%
\begin{equation}
M_{\beta}=\sum_{\substack{\mathbf{g}=\left(  g_{1}\leq g_{2}\leq\cdots\leq
g_{n}\right)  ;\\\operatorname*{Asc}\mathbf{g}=D\left(  \beta\right)
}}x_{g_{1}}x_{g_{2}}\cdots x_{g_{n}}. \label{pf.prop.eta.through-x.Mbeta=2}%
\end{equation}

Let us furthermore observe the following: If a weakly increasing $n$-tuple
$\mathbf{g}=\left(  g_{1}\leq g_{2}\leq\cdots\leq g_{n}\right)  $ satisfies
$\operatorname*{Asc}\mathbf{g}=D\left(  \beta\right)  $, then%
\[
\left\vert \left\{  g_{1},g_{2},\ldots,g_{n}\right\}  \right\vert =\ell
\]
(since we have proved above that the monomial $x_{g_{1}}x_{g_{2}}\cdots
x_{g_{n}}$ can be written as $x_{i_{1}}^{\beta_{1}}x_{i_{2}}^{\beta_{2}}\cdots
x_{i_{\ell}}^{\beta_{\ell}}$ for some integers $i_{1}<i_{2}<\cdots<i_{\ell}$,
and thus contains exactly $\ell$ distinct indeterminates, but this is saying
precisely that the set $\left\{  g_{1},g_{2},\ldots,g_{n}\right\}  $ has
exactly $\ell$ distinct elements) and thus
\begin{equation}
\ell\left(  \beta\right)  =\ell=\left\vert \left\{  g_{1},g_{2},\ldots
,g_{n}\right\}  \right\vert . \label{pf.prop.eta.through-x.len}%
\end{equation}

Now, multiplying the equality (\ref{pf.prop.eta.through-x.Mbeta=2}) by
$r^{\ell\left(  \beta\right)  }$, we find%
\begin{align}
r^{\ell\left(  \beta\right)  }M_{\beta}  &  =\sum_{\substack{\mathbf{g}%
=\left(  g_{1}\leq g_{2}\leq\cdots\leq g_{n}\right)  ;\\\operatorname*{Asc}%
\mathbf{g}=D\left(  \beta\right)  }}\underbrace{r^{\ell\left(  \beta\right)
}}_{\substack{=r^{\left\vert \left\{  g_{1},g_{2},\ldots,g_{n}\right\}
\right\vert }\\\text{(by (\ref{pf.prop.eta.through-x.len}))}}}x_{g_{1}%
}x_{g_{2}}\cdots x_{g_{n}}\nonumber\\
&  =\sum_{\substack{\mathbf{g}=\left(  g_{1}\leq g_{2}\leq\cdots\leq
g_{n}\right)  ;\\\operatorname*{Asc}\mathbf{g}=D\left(  \beta\right)
}}r^{\left\vert \left\{  g_{1},g_{2},\ldots,g_{n}\right\}  \right\vert
}x_{g_{1}}x_{g_{2}}\cdots x_{g_{n}}. \label{pf.prop.eta.through-x.qlM=}%
\end{align}

Forget that we fixed $\beta$. We thus have proved
(\ref{pf.prop.eta.through-x.qlM=}) for each $\beta\in\operatorname*{Comp}%
\nolimits_{n}$. Now, (\ref{eq.def.etaalpha.def}) becomes%
\begin{align*}
\eta_{\alpha}^{\left(  q\right)  }  &  =\sum_{\substack{\beta\in
\operatorname*{Comp}\nolimits_{n};\\D\left(  \beta\right)  \subseteq D\left(
\alpha\right)  }}r^{\ell\left(  \beta\right)  }M_{\beta}\\
&  =\sum_{\substack{\beta\in\operatorname*{Comp}\nolimits_{n};\\D\left(
\beta\right)  \subseteq D\left(  \alpha\right)  }}\ \ \sum
_{\substack{\mathbf{g}=\left(  g_{1}\leq g_{2}\leq\cdots\leq g_{n}\right)
;\\\operatorname*{Asc}\mathbf{g}=D\left(  \beta\right)  }}r^{\left\vert
\left\{  g_{1},g_{2},\ldots,g_{n}\right\}  \right\vert }x_{g_{1}}x_{g_{2}%
}\cdots x_{g_{n}}\ \ \ \ \ \ \ \ \ \ \left(  \text{by
(\ref{pf.prop.eta.through-x.qlM=})}\right) \\
&  =\sum_{\substack{I\subseteq\left[  n-1\right]  ;\\I\subseteq D\left(
\alpha\right)  }}\ \ \sum_{\substack{\mathbf{g}=\left(  g_{1}\leq g_{2}%
\leq\cdots\leq g_{n}\right)  ;\\\operatorname*{Asc}\mathbf{g}=I}}r^{\left\vert
\left\{  g_{1},g_{2},\ldots,g_{n}\right\}  \right\vert }x_{g_{1}}x_{g_{2}%
}\cdots x_{g_{n}}\\
&  \ \ \ \ \ \ \ \ \ \ \ \ \ \ \ \ \ \ \ \ \left(
\begin{array}
[c]{c}%
\text{here, we have substituted }I\text{ for }D\left(  \beta\right)  \text{ in
the first sum,}\\
\text{since the map }D:\operatorname*{Comp}{}_{n}\rightarrow\mathcal{P}\left(
\left[  n-1\right]  \right)  \text{ is a bijection}%
\end{array}
\right) \\
&  =\sum_{\substack{\mathbf{g}=\left(  g_{1}\leq g_{2}\leq\cdots\leq
g_{n}\right)  ;\\\operatorname*{Asc}\mathbf{g}\subseteq D\left(
\alpha\right)  }}r^{\left\vert \left\{  g_{1},g_{2},\ldots,g_{n}\right\}
\right\vert }x_{g_{1}}x_{g_{2}}\cdots x_{g_{n}}\\
&  =\sum_{\substack{\mathbf{g}=\left(  g_{1}\leq g_{2}\leq\cdots\leq
g_{n}\right)  ;\\g_{i}=g_{i+1}\text{ for each }i\in\left[  n-1\right]
\setminus D\left(  \alpha\right)  }}r^{\left\vert \left\{  g_{1},g_{2}%
,\ldots,g_{n}\right\}  \right\vert }x_{g_{1}}x_{g_{2}}\cdots x_{g_{n}}%
\end{align*}
(since the condition \textquotedblleft$\operatorname*{Asc}\mathbf{g}\subseteq
D\left(  \alpha\right)  $\textquotedblright\ imposed on a weakly increasing
$n$-tuple $\mathbf{g}=\left(  g_{1}\leq g_{2}\leq\cdots\leq g_{n}\right)  $ is
equivalent to the condition \textquotedblleft$g_{i}=g_{i+1}$ for each
$i\in\left[  n-1\right]  \setminus D\left(  \alpha\right)  $\textquotedblright%
). This proves Proposition \ref{prop.eta.through-x}.
\end{proof}
\end{vershort}

\begin{verlong}
Our proof of Proposition \ref{prop.eta.through-x} will rely on the following lemma:

\begin{lemma}
\label{lem.eta.through-x.Mlem}Let $n\in\mathbb{N}$ and $\alpha\in
\operatorname*{Comp}\nolimits_{n}$. If $\mathbf{g}=\left(  g_{1}\leq g_{2}%
\leq\cdots\leq g_{n}\right)  $ is a weakly increasing tuple of positive
integers, then we let $\operatorname*{Asc}\mathbf{g}$ denote the set of all
$j\in\left[  n-1\right]  $ satisfying $g_{j}<g_{j+1}$. Then:

\begin{enumerate}
\item[\textbf{(a)}] If $\mathbf{g}=\left(  g_{1}\leq g_{2}\leq\cdots\leq
g_{n}\right)  $ is any weakly increasing $n$-tuple of positive integers that
satisfies $\operatorname*{Asc}\mathbf{g}=D\left(  \alpha\right)  $, then
\[
\ell\left(  \alpha\right)  =\left\vert \left\{  g_{1},g_{2},\ldots
,g_{n}\right\}  \right\vert .
\]

\item[\textbf{(b)}] We have%
\[
M_{\alpha}=\sum_{\substack{\mathbf{g}=\left(  g_{1}\leq g_{2}\leq\cdots\leq
g_{n}\right)  ;\\\operatorname*{Asc}\mathbf{g}=D\left(  \alpha\right)
}}x_{g_{1}}x_{g_{2}}\cdots x_{g_{n}},
\]
where the sum is over all weakly increasing $n$-tuples $\mathbf{g}=\left(
g_{1}\leq g_{2}\leq\cdots\leq g_{n}\right)  $ of positive integers that
satisfy $\operatorname*{Asc}\mathbf{g}=D\left(  \alpha\right)  $.

\item[\textbf{(c)}] We have%
\[
r^{\ell\left(  \alpha\right)  }M_{\alpha}=\sum_{\substack{\mathbf{g}=\left(
g_{1}\leq g_{2}\leq\cdots\leq g_{n}\right)  ;\\\operatorname*{Asc}%
\mathbf{g}=D\left(  \alpha\right)  }}r^{\left\vert \left\{  g_{1},g_{2}%
,\ldots,g_{n}\right\}  \right\vert }x_{g_{1}}x_{g_{2}}\cdots x_{g_{n}}.
\]

\end{enumerate}
\end{lemma}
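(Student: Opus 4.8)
The plan is to prove parts \textbf{(a)}, \textbf{(b)}, \textbf{(c)} in order, obtaining \textbf{(c)} from \textbf{(a)} and \textbf{(b)}. The whole argument rests on a single structural observation that I would establish first. Writing $\alpha = \left( \alpha_1, \alpha_2, \ldots, \alpha_\ell \right)$ with $\ell = \ell\left( \alpha \right)$, the $\ell - 1$ elements of $D\left( \alpha \right) = \left\{ \alpha_1,\ \alpha_1 + \alpha_2,\ \ldots,\ \alpha_1 + \cdots + \alpha_{\ell - 1} \right\}$ subdivide the interval $\left[ n \right]$ into $\ell$ consecutive blocks of lengths $\alpha_1, \alpha_2, \ldots, \alpha_\ell$. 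Since $\operatorname*{Asc}\mathbf{g}$ records exactly the positions at which consecutive entries of $\mathbf{g}$ differ, a weakly increasing tuple $\mathbf{g}$ satisfies $\operatorname*{Asc}\mathbf{g} = D\left( \alpha \right)$ if and only if its entries are constant on each of these blocks and strictly increase as one passes from each block to the next.

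For part \textbf{(a)} I would read off the claim directly from this observation: such a $\mathbf{g}$ assumes exactly one value on each of the $\ell$ blocks, and these $\ell$ values are pairwise distinct because they strictly increase from block to block; hence $\left\{ g_1, g_2, \ldots, g_n \right\}$ has exactly $\ell = \ell\left( \alpha \right)$ elements, which is the assertion.

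For part \textbf{(b)} I would exhibit a bijection between the strictly increasing $\ell$-tuples $\left( i_1 < i_2 < \cdots < i_\ell \right)$ that index the defining sum \eqref{eq.Malpha.def} for $M_\alpha$ and the weakly increasing $n$-tuples $\mathbf{g}$ with $\operatorname*{Asc}\mathbf{g} = D\left( \alpha \right)$. The forward map sends $\left( i_1 < \cdots < i_\ell \right)$ to the tuple $\mathbf{g}$ consisting of $\alpha_1$ copies of $i_1$, then $\alpha_2$ copies of $i_2$, and so on; the structural observation guarantees $\operatorname*{Asc}\mathbf{g} = D\left( \alpha \right)$, while the inverse simply lists the $\ell$ distinct values of $\mathbf{g}$ in increasing order. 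Under this correspondence the monomial $x_{i_1}^{\alpha_1} x_{i_2}^{\alpha_2} \cdots x_{i_\ell}^{\alpha_\ell}$ becomes $x_{g_1} x_{g_2} \cdots x_{g_n}$, so the two sums match term by term.

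Part \textbf{(c)} then follows by multiplying the identity of \textbf{(b)} by $r^{\ell\left( \alpha \right)}$ and rewriting the constant exponent $\ell\left( \alpha \right)$ as $\left\vert \left\{ g_1, g_2, \ldots, g_n \right\} \right\vert$ inside each summand, which is legitimate because part \textbf{(a)} applies to every $\mathbf{g}$ in the range of summation. The one point deserving care -- and the closest thing to an obstacle -- is checking that the two maps in \textbf{(b)} really are mutually inverse, i.e.\ that each admissible $\mathbf{g}$ comes from a unique increasing $\ell$-tuple; this run-length-encoding bijection is elementary, but since all three parts ultimately rest on it, it should be stated carefully rather than waved through.
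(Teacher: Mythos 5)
Your proof is correct, but it is organized rather differently from the paper's own proof of this lemma. The paper establishes part \textbf{(a)} by a telescoping argument: it shows that the increment $\left\vert \left\{  g_{1},\ldots,g_{i+1}\right\}  \right\vert -\left\vert \left\{  g_{1},\ldots,g_{i}\right\}  \right\vert $ equals $1$ when $i\in D\left(  \alpha\right)  $ and $0$ when $i\in\left[  n-1\right]  \setminus D\left(  \alpha\right)  $, and then sums these increments over $i$ to obtain $\left\vert \left\{  g_{1},\ldots,g_{n}\right\}  \right\vert -1=\left\vert D\left(  \alpha\right)  \right\vert =\ell\left(  \alpha\right)  -1$ (treating $n=0$ as a separate case). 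Part \textbf{(b)} is not argued in the paper at all, but imported from an external reference (\cite[detailed version, Proposition 10.10]{Grinbe15}) followed by a renaming of indices. You instead derive both \textbf{(a)} and \textbf{(b)} from a single structural fact, the run-length-encoding bijection between strictly increasing $\ell$-tuples and weakly increasing $n$-tuples with ascent set $D\left(  \alpha\right)  $; this makes your argument self-contained, and it is precisely the block-decomposition picture that the paper itself uses in its short proof of Proposition \ref{prop.eta.through-x}, the statement this lemma exists to support. Part \textbf{(c)} is identical in both treatments: multiply \textbf{(b)} by $r^{\ell\left(  \alpha\right)  }$ and use \textbf{(a)} to rewrite the exponent inside each summand. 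Your closing caveat is the right one: the only point that genuinely needs to be written out is that the two maps in \textbf{(b)} are mutually inverse (equivalently, that each admissible $\mathbf{g}$ arises from a unique increasing $\ell$-tuple); the degenerate case $n=0$, where there are no blocks, also deserves its own sentence.
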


\begin{proof}
Recall that $\alpha\in\operatorname*{Comp}\nolimits_{n}$; in other words,
$\alpha$ is a composition of $n$. In other words, $\alpha$ is a composition
with $\left\vert \alpha\right\vert =n$. Also, $D\left(  \alpha\right)
\in\mathcal{P}\left(  \left[  n-1\right]  \right)  $ (since $D$ is a map from
$\operatorname*{Comp}\nolimits_{n}$ to $\mathcal{P}\left(  \left[  n-1\right]
\right)  $), so that $D\left(  \alpha\right)  \subseteq\left[  n-1\right]
$.\medskip

\textbf{(a)} Let $\mathbf{g}=\left(  g_{1}\leq g_{2}\leq\cdots\leq
g_{n}\right)  $ be any weakly increasing $n$-tuple of positive integers that
satisfies $\operatorname*{Asc}\mathbf{g}=D\left(  \alpha\right)  $. We claim
the following:

\begin{statement}
\textit{Claim 1:} Let $i\in D\left(  \alpha\right)  $. Then, $\left\vert
\left\{  g_{1},g_{2},\ldots,g_{i+1}\right\}  \right\vert -\left\vert \left\{
g_{1},g_{2},\ldots,g_{i}\right\}  \right\vert =1$.
\end{statement}

[\textit{Proof of Claim 1:} We have $i\in D\left(  \alpha\right)
=\operatorname*{Asc}\mathbf{g}$ (since $\operatorname*{Asc}\mathbf{g}=D\left(
\alpha\right)  $). But $\operatorname*{Asc}\mathbf{g}$ was defined as the set
of all $j\in\left[  n-1\right]  $ satisfying $g_{j}<g_{j+1}$. Hence, $i$ is
such a $j$ (since $i\in\operatorname*{Asc}\mathbf{g}$). In other words,
$i\in\left[  n-1\right]  $ and $g_{i}<g_{i+1}$.

Every $j\in\left[  i\right]  $ satisfies $j\leq i$ and therefore%
\begin{align*}
g_{j}  &  \leq g_{i}\ \ \ \ \ \ \ \ \ \ \left(  \text{since }g_{1}\leq
g_{2}\leq\cdots\leq g_{n}\right) \\
&  <g_{i+1},
\end{align*}
so that $g_{i+1}>g_{j}$ and thus $g_{i+1}\neq g_{j}$. In other words,
$g_{i+1}$ is distinct from all the numbers $g_{1},g_{2},\ldots,g_{i}$. In
other words, $g_{i+1}\notin\left\{  g_{1},g_{2},\ldots,g_{i}\right\}  $.

However, if $A$ is a finite set, and if $b$ is an object such that $b\notin
A$, then $\left\vert A\cup\left\{  b\right\}  \right\vert =\left\vert
A\right\vert +1$. Applying this to $A=\left\{  g_{1},g_{2},\ldots
,g_{i}\right\}  $ and $b=g_{i+1}$, we obtain
\[
\left\vert \left\{  g_{1},g_{2},\ldots,g_{i}\right\}  \cup\left\{
g_{i+1}\right\}  \right\vert =\left\vert \left\{  g_{1},g_{2},\ldots
,g_{i}\right\}  \right\vert +1
\]
(since $g_{i+1}\notin\left\{  g_{1},g_{2},\ldots,g_{i}\right\}  $). In view
of
\[
\left\{  g_{1},g_{2},\ldots,g_{i}\right\}  \cup\left\{  g_{i+1}\right\}
=\left\{  g_{1},g_{2},\ldots,g_{i},g_{i+1}\right\}  =\left\{  g_{1}%
,g_{2},\ldots,g_{i+1}\right\}  ,
\]
we can rewrite this as%
\[
\left\vert \left\{  g_{1},g_{2},\ldots,g_{i+1}\right\}  \right\vert
=\left\vert \left\{  g_{1},g_{2},\ldots,g_{i}\right\}  \right\vert +1.
\]
In other words, $\left\vert \left\{  g_{1},g_{2},\ldots,g_{i+1}\right\}
\right\vert -\left\vert \left\{  g_{1},g_{2},\ldots,g_{i}\right\}  \right\vert
=1$. This proves Claim 1.]

\begin{statement}
\textit{Claim 2:} Let $i\in\left[  n-1\right]  $ be such that $i\notin
D\left(  \alpha\right)  $. Then, $\left\vert \left\{  g_{1},g_{2}%
,\ldots,g_{i+1}\right\}  \right\vert -\left\vert \left\{  g_{1},g_{2}%
,\ldots,g_{i}\right\}  \right\vert =0$.
\end{statement}

[\textit{Proof of Claim 2:} We have $i\notin D\left(  \alpha\right)
=\operatorname*{Asc}\mathbf{g}$ (since $\operatorname*{Asc}\mathbf{g}=D\left(
\alpha\right)  $). But $\operatorname*{Asc}\mathbf{g}$ was defined as the set
of all $j\in\left[  n-1\right]  $ satisfying $g_{j}<g_{j+1}$. Hence, $i$ is
not such a $j$ (since $i\notin\operatorname*{Asc}\mathbf{g}$). Thus, $i$ does
not satisfy $g_{i}<g_{i+1}$ (because if $i$ satisfied $g_{i}<g_{i+1}$, then
$i$ would be a $j\in\left[  n-1\right]  $ satisfying $g_{j}<g_{j+1}$, but this
would contradict the previous sentence). In other words, we have $g_{i}\geq
g_{i+1}$.

However, from $g_{1}\leq g_{2}\leq\cdots\leq g_{n}$, we obtain $g_{i}\leq
g_{i+1}$. Combining this with $g_{i}\geq g_{i+1}$, we find $g_{i}=g_{i+1}$.
Thus, $g_{i+1}=g_{i}\in\left\{  g_{1},g_{2},\ldots,g_{i}\right\}  $ (since
$i\geq1$).

However, if $A$ is a finite set, and if $b\in A$, then $A\cup\left\{
b\right\}  =A$. Applying this to $A=\left\{  g_{1},g_{2},\ldots,g_{i}\right\}
$ and $b=g_{i+1}$, we obtain
\[
\left\{  g_{1},g_{2},\ldots,g_{i}\right\}  \cup\left\{  g_{i+1}\right\}
=\left\{  g_{1},g_{2},\ldots,g_{i}\right\}
\]
(since $g_{i+1}\in\left\{  g_{1},g_{2},\ldots,g_{i}\right\}  $). In view of
\[
\left\{  g_{1},g_{2},\ldots,g_{i}\right\}  \cup\left\{  g_{i+1}\right\}
=\left\{  g_{1},g_{2},\ldots,g_{i},g_{i+1}\right\}  =\left\{  g_{1}%
,g_{2},\ldots,g_{i+1}\right\}  ,
\]
we can rewrite this as%
\[
\left\{  g_{1},g_{2},\ldots,g_{i+1}\right\}  =\left\{  g_{1},g_{2}%
,\ldots,g_{i}\right\}  .
\]
Hence, $\left\vert \left\{  g_{1},g_{2},\ldots,g_{i+1}\right\}  \right\vert
=\left\vert \left\{  g_{1},g_{2},\ldots,g_{i}\right\}  \right\vert $. In other
words, $\left\vert \left\{  g_{1},g_{2},\ldots,g_{i+1}\right\}  \right\vert
-\left\vert \left\{  g_{1},g_{2},\ldots,g_{i}\right\}  \right\vert =0$. This
proves Claim 2.] \medskip

Now, recall that we must prove that $\ell\left(  \alpha\right)  =\left\vert
\left\{  g_{1},g_{2},\ldots,g_{n}\right\}  \right\vert $. If $n=0$, then this
is easy to check\footnote{\textit{Proof.} Assume that $n=0$. Thus, $\alpha$ is
a composition with $\left\vert \alpha\right\vert =0$ (since $\left\vert
\alpha\right\vert =n=0$). Hence, $\ell\left(  \alpha\right)  =0$ (by
\cite[Proposition 2.4]{comps}). Comparing this with
\[
\left\vert \underbrace{\left\{  g_{1},g_{2},\ldots,g_{n}\right\}
}_{\substack{=\varnothing\\\text{(since }n=0\text{)}}}\right\vert =\left\vert
\varnothing\right\vert =0,
\]
we obtain $\ell\left(  \alpha\right)  =\left\vert \left\{  g_{1},g_{2}%
,\ldots,g_{n}\right\}  \right\vert $, qed.}. Thus, for the rest of this proof,
we WLOG assume that $n\neq0$. Hence, $n\geq1$ (since $n\in\mathbb{N}$). Thus,
$\left\vert \alpha\right\vert =n\geq1>0$. Therefore, \cite[Proposition
2.3]{comps} yields $\left\vert D\left(  \alpha\right)  \right\vert
=\ell\left(  \alpha\right)  -1$.

The well-known \emph{telescope principle} says that any $n$-tuple $\left(
a_{1},a_{2},\ldots,a_{n}\right)  $ of real numbers satisfies%
\[
\sum_{i=1}^{n-1}\left(  a_{i+1}-a_{i}\right)  =a_{n}-a_{1}.
\]
Applying this to $a_{i}=\left\vert \left\{  g_{1},g_{2},\ldots,g_{i}\right\}
\right\vert $, we obtain
\begin{align*}
&  \sum_{i=1}^{n-1}\left(  \left\vert \left\{  g_{1},g_{2},\ldots
,g_{i+1}\right\}  \right\vert -\left\vert \left\{  g_{1},g_{2},\ldots
,g_{i}\right\}  \right\vert \right) \\
&  =\left\vert \left\{  g_{1},g_{2},\ldots,g_{n}\right\}  \right\vert
-\left\vert \underbrace{\left\{  g_{1},g_{2},\ldots,g_{1}\right\}
}_{=\left\{  g_{1}\right\}  }\right\vert \\
&  =\left\vert \left\{  g_{1},g_{2},\ldots,g_{n}\right\}  \right\vert
-\underbrace{\left\vert \left\{  g_{1}\right\}  \right\vert }_{=1}\\
&  =\left\vert \left\{  g_{1},g_{2},\ldots,g_{n}\right\}  \right\vert -1.
\end{align*}
Hence,%
\begin{align*}
\left\vert \left\{  g_{1},g_{2},\ldots,g_{n}\right\}  \right\vert -1  &
=\underbrace{\sum_{i=1}^{n-1}}_{=\sum_{i\in\left[  n-1\right]  }}\left(
\left\vert \left\{  g_{1},g_{2},\ldots,g_{i+1}\right\}  \right\vert
-\left\vert \left\{  g_{1},g_{2},\ldots,g_{i}\right\}  \right\vert \right) \\
&  =\sum_{i\in\left[  n-1\right]  }\left(  \left\vert \left\{  g_{1}%
,g_{2},\ldots,g_{i+1}\right\}  \right\vert -\left\vert \left\{  g_{1}%
,g_{2},\ldots,g_{i}\right\}  \right\vert \right) \\
&  =\sum_{\substack{i\in\left[  n-1\right]  ;\\i\in D\left(  \alpha\right)
}}\underbrace{\left(  \left\vert \left\{  g_{1},g_{2},\ldots,g_{i+1}\right\}
\right\vert -\left\vert \left\{  g_{1},g_{2},\ldots,g_{i}\right\}  \right\vert
\right)  }_{\substack{=1\\\text{(by Claim 1)}}}\\
&  \ \ \ \ \ \ \ \ \ \ +\sum_{\substack{i\in\left[  n-1\right]  ;\\i\notin
D\left(  \alpha\right)  }}\underbrace{\left(  \left\vert \left\{  g_{1}%
,g_{2},\ldots,g_{i+1}\right\}  \right\vert -\left\vert \left\{  g_{1}%
,g_{2},\ldots,g_{i}\right\}  \right\vert \right)  }_{\substack{=0\\\text{(by
Claim 2)}}}\\
&  \ \ \ \ \ \ \ \ \ \ \ \ \ \ \ \ \ \ \ \ \left(
\begin{array}
[c]{c}%
\text{since each }i\in\left[  n-1\right]  \text{ satisfies}\\
\text{either }i\in D\left(  \alpha\right)  \text{ or }i\notin D\left(
\alpha\right) \\
\text{(but not both at the same time)}%
\end{array}
\right) \\
&  =\underbrace{\sum_{\substack{i\in\left[  n-1\right]  ;\\i\in D\left(
\alpha\right)  }}}_{\substack{=\sum_{i\in D\left(  \alpha\right)
}\\\text{(since }D\left(  \alpha\right)  \subseteq\left[  n-1\right]
\text{)}}}1+\underbrace{\sum_{\substack{i\in\left[  n-1\right]  ;\\i\notin
D\left(  \alpha\right)  }}0}_{=0}=\sum_{i\in D\left(  \alpha\right)  }1\\
&  =\left\vert D\left(  \alpha\right)  \right\vert \cdot1=\left\vert D\left(
\alpha\right)  \right\vert =\ell\left(  \alpha\right)  -1.
\end{align*}
Solving this for $\ell\left(  \alpha\right)  $, we find%
\[
\ell\left(  \alpha\right)  =\left\vert \left\{  g_{1},g_{2},\ldots
,g_{n}\right\}  \right\vert -1+1=\left\vert \left\{  g_{1},g_{2},\ldots
,g_{n}\right\}  \right\vert .
\]
This proves Lemma \ref{lem.eta.through-x.Mlem} \textbf{(a)}. \medskip

\textbf{(b)} From \cite[detailed version, Proposition 10.10]{Grinbe15}, we
know that
\begin{align*}
M_{\alpha}  &  =\sum_{\substack{i_{1}\leq i_{2}\leq\cdots\leq i_{n};\\\left\{
j\in\left[  n-1\right]  \ \mid\ i_{j}<i_{j+1}\right\}  =D\left(
\alpha\right)  }}x_{i_{1}}x_{i_{2}}\cdots x_{i_{n}}\\
&  =\sum_{\substack{g_{1}\leq g_{2}\leq\cdots\leq g_{n};\\\left\{  j\in\left[
n-1\right]  \ \mid\ g_{j}<g_{j+1}\right\}  =D\left(  \alpha\right)  }%
}x_{g_{1}}x_{g_{2}}\cdots x_{g_{n}}%
\end{align*}
(here, we have renamed the summation index $\left(  i_{1},i_{2},\ldots
,i_{n}\right)  $ as $\left(  g_{1},g_{2},\ldots,g_{n}\right)  $). We can
rewrite this further as%
\begin{equation}
M_{\alpha}=\sum_{\substack{\mathbf{g}=\left(  g_{1}\leq g_{2}\leq\cdots\leq
g_{n}\right)  ;\\\left\{  j\in\left[  n-1\right]  \ \mid\ g_{j}<g_{j+1}%
\right\}  =D\left(  \alpha\right)  }}x_{g_{1}}x_{g_{2}}\cdots x_{g_{n}}
\label{pf.lem.eta.through-x.Mlem.1}%
\end{equation}
(here, we have denoted the $n$-tuple $\left(  g_{1},g_{2},\ldots,g_{n}\right)
$ by $\mathbf{g}$).

However, if $\mathbf{g}=\left(  g_{1}\leq g_{2}\leq\cdots\leq g_{n}\right)  $
is any weakly increasing $n$-tuple of positive integers, then%
\begin{equation}
\left\{  j\in\left[  n-1\right]  \ \mid\ g_{j}<g_{j+1}\right\}
=\operatorname*{Asc}\mathbf{g} \label{pf.lem.eta.through-x.Mlem.2}%
\end{equation}
\footnote{\textit{Proof of (\ref{pf.lem.eta.through-x.Mlem.2}):} Let
$\mathbf{g}=\left(  g_{1}\leq g_{2}\leq\cdots\leq g_{n}\right)  $ be any
weakly increasing $n$-tuple of positive integers. The definition of
$\operatorname*{Asc}\mathbf{g}$ says that $\operatorname*{Asc}\mathbf{g}$ is
the set of all $j\in\left[  n-1\right]  $ satisfying $g_{j}<g_{j+1}$. In other
words, $\operatorname*{Asc}\mathbf{g}=\left\{  j\in\left[  n-1\right]
\ \mid\ g_{j}<g_{j+1}\right\}  $. This proves
(\ref{pf.lem.eta.through-x.Mlem.2}).}. Thus, we can rewrite
(\ref{pf.lem.eta.through-x.Mlem.1}) as
\[
M_{\alpha}=\sum_{\substack{\mathbf{g}=\left(  g_{1}\leq g_{2}\leq\cdots\leq
g_{n}\right)  ;\\\operatorname*{Asc}\mathbf{g}=D\left(  \alpha\right)
}}x_{g_{1}}x_{g_{2}}\cdots x_{g_{n}}.
\]
This proves Lemma \ref{lem.eta.through-x.Mlem} \textbf{(b)}. \medskip

\textbf{(c)} From Lemma \ref{lem.eta.through-x.Mlem} \textbf{(b)}, we have%
\[
M_{\alpha}=\sum_{\substack{\mathbf{g}=\left(  g_{1}\leq g_{2}\leq\cdots\leq
g_{n}\right)  ;\\\operatorname*{Asc}\mathbf{g}=D\left(  \alpha\right)
}}x_{g_{1}}x_{g_{2}}\cdots x_{g_{n}}.
\]
Multiplying both sides of this equality by $r^{\ell\left(  \alpha\right)  }$,
we obtain%
\begin{align*}
r^{\ell\left(  \alpha\right)  }M_{\alpha}  &  =r^{\ell\left(  \alpha\right)
}\sum_{\substack{\mathbf{g}=\left(  g_{1}\leq g_{2}\leq\cdots\leq
g_{n}\right)  ;\\\operatorname*{Asc}\mathbf{g}=D\left(  \alpha\right)
}}x_{g_{1}}x_{g_{2}}\cdots x_{g_{n}}\\
&  =\sum_{\substack{\mathbf{g}=\left(  g_{1}\leq g_{2}\leq\cdots\leq
g_{n}\right)  ;\\\operatorname*{Asc}\mathbf{g}=D\left(  \alpha\right)
}}\underbrace{r^{\ell\left(  \alpha\right)  }}_{\substack{=r^{\left\vert
\left\{  g_{1},g_{2},\ldots,g_{n}\right\}  \right\vert }\\\text{(since Lemma
\ref{lem.eta.through-x.Mlem} \textbf{(a)}}\\\text{yields }\ell\left(
\alpha\right)  =\left\vert \left\{  g_{1},g_{2},\ldots,g_{n}\right\}
\right\vert \text{)}}}x_{g_{1}}x_{g_{2}}\cdots x_{g_{n}}\\
&  =\sum_{\substack{\mathbf{g}=\left(  g_{1}\leq g_{2}\leq\cdots\leq
g_{n}\right)  ;\\\operatorname*{Asc}\mathbf{g}=D\left(  \alpha\right)
}}r^{\left\vert \left\{  g_{1},g_{2},\ldots,g_{n}\right\}  \right\vert
}x_{g_{1}}x_{g_{2}}\cdots x_{g_{n}}.
\end{align*}
This proves Lemma \ref{lem.eta.through-x.Mlem} \textbf{(c)}.
\end{proof}

We can now prove Proposition \ref{prop.eta.through-x}:

\begin{proof}
[Proof of Proposition \ref{prop.eta.through-x}.]If $\mathbf{g}=\left(
g_{1}\leq g_{2}\leq\cdots\leq g_{n}\right)  $ is a weakly increasing tuple of
positive integers, then we let $\operatorname*{Asc}\mathbf{g}$ denote the set
of all $j\in\left[  n-1\right]  $ satisfying $g_{j}<g_{j+1}$.

Now, the equality (\ref{eq.def.etaalpha.def}) becomes%
\begin{align}
\eta_{\alpha}^{\left(  q\right)  }  &  =\sum_{\substack{\beta\in
\operatorname*{Comp}\nolimits_{n};\\D\left(  \beta\right)  \subseteq D\left(
\alpha\right)  }}\underbrace{r^{\ell\left(  \beta\right)  }M_{\beta}%
}_{\substack{=\sum_{\substack{\mathbf{g}=\left(  g_{1}\leq g_{2}\leq\cdots\leq
g_{n}\right)  ;\\\operatorname*{Asc}\mathbf{g}=D\left(  \beta\right)
}}r^{\left\vert \left\{  g_{1},g_{2},\ldots,g_{n}\right\}  \right\vert
}x_{g_{1}}x_{g_{2}}\cdots x_{g_{n}}\\\text{(by Lemma
\ref{lem.eta.through-x.Mlem} \textbf{(c)}, applied to }\beta\text{ instead of
}\alpha\text{)}}}\nonumber\\
&  =\sum_{\substack{\beta\in\operatorname*{Comp}\nolimits_{n};\\D\left(
\beta\right)  \subseteq D\left(  \alpha\right)  }}\ \ \sum
_{\substack{\mathbf{g}=\left(  g_{1}\leq g_{2}\leq\cdots\leq g_{n}\right)
;\\\operatorname*{Asc}\mathbf{g}=D\left(  \beta\right)  }}r^{\left\vert
\left\{  g_{1},g_{2},\ldots,g_{n}\right\}  \right\vert }x_{g_{1}}x_{g_{2}%
}\cdots x_{g_{n}}\nonumber\\
&  =\underbrace{\sum_{\substack{I\in\mathcal{P}\left(  \left[  n-1\right]
\right)  ;\\I\subseteq D\left(  \alpha\right)  }}\ \ \sum
_{\substack{\mathbf{g}=\left(  g_{1}\leq g_{2}\leq\cdots\leq g_{n}\right)
;\\\operatorname*{Asc}\mathbf{g}=I}}}_{\substack{=\sum_{\substack{\mathbf{g}%
=\left(  g_{1}\leq g_{2}\leq\cdots\leq g_{n}\right)  ;\\\operatorname*{Asc}%
\mathbf{g}\subseteq D\left(  \alpha\right)  }}\\\text{(since }%
\operatorname*{Asc}\mathbf{g}\in\mathcal{P}\left(  \left[  n-1\right]
\right)  \\\text{for every }\mathbf{g}=\left(  g_{1}\leq g_{2}\leq\cdots\leq
g_{n}\right)  \text{)}}}r^{\left\vert \left\{  g_{1},g_{2},\ldots
,g_{n}\right\}  \right\vert }x_{g_{1}}x_{g_{2}}\cdots x_{g_{n}}\nonumber\\
&  \ \ \ \ \ \ \ \ \ \ \ \ \ \ \ \ \ \ \ \ \left(
\begin{array}
[c]{c}%
\text{here, we have substituted }I\text{ for }D\left(  \beta\right)  \text{ in
the first sum,}\\
\text{since the map }D:\operatorname*{Comp}\nolimits_{n}\rightarrow
\mathcal{P}\left(  \left[  n-1\right]  \right)  \text{ is a bijection}%
\end{array}
\right) \nonumber\\
&  =\sum_{\substack{\mathbf{g}=\left(  g_{1}\leq g_{2}\leq\cdots\leq
g_{n}\right)  ;\\\operatorname*{Asc}\mathbf{g}\subseteq D\left(
\alpha\right)  }}r^{\left\vert \left\{  g_{1},g_{2},\ldots,g_{n}\right\}
\right\vert }x_{g_{1}}x_{g_{2}}\cdots x_{g_{n}}.
\label{pf.prop.eta.through-x.long.1}%
\end{align}

Now, let $\mathbf{g}=\left(  g_{1}\leq g_{2}\leq\cdots\leq g_{n}\right)  $ be
a weakly increasing $n$-tuple of positive integers. We shall prove that the
statement \textquotedblleft$\operatorname*{Asc}\mathbf{g}\subseteq D\left(
\alpha\right)  $\textquotedblright\ is equivalent to the statement
\textquotedblleft$g_{i}=g_{i+1}$ for each $i\in\left[  n-1\right]  \setminus
D\left(  \alpha\right)  $\textquotedblright. Indeed, it is easy to show that
the former statement implies the latter statement\footnote{\textit{Proof.}
Assume that the former statement (i.e., the statement \textquotedblleft%
$\operatorname*{Asc}\mathbf{g}\subseteq D\left(  \alpha\right)  $%
\textquotedblright) holds. We must prove that the latter statement (i.e., the
statement \textquotedblleft$g_{i}=g_{i+1}$ for each $i\in\left[  n-1\right]
\setminus D\left(  \alpha\right)  $\textquotedblright) also holds.
\par
Let $i\in\left[  n-1\right]  \setminus D\left(  \alpha\right)  $ be arbitrary.
Thus, $i\in\left[  n-1\right]  $ and $i\notin D\left(  \alpha\right)  $. We
have $\operatorname*{Asc}\mathbf{g}\subseteq D\left(  \alpha\right)  $ (since
we assumed that the statement \textquotedblleft$\operatorname*{Asc}%
\mathbf{g}\subseteq D\left(  \alpha\right)  $\textquotedblright\ holds). Thus,
we cannot have $i\in\operatorname*{Asc}\mathbf{g}$ (since $i\in
\operatorname*{Asc}\mathbf{g}$ would imply $i\in\operatorname*{Asc}%
\mathbf{g}\subseteq D\left(  \alpha\right)  $, which would contradict $i\notin
D\left(  \alpha\right)  $).
\par
From $i\in\left[  n-1\right]  $, we obtain $g_{i}\leq g_{i+1}$ (since
$g_{1}\leq g_{2}\leq\cdots\leq g_{n}$). However, if we had $g_{i}<g_{i+1}$,
then $i$ would be a $j\in\left[  n-1\right]  $ satisfying $g_{j}<g_{j+1}$, and
thus would belong to $\operatorname*{Asc}\mathbf{g}$ (since
$\operatorname*{Asc}\mathbf{g}$ is defined as the set of all $j\in\left[
n-1\right]  $ satisfying $g_{j}<g_{j+1}$); but this would contradict the fact
that we cannot have $i\in\operatorname*{Asc}\mathbf{g}$. Thus, we cannot have
$g_{i}<g_{i+1}$. Hence, we must have $g_{i}\geq g_{i+1}$. Combining this with
$g_{i}\leq g_{i+1}$, we find $g_{i}=g_{i+1}$.
\par
Forget that we fixed $i$. We thus have shown that $g_{i}=g_{i+1}$ for each
$i\in\left[  n-1\right]  \setminus D\left(  \alpha\right)  $. In other words,
we have proved that the statement \textquotedblleft$g_{i}=g_{i+1}$ for each
$i\in\left[  n-1\right]  \setminus D\left(  \alpha\right)  $\textquotedblright%
\ holds. Qed.}, and it is also easy to show that the latter statement implies
the former statement\footnote{\textit{Proof.} Assume that the latter statement
(i.e., the statement \textquotedblleft$g_{i}=g_{i+1}$ for each $i\in\left[
n-1\right]  \setminus D\left(  \alpha\right)  $\textquotedblright) holds. We
must prove that the former statement (i.e., the statement \textquotedblleft%
$\operatorname*{Asc}\mathbf{g}\subseteq D\left(  \alpha\right)  $%
\textquotedblright) also holds.
\par
Let $k\in\operatorname*{Asc}\mathbf{g}$. We shall show that $k\in D\left(
\alpha\right)  $. Indeed, assume the contrary. Thus, $k\notin D\left(
\alpha\right)  $. Combining $k\in\operatorname*{Asc}\mathbf{g}\subseteq\left[
n-1\right]  $ with $k\notin D\left(  \alpha\right)  $, we find $k\in\left[
n-1\right]  \setminus D\left(  \alpha\right)  $.
\par
However, we assumed that the statement \textquotedblleft$g_{i}=g_{i+1}$ for
each $i\in\left[  n-1\right]  \setminus D\left(  \alpha\right)  $%
\textquotedblright\ holds. Applying this statement to $i=k$, we obtain
$g_{k}=g_{k+1}$ (since $k\in\left[  n-1\right]  \setminus D\left(
\alpha\right)  $). However, $\operatorname*{Asc}\mathbf{g}$ is defined as the
set of all $j\in\left[  n-1\right]  $ satisfying $g_{j}<g_{j+1}$. Hence, $k$
is such a $j$ (since $k\in\operatorname*{Asc}\mathbf{g}$). In other words,
$k\in\left[  n-1\right]  $ and $g_{k}<g_{k+1}$. But $g_{k}<g_{k+1}$ clearly
contradicts $g_{k}=g_{k+1}$. This contradiction shows that our assumption was
false. Hence, $k\in D\left(  \alpha\right)  $ is proved.
\par
Forget that we fixed $k$. We thus have shown that $k\in D\left(
\alpha\right)  $ for each $k\in\operatorname*{Asc}\mathbf{g}$. In other words,
$\operatorname*{Asc}\mathbf{g}\subseteq D\left(  \alpha\right)  $. Hence, we
have proved that the statement \textquotedblleft$\operatorname*{Asc}%
\mathbf{g}\subseteq D\left(  \alpha\right)  $\textquotedblright\ holds. Qed.}.
Hence, these two statements are equivalent.

Forget that we fixed $\mathbf{g}$. We thus have shown that for every weakly
increasing $n$-tuple $\mathbf{g}=\left(  g_{1}\leq g_{2}\leq\cdots\leq
g_{n}\right)  $ of positive integers, the statement \textquotedblleft%
$\operatorname*{Asc}\mathbf{g}\subseteq D\left(  \alpha\right)  $%
\textquotedblright\ is equivalent to the statement \textquotedblleft%
$g_{i}=g_{i+1}$ for each $i\in\left[  n-1\right]  \setminus D\left(
\alpha\right)  $\textquotedblright. Therefore, the summation sign%
\[
\sum_{\substack{\mathbf{g}=\left(  g_{1}\leq g_{2}\leq\cdots\leq g_{n}\right)
;\\\operatorname*{Asc}\mathbf{g}\subseteq D\left(  \alpha\right)  }}\text{ can
be rewritten as }\sum_{\substack{\mathbf{g}=\left(  g_{1}\leq g_{2}\leq
\cdots\leq g_{n}\right)  ;\\g_{i}=g_{i+1}\text{ for each }i\in\left[
n-1\right]  \setminus D\left(  \alpha\right)  }}\text{.}%
\]
Hence, we can rewrite (\ref{pf.prop.eta.through-x.long.1}) as%
\begin{align*}
\eta_{\alpha}^{\left(  q\right)  }  &  =\sum_{\substack{\mathbf{g}=\left(
g_{1}\leq g_{2}\leq\cdots\leq g_{n}\right)  ;\\g_{i}=g_{i+1}\text{ for each
}i\in\left[  n-1\right]  \setminus D\left(  \alpha\right)  }}r^{\left\vert
\left\{  g_{1},g_{2},\ldots,g_{n}\right\}  \right\vert }x_{g_{1}}x_{g_{2}%
}\cdots x_{g_{n}}\\
&  =\sum_{\substack{g_{1}\leq g_{2}\leq\cdots\leq g_{n};\\g_{i}=g_{i+1}\text{
for each }i\in\left[  n-1\right]  \setminus D\left(  \alpha\right)
}}r^{\left\vert \left\{  g_{1},g_{2},\ldots,g_{n}\right\}  \right\vert
}x_{g_{1}}x_{g_{2}}\cdots x_{g_{n}}%
\end{align*}
(here, we have removed the unused label $\mathbf{g}$ for the summation index).
This proves Proposition \ref{prop.eta.through-x}.
\end{proof}
\end{verlong}

\begin{proposition}
\label{prop.eta.through-x2}Let $\alpha=\left(  \alpha_{1},\alpha_{2}%
,\ldots,\alpha_{\ell}\right)  \in\operatorname*{Comp}$. Then,%
\begin{equation}
\eta_{\alpha}^{\left(  q\right)  }=\sum_{i_{1}\leq i_{2}\leq\cdots\leq
i_{\ell}}r^{\left\vert \left\{  i_{1},i_{2},\ldots,i_{\ell}\right\}
\right\vert }x_{i_{1}}^{\alpha_{1}}x_{i_{2}}^{\alpha_{2}}\cdots x_{i_{\ell}%
}^{\alpha_{\ell}}, \label{eq.prop.eta.through-x2.eq}%
\end{equation}
where the sum is over all weakly increasing $\ell$-tuples $\left(  i_{1}\leq
i_{2}\leq\cdots\leq i_{\ell}\right)  $ of positive integers.
\end{proposition}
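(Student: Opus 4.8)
The plan is to derive this directly from Proposition \ref{prop.eta.through-x}, since that result already expresses $\eta_{\alpha}^{\left(  q\right)  }$ as a sum over weakly increasing $n$-tuples (where $n=\left\vert \alpha\right\vert $); all that remains is to re-index this sum by the $\ell$ ``block values'' of such a tuple. Write $\alpha=\left(  \alpha_{1},\alpha_{2},\ldots,\alpha_{\ell}\right)  $ with $\ell=\ell\left(  \alpha\right)  $, so that $\alpha\in\operatorname*{Comp}\nolimits_{n}$ and $D\left(  \alpha\right)  =\left\{  \alpha_{1},\ \alpha_{1}+\alpha_{2},\ \ldots,\ \alpha_{1}+\alpha_{2}+\cdots+\alpha_{\ell-1}\right\}  $. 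These $\ell-1$ partial sums subdivide $\left[  n\right]  $ into $\ell$ consecutive blocks of lengths $\alpha_{1},\alpha_{2},\ldots,\alpha_{\ell}$. The positions $i\in\left[  n-1\right]  $ lying strictly inside a single block are exactly the elements of $\left[  n-1\right]  \setminus D\left(  \alpha\right)  $; hence the constraint ``$g_{i}=g_{i+1}$ for each $i\in\left[  n-1\right]  \setminus D\left(  \alpha\right)  $'' appearing in (\ref{eq.prop.eta.through-x.eq}) says precisely that the weakly increasing $n$-tuple $\mathbf{g}=\left(  g_{1}\leq g_{2}\leq\cdots\leq g_{n}\right)  $ is constant on each of these $\ell$ blocks.

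Next I would set up the re-indexing bijection. Given such a $\mathbf{g}$, let $i_{k}$ denote the common value of the entries of $\mathbf{g}$ on the $k$-th block, for each $k\in\left[  \ell\right]  $. Since $\mathbf{g}$ is weakly increasing, this yields $i_{1}\leq i_{2}\leq\cdots\leq i_{\ell}$. Conversely, any weakly increasing $\ell$-tuple $\left(  i_{1}\leq i_{2}\leq\cdots\leq i_{\ell}\right)  $ of positive integers determines a unique weakly increasing $n$-tuple $\mathbf{g}$ with the required block-constancy, by placing $\alpha_{k}$ copies of $i_{k}$ in the $k$-th block. These two maps are mutually inverse, so summing over the admissible $n$-tuples $\mathbf{g}$ is the same as summing over all weakly increasing $\ell$-tuples $\left(  i_{1}\leq i_{2}\leq\cdots\leq i_{\ell}\right)  $.

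It then remains only to track how the summand transforms under this bijection. Because the $k$-th block consists of $\alpha_{k}$ positions all carrying the index $i_{k}$, the monomial rewrites as
\[
x_{g_{1}}x_{g_{2}}\cdots x_{g_{n}}=x_{i_{1}}^{\alpha_{1}}x_{i_{2}}^{\alpha_{2}}\cdots x_{i_{\ell}}^{\alpha_{\ell}},
\]
while the set of distinct entries satisfies $\left\{  g_{1},g_{2},\ldots,g_{n}\right\}  =\left\{  i_{1},i_{2},\ldots,i_{\ell}\right\}  $, so that the exponents match: $r^{|\{g_{1},\ldots,g_{n}\}|}=r^{|\{i_{1},\ldots,i_{\ell}\}|}$. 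Substituting these two identities into the right-hand side of (\ref{eq.prop.eta.through-x.eq}) produces exactly (\ref{eq.prop.eta.through-x2.eq}), which proves the proposition.

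I do not anticipate any real obstacle here, as the argument is essentially careful bookkeeping of blocks; the one delicate point is the boundary behavior, namely confirming that $D\left(  \alpha\right)  $ consists precisely of the between-block positions (so that the omitted equality constraints are exactly those which would otherwise force adjacent blocks to share a value), together with the degenerate case $\alpha=\varnothing$, where $n=\ell=0$ and both sides collapse to the empty monomial $1$. Alternatively, one could avoid invoking Proposition \ref{prop.eta.through-x} and instead argue straight from the definition (\ref{eq.def.etaalpha.def}), applying the same partial-sum/block analysis to each $M_{\beta}$; but routing through Proposition \ref{prop.eta.through-x} is the shorter path.
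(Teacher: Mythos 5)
Your proposal is correct and follows essentially the same route as the paper's own proof: both derive the formula from Proposition \ref{prop.eta.through-x} by the block re-indexing bijection (the paper phrases it via the "essential" entries $g_{s_{1}},g_{s_{2}},\ldots,g_{s_{\ell}}$ at the partial sums $s_{k}=\alpha_{1}+\alpha_{2}+\cdots+\alpha_{k}$, which are exactly your block values $i_{k}$), and both track the same two identities $x_{g_{1}}x_{g_{2}}\cdots x_{g_{n}}=x_{i_{1}}^{\alpha_{1}}x_{i_{2}}^{\alpha_{2}}\cdots x_{i_{\ell}}^{\alpha_{\ell}}$ and $\left\{  g_{1},g_{2},\ldots,g_{n}\right\}  =\left\{  i_{1},i_{2},\ldots,i_{\ell}\right\}  $ to conclude that the two sums coincide.
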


\begin{vershort}

\begin{proof}
Let $n=\left\vert \alpha\right\vert $, so that $\alpha\in\operatorname*{Comp}%
\nolimits_{n}$. Clearly, it suffices to show that the right hand sides of
(\ref{eq.prop.eta.through-x.eq}) and (\ref{eq.prop.eta.through-x2.eq}) are identical.

We claim that the monomials $x_{g_{1}}x_{g_{2}}\cdots x_{g_{n}}$ for all
$n$-tuples\footnote{We are no longer saying ``weakly increasing'', since this
is automatically implied by the notation $\left(  g_{1}\leq g_{2}\leq
\cdots\leq g_{n}\right)  $.} $\left(  g_{1}\leq g_{2}\leq\cdots\leq
g_{n}\right)  $ satisfying $\left(  g_{i}=g_{i+1}\text{ for each }i\in\left[
n-1\right]  \setminus D\left(  \alpha\right)  \right)  $ are precisely the
monomials of the form $x_{i_{1}}^{\alpha_{1}}x_{i_{2}}^{\alpha_{2}}\cdots
x_{i_{\ell}}^{\alpha_{\ell}}$ for all $\ell$-tuples $\left(  i_{1}\leq
i_{2}\leq\cdots\leq i_{\ell}\right)  $.

Indeed, let us set%
\[
s_{k}:=\alpha_{1}+\alpha_{2}+\cdots+\alpha_{k}\ \ \ \ \ \ \ \ \ \ \text{for
each }k\in\left[  \ell\right]  .
\]
Thus, $D\left(  \alpha\right)  =\left\{  s_{1},s_{2},\ldots,s_{\ell
-1}\right\}  $ and $s_{\ell}=\left\vert \alpha\right\vert =n$. Hence, an
$n$-tuple $\left(  g_{1}\leq g_{2}\leq\cdots\leq g_{n}\right)  $ satisfies%
\[
\left(  g_{i}=g_{i+1}\text{ for each }i\in\left[  n-1\right]  \setminus
D\left(  \alpha\right)  \right)
\]
if and only if it satisfies%
\begin{align*}
&  \ g_{1}=g_{2}=\cdots=g_{s_{1}}\\
\leq &  \ g_{s_{1}+1}=g_{s_{1}+2}=\cdots=g_{s_{2}}\\
\leq &  \ g_{s_{2}+1}=g_{s_{2}+2}=\cdots=g_{s_{3}}\\
\leq &  \ \cdots\\
\leq &  \ g_{s_{\ell-1}+1}=g_{s_{\ell-1}+2}=\cdots=g_{s_{\ell}}%
\end{align*}
(that is, if its entries can grow only at the positions $s_{1},s_{2}%
,\ldots,s_{\ell-1}$). Therefore, such an $n$-tuple is uniquely determined by
its \textquotedblleft essential\textquotedblright\ entries $g_{s_{1}}%
,g_{s_{2}},\ldots,g_{s_{\ell}}$. More precisely, there is a bijection from the
set%
\[
\left\{  n\text{-tuples }\left(  g_{1}\leq g_{2}\leq\cdots\leq g_{n}\right)
\text{ satisfying }\left(  g_{i}=g_{i+1}\text{ for each }i\in\left[
n-1\right]  \setminus D\left(  \alpha\right)  \right)  \right\}
\]
to the set%
\[
\left\{  \ell\text{-tuples }\left(  i_{1}\leq i_{2}\leq\cdots\leq i_{\ell
}\right)  \right\}  ,
\]
which sends each $n$-tuple $\left(  g_{1}\leq g_{2}\leq\cdots\leq
g_{n}\right)  $ from the former set to its \textquotedblleft
subword\textquotedblright\ $\left(  g_{s_{1}}\leq g_{s_{2}}\leq\cdots\leq
g_{s_{\ell}}\right)  $\ \ \ \ \footnote{This bijection is precisely the
bijection $\Phi$ from \cite[detailed version, proof of Proposition
10.69]{Grinbe15}. Indeed, our set%
\[
\left\{  n\text{-tuples }\left(  g_{1}\leq g_{2}\leq\cdots\leq g_{n}\right)
\text{ satisfying }\left(  g_{i}=g_{i+1}\text{ for each }i\in\left[
n-1\right]  \setminus D\left(  \alpha\right)  \right)  \right\}
\]
is the set $\mathcal{I}$ from \cite[detailed version, proof of Proposition
10.69]{Grinbe15} (since the condition \textquotedblleft$g_{i}=g_{i+1}$ for
each $i\in\left[  n-1\right]  \setminus D\left(  \alpha\right)  $%
\textquotedblright\ can be rewritten as \textquotedblleft$\left\{  j\in\left[
n-1\right]  \ \mid\ g_{j}<g_{j+1}\right\}  \subseteq D\left(  \alpha\right)
$\textquotedblright), whereas our set%
\[
\left\{  \ell\text{-tuples }\left(  i_{1}\leq i_{2}\leq\cdots\leq i_{\ell
}\right)  \right\}
\]
is the set $\mathcal{J}$ from \cite[detailed version, proof of Proposition
10.69]{Grinbe15}.}. This bijection has the property that
\[
x_{g_{1}}x_{g_{2}}\cdots x_{g_{n}}=x_{i_{1}}^{\alpha_{1}}x_{i_{2}}^{\alpha
_{2}}\cdots x_{i_{\ell}}^{\alpha_{\ell}}\ \ \ \ \ \ \ \ \ \ \text{and}%
\ \ \ \ \ \ \ \ \ \ \left\{  g_{1},g_{2},\ldots,g_{n}\right\}  =\left\{
i_{1},i_{2},\ldots,i_{\ell}\right\}
\]
whenever it sends an $n$-tuple $\left(  g_{1}\leq g_{2}\leq\cdots\leq
g_{n}\right)  $ to the $\ell$-tuple $\left(  i_{1}\leq i_{2}\leq\cdots\leq
i_{\ell}\right)  =\left(  g_{s_{1}}\leq g_{s_{2}}\leq\cdots\leq g_{s_{\ell}%
}\right)  $ (because the $n$-tuple $\left(  g_{1}\leq g_{2}\leq\cdots\leq
g_{n}\right)  $ consists of $\alpha_{1}$ copies of $i_{1}$, followed by
$\alpha_{2}$ copies of $i_{2}$, followed by $\alpha_{3}$ copies of $i_{3}$,
and so on). This shows that the right hand sides of the equalities
(\ref{eq.prop.eta.through-x.eq}) and (\ref{eq.prop.eta.through-x2.eq}) are
identical. Hence, (\ref{eq.prop.eta.through-x2.eq}) follows from
(\ref{eq.prop.eta.through-x.eq}). This proves Proposition
\ref{prop.eta.through-x2}.
\end{proof}
\end{vershort}

\begin{verlong}

\begin{proof}
Let $n=\left\vert \alpha\right\vert $. Thus, $\alpha$ is a composition of $n$.
In other words, $\alpha\in\operatorname*{Comp}\nolimits_{n}$.

If $\mathbf{g}=\left(  g_{1}\leq g_{2}\leq\cdots\leq g_{n}\right)  $ is a
weakly increasing tuple of positive integers, then we let $\operatorname*{Asc}%
\mathbf{g}$ denote the set of all $j\in\left[  n-1\right]  $ satisfying
$g_{j}<g_{j+1}$. Thus, if $\mathbf{g}=\left(  g_{1}\leq g_{2}\leq\cdots\leq
g_{n}\right)  $ is a weakly increasing tuple of positive integers, then
\begin{equation}
\operatorname*{Asc}\mathbf{g}=\left\{  j\in\left[  n-1\right]  \ \mid
\ g_{j}<g_{j+1}\right\}  . \label{pf.prop.eta.through-x2.long.Asc=}%
\end{equation}

Now, the equality (\ref{pf.prop.eta.through-x.long.1}) (which we proved above)
yields%
\begin{equation}
\eta_{\alpha}^{\left(  q\right)  }=\sum_{\substack{\mathbf{g}=\left(
g_{1}\leq g_{2}\leq\cdots\leq g_{n}\right)  ;\\\operatorname*{Asc}%
\mathbf{g}\subseteq D\left(  \alpha\right)  }}r^{\left\vert \left\{
g_{1},g_{2},\ldots,g_{n}\right\}  \right\vert }x_{g_{1}}x_{g_{2}}\cdots
x_{g_{n}}. \label{pf.prop.eta.through-x2.long.1}%
\end{equation}

Let $\mathcal{J}$ denote the set of all length-$\ell$ weakly increasing
sequences of positive integers. In other words,%
\[
\mathcal{J}=\left\{  \left(  i_{1},i_{2},\ldots,i_{\ell}\right)  \in\left\{
1,2,3,\ldots\right\}  ^{\ell}\ \mid\ i_{1}\leq i_{2}\leq\cdots\leq i_{\ell
}\right\}  .
\]

Define a set $\mathcal{I}$ by%
\begin{align*}
\mathcal{I}  &  =\left\{  \left(  i_{1},i_{2},\ldots,i_{n}\right)  \in\left\{
1,2,3,\ldots\right\}  ^{n}\ \mid\ i_{1}\leq i_{2}\leq\cdots\leq i_{n}\right.
\\
&  \ \ \ \ \ \ \ \ \ \ \left.  \text{and }\left\{  j\in\left[  n-1\right]
\ \mid\ i_{j}<i_{j+1}\right\}  \subseteq D\left(  \alpha\right)  \right\}  .
\end{align*}
Thus, we have the following equality between summation signs:%
\begin{equation}
\sum_{\substack{i_{1}\leq i_{2}\leq\cdots\leq i_{n};\\\left\{  j\in\left[
n-1\right]  \ \mid\ i_{j}<i_{j+1}\right\}  \subseteq D\left(  \alpha\right)
}}=\sum_{\left(  i_{1},i_{2},\ldots,i_{n}\right)  \in\mathcal{I}}.
\label{pf.prop.eta.through-x2.long.fact1}%
\end{equation}

For every $i\in\left\{  0,1,\ldots,\ell\right\}  $, define a nonnegative
integer $s_{i}$ by
\[
s_{i}=\alpha_{1}+\alpha_{2}+\cdots+\alpha_{i}.
\]

The following facts have been proved in \cite[detailed version, proof of
Proposition 10.69]{Grinbe15}:

\begin{enumerate}
\item The map
\[
\mathcal{I}\rightarrow\mathcal{J},\ \ \ \ \ \ \ \ \ \ \left(  i_{1}%
,i_{2},\ldots,i_{n}\right)  \mapsto\left(  i_{s_{1}},i_{s_{2}},\ldots
,i_{s_{\ell}}\right)
\]
is well-defined and is a bijection.

\item For every $\left(  i_{1},i_{2},\ldots,i_{n}\right)  \in\mathcal{I}$, we
have
\begin{equation}
x_{i_{1}}x_{i_{2}}\cdots x_{i_{n}}=x_{i_{s_{1}}}^{\alpha_{1}}x_{i_{s_{2}}%
}^{\alpha_{2}}\cdots x_{i_{s_{\ell}}}^{\alpha_{\ell}}.
\label{pf.prop.eta.through-x2.long.fact3}%
\end{equation}

\end{enumerate}

Furthermore, it is easy to see that every $\left(  i_{1},i_{2},\ldots
,i_{n}\right)  \in\mathcal{I}$ satisfies%
\begin{equation}
\left\{  i_{s_{1}},i_{s_{2}},\ldots,i_{s_{\ell}}\right\}  =\left\{
i_{1},i_{2},\ldots,i_{n}\right\}  \label{pf.prop.eta.through-x2.long.set}%
\end{equation}
\footnote{\textit{Proof of (\ref{pf.prop.eta.through-x2.long.set}):} Let
$\left(  i_{1},i_{2},\ldots,i_{n}\right)  \in\mathcal{I}$. Let $j\in\left[
n\right]  $.
\par
Define a map $f:\left[  n\right]  \rightarrow\left[  \ell\right]  $ as in
\cite[detailed version, Lemma 10.7]{Grinbe15}. Then, the equality
\cite[detailed version, (101)]{Grinbe15} (which is proved in \cite{Grinbe15})
says that%
\[
i_{s_{f\left(  k\right)  }}=i_{k}\ \ \ \ \ \ \ \ \ \ \text{for every }%
k\in\left[  n\right]  .
\]
Applying this to $k=j$, we find $i_{s_{f\left(  j\right)  }}=i_{j}$. Hence,%
\[
i_{j}=i_{s_{f\left(  j\right)  }}\in\left\{  i_{s_{1}},i_{s_{2}}%
,\ldots,i_{s_{\ell}}\right\}  \ \ \ \ \ \ \ \ \ \ \left(  \text{since
}f\left(  j\right)  \in\left[  \ell\right]  =\left\{  1,2,\ldots,\ell\right\}
\right)  .
\]
\par
Forget that we fixed $j$. We thus have shown that $i_{j}\in\left\{  i_{s_{1}%
},i_{s_{2}},\ldots,i_{s_{\ell}}\right\}  $ for each $j\in\left[  n\right]  $.
In other words, all $n$ elements $i_{1},i_{2},\ldots,i_{n}$ belong to
$\left\{  i_{s_{1}},i_{s_{2}},\ldots,i_{s_{\ell}}\right\}  $. In other words,
$\left\{  i_{1},i_{2},\ldots,i_{n}\right\}  \subseteq\left\{  i_{s_{1}%
},i_{s_{2}},\ldots,i_{s_{\ell}}\right\}  $. Combining this relation with
$\left\{  i_{s_{1}},i_{s_{2}},\ldots,i_{s_{\ell}}\right\}  \subseteq\left\{
i_{1},i_{2},\ldots,i_{n}\right\}  $ (which is obvious, since each of the
$\ell$ elements $i_{s_{1}},i_{s_{2}},\ldots,i_{s_{\ell}}$ belongs to $\left\{
i_{1},i_{2},\ldots,i_{n}\right\}  $), we obtain $\left\{  i_{s_{1}},i_{s_{2}%
},\ldots,i_{s_{\ell}}\right\}  =\left\{  i_{1},i_{2},\ldots,i_{n}\right\}  $.
Thus, (\ref{pf.prop.eta.through-x2.long.set}) is proved.}.

Now,
\begin{align*}
&  \underbrace{\sum_{i_{1}\leq i_{2}\leq\cdots\leq i_{\ell}}}_{\substack{=\sum
_{\substack{\left(  i_{1},i_{2},\ldots,i_{\ell}\right)  \in\left\{
1,2,3,\ldots\right\}  ^{\ell};\\i_{1}\leq i_{2}\leq\cdots\leq i_{\ell}}%
}=\sum_{\left(  i_{1},i_{2},\ldots,i_{\ell}\right)  \in\mathcal{J}%
}\\\text{(since }\left\{  \left(  i_{1},i_{2},\ldots,i_{\ell}\right)
\in\left\{  1,2,3,\ldots\right\}  ^{\ell}\ \mid\ i_{1}\leq i_{2}\leq\cdots\leq
i_{\ell}\right\}  =\mathcal{J}\text{)}}}r^{\left\vert \left\{  i_{1}%
,i_{2},\ldots,i_{\ell}\right\}  \right\vert }x_{i_{1}}^{\alpha_{1}}x_{i_{2}%
}^{\alpha_{2}}\cdots x_{i_{\ell}}^{\alpha_{\ell}}\\
&  =\sum_{\left(  i_{1},i_{2},\ldots,i_{\ell}\right)  \in\mathcal{J}%
}r^{\left\vert \left\{  i_{1},i_{2},\ldots,i_{\ell}\right\}  \right\vert
}x_{i_{1}}^{\alpha_{1}}x_{i_{2}}^{\alpha_{2}}\cdots x_{i_{\ell}}^{\alpha
_{\ell}}\\
&  =\sum_{\left(  j_{1},j_{2},\ldots,j_{\ell}\right)  \in\mathcal{J}%
}r^{\left\vert \left\{  j_{1},j_{2},\ldots,j_{\ell}\right\}  \right\vert
}x_{j_{1}}^{\alpha_{1}}x_{j_{2}}^{\alpha_{2}}\cdots x_{j_{\ell}}^{\alpha
_{\ell}}\\
&  \ \ \ \ \ \ \ \ \ \ \ \ \ \ \ \ \ \ \ \ \left(
\begin{array}
[c]{c}%
\text{here, we have renamed the summation}\\
\text{index }\left(  i_{1},i_{2},\ldots,i_{\ell}\right)  \text{ as }\left(
j_{1},j_{2},\ldots,j_{\ell}\right)
\end{array}
\right) \\
&  =\sum_{\left(  i_{1},i_{2},\ldots,i_{n}\right)  \in\mathcal{I}%
}r^{\left\vert \left\{  i_{s_{1}},i_{s_{2}},\ldots,i_{s_{\ell}}\right\}
\right\vert }x_{i_{s_{1}}}^{\alpha_{1}}x_{i_{s_{2}}}^{\alpha_{2}}\cdots
x_{i_{s_{\ell}}}^{\alpha_{\ell}}%
\end{align*}
(here, we have substituted $\left(  i_{s_{1}},i_{s_{2}},\ldots,i_{s_{\ell}%
}\right)  $ for $\left(  j_{1},j_{2},\ldots,j_{\ell}\right)  $ in the sum,
since the map $\mathcal{I}\rightarrow\mathcal{J},\ \left(  i_{1},i_{2}%
,\ldots,i_{n}\right)  \mapsto\left(  i_{s_{1}},i_{s_{2}},\ldots,i_{s_{\ell}%
}\right)  $ is a bijection). Thus,%
\begin{align*}
&  \sum_{i_{1}\leq i_{2}\leq\cdots\leq i_{\ell}}r^{\left\vert \left\{
i_{1},i_{2},\ldots,i_{\ell}\right\}  \right\vert }x_{i_{1}}^{\alpha_{1}%
}x_{i_{2}}^{\alpha_{2}}\cdots x_{i_{\ell}}^{\alpha_{\ell}}\\
&  =\sum_{\left(  i_{1},i_{2},\ldots,i_{n}\right)  \in\mathcal{I}%
}\underbrace{r^{\left\vert \left\{  i_{s_{1}},i_{s_{2}},\ldots,i_{s_{\ell}%
}\right\}  \right\vert }}_{\substack{=r^{\left\vert \left\{  i_{1}%
,i_{2},\ldots,i_{n}\right\}  \right\vert }\\\text{(since
(\ref{pf.prop.eta.through-x2.long.set}) shows}\\\text{that }\left\{  i_{s_{1}%
},i_{s_{2}},\ldots,i_{s_{\ell}}\right\}  =\left\{  i_{1},i_{2},\ldots
,i_{n}\right\}  \text{)}}}\underbrace{x_{i_{s_{1}}}^{\alpha_{1}}x_{i_{s_{2}}%
}^{\alpha_{2}}\cdots x_{i_{s_{\ell}}}^{\alpha_{\ell}}}_{\substack{=x_{i_{1}%
}x_{i_{2}}\cdots x_{i_{n}}\\\text{(by (\ref{pf.prop.eta.through-x2.long.fact3}%
))}}}\\
&  =\underbrace{\sum_{\left(  i_{1},i_{2},\ldots,i_{n}\right)  \in\mathcal{I}%
}}_{\substack{=\sum_{\substack{i_{1}\leq i_{2}\leq\cdots\leq i_{n};\\\left\{
j\in\left[  n-1\right]  \ \mid\ i_{j}<i_{j+1}\right\}  \subseteq D\left(
\alpha\right)  }}\\\text{(by (\ref{pf.prop.eta.through-x2.long.fact1}))}%
}}r^{\left\vert \left\{  i_{1},i_{2},\ldots,i_{n}\right\}  \right\vert
}x_{i_{1}}x_{i_{2}}\cdots x_{i_{n}}\\
&  =\sum_{\substack{i_{1}\leq i_{2}\leq\cdots\leq i_{n};\\\left\{  j\in\left[
n-1\right]  \ \mid\ i_{j}<i_{j+1}\right\}  \subseteq D\left(  \alpha\right)
}}r^{\left\vert \left\{  i_{1},i_{2},\ldots,i_{n}\right\}  \right\vert
}x_{i_{1}}x_{i_{2}}\cdots x_{i_{n}}\\
&  =\sum_{\substack{g_{1}\leq g_{2}\leq\cdots\leq g_{n};\\\left\{  j\in\left[
n-1\right]  \ \mid\ g_{j}<g_{j+1}\right\}  \subseteq D\left(  \alpha\right)
}}r^{\left\vert \left\{  g_{1},g_{2},\ldots,g_{n}\right\}  \right\vert
}x_{g_{1}}x_{g_{2}}\cdots x_{g_{n}}\\
&  \ \ \ \ \ \ \ \ \ \ \ \ \ \ \ \ \ \ \ \ \left(
\begin{array}
[c]{c}%
\text{here, we have renamed the summation}\\
\text{index }\left(  i_{1},i_{2},\ldots,i_{n}\right)  \text{ as }\left(
g_{1},g_{2},\ldots,g_{n}\right)
\end{array}
\right) \\
&  =\underbrace{\sum_{\substack{\mathbf{g}=\left(  g_{1}\leq g_{2}\leq
\cdots\leq g_{n}\right)  ;\\\left\{  j\in\left[  n-1\right]  \ \mid
\ g_{j}<g_{j+1}\right\}  \subseteq D\left(  \alpha\right)  }}}%
_{\substack{=\sum_{\substack{\mathbf{g}=\left(  g_{1}\leq g_{2}\leq\cdots\leq
g_{n}\right)  ;\\\operatorname*{Asc}\mathbf{g}\subseteq D\left(
\alpha\right)  }}\\\text{(since (\ref{pf.prop.eta.through-x2.long.Asc=})
yields}\\\text{that }\left\{  j\in\left[  n-1\right]  \ \mid\ g_{j}%
<g_{j+1}\right\}  =\operatorname*{Asc}\mathbf{g}\\\text{for any }%
\mathbf{g}=\left(  g_{1}\leq g_{2}\leq\cdots\leq g_{n}\right)  \text{)}%
}}r^{\left\vert \left\{  g_{1},g_{2},\ldots,g_{n}\right\}  \right\vert
}x_{g_{1}}x_{g_{2}}\cdots x_{g_{n}}\\
&  \ \ \ \ \ \ \ \ \ \ \ \ \ \ \ \ \ \ \ \ \left(
\begin{array}
[c]{c}%
\text{here, we have introduced the additional}\\
\text{notation }\mathbf{g}\text{ for the summation index }\left(  g_{1}%
,g_{2},\ldots,g_{n}\right)
\end{array}
\right) \\
&  =\sum_{\substack{\mathbf{g}=\left(  g_{1}\leq g_{2}\leq\cdots\leq
g_{n}\right)  ;\\\operatorname*{Asc}\mathbf{g}\subseteq D\left(
\alpha\right)  }}r^{\left\vert \left\{  g_{1},g_{2},\ldots,g_{n}\right\}
\right\vert }x_{g_{1}}x_{g_{2}}\cdots x_{g_{n}}=\eta_{\alpha}^{\left(
q\right)  }%
\end{align*}
(by (\ref{pf.prop.eta.through-x2.long.1})). This proves Proposition
\ref{prop.eta.through-x2}.
\end{proof}
\end{verlong}

\subsection{The $\eta_{\alpha}^{\left(  q\right)  }$ as a basis}

The equality (\ref{eq.def.etaalpha.def}) writes each enriched $q$-monomial
function $\eta_{\alpha}^{\left(  q\right)  }$ as a $\mathbf{k}$-linear
combination of $M_{\beta}$'s. Conversely, we can expand each monomial
quasisymmetric function $M_{\beta}$ as a $\mathbf{k}$-linear combination of
$\eta_{\alpha}^{\left(  q\right)  }$'s, at least after multiplying it by
$r^{\ell\left(  \beta\right)  }$:

\begin{proposition}
\label{prop.eta.M-through-eta}Let $n\in\mathbb{N}$. Let $\beta\in
\operatorname*{Comp}\nolimits_{n}$ be a composition. Then,%
\[
r^{\ell\left(  \beta\right)  }M_{\beta}=\sum_{\substack{\alpha\in
\operatorname*{Comp}\nolimits_{n};\\D\left(  \alpha\right)  \subseteq D\left(
\beta\right)  }}\left(  -1\right)  ^{\ell\left(  \beta\right)  -\ell\left(
\alpha\right)  }\eta_{\alpha}^{\left(  q\right)  }.
\]

\end{proposition}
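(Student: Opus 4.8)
The plan is to recognize the defining equation (\ref{eq.def.etaalpha.def}) as a ``summation over an order ideal'' relation in the Boolean lattice $\mathcal{P}\left([n-1]\right)$ and to invert it by inclusion--exclusion. The bijection $D:\operatorname*{Comp}\nolimits_{n}\to\mathcal{P}\left([n-1]\right)$ from Definition \ref{def.comps.D-comp} lets me translate each subset condition ``$D\left(\beta\right)\subseteq D\left(\alpha\right)$'' directly into a containment of subsets of $[n-1]$, which is the form in which the inversion is cleanest.

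Concretely, I would substitute the definition (\ref{eq.def.etaalpha.def}) of $\eta_{\alpha}^{\left(q\right)}$ into the right-hand side of the claim and interchange the two sums. After the substitution, the right-hand side becomes
\[
\sum_{\substack{\alpha\in\operatorname*{Comp}\nolimits_{n};\\D\left(\alpha\right)\subseteq D\left(\beta\right)}}\left(-1\right)^{\ell\left(\beta\right)-\ell\left(\alpha\right)}\sum_{\substack{\gamma\in\operatorname*{Comp}\nolimits_{n};\\D\left(\gamma\right)\subseteq D\left(\alpha\right)}}r^{\ell\left(\gamma\right)}M_{\gamma}.
\]
Collecting, for each $\gamma\in\operatorname*{Comp}\nolimits_{n}$ with $D\left(\gamma\right)\subseteq D\left(\beta\right)$, the coefficient of the (linearly independent) term $r^{\ell\left(\gamma\right)}M_{\gamma}$ reduces the problem to showing the purely combinatorial identity
\[
\sum_{\substack{\alpha\in\operatorname*{Comp}\nolimits_{n};\\D\left(\gamma\right)\subseteq D\left(\alpha\right)\subseteq D\left(\beta\right)}}\left(-1\right)^{\ell\left(\beta\right)-\ell\left(\alpha\right)}=\begin{cases}1,&\text{if }\gamma=\beta;\\0,&\text{otherwise.}\end{cases}
\]

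To evaluate this inner sum I would re-index the compositions $\alpha$ by their partial-sum sets $S:=D\left(\alpha\right)$, which (since $D$ is a bijection) range over exactly the subsets $S$ with $D\left(\gamma\right)\subseteq S\subseteq D\left(\beta\right)$. For $n>0$ every such $\alpha$ satisfies $\ell\left(\alpha\right)=\left\vert S\right\vert+1$ (because $\left\vert D\left(\alpha\right)\right\vert=\ell\left(\alpha\right)-1$ for compositions of positive size), so $\left(-1\right)^{\ell\left(\beta\right)-\ell\left(\alpha\right)}=\left(-1\right)^{\left\vert D\left(\beta\right)\right\vert-\left\vert S\right\vert}$. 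Parametrizing the admissible $S$ by their intersection with $D\left(\beta\right)\setminus D\left(\gamma\right)$ and grouping by the size $k$ of that intersection, with $m:=\left\vert D\left(\beta\right)\setminus D\left(\gamma\right)\right\vert$, the sum becomes $\sum_{k=0}^{m}\binom{m}{k}\left(-1\right)^{m-k}=\left(1-1\right)^{m}$, which vanishes unless $m=0$, i.e.\ unless $D\left(\gamma\right)=D\left(\beta\right)$, i.e.\ $\gamma=\beta$; in that single case it equals $1$. This is the desired evaluation and completes the proof for $n>0$. The case $n=0$ is immediate, since then $\beta=\gamma=\varnothing$ and both sides equal $M_{\varnothing}=1$.

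I do not expect a serious obstacle: the statement is a clean instance of M\"{o}bius inversion on the Boolean lattice $\mathcal{P}\left([n-1]\right)$, so one could alternatively just invoke the inversion principle directly (cf.\ the analogous $M$-versus-$L$ inversion and \cite[Proposition 5.2.8]{GriRei}) instead of running the binomial computation by hand. The only points that genuinely require care are the bookkeeping of the sign exponent --- making sure $\ell\left(\beta\right)-\ell\left(\alpha\right)$ equals the cardinality difference $\left\vert D\left(\beta\right)\right\vert-\left\vert D\left(\alpha\right)\right\vert$, which rests on $\left\vert D\left(\alpha\right)\right\vert=\ell\left(\alpha\right)-1$ and thus holds for all $\alpha$ of positive size --- and the degenerate case $n=0$, where this cardinality identity fails but the claim is trivial.
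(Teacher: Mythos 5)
Your proof is correct and follows essentially the same route as the paper's: substitute the definition of $\eta_{\alpha}^{\left(  q\right)  }$ into the right-hand side, interchange the sums, and evaluate the inner alternating sum over the Boolean interval between $D\left(  \gamma\right)  $ and $D\left(  \beta\right)  $, which vanishes unless $\gamma=\beta$. The paper packages that evaluation as a standalone fact (Lemma \ref{lem.altsum.1}, proved by complementation rather than your direct binomial expansion) and handles the signs via Lemma \ref{lem.comps.l-vs-size} \textbf{(b)}, which works uniformly in $n$ and thus avoids your separate (but correctly handled) case $n=0$; these differences are cosmetic.
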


For the proof of this proposition (and some later ones as well), we will need
the \emph{Iverson bracket notation}:

\begin{convention}
\label{conv.iverson}If $\mathcal{A}$ is a logical statement, then $\left[
\mathcal{A}\right]  $ shall denote the truth value of $\mathcal{A}$ (that is,
the number $1$ if $\mathcal{A}$ is true, and the number $0$ if $\mathcal{A}$
is false).
\end{convention}

For example, $\left[  2+2=4\right]  =1$ and $\left[  2+2=5\right]  =0$.

The following lemma is a classical elementary property of finite sets, but we
recall its proof for the sake of completeness:

\begin{lemma}
\label{lem.altsum.1}Let $S$ and $T$ be two finite sets. Then,%
\[
\sum_{\substack{I\subseteq S;\\T\subseteq I}}\left(  -1\right)  ^{\left\vert
S\right\vert -\left\vert I\right\vert }=\left[  S=T\right]  .
\]

\end{lemma}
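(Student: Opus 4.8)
The plan is to reduce this alternating sum to a single application of the binomial theorem, after a change of summation variable. First I would dispose of the degenerate situation in which $T$ is not a subset of $S$: in that case no set $I$ can satisfy both $I\subseteq S$ and $T\subseteq I$ (as these would force $T\subseteq I\subseteq S$), so the left-hand side is an empty sum and hence equals $0$; on the other hand $T\not\subseteq S$ forces $S\neq T$, so $\left[S=T\right]=0$ as well, and the claim holds in this case.

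For the rest I would assume $T\subseteq S$. The key move is to reparametrize the index set: every $I$ with $T\subseteq I\subseteq S$ can be written uniquely as $I=T\cup J$ with $J=I\setminus T\subseteq S\setminus T$, and this sets up a bijection between the sets $I$ indexing the sum and the subsets $J$ of $S\setminus T$. Under this bijection we have $\left\vert I\right\vert =\left\vert T\right\vert +\left\vert J\right\vert$, so that $\left\vert S\right\vert -\left\vert I\right\vert =\left(\left\vert S\right\vert -\left\vert T\right\vert\right)-\left\vert J\right\vert =\left\vert S\setminus T\right\vert -\left\vert J\right\vert$, where the last equality uses $T\subseteq S$. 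Hence the sum becomes $\sum_{J\subseteq S\setminus T}\left(-1\right)^{\left\vert S\setminus T\right\vert -\left\vert J\right\vert}$.

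Writing $m=\left\vert S\setminus T\right\vert$ and grouping the subsets $J\subseteq S\setminus T$ by their cardinality $k=\left\vert J\right\vert$ (there being $\binom{m}{k}$ of them), this sum equals $\sum_{k=0}^{m}\binom{m}{k}\left(-1\right)^{m-k}=\left(1+\left(-1\right)\right)^{m}=0^{m}$ by the binomial theorem. Now $0^{m}$ is $1$ when $m=0$ and $0$ otherwise, i.e.\ $0^{m}=\left[m=0\right]=\left[S\setminus T=\varnothing\right]$. Finally, since we are assuming $T\subseteq S$, the condition $S\setminus T=\varnothing$ is equivalent to $S=T$, so $0^{m}=\left[S=T\right]$, completing the argument.

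The computation is entirely elementary, and I expect no genuine obstacle. The only points requiring a little care are the bookkeeping of the sign exponent under the reparametrization (checking that $\left\vert S\right\vert -\left\vert I\right\vert$ becomes $\left\vert S\setminus T\right\vert -\left\vert J\right\vert$, which is precisely where the hypothesis $T\subseteq S$ is used) and the final observation that the quantity $0^{m}$ realizes the Iverson bracket $\left[m=0\right]$.
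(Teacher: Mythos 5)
Your proof is correct and is essentially the paper's own argument: both dispose of the degenerate case $T\not\subseteq S$ first, then reindex the sum as a sum over subsets of $S\setminus T$ and evaluate the resulting alternating sum via $\left[S\setminus T=\varnothing\right]=\left[S=T\right]$. The only cosmetic difference is that you use the bijection $J\mapsto T\cup J$ and invoke the binomial theorem directly, whereas the paper uses the complementation bijection $J\mapsto S\setminus J$ and then cites the standard identity $\sum_{I\subseteq S}\left(-1\right)^{\left\vert I\right\vert}=\left[S=\varnothing\right]$ (which is itself just the binomial theorem in disguise).
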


\begin{vershort}
\begin{proof}
[Proof.]If $T\not \subseteq S$, then the left hand side is an empty sum and
thus equals $0$, as does the right hand side. Hence, for the rest of this
proof, we WLOG assume that $T\subseteq S$. Hence, $S\setminus T=\varnothing$
holds if and only if $S=T$. Therefore, $\left[  S\setminus T=\varnothing
\right]  =\left[  S=T\right]  $.

Each subset $I$ of $S$ can be uniquely written as $S\setminus J$ for a unique
subset $J$ of $S$ (namely, this $J$ is the complement of $I$ in $S$). Thus, we
can reindex the sum $\sum_{\substack{I\subseteq S;\\T\subseteq I}}\left(
-1\right)  ^{\left\vert S\right\vert -\left\vert I\right\vert }$ by
substituting $S\setminus J$ for $I$. Thus, we obtain%
\begin{align}
\sum_{\substack{I\subseteq S;\\T\subseteq I}}\left(  -1\right)  ^{\left\vert
S\right\vert -\left\vert I\right\vert }  &  =\sum_{\substack{J\subseteq
S;\\T\subseteq S\setminus J}}\underbrace{\left(  -1\right)  ^{\left\vert
S\right\vert -\left\vert S\setminus J\right\vert }}_{\substack{=\left(
-1\right)  ^{\left\vert J\right\vert }\\\text{(since }\left\vert S\right\vert
-\left\vert S\setminus J\right\vert =\left\vert J\right\vert \text{)}}%
}=\sum_{\substack{J\subseteq S;\\T\subseteq S\setminus J}}\left(  -1\right)
^{\left\vert J\right\vert }=\sum_{\substack{J\subseteq S;\\J\subseteq
S\setminus T}}\left(  -1\right)  ^{\left\vert J\right\vert }\nonumber\\
&  \ \ \ \ \ \ \ \ \ \ \ \ \ \ \ \ \ \ \ \ \left(
\begin{array}
[c]{c}%
\text{since the condition \textquotedblleft}T\subseteq S\setminus
J\text{\textquotedblright}\\
\text{is equivalent to \textquotedblleft}J\subseteq S\setminus
T\text{\textquotedblright\ when}\\
\text{both }J\text{ and }T\text{ are subsets of }S
\end{array}
\right) \nonumber\\
&  =\sum_{J\subseteq S\setminus T}\left(  -1\right)  ^{\left\vert J\right\vert
} \label{pf.lem.altsum.1.1}%
\end{align}
(since each subset $J\subseteq S\setminus T$ satisfies $J\subseteq S$).

However, a known fact about finite sets (see, e.g., \cite[Proposition
2.9.10]{Grinbe20}) says that $\sum_{I\subseteq S}\left(  -1\right)
^{\left\vert I\right\vert }=\left[  S=\varnothing\right]  $. Applying this
fact to $S\setminus T$ instead of $S$, we obtain $\sum_{I\subseteq S\setminus
T}\left(  -1\right)  ^{\left\vert I\right\vert }=\left[  S\setminus
T=\varnothing\right]  =\left[  S=T\right]  $. Thus, (\ref{pf.lem.altsum.1.1})
becomes%
\[
\sum_{\substack{I\subseteq S;\\T\subseteq I}}\left(  -1\right)  ^{\left\vert
S\right\vert -\left\vert I\right\vert }=\sum_{J\subseteq S\setminus T}\left(
-1\right)  ^{\left\vert J\right\vert }=\sum_{I\subseteq S\setminus T}\left(
-1\right)  ^{\left\vert I\right\vert }=\left[  S=T\right]  .
\]
This proves Lemma \ref{lem.altsum.1}.
\end{proof}
\end{vershort}

\begin{verlong}
\begin{proof}
[Proof of Lemma \ref{lem.altsum.1}.]If we don't have $T\subseteq S$, then the
claim of Lemma \ref{lem.altsum.1} is easy to check
directly\footnote{\textit{Proof.} Assume that we don't have $T\subseteq S$. We
must prove the claim of Lemma \ref{lem.altsum.1}. In other words, we must
prove that $\sum_{\substack{I\subseteq S;\\T\subseteq I}}\left(  -1\right)
^{\left\vert S\right\vert -\left\vert I\right\vert }=\left[  S=T\right]  $.
\par
If we had $S=T$, then we would have $T\subseteq S$, which would contradict the
fact that we don't have $T\subseteq S$. Hence, we do not have $S=T$. Thus,
$\left[  S=T\right]  =0$.
\par
On the other hand, there exists no subset $I$ of $S$ satisfying $T\subseteq I$
(because if $I$ was such a subset, then we would have $T\subseteq I\subseteq
S$ and thus $T\subseteq S$, but this would contradict the fact that we don't
have $T\subseteq S$). Hence, the sum $\sum_{\substack{I\subseteq
S;\\T\subseteq I}}\left(  -1\right)  ^{\left\vert S\right\vert -\left\vert
I\right\vert }$ is an empty sum. Thus,%
\[
\sum_{\substack{I\subseteq S;\\T\subseteq I}}\left(  -1\right)  ^{\left\vert
S\right\vert -\left\vert I\right\vert }=0=\left[  S=T\right]
\ \ \ \ \ \ \ \ \ \ \left(  \text{since }\left[  S=T\right]  =0\right)  .
\]
This proves the claim of Lemma \ref{lem.altsum.1} (under the assumption that
we don't have $T\subseteq S$).}. Hence, for the rest of this proof, we WLOG
assume that we do have $T\subseteq S$.

It is thus easy to see that $\left[  S\setminus T=\varnothing\right]  =\left[
S=T\right]  $\ \ \ \ \footnote{\textit{Proof.} If $S\subseteq T$, then $S=T$
(because we already know that $T\subseteq S$, so that we can obtain $S=T$ by
combining $S\subseteq T$ with $T\subseteq S$). Conversely, if $S=T$, then
$S\subseteq T$ (obviously). Combining these two results, we conclude that
$S\subseteq T$ holds if and only if $S=T$ holds. In other words, we have the
logical equivalence $\left(  S\subseteq T\right)  \ \Longleftrightarrow
\ \left(  S=T\right)  $.
\par
On the other hand, the logical equivalence $\left(  S\setminus T=\varnothing
\right)  \ \Longleftrightarrow\ \left(  S\subseteq T\right)  $ holds (by
elementary set theory).
\par
We thus obtain the chain of logical equivalences%
\[
\left(  S\setminus T=\varnothing\right)  \ \Longleftrightarrow\ \left(
S\subseteq T\right)  \ \Longleftrightarrow\ \left(  S=T\right)  .
\]
Hence, the statements \textquotedblleft$S\setminus T=\varnothing
$\textquotedblright\ and \textquotedblleft$S=T$\textquotedblright\ are
equivalent. Since equivalent statements have the same truth value, we thus
conclude that $\left[  S\setminus T=\varnothing\right]  =\left[  S=T\right]
$.}.

Next, we recall the classical fact that the map%
\begin{align*}
\left\{  \text{subsets of }S\right\}   &  \rightarrow\left\{  \text{subsets of
}S\right\}  ,\\
J  &  \mapsto S\setminus J
\end{align*}
(which sends each subset $J$ of $S$ to its complement $S\setminus J$ in $S$)
is a bijection (in fact, this map is its own inverse). Hence, we can
substitute $S\setminus J$ for $I$ in the sum $\sum_{\substack{I\subseteq
S;\\T\subseteq I}}\left(  -1\right)  ^{\left\vert S\right\vert -\left\vert
I\right\vert }$. As a result, we obtain%
\begin{align}
\sum_{\substack{I\subseteq S;\\T\subseteq I}}\left(  -1\right)  ^{\left\vert
S\right\vert -\left\vert I\right\vert }  &  =\sum_{\substack{J\subseteq
S;\\T\subseteq S\setminus J}}\underbrace{\left(  -1\right)  ^{\left\vert
S\right\vert -\left\vert S\setminus J\right\vert }}_{\substack{=\left(
-1\right)  ^{\left\vert S\right\vert -\left(  \left\vert S\right\vert
-\left\vert J\right\vert \right)  }\\\text{(since }J\subseteq S\text{ entails
}\left\vert S\setminus J\right\vert =\left\vert S\right\vert -\left\vert
J\right\vert \text{)}}}=\sum_{\substack{J\subseteq S;\\T\subseteq S\setminus
J}}\underbrace{\left(  -1\right)  ^{\left\vert S\right\vert -\left(
\left\vert S\right\vert -\left\vert J\right\vert \right)  }}%
_{\substack{=\left(  -1\right)  ^{\left\vert J\right\vert }\\\text{(since
}\left\vert S\right\vert -\left(  \left\vert S\right\vert -\left\vert
J\right\vert \right)  =\left\vert J\right\vert \text{)}}}\nonumber\\
&  =\sum_{\substack{J\subseteq S;\\T\subseteq S\setminus J}}\left(  -1\right)
^{\left\vert J\right\vert }. \label{pf.lem.altsum.1.long.2}%
\end{align}

However, it is easy to see that the subsets $J$ of $S$ satisfying $T\subseteq
S\setminus J$ are precisely the subsets of $S\setminus T$%
\ \ \ \ \footnote{\textit{Proof.} Let $A$ be a subset $J$ of $S$ satisfying
$T\subseteq S\setminus J$. Thus, $A$ is a subset of $S$ and satisfies
$T\subseteq S\setminus A$. From $T\subseteq S\setminus A$, we conclude that
each element of $T$ belongs to $S\setminus A$. Therefore, no element of $T$
belongs to $A$ (since an element cannot belong to $S\setminus A$ and to $A$ at
the same time). In other words, $T$ is disjoint from $A$. In other words, $A$
is disjoint from $T$. Hence, no element of $A$ belongs to $T$. Thus, if $a\in
A$, then $a\in S$ (since $A$ is a subset of $S$) but $a\notin T$ (since no
element of $A$ belongs to $T$), so that $a\in S\setminus T$ (this follows by
combining $a\in S$ with $a\notin T$). In other words, $A$ is a subset of
$S\setminus T$.
\par
Forget that we fixed $A$. We thus have shown that if $A$ is a subset $J$ of
$S$ satisfying $T\subseteq S\setminus J$, then $A$ is a subset of $S\setminus
T$. In other words, we have shown that%
\begin{align}
&  \text{every subset }J\text{ of }S\text{ satisfying }T\subseteq S\setminus
J\nonumber\\
&  \text{is a subset of }S\setminus T. \label{pf.lem.altsum.1.long.3.pf.1}%
\end{align}
\par
On the other hand, let $B$ be a subset of $S\setminus T$. Thus, $B\subseteq
S\setminus T$. Hence, each element of $B$ belongs to $S\setminus T$.
Therefore, no element of $B$ belongs to $T$ (since an element cannot belong to
$S\setminus T$ and to $T$ at the same time). In other words, $B$ is disjoint
from $T$. In other words, $T$ is disjoint from $B$. In other words, no element
of $T$ belongs to $B$. Hence, if $t\in T$, then $t\notin B$, and therefore
$t\in S\setminus B$ (indeed, we can obtain this by combining $t\in T\subseteq
S$ with $t\notin B$). In other words, $T\subseteq S\setminus B$. Hence, $B$ is
a subset $J$ of $S$ satisfying $T\subseteq S\setminus J$ (since $B\subseteq
S\setminus T\subseteq S$ shows that $B$ is a subset of $S$).
\par
Forget that we fixed $B$. We thus have shown that if $B$ is a subset of
$S\setminus T$, then $B$ is a subset $J$ of $S$ satisfying $T\subseteq
S\setminus J$. In other words, we have shown that%
\begin{align*}
&  \text{every subset of }S\setminus T\\
&  \text{is a subset }J\text{ of }S\text{ satisfying }T\subseteq S\setminus J.
\end{align*}
Combining this with (\ref{pf.lem.altsum.1.long.3.pf.1}), we conclude that the
subsets $J$ of $S$ satisfying $T\subseteq S\setminus J$ are precisely the
subsets of $S\setminus T$. Qed.}. In other words,%
\[
\left\{  \text{subsets }J\text{ of }S\text{ satisfying }T\subseteq S\setminus
J\right\}  =\left\{  \text{subsets of }S\setminus T\right\}  .
\]
Hence, the summation sign $\sum_{\substack{J\subseteq S;\\T\subseteq
S\setminus J}}$ can be rewritten as $\sum_{J\subseteq S\setminus T}$.
Therefore, we can rewrite (\ref{pf.lem.altsum.1.long.2}) as%
\begin{align}
\sum_{\substack{I\subseteq S;\\T\subseteq I}}\left(  -1\right)  ^{\left\vert
S\right\vert -\left\vert I\right\vert }  &  =\sum_{J\subseteq S\setminus
T}\left(  -1\right)  ^{\left\vert J\right\vert }\nonumber\\
&  =\sum_{I\subseteq S\setminus T}\left(  -1\right)  ^{\left\vert I\right\vert
} \label{pf.lem.altsum.1.long.4}%
\end{align}
(here, we have renamed the summation index $J$ as $I$).

However, a known fact about finite sets (see, e.g., \cite[Proposition
2.9.10]{Grinbe20}) says that $\sum_{I\subseteq S}\left(  -1\right)
^{\left\vert I\right\vert }=\left[  S=\varnothing\right]  $. Applying this
fact to $S\setminus T$ instead of $S$, we obtain $\sum_{I\subseteq S\setminus
T}\left(  -1\right)  ^{\left\vert I\right\vert }=\left[  S\setminus
T=\varnothing\right]  $. Thus, (\ref{pf.lem.altsum.1.long.4}) becomes%
\[
\sum_{\substack{I\subseteq S;\\T\subseteq I}}\left(  -1\right)  ^{\left\vert
S\right\vert -\left\vert I\right\vert }=\sum_{I\subseteq S\setminus T}\left(
-1\right)  ^{\left\vert I\right\vert }=\left[  S\setminus T=\varnothing
\right]  =\left[  S=T\right]  .
\]
This proves Lemma \ref{lem.altsum.1}.
\end{proof}
\end{verlong}

We will also use the following property of compositions:

\begin{lemma}
\label{lem.comps.l-vs-size}Let $n\in\mathbb{N}$. Then:

\begin{enumerate}
\item[\textbf{(a)}] We have $\ell\left(  \delta\right)  =\left\vert D\left(
\delta\right)  \right\vert +\left[  n\neq0\right]  $ for each $\delta
\in\operatorname*{Comp}\nolimits_{n}$.

\item[\textbf{(b)}] We have $\ell\left(  \beta\right)  -\ell\left(
\alpha\right)  =\left\vert D\left(  \beta\right)  \right\vert -\left\vert
D\left(  \alpha\right)  \right\vert $ for any $\alpha\in\operatorname*{Comp}%
\nolimits_{n}$ and $\beta\in\operatorname*{Comp}\nolimits_{n}$.
\end{enumerate}
\end{lemma}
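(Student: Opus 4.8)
The plan is to derive both parts from the single fact already recalled in the text following Definition \ref{def.comps.D-comp}: every composition $\gamma$ with $\left\vert \gamma\right\vert >0$ satisfies $\ell\left(  \gamma\right)  =\left\vert D\left(  \gamma\right)  \right\vert +1$, whereas the empty composition $\varnothing$ satisfies $\ell\left(  \varnothing\right)  =0=\left\vert D\left(  \varnothing\right)  \right\vert$. Part \textbf{(a)} is then just a matter of merging these two cases into one formula by means of the Iverson bracket $\left[  n\neq0\right]$, and part \textbf{(b)} will follow from part \textbf{(a)} by a one-line subtraction.

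For part \textbf{(a)}, I would split into two cases according to whether $n=0$. If $n>0$, then $\left[  n\neq0\right]  =1$, and every $\delta\in\operatorname*{Comp}\nolimits_{n}$ has size $\left\vert \delta\right\vert =n>0$; hence the recalled fact gives $\ell\left(  \delta\right)  =\left\vert D\left(  \delta\right)  \right\vert +1=\left\vert D\left(  \delta\right)  \right\vert +\left[  n\neq0\right]$, as claimed. If instead $n=0$, then $\operatorname*{Comp}\nolimits_{0}=\left\{  \varnothing\right\}$, so $\delta=\varnothing$; here $\ell\left(  \delta\right)  =0$, $D\left(  \delta\right)  =\varnothing$, and $\left[  n\neq0\right]  =0$, so that both sides of the asserted equality equal $0$. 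These two cases together prove \textbf{(a)}.

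For part \textbf{(b)}, I would apply part \textbf{(a)} to $\beta$ and to $\alpha$ separately, obtaining $\ell\left(  \beta\right)  =\left\vert D\left(  \beta\right)  \right\vert +\left[  n\neq0\right]$ and $\ell\left(  \alpha\right)  =\left\vert D\left(  \alpha\right)  \right\vert +\left[  n\neq0\right]$. Subtracting the second from the first makes the two copies of $\left[  n\neq0\right]$ cancel, leaving $\ell\left(  \beta\right)  -\ell\left(  \alpha\right)  =\left\vert D\left(  \beta\right)  \right\vert -\left\vert D\left(  \alpha\right)  \right\vert$, which is exactly the claim.

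There is essentially no obstacle here: the mathematical content is already present in the remark preceding the lemma, and the real work is only the bookkeeping of the empty-composition edge case, which the bracket $\left[  n\neq0\right]$ absorbs painlessly. The single point deserving a moment's attention is the observation that $\operatorname*{Comp}\nolimits_{0}=\left\{  \varnothing\right\}$, so that the $n=0$ branch of \textbf{(a)} reduces to the empty composition alone; once this is in hand, both parts are immediate verifications.
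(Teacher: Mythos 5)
Your proof is correct and follows essentially the same route as the paper: part \textbf{(a)} is obtained from the definition of $D$ (via the fact, already recalled before the lemma, that $\left\vert D\left(\delta\right)\right\vert = \ell\left(\delta\right) - 1$ for nonempty $\delta$, with the $n=0$ case handled separately), and part \textbf{(b)} follows by applying \textbf{(a)} to $\beta$ and $\alpha$ and subtracting so that the brackets $\left[n\neq 0\right]$ cancel. The paper merely outsources the bookkeeping to its ancillary document \cite[Corollaries 2.6 and 2.7]{comps}, which carries out exactly the argument you describe.
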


\begin{proof}
\textbf{(a)} Part \textbf{(a)} follows easily from the definition of $D\left(
\delta\right)  $. For a detailed proof, see \cite[Corollary 2.6]{comps}.

\begin{vershort}
\textbf{(b)} Use part \textbf{(a)}. For a detailed proof, see \cite[Corollary
2.7]{comps}. \qedhere

\end{vershort}

\begin{verlong}
\textbf{(b)} Part \textbf{(b)} follows from part \textbf{(a)}. For a detailed
proof, see \cite[Corollary 2.7]{comps}. \qedhere

\end{verlong}
\end{proof}

\begin{vershort}

\begin{proof}
[Proof of Proposition \ref{prop.eta.M-through-eta}.]For each $\alpha
\in\operatorname*{Comp}\nolimits_{n}$, we have%
\[
\eta_{\alpha}^{\left(  q\right)  }=\sum_{\substack{\gamma\in
\operatorname*{Comp}\nolimits_{n};\\D\left(  \gamma\right)  \subseteq D\left(
\alpha\right)  }}r^{\ell\left(  \gamma\right)  }M_{\gamma}%
\]
(by (\ref{eq.def.etaalpha.def}), with $\beta$ renamed as $\gamma$). Hence,%
\begin{align}
&  \sum_{\substack{\alpha\in\operatorname*{Comp}\nolimits_{n};\\D\left(
\alpha\right)  \subseteq D\left(  \beta\right)  }}\left(  -1\right)
^{\ell\left(  \beta\right)  -\ell\left(  \alpha\right)  }\eta_{\alpha
}^{\left(  q\right)  }\nonumber\\
&  =\sum_{\substack{\alpha\in\operatorname*{Comp}\nolimits_{n};\\D\left(
\alpha\right)  \subseteq D\left(  \beta\right)  }}\left(  -1\right)
^{\ell\left(  \beta\right)  -\ell\left(  \alpha\right)  }\sum
_{\substack{\gamma\in\operatorname*{Comp}\nolimits_{n};\\D\left(
\gamma\right)  \subseteq D\left(  \alpha\right)  }}r^{\ell\left(
\gamma\right)  }M_{\gamma}\nonumber\\
&  =\underbrace{\sum_{\substack{\alpha\in\operatorname*{Comp}\nolimits_{n}%
;\\D\left(  \alpha\right)  \subseteq D\left(  \beta\right)  }}\ \ \sum
_{\substack{\gamma\in\operatorname*{Comp}\nolimits_{n};\\D\left(
\gamma\right)  \subseteq D\left(  \alpha\right)  }}}_{=\sum_{\gamma
\in\operatorname*{Comp}\nolimits_{n}}\ \ \sum_{\substack{\alpha\in
\operatorname*{Comp}\nolimits_{n};\\D\left(  \gamma\right)  \subseteq D\left(
\alpha\right)  \subseteq D\left(  \beta\right)  }}}\left(  -1\right)
^{\ell\left(  \beta\right)  -\ell\left(  \alpha\right)  }r^{\ell\left(
\gamma\right)  }M_{\gamma}\nonumber\\
&  =\sum_{\gamma\in\operatorname*{Comp}\nolimits_{n}}r^{\ell\left(
\gamma\right)  }M_{\gamma}\sum_{\substack{\alpha\in\operatorname*{Comp}%
\nolimits_{n};\\D\left(  \gamma\right)  \subseteq D\left(  \alpha\right)
\subseteq D\left(  \beta\right)  }}\left(  -1\right)  ^{\ell\left(
\beta\right)  -\ell\left(  \alpha\right)  }.
\label{pf.prop.eta.M-through-eta.2}%
\end{align}

However, for each $\gamma\in\operatorname*{Comp}\nolimits_{n}$, we have
\begin{align*}
&  \sum_{\substack{\alpha\in\operatorname*{Comp}\nolimits_{n};\\D\left(
\gamma\right)  \subseteq D\left(  \alpha\right)  \subseteq D\left(
\beta\right)  }}\underbrace{\left(  -1\right)  ^{\ell\left(  \beta\right)
-\ell\left(  \alpha\right)  }}_{\substack{=\left(  -1\right)  ^{\left\vert
D\left(  \beta\right)  \right\vert -\left\vert D\left(  \alpha\right)
\right\vert }\\\text{(by Lemma \ref{lem.comps.l-vs-size} \textbf{(b)})}}}\\
&  =\sum_{\substack{\alpha\in\operatorname*{Comp}\nolimits_{n};\\D\left(
\gamma\right)  \subseteq D\left(  \alpha\right)  \subseteq D\left(
\beta\right)  }}\left(  -1\right)  ^{\left\vert D\left(  \beta\right)
\right\vert -\left\vert D\left(  \alpha\right)  \right\vert }=\sum
_{\substack{I\subseteq\left[  n-1\right]  ;\\D\left(  \gamma\right)  \subseteq
I\subseteq D\left(  \beta\right)  }}\left(  -1\right)  ^{\left\vert D\left(
\beta\right)  \right\vert -\left\vert I\right\vert }\\
&  \ \ \ \ \ \ \ \ \ \ \ \ \ \ \ \ \ \ \ \ \left(
\begin{array}
[c]{c}%
\text{here, we have substituted }I\text{ for }D\left(  \alpha\right)  \text{
in the sum,}\\
\text{since the map }D:\operatorname*{Comp}\nolimits_{n}\rightarrow
\mathcal{P}\left(  \left[  n-1\right]  \right)  \text{ is a bijection}%
\end{array}
\right) \\
&  =\sum_{\substack{I\subseteq D\left(  \beta\right)  ;\\D\left(
\gamma\right)  \subseteq I}}\left(  -1\right)  ^{\left\vert D\left(
\beta\right)  \right\vert -\left\vert I\right\vert }%
\ \ \ \ \ \ \ \ \ \ \left(
\begin{array}
[c]{c}%
\text{since }D\left(  \beta\right)  \subseteq\left[  n-1\right]  \text{, so
that each subset }I\\
\text{of }D\left(  \beta\right)  \text{ is also a subset of }\left[
n-1\right]
\end{array}
\right) \\
&  =\left[  D\left(  \beta\right)  =D\left(  \gamma\right)  \right]
\ \ \ \ \ \ \ \ \ \ \left(  \text{by Lemma \ref{lem.altsum.1}, applied to
}S=D\left(  \beta\right)  \text{ and }T=D\left(  \gamma\right)  \right) \\
&  =\left[  \beta=\gamma\right]  \ \ \ \ \ \ \ \ \ \ \left(  \text{since the
map }D\text{ is a bijection}\right)  .
\end{align*}
Hence, (\ref{pf.prop.eta.M-through-eta.2}) becomes%
\begin{align*}
\sum_{\substack{\alpha\in\operatorname*{Comp}\nolimits_{n};\\D\left(
\alpha\right)  \subseteq D\left(  \beta\right)  }}\left(  -1\right)
^{\ell\left(  \beta\right)  -\ell\left(  \alpha\right)  }\eta_{\alpha
}^{\left(  q\right)  }  &  =\sum_{\gamma\in\operatorname*{Comp}\nolimits_{n}%
}r^{\ell\left(  \gamma\right)  }M_{\gamma}\underbrace{\sum_{\substack{\alpha
\in\operatorname*{Comp}\nolimits_{n};\\D\left(  \gamma\right)  \subseteq
D\left(  \alpha\right)  \subseteq D\left(  \beta\right)  }}\left(  -1\right)
^{\ell\left(  \beta\right)  -\ell\left(  \alpha\right)  }}_{=\left[
\beta=\gamma\right]  }\\
&  =\sum_{\gamma\in\operatorname*{Comp}\nolimits_{n}}r^{\ell\left(
\gamma\right)  }M_{\gamma}\left[  \beta=\gamma\right]  =r^{\ell\left(
\beta\right)  }M_{\beta}%
\end{align*}
(since the factor $\left[  \beta=\gamma\right]  $ in the sum ensures that the
only nonzero addend in the sum is the addend for $\gamma=\beta$). This proves
Proposition \ref{prop.eta.M-through-eta}.
\end{proof}
\end{vershort}

\begin{verlong}

\begin{proof}
[Proof of Proposition \ref{prop.eta.M-through-eta}.]Recall that
$D:\operatorname*{Comp}\nolimits_{n}\rightarrow\mathcal{P}\left(  \left[
n-1\right]  \right)  $ is a bijection. Hence, from $\beta\in
\operatorname*{Comp}\nolimits_{n}$, we obtain $D\left(  \beta\right)
\in\mathcal{P}\left(  \left[  n-1\right]  \right)  $. In other words,
$D\left(  \beta\right)  \subseteq\left[  n-1\right]  $.

The map $D:\operatorname*{Comp}\nolimits_{n}\rightarrow\mathcal{P}\left(
\left[  n-1\right]  \right)  $ is a bijection, thus injective. In other words,
for any $\varphi,\psi\in\operatorname*{Comp}\nolimits_{n}$, we have $D\left(
\varphi\right)  =D\left(  \psi\right)  $ if and only if $\varphi=\psi$. In
other words, for any $\varphi,\psi\in\operatorname*{Comp}\nolimits_{n}$, the
statement \textquotedblleft$D\left(  \varphi\right)  =D\left(  \psi\right)
$\textquotedblright\ is equivalent to \textquotedblleft$\varphi=\psi
$\textquotedblright. Thus, for any $\varphi,\psi\in\operatorname*{Comp}%
\nolimits_{n}$, we have%
\begin{equation}
\left[  D\left(  \varphi\right)  =D\left(  \psi\right)  \right]  =\left[
\varphi=\psi\right]  \label{pf.prop.eta.M-through-eta.long.0}%
\end{equation}
(since equivalent statements have the same truth value).

For each $\alpha\in\operatorname*{Comp}\nolimits_{n}$, we have%
\begin{equation}
\eta_{\alpha}^{\left(  q\right)  }=\sum_{\substack{\gamma\in
\operatorname*{Comp}\nolimits_{n};\\D\left(  \gamma\right)  \subseteq D\left(
\alpha\right)  }}r^{\ell\left(  \gamma\right)  }M_{\gamma}
\label{pf.prop.eta.M-through-eta.long.1}%
\end{equation}
(this is just the equality (\ref{eq.def.etaalpha.def}), with $\beta$ renamed
as $\gamma$). Hence,%
\begin{align}
&  \sum_{\substack{\alpha\in\operatorname*{Comp}\nolimits_{n};\\D\left(
\alpha\right)  \subseteq D\left(  \beta\right)  }}\left(  -1\right)
^{\ell\left(  \beta\right)  -\ell\left(  \alpha\right)  }\underbrace{\eta
_{\alpha}^{\left(  q\right)  }}_{\substack{=\sum_{\substack{\gamma
\in\operatorname*{Comp}\nolimits_{n};\\D\left(  \gamma\right)  \subseteq
D\left(  \alpha\right)  }}r^{\ell\left(  \gamma\right)  }M_{\gamma}\\\text{(by
(\ref{pf.prop.eta.M-through-eta.long.1}))}}}\nonumber\\
&  =\sum_{\substack{\alpha\in\operatorname*{Comp}\nolimits_{n};\\D\left(
\alpha\right)  \subseteq D\left(  \beta\right)  }}\left(  -1\right)
^{\ell\left(  \beta\right)  -\ell\left(  \alpha\right)  }\sum
_{\substack{\gamma\in\operatorname*{Comp}\nolimits_{n};\\D\left(
\gamma\right)  \subseteq D\left(  \alpha\right)  }}r^{\ell\left(
\gamma\right)  }M_{\gamma}\nonumber\\
&  =\underbrace{\sum_{\substack{\alpha\in\operatorname*{Comp}\nolimits_{n}%
;\\D\left(  \alpha\right)  \subseteq D\left(  \beta\right)  }}\ \ \sum
_{\substack{\gamma\in\operatorname*{Comp}\nolimits_{n};\\D\left(
\gamma\right)  \subseteq D\left(  \alpha\right)  }}}_{=\sum_{\gamma
\in\operatorname*{Comp}\nolimits_{n}}\ \ \sum_{\substack{\alpha\in
\operatorname*{Comp}\nolimits_{n};\\D\left(  \gamma\right)  \subseteq D\left(
\alpha\right)  ;\\D\left(  \alpha\right)  \subseteq D\left(  \beta\right)  }%
}}\left(  -1\right)  ^{\ell\left(  \beta\right)  -\ell\left(  \alpha\right)
}r^{\ell\left(  \gamma\right)  }M_{\gamma}\nonumber\\
&  =\sum_{\gamma\in\operatorname*{Comp}\nolimits_{n}}\ \ \underbrace{\sum
_{\substack{\alpha\in\operatorname*{Comp}\nolimits_{n};\\D\left(
\gamma\right)  \subseteq D\left(  \alpha\right)  ;\\D\left(  \alpha\right)
\subseteq D\left(  \beta\right)  }}\left(  -1\right)  ^{\ell\left(
\beta\right)  -\ell\left(  \alpha\right)  }r^{\ell\left(  \gamma\right)
}M_{\gamma}}_{=r^{\ell\left(  \gamma\right)  }M_{\gamma}\sum_{\substack{\alpha
\in\operatorname*{Comp}\nolimits_{n};\\D\left(  \gamma\right)  \subseteq
D\left(  \alpha\right)  ;\\D\left(  \alpha\right)  \subseteq D\left(
\beta\right)  }}\left(  -1\right)  ^{\ell\left(  \beta\right)  -\ell\left(
\alpha\right)  }}\nonumber\\
&  =\sum_{\gamma\in\operatorname*{Comp}\nolimits_{n}}r^{\ell\left(
\gamma\right)  }M_{\gamma}\sum_{\substack{\alpha\in\operatorname*{Comp}%
\nolimits_{n};\\D\left(  \gamma\right)  \subseteq D\left(  \alpha\right)
;\\D\left(  \alpha\right)  \subseteq D\left(  \beta\right)  }}\left(
-1\right)  ^{\ell\left(  \beta\right)  -\ell\left(  \alpha\right)  }.
\label{pf.prop.eta.M-through-eta.long.2}%
\end{align}

However, for each $\gamma\in\operatorname*{Comp}\nolimits_{n}$, we have
\begin{align}
&  \sum_{\substack{\alpha\in\operatorname*{Comp}\nolimits_{n};\\D\left(
\gamma\right)  \subseteq D\left(  \alpha\right)  ;\\D\left(  \alpha\right)
\subseteq D\left(  \beta\right)  }}\underbrace{\left(  -1\right)
^{\ell\left(  \beta\right)  -\ell\left(  \alpha\right)  }}_{\substack{=\left(
-1\right)  ^{\left\vert D\left(  \beta\right)  \right\vert -\left\vert
D\left(  \alpha\right)  \right\vert }\\\text{(since Lemma
\ref{lem.comps.l-vs-size} \textbf{(b)}}\\\text{yields }\ell\left(
\beta\right)  -\ell\left(  \alpha\right)  =\left\vert D\left(  \beta\right)
\right\vert -\left\vert D\left(  \alpha\right)  \right\vert \text{)}%
}}\nonumber\\
&  =\sum_{\substack{\alpha\in\operatorname*{Comp}\nolimits_{n};\\D\left(
\gamma\right)  \subseteq D\left(  \alpha\right)  ;\\D\left(  \alpha\right)
\subseteq D\left(  \beta\right)  }}\left(  -1\right)  ^{\left\vert D\left(
\beta\right)  \right\vert -\left\vert D\left(  \alpha\right)  \right\vert
}=\underbrace{\sum_{\substack{I\in\mathcal{P}\left(  \left[  n-1\right]
\right)  ;\\D\left(  \gamma\right)  \subseteq I;\\I\subseteq D\left(
\beta\right)  }}}_{\substack{=\sum_{\substack{I\subseteq\left[  n-1\right]
;\\D\left(  \gamma\right)  \subseteq I;\\I\subseteq D\left(  \beta\right)
}}\\\text{(here, we have rewritten}\\\text{the condition \textquotedblleft%
}I\in\mathcal{P}\left(  \left[  n-1\right]  \right)  \text{\textquotedblright%
}\\\text{as \textquotedblleft}I\subseteq\left[  n-1\right]
\text{\textquotedblright)}}}\left(  -1\right)  ^{\left\vert D\left(
\beta\right)  \right\vert -\left\vert I\right\vert }\nonumber\\
&  \ \ \ \ \ \ \ \ \ \ \ \ \ \ \ \ \ \ \ \ \left(
\begin{array}
[c]{c}%
\text{here, we have substituted }I\text{ for }D\left(  \alpha\right)  \text{
in the sum,}\\
\text{since the map }D:\operatorname*{Comp}\nolimits_{n}\rightarrow
\mathcal{P}\left(  \left[  n-1\right]  \right)  \text{ is a bijection}%
\end{array}
\right) \nonumber\\
&  =\underbrace{\sum_{\substack{I\subseteq\left[  n-1\right]  ;\\D\left(
\gamma\right)  \subseteq I;\\I\subseteq D\left(  \beta\right)  }%
}}_{\substack{=\sum_{\substack{I\subseteq D\left(  \beta\right)  ;\\D\left(
\gamma\right)  \subseteq I;\\I\subseteq\left[  n-1\right]  }}=\sum
_{\substack{I\subseteq D\left(  \beta\right)  ;\\D\left(  \gamma\right)
\subseteq I}}\\\text{(here, we have removed the}\\\text{condition
\textquotedblleft}I\subseteq\left[  n-1\right]  \text{\textquotedblright%
\ under}\\\text{the summation sign, since it}\\\text{automatically holds
for}\\\text{every }I\subseteq D\left(  \beta\right)  \text{ (because }D\left(
\beta\right)  \subseteq\left[  n-1\right]  \text{))}}}\left(  -1\right)
^{\left\vert D\left(  \beta\right)  \right\vert -\left\vert I\right\vert
}=\sum_{\substack{I\subseteq D\left(  \beta\right)  ;\\D\left(  \gamma\right)
\subseteq I}}\left(  -1\right)  ^{\left\vert D\left(  \beta\right)
\right\vert -\left\vert I\right\vert }\nonumber\\
&  =\left[  D\left(  \beta\right)  =D\left(  \gamma\right)  \right]
\ \ \ \ \ \ \ \ \ \ \left(  \text{by Lemma \ref{lem.altsum.1}, applied to
}S=D\left(  \beta\right)  \text{ and }T=D\left(  \gamma\right)  \right)
\nonumber\\
&  =\left[  \beta=\gamma\right]  \label{pf.prop.eta.M-through-eta.long.3}%
\end{align}
(by (\ref{pf.prop.eta.M-through-eta.long.0}), applied to $\varphi=\beta$ and
$\psi=\gamma$).

Hence, (\ref{pf.prop.eta.M-through-eta.long.2}) becomes%
\begin{align*}
&  \sum_{\substack{\alpha\in\operatorname*{Comp}\nolimits_{n};\\D\left(
\alpha\right)  \subseteq D\left(  \beta\right)  }}\left(  -1\right)
^{\ell\left(  \beta\right)  -\ell\left(  \alpha\right)  }\eta_{\alpha
}^{\left(  q\right)  }\\
&  =\sum_{\gamma\in\operatorname*{Comp}\nolimits_{n}}r^{\ell\left(
\gamma\right)  }M_{\gamma}\underbrace{\sum_{\substack{\alpha\in
\operatorname*{Comp}\nolimits_{n};\\D\left(  \gamma\right)  \subseteq D\left(
\alpha\right)  ;\\D\left(  \alpha\right)  \subseteq D\left(  \beta\right)
}}\left(  -1\right)  ^{\ell\left(  \beta\right)  -\ell\left(  \alpha\right)
}}_{\substack{=\left[  \beta=\gamma\right]  \\\text{(by
(\ref{pf.prop.eta.M-through-eta.long.3}))}}}\\
&  =\sum_{\gamma\in\operatorname*{Comp}\nolimits_{n}}r^{\ell\left(
\gamma\right)  }M_{\gamma}\left[  \beta=\gamma\right] \\
&  =r^{\ell\left(  \beta\right)  }M_{\beta}\underbrace{\left[  \beta
=\beta\right]  }_{\substack{=1\\\text{(since }\beta=\beta\text{)}}%
}+\sum_{\substack{\gamma\in\operatorname*{Comp}\nolimits_{n};\\\gamma\neq
\beta}}r^{\ell\left(  \gamma\right)  }M_{\gamma}\underbrace{\left[
\beta=\gamma\right]  }_{\substack{=0\\\text{(since }\beta\neq\gamma
\\\text{(because }\gamma\neq\beta\text{))}}}\\
&  \ \ \ \ \ \ \ \ \ \ \ \ \ \ \ \ \ \ \ \ \left(  \text{here, we have split
off the addend for }\gamma=\beta\text{ from the sum}\right) \\
&  =r^{\ell\left(  \beta\right)  }M_{\beta}+\underbrace{\sum_{\substack{\gamma
\in\operatorname*{Comp}\nolimits_{n};\\\gamma\neq\beta}}r^{\ell\left(
\gamma\right)  }M_{\gamma}0}_{=0}=r^{\ell\left(  \beta\right)  }M_{\beta}.
\end{align*}
This proves Proposition \ref{prop.eta.M-through-eta}.
\end{proof}
\end{verlong}

Proposition \ref{prop.eta.M-through-eta} shows that the quasisymmetric
functions $r^{\ell\left(  \beta\right)  }M_{\beta}$ for all $\beta
\in\operatorname*{Comp}$ are $\mathbf{k}$-linear combinations of the enriched
$q$-monomial quasisymmetric functions $\eta_{\alpha}^{\left(  q\right)  }$. If
$r$ is invertible in $\mathbf{k}$, then it follows that the monomial
quasisymmetric functions $M_{\beta}$ are such combinations as well, and thus
the family $\left(  \eta_{\alpha}^{\left(  q\right)  }\right)  _{\alpha
\in\operatorname*{Comp}}$ spans the $\mathbf{k}$-module $\operatorname*{QSym}$
in this case. But we can actually say more:

\begin{theorem}
\label{thm.eta.basis}Assume that $r$ is invertible in $\mathbf{k}$. Then:

\begin{enumerate}
\item[\textbf{(a)}] The family $\left(  \eta_{\alpha}^{\left(  q\right)
}\right)  _{\alpha\in\operatorname*{Comp}}$ is a basis of the $\mathbf{k}%
$-module $\operatorname*{QSym}$.

\item[\textbf{(b)}] Let $n\in\mathbb{N}$. Consider the $n$-th graded component
$\operatorname*{QSym}\nolimits_{n}$ of the graded $\mathbf{k}$-module
$\operatorname*{QSym}$. Then, the family $\left(  \eta_{\alpha}^{\left(
q\right)  }\right)  _{\alpha\in\operatorname*{Comp}\nolimits_{n}}$ is a basis
of the $\mathbf{k}$-module $\operatorname*{QSym}\nolimits_{n}$.
\end{enumerate}
\end{theorem}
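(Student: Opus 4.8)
The plan is to prove part \textbf{(b)} first, for a fixed $n\in\mathbb{N}$, and then to deduce part \textbf{(a)} by assembling the graded pieces. Throughout I work inside the $n$-th graded component $\operatorname*{QSym}\nolimits_{n}$, which is a free $\mathbf{k}$-module with the finite basis $\left(  M_{\beta}\right)  _{\beta\in\operatorname*{Comp}\nolimits_{n}}$ (the degree-$n$ part of the monomial basis, which is a basis of $\operatorname*{QSym}\nolimits_{n}$ since the $M_{\beta}$ are homogeneous). Since the family $\left(  \eta_{\alpha}^{\left(  q\right)  }\right)  _{\alpha\in\operatorname*{Comp}\nolimits_{n}}$ is indexed by the very same finite set $\operatorname*{Comp}\nolimits_{n}$, it will suffice to show that the $\mathbf{k}$-linear transition matrix expressing the $\eta_{\alpha}^{\left(  q\right)  }$ in terms of the $M_{\beta}$ is invertible over $\mathbf{k}$; a family obtained from a basis by an invertible transition matrix is again a basis.

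The key step is to read off and invert this transition matrix. By (\ref{eq.def.etaalpha.def}), the coefficient of $M_{\beta}$ in $\eta_{\alpha}^{\left(  q\right)  }$ equals $r^{\ell\left(  \beta\right)  }$ when $D\left(  \beta\right)  \subseteq D\left(  \alpha\right)  $ and $0$ otherwise. I would order the elements of $\operatorname*{Comp}\nolimits_{n}$ by any total order refining the partial order ``$\beta\preceq\alpha$ iff $D\left(  \beta\right)  \subseteq D\left(  \alpha\right)  $'' (for instance, by increasing $\left\vert D\left(  \cdot\right)  \right\vert$, breaking ties arbitrarily). With respect to this ordering the transition matrix is triangular, since a nonzero entry forces $D\left(  \beta\right)  \subseteq D\left(  \alpha\right)  $, with equality of cardinalities only when $\beta=\alpha$ (as $D$ is a bijection); and its diagonal entries are precisely the $r^{\ell\left(  \alpha\right)  }$. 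Because $r$ is invertible in $\mathbf{k}$, each diagonal entry is a unit, so the determinant $\prod_{\alpha}r^{\ell\left(  \alpha\right)  }$ is a unit, whence the matrix is invertible over $\mathbf{k}$. This proves \textbf{(b)}. Equivalently, and perhaps more transparently, Proposition \ref{prop.eta.M-through-eta} already exhibits the inverse explicitly: multiplying its identity by $r^{-\ell\left(  \beta\right)  }$ writes each $M_{\beta}$ as a $\mathbf{k}$-linear combination of the $\eta_{\alpha}^{\left(  q\right)  }$, so the two families span the same $\mathbf{k}$-submodule and their transition matrices are mutually inverse.

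Finally, part \textbf{(a)} follows by grading: each $\eta_{\alpha}^{\left(  q\right)  }$ with $\alpha\in\operatorname*{Comp}\nolimits_{n}$ is homogeneous of degree $n$, and $\operatorname*{QSym}=\bigoplus_{n\in\mathbb{N}}\operatorname*{QSym}\nolimits_{n}$ as $\mathbf{k}$-modules, with $\operatorname*{Comp}$ the disjoint union of the $\operatorname*{Comp}\nolimits_{n}$; a disjoint union of bases of the graded components is a basis of the direct sum, so \textbf{(b)} for all $n$ yields \textbf{(a)}. I expect the only genuine subtlety to be the insistence on an arbitrary commutative ring $\mathbf{k}$ rather than a field: here a spanning set of the correct cardinality does not automatically give linear independence, so the argument must produce an honestly invertible transition matrix. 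This is exactly what the triangular shape together with the invertibility of $r$ (equivalently, the explicit two-sided inverse of Proposition \ref{prop.eta.M-through-eta}) provides, so no deeper input is needed.
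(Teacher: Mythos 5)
Your proposal is correct and follows essentially the same route as the paper: both rest on the observation that the transition matrix from $\left(  M_{\beta}\right)  _{\beta\in\operatorname*{Comp}\nolimits_{n}}$ to $\left(  \eta_{\alpha}^{\left(  q\right)  }\right)  _{\alpha\in\operatorname*{Comp}\nolimits_{n}}$ is triangular (the paper orders compositions by length, you by $\left\vert D\left(  \cdot\right)  \right\vert $, which is the same thing) with unit diagonal entries $r^{\ell\left(  \alpha\right)  }$, and then part \textbf{(a)} is deduced by assembling the graded components. The paper invokes the general "invertibly triangular expansion" machinery of \cite[\S 11.1]{GriRei} where you argue directly with the determinant of a finite triangular matrix, but this is a cosmetic difference, not a different proof.
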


\begin{vershort}

\begin{proof}
\textbf{(b)} Define a strict partial order $\prec$ on the finite set
$\operatorname*{Comp}\nolimits_{n}$ by setting
\[
\beta\prec\alpha\ \ \ \ \ \ \ \ \ \ \text{if and only if}%
\ \ \ \ \ \ \ \ \ \ \ell\left(  \beta\right)  <\ell\left(  \alpha\right)  .
\]

If two compositions $\alpha,\beta\in\operatorname*{Comp}\nolimits_{n}$ satisfy
$D\left(  \beta\right)  \subseteq D\left(  \alpha\right)  $ and $\beta
\neq\alpha$, then $D\left(  \beta\right)  $ is a proper subset of $D\left(
\alpha\right)  $ (since $D$ is a bijection, so that $\beta\neq\alpha$ entails
$D\left(  \beta\right)  \neq D\left(  \alpha\right)  $), and thus we have
$\left\vert D\left(  \beta\right)  \right\vert <\left\vert D\left(
\alpha\right)  \right\vert $ and therefore $\ell\left(  \beta\right)
<\ell\left(  \alpha\right)  $ (by Lemma \ref{lem.comps.l-vs-size}
\textbf{(b)}) and thus%
\begin{equation}
\beta\prec\alpha\label{pf.thm.eta.basis.short.less}%
\end{equation}
(by the definition of the partial order $\prec$).

For each $\alpha\in\operatorname*{Comp}\nolimits_{n}$, we have%
\begin{align*}
\eta_{\alpha}^{\left(  q\right)  }  &  =\sum_{\substack{\beta\in
\operatorname*{Comp}\nolimits_{n};\\D\left(  \beta\right)  \subseteq D\left(
\alpha\right)  }}r^{\ell\left(  \beta\right)  }M_{\beta}%
\ \ \ \ \ \ \ \ \ \ \left(  \text{by the definition of }\eta_{\alpha}^{\left(
q\right)  }\right) \\
&  =r^{\ell\left(  \alpha\right)  }M_{\alpha}+\sum_{\substack{\beta
\in\operatorname*{Comp}\nolimits_{n};\\D\left(  \beta\right)  \subseteq
D\left(  \alpha\right)  ;\\\beta\neq\alpha}}r^{\ell\left(  \beta\right)
}M_{\beta}\\
&  =r^{\ell\left(  \alpha\right)  }M_{\alpha}+\left(  \text{a linear
combination of }M_{\beta}\text{ with }\beta\in\operatorname*{Comp}%
\nolimits_{n}\text{ satisfying }\beta\prec\alpha\right)
\end{align*}
(by (\ref{pf.thm.eta.basis.short.less})). Since $r$ is invertible, this shows
that the family $\left(  \eta_{\alpha}^{\left(  q\right)  }\right)
_{\alpha\in\operatorname*{Comp}\nolimits_{n}}$ expands invertibly triangularly
in the family $\left(  M_{\alpha}\right)  _{\alpha\in\operatorname*{Comp}%
\nolimits_{n}}$ with respect to the partial order $\prec$ (where we are using
the terminology from \cite[\S 11.1]{GriRei}). Hence, a classical fact about
triangular expansions (\cite[Corollary 11.1.19(e)]{GriRei}) shows that the
family $\left(  \eta_{\alpha}^{\left(  q\right)  }\right)  _{\alpha
\in\operatorname*{Comp}\nolimits_{n}}$ is a basis of the $\mathbf{k}$-module
$\operatorname*{QSym}\nolimits_{n}$ (since the family $\left(  M_{\alpha
}\right)  _{\alpha\in\operatorname*{Comp}\nolimits_{n}}$ is a basis of
$\operatorname*{QSym}\nolimits_{n}$). This proves Theorem \ref{thm.eta.basis}
\textbf{(b)}. \medskip

\textbf{(a)} Theorem \ref{thm.eta.basis} \textbf{(b)} says that the family
$\left(  \eta_{\alpha}^{\left(  q\right)  }\right)  _{\alpha\in
\operatorname*{Comp}\nolimits_{n}}$ is a basis of the $\mathbf{k}$-module
$\operatorname*{QSym}\nolimits_{n}$ for each $n\in\mathbb{N}$. Hence, the
family $\left(  \eta_{\alpha}^{\left(  q\right)  }\right)  _{\alpha
\in\operatorname*{Comp}}$ is a basis of the $\mathbf{k}$-module $\bigoplus
_{n\in\mathbb{N}}\operatorname*{QSym}\nolimits_{n}=\operatorname*{QSym}$. This
proves Theorem \ref{thm.eta.basis} \textbf{(a)}.
\end{proof}
\end{vershort}

\begin{verlong}

\begin{proof}
\textbf{(b)} It is well-known that the family $\left(  M_{\alpha}\right)
_{\alpha\in\operatorname*{Comp}\nolimits_{n}}$ is a basis of the $\mathbf{k}%
$-module $\operatorname*{QSym}\nolimits_{n}$. Renaming the letter $\alpha$ as
$s$ in this statement, we obtain the following: The family $\left(
M_{s}\right)  _{s\in\operatorname*{Comp}\nolimits_{n}}$ is a basis of the
$\mathbf{k}$-module $\operatorname*{QSym}\nolimits_{n}$. Hence, of course,
this family is a family of elements of $\operatorname*{QSym}\nolimits_{n}$.

Furthermore, for each composition $\alpha\in\operatorname*{Comp}\nolimits_{n}%
$, we have%
\begin{align*}
\eta_{\alpha}^{\left(  q\right)  }  &  =\sum_{\substack{\beta\in
\operatorname*{Comp}\nolimits_{n};\\D\left(  \beta\right)  \subseteq D\left(
\alpha\right)  }}r^{\ell\left(  \beta\right)  }\underbrace{M_{\beta}%
}_{\substack{\in\operatorname*{QSym}\nolimits_{n}\\\text{(since }\beta
\in\operatorname*{Comp}\nolimits_{n}\text{)}}}\ \ \ \ \ \ \ \ \ \ \left(
\text{by (\ref{eq.def.etaalpha.def})}\right) \\
&  \in\sum_{\substack{\beta\in\operatorname*{Comp}\nolimits_{n};\\D\left(
\beta\right)  \subseteq D\left(  \alpha\right)  }}r^{\ell\left(  \beta\right)
}\operatorname*{QSym}\nolimits_{n}\subseteq\operatorname*{QSym}\nolimits_{n}%
\end{align*}
(since $\operatorname*{QSym}\nolimits_{n}$ is a $\mathbf{k}$-module). Renaming
the letter $\alpha$ as $s$ in this statement, we obtain the following: For
each composition $s\in\operatorname*{Comp}\nolimits_{n}$, we have $\eta
_{s}^{\left(  q\right)  }\in\operatorname*{QSym}\nolimits_{n}$. Thus, the
family $\left(  \eta_{s}^{\left(  q\right)  }\right)  _{s\in
\operatorname*{Comp}\nolimits_{n}}$ is a family of elements of
$\operatorname*{QSym}\nolimits_{n}$ as well.

We shall use the terminology of \cite[\S 11.1]{GriRei}, specifically the
notion of \textquotedblleft expanding invertibly
triangularly\textquotedblright\ (see \cite[Definition 11.1.16 (b)]{GriRei} for
its meaning).

Define a binary relation $\prec$ on the finite set $\operatorname*{Comp}%
\nolimits_{n}$ as follows: Two compositions $\beta,\alpha\in
\operatorname*{Comp}\nolimits_{n}$ shall satisfy $\beta\prec\alpha$ if and
only if $\ell\left(  \beta\right)  <\ell\left(  \alpha\right)  $.

This relation $\prec$ is transitive\footnote{\textit{Proof.} Let $\gamma
,\beta,\alpha\in\operatorname*{Comp}\nolimits_{n}$ be three compositions that
satisfy $\gamma\prec\beta$ and $\beta\prec\alpha$. We must show that
$\gamma\prec\alpha$.
\par
By the definition of the relation $\prec$, we have $\gamma\prec\beta$ if and
only if $\ell\left(  \gamma\right)  <\ell\left(  \beta\right)  $. Thus, we
have $\ell\left(  \gamma\right)  <\ell\left(  \beta\right)  $ (since
$\gamma\prec\beta$).
\par
By the definition of the relation $\prec$, we have $\beta\prec\alpha$ if and
only if $\ell\left(  \beta\right)  <\ell\left(  \alpha\right)  $. Thus, we
have $\ell\left(  \beta\right)  <\ell\left(  \alpha\right)  $ (since
$\beta\prec\alpha$).
\par
By the definition of the relation $\prec$, we have $\gamma\prec\alpha$ if and
only if $\ell\left(  \gamma\right)  <\ell\left(  \alpha\right)  $. Thus, we
have $\gamma\prec\alpha$ (since $\ell\left(  \gamma\right)  <\ell\left(
\beta\right)  <\ell\left(  \alpha\right)  $).
\par
Forget that we fixed $\gamma,\beta,\alpha$. We thus have shown that if
$\gamma,\beta,\alpha\in\operatorname*{Comp}\nolimits_{n}$ are three
compositions that satisfy $\gamma\prec\beta$ and $\beta\prec\alpha$, then
$\gamma\prec\alpha$. In other words, the relation $\prec$ is transitive.},
irreflexive\footnote{\textit{Proof.} Let $\alpha\in\operatorname*{Comp}%
\nolimits_{n}$ be a composition that satisfies $\alpha\prec\alpha$. We must
find a contradiction.
\par
By the definition of the relation $\prec$, we have $\alpha\prec\alpha$ if and
only if $\ell\left(  \alpha\right)  <\ell\left(  \alpha\right)  $. Thus, we
have $\ell\left(  \alpha\right)  <\ell\left(  \alpha\right)  $ (since
$\alpha\prec\alpha$). But this is absurd. Hence, we have found a
contradiction.
\par
Forget that we fixed $\alpha$. We thus have found a contradiction for each
$\alpha\in\operatorname*{Comp}\nolimits_{n}$ that satisfies $\alpha\prec
\alpha$. In other words, there exists no $\alpha\in\operatorname*{Comp}%
\nolimits_{n}$ that satisfies $\alpha\prec\alpha$. In other words, the
relation $\prec$ is irreflexive.} and asymmetric\footnote{\textit{Proof.} It
is well-known that any binary relation that is transitive and irreflexive must
necessarily be asymmetric. Hence, the relation $\prec$ is asymmetric (since it
is transitive and irreflexive).}. In other words, it is a strict partial
order. Consider the set $\operatorname*{Comp}\nolimits_{n}$ as a poset
equipped with this strict partial order (so that the smaller relation of this
poset shall be the relation $\prec$).

Now, it is easy to see that every $\alpha\in\operatorname*{Comp}\nolimits_{n}$
satisfies%
\begin{align*}
\eta_{\alpha}^{\left(  q\right)  }  &  =r^{\ell\left(  \alpha\right)
}M_{\alpha}+\left(  \text{a }\mathbf{k}\text{-linear combination of
the}\right. \\
&  \ \ \ \ \ \ \ \ \ \ \ \ \ \ \ \ \ \ \ \ \left.  \text{elements }M_{\beta
}\text{ for }\beta\in\operatorname*{Comp}\nolimits_{n}\text{ satisfying }%
\beta\prec\alpha\right)
\end{align*}
\footnote{\textit{Proof.} Let $\alpha\in\operatorname*{Comp}\nolimits_{n}$.
From (\ref{eq.def.etaalpha.def}), we obtain%
\[
\eta_{\alpha}^{\left(  q\right)  }=\sum_{\substack{\beta\in
\operatorname*{Comp}\nolimits_{n};\\D\left(  \beta\right)  \subseteq D\left(
\alpha\right)  }}r^{\ell\left(  \beta\right)  }M_{\beta}.
\]
The sum on the right hand side here has an addend for $\beta=\alpha$ (since
$\alpha\in\operatorname*{Comp}\nolimits_{n}$ and $D\left(  \alpha\right)
\subseteq D\left(  \alpha\right)  $). Splitting this addend off from this sum,
we obtain%
\[
\sum_{\substack{\beta\in\operatorname*{Comp}\nolimits_{n};\\D\left(
\beta\right)  \subseteq D\left(  \alpha\right)  }}r^{\ell\left(  \beta\right)
}M_{\beta}=r^{\ell\left(  \alpha\right)  }M_{\alpha}+\sum_{\substack{\beta
\in\operatorname*{Comp}\nolimits_{n};\\D\left(  \beta\right)  \subseteq
D\left(  \alpha\right)  ;\\\beta\neq\alpha}}r^{\ell\left(  \beta\right)
}M_{\beta}.
\]
We shall now analyze the sum on the right hand side here.
\par
Let $\beta\in\operatorname*{Comp}\nolimits_{n}$ satisfy $D\left(
\beta\right)  \subseteq D\left(  \alpha\right)  $ and $\beta\neq\alpha$. We
shall show that $\beta\prec\alpha$.
\par
Indeed, we recall that $D:\operatorname*{Comp}\nolimits_{n}\rightarrow
\mathcal{P}\left(  \left[  n-1\right]  \right)  $ is a bijection. Hence, this
map $D$ is injective. Thus, from $\beta\neq\alpha$, we obtain $D\left(
\beta\right)  \neq D\left(  \alpha\right)  $. Combining this with $D\left(
\beta\right)  \subseteq D\left(  \alpha\right)  $, we see that $D\left(
\beta\right)  $ is a proper subset of $D\left(  \alpha\right)  $. Therefore,
$\left\vert D\left(  \beta\right)  \right\vert <\left\vert D\left(
\alpha\right)  \right\vert $ (because if $T$ is a proper subset of a finite
set $S$, then $\left\vert T\right\vert <\left\vert S\right\vert $). However,
Lemma \ref{lem.comps.l-vs-size} \textbf{(b)} yields $\ell\left(  \beta\right)
-\ell\left(  \alpha\right)  =\left\vert D\left(  \beta\right)  \right\vert
-\left\vert D\left(  \alpha\right)  \right\vert <0$ (since $\left\vert
D\left(  \beta\right)  \right\vert <\left\vert D\left(  \alpha\right)
\right\vert $). In other words, $\ell\left(  \beta\right)  <\ell\left(
\alpha\right)  $.
\par
By the definition of the relation $\prec$, we have $\beta\prec\alpha$ if and
only if $\ell\left(  \beta\right)  <\ell\left(  \alpha\right)  $. Thus, we
have $\beta\prec\alpha$ (since $\ell\left(  \beta\right)  <\ell\left(
\alpha\right)  $).
\par
Forget that we fixed $\beta$. We thus have shown that if $\beta\in
\operatorname*{Comp}\nolimits_{n}$ satisfies $D\left(  \beta\right)  \subseteq
D\left(  \alpha\right)  $ and $\beta\neq\alpha$, then $\beta\prec\alpha$.
Hence, in each addend of the sum $\sum_{\substack{\beta\in\operatorname*{Comp}%
\nolimits_{n};\\D\left(  \beta\right)  \subseteq D\left(  \alpha\right)
;\\\beta\neq\alpha}}r^{\ell\left(  \beta\right)  }M_{\beta}$, the summation
index $\beta$ satisfies $\beta\prec\alpha$. Thus, this sum is a $\mathbf{k}%
$-linear combination of the elements $M_{\beta}$ for $\beta\in
\operatorname*{Comp}\nolimits_{n}$ satisfying $\beta\prec\alpha$. In other
words,%
\begin{align*}
&  \sum_{\substack{\beta\in\operatorname*{Comp}\nolimits_{n};\\D\left(
\beta\right)  \subseteq D\left(  \alpha\right)  ;\\\beta\neq\alpha}%
}r^{\ell\left(  \beta\right)  }M_{\beta}\\
&  =\left(  \text{a }\mathbf{k}\text{-linear combination of the}\right. \\
&  \ \ \ \ \ \ \ \ \ \ \ \ \ \ \ \ \ \ \ \ \left.  \text{elements }M_{\beta
}\text{ for }\beta\in\operatorname*{Comp}\nolimits_{n}\text{ satisfying }%
\beta\prec\alpha\right)  .
\end{align*}
\par
Altogether, we now obtain%
\begin{align*}
\eta_{\alpha}^{\left(  q\right)  }  &  =\sum_{\substack{\beta\in
\operatorname*{Comp}\nolimits_{n};\\D\left(  \beta\right)  \subseteq D\left(
\alpha\right)  }}r^{\ell\left(  \beta\right)  }M_{\beta}=r^{\ell\left(
\alpha\right)  }M_{\alpha}+\underbrace{\sum_{\substack{\beta\in
\operatorname*{Comp}\nolimits_{n};\\D\left(  \beta\right)  \subseteq D\left(
\alpha\right)  ;\\\beta\neq\alpha}}r^{\ell\left(  \beta\right)  }M_{\beta}%
}_{\substack{=\left(  \text{a }\mathbf{k}\text{-linear combination of
the}\right.  \\\left.  \text{elements }M_{\beta}\text{ for }\beta
\in\operatorname*{Comp}\nolimits_{n}\text{ satisfying }\beta\prec
\alpha\right)  }}\\
&  =r^{\ell\left(  \alpha\right)  }M_{\alpha}+\left(  \text{a }\mathbf{k}%
\text{-linear combination of the}\right. \\
&  \ \ \ \ \ \ \ \ \ \ \ \ \ \ \ \ \ \ \ \ \left.  \text{elements }M_{\beta
}\text{ for }\beta\in\operatorname*{Comp}\nolimits_{n}\text{ satisfying }%
\beta\prec\alpha\right)  .
\end{align*}
Qed.}. Renaming the letters $\alpha$ and $\beta$ as $s$ and $t$ in this
sentence, we obtain the following: Every $s\in\operatorname*{Comp}%
\nolimits_{n}$ satisfies%
\begin{align*}
\eta_{s}^{\left(  q\right)  }  &  =r^{\ell\left(  s\right)  }M_{s}+\left(
\text{a }\mathbf{k}\text{-linear combination of the}\right. \\
&  \ \ \ \ \ \ \ \ \ \ \ \ \ \ \ \ \ \ \ \ \left.  \text{elements }M_{t}\text{
for }t\in\operatorname*{Comp}\nolimits_{n}\text{ satisfying }t\prec s\right)
.
\end{align*}
Furthermore, the coefficient $r^{\ell\left(  s\right)  }\in\mathbf{k}$ here is
invertible for each $s\in\operatorname*{Comp}\nolimits_{n}$ (because $r$ is
invertible). Moreover, we know that both families $\left(  M_{s}\right)
_{s\in\operatorname*{Comp}\nolimits_{n}}$ and $\left(  \eta_{s}^{\left(
q\right)  }\right)  _{s\in\operatorname*{Comp}\nolimits_{n}}$ are families of
elements of $\operatorname*{QSym}\nolimits_{n}$.

Hence, \cite[Remark 11.1.17 (b)]{GriRei} (applied to $M=\operatorname*{QSym}%
\nolimits_{n}$, $S=\operatorname*{Comp}\nolimits_{n}$, $\left(  e_{s}\right)
_{s\in S}=\left(  \eta_{s}^{\left(  q\right)  }\right)  _{s\in
\operatorname*{Comp}\nolimits_{n}}$ and $\left(  f_{s}\right)  _{s\in
S}=\left(  M_{s}\right)  _{s\in\operatorname*{Comp}\nolimits_{n}}$) shows that
the family $\left(  \eta_{s}^{\left(  q\right)  }\right)  _{s\in
\operatorname*{Comp}\nolimits_{n}}$ expands invertibly triangularly in the
family $\left(  M_{s}\right)  _{s\in\operatorname*{Comp}\nolimits_{n}}$ if and
only if every $s\in\operatorname*{Comp}\nolimits_{n}$ satisfies\footnote{Note
that the smaller relation of the poset $\operatorname*{Comp}\nolimits_{n}$
(which is denoted by $<$ in \cite[Remark 11.1.17 (b)]{GriRei}) is called
$\prec$ in our proof here.}%
\begin{align*}
\eta_{s}^{\left(  q\right)  }  &  =\alpha_{s}M_{s}+\left(  \text{a }%
\mathbf{k}\text{-linear combination of the}\right. \\
&  \ \ \ \ \ \ \ \ \ \ \ \ \ \ \ \ \ \ \ \ \left.  \text{elements }M_{t}\text{
for }t\in\operatorname*{Comp}\nolimits_{n}\text{ satisfying }t\prec s\right)
\end{align*}
for some invertible $\alpha_{s}\in\mathbf{k}$. Since every $s\in
\operatorname*{Comp}\nolimits_{n}$ does indeed satisfy%
\begin{align*}
\eta_{s}^{\left(  q\right)  }  &  =\alpha_{s}M_{s}+\left(  \text{a }%
\mathbf{k}\text{-linear combination of the}\right. \\
&  \ \ \ \ \ \ \ \ \ \ \ \ \ \ \ \ \ \ \ \ \left.  \text{elements }M_{t}\text{
for }t\in\operatorname*{Comp}\nolimits_{n}\text{ satisfying }t\prec s\right)
\end{align*}
for some invertible $\alpha_{s}\in\mathbf{k}$ (namely, for $\alpha_{s}%
=r^{\ell\left(  s\right)  }$) (because every $s\in\operatorname*{Comp}%
\nolimits_{n}$ satisfies%
\begin{align*}
\eta_{s}^{\left(  q\right)  }  &  =r^{\ell\left(  s\right)  }M_{s}+\left(
\text{a }\mathbf{k}\text{-linear combination of the}\right. \\
&  \ \ \ \ \ \ \ \ \ \ \ \ \ \ \ \ \ \ \ \ \left.  \text{elements }M_{t}\text{
for }t\in\operatorname*{Comp}\nolimits_{n}\text{ satisfying }t\prec s\right)
,
\end{align*}
and because $r^{\ell\left(  s\right)  }$ is invertible for each $s\in
\operatorname*{Comp}\nolimits_{n}$), we thus conclude that the family $\left(
\eta_{s}^{\left(  q\right)  }\right)  _{s\in\operatorname*{Comp}\nolimits_{n}%
}$ expands invertibly triangularly in the family $\left(  M_{s}\right)
_{s\in\operatorname*{Comp}\nolimits_{n}}$.

Hence, \cite[Corollary 11.1.19 (e)]{GriRei} (applied to
$M=\operatorname*{QSym}\nolimits_{n}$, $S=\operatorname*{Comp}\nolimits_{n}$,
$\left(  e_{s}\right)  _{s\in S}=\left(  \eta_{s}^{\left(  q\right)  }\right)
_{s\in\operatorname*{Comp}\nolimits_{n}}$ and $\left(  f_{s}\right)  _{s\in
S}=\left(  M_{s}\right)  _{s\in\operatorname*{Comp}\nolimits_{n}}$) shows that
the family $\left(  \eta_{s}^{\left(  q\right)  }\right)  _{s\in
\operatorname*{Comp}\nolimits_{n}}$ is a basis of the $\mathbf{k}$-module
$\operatorname*{QSym}\nolimits_{n}$ if and only if the family $\left(
M_{s}\right)  _{s\in\operatorname*{Comp}\nolimits_{n}}$ is a basis of the
$\mathbf{k}$-module $\operatorname*{QSym}\nolimits_{n}$. Hence, the family
$\left(  \eta_{s}^{\left(  q\right)  }\right)  _{s\in\operatorname*{Comp}%
\nolimits_{n}}$ is a basis of the $\mathbf{k}$-module $\operatorname*{QSym}%
\nolimits_{n}$ (since we know that the family $\left(  M_{s}\right)
_{s\in\operatorname*{Comp}\nolimits_{n}}$ is a basis of the $\mathbf{k}%
$-module $\operatorname*{QSym}\nolimits_{n}$). Renaming the letter $s$ as
$\alpha$ in this sentence, we obtain that the family $\left(  \eta_{\alpha
}^{\left(  q\right)  }\right)  _{\alpha\in\operatorname*{Comp}\nolimits_{n}}$
is a basis of the $\mathbf{k}$-module $\operatorname*{QSym}\nolimits_{n}$.
This proves Theorem \ref{thm.eta.basis} \textbf{(b)}. \medskip

\textbf{(a)} Since $\operatorname*{QSym}$ is a graded $\mathbf{k}$-module, we
have $\operatorname*{QSym}=\bigoplus\limits_{n\in\mathbb{N}}%
\operatorname*{QSym}\nolimits_{n}$.

Theorem \ref{thm.eta.basis} \textbf{(b)} shows that for each $n\in\mathbb{N}$,
the family $\left(  \eta_{\alpha}^{\left(  q\right)  }\right)  _{\alpha
\in\operatorname*{Comp}\nolimits_{n}}$ is a basis of the $\mathbf{k}$-module
$\operatorname*{QSym}\nolimits_{n}$. Hence, the union $\left(  \eta_{\alpha
}^{\left(  q\right)  }\right)  _{n\in\mathbb{N},\ \alpha\in
\operatorname*{Comp}\nolimits_{n}}$ of all these families is a basis of the
direct sum $\bigoplus\limits_{n\in\mathbb{N}}\operatorname*{QSym}%
\nolimits_{n}=\operatorname*{QSym}$. In other words, the family $\left(
\eta_{\alpha}^{\left(  q\right)  }\right)  _{\alpha\in\operatorname*{Comp}}$
is a basis of $\operatorname*{QSym}$ (since the family $\left(  \eta_{\alpha
}^{\left(  q\right)  }\right)  _{n\in\mathbb{N},\ \alpha\in
\operatorname*{Comp}\nolimits_{n}}$ is just a reindexing of the family
$\left(  \eta_{\alpha}^{\left(  q\right)  }\right)  _{\alpha\in
\operatorname*{Comp}}$ (because $\operatorname*{Comp}=\bigsqcup\limits_{n\in
\mathbb{N}}\operatorname*{Comp}\nolimits_{n}$)). This proves Theorem
\ref{thm.eta.basis} \textbf{(a)}.
\end{proof}
\end{verlong}

Theorem \ref{thm.eta.basis} \textbf{(a)} has a converse: If the family
$\left(  \eta_{\alpha}^{\left(  q\right)  }\right)  _{\alpha\in
\operatorname*{Comp}}$ is a basis of $\operatorname*{QSym}$, then $r$ is
invertible. (This is already clear from considering its unique degree-$1$
entry $\eta_{\left(  1\right)  }^{\left(  q\right)  }=rM_{\left(  1\right)  }$.)

\subsection{Relation to the fundamental basis}

We can also expand the $\eta_{\alpha}^{\left(  q\right)  }$ in the fundamental
basis and vice versa:

\begin{proposition}
\label{prop.eta.through-F}Let $n$ be a positive integer. Let $\alpha
\in\operatorname*{Comp}\nolimits_{n}$. Then,%
\[
\eta_{\alpha}^{\left(  q\right)  }=r\sum_{\gamma\in\operatorname*{Comp}%
\nolimits_{n}}\left(  -1\right)  ^{\left\vert D\left(  \gamma\right)
\setminus D\left(  \alpha\right)  \right\vert }q^{\left\vert D\left(
\gamma\right)  \cap D\left(  \alpha\right)  \right\vert }L_{\gamma}.
\]

\end{proposition}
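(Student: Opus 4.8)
The plan is to re-expand $\eta_\alpha^{\left(q\right)}$ from the monomial basis into the fundamental basis in two stages. The definition $(\ref{eq.def.etaalpha.def})$ already writes $\eta_\alpha^{\left(q\right)}=\sum_{\beta\in\operatorname{Comp}\nolimits_n;\ D\left(\beta\right)\subseteq D\left(\alpha\right)}r^{\ell\left(\beta\right)}M_\beta$. I would then substitute into this the inverse of the defining relation $(\ref{eq.Lalpha.def})$, namely the M\"obius-inversion formula
\[
M_\beta=\sum_{\substack{\gamma\in\operatorname{Comp}\nolimits_n;\\ D\left(\gamma\right)\supseteq D\left(\beta\right)}}\left(-1\right)^{\ell\left(\gamma\right)-\ell\left(\beta\right)}L_\gamma ,
\]
which is the standard inversion recorded earlier (see \cite[Proposition 5.2.8]{GriRei}). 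Plugging this in and interchanging the two summations leaves $\eta_\alpha^{\left(q\right)}=\sum_{\gamma\in\operatorname{Comp}\nolimits_n}c_\gamma L_\gamma$, where the coefficient $c_\gamma$ is a sum over those $\beta$ satisfying \emph{both} $D\left(\beta\right)\subseteq D\left(\alpha\right)$ and $D\left(\beta\right)\subseteq D\left(\gamma\right)$, i.e.\ $D\left(\beta\right)\subseteq D\left(\alpha\right)\cap D\left(\gamma\right)$.

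Since $D:\operatorname{Comp}\nolimits_n\to\mathcal{P}\left(\left[n-1\right]\right)$ is a bijection, summing over such $\beta$ is the same as summing over all subsets $B\subseteq D\left(\alpha\right)\cap D\left(\gamma\right)$. Writing $G:=D\left(\gamma\right)$ and $K:=D\left(\alpha\right)\cap D\left(\gamma\right)$, and using the length-versus-size identity $\ell\left(\delta\right)=\left\vert D\left(\delta\right)\right\vert+1$ valid for every $\delta\in\operatorname{Comp}\nolimits_n$ (Lemma \ref{lem.comps.l-vs-size}\textbf{(a)}, which holds because $n>0$), the coefficient becomes
\[
c_\gamma=\sum_{B\subseteq K}r^{\left\vert B\right\vert+1}\left(-1\right)^{\left(\left\vert G\right\vert+1\right)-\left(\left\vert B\right\vert+1\right)}=r\left(-1\right)^{\left\vert G\right\vert}\sum_{B\subseteq K}\left(-r\right)^{\left\vert B\right\vert}.
\]
The inner sum is a binomial expansion, $\sum_{B\subseteq K}\left(-r\right)^{\left\vert B\right\vert}=\left(1-r\right)^{\left\vert K\right\vert}=\left(-q\right)^{\left\vert K\right\vert}$, where $1-r=-q$ comes from $r=q+1$. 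Hence $c_\gamma=r\left(-1\right)^{\left\vert G\right\vert+\left\vert K\right\vert}q^{\left\vert K\right\vert}$.

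The final step is sign bookkeeping: since $\left\vert G\right\vert+\left\vert K\right\vert\equiv\left\vert G\right\vert-\left\vert K\right\vert\pmod 2$ and $\left\vert G\right\vert-\left\vert K\right\vert=\left\vert D\left(\gamma\right)\right\vert-\left\vert D\left(\gamma\right)\cap D\left(\alpha\right)\right\vert=\left\vert D\left(\gamma\right)\setminus D\left(\alpha\right)\right\vert$, we get $\left(-1\right)^{\left\vert G\right\vert+\left\vert K\right\vert}=\left(-1\right)^{\left\vert D\left(\gamma\right)\setminus D\left(\alpha\right)\right\vert}$, and $\left\vert K\right\vert=\left\vert D\left(\gamma\right)\cap D\left(\alpha\right)\right\vert$, so $c_\gamma$ equals the claimed coefficient $r\left(-1\right)^{\left\vert D\left(\gamma\right)\setminus D\left(\alpha\right)\right\vert}q^{\left\vert D\left(\gamma\right)\cap D\left(\alpha\right)\right\vert}$.

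I do not expect any genuinely hard step: the whole argument is one M\"obius inversion, one interchange of sums, and one binomial identity. The only place demanding care is the exponent arithmetic, specifically ensuring that the two subset constraints correctly merge into $D\left(\beta\right)\subseteq D\left(\alpha\right)\cap D\left(\gamma\right)$, and that the additive $+1$ in $\ell\left(\delta\right)=\left\vert D\left(\delta\right)\right\vert+1$ cancels inside the sign $\left(-1\right)^{\ell\left(\gamma\right)-\ell\left(\beta\right)}$ while surviving as the single overall factor of $r$. This is also exactly where the hypothesis $n>0$ is used, since the uniform identity $\ell=\left\vert D\right\vert+1$ fails for the empty composition, explaining why the proposition is stated only for positive $n$.
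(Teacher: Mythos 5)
Your proof is correct, but it travels in the opposite direction from the paper's. The paper starts from the right-hand side: it expands each $L_{\gamma}$ into monomials via the defining formula (\ref{eq.Lalpha.def}), interchanges the two summations, and then evaluates the inner sum over $\gamma$ (reindexed as subsets $I\subseteq D\left(  \beta\right)  $ via the bijection $D$) using Lemma \ref{lem.eta.through-F.lem1}, namely $\sum_{I\subseteq S}\left(  -1\right)  ^{\left\vert I\setminus T\right\vert }q^{\left\vert I\cap T\right\vert }=\left[  S\subseteq T\right]  \cdot r^{\left\vert S\right\vert }$; this collapses the double sum to $\sum_{D\left(  \beta\right)  \subseteq D\left(  \alpha\right)  }r^{\ell\left(  \beta\right)  }M_{\beta}=\eta_{\alpha}^{\left(  q\right)  }$. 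You instead start from the left-hand side, convert each $M_{\beta}$ into fundamentals via the M\"obius-inversion formula $M_{\beta}=\sum_{D\left(  \gamma\right)  \supseteq D\left(  \beta\right)  }\left(  -1\right)  ^{\ell\left(  \gamma\right)  -\ell\left(  \beta\right)  }L_{\gamma}$ (which the paper only records in passing, citing \cite[Proposition 5.2.8]{GriRei}), and then compute the coefficient of $L_{\gamma}$ by the plain binomial theorem, $\sum_{B\subseteq K}\left(  -r\right)  ^{\left\vert B\right\vert }=\left(  1-r\right)  ^{\left\vert K\right\vert }=\left(  -q\right)  ^{\left\vert K\right\vert }$. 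The trade-off: your argument is a genuine derivation---it discovers the coefficients rather than verifying given ones---at the cost of importing the $M$-to-$L$ inversion as an external ingredient; the paper's verification needs no inversion at all, and its Lemma \ref{lem.eta.through-F.lem1} does double duty, since the same lemma immediately yields the companion Proposition \ref{prop.eta.F-through}. Your identification of where positivity of $n$ enters (the identity $\ell\left(  \delta\right)  =\left\vert D\left(  \delta\right)  \right\vert +1$, which fails for the empty composition) matches the paper exactly; in both proofs this is the sole use of the hypothesis $n>0$.
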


\begin{proposition}
\label{prop.eta.F-through}Let $n$ be a positive integer. Let $\gamma
\in\operatorname*{Comp}\nolimits_{n}$. Then,%
\[
r^{n}L_{\gamma}=\sum_{\alpha\in\operatorname*{Comp}\nolimits_{n}}\left(
-1\right)  ^{\left\vert D\left(  \gamma\right)  \setminus D\left(
\alpha\right)  \right\vert }q^{\left\vert \left[  n-1\right]  \setminus\left(
D\left(  \gamma\right)  \cup D\left(  \alpha\right)  \right)  \right\vert
}\eta_{\alpha}^{\left(  q\right)  }.
\]

\end{proposition}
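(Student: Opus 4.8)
The plan is to recognize Proposition \ref{prop.eta.F-through} as the exact inverse of Proposition \ref{prop.eta.through-F}, and to verify it by direct substitution. Concretely, I would take the right-hand side
\[
R:=\sum_{\alpha\in\operatorname*{Comp}\nolimits_{n}}\left(-1\right)^{\left\vert D\left(\gamma\right)\setminus D\left(\alpha\right)\right\vert}q^{\left\vert\left[n-1\right]\setminus\left(D\left(\gamma\right)\cup D\left(\alpha\right)\right)\right\vert}\eta_{\alpha}^{\left(q\right)}
\]
and insert into it the $L$-expansion of $\eta_{\alpha}^{\left(q\right)}$ supplied by Proposition \ref{prop.eta.through-F} (with the bound $L$-index renamed to $\delta$). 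After interchanging the two summations and pulling $L_{\delta}$ to the front, $R$ becomes $r\sum_{\delta\in\operatorname*{Comp}\nolimits_{n}}c_{\delta}L_{\delta}$, where $c_{\delta}$ is the resulting inner sum over $\alpha$. Since $\left(L_{\delta}\right)_{\delta\in\operatorname*{Comp}\nolimits_{n}}$ is a basis of $\operatorname*{QSym}\nolimits_{n}$ over an arbitrary base ring $\mathbf{k}$, it then suffices to show $r\,c_{\delta}=r^{n}\left[\delta=\gamma\right]$ for each $\delta$. In particular no invertibility of $r$ is needed, since we never divide by $r$.

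Next I would reduce the computation of $c_{\delta}$ to a purely set-theoretic identity. Using that $D\colon\operatorname*{Comp}\nolimits_{n}\to\mathcal{P}\left(\left[n-1\right]\right)$ is a bijection, I would reindex the sum over $\alpha$ as a sum over subsets $A\subseteq\left[n-1\right]$, writing $G=D\left(\gamma\right)$, $E=D\left(\delta\right)$ and $A=D\left(\alpha\right)$. The desired equality $c_{\delta}=r^{n-1}\left[\delta=\gamma\right]$ then becomes the following lemma: for any finite set $S$ and any subsets $E,G\subseteq S$,
\[
\sum_{A\subseteq S}\left(-1\right)^{\left\vert G\setminus A\right\vert+\left\vert E\setminus A\right\vert}q^{\left\vert S\setminus\left(G\cup A\right)\right\vert+\left\vert E\cap A\right\vert}=r^{\left\vert S\right\vert}\left[E=G\right],
\]
applied with $S=\left[n-1\right]$, so that $\left\vert S\right\vert=n-1$. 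Multiplying by $r$ and recalling $r^{n}=r\cdot r^{n-1}$ then produces exactly the coefficient $r^{n}\left[\delta=\gamma\right]$, finishing the proof once the lemma is in hand.

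The heart of the argument is this lemma, which I would prove by factoring the sum over the elements of $S$. Each of the four quantities $\left\vert G\setminus A\right\vert$, $\left\vert E\setminus A\right\vert$, $\left\vert S\setminus\left(G\cup A\right)\right\vert$, $\left\vert E\cap A\right\vert$ is a sum of Iverson-bracket indicators over $x\in S$, so the summand factors as a product over $x\in S$, and the full sum over $A\subseteq S$ factors as a product of local sums, one for each element $x$ (over the two choices $x\in A$ and $x\notin A$). A short case analysis on whether $x\in G$ and whether $x\in E$ shows that each local sum equals $q+1=r$ when $x\in G\cap E$ or $x\notin G\cup E$, and equals $0$ when $x$ lies in exactly one of $G$ and $E$. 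Hence the product is $r^{\left\vert S\right\vert}$ when $G=E$ (every element falls into one of the two ``$r$'' cases) and $0$ otherwise (some element contributes a zero factor), which is precisely $r^{\left\vert S\right\vert}\left[E=G\right]$.

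I do not expect a genuine obstacle here: the only real work is the bookkeeping in the local case analysis, that is, correctly tracking the sign exponent $\left[x\in G\right]+\left[x\in E\right]$ (active only when $x\notin A$) and the $q$-exponent, which is $\left[x\notin G\right]$ for $x\notin A$ and $\left[x\in E\right]$ for $x\in A$. Once the four local sums are verified, the factorization and the reindexing assemble mechanically, and the final identity $R=r^{n}L_{\gamma}$ drops out.
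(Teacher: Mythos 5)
Your proposal is correct, but it takes a genuinely different route from the paper. The paper proves the identity directly from the definition \eqref{eq.def.etaalpha.def}: it expands each $\eta_{\alpha}^{\left(q\right)}$ in the monomial basis, interchanges the two sums, and evaluates the inner sum over $\alpha$ by applying Lemma \ref{lem.eta.through-F.lem1} to $S=\overline{D\left(\beta\right)}$ and $T=\overline{D\left(\gamma\right)}$ (a complementation/de Morgan trick), at which point the result is recognized as the $M$-expansion \eqref{eq.Lalpha.def} of $r^{n}L_{\gamma}$; in particular, the paper's proof is independent of Proposition \ref{prop.eta.through-F}. You instead substitute the already-proved Proposition \ref{prop.eta.through-F} into the right-hand side and verify that the two transition matrices compose to $r^{n}$ times the identity, which amounts to the orthogonality identity
\[
\sum_{A\subseteq S}\left(-1\right)^{\left\vert G\setminus A\right\vert+\left\vert E\setminus A\right\vert}q^{\left\vert S\setminus\left(G\cup A\right)\right\vert+\left\vert E\cap A\right\vert}=r^{\left\vert S\right\vert}\left[E=G\right],
\]
and your proof of this by factoring the sum over the elements of $S$ is sound: the local sums are $q+1=r$ when $x\in G\cap E$ or $x\notin G\cup E$, and $0$ when $x$ lies in exactly one of $G$ and $E$, exactly as you say. (Your appeal to the basis property of $\left(L_{\delta}\right)$ is not even needed, since you compute the coefficients $c_{\delta}$ outright rather than inferring them; and, as you note, no invertibility of $r$ enters.) What each approach buys: the paper's argument reuses its one set-theoretic lemma and stays self-contained, while yours makes explicit that Propositions \ref{prop.eta.through-F} and \ref{prop.eta.F-through} are mutually inverse basis-change formulas (up to the scalar $r^{n}$) — a structural fact the paper only hints at in its closing remark on the map $R_{q}$ — at the cost of depending on Proposition \ref{prop.eta.through-F} being established first (which it is, non-circularly).
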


Note that Proposition \ref{prop.eta.through-F} generalizes \cite[Proposition
2.2]{Hsiao07}.

Both propositions can be proved with the help of a rather simple
identity:\footnote{We will use Convention \ref{conv.iverson}.}

\begin{lemma}
\label{lem.eta.through-F.lem1}Let $S$ and $T$ be two finite sets. Then,%
\[
\sum_{I\subseteq S}\left(  -1\right)  ^{\left\vert I\setminus T\right\vert
}q^{\left\vert I\cap T\right\vert }=\left[  S\subseteq T\right]  \cdot
r^{\left\vert S\right\vert }.
\]

\end{lemma}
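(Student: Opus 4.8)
The plan is to recognize the left-hand side as an instance of the standard ``expand a product of binomials'' identity, applied factorwise over the elements of $S$. First I would observe that, for any subset $I\subseteq S$, the summand factors as a product over the elements of $I$: namely
\[
\left(  -1\right)  ^{\left\vert I\setminus T\right\vert }q^{\left\vert I\cap T\right\vert }=\prod_{s\in I}w\left(  s\right)  ,
\]
where $w\left(  s\right)  =q$ if $s\in T$ and $w\left(  s\right)  =-1$ if $s\notin T$. This is immediate, since $\left\vert I\setminus T\right\vert $ and $\left\vert I\cap T\right\vert $ merely count the elements $s\in I$ of these two respective kinds.

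Next I would invoke the well-known product expansion $\sum_{I\subseteq S}\prod_{s\in I}w\left(  s\right)  =\prod_{s\in S}\left(  1+w\left(  s\right)  \right)  $, valid for any family of ring elements $\left(  w\left(  s\right)  \right)  _{s\in S}$ indexed by a finite set $S$ (it follows by iterated distributivity, or by a one-line induction on $\left\vert S\right\vert $). Substituting our $w$, the right-hand product splits according to whether each $s\in S$ lies in $T$:
\[
\prod_{s\in S}\left(  1+w\left(  s\right)  \right)  =\prod_{s\in S\cap T}\left(  1+q\right)  \cdot\prod_{s\in S\setminus T}\left(  1-1\right)  =r^{\left\vert S\cap T\right\vert }\cdot0^{\left\vert S\setminus T\right\vert },
\]
using $r=q+1$.

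Finally I would observe that $0^{\left\vert S\setminus T\right\vert }$ equals $1$ when $S\setminus T=\varnothing$ (that is, when $S\subseteq T$) and equals $0$ otherwise, so that $0^{\left\vert S\setminus T\right\vert }=\left[  S\subseteq T\right]  $; and in the case $S\subseteq T$ we have $S\cap T=S$, whence $r^{\left\vert S\cap T\right\vert }=r^{\left\vert S\right\vert }$. Combining these two observations gives exactly $\left[  S\subseteq T\right]  \cdot r^{\left\vert S\right\vert }$, proving the lemma.

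I do not anticipate any genuine obstacle here: the only points requiring care are the clean bookkeeping that identifies the two exponents as elementwise counts, and the harmless convention $0^{0}=1$ needed so that the formula is correct in the boundary case $S\subseteq T$ (in particular $S=\varnothing$). Everything else reduces to the standard distributive-law identity.
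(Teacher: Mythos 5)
Your proof is correct, and it takes a genuinely different route from the one in the paper. You prove the identity uniformly by factoring each summand elementwise, writing $\left(-1\right)^{\left\vert I\setminus T\right\vert}q^{\left\vert I\cap T\right\vert}=\prod_{s\in I}w\left(s\right)$ with $w\left(s\right)=q$ for $s\in T$ and $w\left(s\right)=-1$ for $s\notin T$, and then invoking the distributive expansion $\sum_{I\subseteq S}\prod_{s\in I}w\left(s\right)=\prod_{s\in S}\left(1+w\left(s\right)\right)=r^{\left\vert S\cap T\right\vert}\cdot 0^{\left\vert S\setminus T\right\vert}$; the case distinction enters only in the trivial evaluation of $0^{\left\vert S\setminus T\right\vert}$ (with the convention $0^{0}=1$, which you correctly flag). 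The paper instead splits into cases at the outset: when $S\subseteq T$ it reduces the sum to $\sum_{I\subseteq S}q^{\left\vert I\right\vert}$ and applies the binomial formula, and when $S\not\subseteq T$ it picks an element $s\in S\setminus T$ and cancels the sum in pairs via the toggle $I\mapsto I\cup\left\{s\right\}$ (a sign-reversing bijection between subsets avoiding $s$ and subsets containing $s$). Your argument is shorter and makes the appearance of the per-element factor $1+q=r$ transparent, at the price of relying on the product-expansion identity; the paper's argument uses only the binomial theorem and an explicit involution, which keeps every step at the level of elementary set manipulations. Both are complete proofs of the lemma.
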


\begin{vershort}

\begin{proof}
[Proof of Lemma \ref{lem.eta.through-F.lem1}.]We are in one of the following
two cases:

\textit{Case 1:} We have $S\subseteq T$.

\textit{Case 2:} We have $S\not \subseteq T$.

Let us first consider Case 1. In this case, we have $S\subseteq T$. Hence,
$\left[  S\subseteq T\right]  =1$, so that $\left[  S\subseteq T\right]  \cdot
r^{\left\vert S\right\vert }=1\cdot r^{\left\vert S\right\vert }=r^{\left\vert
S\right\vert }$.

Also, each subset $I$ of $S$ satisfies $I\subseteq S\subseteq T$ and therefore
satisfies $I\setminus T=\varnothing$ and $I\cap T=I$. Hence,%
\begin{align}
\sum_{I\subseteq S}\underbrace{\left(  -1\right)  ^{\left\vert I\setminus
T\right\vert }q^{\left\vert I\cap T\right\vert }}_{\substack{=\left(
-1\right)  ^{\left\vert \varnothing\right\vert }q^{\left\vert I\right\vert
}\\\text{(since }I\setminus T=\varnothing\text{ and }I\cap T=I\text{)}}}  &
=\sum_{I\subseteq S}\underbrace{\left(  -1\right)  ^{\left\vert \varnothing
\right\vert }}_{=\left(  -1\right)  ^{0}=1}q^{\left\vert I\right\vert
}\nonumber\\
&  =\sum_{I\subseteq S}q^{\left\vert I\right\vert }.
\label{pf.lem.eta.through-F.lem1.1}%
\end{align}

Now, consider the sum $\sum_{I\subseteq S}q^{\left\vert I\right\vert }$ on the
right hand side. This sum contains $\dbinom{\left\vert S\right\vert }{k}$
copies of each possible power $q^{k}$ (since the number of $k$-element subsets
of $S$ is $\dbinom{\left\vert S\right\vert }{k}$), and thus can be rewritten
as $\sum_{k=0}^{\left\vert S\right\vert }\dbinom{\left\vert S\right\vert }%
{k}q^{k}$. Hence, (\ref{pf.lem.eta.through-F.lem1.1}) rewrites as
\begin{align*}
\sum_{I\subseteq S}\left(  -1\right)  ^{\left\vert I\setminus T\right\vert
}q^{\left\vert I\cap T\right\vert }  &  =\sum_{k=0}^{\left\vert S\right\vert
}\dbinom{\left\vert S\right\vert }{k}q^{k}\\
&  =\left(  1+q\right)  ^{\left\vert S\right\vert }\ \ \ \ \ \ \ \ \ \ \left(
\text{by the binomial formula}\right) \\
&  =r^{\left\vert S\right\vert }\ \ \ \ \ \ \ \ \ \ \left(  \text{since
}1+q=q+1=r\right) \\
&  =\left[  S\subseteq T\right]  \cdot r^{\left\vert S\right\vert
}\ \ \ \ \ \ \ \ \ \ \left(  \text{since }\left[  S\subseteq T\right]  \cdot
r^{\left\vert S\right\vert }=r^{\left\vert S\right\vert }\right)  .
\end{align*}
Thus, Lemma \ref{lem.eta.through-F.lem1} is proved in Case 1.

Let us now consider Case 2. In this case, we have $S\not \subseteq T$. Hence,
$\left[  S\subseteq T\right]  =0$, so that $\left[  S\subseteq T\right]  \cdot
r^{\left\vert S\right\vert }=0\cdot r^{\left\vert S\right\vert }=0$.

There exists some $s\in S$ such that $s\notin T$ (since $S\not \subseteq T$).
Consider this $s$. Now, each subset $I$ of $S$ satisfies either $s\in I$ or
$s\notin I$ (but not both). Hence,
\begin{align*}
&  \sum_{I\subseteq S}\left(  -1\right)  ^{\left\vert I\setminus T\right\vert
}q^{\left\vert I\cap T\right\vert }\\
&  =\sum_{\substack{I\subseteq S;\\s\in I}}\left(  -1\right)  ^{\left\vert
I\setminus T\right\vert }q^{\left\vert I\cap T\right\vert }+\sum
_{\substack{I\subseteq S;\\s\notin I}}\left(  -1\right)  ^{\left\vert
I\setminus T\right\vert }q^{\left\vert I\cap T\right\vert }\\
&  =\sum_{\substack{I\subseteq S;\\s\notin I}}\underbrace{\left(  -1\right)
^{\left\vert \left(  I\cup\left\{  s\right\}  \right)  \setminus T\right\vert
}}_{\substack{=\left(  -1\right)  ^{\left\vert I\setminus T\right\vert
+1}\\\text{(since }s\notin T\text{ and }s\notin I\text{, so}\\\text{that
}\left\vert \left(  I\cup\left\{  s\right\}  \right)  \setminus T\right\vert
=\left\vert I\setminus T\right\vert +1\text{)}}}\ \ \underbrace{q^{\left\vert
\left(  I\cup\left\{  s\right\}  \right)  \cap T\right\vert }}%
_{\substack{=q^{\left\vert I\cap T\right\vert }\\\text{(since }s\notin T\text{
and}\\\text{thus }\left(  I\cup\left\{  s\right\}  \right)  \cap T=I\cap
T\text{)}}}+\sum_{\substack{I\subseteq S;\\s\notin I}}\left(  -1\right)
^{\left\vert I\setminus T\right\vert }q^{\left\vert I\cap T\right\vert }\\
&  \ \ \ \ \ \ \ \ \ \ \ \ \ \ \ \ \ \ \ \ \left(
\begin{array}
[c]{c}%
\text{here, we have substituted }I\cup\left\{  s\right\}  \text{ for }I\text{
in the first sum,}\\
\text{since the map }\left\{  I\subseteq S\ \mid\ s\notin I\right\}
\rightarrow\left\{  I\subseteq S\ \mid\ s\in I\right\} \\
\text{that sends each }I\text{ to }I\cup\left\{  s\right\}  \text{ is a
bijection}%
\end{array}
\right) \\
&  =\sum_{\substack{I\subseteq S;\\s\notin I}}\underbrace{\left(  -1\right)
^{\left\vert I\setminus T\right\vert +1}}_{=-\left(  -1\right)  ^{\left\vert
I\setminus T\right\vert }}q^{\left\vert I\cap T\right\vert }+\sum
_{\substack{I\subseteq S;\\s\notin I}}\left(  -1\right)  ^{\left\vert
I\setminus T\right\vert }q^{\left\vert I\cap T\right\vert }\\
&  =-\sum_{\substack{I\subseteq S;\\s\notin I}}\left(  -1\right)  ^{\left\vert
I\setminus T\right\vert }q^{\left\vert I\cap T\right\vert }+\sum
_{\substack{I\subseteq S;\\s\notin I}}\left(  -1\right)  ^{\left\vert
I\setminus T\right\vert }q^{\left\vert I\cap T\right\vert }=0=\left[
S\subseteq T\right]  \cdot r^{\left\vert S\right\vert }%
\end{align*}
(since $\left[  S\subseteq T\right]  \cdot r^{\left\vert S\right\vert }=0$).
Thus, Lemma \ref{lem.eta.through-F.lem1} is proved in Case 2. The proof of
Lemma \ref{lem.eta.through-F.lem1} is thus complete.
\end{proof}
\end{vershort}

\begin{verlong}

\begin{proof}
[Proof of Lemma \ref{lem.eta.through-F.lem1}.]We are in one of the following
two cases:

\textit{Case 1:} We have $S\subseteq T$.

\textit{Case 2:} We have $S\not \subseteq T$.

Let us first consider Case 1. In this case, we have $S\subseteq T$. Therefore,
for each subset $I$ of $S$, we have $I\subseteq S\subseteq T$ and therefore
\[
\underbrace{\left(  -1\right)  ^{\left\vert I\setminus T\right\vert }%
}_{\substack{=\left(  -1\right)  ^{\left\vert \varnothing\right\vert
}\\\text{(since }I\setminus T=\varnothing\\\text{(because }I\subseteq
T\text{))}}}\ \ \underbrace{q^{\left\vert I\cap T\right\vert }}%
_{\substack{=q^{\left\vert I\right\vert }\\\text{(since }I\cap
T=I\\\text{(because }I\subseteq T\text{))}}}=\underbrace{\left(  -1\right)
^{\left\vert \varnothing\right\vert }}_{\substack{=\left(  -1\right)
^{0}\\\text{(since }\left\vert \varnothing\right\vert =0\text{)}%
}}q^{\left\vert I\right\vert }=\underbrace{\left(  -1\right)  ^{0}}%
_{=1}q^{\left\vert I\right\vert }=q^{\left\vert I\right\vert }.
\]
Summing this equality over all subsets $I$ of $S$, we obtain%
\begin{align*}
\sum_{I\subseteq S}\left(  -1\right)  ^{\left\vert I\setminus T\right\vert
}q^{\left\vert I\cap T\right\vert }  &  =\sum_{I\subseteq S}q^{\left\vert
I\right\vert }\\
&  =\sum_{k=0}^{\left\vert S\right\vert }\ \ \sum_{\substack{I\subseteq
S;\\\left\vert I\right\vert =k}}\underbrace{q^{\left\vert I\right\vert }%
}_{\substack{=q^{k}\\\text{(since }\left\vert I\right\vert =k\text{)}%
}}\ \ \ \ \ \ \ \ \ \ \left(
\begin{array}
[c]{c}%
\text{here, we have split the sum}\\
\text{according to the value of }\left\vert I\right\vert \text{,}\\
\text{since }\left\vert I\right\vert \in\left\{  0,1,\ldots,\left\vert
S\right\vert \right\}  \text{ for}\\
\text{every subset }I\text{ of }S
\end{array}
\right) \\
&  =\sum_{k=0}^{\left\vert S\right\vert }\ \ \underbrace{\sum
_{\substack{I\subseteq S;\\\left\vert I\right\vert =k}}q^{k}}_{=\left(
\text{number of all subsets }I\text{ of }S\text{ satisfying }\left\vert
I\right\vert =k\right)  \cdot q^{k}}\\
&  =\sum_{k=0}^{\left\vert S\right\vert }\underbrace{\left(  \text{number of
all subsets }I\text{ of }S\text{ satisfying }\left\vert I\right\vert
=k\right)  }_{\substack{=\left(  \text{number of all }k\text{-element subsets
of }S\right)  \\=\dbinom{\left\vert S\right\vert }{k}\\\text{(by the
combinatorial interpretation}\\\text{of the binomial coefficients)}}%
}\cdot\,q^{k}\\
&  =\sum_{k=0}^{\left\vert S\right\vert }\dbinom{\left\vert S\right\vert }%
{k}q^{k}.
\end{align*}
Comparing this with%
\begin{align*}
\underbrace{\left[  S\subseteq T\right]  }_{\substack{=1\\\text{(since
}S\subseteq T\text{)}}}\cdot\,r^{\left\vert S\right\vert }  &  =r^{\left\vert
S\right\vert }=\left(  q+1\right)  ^{\left\vert S\right\vert }%
\ \ \ \ \ \ \ \ \ \ \left(  \text{since }r=q+1\right) \\
&  =\sum_{k=0}^{\left\vert S\right\vert }\dbinom{\left\vert S\right\vert }%
{k}q^{k}\underbrace{1^{\left\vert S\right\vert -k}}_{=1}%
\ \ \ \ \ \ \ \ \ \ \left(  \text{by the binomial formula}\right) \\
&  =\sum_{k=0}^{\left\vert S\right\vert }\dbinom{\left\vert S\right\vert }%
{k}q^{k},
\end{align*}
we obtain%
\[
\sum_{I\subseteq S}\left(  -1\right)  ^{\left\vert I\setminus T\right\vert
}q^{\left\vert I\cap T\right\vert }=\left[  S\subseteq T\right]  \cdot
r^{\left\vert S\right\vert }.
\]
Thus, Lemma \ref{lem.eta.through-F.lem1} is proved in Case 1.

Let us now consider Case 2. In this case, we have $S\not \subseteq T$. Hence,
there exists some $s\in S$ such that $s\notin T$. Consider this $s$. Now, each
subset $I$ of $S$ satisfies either $s\in I$ or $s\notin I$ (but not both).
Hence, we can break up the sum $\sum_{I\subseteq S}\left(  -1\right)
^{\left\vert I\setminus T\right\vert }q^{\left\vert I\cap T\right\vert }$ as
follows:%
\begin{align*}
&  \sum_{I\subseteq S}\left(  -1\right)  ^{\left\vert I\setminus T\right\vert
}q^{\left\vert I\cap T\right\vert }\\
&  =\underbrace{\sum_{\substack{I\subseteq S;\\s\in I}}}_{=\sum_{I\in\left\{
J\subseteq S\ \mid\ s\in J\right\}  }}\left(  -1\right)  ^{\left\vert
I\setminus T\right\vert }q^{\left\vert I\cap T\right\vert }+\underbrace{\sum
_{\substack{I\subseteq S;\\s\notin I}}}_{=\sum_{I\in\left\{  J\subseteq
S\ \mid\ s\notin J\right\}  }}\left(  -1\right)  ^{\left\vert I\setminus
T\right\vert }q^{\left\vert I\cap T\right\vert }\\
&  =\sum_{I\in\left\{  J\subseteq S\ \mid\ s\in J\right\}  }\left(  -1\right)
^{\left\vert I\setminus T\right\vert }q^{\left\vert I\cap T\right\vert }%
+\sum_{I\in\left\{  J\subseteq S\ \mid\ s\notin J\right\}  }\left(  -1\right)
^{\left\vert I\setminus T\right\vert }q^{\left\vert I\cap T\right\vert }.
\end{align*}

However, if $I\in\left\{  J\subseteq S\ \mid\ s\notin J\right\}  $, then
$I\cup\left\{  s\right\}  \in\left\{  J\subseteq S\ \mid\ s\in J\right\}
$\ \ \ \ \footnote{\textit{Proof.} Let $I\in\left\{  J\subseteq S\ \mid
\ s\notin J\right\}  $. Thus, $I$ is a subset $J$ of $S$ satisfying $s\notin
J$. In other words, $I$ is a subset of $S$, and we have $s\notin I$.
Furthermore, $\left\{  s\right\}  $ is a subset of $S$ (since $s\in S$).
\par
Therefore, both $I$ and $\left\{  s\right\}  $ are subsets of $S$. Thus, their
union $I\cup\left\{  s\right\}  $ is a subset of $S$ as well. Hence,
$I\cup\left\{  s\right\}  $ is a subset $J$ of $S$ satisfying $s\in J$ (since
$s\in\left\{  s\right\}  \subseteq I\cup\left\{  s\right\}  $). In other
words, $I\cup\left\{  s\right\}  \in\left\{  J\subseteq S\ \mid\ s\in
J\right\}  $. Qed.}. Hence, we can define a map%
\begin{align*}
\Phi:\left\{  J\subseteq S\ \mid\ s\notin J\right\}   &  \rightarrow\left\{
J\subseteq S\ \mid\ s\in J\right\}  ,\\
I  &  \mapsto I\cup\left\{  s\right\}  .
\end{align*}
Consider this map $\Phi$.

Furthermore, if $I\in\left\{  J\subseteq S\ \mid\ s\in J\right\}  $, then
$I\setminus\left\{  s\right\}  \in\left\{  J\subseteq S\ \mid\ s\notin
J\right\}  $\ \ \ \ \footnote{\textit{Proof.} Let $I\in\left\{  J\subseteq
S\ \mid\ s\in J\right\}  $. Thus, $I$ is a subset $J$ of $S$ satisfying $s\in
J$. In other words, $I$ is a subset of $S$, and we have $s\in I$.
\par
We have $I\subseteq S$ (since $I$ is a subset of $S$) and thus $I\setminus
\left\{  s\right\}  \subseteq I\subseteq S$. In other words, $I\setminus
\left\{  s\right\}  $ is a subset of $S$. Thus, $I\setminus\left\{  s\right\}
$ is a subset $J$ of $S$ satisfying $s\notin J$ (since $s\notin I\setminus
\left\{  s\right\}  $ (because $s\in\left\{  s\right\}  $)). In other words,
$I\setminus\left\{  s\right\}  \in\left\{  J\subseteq S\ \mid\ s\notin
J\right\}  $. Qed.}. Hence, we can define a map%
\begin{align*}
\Psi:\left\{  J\subseteq S\ \mid\ s\in J\right\}   &  \rightarrow\left\{
J\subseteq S\ \mid\ s\notin J\right\}  ,\\
I  &  \mapsto I\setminus\left\{  s\right\}  .
\end{align*}
Consider this map $\Psi$.

We have $\Phi\circ\Psi=\operatorname*{id}$\ \ \ \ \footnote{\textit{Proof.}
Let $I\in\left\{  J\subseteq S\ \mid\ s\in J\right\}  $. Thus, $I$ is a subset
$J$ of $S$ satisfying $s\in J$. In other words, $I$ is a subset of $S$, and we
have $s\in I$. Furthermore, the definition of $\Phi$ yields$\ $
\[
\Phi\left(  \Psi\left(  I\right)  \right)  =\underbrace{\left(  \Psi\left(
I\right)  \right)  }_{\substack{=I\setminus\left\{  s\right\}  \\\text{(by the
definition of }\Psi\text{)}}}\cup\left\{  s\right\}  =\left(  I\setminus
\left\{  s\right\}  \right)  \cup\left\{  s\right\}  =I
\]
(since $s\in I$). Hence, $\left(  \Phi\circ\Psi\right)  \left(  I\right)
=\Phi\left(  \Psi\left(  I\right)  \right)  =I=\operatorname*{id}\left(
I\right)  $.
\par
Forget that we fixed $I$. We thus have shown that $\left(  \Phi\circ
\Psi\right)  \left(  I\right)  =\operatorname*{id}\left(  I\right)  $ for each
$I\in\left\{  J\subseteq S\ \mid\ s\in J\right\}  $. In other words,
$\Phi\circ\Psi=\operatorname*{id}$.} and $\Psi\circ\Phi=\operatorname*{id}%
$\ \ \ \ \footnote{\textit{Proof.} Let $I\in\left\{  J\subseteq S\ \mid
\ s\notin J\right\}  $. Thus, $I$ is a subset $J$ of $S$ satisfying $s\notin
J$. In other words, $I$ is a subset of $S$, and we have $s\notin I$.
Furthermore, the definition of $\Psi$ yields$\ $
\[
\Psi\left(  \Phi\left(  I\right)  \right)  =\underbrace{\left(  \Phi\left(
I\right)  \right)  }_{\substack{=I\cup\left\{  s\right\}  \\\text{(by the
definition of }\Phi\text{)}}}\setminus\left\{  s\right\}  =\left(
I\cup\left\{  s\right\}  \right)  \setminus\left\{  s\right\}  =I
\]
(since $s\notin I$). Hence, $\left(  \Psi\circ\Phi\right)  \left(  I\right)
=\Psi\left(  \Phi\left(  I\right)  \right)  =I=\operatorname*{id}\left(
I\right)  $.
\par
Forget that we fixed $I$. We thus have shown that $\left(  \Psi\circ
\Phi\right)  \left(  I\right)  =\operatorname*{id}\left(  I\right)  $ for each
$I\in\left\{  J\subseteq S\ \mid\ s\notin J\right\}  $. In other words,
$\Psi\circ\Phi=\operatorname*{id}$.}. Hence, the two maps $\Phi$ and $\Psi$
are mutually inverse. Thus, the map $\Phi$ is invertible, i.e., is a bijection.

Moreover, each $I\in\left\{  J\subseteq S\ \mid\ s\notin J\right\}  $
satisfies%
\begin{equation}
\left(  -1\right)  ^{\left\vert \Phi\left(  I\right)  \setminus T\right\vert
}=-\left(  -1\right)  ^{\left\vert I\setminus T\right\vert }
\label{pf.lem.eta.through-F.lem1.long.8}%
\end{equation}
\footnote{\textit{Proof.} Let $I\in\left\{  J\subseteq S\ \mid\ s\notin
J\right\}  $. Thus, $I$ is a subset $J$ of $S$ satisfying $s\notin J$. In
other words, $I$ is a subset of $S$, and we have $s\notin I$.
\par
We have $s\notin I\setminus T$ (since $s\in I\setminus T$ would yield $s\in
I\setminus T\subseteq I$, which would contradict $s\notin I$).
\par
The definition of $\Phi$ yields $\Phi\left(  I\right)  =I\cup\left\{
s\right\}  $. Hence,%
\begin{align*}
\underbrace{\Phi\left(  I\right)  }_{=I\cup\left\{  s\right\}  }\setminus T
&  =\left(  I\cup\left\{  s\right\}  \right)  \setminus T\\
&  =\left(  I\setminus T\right)  \cup\underbrace{\left(  \left\{  s\right\}
\setminus T\right)  }_{\substack{=\left\{  s\right\}  \\\text{(since }s\notin
T\text{)}}}\ \ \ \ \ \ \ \ \ \ \left(
\begin{array}
[c]{c}%
\text{by the rule }\left(  A\cup B\right)  \setminus C=\left(  A\setminus
C\right)  \cup\left(  B\setminus C\right)  \text{,}\\
\text{which holds for any three sets }A\text{, }B\text{ and }C
\end{array}
\right) \\
&  =\left(  I\setminus T\right)  \cup\left\{  s\right\}  .
\end{align*}
Thus,%
\[
\left\vert \Phi\left(  I\right)  \setminus T\right\vert =\left\vert \left(
I\setminus T\right)  \cup\left\{  s\right\}  \right\vert =\left\vert
I\setminus T\right\vert +1
\]
(since $s\notin I\setminus T$). Hence, $\left(  -1\right)  ^{\left\vert
\Phi\left(  I\right)  \setminus T\right\vert }=\left(  -1\right)  ^{\left\vert
I\setminus T\right\vert +1}=-\left(  -1\right)  ^{\left\vert I\setminus
T\right\vert }$, qed.} and%
\begin{equation}
q^{\left\vert \Phi\left(  I\right)  \cap T\right\vert }=q^{\left\vert I\cap
T\right\vert } \label{pf.lem.eta.through-F.lem1.long.9}%
\end{equation}
\footnote{\textit{Proof.} Let $I\in\left\{  J\subseteq S\ \mid\ s\notin
J\right\}  $. Thus, $I$ is a subset $J$ of $S$ satisfying $s\notin J$. In
other words, $I$ is a subset of $S$, and we have $s\notin I$.
\par
The definition of $\Phi$ yields $\Phi\left(  I\right)  =I\cup\left\{
s\right\}  $. Hence,%
\begin{align*}
\underbrace{\Phi\left(  I\right)  }_{=I\cup\left\{  s\right\}  }\cap T  &
=\left(  I\cup\left\{  s\right\}  \right)  \cap T\\
&  =\left(  I\cap T\right)  \cup\underbrace{\left(  \left\{  s\right\}  \cap
T\right)  }_{\substack{=\varnothing\\\text{(since }s\notin T\text{)}%
}}\ \ \ \ \ \ \ \ \ \ \left(
\begin{array}
[c]{c}%
\text{by the rule }\left(  A\cup B\right)  \cap C=\left(  A\cap C\right)
\cup\left(  B\cap C\right)  \text{,}\\
\text{which holds for any three sets }A\text{, }B\text{ and }C
\end{array}
\right) \\
&  =I\cap T.
\end{align*}
Thus, $q^{\left\vert \Phi\left(  I\right)  \cap T\right\vert }=q^{\left\vert
I\cap T\right\vert }$, qed.}.

Now,%
\begin{align*}
&  \sum_{I\in\left\{  J\subseteq S\ \mid\ s\in J\right\}  }\left(  -1\right)
^{\left\vert I\setminus T\right\vert }q^{\left\vert I\cap T\right\vert }\\
&  =\sum_{I\in\left\{  J\subseteq S\ \mid\ s\notin J\right\}  }%
\underbrace{\left(  -1\right)  ^{\left\vert \Phi\left(  I\right)  \setminus
T\right\vert }}_{\substack{=-\left(  -1\right)  ^{\left\vert I\setminus
T\right\vert }\\\text{(by (\ref{pf.lem.eta.through-F.lem1.long.8}))}%
}}\underbrace{q^{\left\vert \Phi\left(  I\right)  \cap T\right\vert }%
}_{\substack{=q^{\left\vert I\cap T\right\vert }\\\text{(by
(\ref{pf.lem.eta.through-F.lem1.long.9}))}}}\\
&  \ \ \ \ \ \ \ \ \ \ \ \ \ \ \ \ \ \ \ \ \left(
\begin{array}
[c]{c}%
\text{here, we have substituted }\Phi\left(  I\right)  \text{ for }I\text{ in
the sum,}\\
\text{since the map }\Phi:\left\{  J\subseteq S\ \mid\ s\notin J\right\}
\rightarrow\left\{  J\subseteq S\ \mid\ s\in J\right\} \\
\text{is a bijection}%
\end{array}
\right) \\
&  =\sum_{I\in\left\{  J\subseteq S\ \mid\ s\notin J\right\}  }\left(
-\left(  -1\right)  ^{\left\vert I\setminus T\right\vert }\right)
q^{\left\vert I\cap T\right\vert }=-\sum_{I\in\left\{  J\subseteq
S\ \mid\ s\notin J\right\}  }\left(  -1\right)  ^{\left\vert I\setminus
T\right\vert }q^{\left\vert I\cap T\right\vert }.
\end{align*}

Now, recall that%
\begin{align*}
&  \sum_{I\subseteq S}\left(  -1\right)  ^{\left\vert I\setminus T\right\vert
}q^{\left\vert I\cap T\right\vert }\\
&  =\underbrace{\sum_{I\in\left\{  J\subseteq S\ \mid\ s\in J\right\}
}\left(  -1\right)  ^{\left\vert I\setminus T\right\vert }q^{\left\vert I\cap
T\right\vert }}_{=-\sum_{I\in\left\{  J\subseteq S\ \mid\ s\notin J\right\}
}\left(  -1\right)  ^{\left\vert I\setminus T\right\vert }q^{\left\vert I\cap
T\right\vert }}+\sum_{I\in\left\{  J\subseteq S\ \mid\ s\notin J\right\}
}\left(  -1\right)  ^{\left\vert I\setminus T\right\vert }q^{\left\vert I\cap
T\right\vert }\\
&  =-\sum_{I\in\left\{  J\subseteq S\ \mid\ s\notin J\right\}  }\left(
-1\right)  ^{\left\vert I\setminus T\right\vert }q^{\left\vert I\cap
T\right\vert }+\sum_{I\in\left\{  J\subseteq S\ \mid\ s\notin J\right\}
}\left(  -1\right)  ^{\left\vert I\setminus T\right\vert }q^{\left\vert I\cap
T\right\vert }=0.
\end{align*}
Comparing this with%
\[
\underbrace{\left[  S\subseteq T\right]  }_{\substack{=0\\\text{(since
}S\not \subseteq T\text{)}}}\cdot\,r^{\left\vert S\right\vert }=0,
\]
we obtain%
\[
\sum_{I\subseteq S}\left(  -1\right)  ^{\left\vert I\setminus T\right\vert
}q^{\left\vert I\cap T\right\vert }=\left[  S\subseteq T\right]  \cdot
r^{\left\vert S\right\vert }.
\]
Thus, Lemma \ref{lem.eta.through-F.lem1} is proved in Case 2.

We have now proved Lemma \ref{lem.eta.through-F.lem1} in both Cases 1 and 2.
The proof of Lemma \ref{lem.eta.through-F.lem1} is thus complete.
\end{proof}
\end{verlong}

\begin{vershort}

\begin{proof}
[Proof of Proposition \ref{prop.eta.through-F}.]We begin by observing that%
\begin{equation}
\left\vert D\left(  \beta\right)  \right\vert +1=\ell\left(  \beta\right)
\label{pf.prop.eta.through-F.short.Dl}%
\end{equation}
for every $\beta\in\operatorname*{Comp}\nolimits_{n}$%
\ \ \ \ \footnote{\textit{Proof.} Let $\beta\in\operatorname*{Comp}%
\nolimits_{n}$. From $n\neq0$ (since $n$ is positive), we obtain $\left[
n\neq0\right]  =1$. However, Lemma \ref{lem.comps.l-vs-size} \textbf{(a)}
(applied to $\delta=\beta$) yields $\ell\left(  \beta\right)  =\left\vert
D\left(  \beta\right)  \right\vert +\underbrace{\left[  n\neq0\right]  }%
_{=1}=\left\vert D\left(  \beta\right)  \right\vert +1$. This proves
(\ref{pf.prop.eta.through-F.short.Dl}).}.

Let $T:=D\left(  \alpha\right)  $. Thus, $D\left(  \alpha\right)  =T$, so that%
\begin{align*}
&  r\sum_{\gamma\in\operatorname*{Comp}\nolimits_{n}}\left(  -1\right)
^{\left\vert D\left(  \gamma\right)  \setminus D\left(  \alpha\right)
\right\vert }q^{\left\vert D\left(  \gamma\right)  \cap D\left(
\alpha\right)  \right\vert }L_{\gamma}\\
&  =r\sum_{\gamma\in\operatorname*{Comp}\nolimits_{n}}\left(  -1\right)
^{\left\vert D\left(  \gamma\right)  \setminus T\right\vert }q^{\left\vert
D\left(  \gamma\right)  \cap T\right\vert }\underbrace{L_{\gamma}%
}_{\substack{=\sum_{\substack{\beta\in\operatorname*{Comp}\nolimits_{n}%
;\\D\left(  \beta\right)  \supseteq D\left(  \gamma\right)  }}M_{\beta
}\\\text{(by the definition of }L_{\gamma}\text{)}}}\\
&  =r\sum_{\gamma\in\operatorname*{Comp}\nolimits_{n}}\left(  -1\right)
^{\left\vert D\left(  \gamma\right)  \setminus T\right\vert }q^{\left\vert
D\left(  \gamma\right)  \cap T\right\vert }\sum_{\substack{\beta
\in\operatorname*{Comp}\nolimits_{n};\\D\left(  \beta\right)  \supseteq
D\left(  \gamma\right)  }}M_{\beta}\\
&  =r\sum_{\beta\in\operatorname*{Comp}\nolimits_{n}}\ \ \sum
_{\substack{\gamma\in\operatorname*{Comp}\nolimits_{n};\\D\left(
\beta\right)  \supseteq D\left(  \gamma\right)  }}\left(  -1\right)
^{\left\vert D\left(  \gamma\right)  \setminus T\right\vert }q^{\left\vert
D\left(  \gamma\right)  \cap T\right\vert }M_{\beta}.
\end{align*}
However, every $\beta\in\operatorname*{Comp}\nolimits_{n}$ satisfies%
\begin{align*}
&  \sum_{\substack{\gamma\in\operatorname*{Comp}\nolimits_{n};\\D\left(
\beta\right)  \supseteq D\left(  \gamma\right)  }}\left(  -1\right)
^{\left\vert D\left(  \gamma\right)  \setminus T\right\vert }q^{\left\vert
D\left(  \gamma\right)  \cap T\right\vert }\\
&  =\sum_{\substack{I\subseteq\left[  n-1\right]  ;\\D\left(  \beta\right)
\supseteq I}}\left(  -1\right)  ^{\left\vert I\setminus T\right\vert
}q^{\left\vert I\cap T\right\vert }\\
&  \ \ \ \ \ \ \ \ \ \ \ \ \ \ \ \ \ \ \ \ \left(
\begin{array}
[c]{c}%
\text{here, we have substituted }I\text{ for }D\left(  \gamma\right)  \text{
in the sum,}\\
\text{since the map }D:\operatorname*{Comp}\nolimits_{n}\rightarrow
\mathcal{P}\left(  \left[  n-1\right]  \right)  \text{ is a bijection}%
\end{array}
\right) \\
&  =\sum_{I\subseteq D\left(  \beta\right)  }\left(  -1\right)  ^{\left\vert
I\setminus T\right\vert }q^{\left\vert I\cap T\right\vert }%
\ \ \ \ \ \ \ \ \ \ \left(  \text{since }D\left(  \beta\right)  \subseteq
\left[  n-1\right]  \right) \\
&  =\left[  D\left(  \beta\right)  \subseteq T\right]  \cdot r^{\left\vert
D\left(  \beta\right)  \right\vert }%
\end{align*}
(by Lemma \ref{lem.eta.through-F.lem1}, applied to $S=D\left(  \beta\right)
$). Hence, this becomes%
\begin{align*}
&  r\sum_{\gamma\in\operatorname*{Comp}\nolimits_{n}}\left(  -1\right)
^{\left\vert D\left(  \gamma\right)  \setminus D\left(  \alpha\right)
\right\vert }q^{\left\vert D\left(  \gamma\right)  \cap D\left(
\alpha\right)  \right\vert }L_{\gamma}\\
&  =r\sum_{\beta\in\operatorname*{Comp}\nolimits_{n}}\ \ \underbrace{\sum
_{\substack{\gamma\in\operatorname*{Comp}\nolimits_{n};\\D\left(
\beta\right)  \supseteq D\left(  \gamma\right)  }}\left(  -1\right)
^{\left\vert D\left(  \gamma\right)  \setminus T\right\vert }q^{\left\vert
D\left(  \gamma\right)  \cap T\right\vert }}_{=\left[  D\left(  \beta\right)
\subseteq T\right]  \cdot r^{\left\vert D\left(  \beta\right)  \right\vert }%
}M_{\beta}\\
&  =r\sum_{\beta\in\operatorname*{Comp}\nolimits_{n}}\left[  D\left(
\beta\right)  \subseteq T\right]  \cdot r^{\left\vert D\left(  \beta\right)
\right\vert }M_{\beta}=r\sum_{\substack{\beta\in\operatorname*{Comp}%
\nolimits_{n};\\D\left(  \beta\right)  \subseteq T}}r^{\left\vert D\left(
\beta\right)  \right\vert }M_{\beta}\\
&  =\sum_{\substack{\beta\in\operatorname*{Comp}\nolimits_{n};\\D\left(
\beta\right)  \subseteq T}}\underbrace{r^{\left\vert D\left(  \beta\right)
\right\vert +1}}_{\substack{=r^{\ell\left(  \beta\right)  }\\\text{(by
(\ref{pf.prop.eta.through-F.short.Dl}))}}}M_{\beta}=\sum_{\substack{\beta
\in\operatorname*{Comp}\nolimits_{n};\\D\left(  \beta\right)  \subseteq
T}}r^{\ell\left(  \beta\right)  }M_{\beta}\\
&  =\sum_{\substack{\beta\in\operatorname*{Comp}\nolimits_{n};\\D\left(
\beta\right)  \subseteq D\left(  \alpha\right)  }}r^{\ell\left(  \beta\right)
}M_{\beta}\ \ \ \ \ \ \ \ \ \ \left(  \text{since }T=D\left(  \alpha\right)
\right) \\
&  =\eta_{\alpha}^{\left(  q\right)  }\ \ \ \ \ \ \ \ \ \ \left(  \text{by the
definition of }\eta_{\alpha}^{\left(  q\right)  }\right)  .
\end{align*}
This proves Proposition \ref{prop.eta.through-F}.
\end{proof}
\end{vershort}

\begin{verlong}

\begin{proof}
[Proof of Proposition \ref{prop.eta.through-F}.]Let $T:=D\left(
\alpha\right)  $. Thus, $D\left(  \alpha\right)  =T$, so that%
\begin{align}
&  r\sum_{\gamma\in\operatorname*{Comp}\nolimits_{n}}\left(  -1\right)
^{\left\vert D\left(  \gamma\right)  \setminus D\left(  \alpha\right)
\right\vert }q^{\left\vert D\left(  \gamma\right)  \cap D\left(
\alpha\right)  \right\vert }L_{\gamma}\nonumber\\
&  =r\sum_{\gamma\in\operatorname*{Comp}\nolimits_{n}}\left(  -1\right)
^{\left\vert D\left(  \gamma\right)  \setminus T\right\vert }q^{\left\vert
D\left(  \gamma\right)  \cap T\right\vert }\underbrace{L_{\gamma}%
}_{\substack{=\sum_{\substack{\beta\in\operatorname*{Comp}\nolimits_{n}%
;\\D\left(  \beta\right)  \supseteq D\left(  \gamma\right)  }}M_{\beta
}\\\text{(by the definition of }L_{\gamma}\text{)}}}\nonumber\\
&  =r\sum_{\gamma\in\operatorname*{Comp}\nolimits_{n}}\left(  -1\right)
^{\left\vert D\left(  \gamma\right)  \setminus T\right\vert }q^{\left\vert
D\left(  \gamma\right)  \cap T\right\vert }\sum_{\substack{\beta
\in\operatorname*{Comp}\nolimits_{n};\\D\left(  \beta\right)  \supseteq
D\left(  \gamma\right)  }}M_{\beta}\nonumber\\
&  =r\underbrace{\sum_{\gamma\in\operatorname*{Comp}\nolimits_{n}}%
\ \ \sum_{\substack{\beta\in\operatorname*{Comp}\nolimits_{n};\\D\left(
\beta\right)  \supseteq D\left(  \gamma\right)  }}}_{=\sum_{\beta
\in\operatorname*{Comp}\nolimits_{n}}\ \ \sum_{\substack{\gamma\in
\operatorname*{Comp}\nolimits_{n};\\D\left(  \beta\right)  \supseteq D\left(
\gamma\right)  }}}\left(  -1\right)  ^{\left\vert D\left(  \gamma\right)
\setminus T\right\vert }q^{\left\vert D\left(  \gamma\right)  \cap
T\right\vert }M_{\beta}\nonumber\\
&  =r\sum_{\beta\in\operatorname*{Comp}\nolimits_{n}}\ \ \sum
_{\substack{\gamma\in\operatorname*{Comp}\nolimits_{n};\\D\left(
\beta\right)  \supseteq D\left(  \gamma\right)  }}\left(  -1\right)
^{\left\vert D\left(  \gamma\right)  \setminus T\right\vert }q^{\left\vert
D\left(  \gamma\right)  \cap T\right\vert }M_{\beta}\nonumber\\
&  =r\sum_{\beta\in\operatorname*{Comp}\nolimits_{n}}\left(  \sum
_{\substack{\gamma\in\operatorname*{Comp}\nolimits_{n};\\D\left(
\beta\right)  \supseteq D\left(  \gamma\right)  }}\left(  -1\right)
^{\left\vert D\left(  \gamma\right)  \setminus T\right\vert }q^{\left\vert
D\left(  \gamma\right)  \cap T\right\vert }\right)  M_{\beta}.
\label{prop.eta.through-F.long.1}%
\end{align}
Now, let $\beta\in\operatorname*{Comp}\nolimits_{n}$ be arbitrary. Recall that
$D:\operatorname*{Comp}\nolimits_{n}\rightarrow\mathcal{P}\left(  \left[
n-1\right]  \right)  $ is a bijection. Hence, from $\beta\in
\operatorname*{Comp}\nolimits_{n}$, we obtain $D\left(  \beta\right)
\in\mathcal{P}\left(  \left[  n-1\right]  \right)  $. In other words,
$D\left(  \beta\right)  \subseteq\left[  n-1\right]  $. Furthermore,%
\begin{align}
&  \sum_{\substack{\gamma\in\operatorname*{Comp}\nolimits_{n};\\D\left(
\beta\right)  \supseteq D\left(  \gamma\right)  }}\left(  -1\right)
^{\left\vert D\left(  \gamma\right)  \setminus T\right\vert }q^{\left\vert
D\left(  \gamma\right)  \cap T\right\vert }\nonumber\\
&  =\underbrace{\sum_{\substack{I\in\mathcal{P}\left(  \left[  n-1\right]
\right)  ;\\D\left(  \beta\right)  \supseteq I}}}_{\substack{=\sum
_{\substack{I\subseteq\left[  n-1\right]  ;\\D\left(  \beta\right)  \supseteq
I}}\\\text{(here, we have replaced}\\\text{the condition \textquotedblleft%
}I\in\mathcal{P}\left(  \left[  n-1\right]  \right)  \text{\textquotedblright%
}\\\text{under the summation sign}\\\text{by the equivalent}\\\text{condition
\textquotedblleft}I\subseteq\left[  n-1\right]  \text{\textquotedblright)}%
}}\left(  -1\right)  ^{\left\vert I\setminus T\right\vert }q^{\left\vert I\cap
T\right\vert }\nonumber\\
&  \ \ \ \ \ \ \ \ \ \ \ \ \ \ \ \ \ \ \ \ \left(
\begin{array}
[c]{c}%
\text{here, we have substituted }I\text{ for }D\left(  \gamma\right)  \text{
in the sum,}\\
\text{since the map }D:\operatorname*{Comp}\nolimits_{n}\rightarrow
\mathcal{P}\left(  \left[  n-1\right]  \right)  \text{ is a bijection}%
\end{array}
\right) \nonumber\\
&  =\underbrace{\sum_{\substack{I\subseteq\left[  n-1\right]  ;\\D\left(
\beta\right)  \supseteq I}}}_{\substack{=\sum_{\substack{I\subseteq\left[
n-1\right]  ;\\I\subseteq D\left(  \beta\right)  }}\\\text{(here, we have
replaced}\\\text{the condition \textquotedblleft}D\left(  \beta\right)
\supseteq I\text{\textquotedblright}\\\text{under the summation sign}%
\\\text{by the equivalent}\\\text{condition \textquotedblleft}I\subseteq
D\left(  \beta\right)  \text{\textquotedblright)}}}\left(  -1\right)
^{\left\vert I\setminus T\right\vert }q^{\left\vert I\cap T\right\vert
}\nonumber\\
&  =\underbrace{\sum_{\substack{I\subseteq\left[  n-1\right]  ;\\I\subseteq
D\left(  \beta\right)  }}}_{\substack{=\sum_{I\subseteq D\left(  \beta\right)
}\\\text{(since }D\left(  \beta\right)  \subseteq\left[  n-1\right]  \text{)}%
}}\left(  -1\right)  ^{\left\vert I\setminus T\right\vert }q^{\left\vert I\cap
T\right\vert }\nonumber\\
&  =\sum_{I\subseteq D\left(  \beta\right)  }\left(  -1\right)  ^{\left\vert
I\setminus T\right\vert }q^{\left\vert I\cap T\right\vert }\nonumber\\
&  =\left[  D\left(  \beta\right)  \subseteq T\right]  \cdot r^{\left\vert
D\left(  \beta\right)  \right\vert } \label{prop.eta.through-F.long.2}%
\end{align}
(by Lemma \ref{lem.eta.through-F.lem1}, applied to $S=D\left(  \beta\right)  $).

Also, $n\neq0$ (since $n$ is positive), so that $\left[  n\neq0\right]  =1$.
However, Lemma \ref{lem.comps.l-vs-size} \textbf{(a)} (applied to
$\delta=\beta$) yields $\ell\left(  \beta\right)  =\left\vert D\left(
\beta\right)  \right\vert +\underbrace{\left[  n\neq0\right]  }_{=1}%
=\left\vert D\left(  \beta\right)  \right\vert +1$, so that%
\begin{equation}
r^{\ell\left(  \beta\right)  }=r^{\left\vert D\left(  \beta\right)
\right\vert +1}=rr^{\left\vert D\left(  \beta\right)  \right\vert }.
\label{pf.lem.eta.through-F.lem1.3}%
\end{equation}

Forget that we fixed $\beta$. We thus have proved the equalities
(\ref{prop.eta.through-F.long.2}) and (\ref{pf.lem.eta.through-F.lem1.3}) for
each $\beta\in\operatorname*{Comp}\nolimits_{n}$.

Now, (\ref{prop.eta.through-F.long.1}) becomes%
\begin{align*}
&  r\sum_{\gamma\in\operatorname*{Comp}\nolimits_{n}}\left(  -1\right)
^{\left\vert D\left(  \gamma\right)  \setminus D\left(  \alpha\right)
\right\vert }q^{\left\vert D\left(  \gamma\right)  \cap D\left(
\alpha\right)  \right\vert }L_{\gamma}\\
&  =r\sum_{\beta\in\operatorname*{Comp}\nolimits_{n}}\underbrace{\left(
\sum_{\substack{\gamma\in\operatorname*{Comp}\nolimits_{n};\\D\left(
\beta\right)  \supseteq D\left(  \gamma\right)  }}\left(  -1\right)
^{\left\vert D\left(  \gamma\right)  \setminus T\right\vert }q^{\left\vert
D\left(  \gamma\right)  \cap T\right\vert }\right)  }_{\substack{=\left[
D\left(  \beta\right)  \subseteq T\right]  \cdot r^{\left\vert D\left(
\beta\right)  \right\vert }\\\text{(by (\ref{prop.eta.through-F.long.2}))}%
}}M_{\beta}\\
&  =r\sum_{\beta\in\operatorname*{Comp}\nolimits_{n}}\left[  D\left(
\beta\right)  \subseteq T\right]  \cdot r^{\left\vert D\left(  \beta\right)
\right\vert }M_{\beta}=\sum_{\beta\in\operatorname*{Comp}\nolimits_{n}}\left[
D\left(  \beta\right)  \subseteq T\right]  \cdot\underbrace{rr^{\left\vert
D\left(  \beta\right)  \right\vert }}_{\substack{=r^{\ell\left(  \beta\right)
}\\\text{(by (\ref{pf.lem.eta.through-F.lem1.3}))}}}M_{\beta}\\
&  =\sum_{\beta\in\operatorname*{Comp}\nolimits_{n}}\left[  D\left(
\beta\right)  \subseteq T\right]  \cdot r^{\ell\left(  \beta\right)  }%
M_{\beta}\\
&  =\sum_{\substack{\beta\in\operatorname*{Comp}\nolimits_{n};\\D\left(
\beta\right)  \subseteq T}}\underbrace{\left[  D\left(  \beta\right)
\subseteq T\right]  }_{\substack{=1\\\text{(since }D\left(  \beta\right)
\subseteq T\text{)}}}\cdot\,r^{\ell\left(  \beta\right)  }M_{\beta}%
+\sum_{\substack{\beta\in\operatorname*{Comp}\nolimits_{n};\\\text{not
}D\left(  \beta\right)  \subseteq T}}\ \ \underbrace{\left[  D\left(
\beta\right)  \subseteq T\right]  }_{\substack{=0\\\text{(since we
don't}\\\text{have }D\left(  \beta\right)  \subseteq T\text{)}}}\cdot
\,r^{\ell\left(  \beta\right)  }M_{\beta}\\
&  \ \ \ \ \ \ \ \ \ \ \ \ \ \ \ \ \ \ \ \ \left(
\begin{array}
[c]{c}%
\text{since each }\beta\in\operatorname*{Comp}\nolimits_{n}\text{ satisfies
either }D\left(  \beta\right)  \subseteq T\\
\text{or }\left(  \text{not }D\left(  \beta\right)  \subseteq T\right)
\text{, but not both simultaneously}%
\end{array}
\right) \\
&  =\sum_{\substack{\beta\in\operatorname*{Comp}\nolimits_{n};\\D\left(
\beta\right)  \subseteq T}}r^{\ell\left(  \beta\right)  }M_{\beta
}+\underbrace{\sum_{\substack{\beta\in\operatorname*{Comp}\nolimits_{n}%
;\\\text{not }D\left(  \beta\right)  \subseteq T}}0\cdot r^{\ell\left(
\beta\right)  }M_{\beta}}_{=0}\\
&  =\sum_{\substack{\beta\in\operatorname*{Comp}\nolimits_{n};\\D\left(
\beta\right)  \subseteq T}}r^{\ell\left(  \beta\right)  }M_{\beta}%
=\sum_{\substack{\beta\in\operatorname*{Comp}\nolimits_{n};\\D\left(
\beta\right)  \subseteq D\left(  \alpha\right)  }}r^{\ell\left(  \beta\right)
}M_{\beta}\ \ \ \ \ \ \ \ \ \ \left(  \text{since }T=D\left(  \alpha\right)
\right) \\
&  =\eta_{\alpha}^{\left(  q\right)  }\ \ \ \ \ \ \ \ \ \ \left(  \text{by
(\ref{eq.def.etaalpha.def})}\right)  .
\end{align*}
This proves Proposition \ref{prop.eta.through-F}.
\end{proof}
\end{verlong}

\begin{vershort}

\begin{proof}
[Proof of Proposition \ref{prop.eta.F-through}.]For each subset $J$ of
$\left[  n-1\right]  $, we let $\overline{J}$ denote its complement $\left[
n-1\right]  \setminus J$. It is easy to see that%
\begin{equation}
\left\vert \overline{D\left(  \beta\right)  }\right\vert =n-\ell\left(
\beta\right)  \label{pf.prop.eta.F-through.size-of-comp}%
\end{equation}
for every $\beta\in\operatorname*{Comp}\nolimits_{n}$%
\ \ \ \ \footnote{\textit{Proof.} Let $\beta\in\operatorname*{Comp}%
\nolimits_{n}$. From $n\neq0$ (since $n$ is positive), we obtain $\left[
n\neq0\right]  =1$. However, Lemma \ref{lem.comps.l-vs-size} \textbf{(a)}
(applied to $\delta=\beta$) yields $\ell\left(  \beta\right)  =\left\vert
D\left(  \beta\right)  \right\vert +\underbrace{\left[  n\neq0\right]  }%
_{=1}=\left\vert D\left(  \beta\right)  \right\vert +1$, so that $\left\vert
D\left(  \beta\right)  \right\vert =\ell\left(  \beta\right)  -1$. However,
the definition of $\overline{D\left(  \beta\right)  }$ yields $\overline
{D\left(  \beta\right)  }=\left[  n-1\right]  \setminus D\left(  \beta\right)
$. Hence,%
\begin{align*}
\left\vert \overline{D\left(  \beta\right)  }\right\vert  &  =\left\vert
\left[  n-1\right]  \setminus D\left(  \beta\right)  \right\vert \\
&  =\underbrace{\left\vert \left[  n-1\right]  \right\vert }%
_{\substack{=n-1\\\text{(since }n-1\in\mathbb{N}\text{)}}%
}-\underbrace{\left\vert D\left(  \beta\right)  \right\vert }_{=\ell\left(
\beta\right)  -1}\ \ \ \ \ \ \ \ \ \ \left(  \text{since }D\left(
\beta\right)  \subseteq\left[  n-1\right]  \right) \\
&  =\left(  n-1\right)  -\left(  \ell\left(  \beta\right)  -1\right)
=n-\ell\left(  \beta\right)  .
\end{align*}
This proves (\ref{pf.prop.eta.F-through.size-of-comp}).}.

Let $T:=\overline{D\left(  \gamma\right)  }$. Thus, $D\left(  \gamma\right)
=\overline{T}$, so that%
\begin{align*}
&  \sum_{\alpha\in\operatorname*{Comp}\nolimits_{n}}\left(  -1\right)
^{\left\vert D\left(  \gamma\right)  \setminus D\left(  \alpha\right)
\right\vert }q^{\left\vert \left[  n-1\right]  \setminus\left(  D\left(
\gamma\right)  \cup D\left(  \alpha\right)  \right)  \right\vert }\eta
_{\alpha}^{\left(  q\right)  }\\
&  =\sum_{\alpha\in\operatorname*{Comp}\nolimits_{n}}\left(  -1\right)
^{\left\vert \overline{T}\setminus D\left(  \alpha\right)  \right\vert
}q^{\left\vert \left[  n-1\right]  \setminus\left(  \overline{T}\cup D\left(
\alpha\right)  \right)  \right\vert }\underbrace{\eta_{\alpha}^{\left(
q\right)  }}_{\substack{=\sum_{\substack{\beta\in\operatorname*{Comp}%
\nolimits_{n};\\D\left(  \beta\right)  \subseteq D\left(  \alpha\right)
}}r^{\ell\left(  \beta\right)  }M_{\beta}\\\text{(by the definition of }%
\eta_{\alpha}^{\left(  q\right)  }\text{)}}}\\
&  =\sum_{\alpha\in\operatorname*{Comp}\nolimits_{n}}\left(  -1\right)
^{\left\vert \overline{T}\setminus D\left(  \alpha\right)  \right\vert
}q^{\left\vert \left[  n-1\right]  \setminus\left(  \overline{T}\cup D\left(
\alpha\right)  \right)  \right\vert }\sum_{\substack{\beta\in
\operatorname*{Comp}\nolimits_{n};\\D\left(  \beta\right)  \subseteq D\left(
\alpha\right)  }}r^{\ell\left(  \beta\right)  }M_{\beta}\\
&  =\sum_{\beta\in\operatorname*{Comp}\nolimits_{n}}\ \ \sum_{\substack{\alpha
\in\operatorname*{Comp}\nolimits_{n};\\D\left(  \beta\right)  \subseteq
D\left(  \alpha\right)  }}\left(  -1\right)  ^{\left\vert \overline
{T}\setminus D\left(  \alpha\right)  \right\vert }q^{\left\vert \left[
n-1\right]  \setminus\left(  \overline{T}\cup D\left(  \alpha\right)  \right)
\right\vert }r^{\ell\left(  \beta\right)  }M_{\beta}.
\end{align*}
However, every $\beta\in\operatorname*{Comp}\nolimits_{n}$ satisfies%
\begin{align*}
&  \sum_{\substack{\alpha\in\operatorname*{Comp}\nolimits_{n};\\D\left(
\beta\right)  \subseteq D\left(  \alpha\right)  }}\left(  -1\right)
^{\left\vert \overline{T}\setminus D\left(  \alpha\right)  \right\vert
}q^{\left\vert \left[  n-1\right]  \setminus\left(  \overline{T}\cup D\left(
\alpha\right)  \right)  \right\vert }\\
&  =\sum_{\substack{K\subseteq\left[  n-1\right]  ;\\D\left(  \beta\right)
\subseteq K}}\left(  -1\right)  ^{\left\vert \overline{T}\setminus
K\right\vert }q^{\left\vert \left[  n-1\right]  \setminus\left(  \overline
{T}\cup K\right)  \right\vert }\\
&  \ \ \ \ \ \ \ \ \ \ \ \ \ \ \ \ \ \ \ \ \left(
\begin{array}
[c]{c}%
\text{here, we have substituted }K\text{ for }D\left(  \alpha\right)  \text{
in the sum,}\\
\text{since the map }D:\operatorname*{Comp}\nolimits_{n}\rightarrow
\mathcal{P}\left(  \left[  n-1\right]  \right)  \text{ is a bijection}%
\end{array}
\right) \\
&  =\underbrace{\sum_{\substack{I\subseteq\left[  n-1\right]  ;\\D\left(
\beta\right)  \subseteq\overline{I}}}}_{\substack{=\sum_{I\subseteq
\overline{D\left(  \beta\right)  }}\\\text{(since the subsets }I\text{ of
}\left[  n-1\right]  \\\text{satisfying }D\left(  \beta\right)  \subseteq
\overline{I}\text{ are precisely}\\\text{the subsets of }\overline{D\left(
\beta\right)  }\text{)}}}\underbrace{\left(  -1\right)  ^{\left\vert
\overline{T}\setminus\overline{I}\right\vert }}_{\substack{=\left(  -1\right)
^{\left\vert I\setminus T\right\vert }\\\text{(since }\overline{T}%
\setminus\overline{I}=I\setminus T\text{)}}}\ \ \underbrace{q^{\left\vert
\left[  n-1\right]  \setminus\left(  \overline{T}\cup\overline{I}\right)
\right\vert }}_{\substack{=q^{\left\vert I\cap T\right\vert }\\\text{(since we
have}\\\left[  n-1\right]  \setminus\left(  \overline{T}\cup\overline
{I}\right)  =\overline{\overline{T}\cup\overline{I}}=\overline{\overline
{I}\cup\overline{T}}=I\cap T\\\text{by de Morgan's laws)}}}\\
&  \ \ \ \ \ \ \ \ \ \ \ \ \ \ \ \ \ \ \ \ \left(
\begin{array}
[c]{c}%
\text{here, we have substituted }\overline{I}\text{ for }K\text{ in the sum,
since}\\
\text{the map }\mathcal{P}\left(  \left[  n-1\right]  \right)  \rightarrow
\mathcal{P}\left(  \left[  n-1\right]  \right)  \text{ that sends each}\\
\text{subset }I\text{ to its complement }\overline{I}\text{ is a bijection}%
\end{array}
\right) \\
&  =\sum_{I\subseteq\overline{D\left(  \beta\right)  }}\left(  -1\right)
^{\left\vert I\setminus T\right\vert }q^{\left\vert I\cap T\right\vert
}=\left[  \overline{D\left(  \beta\right)  }\subseteq T\right]  \cdot
r^{\left\vert \overline{D\left(  \beta\right)  }\right\vert }%
\end{align*}
(by Lemma \ref{lem.eta.through-F.lem1}, applied to $S=\overline{D\left(
\beta\right)  }$). Hence, this becomes%
\begin{align*}
&  \sum_{\alpha\in\operatorname*{Comp}\nolimits_{n}}\left(  -1\right)
^{\left\vert D\left(  \gamma\right)  \setminus D\left(  \alpha\right)
\right\vert }q^{\left\vert \left[  n-1\right]  \setminus\left(  D\left(
\gamma\right)  \cup D\left(  \alpha\right)  \right)  \right\vert }\eta
_{\alpha}^{\left(  q\right)  }\\
&  =\sum_{\beta\in\operatorname*{Comp}\nolimits_{n}}\ \ \underbrace{\sum
_{\substack{\alpha\in\operatorname*{Comp}\nolimits_{n};\\D\left(
\beta\right)  \subseteq D\left(  \alpha\right)  }}\left(  -1\right)
^{\left\vert \overline{T}\setminus D\left(  \alpha\right)  \right\vert
}q^{\left\vert \left[  n-1\right]  \setminus\left(  \overline{T}\cup D\left(
\alpha\right)  \right)  \right\vert }}_{=\left[  \overline{D\left(
\beta\right)  }\subseteq T\right]  \cdot r^{\left\vert \overline{D\left(
\beta\right)  }\right\vert }}r^{\ell\left(  \beta\right)  }M_{\beta}\\
&  =\sum_{\beta\in\operatorname*{Comp}\nolimits_{n}}\left[  \overline{D\left(
\beta\right)  }\subseteq T\right]  \cdot r^{\left\vert \overline{D\left(
\beta\right)  }\right\vert }r^{\ell\left(  \beta\right)  }M_{\beta}%
=\sum_{\substack{\beta\in\operatorname*{Comp}\nolimits_{n};\\\overline
{D\left(  \beta\right)  }\subseteq T}}\ \ \underbrace{r^{\left\vert
\overline{D\left(  \beta\right)  }\right\vert }}_{\substack{=r^{n-\ell\left(
\beta\right)  }\\\text{(by (\ref{pf.prop.eta.F-through.size-of-comp}))}%
}}r^{\ell\left(  \beta\right)  }M_{\beta}\\
&  =\sum_{\substack{\beta\in\operatorname*{Comp}\nolimits_{n};\\\overline
{D\left(  \beta\right)  }\subseteq T}}\underbrace{r^{n-\ell\left(
\beta\right)  }r^{\ell\left(  \beta\right)  }}_{=r^{n}}M_{\beta}=r^{n}%
\sum_{\substack{\beta\in\operatorname*{Comp}\nolimits_{n};\\\overline{D\left(
\beta\right)  }\subseteq T}}M_{\beta}\\
&  =r^{n}\sum_{\substack{\beta\in\operatorname*{Comp}\nolimits_{n}%
;\\\overline{D\left(  \beta\right)  }\subseteq\overline{D\left(
\gamma\right)  }}}M_{\beta}\ \ \ \ \ \ \ \ \ \ \left(  \text{since
}T=\overline{D\left(  \gamma\right)  }\right) \\
&  =r^{n}\underbrace{\sum_{\substack{\beta\in\operatorname*{Comp}%
\nolimits_{n};\\D\left(  \beta\right)  \supseteq D\left(  \gamma\right)
}}M_{\beta}}_{\substack{=L_{\gamma}\\\text{(by (\ref{eq.Lalpha.def}), applied
to }\alpha=\gamma\text{)}}}\ \ \ \ \ \ \ \ \ \ \left(
\begin{array}
[c]{c}%
\text{since the condition \textquotedblleft}\overline{D\left(  \beta\right)
}\subseteq\overline{D\left(  \gamma\right)  }\text{\textquotedblright\ on a}\\
\text{composition }\beta\in\operatorname*{Comp}\nolimits_{n}\text{ is
equivalent}\\
\text{to the condition \textquotedblleft}D\left(  \beta\right)  \supseteq
D\left(  \gamma\right)  \text{\textquotedblright}%
\end{array}
\right) \\
&  =r^{n}L_{\gamma}.
\end{align*}
This proves Proposition \ref{prop.eta.F-through}.
\end{proof}
\end{vershort}

\begin{verlong}

\begin{proof}
[Proof of Proposition \ref{prop.eta.F-through}.]For each subset $J$ of
$\left[  n-1\right]  $, we let $\overline{J}$ denote its complement $\left[
n-1\right]  \setminus J$. The following properties of complements are
well-known (and easy to check):

\begin{itemize}
\item Every subset $J$ of $\left[  n-1\right]  $ satisfies
\begin{equation}
\overline{\overline{J}}=J. \label{pf.prop.eta.F-through.long.JJJ}%
\end{equation}

\item The map $\mathcal{P}\left(  \left[  n-1\right]  \right)  \rightarrow
\mathcal{P}\left(  \left[  n-1\right]  \right)  $ that sends each subset $J$
to its complement $\overline{J}$ is a bijection. (In fact, this map is its own
inverse, because of (\ref{pf.prop.eta.F-through.long.JJJ}).)

\item If $A$ and $B$ are two subsets of $\left[  n-1\right]  $, then%
\begin{equation}
\overline{A\cap B}=\overline{A}\cup\overline{B}.
\label{pf.prop.eta.F-through.long.deMor}%
\end{equation}

\item If $A$ and $B$ are two subsets of $\left[  n-1\right]  $ satisfying
$A\subseteq B$, then $\overline{A}\supseteq\overline{B}$.

\item Any two subsets $A$ and $B$ of $\left[  n-1\right]  $ satisfy
\begin{equation}
\overline{A}\setminus\overline{B}=B\setminus A
\label{pf.prop.eta.F-through.long.AminB}%
\end{equation}
\footnote{\textit{Proof of (\ref{pf.prop.eta.F-through.long.AminB}):} Let $A$
and $B$ be two subsets of $\left[  n-1\right]  $. Then, the definition of
$\overline{A}$ yields $\overline{A}=\left[  n-1\right]  \setminus
A\subseteq\left[  n-1\right]  $. The definition of $\overline{B}$ yields
$\overline{B}=\left[  n-1\right]  \setminus B\subseteq\left[  n-1\right]  $.
\par
Hence, the definition of $\overline{\overline{B}}$ yields $\overline
{\overline{B}}=\left[  n-1\right]  \setminus\overline{B}$. Thus, $\left[
n-1\right]  \setminus\overline{B}=\overline{\overline{B}}=B$ (since every
subset $J$ of $\left[  n-1\right]  $ satisfies $\overline{\overline{J}}=J$).
\par
Now,%
\begin{align*}
\underbrace{\overline{A}}_{\substack{=\overline{A}\cap\left[  n-1\right]
\\\text{(since }\overline{A}\subseteq\left[  n-1\right]  \text{)}}%
}\setminus\overline{B}  &  =\left(  \overline{A}\cap\left[  n-1\right]
\right)  \setminus\overline{B}\\
&  =\overline{A}\cap\underbrace{\left(  \left[  n-1\right]  \setminus
\overline{B}\right)  }_{=B}\ \ \ \ \ \ \ \ \ \ \left(
\begin{array}
[c]{c}%
\text{since }\left(  X\cap Y\right)  \setminus Z=X\cap\left(  Y\setminus
Z\right) \\
\text{for any three sets }X\text{, }Y\text{ and }Z
\end{array}
\right) \\
&  =\overline{A}\cap B=B\cap\underbrace{\overline{A}}_{=\left[  n-1\right]
\setminus A}=B\cap\left(  \left[  n-1\right]  \setminus A\right) \\
&  =\underbrace{\left(  B\cap\left[  n-1\right]  \right)  }%
_{\substack{=B\\\text{(since }B\subseteq\left[  n-1\right]  \text{)}%
}}\setminus A\ \ \ \ \ \ \ \ \ \ \left(
\begin{array}
[c]{c}%
\text{since }X\cap\left(  Y\setminus Z\right)  =\left(  X\cap Y\right)
\setminus Z\\
\text{for any three sets }X\text{, }Y\text{ and }Z
\end{array}
\right) \\
&  =B\setminus A.
\end{align*}
This proves (\ref{pf.prop.eta.F-through.long.AminB}).}.

\item Any two subsets $A$ and $B$ of $\left[  n-1\right]  $ satisfy
\begin{equation}
\left[  n-1\right]  \setminus\left(  \overline{A}\cup\overline{B}\right)
=B\cap A \label{pf.prop.eta.F-through.long.uni}%
\end{equation}
\footnote{\textit{Proof of (\ref{pf.prop.eta.F-through.long.uni}):} Let $A$
and $B$ be two subsets of $\left[  n-1\right]  $.
\par
Then, $A\cap B\subseteq A\subseteq\left[  n-1\right]  $ (since $A$ is a subset
of $\left[  n-1\right]  $). In other words, $A\cap B$ is a subset of $\left[
n-1\right]  $.
\par
Recall that every subset $J$ of $\left[  n-1\right]  $ satisfies
$\overline{\overline{J}}=J$. Applying this to $J=A\cap B$, we obtain
$\overline{\overline{A\cap B}}=A\cap B$ (since $A\cap B$ is a subset of
$\left[  n-1\right]  $). In view of (\ref{pf.prop.eta.F-through.long.deMor}),
we can rewrite this as $\overline{\overline{A}\cup\overline{B}}=A\cap B$.
However, the definition of $\overline{\overline{A}\cup\overline{B}}$ yields
$\overline{\overline{A}\cup\overline{B}}=\left[  n-1\right]  \setminus\left(
\overline{A}\cup\overline{B}\right)  $. Hence,%
\[
\left[  n-1\right]  \setminus\left(  \overline{A}\cup\overline{B}\right)
=\overline{\overline{A}\cup\overline{B}}=A\cap B=B\cap A.
\]
This proves (\ref{pf.prop.eta.F-through.long.uni}).}.
\end{itemize}

Recall that $D:\operatorname*{Comp}\nolimits_{n}\rightarrow\mathcal{P}\left(
\left[  n-1\right]  \right)  $ is a bijection. Hence, from $\gamma
\in\operatorname*{Comp}\nolimits_{n}$, we obtain $D\left(  \gamma\right)
\in\mathcal{P}\left(  \left[  n-1\right]  \right)  $. In other words,
$D\left(  \gamma\right)  \subseteq\left[  n-1\right]  $. In other words,
$D\left(  \gamma\right)  $ is a subset of $\left[  n-1\right]  $. Hence, its
complement $\overline{D\left(  \gamma\right)  }$ is well-defined.

Let $T:=\overline{D\left(  \gamma\right)  }$. Thus,
\begin{align*}
T  &  =\overline{D\left(  \gamma\right)  }=\left[  n-1\right]  \setminus
D\left(  \gamma\right)  \ \ \ \ \ \ \ \ \ \ \left(  \text{by the definition of
}\overline{D\left(  \gamma\right)  }\right) \\
&  \subseteq\left[  n-1\right]  .
\end{align*}
Furthermore, from $T=\overline{D\left(  \gamma\right)  }$, we obtain
$\overline{T}=\overline{\overline{D\left(  \gamma\right)  }}=D\left(
\gamma\right)  $ (since every subset $J$ of $\left[  n-1\right]  $ satisfies
$\overline{\overline{J}}=J$). In other words, $D\left(  \gamma\right)
=\overline{T}$. Hence,%
\begin{align}
&  \sum_{\alpha\in\operatorname*{Comp}\nolimits_{n}}\left(  -1\right)
^{\left\vert D\left(  \gamma\right)  \setminus D\left(  \alpha\right)
\right\vert }q^{\left\vert \left[  n-1\right]  \setminus\left(  D\left(
\gamma\right)  \cup D\left(  \alpha\right)  \right)  \right\vert }\eta
_{\alpha}^{\left(  q\right)  }\nonumber\\
&  =\sum_{\alpha\in\operatorname*{Comp}\nolimits_{n}}\left(  -1\right)
^{\left\vert \overline{T}\setminus D\left(  \alpha\right)  \right\vert
}q^{\left\vert \left[  n-1\right]  \setminus\left(  \overline{T}\cup D\left(
\alpha\right)  \right)  \right\vert }\underbrace{\eta_{\alpha}^{\left(
q\right)  }}_{\substack{=\sum_{\substack{\beta\in\operatorname*{Comp}%
\nolimits_{n};\\D\left(  \beta\right)  \subseteq D\left(  \alpha\right)
}}r^{\ell\left(  \beta\right)  }M_{\beta}\\\text{(by the definition of }%
\eta_{\alpha}^{\left(  q\right)  }\text{)}}}\nonumber\\
&  =\sum_{\alpha\in\operatorname*{Comp}\nolimits_{n}}\left(  -1\right)
^{\left\vert \overline{T}\setminus D\left(  \alpha\right)  \right\vert
}q^{\left\vert \left[  n-1\right]  \setminus\left(  \overline{T}\cup D\left(
\alpha\right)  \right)  \right\vert }\sum_{\substack{\beta\in
\operatorname*{Comp}\nolimits_{n};\\D\left(  \beta\right)  \subseteq D\left(
\alpha\right)  }}r^{\ell\left(  \beta\right)  }M_{\beta}\nonumber\\
&  =\underbrace{\sum_{\alpha\in\operatorname*{Comp}\nolimits_{n}}%
\ \ \sum_{\substack{\beta\in\operatorname*{Comp}\nolimits_{n};\\D\left(
\beta\right)  \subseteq D\left(  \alpha\right)  }}}_{=\sum_{\beta
\in\operatorname*{Comp}\nolimits_{n}}\ \ \sum_{\substack{\alpha\in
\operatorname*{Comp}\nolimits_{n};\\D\left(  \beta\right)  \subseteq D\left(
\alpha\right)  }}}\left(  -1\right)  ^{\left\vert \overline{T}\setminus
D\left(  \alpha\right)  \right\vert }q^{\left\vert \left[  n-1\right]
\setminus\left(  \overline{T}\cup D\left(  \alpha\right)  \right)  \right\vert
}r^{\ell\left(  \beta\right)  }M_{\beta}\nonumber\\
&  =\sum_{\beta\in\operatorname*{Comp}\nolimits_{n}}\ \ \sum_{\substack{\alpha
\in\operatorname*{Comp}\nolimits_{n};\\D\left(  \beta\right)  \subseteq
D\left(  \alpha\right)  }}\left(  -1\right)  ^{\left\vert \overline
{T}\setminus D\left(  \alpha\right)  \right\vert }q^{\left\vert \left[
n-1\right]  \setminus\left(  \overline{T}\cup D\left(  \alpha\right)  \right)
\right\vert }r^{\ell\left(  \beta\right)  }M_{\beta}\nonumber\\
&  =\sum_{\beta\in\operatorname*{Comp}\nolimits_{n}}\left(  \sum
_{\substack{\alpha\in\operatorname*{Comp}\nolimits_{n};\\D\left(
\beta\right)  \subseteq D\left(  \alpha\right)  }}\left(  -1\right)
^{\left\vert \overline{T}\setminus D\left(  \alpha\right)  \right\vert
}q^{\left\vert \left[  n-1\right]  \setminus\left(  \overline{T}\cup D\left(
\alpha\right)  \right)  \right\vert }\right)  r^{\ell\left(  \beta\right)
}M_{\beta}. \label{pf.prop.eta.F-through.long.1}%
\end{align}

Now, let $\beta\in\operatorname*{Comp}\nolimits_{n}$ be arbitrary. Recall that
$D:\operatorname*{Comp}\nolimits_{n}\rightarrow\mathcal{P}\left(  \left[
n-1\right]  \right)  $ is a bijection. Hence, from $\beta\in
\operatorname*{Comp}\nolimits_{n}$, we obtain $D\left(  \beta\right)
\in\mathcal{P}\left(  \left[  n-1\right]  \right)  $. In other words,
$D\left(  \beta\right)  \subseteq\left[  n-1\right]  $. In other words,
$D\left(  \beta\right)  $ is a subset of $\left[  n-1\right]  $. Hence, its
complement $\overline{D\left(  \beta\right)  }$ is well-defined. We shall now
prove that%
\begin{equation}
\left\vert \overline{D\left(  \beta\right)  }\right\vert =n-\ell\left(
\beta\right)  . \label{pf.prop.eta.F-through.long.size-of-comp}%
\end{equation}

[\textit{Proof of (\ref{pf.prop.eta.F-through.long.size-of-comp}):} We have
$n\neq0$ (since $n$ is positive), and thus $\left[  n\neq0\right]  =1$.
However, Lemma \ref{lem.comps.l-vs-size} \textbf{(a)} (applied to
$\delta=\beta$) yields $\ell\left(  \beta\right)  =\left\vert D\left(
\beta\right)  \right\vert +\underbrace{\left[  n\neq0\right]  }_{=1}%
=\left\vert D\left(  \beta\right)  \right\vert +1$, so that $\left\vert
D\left(  \beta\right)  \right\vert =\ell\left(  \beta\right)  -1$. However,
the definition of $\overline{D\left(  \beta\right)  }$ yields $\overline
{D\left(  \beta\right)  }=\left[  n-1\right]  \setminus D\left(  \beta\right)
$. Hence,%
\begin{align*}
\left\vert \overline{D\left(  \beta\right)  }\right\vert  &  =\left\vert
\left[  n-1\right]  \setminus D\left(  \beta\right)  \right\vert \\
&  =\underbrace{\left\vert \left[  n-1\right]  \right\vert }%
_{\substack{=n-1\\\text{(since }n-1\in\mathbb{N}\\\text{(because }n\text{ is
a}\\\text{positive integer))}}}-\underbrace{\left\vert D\left(  \beta\right)
\right\vert }_{=\ell\left(  \beta\right)  -1}\ \ \ \ \ \ \ \ \ \ \left(
\text{since }D\left(  \beta\right)  \subseteq\left[  n-1\right]  \right) \\
&  =\left(  n-1\right)  -\left(  \ell\left(  \beta\right)  -1\right)
=n-\ell\left(  \beta\right)  .
\end{align*}
This proves (\ref{pf.prop.eta.F-through.long.size-of-comp}).]

Next, we observe that%
\begin{align}
r^{\left\vert \overline{D\left(  \beta\right)  }\right\vert }r^{\ell\left(
\beta\right)  }  &  =r^{n-\ell\left(  \beta\right)  }r^{\ell\left(
\beta\right)  }\ \ \ \ \ \ \ \ \ \ \left(  \text{since
(\ref{pf.prop.eta.F-through.long.size-of-comp}) says that }\left\vert
\overline{D\left(  \beta\right)  }\right\vert =n-\ell\left(  \beta\right)
\right) \nonumber\\
&  =r^{\left(  n-\ell\left(  \beta\right)  \right)  +\ell\left(  \beta\right)
}=r^{n}. \label{pf.prop.eta.F-through.long.size-of-comp2}%
\end{align}

Furthermore,%
\begin{align}
&  \sum_{\substack{\alpha\in\operatorname*{Comp}\nolimits_{n};\\D\left(
\beta\right)  \subseteq D\left(  \alpha\right)  }}\left(  -1\right)
^{\left\vert \overline{T}\setminus D\left(  \alpha\right)  \right\vert
}q^{\left\vert \left[  n-1\right]  \setminus\left(  \overline{T}\cup D\left(
\alpha\right)  \right)  \right\vert }\nonumber\\
&  =\sum_{\substack{I\in\mathcal{P}\left(  \left[  n-1\right]  \right)
;\\D\left(  \beta\right)  \subseteq I}}\left(  -1\right)  ^{\left\vert
\overline{T}\setminus I\right\vert }q^{\left\vert \left[  n-1\right]
\setminus\left(  \overline{T}\cup I\right)  \right\vert }\nonumber\\
&  \ \ \ \ \ \ \ \ \ \ \ \ \ \ \ \ \ \ \ \ \left(
\begin{array}
[c]{c}%
\text{here, we have substituted }I\text{ for }D\left(  \alpha\right)  \text{
in the sum,}\\
\text{since the map }D:\operatorname*{Comp}\nolimits_{n}\rightarrow
\mathcal{P}\left(  \left[  n-1\right]  \right)  \text{ is a bijection}%
\end{array}
\right) \nonumber\\
&  =\sum_{\substack{J\in\mathcal{P}\left(  \left[  n-1\right]  \right)
;\\D\left(  \beta\right)  \subseteq\overline{J}}}\underbrace{\left(
-1\right)  ^{\left\vert \overline{T}\setminus\overline{J}\right\vert }%
}_{\substack{=\left(  -1\right)  ^{\left\vert J\setminus T\right\vert
}\\\text{(since }\overline{T}\setminus\overline{J}=J\setminus T\\\text{(by
(\ref{pf.prop.eta.F-through.long.AminB}), applied to }A=T\text{ and
}B=J\text{))}}}\underbrace{q^{\left\vert \left[  n-1\right]  \setminus\left(
\overline{T}\cup\overline{J}\right)  \right\vert }}_{\substack{=q^{\left\vert
J\cap T\right\vert }\\\text{(since }\left[  n-1\right]  \setminus\left(
\overline{T}\cup\overline{J}\right)  =J\cap T\\\text{(by
(\ref{pf.prop.eta.F-through.long.uni}), applied to }A=T\text{ and
}B=J\text{))}}}\nonumber\\
&  \ \ \ \ \ \ \ \ \ \ \ \ \ \ \ \ \ \ \ \ \left(
\begin{array}
[c]{c}%
\text{here, we have substituted }\overline{J}\text{ for }I\text{ in the sum,
since}\\
\text{the map }\mathcal{P}\left(  \left[  n-1\right]  \right)  \rightarrow
\mathcal{P}\left(  \left[  n-1\right]  \right)  \text{ that sends each}\\
\text{subset }J\text{ to its complement }\overline{J}\text{ is a bijection}%
\end{array}
\right) \nonumber\\
&  =\sum_{\substack{J\in\mathcal{P}\left(  \left[  n-1\right]  \right)
;\\D\left(  \beta\right)  \subseteq\overline{J}}}\left(  -1\right)
^{\left\vert J\setminus T\right\vert }q^{\left\vert J\cap T\right\vert }.
\label{pf.prop.eta.F-through.long.4}%
\end{align}

However, we have\footnote{Here, the notation $\mathcal{P}\left(  T\right)  $
denotes the powerset of a given set $T$ (that is, the set of all subsets of
$T$). This generalizes our above notation $\mathcal{P}\left(  \left[
n-1\right]  \right)  $.}%
\begin{equation}
\left\{  J\in\mathcal{P}\left(  \left[  n-1\right]  \right)  \ \mid\ D\left(
\beta\right)  \subseteq\overline{J}\right\}  =\mathcal{P}\left(
\overline{D\left(  \beta\right)  }\right)
\label{pf.prop.eta.F-through.long.sets}%
\end{equation}
\footnote{\textit{Proof.} Let $J\in\mathcal{P}\left(  \left[  n-1\right]
\right)  $ be such that $D\left(  \beta\right)  \subseteq\overline{J}$. We
shall prove that $J\in\mathcal{P}\left(  \overline{D\left(  \beta\right)
}\right)  $.
\par
Indeed, from $J\in\mathcal{P}\left(  \left[  n-1\right]  \right)  $, we obtain
$J\subseteq\left[  n-1\right]  $.
\par
We know that $D\left(  \beta\right)  $ is a subset of $\left[  n-1\right]  $.
Also, $\overline{J}$ is a subset of $\left[  n-1\right]  $ (since the
definition of $\overline{J}$ yields $\overline{J}=\left[  n-1\right]
\setminus J\subseteq\left[  n-1\right]  $).
\par
Recall that if $A$ and $B$ are two subsets of $\left[  n-1\right]  $
satisfying $A\subseteq B$, then $\overline{A}\supseteq\overline{B}$. Applying
this to $A=D\left(  \beta\right)  $ and $B=\overline{J}$, we obtain
$\overline{D\left(  \beta\right)  }\supseteq\overline{\overline{J}}$ (since
$D\left(  \beta\right)  \subseteq\overline{J}$). In view of
(\ref{pf.prop.eta.F-through.long.JJJ}), we can rewrite this as $\overline
{D\left(  \beta\right)  }\supseteq J$. In other words, $J\subseteq
\overline{D\left(  \beta\right)  }$. In other words, $J\in\mathcal{P}\left(
\overline{D\left(  \beta\right)  }\right)  $.
\par
Forget that we fixed $J$. Thus, we have shown that $J\in\mathcal{P}\left(
\overline{D\left(  \beta\right)  }\right)  $ for every $J\in\mathcal{P}\left(
\left[  n-1\right]  \right)  $ such that $D\left(  \beta\right)
\subseteq\overline{J}$. In other words,%
\begin{equation}
\left\{  J\in\mathcal{P}\left(  \left[  n-1\right]  \right)  \ \mid\ D\left(
\beta\right)  \subseteq\overline{J}\right\}  \subseteq\mathcal{P}\left(
\overline{D\left(  \beta\right)  }\right)  .
\label{pf.prop.eta.F-through.long.sets.pf.1}%
\end{equation}
\par
On the other hand, let $I\in\mathcal{P}\left(  \overline{D\left(
\beta\right)  }\right)  $ be arbitrary. We shall prove that $I\in\left\{
J\in\mathcal{P}\left(  \left[  n-1\right]  \right)  \ \mid\ D\left(
\beta\right)  \subseteq\overline{J}\right\}  $.
\par
Indeed, we have $I\in\mathcal{P}\left(  \overline{D\left(  \beta\right)
}\right)  $. In other words, $I\subseteq\overline{D\left(  \beta\right)  }$.
However, $D\left(  \beta\right)  $ is a subset of $\left[  n-1\right]  $;
thus, the definition of $\overline{D\left(  \beta\right)  }$ yields
$\overline{D\left(  \beta\right)  }=\left[  n-1\right]  \setminus D\left(
\beta\right)  \subseteq\left[  n-1\right]  $. In other words, $\overline
{D\left(  \beta\right)  }$ is a subset of $\left[  n-1\right]  $. Hence, $I$
is a subset of $\left[  n-1\right]  $ as well (since $I\subseteq
\overline{D\left(  \beta\right)  }$). In other words, $I\in\mathcal{P}\left(
\left[  n-1\right]  \right)  $.
\par
Recall that if $A$ and $B$ are two subsets of $\left[  n-1\right]  $
satisfying $A\subseteq B$, then $\overline{A}\supseteq\overline{B}$. Applying
this to $A=I$ and $B=\overline{D\left(  \beta\right)  }$, we obtain
$\overline{I}\supseteq\overline{\overline{D\left(  \beta\right)  }}$ (since
$I\subseteq\overline{D\left(  \beta\right)  }$). However,
(\ref{pf.prop.eta.F-through.long.JJJ}) (applied to $D\left(  \beta\right)  $
instead of $J$) yields $\overline{\overline{D\left(  \beta\right)  }}=D\left(
\beta\right)  $. Thus, $D\left(  \beta\right)  =\overline{\overline{D\left(
\beta\right)  }}\subseteq\overline{I}$ (since $\overline{I}\supseteq
\overline{\overline{D\left(  \beta\right)  }}$).
\par
Hence, we conclude that $I$ is a $J\in\mathcal{P}\left(  \left[  n-1\right]
\right)  $ satisfying $D\left(  \beta\right)  \subseteq\overline{J}$ (since
$I\in\mathcal{P}\left(  \left[  n-1\right]  \right)  $ and $D\left(
\beta\right)  \subseteq\overline{I}$). In other words, $I\in\left\{
J\in\mathcal{P}\left(  \left[  n-1\right]  \right)  \ \mid\ D\left(
\beta\right)  \subseteq\overline{J}\right\}  $.
\par
Forget that we fixed $I$. We thus have shown that every $I\in\mathcal{P}%
\left(  \overline{D\left(  \beta\right)  }\right)  $ satisfies $I\in\left\{
J\in\mathcal{P}\left(  \left[  n-1\right]  \right)  \ \mid\ D\left(
\beta\right)  \subseteq\overline{J}\right\}  $. In other words,%
\[
\mathcal{P}\left(  \overline{D\left(  \beta\right)  }\right)  \subseteq
\left\{  J\in\mathcal{P}\left(  \left[  n-1\right]  \right)  \ \mid\ D\left(
\beta\right)  \subseteq\overline{J}\right\}  .
\]
Combining this with (\ref{pf.prop.eta.F-through.long.sets.pf.1}), we obtain%
\[
\left\{  J\in\mathcal{P}\left(  \left[  n-1\right]  \right)  \ \mid\ D\left(
\beta\right)  \subseteq\overline{J}\right\}  =\mathcal{P}\left(
\overline{D\left(  \beta\right)  }\right)  .
\]
}. Thus, we have the following equality of summation signs:
\[
\sum_{\substack{J\in\mathcal{P}\left(  \left[  n-1\right]  \right)
;\\D\left(  \beta\right)  \subseteq\overline{J}}}=\sum_{J\in\mathcal{P}\left(
\overline{D\left(  \beta\right)  }\right)  }=\sum_{J\subseteq\overline
{D\left(  \beta\right)  }}%
\]
(since the condition \textquotedblleft$J\in\mathcal{P}\left(  \overline
{D\left(  \beta\right)  }\right)  $\textquotedblright\ is clearly equivalent
to \textquotedblleft$J\subseteq\overline{D\left(  \beta\right)  }%
$\textquotedblright). Hence, (\ref{pf.prop.eta.F-through.long.4}) becomes%
\begin{align}
&  \sum_{\substack{\alpha\in\operatorname*{Comp}\nolimits_{n};\\D\left(
\beta\right)  \subseteq D\left(  \alpha\right)  }}\left(  -1\right)
^{\left\vert \overline{T}\setminus D\left(  \alpha\right)  \right\vert
}q^{\left\vert \left[  n-1\right]  \setminus\left(  \overline{T}\cup D\left(
\alpha\right)  \right)  \right\vert }\nonumber\\
&  =\underbrace{\sum_{\substack{J\in\mathcal{P}\left(  \left[  n-1\right]
\right)  ;\\D\left(  \beta\right)  \subseteq\overline{J}}}}_{\substack{=\sum
_{J\subseteq\overline{D\left(  \beta\right)  }}}}\left(  -1\right)
^{\left\vert J\setminus T\right\vert }q^{\left\vert J\cap T\right\vert
}\nonumber\\
&  =\sum_{J\subseteq\overline{D\left(  \beta\right)  }}\left(  -1\right)
^{\left\vert J\setminus T\right\vert }q^{\left\vert J\cap T\right\vert
}\nonumber\\
&  =\sum_{I\subseteq\overline{D\left(  \beta\right)  }}\left(  -1\right)
^{\left\vert I\setminus T\right\vert }q^{\left\vert I\cap T\right\vert
}\ \ \ \ \ \ \ \ \ \ \left(
\begin{array}
[c]{c}%
\text{here, we have renamed}\\
\text{the summation index }J\text{ as }I
\end{array}
\right) \nonumber\\
&  =\left[  \overline{D\left(  \beta\right)  }\subseteq T\right]  \cdot
r^{\left\vert \overline{D\left(  \beta\right)  }\right\vert }
\label{pf.prop.eta.F-through.long.2}%
\end{align}
(by Lemma \ref{lem.eta.through-F.lem1}, applied to $S=\overline{D\left(
\beta\right)  }$).

Forget that we fixed $\beta$. We thus have proved the two equalities
(\ref{pf.prop.eta.F-through.long.size-of-comp2}) and
(\ref{pf.prop.eta.F-through.long.2}) for each $\beta\in\operatorname*{Comp}%
\nolimits_{n}$.

Hence, (\ref{pf.prop.eta.F-through.long.1}) becomes%
\begin{align}
&  \sum_{\alpha\in\operatorname*{Comp}\nolimits_{n}}\left(  -1\right)
^{\left\vert D\left(  \gamma\right)  \setminus D\left(  \alpha\right)
\right\vert }q^{\left\vert \left[  n-1\right]  \setminus\left(  D\left(
\gamma\right)  \cup D\left(  \alpha\right)  \right)  \right\vert }\eta
_{\alpha}^{\left(  q\right)  }\nonumber\\
&  =\sum_{\beta\in\operatorname*{Comp}\nolimits_{n}}\underbrace{\left(
\sum_{\substack{\alpha\in\operatorname*{Comp}\nolimits_{n};\\D\left(
\beta\right)  \subseteq D\left(  \alpha\right)  }}\left(  -1\right)
^{\left\vert \overline{T}\setminus D\left(  \alpha\right)  \right\vert
}q^{\left\vert \left[  n-1\right]  \setminus\left(  \overline{T}\cup D\left(
\alpha\right)  \right)  \right\vert }\right)  }_{\substack{=\left[
\overline{D\left(  \beta\right)  }\subseteq T\right]  \cdot r^{\left\vert
\overline{D\left(  \beta\right)  }\right\vert }\\\text{(by
(\ref{pf.prop.eta.F-through.long.2}))}}}r^{\ell\left(  \beta\right)  }%
M_{\beta}\nonumber\\
&  =\sum_{\beta\in\operatorname*{Comp}\nolimits_{n}}\left[  \overline{D\left(
\beta\right)  }\subseteq T\right]  \cdot\underbrace{r^{\left\vert
\overline{D\left(  \beta\right)  }\right\vert }r^{\ell\left(  \beta\right)  }%
}_{\substack{=r^{n}\\\text{(by (\ref{pf.prop.eta.F-through.long.size-of-comp2}%
))}}}M_{\beta}=\sum_{\beta\in\operatorname*{Comp}\nolimits_{n}}\left[
\overline{D\left(  \beta\right)  }\subseteq T\right]  \cdot r^{n}M_{\beta
}\nonumber\\
&  =\sum_{\substack{\beta\in\operatorname*{Comp}\nolimits_{n};\\\overline
{D\left(  \beta\right)  }\subseteq T}}\underbrace{\left[  \overline{D\left(
\beta\right)  }\subseteq T\right]  }_{\substack{=1\\\text{(since }%
\overline{D\left(  \beta\right)  }\subseteq T\text{)}}}\cdot\,r^{n}M_{\beta
}+\sum_{\substack{\beta\in\operatorname*{Comp}\nolimits_{n};\\\text{not
}\overline{D\left(  \beta\right)  }\subseteq T}}\ \ \underbrace{\left[
\overline{D\left(  \beta\right)  }\subseteq T\right]  }%
_{\substack{=0\\\text{(since we don't}\\\text{have }\overline{D\left(
\beta\right)  }\subseteq T\text{)}}}\cdot\,r^{n}M_{\beta}\nonumber\\
&  \ \ \ \ \ \ \ \ \ \ \ \ \ \ \ \ \ \ \ \ \left(
\begin{array}
[c]{c}%
\text{since each }\beta\in\operatorname*{Comp}\nolimits_{n}\text{ satisfies
either }\overline{D\left(  \beta\right)  }\subseteq T\\
\text{or }\left(  \text{not }\overline{D\left(  \beta\right)  }\subseteq
T\right)  \text{, but not both simultaneously}%
\end{array}
\right) \nonumber\\
&  =\sum_{\substack{\beta\in\operatorname*{Comp}\nolimits_{n};\\\overline
{D\left(  \beta\right)  }\subseteq T}}r^{n}M_{\beta}+\underbrace{\sum
_{\substack{\beta\in\operatorname*{Comp}\nolimits_{n};\\\text{not }%
\overline{D\left(  \beta\right)  }\subseteq T}}0\cdot r^{n}M_{\beta}}%
_{=0}\nonumber\\
&  =\sum_{\substack{\beta\in\operatorname*{Comp}\nolimits_{n};\\\overline
{D\left(  \beta\right)  }\subseteq T}}r^{n}M_{\beta}=r^{n}\sum
_{\substack{\beta\in\operatorname*{Comp}\nolimits_{n};\\\overline{D\left(
\beta\right)  }\subseteq T}}M_{\beta}\nonumber\\
&  =r^{n}\sum_{\substack{\beta\in\operatorname*{Comp}\nolimits_{n}%
;\\\overline{D\left(  \beta\right)  }\subseteq\overline{D\left(
\gamma\right)  }}}M_{\beta}\ \ \ \ \ \ \ \ \ \ \left(  \text{since
}T=\overline{D\left(  \gamma\right)  }\right)  .
\label{pf.prop.eta.F-through.long.at}%
\end{align}

Now, we observe that for any $\beta\in\operatorname*{Comp}\nolimits_{n}$, the
equivalence%
\[
\left(  D\left(  \beta\right)  \supseteq D\left(  \gamma\right)  \right)
\ \Longleftrightarrow\ \left(  \overline{D\left(  \beta\right)  }%
\subseteq\overline{D\left(  \gamma\right)  }\right)
\]
holds\footnote{\textit{Proof.} Let $\beta\in\operatorname*{Comp}\nolimits_{n}%
$. Recall that $D:\operatorname*{Comp}\nolimits_{n}\rightarrow\mathcal{P}%
\left(  \left[  n-1\right]  \right)  $ is a bijection. Hence, from $\beta
\in\operatorname*{Comp}\nolimits_{n}$, we obtain $D\left(  \beta\right)
\in\mathcal{P}\left(  \left[  n-1\right]  \right)  $. In other words,
$D\left(  \beta\right)  \subseteq\left[  n-1\right]  $. In other words,
$D\left(  \beta\right)  $ is a subset of $\left[  n-1\right]  $. Recall also
that $D\left(  \gamma\right)  $ is a subset of $\left[  n-1\right]  $.
\par
Hence, (\ref{pf.prop.eta.F-through.long.AminB}) (applied to $A=D\left(
\beta\right)  $ and $B=D\left(  \gamma\right)  $) yields $\overline{D\left(
\beta\right)  }\setminus\overline{D\left(  \gamma\right)  }=D\left(
\gamma\right)  \setminus D\left(  \beta\right)  $. In other words,%
\[
D\left(  \gamma\right)  \setminus D\left(  \beta\right)  =\overline{D\left(
\beta\right)  }\setminus\overline{D\left(  \gamma\right)  }.
\]
\par
However, we have the following chain of equivalences:%
\begin{align*}
\left(  D\left(  \beta\right)  \supseteq D\left(  \gamma\right)  \right)  \
&  \Longleftrightarrow\ \left(  D\left(  \gamma\right)  \subseteq D\left(
\beta\right)  \right) \\
&  \Longleftrightarrow\ \left(  \underbrace{D\left(  \gamma\right)  \setminus
D\left(  \beta\right)  }_{=\overline{D\left(  \beta\right)  }\setminus
\overline{D\left(  \gamma\right)  }}=\varnothing\right) \\
&  \Longleftrightarrow\ \left(  \overline{D\left(  \beta\right)  }%
\setminus\overline{D\left(  \gamma\right)  }=\varnothing\right) \\
&  \Longleftrightarrow\ \left(  \overline{D\left(  \beta\right)  }%
\subseteq\overline{D\left(  \gamma\right)  }\right)  .
\end{align*}
We have thus proved the equivalence $\left(  D\left(  \beta\right)  \supseteq
D\left(  \gamma\right)  \right)  \ \Longleftrightarrow\ \left(  \overline
{D\left(  \beta\right)  }\subseteq\overline{D\left(  \gamma\right)  }\right)
$, qed.}. Hence, we have the following equality of summation signs:%
\begin{equation}
\sum_{\substack{\beta\in\operatorname*{Comp}\nolimits_{n};\\D\left(
\beta\right)  \supseteq D\left(  \gamma\right)  }}=\sum_{\substack{\beta
\in\operatorname*{Comp}\nolimits_{n};\\\overline{D\left(  \beta\right)
}\subseteq\overline{D\left(  \gamma\right)  }}}.
\label{pf.prop.eta.F-through.long.eqsum}%
\end{equation}

Now, (\ref{eq.Lalpha.def}) (applied to $\alpha=\gamma$) yields%
\[
L_{\gamma}=\underbrace{\sum_{\substack{\beta\in\operatorname*{Comp}%
\nolimits_{n};\\D\left(  \beta\right)  \supseteq D\left(  \gamma\right)  }%
}}_{\substack{=\sum_{\substack{\beta\in\operatorname*{Comp}\nolimits_{n}%
;\\\overline{D\left(  \beta\right)  }\subseteq\overline{D\left(
\gamma\right)  }}}\\\text{(by (\ref{pf.prop.eta.F-through.long.eqsum}))}%
}}M_{\beta}=\sum_{\substack{\beta\in\operatorname*{Comp}\nolimits_{n}%
;\\\overline{D\left(  \beta\right)  }\subseteq\overline{D\left(
\gamma\right)  }}}M_{\beta}.
\]
Multiplying both sides of this equality by $r^{n}$, we find%
\[
r^{n}L_{\gamma}=r^{n}\sum_{\substack{\beta\in\operatorname*{Comp}%
\nolimits_{n};\\\overline{D\left(  \beta\right)  }\subseteq\overline{D\left(
\gamma\right)  }}}M_{\beta}=\sum_{\alpha\in\operatorname*{Comp}\nolimits_{n}%
}\left(  -1\right)  ^{\left\vert D\left(  \gamma\right)  \setminus D\left(
\alpha\right)  \right\vert }q^{\left\vert \left[  n-1\right]  \setminus\left(
D\left(  \gamma\right)  \cup D\left(  \alpha\right)  \right)  \right\vert
}\eta_{\alpha}^{\left(  q\right)  }%
\]
(by (\ref{pf.prop.eta.F-through.long.at})). This proves Proposition
\ref{prop.eta.F-through}.
\end{proof}
\end{verlong}

\subsection{The antipode of $\eta_{\alpha}^{\left(  q\right)  }$}

The \emph{antipode} of $\operatorname*{QSym}$ is a certain $\mathbf{k}$-linear
map $S:\operatorname*{QSym}\rightarrow\operatorname*{QSym}$ that can be
defined in terms of the Hopf algebra structure of $\operatorname*{QSym}$,
which we have not defined so far. But there are various formulas for its
values on certain quasisymmetric functions that can be used as alternative
definitions. For example, for any $n\in\mathbb{N}$ and any $\alpha=\left(
\alpha_{1},\alpha_{2},\ldots,\alpha_{\ell}\right)  \in\operatorname*{Comp}%
\nolimits_{n}$, we have%
\begin{equation}
S\left(  M_{\alpha}\right)  =\left(  -1\right)  ^{\ell}\sum_{\substack{\gamma
\in\operatorname*{Comp}\nolimits_{n};\\D\left(  \gamma\right)  \subseteq
D\left(  \alpha_{\ell},\alpha_{\ell-1},\ldots,\alpha_{1}\right)  }}M_{\gamma}.
\label{eq.SMalpha}%
\end{equation}
This formula (which appears, e.g., in \cite[(4.26)]{Malven93}\footnote{The
proof given in \cite{Malven93} requires $\mathbf{k}$ to be a $\mathbb{Q}%
$-algebra, but it is easy to see that the truth of \eqref{eq.SMalpha} for
$\mathbf{k} = \mathbb{Q}$ implies the truth of \eqref{eq.SMalpha} for every
commutative ring $\mathbf{k}$.} and in \cite[Theorem 5.1.11]{GriRei}%
\footnote{Note that \cite[Theorem 5.1.11]{GriRei} uses the notation
$\operatorname*{rev}\alpha$ for the composition $\left(  \alpha_{\ell}%
,\alpha_{\ell-1},\ldots,\alpha_{1}\right)  $, and writes \textquotedblleft%
$\gamma$ coarsens $\operatorname*{rev}\alpha$\textquotedblright\ for what we
call \textquotedblleft$\gamma\in\operatorname*{Comp}\nolimits_{n}$ and
$D\left(  \gamma\right)  \subseteq D\left(  \operatorname*{rev}\alpha\right)
$\textquotedblright.} or in \cite[detailed version, Proposition 10.70]%
{Grinbe15}) can be used to define $S$ (since $S$ is to be $\mathbf{k}%
$-linear). Also, for each composition $\alpha$, we have $S\left(  L_{\alpha
}\right)  =\left(  -1\right)  ^{\left\vert \alpha\right\vert }L_{\omega\left(
\alpha\right)  }$, where $\omega\left(  \alpha\right)  $ is a certain
composition known as the \emph{conjugate} of $\alpha$. See \cite[Corollaire
4.20]{Malven93} or \cite[Theorem 5.1.11 and Proposition 5.2.15]{GriRei} for
details and proofs. Note that $S$ is a $\mathbf{k}$-algebra homomorphism and
an involution (that is, $S^{2}=\operatorname*{id}$). (Again, this is derived
from abstract algebraic properties of antipodes in \cite{GriRei}, but can also
be shown more directly.)

We will prove two formulas for the antipode of $\eta_{\alpha}^{\left(
q\right)  }$. Both rely on the following notation:

\begin{definition}
\label{def.rev}If $\alpha=\left(  \alpha_{1},\alpha_{2},\ldots,\alpha_{\ell
}\right)  $ is a composition, then the \emph{reversal} of $\alpha$ is defined
to be the composition $\left(  \alpha_{\ell},\alpha_{\ell-1},\ldots,\alpha
_{1}\right)  $. It is denoted by $\operatorname*{rev}\alpha$.
\end{definition}

We are now ready to state our first formula for the antipode of $\eta_{\alpha
}^{\left(  q\right)  }$ in the case when $q$ is invertible:

\begin{theorem}
\label{thm.eta.S}Let $p\in\mathbf{k}$ be such that $pq=1$. Let $\alpha
\in\operatorname*{Comp}$, and let $n=\left\vert \alpha\right\vert $. Then, the
antipode $S$ of $\operatorname*{QSym}$ satisfies
\[
S\left(  \eta_{\alpha}^{\left(  q\right)  }\right)  =\left(  -q\right)
^{\ell\left(  \alpha\right)  }\eta_{\operatorname*{rev}\alpha}^{\left(
p\right)  }.
\]

\end{theorem}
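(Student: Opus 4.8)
The plan is to expand both sides in the monomial basis $\left(M_{\gamma}\right)_{\gamma\in\operatorname*{Comp}\nolimits_{n}}$ and compare coefficients, using the explicit antipode formula \eqref{eq.SMalpha} for $M_{\beta}$; this way the conjugate composition never needs to be made explicit. The case $n=0$ is trivial: then $\alpha=\varnothing$ and both sides equal $1$ (by Example \ref{exa.etaalpha.1}\,\textbf{(b)}, since $S\left(1\right)=1$ and $\left(-q\right)^{0}=1$), so from now on I assume $n\geq1$.

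First I would apply $S$ to the defining expansion \eqref{eq.def.etaalpha.def} of $\eta_{\alpha}^{\left(q\right)}$ and substitute \eqref{eq.SMalpha} for each $S\left(M_{\beta}\right)$, obtaining
\[
S\left(\eta_{\alpha}^{\left(q\right)}\right)=\sum_{\substack{\beta\in\operatorname*{Comp}\nolimits_{n};\\D\left(\beta\right)\subseteq D\left(\alpha\right)}}\left(-r\right)^{\ell\left(\beta\right)}\sum_{\substack{\gamma\in\operatorname*{Comp}\nolimits_{n};\\D\left(\gamma\right)\subseteq D\left(\operatorname*{rev}\beta\right)}}M_{\gamma}.
\]
Interchanging the two sums collects, for each $\gamma$, the coefficient $\sum\left(-r\right)^{\ell\left(\beta\right)}$ ranging over all $\beta$ with $D\left(\beta\right)\subseteq D\left(\alpha\right)$ and $D\left(\gamma\right)\subseteq D\left(\operatorname*{rev}\beta\right)$.

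The key bookkeeping step is to rewrite the condition $D\left(\gamma\right)\subseteq D\left(\operatorname*{rev}\beta\right)$. Since $D\left(\operatorname*{rev}\beta\right)=\left\{n-i\ \mid\ i\in D\left(\beta\right)\right\}$ and the map $i\mapsto n-i$ is an involution of $\left[n-1\right]$, this condition is equivalent to $D\left(\operatorname*{rev}\gamma\right)\subseteq D\left(\beta\right)$. Hence the coefficient of $M_{\gamma}$ equals $\sum\left(-r\right)^{\ell\left(\beta\right)}$ over all $\beta\in\operatorname*{Comp}\nolimits_{n}$ with $D\left(\operatorname*{rev}\gamma\right)\subseteq D\left(\beta\right)\subseteq D\left(\alpha\right)$; this sum is empty (so $0$) unless $D\left(\operatorname*{rev}\gamma\right)\subseteq D\left(\alpha\right)$, i.e. $D\left(\gamma\right)\subseteq D\left(\operatorname*{rev}\alpha\right)$, which is exactly the support condition on the right-hand side. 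Writing $T:=D\left(\operatorname*{rev}\gamma\right)$ and $A:=D\left(\alpha\right)$, using $\ell\left(\beta\right)=\left\vert D\left(\beta\right)\right\vert+1$ (Lemma \ref{lem.comps.l-vs-size}\,\textbf{(a)}, valid since $n\geq1$), and reindexing $\beta$ by its descent set, the coefficient becomes $\sum_{T\subseteq B\subseteq A}\left(-r\right)^{\left\vert B\right\vert+1}$, which the binomial theorem evaluates to $\left(-r\right)^{\left\vert T\right\vert+1}\left(1-r\right)^{\left\vert A\setminus T\right\vert}$. Now $1-r=-q$, and Lemma \ref{lem.comps.l-vs-size}\,\textbf{(a)} gives $\left\vert T\right\vert+1=\ell\left(\gamma\right)$ and $\left\vert A\setminus T\right\vert=\ell\left(\alpha\right)-\ell\left(\gamma\right)$, so this coefficient is $\left(-r\right)^{\ell\left(\gamma\right)}\left(-q\right)^{\ell\left(\alpha\right)-\ell\left(\gamma\right)}$.

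Finally I would expand $\left(-q\right)^{\ell\left(\alpha\right)}\eta_{\operatorname*{rev}\alpha}^{\left(p\right)}$ in the same basis via its definition \eqref{eq.def.etaalpha.def} with the parameter $p$ (so $r$ is replaced by $p+1$), noting that $p+1=rp$ because $pq=1$ yields $\left(q+1\right)p=qp+p=1+p$. Reading off the coefficient of $M_{\gamma}$, namely $\left(-q\right)^{\ell\left(\alpha\right)}\left(p+1\right)^{\ell\left(\gamma\right)}$ when $D\left(\gamma\right)\subseteq D\left(\operatorname*{rev}\alpha\right)$ and $0$ otherwise, and comparing with the previous paragraph reduces the theorem to the elementary identity $\left(-r\right)^{\ell\left(\gamma\right)}\left(-q\right)^{\ell\left(\alpha\right)-\ell\left(\gamma\right)}=\left(-q\right)^{\ell\left(\alpha\right)}\left(p+1\right)^{\ell\left(\gamma\right)}$; after substituting $p+1=rp$ and cancelling the common factors $\left(-1\right)^{\ell\left(\alpha\right)}r^{\ell\left(\gamma\right)}$, this collapses to $\left(pq\right)^{\ell\left(\gamma\right)}=1$, true by hypothesis. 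Since the $M_{\gamma}$ are linearly independent, equality of all coefficients yields the claim. I expect the main obstacle to be purely the sign-and-exponent bookkeeping of the third and fourth paragraphs — in particular correctly carrying the reversal through the summation condition in the passage from $D\left(\gamma\right)\subseteq D\left(\operatorname*{rev}\beta\right)$ to $D\left(\operatorname*{rev}\gamma\right)\subseteq D\left(\beta\right)$ — rather than anything conceptual.
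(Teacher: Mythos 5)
Your proof is correct, and it takes a genuinely different route from the paper's proof of Theorem \ref{thm.eta.S}. The paper expands $\eta_{\alpha}^{\left(q\right)}$ in the \emph{fundamental} basis via Proposition \ref{prop.eta.through-F}, applies the antipode formula $S\left(L_{\gamma}\right)=\left(-1\right)^{n}L_{\omega\left(\gamma\right)}$ involving the conjugate composition $\omega\left(\gamma\right)$, and then needs two combinatorial identities (its ``Observation 1'', outsourced to the companion note) translating $\left\vert D\left(\omega\left(\gamma\right)\right)\cap D\left(\alpha\right)\right\vert$ and $\left\vert D\left(\omega\left(\gamma\right)\right)\setminus D\left(\alpha\right)\right\vert$ into statistics of $\gamma$ relative to $\operatorname*{rev}\alpha$, before matching against the $L$-expansion of $\eta_{\operatorname*{rev}\alpha}^{\left(p\right)}$. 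You instead stay entirely in the \emph{monomial} basis: the antipode formula (\ref{eq.SMalpha}), the involution $i\mapsto n-i$ of $\left[n-1\right]$ (which correctly converts $D\left(\gamma\right)\subseteq D\left(\operatorname*{rev}\beta\right)$ into $D\left(\operatorname*{rev}\gamma\right)\subseteq D\left(\beta\right)$), and the Boolean-interval evaluation $\sum_{T\subseteq B\subseteq A}\left(-r\right)^{\left\vert B\right\vert+1}=\left(-r\right)^{\left\vert T\right\vert+1}\left(1-r\right)^{\left\vert A\setminus T\right\vert}$. That computation is essentially the one the paper performs to prove its \emph{other} antipode formula, Theorem \ref{thm.eta.S2}, where the interval sum is packaged as Lemma \ref{lem.Comp-sandwich-sum}; your twist is to compare the resulting coefficient $\left(-r\right)^{\ell\left(\gamma\right)}\left(-q\right)^{\ell\left(\alpha\right)-\ell\left(\gamma\right)}$ of $M_{\gamma}$ directly with the coefficient $\left(-q\right)^{\ell\left(\alpha\right)}\left(p+1\right)^{\ell\left(\gamma\right)}$ coming from the $M$-expansion of $\eta_{\operatorname*{rev}\alpha}^{\left(p\right)}$, using $p+1=rp$ and $pq=1$. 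What your route buys: it is more self-contained (no fundamental basis, no conjugate $\omega$, no $L$-antipode formula), and it effectively yields Theorems \ref{thm.eta.S} and \ref{thm.eta.S2} from one and the same computation. What the paper's route buys: it reuses Proposition \ref{prop.eta.through-F} (proved anyway) and keeps the $q\mapsto1/q$ symmetry visible at the level of $L$-coefficients. Two small points in your write-up check out and are worth making explicit in a final version: your ``cancellation'' of $\left(-1\right)^{\ell\left(\alpha\right)}r^{\ell\left(\gamma\right)}$ is used in the sufficient direction (both sides literally contain this factor, so it suffices that the remaining factors agree, which needs only $pq=1$ and never invertibility of $r$), and in the surviving terms $D\left(\operatorname*{rev}\gamma\right)\subseteq D\left(\alpha\right)$ forces $\ell\left(\gamma\right)\leq\ell\left(\alpha\right)$, so the exponent $\ell\left(\alpha\right)-\ell\left(\gamma\right)$ is a nonnegative integer.
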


\begin{proof}
From $pq=1$, we obtain $p=q^{-1}$. Furthermore, $\left(  p+1\right)
q=\underbrace{pq}_{=1}+q=1+q=q+1=r$. Solving this for $p+1$, we obtain%
\begin{equation}
p+1=rq^{-1}. \label{pf.prop.eta.S.r=}%
\end{equation}

We shall need a few more features of compositions. For any composition
$\gamma\in\operatorname*{Comp}\nolimits_{n}$, we let $\omega\left(
\gamma\right)  $ denote the unique composition of $n$ satisfying%
\begin{equation}
D\left(  \omega\left(  \gamma\right)  \right)  =\left[  n-1\right]  \setminus
D\left(  \operatorname*{rev}\gamma\right)  . \label{pf.prop.eta.S.Domgam}%
\end{equation}
(This $\omega\left(  \gamma\right)  $ is indeed unique, since the map $D$ is a
bijection.) Then, a classical formula (\cite[(4.27)]{Malven93} or
\cite[(5.2.7)]{GriRei}) says that each $\gamma\in\operatorname*{Comp}%
\nolimits_{n}$ satisfies%
\begin{equation}
S\left(  L_{\gamma}\right)  =\left(  -1\right)  ^{n}L_{\omega\left(
\gamma\right)  }. \label{pf.prop.eta.S.SLgam}%
\end{equation}
It is also easy to prove (see, e.g., \cite[Proposition 4.3 \textbf{(d)}%
]{comps}) that%
\begin{equation}
\omega\left(  \omega\left(  \gamma\right)  \right)  =\gamma
\ \ \ \ \ \ \ \ \ \ \text{for any }\gamma\in\operatorname*{Comp}\nolimits_{n}.
\label{pf.prop.eta.S.oo=id}%
\end{equation}
Thus, the map $\omega:\operatorname*{Comp}\nolimits_{n}\rightarrow
\operatorname*{Comp}\nolimits_{n}$ (which sends each $\gamma\in
\operatorname*{Comp}\nolimits_{n}$ to $\omega\left(  \gamma\right)  $) is a bijection.

We WLOG assume that $n\neq0$ (since the claim of Theorem \ref{thm.eta.S} is
easily checked by hand in the case when $n=0$).

From $n=\left\vert \alpha\right\vert $, we obtain $\alpha\in
\operatorname*{Comp}\nolimits_{n}$.

\begin{noncompile}
Thus, Lemma \ref{lem.comps.l-vs-size} \textbf{(a)} (applied to $\delta=\alpha
$) yields $\ell\left(  \alpha\right)  =\left\vert D\left(  \alpha\right)
\right\vert +\underbrace{\left[  n\neq0\right]  }_{\substack{=1\\\text{(since
}n\neq0\text{)}}}=\left\vert D\left(  \alpha\right)  \right\vert +1$. Hence,%
\begin{equation}
\left\vert D\left(  \alpha\right)  \right\vert =\ell\left(  \alpha\right)  -1.
\label{pf.prop.eta.S.Da}%
\end{equation}

\end{noncompile}

Now, we make the following combinatorial observation:

\begin{statement}
\textit{Observation 1:} Let $\gamma\in\operatorname*{Comp}\nolimits_{n}$.
Then,%
\begin{equation}
\left\vert D\left(  \omega\left(  \gamma\right)  \right)  \cap D\left(
\alpha\right)  \right\vert =\ell\left(  \alpha\right)  -1-\left\vert D\left(
\gamma\right)  \cap D\left(  \operatorname*{rev}\alpha\right)  \right\vert
\label{pf.prop.eta.S.O1.1}%
\end{equation}
and%
\begin{equation}
\left\vert D\left(  \omega\left(  \gamma\right)  \right)  \setminus D\left(
\alpha\right)  \right\vert =n-\ell\left(  \alpha\right)  -\left\vert D\left(
\gamma\right)  \setminus D\left(  \operatorname*{rev}\alpha\right)
\right\vert . \label{pf.prop.eta.S.O1.2}%
\end{equation}

\end{statement}

The proof of Observation 1 is laborious but fairly straightforward, and can be
found in \cite[Proposition 4.4]{comps}.

\begin{noncompile}
[\textit{Proof of Observation 1:} For any subset $X$ of $\left[  n-1\right]
$, we let $\operatorname*{rev}X$ denote the subset $\left\{  n-x\ \mid\ x\in
X\right\}  $ of $\left[  n-1\right]  $. Thus, we have defined a map
$\operatorname*{rev}:\mathcal{P}\left(  \left[  n-1\right]  \right)
\rightarrow\mathcal{P}\left(  \left[  n-1\right]  \right)  $ (where
$\mathcal{P}\left(  \left[  n-1\right]  \right)  $ is the powerset of $\left[
n-1\right]  $). It is easy to see that this map $\operatorname*{rev}$

\begin{itemize}
\item is an involution (i.e., it satisfies $\operatorname*{rev}\left(
\operatorname*{rev}X\right)  =X$ for any subset $X$ of $\left[  n-1\right]  $),

\item preserves sizes (i.e., we have $\left\vert \operatorname*{rev}%
X\right\vert =\left\vert X\right\vert $ for any subset $X$ of $\left[
n-1\right]  $),

\item respects set-theoretic operations like intersection and set difference
(i.e., any two subsets $X$ and $Y$ of $\left[  n-1\right]  $ satisfy
$\operatorname*{rev}\left(  X\cap Y\right)  =\left(  \operatorname*{rev}%
X\right)  \cap\left(  \operatorname*{rev}Y\right)  $ and $\operatorname*{rev}%
\left(  X\setminus Y\right)  =\left(  \operatorname*{rev}X\right)
\setminus\left(  \operatorname*{rev}Y\right)  $),

\item and satisfies $\operatorname*{rev}\left(  \left[  n-1\right]  \right)
=\left[  n-1\right]  $.
\end{itemize}

It is furthermore easy to see that $D\left(  \operatorname*{rev}\gamma\right)
=\operatorname*{rev}\left(  D\left(  \gamma\right)  \right)  $ (because if we
write the composition $\gamma$ as $\gamma=\left(  \gamma_{1},\gamma_{2}%
,\ldots,\gamma_{k}\right)  $, then $\gamma_{1}+\gamma_{2}+\cdots+\gamma
_{k}=\left\vert \gamma\right\vert =n$ (since $\gamma\in\operatorname*{Comp}%
\nolimits_{n}$), and therefore $\gamma_{i+1}+\gamma_{i+2}+\cdots+\gamma
_{k}=n-\left(  \gamma_{1}+\gamma_{2}+\cdots+\gamma_{i}\right)  $ for each
$i\in\left[  k-1\right]  $). Hence, (\ref{pf.prop.eta.S.Domgam}) can be
rewritten as%
\begin{equation}
D\left(  \omega\left(  \gamma\right)  \right)  =\left[  n-1\right]
\setminus\operatorname*{rev}\left(  D\left(  \gamma\right)  \right)  .
\label{pf.prop.eta.S.O1.Dwg}%
\end{equation}

Recall that we proved that $D\left(  \operatorname*{rev}\gamma\right)
=\operatorname*{rev}\left(  D\left(  \gamma\right)  \right)  $. The same
argument (applied to $\alpha$ instead of $\gamma$) yields $D\left(
\operatorname*{rev}\alpha\right)  =\operatorname*{rev}\left(  D\left(
\alpha\right)  \right)  $. Thus, $\operatorname*{rev}\left(  D\left(
\operatorname*{rev}\alpha\right)  \right)  =\operatorname*{rev}\left(
\operatorname*{rev}\left(  D\left(  \alpha\right)  \right)  \right)  =D\left(
\alpha\right)  $ (since $\operatorname*{rev}\left(  \operatorname*{rev}%
X\right)  =X$ for any subset $X$ of $\left[  n-1\right]  $). In other words,%
\begin{equation}
D\left(  \alpha\right)  =\operatorname*{rev}\left(  D\left(
\operatorname*{rev}\alpha\right)  \right)  . \label{pf.prop.eta.S.O1.rra}%
\end{equation}

Now,%
\begin{align*}
&  \underbrace{D\left(  \omega\left(  \gamma\right)  \right)  }%
_{\substack{=\left[  n-1\right]  \setminus\operatorname*{rev}\left(  D\left(
\gamma\right)  \right)  \\\text{(by (\ref{pf.prop.eta.S.O1.Dwg}))}}}\cap
D\left(  \alpha\right) \\
&  =\left(  \left[  n-1\right]  \setminus\operatorname*{rev}\left(  D\left(
\gamma\right)  \right)  \right)  \cap D\left(  \alpha\right) \\
&  =\underbrace{\left(  \left[  n-1\right]  \cap D\left(  \alpha\right)
\right)  }_{\substack{=D\left(  \alpha\right)  \\\text{(since }D\left(
\alpha\right)  \subseteq\left[  n-1\right]  \text{)}}}\setminus
\operatorname*{rev}\left(  D\left(  \gamma\right)  \right)
\ \ \ \ \ \ \ \ \ \ \left(
\begin{array}
[c]{c}%
\text{since }\left(  X\setminus Y\right)  \cap Z=\left(  X\cap Z\right)
\setminus Y\\
\text{for any sets }X\text{, }Y\text{ and }Z
\end{array}
\right) \\
&  =D\left(  \alpha\right)  \setminus\operatorname*{rev}\left(  D\left(
\gamma\right)  \right)  .
\end{align*}
Thus,%
\begin{align}
\left\vert D\left(  \omega\left(  \gamma\right)  \right)  \cap D\left(
\alpha\right)  \right\vert  &  =\left\vert D\left(  \alpha\right)
\setminus\operatorname*{rev}\left(  D\left(  \gamma\right)  \right)
\right\vert \nonumber\\
&  =\left\vert D\left(  \alpha\right)  \right\vert -\left\vert D\left(
\alpha\right)  \cap\operatorname*{rev}\left(  D\left(  \gamma\right)  \right)
\right\vert \label{pf.prop.eta.S.O1.3}%
\end{align}
(since any finite sets $X$ and $Y$ satisfy $\left\vert X\setminus Y\right\vert
=\left\vert X\right\vert -\left\vert X\cap Y\right\vert $).

However, we have%
\begin{align*}
\underbrace{D\left(  \alpha\right)  }_{\substack{=\operatorname*{rev}\left(
D\left(  \operatorname*{rev}\alpha\right)  \right)  \\\text{(by
(\ref{pf.prop.eta.S.O1.rra}))}}}\cap\operatorname*{rev}\left(  D\left(
\gamma\right)  \right)   &  =\operatorname*{rev}\left(  D\left(
\operatorname*{rev}\alpha\right)  \right)  \cap\operatorname*{rev}\left(
D\left(  \gamma\right)  \right) \\
&  =\operatorname*{rev}\left(  D\left(  \operatorname*{rev}\alpha\right)  \cap
D\left(  \gamma\right)  \right)
\end{align*}
(since the map $\operatorname*{rev}$ respects intersection of sets) and thus%
\[
\left\vert D\left(  \alpha\right)  \cap\operatorname*{rev}\left(  D\left(
\gamma\right)  \right)  \right\vert =\left\vert \operatorname*{rev}\left(
D\left(  \operatorname*{rev}\alpha\right)  \cap D\left(  \gamma\right)
\right)  \right\vert =\left\vert D\left(  \operatorname*{rev}\alpha\right)
\cap D\left(  \gamma\right)  \right\vert
\]
(since $\left\vert \operatorname*{rev}X\right\vert =\left\vert X\right\vert $
for any subset $X$ of $\left[  n-1\right]  $). Therefore,
(\ref{pf.prop.eta.S.O1.3}) becomes%
\begin{align*}
\left\vert D\left(  \omega\left(  \gamma\right)  \right)  \cap D\left(
\alpha\right)  \right\vert  &  =\underbrace{\left\vert D\left(  \alpha\right)
\right\vert }_{\substack{=\ell\left(  \alpha\right)  -1\\\text{(by
(\ref{pf.prop.eta.S.Da}))}}}-\underbrace{\left\vert D\left(  \alpha\right)
\cap\operatorname*{rev}\left(  D\left(  \gamma\right)  \right)  \right\vert
}_{\substack{=\left\vert D\left(  \operatorname*{rev}\alpha\right)  \cap
D\left(  \gamma\right)  \right\vert \\=\left\vert D\left(  \gamma\right)  \cap
D\left(  \operatorname*{rev}\alpha\right)  \right\vert }}\\
&  =\ell\left(  \alpha\right)  -1-\left\vert D\left(  \gamma\right)  \cap
D\left(  \operatorname*{rev}\alpha\right)  \right\vert .
\end{align*}

From (\ref{pf.prop.eta.S.O1.Dwg}), we obtain%
\begin{align}
\left\vert D\left(  \omega\left(  \gamma\right)  \right)  \right\vert  &
=\left\vert \left[  n-1\right]  \setminus\operatorname*{rev}\left(  D\left(
\gamma\right)  \right)  \right\vert \nonumber\\
&  =\underbrace{\left\vert \left[  n-1\right]  \right\vert }%
_{\substack{=n-1\\\text{(since }n\neq0\text{ and}\\\text{thus }n-1\in
\mathbb{N}\text{)}}}-\underbrace{\left\vert \operatorname*{rev}\left(
D\left(  \gamma\right)  \right)  \right\vert }_{\substack{=\left\vert D\left(
\gamma\right)  \right\vert \\\text{(since }\left\vert \operatorname*{rev}%
X\right\vert =\left\vert X\right\vert \\\text{for any subset }X\\\text{of
}\left[  n-1\right]  \text{)}}}\ \ \ \ \ \ \ \ \ \ \left(  \text{since
}\operatorname*{rev}\left(  D\left(  \gamma\right)  \right)  \subseteq\left[
n-1\right]  \right) \nonumber\\
&  =n-1-\left\vert D\left(  \gamma\right)  \right\vert .
\label{pf.prop.eta.S.O1.Dwgsize}%
\end{align}

Next, recall that $\left\vert X\setminus Y\right\vert =\left\vert X\right\vert
-\left\vert X\cap Y\right\vert $ for any two finite sets $X$ and $Y$. From
this equality, we obtain%
\begin{equation}
\left\vert D\left(  \omega\left(  \gamma\right)  \right)  \setminus D\left(
\alpha\right)  \right\vert =\left\vert D\left(  \omega\left(  \gamma\right)
\right)  \right\vert -\left\vert D\left(  \omega\left(  \gamma\right)
\right)  \cap D\left(  \alpha\right)  \right\vert \label{pf.prop.eta.S.O1.b1}%
\end{equation}
and%
\begin{equation}
\left\vert D\left(  \gamma\right)  \setminus D\left(  \operatorname*{rev}%
\alpha\right)  \right\vert =\left\vert D\left(  \gamma\right)  \right\vert
-\left\vert D\left(  \gamma\right)  \cap D\left(  \operatorname*{rev}%
\alpha\right)  \right\vert . \label{pf.prop.eta.S.O1.b2}%
\end{equation}
Adding these two equalities together, we find%
\begin{align*}
&  \left\vert D\left(  \omega\left(  \gamma\right)  \right)  \setminus
D\left(  \alpha\right)  \right\vert +\left\vert D\left(  \gamma\right)
\setminus D\left(  \operatorname*{rev}\alpha\right)  \right\vert \\
&  =\underbrace{\left\vert D\left(  \omega\left(  \gamma\right)  \right)
\right\vert }_{\substack{=n-1-\left\vert D\left(  \gamma\right)  \right\vert
\\\text{(by (\ref{pf.prop.eta.S.O1.Dwgsize}))}}}-\underbrace{\left\vert
D\left(  \omega\left(  \gamma\right)  \right)  \cap D\left(  \alpha\right)
\right\vert }_{\substack{=\ell\left(  \alpha\right)  -1-\left\vert D\left(
\gamma\right)  \cap D\left(  \operatorname*{rev}\alpha\right)  \right\vert
\\\text{(by (\ref{pf.prop.eta.S.O1.1}))}}}+\left\vert D\left(  \gamma\right)
\right\vert -\left\vert D\left(  \gamma\right)  \cap D\left(
\operatorname*{rev}\alpha\right)  \right\vert \\
&  =n-1-\left\vert D\left(  \gamma\right)  \right\vert -\left(  \ell\left(
\alpha\right)  -1-\left\vert D\left(  \gamma\right)  \cap D\left(
\operatorname*{rev}\alpha\right)  \right\vert \right)  +\left\vert D\left(
\gamma\right)  \right\vert -\left\vert D\left(  \gamma\right)  \cap D\left(
\operatorname*{rev}\alpha\right)  \right\vert \\
&  =n-\ell\left(  \alpha\right)  .
\end{align*}
In other words,%
\[
\left\vert D\left(  \omega\left(  \gamma\right)  \right)  \setminus D\left(
\alpha\right)  \right\vert =n-\ell\left(  \alpha\right)  -\left\vert D\left(
\gamma\right)  \setminus D\left(  \operatorname*{rev}\alpha\right)
\right\vert .
\]
This proves (\ref{pf.prop.eta.S.O1.2}). Thus, Observation 1 is proven.]
\medskip
\end{noncompile}

Now, Proposition \ref{prop.eta.through-F} (applied to $\operatorname*{rev}%
\alpha$, $p$ and $p+1$ instead of $\alpha$, $q$ and $r$) yields
\begin{equation}
\eta_{\operatorname*{rev}\alpha}^{\left(  p\right)  }=\left(  p+1\right)
\sum_{\gamma\in\operatorname*{Comp}\nolimits_{n}}\left(  -1\right)
^{\left\vert D\left(  \gamma\right)  \setminus D\left(  \operatorname*{rev}%
\alpha\right)  \right\vert }p^{\left\vert D\left(  \gamma\right)  \cap
D\left(  \operatorname*{rev}\alpha\right)  \right\vert }L_{\gamma}.
\label{pf.prop.eta.S.O1.etarev}%
\end{equation}

On the other hand, Proposition \ref{prop.eta.through-F} yields%
\[
\eta_{\alpha}^{\left(  q\right)  }=r\sum_{\gamma\in\operatorname*{Comp}%
\nolimits_{n}}\left(  -1\right)  ^{\left\vert D\left(  \gamma\right)
\setminus D\left(  \alpha\right)  \right\vert }q^{\left\vert D\left(
\gamma\right)  \cap D\left(  \alpha\right)  \right\vert }L_{\gamma}.
\]
Applying the map $S$ to both sides of this equality, we obtain%
\begin{align*}
S\left(  \eta_{\alpha}^{\left(  q\right)  }\right)   &  =S\left(
r\sum_{\gamma\in\operatorname*{Comp}\nolimits_{n}}\left(  -1\right)
^{\left\vert D\left(  \gamma\right)  \setminus D\left(  \alpha\right)
\right\vert }q^{\left\vert D\left(  \gamma\right)  \cap D\left(
\alpha\right)  \right\vert }L_{\gamma}\right) \\
&  =r\sum_{\gamma\in\operatorname*{Comp}\nolimits_{n}}\left(  -1\right)
^{\left\vert D\left(  \gamma\right)  \setminus D\left(  \alpha\right)
\right\vert }q^{\left\vert D\left(  \gamma\right)  \cap D\left(
\alpha\right)  \right\vert }\underbrace{S\left(  L_{\gamma}\right)
}_{\substack{=\left(  -1\right)  ^{n}L_{\omega\left(  \gamma\right)
}\\\text{(by (\ref{pf.prop.eta.S.SLgam}))}}}\ \ \ \ \ \ \ \ \ \ \left(
\begin{array}
[c]{c}%
\text{since the map }S\\
\text{is }\mathbf{k}\text{-linear}%
\end{array}
\right) \\
&  =r\sum_{\gamma\in\operatorname*{Comp}\nolimits_{n}}\left(  -1\right)
^{\left\vert D\left(  \gamma\right)  \setminus D\left(  \alpha\right)
\right\vert }q^{\left\vert D\left(  \gamma\right)  \cap D\left(
\alpha\right)  \right\vert }\left(  -1\right)  ^{n}L_{\omega\left(
\gamma\right)  }\\
&  =r\sum_{\gamma\in\operatorname*{Comp}\nolimits_{n}}\underbrace{\left(
-1\right)  ^{\left\vert D\left(  \omega\left(  \gamma\right)  \right)
\setminus D\left(  \alpha\right)  \right\vert }q^{\left\vert D\left(
\omega\left(  \gamma\right)  \right)  \cap D\left(  \alpha\right)  \right\vert
}\left(  -1\right)  ^{n}}_{=\left(  -1\right)  ^{n}\left(  -1\right)
^{\left\vert D\left(  \omega\left(  \gamma\right)  \right)  \setminus D\left(
\alpha\right)  \right\vert }q^{\left\vert D\left(  \omega\left(
\gamma\right)  \right)  \cap D\left(  \alpha\right)  \right\vert }%
}\underbrace{L_{\omega\left(  \omega\left(  \gamma\right)  \right)  }%
}_{\substack{=L_{\gamma}\\\text{(by (\ref{pf.prop.eta.S.oo=id}))}}}\\
&  \ \ \ \ \ \ \ \ \ \ \ \ \ \ \ \ \ \ \ \ \left(
\begin{array}
[c]{c}%
\text{here, we have substituted }\omega\left(  \gamma\right)  \text{ for
}\gamma\text{ in the sum,}\\
\text{since the map }\omega:\operatorname*{Comp}\nolimits_{n}\rightarrow
\operatorname*{Comp}\nolimits_{n}\text{ is a bijection}%
\end{array}
\right) \\
&  =r\sum_{\gamma\in\operatorname*{Comp}\nolimits_{n}}\left(  -1\right)
^{n}\underbrace{\left(  -1\right)  ^{\left\vert D\left(  \omega\left(
\gamma\right)  \right)  \setminus D\left(  \alpha\right)  \right\vert }%
}_{\substack{=\left(  -1\right)  ^{n-\ell\left(  \alpha\right)  -\left\vert
D\left(  \gamma\right)  \setminus D\left(  \operatorname*{rev}\alpha\right)
\right\vert }\\\text{(by (\ref{pf.prop.eta.S.O1.2}))}}%
}\ \ \underbrace{q^{\left\vert D\left(  \omega\left(  \gamma\right)  \right)
\cap D\left(  \alpha\right)  \right\vert }}_{\substack{=q^{\ell\left(
\alpha\right)  -1-\left\vert D\left(  \gamma\right)  \cap D\left(
\operatorname*{rev}\alpha\right)  \right\vert }\\\text{(by
(\ref{pf.prop.eta.S.O1.1}))}}}L_{\gamma}\\
&  =r\sum_{\gamma\in\operatorname*{Comp}\nolimits_{n}}\underbrace{\left(
-1\right)  ^{n}\left(  -1\right)  ^{n-\ell\left(  \alpha\right)  -\left\vert
D\left(  \gamma\right)  \setminus D\left(  \operatorname*{rev}\alpha\right)
\right\vert }}_{=\left(  -1\right)  ^{\ell\left(  \alpha\right)  }\left(
-1\right)  ^{\left\vert D\left(  \gamma\right)  \setminus D\left(
\operatorname*{rev}\alpha\right)  \right\vert }}\underbrace{q^{\ell\left(
\alpha\right)  -1-\left\vert D\left(  \gamma\right)  \cap D\left(
\operatorname*{rev}\alpha\right)  \right\vert }}_{\substack{=q^{\ell\left(
\alpha\right)  }q^{-1}q^{-\left\vert D\left(  \gamma\right)  \cap D\left(
\operatorname*{rev}\alpha\right)  \right\vert }}}L_{\gamma}\\
&  =r\sum_{\gamma\in\operatorname*{Comp}\nolimits_{n}}\left(  -1\right)
^{\ell\left(  \alpha\right)  }\left(  -1\right)  ^{\left\vert D\left(
\gamma\right)  \setminus D\left(  \operatorname*{rev}\alpha\right)
\right\vert }q^{\ell\left(  \alpha\right)  }q^{-1}q^{-\left\vert D\left(
\gamma\right)  \cap D\left(  \operatorname*{rev}\alpha\right)  \right\vert
}L_{\gamma}\\
&  =\underbrace{rq^{-1}}_{\substack{=p+1\\\text{(by (\ref{pf.prop.eta.S.r=}%
))}}}\underbrace{\left(  -1\right)  ^{\ell\left(  \alpha\right)  }%
q^{\ell\left(  \alpha\right)  }}_{=\left(  -q\right)  ^{\ell\left(
\alpha\right)  }}\sum_{\gamma\in\operatorname*{Comp}\nolimits_{n}}\left(
-1\right)  ^{\left\vert D\left(  \gamma\right)  \setminus D\left(
\operatorname*{rev}\alpha\right)  \right\vert }\underbrace{q^{-\left\vert
D\left(  \gamma\right)  \cap D\left(  \operatorname*{rev}\alpha\right)
\right\vert }}_{\substack{=\left(  q^{-1}\right)  ^{\left\vert D\left(
\gamma\right)  \cap D\left(  \operatorname*{rev}\alpha\right)  \right\vert
}\\=p^{\left\vert D\left(  \gamma\right)  \cap D\left(  \operatorname*{rev}%
\alpha\right)  \right\vert }\\\text{(since }q^{-1}=p\text{)}}}L_{\gamma}\\
&  =\left(  p+1\right)  \left(  -q\right)  ^{\ell\left(  \alpha\right)  }%
\sum_{\gamma\in\operatorname*{Comp}\nolimits_{n}}\left(  -1\right)
^{\left\vert D\left(  \gamma\right)  \setminus D\left(  \operatorname*{rev}%
\alpha\right)  \right\vert }p^{\left\vert D\left(  \gamma\right)  \cap
D\left(  \operatorname*{rev}\alpha\right)  \right\vert }L_{\gamma}\\
&  =\left(  -q\right)  ^{\ell\left(  \alpha\right)  }\underbrace{\left(
p+1\right)  \sum_{\gamma\in\operatorname*{Comp}\nolimits_{n}}\left(
-1\right)  ^{\left\vert D\left(  \gamma\right)  \setminus D\left(
\operatorname*{rev}\alpha\right)  \right\vert }p^{\left\vert D\left(
\gamma\right)  \cap D\left(  \operatorname*{rev}\alpha\right)  \right\vert
}L_{\gamma}}_{\substack{=\eta_{\operatorname*{rev}\alpha}^{\left(  p\right)
}\\\text{(by (\ref{pf.prop.eta.S.O1.etarev}))}}}\\
&  =\left(  -q\right)  ^{\ell\left(  \alpha\right)  }\eta_{\operatorname*{rev}%
\alpha}^{\left(  p\right)  }.
\end{align*}
This proves Theorem \ref{thm.eta.S}.
\end{proof}

Theorem \ref{thm.eta.S} generalizes \cite[Proposition 2.9]{Hsiao07}.

Our second formula for the antipode of $\eta_{\alpha}^{\left(  q\right)  }$
comes with no requirement on $q$, but is somewhat more complicated:

\begin{theorem}
\label{thm.eta.S2}Let $n\in\mathbb{N}$. Let $\alpha\in\operatorname*{Comp}%
\nolimits_{n}$. Then, the antipode $S$ of $\operatorname*{QSym}$ satisfies
\[
S\left(  \eta_{\alpha}^{\left(  q\right)  }\right)  =\left(  -1\right)
^{\ell\left(  \alpha\right)  }\sum_{\substack{\beta\in\operatorname*{Comp}%
\nolimits_{n};\\D\left(  \beta\right)  \subseteq D\left(  \operatorname*{rev}%
\alpha\right)  }}\left(  q-1\right)  ^{\ell\left(  \alpha\right)  -\ell\left(
\beta\right)  }\eta_{\beta}^{\left(  q\right)  }.
\]

\end{theorem}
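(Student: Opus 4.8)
The plan is to prove the identity by expanding both sides in the monomial basis $(M_\gamma)$ and comparing coefficients; this avoids any invertibility hypothesis on $q$ and handles all $q$ at once. Since $\eta_{\varnothing}^{\left(q\right)}=1=S\left(1\right)$ and the claimed right-hand side collapses to $\eta_{\varnothing}^{\left(q\right)}$ when $n=0$, I may assume $n\geq1$; then Lemma \ref{lem.comps.l-vs-size}\textbf{(a)} gives $\ell\left(\delta\right)=\left\vert D\left(\delta\right)\right\vert +1$ for every $\delta\in\operatorname{Comp}_{n}$, which I will use repeatedly to convert lengths into cardinalities. I index monomial functions by subsets of $\left[n-1\right]$ through the bijection $D$, and I use the reversal involution $\operatorname{rev}$ on $\mathcal{P}\left(\left[n-1\right]\right)$ sending $X$ to $\left\{n-x\ \mid\ x\in X\right\}$, for which $D\left(\operatorname{rev}\delta\right)=\operatorname{rev}\left(D\left(\delta\right)\right)$ and which preserves cardinalities and inclusions (these are the facts already recorded in the proof of Theorem \ref{thm.eta.S}).

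First I would compute the $M$-expansion of $S\left(\eta_{\alpha}^{\left(q\right)}\right)$. Writing $\eta_{\alpha}^{\left(q\right)}=\sum_{D\left(\delta\right)\subseteq D\left(\alpha\right)}r^{\ell\left(\delta\right)}M_{\delta}$, using $\mathbf{k}$-linearity of $S$, and inserting the antipode formula \eqref{eq.SMalpha} $S\left(M_{\delta}\right)=\left(-1\right)^{\ell\left(\delta\right)}\sum_{D\left(\gamma\right)\subseteq D\left(\operatorname{rev}\delta\right)}M_{\gamma}$, I obtain a double sum over $\delta$ and $\gamma$. Collecting the coefficient of a fixed $M_{\gamma}$ and rewriting the condition $D\left(\gamma\right)\subseteq\operatorname{rev}\left(D\left(\delta\right)\right)$ as $\operatorname{rev}\left(D\left(\gamma\right)\right)\subseteq D\left(\delta\right)$ turns this coefficient into the sum $\sum_{\operatorname{rev}\left(D\left(\gamma\right)\right)\subseteq D\left(\delta\right)\subseteq D\left(\alpha\right)}\left(-r\right)^{\ell\left(\delta\right)}$. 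Splitting $D\left(\delta\right)$ as the disjoint union of $\operatorname{rev}\left(D\left(\gamma\right)\right)$ with an arbitrary subset of $D\left(\alpha\right)\setminus\operatorname{rev}\left(D\left(\gamma\right)\right)$, and using $\sum_{E'\subseteq U}\left(-r\right)^{\left\vert E'\right\vert}=\left(1-r\right)^{\left\vert U\right\vert}$ together with $1-r=-q$, evaluates this coefficient to $\left(-r\right)^{\ell\left(\gamma\right)}\left(-q\right)^{\ell\left(\alpha\right)-\ell\left(\gamma\right)}$, which is nonzero only when $D\left(\gamma\right)\subseteq D\left(\operatorname{rev}\alpha\right)$. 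Hence
\[
S\left(\eta_{\alpha}^{\left(q\right)}\right)=\sum_{\substack{\gamma\in\operatorname{Comp}_{n};\\D\left(\gamma\right)\subseteq D\left(\operatorname{rev}\alpha\right)}}\left(-r\right)^{\ell\left(\gamma\right)}\left(-q\right)^{\ell\left(\alpha\right)-\ell\left(\gamma\right)}M_{\gamma}.
\]

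Next I would expand the claimed right-hand side in the same basis. Substituting $\eta_{\beta}^{\left(q\right)}=\sum_{D\left(\delta\right)\subseteq D\left(\beta\right)}r^{\ell\left(\delta\right)}M_{\delta}$ and collecting the coefficient of $M_{\delta}$ leaves the inner sum $\sum_{D\left(\delta\right)\subseteq D\left(\beta\right)\subseteq D\left(\operatorname{rev}\alpha\right)}\left(q-1\right)^{\ell\left(\alpha\right)-\ell\left(\beta\right)}$, which, writing $D\left(\beta\right)=D\left(\delta\right)\sqcup F'$ with $F'$ ranging over subsets of $D\left(\operatorname{rev}\alpha\right)\setminus D\left(\delta\right)$, evaluates by the binomial theorem $\sum_{k}\binom{m}{k}\left(q-1\right)^{m-k}=q^{m}$ with $m=\ell\left(\alpha\right)-\ell\left(\delta\right)$. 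Thus the coefficient of $M_{\delta}$ in the right-hand side is $\left(-1\right)^{\ell\left(\alpha\right)}r^{\ell\left(\delta\right)}q^{\ell\left(\alpha\right)-\ell\left(\delta\right)}$, valid precisely for $D\left(\delta\right)\subseteq D\left(\operatorname{rev}\alpha\right)$. A direct comparison, using the factorization $\left(-r\right)^{\ell\left(\delta\right)}\left(-q\right)^{\ell\left(\alpha\right)-\ell\left(\delta\right)}=\left(-1\right)^{\ell\left(\alpha\right)}r^{\ell\left(\delta\right)}q^{\ell\left(\alpha\right)-\ell\left(\delta\right)}$, shows that the two $M$-coefficients coincide for every $\gamma=\delta$, which proves the theorem.

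The computation is essentially mechanical; the only place demanding genuine care — and the main potential obstacle — is the bookkeeping around the reversal map, namely the identities $D\left(\operatorname{rev}\delta\right)=\operatorname{rev}\left(D\left(\delta\right)\right)$ and the inclusion-reversal equivalence $\bigl(D\left(\gamma\right)\subseteq\operatorname{rev}\left(D\left(\delta\right)\right)\bigr)\Leftrightarrow\bigl(\operatorname{rev}\left(D\left(\gamma\right)\right)\subseteq D\left(\delta\right)\bigr)$, so that the condition produced by \eqref{eq.SMalpha} can be transported back into $D\left(\operatorname{rev}\alpha\right)$-language without index errors. Everything else reduces to the two elementary sum evaluations mentioned above. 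As a consistency check, when $q$ is invertible this matches Theorem \ref{thm.eta.S} via the auxiliary identity $q^{\ell\left(\mu\right)}\eta_{\mu}^{\left(1/q\right)}=\sum_{D\left(\beta\right)\subseteq D\left(\mu\right)}\left(q-1\right)^{\ell\left(\mu\right)-\ell\left(\beta\right)}\eta_{\beta}^{\left(q\right)}$ applied to $\mu=\operatorname{rev}\alpha$, which one verifies by the very same $M$-basis computation.
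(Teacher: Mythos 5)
Your proof is correct and takes essentially the same route as the paper's: both arguments expand $S\left(\eta_{\alpha}^{\left(q\right)}\right)$ and the claimed right-hand side in the monomial basis via \eqref{eq.SMalpha}, reduce the resulting coefficients to sandwich sums over descent sets, and evaluate those sums by the binomial theorem (which the paper packages as Lemma~\ref{lem.Comp-sandwich-sum}). The only cosmetic difference is that the paper first substitutes $\operatorname{rev}\alpha$ for $\alpha$ and works with $M_{\operatorname{rev}\beta}$, whereas you keep $\alpha$ fixed and transport the reversal to subsets of $\left[n-1\right]$ instead.
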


\begin{vershort}
We will prove this theorem using the following summation lemma:
\end{vershort}

\begin{verlong}
We will prove this theorem via a series of three lemmas. The first one is a
simple combinatorial identity for finite sets:

\begin{lemma}
\label{lem.set-sandwich-sum}Let $A$ and $C$ be two finite sets such that
$C\subseteq A$. Let $u,v\in\mathbf{k}$. Then,%
\[
\sum_{\substack{I\subseteq A;\\C\subseteq I}}u^{\left\vert I\right\vert
-\left\vert C\right\vert }v^{\left\vert A\right\vert -\left\vert I\right\vert
}=\left(  u+v\right)  ^{\left\vert A\right\vert -\left\vert C\right\vert }.
\]

\end{lemma}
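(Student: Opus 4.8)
The plan is to reindex the sum over the ``sandwich'' of subsets lying between $C$ and $A$ by subtracting off the common part $C$. Concretely, the subsets $I$ satisfying $C\subseteq I\subseteq A$ are in bijection with the subsets $J$ of $A\setminus C$, via the mutually inverse maps $I\mapsto I\setminus C$ and $J\mapsto J\cup C$. First I would verify this bijection and record the two cardinality identities it entails: since $C\subseteq I$, we have $\left\vert I\right\vert -\left\vert C\right\vert =\left\vert I\setminus C\right\vert =\left\vert J\right\vert$; and since $I\subseteq A$ with $J=I\setminus C$ and $C\subseteq A$, we have $\left\vert A\right\vert -\left\vert I\right\vert =\left\vert A\setminus C\right\vert -\left\vert J\right\vert$.

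Substituting $J$ for $I\setminus C$ in the sum, the summand $u^{\left\vert I\right\vert -\left\vert C\right\vert }v^{\left\vert A\right\vert -\left\vert I\right\vert }$ becomes $u^{\left\vert J\right\vert }v^{\left\vert A\setminus C\right\vert -\left\vert J\right\vert }$, so the whole sum rewrites as $\sum_{J\subseteq A\setminus C}u^{\left\vert J\right\vert }v^{\left\vert A\setminus C\right\vert -\left\vert J\right\vert }$. Next I would split this sum according to the cardinality $k=\left\vert J\right\vert$: there are exactly $\dbinom{\left\vert A\setminus C\right\vert }{k}$ subsets $J$ of $A\setminus C$ of size $k$, so the sum collapses to $\sum_{k=0}^{\left\vert A\setminus C\right\vert }\dbinom{\left\vert A\setminus C\right\vert }{k}u^{k}v^{\left\vert A\setminus C\right\vert -k}$. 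The binomial theorem then identifies this with $\left(  u+v\right)  ^{\left\vert A\setminus C\right\vert }$.

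Finally, since $C\subseteq A$, we have $\left\vert A\setminus C\right\vert =\left\vert A\right\vert -\left\vert C\right\vert$, which turns the exponent into $\left\vert A\right\vert -\left\vert C\right\vert$ and yields the claimed equality. I expect no real obstacle: the only care needed is in checking the bijection and the two cardinality identities, both of which rely solely on $C\subseteq A$, while the remaining step is the binomial formula applied to the finite set $A\setminus C$. This is essentially the same ``reindex by complement/difference'' technique already used in the proof of Lemma \ref{lem.altsum.1}, but now with a binomial weight $u^{\left\vert J\right\vert }v^{\left\vert A\setminus C\right\vert -\left\vert J\right\vert }$ in place of the alternating sign $\left(  -1\right)  ^{\left\vert J\right\vert }$.
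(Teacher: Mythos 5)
Your proposal is correct and follows essentially the same route as the paper's proof: the bijection $I\mapsto I\setminus C$ (with inverse $J\mapsto J\cup C$) onto the subsets of $A\setminus C$, the two cardinality identities, the split by $\left\vert J\right\vert$, and the binomial formula applied to $\left\vert A\setminus C\right\vert =\left\vert A\right\vert -\left\vert C\right\vert$. No gaps to report.
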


\begin{proof}
[Proof of Lemma \ref{lem.set-sandwich-sum}.]Let $F:=A\setminus C$. Thus, $F$
is a finite set (since $A$ and $C$ are finite sets). Moreover, from
$F=A\setminus C$, we obtain
\begin{equation}
\left\vert F\right\vert =\left\vert A\setminus C\right\vert =\left\vert
A\right\vert -\left\vert C\right\vert \label{pf.lem.set-sandwich-sum.absF=}%
\end{equation}
(since $C\subseteq A$).

If $I$ is a subset of $A$ satisfying $C\subseteq I$, then $I\setminus C$ is a
subset of $F$\ \ \ \ \footnote{\textit{Proof.} Let $I$ be a subset of $A$
satisfying $C\subseteq I$. Then, $I\subseteq A$ (since $I$ is a subset of
$A$), so that $\underbrace{I}_{\subseteq A}\setminus C\subseteq A\setminus
C=F$. In other words, $I\setminus C$ is a subset of $F$. Qed.}. Hence, the map%
\begin{align*}
\Phi:\left\{  \text{subsets }I\text{ of }A\text{ satisfying }C\subseteq
I\right\}   &  \rightarrow\left\{  \text{subsets of }F\right\}  ,\\
I  &  \mapsto I\setminus C
\end{align*}
is well-defined. Consider this map $\Phi$.

If $J$ is a subset of $F$, then $J\cup C$ is a subset $I$ of $A$ satisfying
$C\subseteq I$\ \ \ \ \footnote{\textit{Proof.} Let $J$ be a subset of $F$.
Thus, $J\subseteq F=A\setminus C\subseteq A$. Hence, $\underbrace{J}%
_{\subseteq A}\cup\underbrace{C}_{\subseteq A}\subseteq A\cup A=A$. Thus,
$J\cup C$ is a subset of $A$. Since $C\subseteq J\cup C$, we thus conclude
that $J\cup C$ is a subset $I$ of $A$ satisfying $C\subseteq I$. Qed.}. Hence,
the map%
\begin{align*}
\Psi:\left\{  \text{subsets of }F\right\}   &  \rightarrow\left\{
\text{subsets }I\text{ of }A\text{ satisfying }C\subseteq I\right\}  ,\\
J  &  \mapsto J\cup C
\end{align*}
is well-defined. Consider this map $\Psi$.

We have $\Phi\circ\Psi=\operatorname*{id}$\ \ \ \ \footnote{\textit{Proof.}
Let $J\in\left\{  \text{subsets of }F\right\}  $. Then, $J$ is a subset of
$F$. In other words, $J\subseteq F=A\setminus C$. Hence, each $j\in J$
satisfies $j\in J\subseteq A\setminus C$ and therefore $j\notin C$. In other
words, the sets $J$ and $C$ are disjoint.
\par
But the definition of $\Phi$ yields
\[
\Phi\left(  \Psi\left(  J\right)  \right)  =\underbrace{\Psi\left(  J\right)
}_{\substack{=J\cup C\\\text{(by the definition of }\Psi\text{)}}}\setminus
C=\left(  J\cup C\right)  \setminus C=J
\]
(since the sets $J$ and $C$ are disjoint). Thus, $\left(  \Phi\circ
\Psi\right)  \left(  J\right)  =\Phi\left(  \Psi\left(  J\right)  \right)
=J=\operatorname*{id}\left(  J\right)  $.
\par
Forget that we fixed $J$. We thus have shown that $\left(  \Phi\circ
\Psi\right)  \left(  J\right)  =\operatorname*{id}\left(  J\right)  $ for each
$J\in\left\{  \text{subsets of }F\right\}  $. In other words, $\Phi\circ
\Psi=\operatorname*{id}$.} and $\Psi\circ\Phi=\operatorname*{id}%
$\ \ \ \ \footnote{\textit{Proof.} Let $K\in\left\{  \text{subsets }I\text{ of
}A\text{ satisfying }C\subseteq I\right\}  $. Then, $K$ is a subset $I$ of $A$
satisfying $C\subseteq I$. In other words, $K$ is a subset of $A$ and
satisfies $C\subseteq K$.
\par
But the definition of $\Psi$ yields%
\[
\Psi\left(  \Phi\left(  K\right)  \right)  =\underbrace{\Phi\left(  K\right)
}_{\substack{=K\setminus C\\\text{(by the definition of }\Phi\text{)}}}\cup
C=\left(  K\setminus C\right)  \cup C=K\ \ \ \ \ \ \ \ \ \ \left(  \text{since
}C\subseteq K\right)  .
\]
Thus, $\left(  \Psi\circ\Phi\right)  \left(  K\right)  =\Psi\left(
\Phi\left(  K\right)  \right)  =K=\operatorname*{id}\left(  K\right)  $.
\par
Forget that we fixed $K$. We thus have shown that $\left(  \Psi\circ
\Phi\right)  \left(  K\right)  =\operatorname*{id}\left(  K\right)  $ for each
$K\in\left\{  \text{subsets }I\text{ of }A\text{ satisfying }C\subseteq
I\right\}  $. In other words, $\Psi\circ\Phi=\operatorname*{id}$.}. Hence, the
maps $\Phi$ and $\Psi$ are mutually inverse. Thus, the map $\Psi$ is
invertible, i.e., is a bijection. Therefore, we can substitute $\Psi\left(
J\right)  $ for $I$ in the sum $\sum_{\substack{I\subseteq A;\\C\subseteq
I}}u^{\left\vert I\right\vert -\left\vert C\right\vert }v^{\left\vert
A\right\vert -\left\vert I\right\vert }$. As a result, we obtain%
\begin{equation}
\sum_{\substack{I\subseteq A;\\C\subseteq I}}u^{\left\vert I\right\vert
-\left\vert C\right\vert }v^{\left\vert A\right\vert -\left\vert I\right\vert
}=\sum_{J\subseteq F}u^{\left\vert \Psi\left(  J\right)  \right\vert
-\left\vert C\right\vert }v^{\left\vert A\right\vert -\left\vert \Psi\left(
J\right)  \right\vert }. \label{pf.lem.set-sandwich-sum.1}%
\end{equation}

However, if $J$ is any subset of $F$, then%
\begin{equation}
\left\vert \Psi\left(  J\right)  \right\vert =\left\vert J\right\vert
+\left\vert C\right\vert \label{pf.lem.set-sandwich-sum.2a}%
\end{equation}
\footnote{\textit{Proof.} Let $J$ be a subset of $F$. Thus, $J\subseteq F$.
Hence, each $j\in J$ satisfies $j\in J\subseteq F=A\setminus C$ and therefore
$j\notin C$. In other words, the sets $J$ and $C$ are disjoint. Hence,
$\left\vert J\cup C\right\vert =\left\vert J\right\vert +\left\vert
C\right\vert $. But the definition of $\Psi$ yields $\Psi\left(  J\right)
=J\cup C$. Hence, $\left\vert \Psi\left(  J\right)  \right\vert =\left\vert
J\cup C\right\vert =\left\vert J\right\vert +\left\vert C\right\vert $. This
proves (\ref{pf.lem.set-sandwich-sum.2a}).} and therefore%
\begin{equation}
\left\vert \Psi\left(  J\right)  \right\vert -\left\vert C\right\vert
=\left\vert J\right\vert \label{pf.lem.set-sandwich-sum.2b}%
\end{equation}
and%
\begin{align}
\left\vert A\right\vert -\underbrace{\left\vert \Psi\left(  J\right)
\right\vert }_{=\left\vert J\right\vert +\left\vert C\right\vert }  &
=\left\vert A\right\vert -\left(  \left\vert J\right\vert +\left\vert
C\right\vert \right)  =\underbrace{\left\vert A\right\vert -\left\vert
C\right\vert }_{\substack{=\left\vert F\right\vert \\\text{(by
(\ref{pf.lem.set-sandwich-sum.absF=}))}}}-\left\vert J\right\vert \nonumber\\
&  =\left\vert F\right\vert -\left\vert J\right\vert .
\label{pf.lem.set-sandwich-sum.2c}%
\end{align}

Now, (\ref{pf.lem.set-sandwich-sum.1}) becomes%
\begin{align*}
\sum_{\substack{I\subseteq A;\\C\subseteq I}}u^{\left\vert I\right\vert
-\left\vert C\right\vert }v^{\left\vert A\right\vert -\left\vert I\right\vert
}  &  =\sum_{J\subseteq F}\underbrace{u^{\left\vert \Psi\left(  J\right)
\right\vert -\left\vert C\right\vert }}_{\substack{=u^{\left\vert J\right\vert
}\\\text{(by (\ref{pf.lem.set-sandwich-sum.2b}))}}%
}\ \ \underbrace{v^{\left\vert A\right\vert -\left\vert \Psi\left(  J\right)
\right\vert }}_{\substack{=v^{\left\vert F\right\vert -\left\vert J\right\vert
}\\\text{(by (\ref{pf.lem.set-sandwich-sum.2c}))}}}\\
&  =\sum_{J\subseteq F}u^{\left\vert J\right\vert }v^{\left\vert F\right\vert
-\left\vert J\right\vert }=\sum_{k=0}^{\left\vert F\right\vert }%
\ \ \sum_{\substack{J\subseteq F;\\\left\vert J\right\vert =k}%
}\ \ \underbrace{u^{\left\vert J\right\vert }v^{\left\vert F\right\vert
-\left\vert J\right\vert }}_{\substack{=u^{k}v^{\left\vert F\right\vert
-k}\\\text{(since }\left\vert J\right\vert =k\text{)}}}\\
&  \ \ \ \ \ \ \ \ \ \ \ \ \ \ \ \ \ \ \ \ \left(
\begin{array}
[c]{c}%
\text{here, we have split up the sum according to}\\
\text{the value of }\left\vert J\right\vert \text{, since each subset }J\text{
of }F\\
\text{has size }\left\vert J\right\vert \in\left\{  0,1,\ldots,\left\vert
F\right\vert \right\}
\end{array}
\right) \\
&  =\sum_{k=0}^{\left\vert F\right\vert }\ \ \underbrace{\sum
_{\substack{J\subseteq F;\\\left\vert J\right\vert =k}}u^{k}v^{\left\vert
F\right\vert -k}}_{=\left(  \text{the number of all subsets }J\text{ of
}F\text{ satisfying }\left\vert J\right\vert =k\right)  \cdot u^{k}%
v^{\left\vert F\right\vert -k}}\\
&  =\sum_{k=0}^{\left\vert F\right\vert }\underbrace{\left(  \text{the number
of all subsets }J\text{ of }F\text{ satisfying }\left\vert J\right\vert
=k\right)  }_{\substack{=\left(  \text{the number of all }k\text{-element
subsets of }F\right)  \\=\dbinom{\left\vert F\right\vert }{k}\\\text{(by the
combinatorial interpretation of the binomial coefficients)}}}\cdot
\,u^{k}v^{\left\vert F\right\vert -k}\\
&  =\sum_{k=0}^{\left\vert F\right\vert }\dbinom{\left\vert F\right\vert }%
{k}u^{k}v^{\left\vert F\right\vert -k}=\left(  u+v\right)  ^{\left\vert
F\right\vert }%
\end{align*}
(since the binomial formula yields $\left(  u+v\right)  ^{\left\vert
F\right\vert }=\sum_{k=0}^{\left\vert F\right\vert }\dbinom{\left\vert
F\right\vert }{k}u^{k}v^{\left\vert F\right\vert -k}$). In view of $\left\vert
F\right\vert =\left\vert A\right\vert -\left\vert C\right\vert $, we can
rewrite this as%
\[
\sum_{\substack{I\subseteq A;\\C\subseteq I}}u^{\left\vert I\right\vert
-\left\vert C\right\vert }v^{\left\vert A\right\vert -\left\vert I\right\vert
}=\left(  u+v\right)  ^{\left\vert A\right\vert -\left\vert C\right\vert }.
\]
This proves Lemma \ref{lem.set-sandwich-sum}.
\end{proof}

Our next lemma will simplify some sums for us:
\end{verlong}

\begin{lemma}
\label{lem.Comp-sandwich-sum}Let $n\in\mathbb{N}$. Let $\alpha\in
\operatorname*{Comp}\nolimits_{n}$ and $\gamma\in\operatorname*{Comp}%
\nolimits_{n}$ be such that $D\left(  \gamma\right)  \subseteq D\left(
\alpha\right)  $. Then:

\begin{enumerate}
\item[\textbf{(a)}] For any $u,v\in\mathbf{k}$, we have%
\[
\sum_{\substack{\beta\in\operatorname*{Comp}\nolimits_{n};\\D\left(
\beta\right)  \subseteq D\left(  \alpha\right)  \text{ and }D\left(
\gamma\right)  \subseteq D\left(  \beta\right)  }}u^{\ell\left(  \beta\right)
-\ell\left(  \gamma\right)  }v^{\ell\left(  \alpha\right)  -\ell\left(
\beta\right)  }=\left(  u+v\right)  ^{\ell\left(  \alpha\right)  -\ell\left(
\gamma\right)  }.
\]

\item[\textbf{(b)}] For any $u\in\mathbf{k}$, we have%
\[
\sum_{\substack{\beta\in\operatorname*{Comp}\nolimits_{n};\\D\left(
\beta\right)  \subseteq D\left(  \alpha\right)  \text{ and }D\left(
\gamma\right)  \subseteq D\left(  \beta\right)  }}u^{\ell\left(  \beta\right)
}=\left(  u+1\right)  ^{\ell\left(  \alpha\right)  -\ell\left(  \gamma\right)
}u^{\ell\left(  \gamma\right)  }.
\]

\item[\textbf{(c)}] For any $v\in\mathbf{k}$, we have%
\[
\sum_{\substack{\beta\in\operatorname*{Comp}\nolimits_{n};\\D\left(
\beta\right)  \subseteq D\left(  \alpha\right)  \text{ and }D\left(
\gamma\right)  \subseteq D\left(  \beta\right)  }}v^{\ell\left(
\alpha\right)  -\ell\left(  \beta\right)  }=\left(  1+v\right)  ^{\ell\left(
\alpha\right)  -\ell\left(  \gamma\right)  }.
\]

\end{enumerate}
\end{lemma}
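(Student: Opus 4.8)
The plan is to reduce all three parts to the purely set-theoretic identity of Lemma \ref{lem.set-sandwich-sum}, using the bijection $D:\operatorname{Comp}_n\to\mathcal{P}\left(\left[n-1\right]\right)$ together with the length-versus-cardinality relation of Lemma \ref{lem.comps.l-vs-size} \textbf{(b)}. I would prove part \textbf{(a)} first and then obtain \textbf{(b)} and \textbf{(c)} as specializations.

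For part \textbf{(a)}, I would set $A:=D\left(\alpha\right)$ and $C:=D\left(\gamma\right)$; these are subsets of $\left[n-1\right]$ with $C\subseteq A$ (since $D\left(\gamma\right)\subseteq D\left(\alpha\right)$). The summation condition on $\beta\in\operatorname{Comp}_n$ is exactly $C\subseteq D\left(\beta\right)\subseteq A$, so, since $D$ is a bijection, I would substitute $I:=D\left(\beta\right)$, turning the sum into one over all subsets $I$ of $A$ with $C\subseteq I$. It then remains to rewrite the exponents: Lemma \ref{lem.comps.l-vs-size} \textbf{(b)} gives $\ell\left(\beta\right)-\ell\left(\gamma\right)=\left\vert D\left(\beta\right)\right\vert-\left\vert D\left(\gamma\right)\right\vert=\left\vert I\right\vert-\left\vert C\right\vert$ and likewise $\ell\left(\alpha\right)-\ell\left(\beta\right)=\left\vert A\right\vert-\left\vert I\right\vert$. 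The transformed sum is precisely the left-hand side of Lemma \ref{lem.set-sandwich-sum}, whose value is $\left(u+v\right)^{\left\vert A\right\vert-\left\vert C\right\vert}=\left(u+v\right)^{\ell\left(\alpha\right)-\ell\left(\gamma\right)}$, the last equality again by Lemma \ref{lem.comps.l-vs-size} \textbf{(b)}.

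For part \textbf{(b)}, I would factor $u^{\ell\left(\beta\right)}=u^{\ell\left(\gamma\right)}\cdot u^{\ell\left(\beta\right)-\ell\left(\gamma\right)}$, pull the constant $u^{\ell\left(\gamma\right)}$ out of the sum, insert the harmless factor $1^{\ell\left(\alpha\right)-\ell\left(\beta\right)}$, and apply part \textbf{(a)} with $v=1$; this produces $u^{\ell\left(\gamma\right)}\left(u+1\right)^{\ell\left(\alpha\right)-\ell\left(\gamma\right)}$, which is the claimed expression. For part \textbf{(c)}, I would simply apply part \textbf{(a)} with $u=1$ and note that $1^{\ell\left(\beta\right)-\ell\left(\gamma\right)}=1$.

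I do not expect a genuine obstacle here; the argument is essentially bookkeeping. The only two points requiring care are verifying that every exponent in sight is a nonnegative integer — which follows from the inclusions $C\subseteq I\subseteq A$ (equivalently $D\left(\gamma\right)\subseteq D\left(\beta\right)\subseteq D\left(\alpha\right)$) and keeps all the powers well-defined over an arbitrary commutative ring $\mathbf{k}$ — and translating the three length differences correctly into cardinality differences via Lemma \ref{lem.comps.l-vs-size} \textbf{(b)} before invoking Lemma \ref{lem.set-sandwich-sum}.
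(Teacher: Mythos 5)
Your proposal is correct and follows essentially the same route as the paper: its proof of part \textbf{(a)} likewise sets $A=D\left(\alpha\right)$ and $C=D\left(\gamma\right)$, substitutes $I=D\left(\beta\right)$ via the bijection $D$, converts the exponents using Lemma \ref{lem.comps.l-vs-size} \textbf{(b)}, and evaluates the resulting set sum $\sum_{\substack{I\subseteq A;\\C\subseteq I}}u^{\left\vert I\right\vert-\left\vert C\right\vert}v^{\left\vert A\right\vert-\left\vert I\right\vert}$ by the binomial formula, which is precisely the content of Lemma \ref{lem.set-sandwich-sum}. Parts \textbf{(b)} and \textbf{(c)} are also obtained in the paper exactly as you describe, by specializing \textbf{(a)} to $v=1$ (and multiplying by $u^{\ell\left(\gamma\right)}$) and to $u=1$, respectively.
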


\begin{vershort}

\begin{proof}
[Proof of Lemma \ref{lem.Comp-sandwich-sum}.]\textbf{(a)} Let $u,v\in
\mathbf{k}$. Set $A:=D\left(  \alpha\right)  $ and $C:=D\left(  \gamma\right)
$. Then, $A=D\left(  \alpha\right)  \in\mathcal{P}\left(  \left[  n-1\right]
\right)  $ (since $\alpha\in\operatorname*{Comp}\nolimits_{n}$, but
$D:\operatorname*{Comp}\nolimits_{n}\rightarrow\mathcal{P}\left(  \left[
n-1\right]  \right)  $ is a map). In other words, $A\subseteq\left[
n-1\right]  $. Furthermore, $C=D\left(  \gamma\right)  \subseteq D\left(
\alpha\right)  =A\subseteq\left[  n-1\right]  $.

From $C\subseteq A$, we obtain%
\begin{align*}
\left\vert A\setminus C\right\vert  &  =\left\vert A\right\vert -\left\vert
C\right\vert =\left\vert D\left(  \alpha\right)  \right\vert -\left\vert
D\left(  \gamma\right)  \right\vert \ \ \ \ \ \ \ \ \ \ \left(  \text{since
}A=D\left(  \alpha\right)  \text{ and }C=D\left(  \gamma\right)  \right) \\
&  =\ell\left(  \alpha\right)  -\ell\left(  \gamma\right)
\end{align*}
(since Lemma \ref{lem.comps.l-vs-size} \textbf{(b) }yields $\ell\left(
\alpha\right)  -\ell\left(  \gamma\right)  =\left\vert D\left(  \alpha\right)
\right\vert -\left\vert D\left(  \gamma\right)  \right\vert $).

Now,%
\begin{align}
&  \sum_{\substack{\beta\in\operatorname*{Comp}\nolimits_{n};\\D\left(
\beta\right)  \subseteq D\left(  \alpha\right)  \text{ and }D\left(
\gamma\right)  \subseteq D\left(  \beta\right)  }}\underbrace{u^{\ell\left(
\beta\right)  -\ell\left(  \gamma\right)  }}_{\substack{=u^{\left\vert
D\left(  \beta\right)  \right\vert -\left\vert D\left(  \gamma\right)
\right\vert }\\\text{(since Lemma \ref{lem.comps.l-vs-size} \textbf{(b)}%
}\\\text{yields }\ell\left(  \beta\right)  -\ell\left(  \gamma\right)
=\left\vert D\left(  \beta\right)  \right\vert -\left\vert D\left(
\gamma\right)  \right\vert \text{)}}}\ \ \underbrace{v^{\ell\left(
\alpha\right)  -\ell\left(  \beta\right)  }}_{\substack{=v^{\left\vert
D\left(  \alpha\right)  \right\vert -\left\vert D\left(  \beta\right)
\right\vert }\\\text{(since Lemma \ref{lem.comps.l-vs-size} \textbf{(b)}%
}\\\text{yields }\ell\left(  \alpha\right)  -\ell\left(  \beta\right)
=\left\vert D\left(  \alpha\right)  \right\vert -\left\vert D\left(
\beta\right)  \right\vert \text{)}}}\nonumber\\
&  =\sum_{\substack{\beta\in\operatorname*{Comp}\nolimits_{n};\\D\left(
\beta\right)  \subseteq D\left(  \alpha\right)  \text{ and }D\left(
\gamma\right)  \subseteq D\left(  \beta\right)  }}u^{\left\vert D\left(
\beta\right)  \right\vert -\left\vert D\left(  \gamma\right)  \right\vert
}v^{\left\vert D\left(  \alpha\right)  \right\vert -\left\vert D\left(
\beta\right)  \right\vert }\nonumber\\
&  =\sum_{\substack{\beta\in\operatorname*{Comp}\nolimits_{n};\\D\left(
\beta\right)  \subseteq A\text{ and }C\subseteq D\left(  \beta\right)
}}u^{\left\vert D\left(  \beta\right)  \right\vert -\left\vert C\right\vert
}v^{\left\vert A\right\vert -\left\vert D\left(  \beta\right)  \right\vert
}\ \ \ \ \ \ \ \ \ \ \left(  \text{since }D\left(  \gamma\right)  =C\text{ and
}D\left(  \alpha\right)  =A\right) \nonumber\\
&  =\sum_{\substack{I\subseteq\left[  n-1\right]  ;\\I\subseteq A\text{ and
}C\subseteq I}}u^{\left\vert I\right\vert -\left\vert C\right\vert
}v^{\left\vert A\right\vert -\left\vert I\right\vert }%
\ \ \ \ \ \ \ \ \ \ \left(
\begin{array}
[c]{c}%
\text{here, we have substituted }I\text{ for }D\left(  \beta\right) \\
\text{in the sum, since the}\\
\text{map }D:\operatorname*{Comp}\nolimits_{n}\rightarrow\mathcal{P}\left(
\left[  n-1\right]  \right) \\
\text{is a bijection}%
\end{array}
\right) \nonumber\\
&  =\sum_{\substack{I\subseteq A;\\C\subseteq I}}u^{\left\vert I\right\vert
-\left\vert C\right\vert }v^{\left\vert A\right\vert -\left\vert I\right\vert
} \label{pf.lem.Comp-sandwich-sum.1}%
\end{align}
(since the condition \textquotedblleft$I\subseteq\left[  n-1\right]
$\textquotedblright\ under the sum is redundant (because the condition
\textquotedblleft$I\subseteq A$\textquotedblright\ already yields $I\subseteq
A\subseteq\left[  n-1\right]  $)).

Now, each subset $I$ of $A$ that satisfies $C\subseteq I$ can be written as
$C\cup Z$ for some unique subset $Z\subseteq A\setminus C$. Hence, we can
substitute $C\cup Z$ for $I$ in the sum $\sum_{\substack{I\subseteq
A;\\C\subseteq I}}u^{\left\vert I\right\vert -\left\vert C\right\vert
}v^{\left\vert A\right\vert -\left\vert I\right\vert }$. As a consequence, we
obtain%
\begin{align*}
\sum_{\substack{I\subseteq A;\\C\subseteq I}}u^{\left\vert I\right\vert
-\left\vert C\right\vert }v^{\left\vert A\right\vert -\left\vert I\right\vert
}  &  =\sum_{Z\subseteq A\setminus C}\underbrace{u^{\left\vert C\cup
Z\right\vert -\left\vert C\right\vert }v^{\left\vert A\right\vert -\left\vert
C\cup Z\right\vert }}_{\substack{=u^{\left(  \left\vert C\right\vert
+\left\vert Z\right\vert \right)  -\left\vert C\right\vert }v^{\left\vert
A\right\vert -\left(  \left\vert C\right\vert +\left\vert Z\right\vert
\right)  }\\\text{(since }\left\vert C\cup Z\right\vert =\left\vert
C\right\vert +\left\vert Z\right\vert \\\text{(because }Z\subseteq A\setminus
C\text{ shows that}\\\text{the sets }C\text{ and }Z\text{ are disjoint))}}}\\
&  =\sum_{Z\subseteq A\setminus C}\underbrace{u^{\left(  \left\vert
C\right\vert +\left\vert Z\right\vert \right)  -\left\vert C\right\vert }%
}_{=u^{\left\vert Z\right\vert }}\underbrace{v^{\left\vert A\right\vert
-\left(  \left\vert C\right\vert +\left\vert Z\right\vert \right)  }%
}_{\substack{=v^{\left\vert A\right\vert -\left\vert C\right\vert -\left\vert
Z\right\vert }=v^{\left\vert A\setminus C\right\vert -\left\vert Z\right\vert
}\\\text{(since }\left\vert A\right\vert -\left\vert C\right\vert =\left\vert
A\setminus C\right\vert \text{)}}}\\
&  =\sum_{Z\subseteq A\setminus C}u^{\left\vert Z\right\vert }v^{\left\vert
A\setminus C\right\vert -\left\vert Z\right\vert }=\sum_{k=0}^{\left\vert
A\setminus C\right\vert }\dbinom{\left\vert A\setminus C\right\vert }{k}%
u^{k}v^{\left\vert A\setminus C\right\vert -k}%
\end{align*}
(here, we have split the sum according to the size $\left\vert Z\right\vert $
of the subset $Z$, and observed that for each given $k\in\mathbb{N}$, there
are exactly $\dbinom{\left\vert A\setminus C\right\vert }{k}$ many subsets
$Z\subseteq A\setminus C$ that have size $k$). Comparing this with%
\[
\left(  u+v\right)  ^{\left\vert A\setminus C\right\vert }=\sum_{k=0}%
^{\left\vert A\setminus C\right\vert }\dbinom{\left\vert A\setminus
C\right\vert }{k}u^{k}v^{\left\vert A\setminus C\right\vert -k}%
\ \ \ \ \ \ \ \ \ \ \left(  \text{by the binomial formula}\right)  ,
\]
we obtain%
\[
\sum_{\substack{I\subseteq A;\\C\subseteq I}}u^{\left\vert I\right\vert
-\left\vert C\right\vert }v^{\left\vert A\right\vert -\left\vert I\right\vert
}=\left(  u+v\right)  ^{\left\vert A\setminus C\right\vert }=\left(
u+v\right)  ^{\ell\left(  \alpha\right)  -\ell\left(  \gamma\right)  }%
\]
(since $\left\vert A\setminus C\right\vert =\ell\left(  \alpha\right)
-\ell\left(  \gamma\right)  $). Hence, (\ref{pf.lem.Comp-sandwich-sum.1})
becomes%
\[
\sum_{\substack{\beta\in\operatorname*{Comp}\nolimits_{n};\\D\left(
\beta\right)  \subseteq D\left(  \alpha\right)  \text{ and }D\left(
\gamma\right)  \subseteq D\left(  \beta\right)  }}u^{\ell\left(  \beta\right)
-\ell\left(  \gamma\right)  }v^{\ell\left(  \alpha\right)  -\ell\left(
\beta\right)  }=\sum_{\substack{I\subseteq A;\\C\subseteq I}}u^{\left\vert
I\right\vert -\left\vert C\right\vert }v^{\left\vert A\right\vert -\left\vert
I\right\vert }=\left(  u+v\right)  ^{\ell\left(  \alpha\right)  -\ell\left(
\gamma\right)  }.
\]
This proves Lemma \ref{lem.Comp-sandwich-sum} \textbf{(a)}.

\textbf{(b)} This follows by applying Lemma \ref{lem.Comp-sandwich-sum}
\textbf{(a)} to $v=1$ and then multiplying both sides by $u^{\ell\left(
\gamma\right)  }$.

\textbf{(c)} This follows by applying Lemma \ref{lem.Comp-sandwich-sum}
\textbf{(a)} to $u=1$.
\end{proof}
\end{vershort}

\begin{verlong}

\begin{proof}
[Proof of Lemma \ref{lem.Comp-sandwich-sum}.]\textbf{(a)} Let $u,v\in
\mathbf{k}$.

We have $\alpha\in\operatorname*{Comp}\nolimits_{n}$, so that $D\left(
\alpha\right)  \in\mathcal{P}\left(  \left[  n-1\right]  \right)  $ (since
$D:\operatorname*{Comp}\nolimits_{n}\rightarrow\mathcal{P}\left(  \left[
n-1\right]  \right)  $ is a map). In other words, $D\left(  \alpha\right)
\subseteq\left[  n-1\right]  $.

Set $A:=D\left(  \alpha\right)  $ and $C:=D\left(  \gamma\right)  $. Then,
$A=D\left(  \alpha\right)  \subseteq\left[  n-1\right]  $ and $C=D\left(
\gamma\right)  \subseteq D\left(  \alpha\right)  =A\subseteq\left[
n-1\right]  $.

Lemma \ref{lem.comps.l-vs-size} \textbf{(b)} (applied to $\alpha$ and $\gamma$
instead of $\beta$ and $\alpha$) yields
\begin{equation}
\ell\left(  \alpha\right)  -\ell\left(  \gamma\right)  =\left\vert D\left(
\alpha\right)  \right\vert -\left\vert D\left(  \gamma\right)  \right\vert .
\label{pf.lem.Comp-sandwich-sum.a.0}%
\end{equation}
From $A=D\left(  \alpha\right)  $ and $C=D\left(  \gamma\right)  $, we obtain%
\[
\left\vert A\right\vert -\left\vert C\right\vert =\left\vert D\left(
\alpha\right)  \right\vert -\left\vert D\left(  \gamma\right)  \right\vert
=\ell\left(  \alpha\right)  -\ell\left(  \gamma\right)
\ \ \ \ \ \ \ \ \ \ \left(  \text{by (\ref{pf.lem.Comp-sandwich-sum.a.0}%
)}\right)  .
\]

Now, for each $\beta\in\operatorname*{Comp}\nolimits_{n}$, we have%
\begin{align}
\ell\left(  \beta\right)  -\ell\left(  \gamma\right)   &  =\left\vert D\left(
\beta\right)  \right\vert -\left\vert \underbrace{D\left(  \gamma\right)
}_{=C}\right\vert \ \ \ \ \ \ \ \ \ \ \left(
\begin{array}
[c]{c}%
\text{by Lemma \ref{lem.comps.l-vs-size} \textbf{(b)},}\\
\text{applied to }\gamma\text{ instead of }\alpha
\end{array}
\right) \nonumber\\
&  =\left\vert D\left(  \beta\right)  \right\vert -\left\vert C\right\vert
\label{pf.lem.Comp-sandwich-sum.a.1}%
\end{align}
and%
\begin{align}
\ell\left(  \alpha\right)  -\ell\left(  \beta\right)   &  =\left\vert
\underbrace{D\left(  \alpha\right)  }_{=A}\right\vert -\left\vert D\left(
\beta\right)  \right\vert \ \ \ \ \ \ \ \ \ \ \left(
\begin{array}
[c]{c}%
\text{by Lemma \ref{lem.comps.l-vs-size} \textbf{(b)},}\\
\text{applied to }\beta\text{ and }\alpha\text{ instead of }\alpha\text{ and
}\beta
\end{array}
\right) \nonumber\\
&  =\left\vert A\right\vert -\left\vert D\left(  \beta\right)  \right\vert .
\label{pf.lem.Comp-sandwich-sum.a.2}%
\end{align}
Hence,%
\begin{align}
&  \underbrace{\sum_{\substack{\beta\in\operatorname*{Comp}\nolimits_{n}%
;\\D\left(  \beta\right)  \subseteq D\left(  \alpha\right)  \text{ and
}D\left(  \gamma\right)  \subseteq D\left(  \beta\right)  }}}_{\substack{=\sum
_{\substack{\beta\in\operatorname*{Comp}\nolimits_{n};\\D\left(  \beta\right)
\subseteq A\text{ and }C\subseteq D\left(  \beta\right)  }}\\\text{(since
}D\left(  \gamma\right)  =C\text{ and }D\left(  \alpha\right)  =A\text{)}%
}}\underbrace{u^{\ell\left(  \beta\right)  -\ell\left(  \gamma\right)  }%
}_{\substack{=u^{\left\vert D\left(  \beta\right)  \right\vert -\left\vert
C\right\vert }\\\text{(by (\ref{pf.lem.Comp-sandwich-sum.a.1}))}%
}}\ \ \underbrace{v^{\ell\left(  \alpha\right)  -\ell\left(  \beta\right)  }%
}_{\substack{=v^{\left\vert A\right\vert -\left\vert D\left(  \beta\right)
\right\vert }\\\text{(by (\ref{pf.lem.Comp-sandwich-sum.a.2}))}}}\nonumber\\
&  =\sum_{\substack{\beta\in\operatorname*{Comp}\nolimits_{n};\\D\left(
\beta\right)  \subseteq A\text{ and }C\subseteq D\left(  \beta\right)
}}u^{\left\vert D\left(  \beta\right)  \right\vert -\left\vert C\right\vert
}v^{\left\vert A\right\vert -\left\vert D\left(  \beta\right)  \right\vert
}\nonumber\\
&  =\sum_{\substack{I\in\mathcal{P}\left(  \left[  n-1\right]  \right)
;\\I\subseteq A\text{ and }C\subseteq I}}u^{\left\vert I\right\vert
-\left\vert C\right\vert }v^{\left\vert A\right\vert -\left\vert I\right\vert
} \label{pf.lem.Comp-sandwich-sum.a.4}%
\end{align}
(here, we have substituted $I$ for $D\left(  \beta\right)  $ in the sum, since
the map $D:\operatorname*{Comp}\nolimits_{n}\rightarrow\mathcal{P}\left(
\left[  n-1\right]  \right)  $ is a bijection).

However, recall that $A\subseteq\left[  n-1\right]  $. Hence, each subset $I$
of $A$ automatically satisfies $I\in\mathcal{P}\left(  \left[  n-1\right]
\right)  $ (since $I\subseteq A\subseteq\left[  n-1\right]  $) and $I\subseteq
A$. In other words, each subset of $A$ is an $I\in\mathcal{P}\left(  \left[
n-1\right]  \right)  $ that satisfies $I\subseteq A$. Conversely, of course,
every $I\in\mathcal{P}\left(  \left[  n-1\right]  \right)  $ that satisfies
$I\subseteq A$ must be a subset of $A$ (since $I\subseteq A$). Thus, the sets
$I\in\mathcal{P}\left(  \left[  n-1\right]  \right)  $ that satisfy
$I\subseteq A$ are precisely the subsets of $A$. Therefore, we have the
following equality of summation signs:%
\[
\sum_{\substack{I\in\mathcal{P}\left(  \left[  n-1\right]  \right)
;\\I\subseteq A}}=\sum_{I\subseteq A}.
\]
Of course, this equality remains true if we add the extra condition
\textquotedblleft$C\subseteq I$\textquotedblright\ under both summation signs.
Thus, we obtain the following equality of summation signs:%
\[
\sum_{\substack{I\in\mathcal{P}\left(  \left[  n-1\right]  \right)
;\\I\subseteq A\text{ and }C\subseteq I}}=\sum_{\substack{I\subseteq
A;\\C\subseteq I}}.
\]
Hence, we can rewrite (\ref{pf.lem.Comp-sandwich-sum.a.4}) as follows:%
\begin{align*}
\sum_{\substack{\beta\in\operatorname*{Comp}\nolimits_{n};\\D\left(
\beta\right)  \subseteq D\left(  \alpha\right)  \text{ and }D\left(
\gamma\right)  \subseteq D\left(  \beta\right)  }}u^{\ell\left(  \beta\right)
-\ell\left(  \gamma\right)  }v^{\ell\left(  \alpha\right)  -\ell\left(
\beta\right)  }  &  =\sum_{\substack{I\subseteq A;\\C\subseteq I}%
}u^{\left\vert I\right\vert -\left\vert C\right\vert }v^{\left\vert
A\right\vert -\left\vert I\right\vert }\\
&  =\left(  u+v\right)  ^{\left\vert A\right\vert -\left\vert C\right\vert
}\ \ \ \ \ \ \ \ \ \ \left(  \text{by Lemma \ref{lem.set-sandwich-sum}}\right)
\\
&  =\left(  u+v\right)  ^{\ell\left(  \alpha\right)  -\ell\left(
\gamma\right)  }%
\end{align*}
(since $\left\vert A\right\vert -\left\vert C\right\vert =\ell\left(
\alpha\right)  -\ell\left(  \gamma\right)  $). This proves Lemma
\ref{lem.Comp-sandwich-sum} \textbf{(a)}.

\textbf{(b)} Let $u\in\mathbf{k}$. Lemma \ref{lem.Comp-sandwich-sum}
\textbf{(a)} (applied to $v=1$) yields
\[
\sum_{\substack{\beta\in\operatorname*{Comp}\nolimits_{n};\\D\left(
\beta\right)  \subseteq D\left(  \alpha\right)  \text{ and }D\left(
\gamma\right)  \subseteq D\left(  \beta\right)  }}u^{\ell\left(  \beta\right)
-\ell\left(  \gamma\right)  }1^{\ell\left(  \alpha\right)  -\ell\left(
\beta\right)  }=\left(  u+1\right)  ^{\ell\left(  \alpha\right)  -\ell\left(
\gamma\right)  }.
\]
Hence,%
\begin{align*}
\left(  u+1\right)  ^{\ell\left(  \alpha\right)  -\ell\left(  \gamma\right)
}  &  =\sum_{\substack{\beta\in\operatorname*{Comp}\nolimits_{n};\\D\left(
\beta\right)  \subseteq D\left(  \alpha\right)  \text{ and }D\left(
\gamma\right)  \subseteq D\left(  \beta\right)  }}u^{\ell\left(  \beta\right)
-\ell\left(  \gamma\right)  }\underbrace{1^{\ell\left(  \alpha\right)
-\ell\left(  \beta\right)  }}_{=1}\\
&  =\sum_{\substack{\beta\in\operatorname*{Comp}\nolimits_{n};\\D\left(
\beta\right)  \subseteq D\left(  \alpha\right)  \text{ and }D\left(
\gamma\right)  \subseteq D\left(  \beta\right)  }}u^{\ell\left(  \beta\right)
-\ell\left(  \gamma\right)  }.
\end{align*}
Multiplying both sides of this equality by $u^{\ell\left(  \gamma\right)  }$,
we obtain%
\begin{align*}
\left(  u+1\right)  ^{\ell\left(  \alpha\right)  -\ell\left(  \gamma\right)
}u^{\ell\left(  \gamma\right)  }  &  =\left(  \sum_{\substack{\beta
\in\operatorname*{Comp}\nolimits_{n};\\D\left(  \beta\right)  \subseteq
D\left(  \alpha\right)  \text{ and }D\left(  \gamma\right)  \subseteq D\left(
\beta\right)  }}u^{\ell\left(  \beta\right)  -\ell\left(  \gamma\right)
}\right)  u^{\ell\left(  \gamma\right)  }\\
&  =\sum_{\substack{\beta\in\operatorname*{Comp}\nolimits_{n};\\D\left(
\beta\right)  \subseteq D\left(  \alpha\right)  \text{ and }D\left(
\gamma\right)  \subseteq D\left(  \beta\right)  }}\underbrace{u^{\ell\left(
\beta\right)  -\ell\left(  \gamma\right)  }u^{\ell\left(  \gamma\right)  }%
}_{=u^{\left(  \ell\left(  \beta\right)  -\ell\left(  \gamma\right)  \right)
}u^{\ell\left(  \gamma\right)  }=u^{\ell\left(  \beta\right)  }}\\
&  =\sum_{\substack{\beta\in\operatorname*{Comp}\nolimits_{n};\\D\left(
\beta\right)  \subseteq D\left(  \alpha\right)  \text{ and }D\left(
\gamma\right)  \subseteq D\left(  \beta\right)  }}u^{\ell\left(  \beta\right)
}.
\end{align*}
This proves Lemma \ref{lem.Comp-sandwich-sum} \textbf{(b)}.

\textbf{(c)} Let $v\in\mathbf{k}$. Lemma \ref{lem.Comp-sandwich-sum}
\textbf{(a)} (applied to $u=1$) yields%
\[
\sum_{\substack{\beta\in\operatorname*{Comp}\nolimits_{n};\\D\left(
\beta\right)  \subseteq D\left(  \alpha\right)  \text{ and }D\left(
\gamma\right)  \subseteq D\left(  \beta\right)  }}1^{\ell\left(  \beta\right)
-\ell\left(  \gamma\right)  }v^{\ell\left(  \alpha\right)  -\ell\left(
\beta\right)  }=\left(  1+v\right)  ^{\ell\left(  \alpha\right)  -\ell\left(
\gamma\right)  }.
\]
Thus,%
\begin{align*}
\left(  1+v\right)  ^{\ell\left(  \alpha\right)  -\ell\left(  \gamma\right)
}  &  =\sum_{\substack{\beta\in\operatorname*{Comp}\nolimits_{n};\\D\left(
\beta\right)  \subseteq D\left(  \alpha\right)  \text{ and }D\left(
\gamma\right)  \subseteq D\left(  \beta\right)  }}\underbrace{1^{\ell\left(
\beta\right)  -\ell\left(  \gamma\right)  }}_{=1}v^{\ell\left(  \alpha\right)
-\ell\left(  \beta\right)  }\\
&  =\sum_{\substack{\beta\in\operatorname*{Comp}\nolimits_{n};\\D\left(
\beta\right)  \subseteq D\left(  \alpha\right)  \text{ and }D\left(
\gamma\right)  \subseteq D\left(  \beta\right)  }}v^{\ell\left(
\alpha\right)  -\ell\left(  \beta\right)  }.
\end{align*}
This proves Lemma \ref{lem.Comp-sandwich-sum} \textbf{(c)}.
\end{proof}
\end{verlong}

\begin{verlong}
We can now prove Theorem \ref{thm.eta.S2} in a slightly modified form (we will
subsequently derive the actual Theorem \ref{thm.eta.S2} from it):

\begin{lemma}
\label{lem.eta.S2rev}Let $n\in\mathbb{N}$. Let $\alpha\in\operatorname*{Comp}%
\nolimits_{n}$. Then, the antipode $S$ of $\operatorname*{QSym}$ satisfies
\[
S\left(  \eta_{\operatorname*{rev}\alpha}^{\left(  q\right)  }\right)
=\left(  -1\right)  ^{\ell\left(  \alpha\right)  }\sum_{\substack{\beta
\in\operatorname*{Comp}\nolimits_{n};\\D\left(  \beta\right)  \subseteq
D\left(  \alpha\right)  }}\left(  q-1\right)  ^{\ell\left(  \alpha\right)
-\ell\left(  \beta\right)  }\eta_{\beta}^{\left(  q\right)  }.
\]

\end{lemma}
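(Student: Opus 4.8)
The plan is to pass through the monomial basis: expand $\eta_{\operatorname{rev}\alpha}^{(q)}$ via \eqref{eq.def.etaalpha.def}, apply the antipode termwise using the known formula \eqref{eq.SMalpha}, collapse the resulting nested sums with Lemma \ref{lem.Comp-sandwich-sum}, and finally repackage everything back into the $\eta$-basis using Proposition \ref{prop.eta.M-through-eta}. Concretely, I would first use \eqref{eq.def.etaalpha.def} together with the $\mathbf{k}$-linearity of $S$ to write
\[
S\left(\eta_{\operatorname{rev}\alpha}^{(q)}\right)=\sum_{\substack{\delta\in\operatorname{Comp}\nolimits_{n};\\ D(\delta)\subseteq D(\operatorname{rev}\alpha)}} r^{\ell(\delta)}\,S(M_\delta),
\]
and then substitute \eqref{eq.SMalpha}, which turns $S(M_\delta)$ into $(-1)^{\ell(\delta)}$ times a sum over all $\gamma\in\operatorname{Comp}\nolimits_{n}$ with $D(\gamma)\subseteq D(\operatorname{rev}\delta)$.

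The key manipulation is a reversal substitution. Setting $\delta'=\operatorname{rev}\delta$ and using that $\operatorname{rev}$ is a length-preserving involution on $\operatorname{Comp}\nolimits_{n}$ which preserves the refinement order (that is, $D(\mu)\subseteq D(\nu)$ if and only if $D(\operatorname{rev}\mu)\subseteq D(\operatorname{rev}\nu)$, a standard property, see \cite{comps}), the outer condition $D(\delta)\subseteq D(\operatorname{rev}\alpha)$ becomes $D(\delta')\subseteq D(\alpha)$, the inner condition reads $D(\gamma)\subseteq D(\operatorname{rev}\delta)=D(\delta')$, and the sign-and-power factor becomes $r^{\ell(\delta)}(-1)^{\ell(\delta)}=(-r)^{\ell(\delta')}$. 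Interchanging the two summations to collect the coefficient of each $M_\gamma$ produces the sandwich sum $\sum_{D(\gamma)\subseteq D(\delta')\subseteq D(\alpha)}(-r)^{\ell(\delta')}$, which by Lemma \ref{lem.Comp-sandwich-sum} \textbf{(b)} with $u=-r$ equals $(1-r)^{\ell(\alpha)-\ell(\gamma)}(-r)^{\ell(\gamma)}=(-q)^{\ell(\alpha)-\ell(\gamma)}(-r)^{\ell(\gamma)}$ (using $1-r=-q$); the terms with $D(\gamma)\not\subseteq D(\alpha)$ vanish as empty sums. This gives
\[
S\left(\eta_{\operatorname{rev}\alpha}^{(q)}\right)=\sum_{\substack{\gamma\in\operatorname{Comp}\nolimits_{n};\\ D(\gamma)\subseteq D(\alpha)}}(-q)^{\ell(\alpha)-\ell(\gamma)}(-r)^{\ell(\gamma)}M_\gamma.
\]

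To finish, I would convert back to the $\eta$-basis. Multiplying Proposition \ref{prop.eta.M-through-eta} by $(-1)^{\ell(\gamma)}$ yields $(-r)^{\ell(\gamma)}M_\gamma=\sum_{D(\beta)\subseteq D(\gamma)}(-1)^{\ell(\beta)}\eta_\beta^{(q)}$, since the two copies of $(-1)^{\ell(\gamma)}$ cancel. Substituting this, interchanging summations once more, and applying Lemma \ref{lem.Comp-sandwich-sum} \textbf{(c)} with $v=-q$ to the sandwich sum over $\gamma$ (namely $\sum_{D(\beta)\subseteq D(\gamma)\subseteq D(\alpha)}(-q)^{\ell(\alpha)-\ell(\gamma)}=(1-q)^{\ell(\alpha)-\ell(\beta)}$) produces
\[
S\left(\eta_{\operatorname{rev}\alpha}^{(q)}\right)=\sum_{\substack{\beta\in\operatorname{Comp}\nolimits_{n};\\ D(\beta)\subseteq D(\alpha)}}(-1)^{\ell(\beta)}(1-q)^{\ell(\alpha)-\ell(\beta)}\eta_\beta^{(q)}.
\]
The one-line sign identity $(-1)^{\ell(\beta)}(1-q)^{\ell(\alpha)-\ell(\beta)}=(-1)^{\ell(\alpha)}(q-1)^{\ell(\alpha)-\ell(\beta)}$ then matches this with the claimed formula.

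I expect the main obstacle to be the reversal step: cleanly reducing the nested $D(\operatorname{rev}\delta)$ conditions to ordinary inclusions, which rests on invoking (or re-deriving) that composition reversal preserves the refinement order and on the bookkeeping of the index substitution $\delta'=\operatorname{rev}\delta$. Everything downstream — the two applications of Lemma \ref{lem.Comp-sandwich-sum}, the elementary identities $1-r=-q$ and $1+(-q)=1-q$, the two interchanges of summation, and the final sign check — is routine, as is treating the degenerate case $n=0$ separately if one prefers.
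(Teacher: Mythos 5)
Your proposal is correct, and its first two thirds coincide with the paper's own proof: both expand $\eta_{\operatorname*{rev}\alpha}^{\left(  q\right)  }$ in the $M$-basis, apply the antipode formula (\ref{eq.SMalpha}), use the facts that reversal is a length-preserving bijection on $\operatorname*{Comp}\nolimits_{n}$ compatible with refinement (so that the $\operatorname*{rev}$'s can be absorbed into a reindexing), and then collapse the resulting sandwich sum via Lemma \ref{lem.Comp-sandwich-sum} \textbf{(b)} with $u=-r$, arriving at the same intermediate $M$-expansion $\left(  -1\right)  ^{\ell\left(  \alpha\right)  }\sum_{D\left(  \gamma\right)  \subseteq D\left(  \alpha\right)  }q^{\ell\left(  \alpha\right)  -\ell\left(  \gamma\right)  }r^{\ell\left(  \gamma\right)  }M_{\gamma}$. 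Where you diverge is the endgame. The paper never inverts the change of basis: it instead expands the \emph{claimed} right-hand side into the $M$-basis (using (\ref{eq.def.etaalpha.def}) and Lemma \ref{lem.Comp-sandwich-sum} \textbf{(c)} with $v=q-1$) and observes that the two $M$-expansions agree. You proceed forward instead, substituting $\left(  -r\right)  ^{\ell\left(  \gamma\right)  }M_{\gamma}=\sum_{D\left(  \beta\right)  \subseteq D\left(  \gamma\right)  }\left(  -1\right)  ^{\ell\left(  \beta\right)  }\eta_{\beta}^{\left(  q\right)  }$ (Proposition \ref{prop.eta.M-through-eta}, rescaled by $\left(  -1\right)  ^{\ell\left(  \gamma\right)  }$) and collapsing with Lemma \ref{lem.Comp-sandwich-sum} \textbf{(c)} at $v=-q$. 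Both endgames are equally routine and require no invertibility of $r$ (Proposition \ref{prop.eta.M-through-eta} holds over any $\mathbf{k}$, so your use of it is legitimate); yours has the small advantage of deriving the coefficients constructively rather than verifying a guessed answer, while the paper's avoids invoking the inverse expansion altogether. Your sign bookkeeping ($1-r=-q$, the cancellation of the two copies of $\left(  -1\right)  ^{\ell\left(  \gamma\right)  }$, and the final identity $\left(  -1\right)  ^{\ell\left(  \beta\right)  }\left(  1-q\right)  ^{\ell\left(  \alpha\right)  -\ell\left(  \beta\right)  }=\left(  -1\right)  ^{\ell\left(  \alpha\right)  }\left(  q-1\right)  ^{\ell\left(  \alpha\right)  -\ell\left(  \beta\right)  }$) all checks out.
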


\begin{proof}
[Proof of Lemma \ref{lem.eta.S2rev}.]First, we notice that
\[
\left(  -\underbrace{r}_{=q+1}\right)  +1=\left(  -\left(  q+1\right)
\right)  +1=-q.
\]

We next observe that every $\beta\in\operatorname*{Comp}\nolimits_{n}$
satisfies%
\begin{equation}
S\left(  M_{\operatorname*{rev}\beta}\right)  =\left(  -1\right)
^{\ell\left(  \beta\right)  }\sum_{\substack{\gamma\in\operatorname*{Comp}%
\nolimits_{n};\\D\left(  \gamma\right)  \subseteq D\left(  \beta\right)
}}M_{\gamma} \label{pf.lem.eta.S2rev.SMrev=}%
\end{equation}
\footnote{\textit{Proof of (\ref{pf.lem.eta.S2rev.SMrev=}):} Let $\beta
\in\operatorname*{Comp}\nolimits_{n}$. Write this composition $\beta$ as
$\beta=\left(  \beta_{1},\beta_{2},\ldots,\beta_{k}\right)  $. Then, the
definition of $\operatorname*{rev}\beta$ yields $\operatorname*{rev}%
\beta=\left(  \beta_{k},\beta_{k-1},\ldots,\beta_{1}\right)  $. Hence,
\begin{align}
\left\vert \operatorname*{rev}\beta\right\vert  &  =\left\vert \left(
\beta_{k},\beta_{k-1},\ldots,\beta_{1}\right)  \right\vert =\beta_{k}%
+\beta_{k-1}+\cdots+\beta_{1}\nonumber\\
&  =\beta_{1}+\beta_{2}+\cdots+\beta_{k}. \label{pf.lem.eta.S2rev.SMrev=.pf.1}%
\end{align}
But $\beta\in\operatorname*{Comp}\nolimits_{n}$ entails $\left\vert
\beta\right\vert =n$, so that $n=\left\vert \beta\right\vert =\beta_{1}%
+\beta_{2}+\cdots+\beta_{k}$ (since $\beta=\left(  \beta_{1},\beta_{2}%
,\ldots,\beta_{k}\right)  $). Comparing this with
(\ref{pf.lem.eta.S2rev.SMrev=.pf.1}), we obtain $\left\vert
\operatorname*{rev}\beta\right\vert =n$. Thus, $\operatorname*{rev}\beta$ is a
composition of $n$. In other words, $\operatorname*{rev}\beta\in
\operatorname*{Comp}\nolimits_{n}$.
\par
So we know that $\operatorname*{rev}\beta\in\operatorname*{Comp}\nolimits_{n}$
and $\operatorname*{rev}\beta=\left(  \beta_{k},\beta_{k-1},\ldots,\beta
_{1}\right)  $. Hence, (\ref{eq.SMalpha}) (applied to $k$,
$\operatorname*{rev}\beta$ and $\beta_{k+1-i}$ instead of $\ell$, $\alpha$ and
$\alpha_{i}$) yields%
\begin{equation}
S\left(  M_{\operatorname*{rev}\beta}\right)  =\left(  -1\right)  ^{k}%
\sum_{\substack{\gamma\in\operatorname*{Comp}\nolimits_{n};\\D\left(
\gamma\right)  \subseteq D\left(  \beta_{1},\beta_{2},\ldots,\beta_{k}\right)
}}M_{\gamma}=\left(  -1\right)  ^{k}\sum_{\substack{\gamma\in
\operatorname*{Comp}\nolimits_{n};\\D\left(  \gamma\right)  \subseteq D\left(
\beta\right)  }}M_{\gamma} \label{pf.lem.eta.S2rev.SMrev=.pf.2}%
\end{equation}
(since $\left(  \beta_{1},\beta_{2},\ldots,\beta_{k}\right)  =\beta$).
Moreover, the definition of $\ell\left(  \beta\right)  $ yields $\ell\left(
\beta\right)  =k$ (since $\beta=\left(  \beta_{1},\beta_{2},\ldots,\beta
_{k}\right)  $ is visibly a $k$-tuple). Hence, $k=\ell\left(  \beta\right)  $.
Thus, we can rewrite (\ref{pf.lem.eta.S2rev.SMrev=.pf.2}) as
\[
S\left(  M_{\operatorname*{rev}\beta}\right)  =\left(  -1\right)
^{\ell\left(  \beta\right)  }\sum_{\substack{\gamma\in\operatorname*{Comp}%
\nolimits_{n};\\D\left(  \gamma\right)  \subseteq D\left(  \beta\right)
}}M_{\gamma}.
\]
This proves (\ref{pf.lem.eta.S2rev.SMrev=}).} and%
\begin{equation}
\eta_{\beta}^{\left(  q\right)  }=\sum_{\substack{\gamma\in
\operatorname*{Comp}\nolimits_{n};\\D\left(  \gamma\right)  \subseteq D\left(
\beta\right)  }}r^{\ell\left(  \gamma\right)  }M_{\gamma}
\label{pf.lem.eta.S2rev.eta=}%
\end{equation}
(indeed, this is just the equality (\ref{eq.def.etaalpha.def}), with the
letters $\alpha$ and $\beta$ renamed as $\beta$ and $\gamma$).

We next observe that every composition $\beta\in\operatorname*{Comp}$
satisfies
\begin{equation}
\ell\left(  \operatorname*{rev}\beta\right)  =\ell\left(  \beta\right)
\label{pf.lem.eta.S2rev.lenrev}%
\end{equation}
(this is a trivial consequence of the definition of $\operatorname*{rev}\beta$).

Next, we recall two simple facts from \cite{comps}. First, \cite[Corollary
3.10]{comps} says that the map%
\begin{align*}
\operatorname*{Comp}\nolimits_{n}  &  \rightarrow\operatorname*{Comp}%
\nolimits_{n},\\
\delta &  \mapsto\operatorname*{rev}\delta
\end{align*}
is a bijection. Renaming the letter $\delta$ as $\beta$ in this statement, we
conclude that the map%
\begin{align*}
\operatorname*{Comp}\nolimits_{n}  &  \rightarrow\operatorname*{Comp}%
\nolimits_{n},\\
\beta &  \mapsto\operatorname*{rev}\beta
\end{align*}
is a bijection. Furthermore, \cite[Proposition 3.11]{comps} shows that if
$\beta\in\operatorname*{Comp}\nolimits_{n}$ is arbitrary, then we have the
logical equivalence%
\[
\left(  D\left(  \operatorname*{rev}\beta\right)  \subseteq D\left(
\operatorname*{rev}\alpha\right)  \right)  \ \Longleftrightarrow\ \left(
D\left(  \beta\right)  \subseteq D\left(  \alpha\right)  \right)  .
\]
Hence, we have the following equality of summation signs:%
\begin{equation}
\sum_{\substack{\beta\in\operatorname*{Comp}\nolimits_{n};\\D\left(
\operatorname*{rev}\beta\right)  \subseteq D\left(  \operatorname*{rev}%
\alpha\right)  }}=\sum_{\substack{\beta\in\operatorname*{Comp}\nolimits_{n}%
;\\D\left(  \beta\right)  \subseteq D\left(  \alpha\right)  }}.
\label{pf.lem.eta.S2rev.equivsum}%
\end{equation}

Now, the definition of $\eta_{\operatorname*{rev}\alpha}^{\left(  q\right)  }$
yields%
\begin{align*}
\eta_{\operatorname*{rev}\alpha}^{\left(  q\right)  }  &  =\sum
_{\substack{\beta\in\operatorname*{Comp}\nolimits_{n};\\D\left(  \beta\right)
\subseteq D\left(  \operatorname*{rev}\alpha\right)  }}r^{\ell\left(
\beta\right)  }M_{\beta}=\underbrace{\sum_{\substack{\beta\in
\operatorname*{Comp}\nolimits_{n};\\D\left(  \operatorname*{rev}\beta\right)
\subseteq D\left(  \operatorname*{rev}\alpha\right)  }}}_{\substack{=\sum
_{\substack{\beta\in\operatorname*{Comp}\nolimits_{n};\\D\left(  \beta\right)
\subseteq D\left(  \alpha\right)  }}\\\text{(by
(\ref{pf.lem.eta.S2rev.equivsum}))}}}\underbrace{r^{\ell\left(
\operatorname*{rev}\beta\right)  }}_{\substack{=r^{\ell\left(  \beta\right)
}\\\text{(by (\ref{pf.lem.eta.S2rev.lenrev}))}}}M_{\operatorname*{rev}\beta}\\
&  \ \ \ \ \ \ \ \ \ \ \ \ \ \ \ \ \ \ \ \ \left(
\begin{array}
[c]{c}%
\text{here, we have substituted }\operatorname*{rev}\beta\text{ for }%
\beta\text{ in the sum,}\\
\text{since the map }\operatorname*{Comp}\nolimits_{n}\rightarrow
\operatorname*{Comp}\nolimits_{n},\ \beta\mapsto\operatorname*{rev}\beta\\
\text{is a bijection}%
\end{array}
\right) \\
&  =\sum_{\substack{\beta\in\operatorname*{Comp}\nolimits_{n};\\D\left(
\beta\right)  \subseteq D\left(  \alpha\right)  }}r^{\ell\left(  \beta\right)
}M_{\operatorname*{rev}\beta}.
\end{align*}

Applying the map $S$ to both sides of this equality, we obtain%
\begin{align}
S\left(  \eta_{\operatorname*{rev}\alpha}^{\left(  q\right)  }\right)   &
=S\left(  \sum_{\substack{\beta\in\operatorname*{Comp}\nolimits_{n};\\D\left(
\beta\right)  \subseteq D\left(  \alpha\right)  }}r^{\ell\left(  \beta\right)
}M_{\operatorname*{rev}\beta}\right) \nonumber\\
&  =\sum_{\substack{\beta\in\operatorname*{Comp}\nolimits_{n};\\D\left(
\beta\right)  \subseteq D\left(  \alpha\right)  }}r^{\ell\left(  \beta\right)
}\underbrace{S\left(  M_{\operatorname*{rev}\beta}\right)  }%
_{\substack{=\left(  -1\right)  ^{\ell\left(  \beta\right)  }\sum
_{\substack{\gamma\in\operatorname*{Comp}\nolimits_{n};\\D\left(
\gamma\right)  \subseteq D\left(  \beta\right)  }}M_{\gamma}\\\text{(by
(\ref{pf.lem.eta.S2rev.SMrev=}))}}}\ \ \ \ \ \ \ \ \ \ \left(
\begin{array}
[c]{c}%
\text{since the map }S\\
\text{is }\mathbf{k}\text{-linear}%
\end{array}
\right) \nonumber\\
&  =\sum_{\substack{\beta\in\operatorname*{Comp}\nolimits_{n};\\D\left(
\beta\right)  \subseteq D\left(  \alpha\right)  }}\underbrace{r^{\ell\left(
\beta\right)  }\left(  -1\right)  ^{\ell\left(  \beta\right)  }}_{=\left(
-r\right)  ^{\ell\left(  \beta\right)  }}\sum_{\substack{\gamma\in
\operatorname*{Comp}\nolimits_{n};\\D\left(  \gamma\right)  \subseteq D\left(
\beta\right)  }}M_{\gamma}\nonumber\\
&  =\sum_{\substack{\beta\in\operatorname*{Comp}\nolimits_{n};\\D\left(
\beta\right)  \subseteq D\left(  \alpha\right)  }}\left(  -r\right)
^{\ell\left(  \beta\right)  }\sum_{\substack{\gamma\in\operatorname*{Comp}%
\nolimits_{n};\\D\left(  \gamma\right)  \subseteq D\left(  \beta\right)
}}M_{\gamma}\nonumber\\
&  =\sum_{\substack{\beta\in\operatorname*{Comp}\nolimits_{n};\\D\left(
\beta\right)  \subseteq D\left(  \alpha\right)  }}\ \ \sum_{\substack{\gamma
\in\operatorname*{Comp}\nolimits_{n};\\D\left(  \gamma\right)  \subseteq
D\left(  \beta\right)  }}\left(  -r\right)  ^{\ell\left(  \beta\right)
}M_{\gamma}. \label{pf.lem.eta.S2rev.LHS1}%
\end{align}
However, it is easy to see that we have the following equality of summation
signs:%
\begin{equation}
\sum_{\substack{\beta\in\operatorname*{Comp}\nolimits_{n};\\D\left(
\beta\right)  \subseteq D\left(  \alpha\right)  }}\ \ \sum_{\substack{\gamma
\in\operatorname*{Comp}\nolimits_{n};\\D\left(  \gamma\right)  \subseteq
D\left(  \beta\right)  }}=\sum_{\substack{\gamma\in\operatorname*{Comp}%
\nolimits_{n};\\D\left(  \gamma\right)  \subseteq D\left(  \alpha\right)
}}\ \ \sum_{\substack{\beta\in\operatorname*{Comp}\nolimits_{n};\\D\left(
\beta\right)  \subseteq D\left(  \alpha\right)  \text{ and }D\left(
\gamma\right)  \subseteq D\left(  \beta\right)  }}
\label{pf.lem.eta.S2rev.sums}%
\end{equation}
\footnote{\textit{Proof of (\ref{pf.lem.eta.S2rev.sums}):} A standard
interchange of summation signs yields%
\begin{align*}
\sum_{\substack{\gamma\in\operatorname*{Comp}\nolimits_{n};\\D\left(
\gamma\right)  \subseteq D\left(  \alpha\right)  }}\ \ \sum_{\substack{\beta
\in\operatorname*{Comp}\nolimits_{n};\\D\left(  \beta\right)  \subseteq
D\left(  \alpha\right)  \text{ and }D\left(  \gamma\right)  \subseteq D\left(
\beta\right)  }}  &  =\sum_{\substack{\beta\in\operatorname*{Comp}%
\nolimits_{n};\\D\left(  \beta\right)  \subseteq D\left(  \alpha\right)
}}\ \ \underbrace{\sum_{\substack{\gamma\in\operatorname*{Comp}\nolimits_{n}%
;\\D\left(  \gamma\right)  \subseteq D\left(  \beta\right)  ;\\D\left(
\gamma\right)  \subseteq D\left(  \alpha\right)  }}}_{\substack{=\sum
_{\substack{\gamma\in\operatorname*{Comp}\nolimits_{n};\\D\left(
\gamma\right)  \subseteq D\left(  \beta\right)  }}\\\text{(here, we have
removed the condition \textquotedblleft}D\left(  \gamma\right)  \subseteq
D\left(  \alpha\right)  \text{\textquotedblright}\\\text{under the summation
sign, since this condition}\\\text{follows automatically from the condition
\textquotedblleft}D\left(  \gamma\right)  \subseteq D\left(  \beta\right)
\text{\textquotedblright}\\\text{(because the latter condition entails
}D\left(  \gamma\right)  \subseteq D\left(  \beta\right)  \subseteq D\left(
\alpha\right)  \text{))}}}\\
&  =\sum_{\substack{\beta\in\operatorname*{Comp}\nolimits_{n};\\D\left(
\beta\right)  \subseteq D\left(  \alpha\right)  }}\ \ \sum_{\substack{\gamma
\in\operatorname*{Comp}\nolimits_{n};\\D\left(  \gamma\right)  \subseteq
D\left(  \beta\right)  }}.
\end{align*}
This proves (\ref{pf.lem.eta.S2rev.sums}).}. Thus, we can rewrite
(\ref{pf.lem.eta.S2rev.LHS1}) as
\begin{align}
S\left(  \eta_{\operatorname*{rev}\alpha}^{\left(  q\right)  }\right)   &
=\sum_{\substack{\gamma\in\operatorname*{Comp}\nolimits_{n};\\D\left(
\gamma\right)  \subseteq D\left(  \alpha\right)  }}\ \ \underbrace{\sum
_{\substack{\beta\in\operatorname*{Comp}\nolimits_{n};\\D\left(  \beta\right)
\subseteq D\left(  \alpha\right)  \text{ and }D\left(  \gamma\right)
\subseteq D\left(  \beta\right)  }}\left(  -r\right)  ^{\ell\left(
\beta\right)  }}_{\substack{=\left(  \left(  -r\right)  +1\right)
^{\ell\left(  \alpha\right)  -\ell\left(  \gamma\right)  }\left(  -r\right)
^{\ell\left(  \gamma\right)  }\\\text{(by Lemma \ref{lem.Comp-sandwich-sum}
\textbf{(b)},}\\\text{applied to }u=-r\text{)}}}M_{\gamma}\nonumber\\
&  =\sum_{\substack{\gamma\in\operatorname*{Comp}\nolimits_{n};\\D\left(
\gamma\right)  \subseteq D\left(  \alpha\right)  }}\left(  \underbrace{\left(
-r\right)  +1}_{=-q}\right)  ^{\ell\left(  \alpha\right)  -\ell\left(
\gamma\right)  }\left(  -r\right)  ^{\ell\left(  \gamma\right)  }M_{\gamma
}\nonumber\\
&  =\sum_{\substack{\gamma\in\operatorname*{Comp}\nolimits_{n};\\D\left(
\gamma\right)  \subseteq D\left(  \alpha\right)  }}\underbrace{\left(
-q\right)  ^{\ell\left(  \alpha\right)  -\ell\left(  \gamma\right)  }%
}_{=\left(  -1\right)  ^{\ell\left(  \alpha\right)  -\ell\left(
\gamma\right)  }q^{\ell\left(  \alpha\right)  -\ell\left(  \gamma\right)  }%
}\underbrace{\left(  -r\right)  ^{\ell\left(  \gamma\right)  }}_{=\left(
-1\right)  ^{\ell\left(  \gamma\right)  }r^{\ell\left(  \gamma\right)  }%
}M_{\gamma}\nonumber\\
&  =\sum_{\substack{\gamma\in\operatorname*{Comp}\nolimits_{n};\\D\left(
\gamma\right)  \subseteq D\left(  \alpha\right)  }}\left(  -1\right)
^{\ell\left(  \alpha\right)  -\ell\left(  \gamma\right)  }q^{\ell\left(
\alpha\right)  -\ell\left(  \gamma\right)  }\left(  -1\right)  ^{\ell\left(
\gamma\right)  }r^{\ell\left(  \gamma\right)  }M_{\gamma}\nonumber\\
&  =\sum_{\substack{\gamma\in\operatorname*{Comp}\nolimits_{n};\\D\left(
\gamma\right)  \subseteq D\left(  \alpha\right)  }}\underbrace{\left(
-1\right)  ^{\ell\left(  \alpha\right)  -\ell\left(  \gamma\right)  }\left(
-1\right)  ^{\ell\left(  \gamma\right)  }}_{=\left(  -1\right)  ^{\left(
\ell\left(  \alpha\right)  -\ell\left(  \gamma\right)  \right)  +\ell\left(
\gamma\right)  }=\left(  -1\right)  ^{\ell\left(  \alpha\right)  }}%
q^{\ell\left(  \alpha\right)  -\ell\left(  \gamma\right)  }r^{\ell\left(
\gamma\right)  }M_{\gamma}\nonumber\\
&  =\sum_{\substack{\gamma\in\operatorname*{Comp}\nolimits_{n};\\D\left(
\gamma\right)  \subseteq D\left(  \alpha\right)  }}\left(  -1\right)
^{\ell\left(  \alpha\right)  }q^{\ell\left(  \alpha\right)  -\ell\left(
\gamma\right)  }r^{\ell\left(  \gamma\right)  }M_{\gamma}\nonumber\\
&  =\left(  -1\right)  ^{\ell\left(  \alpha\right)  }\sum_{\substack{\gamma
\in\operatorname*{Comp}\nolimits_{n};\\D\left(  \gamma\right)  \subseteq
D\left(  \alpha\right)  }}q^{\ell\left(  \alpha\right)  -\ell\left(
\gamma\right)  }r^{\ell\left(  \gamma\right)  }M_{\gamma}.
\label{pf.lem.eta.S2rev.LHS2}%
\end{align}

On the other hand,%
\begin{align*}
&  \sum_{\substack{\beta\in\operatorname*{Comp}\nolimits_{n};\\D\left(
\beta\right)  \subseteq D\left(  \alpha\right)  }}\left(  q-1\right)
^{\ell\left(  \alpha\right)  -\ell\left(  \beta\right)  }\underbrace{\eta
_{\beta}^{\left(  q\right)  }}_{\substack{=\sum_{\substack{\gamma
\in\operatorname*{Comp}\nolimits_{n};\\D\left(  \gamma\right)  \subseteq
D\left(  \beta\right)  }}r^{\ell\left(  \gamma\right)  }M_{\gamma}\\\text{(by
(\ref{pf.lem.eta.S2rev.eta=}))}}}\\
&  =\sum_{\substack{\beta\in\operatorname*{Comp}\nolimits_{n};\\D\left(
\beta\right)  \subseteq D\left(  \alpha\right)  }}\left(  q-1\right)
^{\ell\left(  \alpha\right)  -\ell\left(  \beta\right)  }\sum
_{\substack{\gamma\in\operatorname*{Comp}\nolimits_{n};\\D\left(
\gamma\right)  \subseteq D\left(  \beta\right)  }}r^{\ell\left(
\gamma\right)  }M_{\gamma}\\
&  =\underbrace{\sum_{\substack{\beta\in\operatorname*{Comp}\nolimits_{n}%
;\\D\left(  \beta\right)  \subseteq D\left(  \alpha\right)  }}\ \ \sum
_{\substack{\gamma\in\operatorname*{Comp}\nolimits_{n};\\D\left(
\gamma\right)  \subseteq D\left(  \beta\right)  }}}_{\substack{=\sum
_{\substack{\gamma\in\operatorname*{Comp}\nolimits_{n};\\D\left(
\gamma\right)  \subseteq D\left(  \alpha\right)  }}\ \ \sum_{\substack{\beta
\in\operatorname*{Comp}\nolimits_{n};\\D\left(  \beta\right)  \subseteq
D\left(  \alpha\right)  \text{ and }D\left(  \gamma\right)  \subseteq D\left(
\beta\right)  }}\\\text{(by (\ref{pf.lem.eta.S2rev.sums}))}}}\left(
q-1\right)  ^{\ell\left(  \alpha\right)  -\ell\left(  \beta\right)  }%
r^{\ell\left(  \gamma\right)  }M_{\gamma}\\
&  =\sum_{\substack{\gamma\in\operatorname*{Comp}\nolimits_{n};\\D\left(
\gamma\right)  \subseteq D\left(  \alpha\right)  }}\ \ \underbrace{\sum
_{\substack{\beta\in\operatorname*{Comp}\nolimits_{n};\\D\left(  \beta\right)
\subseteq D\left(  \alpha\right)  \text{ and }D\left(  \gamma\right)
\subseteq D\left(  \beta\right)  }}\left(  q-1\right)  ^{\ell\left(
\alpha\right)  -\ell\left(  \beta\right)  }}_{\substack{=\left(  1+\left(
q-1\right)  \right)  ^{\ell\left(  \alpha\right)  -\ell\left(  \gamma\right)
}\\\text{(by Lemma \ref{lem.Comp-sandwich-sum} \textbf{(c)},}\\\text{applied
to }v=q-1\text{)}}}r^{\ell\left(  \gamma\right)  }M_{\gamma}\\
&  =\sum_{\substack{\gamma\in\operatorname*{Comp}\nolimits_{n};\\D\left(
\gamma\right)  \subseteq D\left(  \alpha\right)  }}\left(
\underbrace{1+\left(  q-1\right)  }_{=q}\right)  ^{\ell\left(  \alpha\right)
-\ell\left(  \gamma\right)  }r^{\ell\left(  \gamma\right)  }M_{\gamma}%
=\sum_{\substack{\gamma\in\operatorname*{Comp}\nolimits_{n};\\D\left(
\gamma\right)  \subseteq D\left(  \alpha\right)  }}q^{\ell\left(
\alpha\right)  -\ell\left(  \gamma\right)  }r^{\ell\left(  \gamma\right)
}M_{\gamma}.
\end{align*}
Multiplying this equality by $\left(  -1\right)  ^{\ell\left(  \alpha\right)
}$, we obtain%
\[
\left(  -1\right)  ^{\ell\left(  \alpha\right)  }\sum_{\substack{\beta
\in\operatorname*{Comp}\nolimits_{n};\\D\left(  \beta\right)  \subseteq
D\left(  \alpha\right)  }}\left(  q-1\right)  ^{\ell\left(  \alpha\right)
-\ell\left(  \beta\right)  }\eta_{\beta}^{\left(  q\right)  }=\left(
-1\right)  ^{\ell\left(  \alpha\right)  }\sum_{\substack{\gamma\in
\operatorname*{Comp}\nolimits_{n};\\D\left(  \gamma\right)  \subseteq D\left(
\alpha\right)  }}q^{\ell\left(  \alpha\right)  -\ell\left(  \gamma\right)
}r^{\ell\left(  \gamma\right)  }M_{\gamma}.
\]
Comparing this with (\ref{pf.lem.eta.S2rev.LHS2}), we obtain%
\[
S\left(  \eta_{\operatorname*{rev}\alpha}^{\left(  q\right)  }\right)
=\left(  -1\right)  ^{\ell\left(  \alpha\right)  }\sum_{\substack{\beta
\in\operatorname*{Comp}\nolimits_{n};\\D\left(  \beta\right)  \subseteq
D\left(  \alpha\right)  }}\left(  q-1\right)  ^{\ell\left(  \alpha\right)
-\ell\left(  \beta\right)  }\eta_{\beta}^{\left(  q\right)  }.
\]
Thus, Lemma \ref{lem.eta.S2rev} is proved.
\end{proof}
\end{verlong}

\begin{vershort}

\begin{proof}
[Proof of Theorem \ref{thm.eta.S2}.]We replace $\alpha$ by
$\operatorname*{rev}\alpha$. Thus, $\alpha$ and $\operatorname*{rev}\alpha$
become $\operatorname*{rev}\alpha$ and $\alpha$, respectively, while the
length $\ell\left(  \alpha\right)  $ stays unchanged. Hence, the claim we must
prove becomes%
\begin{equation}
S\left(  \eta_{\operatorname*{rev}\alpha}^{\left(  q\right)  }\right)
=\left(  -1\right)  ^{\ell\left(  \alpha\right)  }\sum_{\substack{\beta
\in\operatorname*{Comp}\nolimits_{n};\\D\left(  \beta\right)  \subseteq
D\left(  \alpha\right)  }}\left(  q-1\right)  ^{\ell\left(  \alpha\right)
-\ell\left(  \beta\right)  }\eta_{\beta}^{\left(  q\right)  }.
\label{pf.thm.eta.S2.short.goal}%
\end{equation}
It is this equality that we will be proving.

First, we observe that every $\beta\in\operatorname*{Comp}\nolimits_{n}$
satisfies%
\begin{equation}
S\left(  M_{\beta}\right)  =\left(  -1\right)  ^{\ell\left(  \beta\right)
}\sum_{\substack{\gamma\in\operatorname*{Comp}\nolimits_{n};\\D\left(
\gamma\right)  \subseteq D\left(  \operatorname*{rev}\beta\right)  }%
}M_{\gamma}. \label{pf.thm.eta.S2.short.S1}%
\end{equation}
(Indeed, this is just the formula (\ref{eq.SMalpha}), applied to $\beta$
instead of $\alpha$ and restated using Definition \ref{def.rev}.) Substituting
$\operatorname*{rev}\beta$ for $\beta$ in (\ref{pf.thm.eta.S2.short.S1}), we
obtain the following: Every $\beta\in\operatorname*{Comp}\nolimits_{n}$
satisfies%
\begin{align}
S\left(  M_{\operatorname*{rev}\beta}\right)   &  =\left(  -1\right)
^{\ell\left(  \operatorname*{rev}\beta\right)  }\sum_{\substack{\gamma
\in\operatorname*{Comp}\nolimits_{n};\\D\left(  \gamma\right)  \subseteq
D\left(  \operatorname*{rev}\left(  \operatorname*{rev}\beta\right)  \right)
}}M_{\gamma}\ \ \ \ \ \ \ \ \ \ \left(  \text{since }\operatorname*{rev}%
\beta\in\operatorname*{Comp}\nolimits_{n}\right) \nonumber\\
&  =\left(  -1\right)  ^{\ell\left(  \beta\right)  }\sum_{\substack{\gamma
\in\operatorname*{Comp}\nolimits_{n};\\D\left(  \gamma\right)  \subseteq
D\left(  \beta\right)  }}M_{\gamma} \label{pf.thm.eta.S2.short.S2}%
\end{align}
(since $\operatorname*{rev}\left(  \operatorname*{rev}\beta\right)  =\beta$
and $\ell\left(  \operatorname*{rev}\beta\right)  =\ell\left(  \beta\right)  $).

Next, we recall two simple facts from \cite{comps}. First, \cite[Corollary
3.10]{comps} says that the map%
\begin{align*}
\operatorname*{Comp}\nolimits_{n}  &  \rightarrow\operatorname*{Comp}%
\nolimits_{n},\\
\delta &  \mapsto\operatorname*{rev}\delta
\end{align*}
is a bijection. Furthermore, \cite[Proposition 3.11]{comps} shows that if
$\beta\in\operatorname*{Comp}\nolimits_{n}$ is arbitrary, then we have the
logical equivalence%
\begin{equation}
\left(  D\left(  \operatorname*{rev}\beta\right)  \subseteq D\left(
\operatorname*{rev}\alpha\right)  \right)  \ \Longleftrightarrow\ \left(
D\left(  \beta\right)  \subseteq D\left(  \alpha\right)  \right)  .
\label{pf.thm.eta.S2.short.equiv}%
\end{equation}

The definition of $\eta_{\operatorname*{rev}\alpha}^{\left(  q\right)  }$
yields%
\begin{align*}
\eta_{\operatorname*{rev}\alpha}^{\left(  q\right)  }  &  =\sum
_{\substack{\beta\in\operatorname*{Comp}\nolimits_{n};\\D\left(  \beta\right)
\subseteq D\left(  \operatorname*{rev}\alpha\right)  }}r^{\ell\left(
\beta\right)  }M_{\beta}=\underbrace{\sum_{\substack{\beta\in
\operatorname*{Comp}\nolimits_{n};\\D\left(  \operatorname*{rev}\beta\right)
\subseteq D\left(  \operatorname*{rev}\alpha\right)  }}}_{\substack{=\sum
_{\substack{\beta\in\operatorname*{Comp}\nolimits_{n};\\D\left(  \beta\right)
\subseteq D\left(  \alpha\right)  }}\\\text{(by the equivalence
(\ref{pf.thm.eta.S2.short.equiv}))}}}\underbrace{r^{\ell\left(
\operatorname*{rev}\beta\right)  }}_{\substack{=r^{\ell\left(  \beta\right)
}\\\text{(since }\ell\left(  \operatorname*{rev}\beta\right)  =\ell\left(
\beta\right)  \text{)}}}M_{\operatorname*{rev}\beta}\\
&  \ \ \ \ \ \ \ \ \ \ \ \ \ \ \ \ \ \ \ \ \left(
\begin{array}
[c]{c}%
\text{here, we have substituted }\operatorname*{rev}\beta\text{ for }%
\beta\text{ in the sum,}\\
\text{since the map }\operatorname*{Comp}\nolimits_{n}\rightarrow
\operatorname*{Comp}\nolimits_{n},\ \delta\mapsto\operatorname*{rev}\delta\\
\text{is a bijection}%
\end{array}
\right) \\
&  =\sum_{\substack{\beta\in\operatorname*{Comp}\nolimits_{n};\\D\left(
\beta\right)  \subseteq D\left(  \alpha\right)  }}r^{\ell\left(  \beta\right)
}M_{\operatorname*{rev}\beta}.
\end{align*}
Applying the $\mathbf{k}$-linear map $S$ to both sides of this equality, we
obtain%
\begin{align*}
S\left(  \eta_{\operatorname*{rev}\alpha}^{\left(  q\right)  }\right)   &
=\sum_{\substack{\beta\in\operatorname*{Comp}\nolimits_{n};\\D\left(
\beta\right)  \subseteq D\left(  \alpha\right)  }}r^{\ell\left(  \beta\right)
}S\left(  M_{\operatorname*{rev}\beta}\right) \\
&  =\sum_{\substack{\beta\in\operatorname*{Comp}\nolimits_{n};\\D\left(
\beta\right)  \subseteq D\left(  \alpha\right)  }}r^{\ell\left(  \beta\right)
}\left(  -1\right)  ^{\ell\left(  \beta\right)  }\sum_{\substack{\gamma
\in\operatorname*{Comp}\nolimits_{n};\\D\left(  \gamma\right)  \subseteq
D\left(  \beta\right)  }}M_{\gamma}\ \ \ \ \ \ \ \ \ \ \left(  \text{by
(\ref{pf.thm.eta.S2.short.S2})}\right) \\
&  =\underbrace{\sum_{\substack{\beta\in\operatorname*{Comp}\nolimits_{n}%
;\\D\left(  \beta\right)  \subseteq D\left(  \alpha\right)  }}\ \ \sum
_{\substack{\gamma\in\operatorname*{Comp}\nolimits_{n};\\D\left(
\gamma\right)  \subseteq D\left(  \beta\right)  }}}_{=\sum_{\substack{\gamma
\in\operatorname*{Comp}\nolimits_{n};\\D\left(  \gamma\right)  \subseteq
D\left(  \alpha\right)  }}\ \ \sum_{\substack{\beta\in\operatorname*{Comp}%
\nolimits_{n};\\D\left(  \beta\right)  \subseteq D\left(  \alpha\right)
\text{ and }D\left(  \gamma\right)  \subseteq D\left(  \beta\right)  }%
}}\underbrace{r^{\ell\left(  \beta\right)  }\left(  -1\right)  ^{\ell\left(
\beta\right)  }}_{=\left(  -r\right)  ^{\ell\left(  \beta\right)  }}M_{\gamma
}\\
&  =\sum_{\substack{\gamma\in\operatorname*{Comp}\nolimits_{n};\\D\left(
\gamma\right)  \subseteq D\left(  \alpha\right)  }}\ \ \underbrace{\sum
_{\substack{\beta\in\operatorname*{Comp}\nolimits_{n};\\D\left(  \beta\right)
\subseteq D\left(  \alpha\right)  \text{ and }D\left(  \gamma\right)
\subseteq D\left(  \beta\right)  }}\left(  -r\right)  ^{\ell\left(
\beta\right)  }}_{\substack{=\left(  -r+1\right)  ^{\ell\left(  \alpha\right)
-\ell\left(  \gamma\right)  }\left(  -r\right)  ^{\ell\left(  \gamma\right)
}\\\text{(by Lemma \ref{lem.Comp-sandwich-sum} \textbf{(b)})}}}M_{\gamma}\\
&  =\sum_{\substack{\gamma\in\operatorname*{Comp}\nolimits_{n};\\D\left(
\gamma\right)  \subseteq D\left(  \alpha\right)  }}\left(  \underbrace{-r+1}%
_{\substack{=-q\\\text{(since }r=q+1\text{)}}}\right)  ^{\ell\left(
\alpha\right)  -\ell\left(  \gamma\right)  }\left(  -r\right)  ^{\ell\left(
\gamma\right)  }M_{\gamma}\\
&  =\sum_{\substack{\gamma\in\operatorname*{Comp}\nolimits_{n};\\D\left(
\gamma\right)  \subseteq D\left(  \alpha\right)  }}\underbrace{\left(
-q\right)  ^{\ell\left(  \alpha\right)  -\ell\left(  \gamma\right)  }\left(
-r\right)  ^{\ell\left(  \gamma\right)  }}_{=\left(  -1\right)  ^{\ell\left(
\alpha\right)  }q^{\ell\left(  \alpha\right)  -\ell\left(  \gamma\right)
}r^{\ell\left(  \gamma\right)  }}M_{\gamma}\\
&  =\left(  -1\right)  ^{\ell\left(  \alpha\right)  }\sum_{\substack{\gamma
\in\operatorname*{Comp}\nolimits_{n};\\D\left(  \gamma\right)  \subseteq
D\left(  \alpha\right)  }}q^{\ell\left(  \alpha\right)  -\ell\left(
\gamma\right)  }r^{\ell\left(  \gamma\right)  }M_{\gamma}.
\end{align*}

In view of%
\begin{align*}
&  \sum_{\substack{\beta\in\operatorname*{Comp}\nolimits_{n};\\D\left(
\beta\right)  \subseteq D\left(  \alpha\right)  }}\left(  q-1\right)
^{\ell\left(  \alpha\right)  -\ell\left(  \beta\right)  }\underbrace{\eta
_{\beta}^{\left(  q\right)  }}_{\substack{=\sum_{\substack{\gamma
\in\operatorname*{Comp}\nolimits_{n};\\D\left(  \gamma\right)  \subseteq
D\left(  \beta\right)  }}r^{\ell\left(  \gamma\right)  }M_{\gamma}\\\text{(by
(\ref{eq.def.etaalpha.def}))}}}\\
&  =\sum_{\substack{\beta\in\operatorname*{Comp}\nolimits_{n};\\D\left(
\beta\right)  \subseteq D\left(  \alpha\right)  }}\left(  q-1\right)
^{\ell\left(  \alpha\right)  -\ell\left(  \beta\right)  }\sum
_{\substack{\gamma\in\operatorname*{Comp}\nolimits_{n};\\D\left(
\gamma\right)  \subseteq D\left(  \beta\right)  }}r^{\ell\left(
\gamma\right)  }M_{\gamma}\\
&  =\underbrace{\sum_{\substack{\beta\in\operatorname*{Comp}\nolimits_{n}%
;\\D\left(  \beta\right)  \subseteq D\left(  \alpha\right)  }}\ \ \sum
_{\substack{\gamma\in\operatorname*{Comp}\nolimits_{n};\\D\left(
\gamma\right)  \subseteq D\left(  \beta\right)  }}}_{=\sum_{\substack{\gamma
\in\operatorname*{Comp}\nolimits_{n};\\D\left(  \gamma\right)  \subseteq
D\left(  \alpha\right)  }}\ \ \sum_{\substack{\beta\in\operatorname*{Comp}%
\nolimits_{n};\\D\left(  \beta\right)  \subseteq D\left(  \alpha\right)
\text{ and }D\left(  \gamma\right)  \subseteq D\left(  \beta\right)  }%
}}\left(  q-1\right)  ^{\ell\left(  \alpha\right)  -\ell\left(  \beta\right)
}r^{\ell\left(  \gamma\right)  }M_{\gamma}\\
&  =\sum_{\substack{\gamma\in\operatorname*{Comp}\nolimits_{n};\\D\left(
\gamma\right)  \subseteq D\left(  \alpha\right)  }}\ \ \underbrace{\sum
_{\substack{\beta\in\operatorname*{Comp}\nolimits_{n};\\D\left(  \beta\right)
\subseteq D\left(  \alpha\right)  \text{ and }D\left(  \gamma\right)
\subseteq D\left(  \beta\right)  }}\left(  q-1\right)  ^{\ell\left(
\alpha\right)  -\ell\left(  \beta\right)  }}_{\substack{=\left(  1+\left(
q-1\right)  \right)  ^{\ell\left(  \alpha\right)  -\ell\left(  \gamma\right)
}\\\text{(by Lemma \ref{lem.Comp-sandwich-sum} \textbf{(c)})}}}r^{\ell\left(
\gamma\right)  }M_{\gamma}\\
&  =\sum_{\substack{\gamma\in\operatorname*{Comp}\nolimits_{n};\\D\left(
\gamma\right)  \subseteq D\left(  \alpha\right)  }}\left(
\underbrace{1+\left(  q-1\right)  }_{=q}\right)  ^{\ell\left(  \alpha\right)
-\ell\left(  \gamma\right)  }r^{\ell\left(  \gamma\right)  }M_{\gamma}%
=\sum_{\substack{\gamma\in\operatorname*{Comp}\nolimits_{n};\\D\left(
\gamma\right)  \subseteq D\left(  \alpha\right)  }}q^{\ell\left(
\alpha\right)  -\ell\left(  \gamma\right)  }r^{\ell\left(  \gamma\right)
}M_{\gamma},
\end{align*}
we can rewrite this as%
\[
S\left(  \eta_{\operatorname*{rev}\alpha}^{\left(  q\right)  }\right)
=\left(  -1\right)  ^{\ell\left(  \alpha\right)  }\sum_{\substack{\beta
\in\operatorname*{Comp}\nolimits_{n};\\D\left(  \beta\right)  \subseteq
D\left(  \alpha\right)  }}\left(  q-1\right)  ^{\ell\left(  \alpha\right)
-\ell\left(  \beta\right)  }\eta_{\beta}^{\left(  q\right)  }.
\]
Thus, (\ref{pf.thm.eta.S2.short.goal}) is proved. As we explained, this proves
Theorem \ref{thm.eta.S2}.
\end{proof}
\end{vershort}

\begin{verlong}
We can now prove Theorem \ref{thm.eta.S2} at last:
\end{verlong}

\begin{verlong}

\begin{proof}
[Proof of Theorem \ref{thm.eta.S2}.]It is easy to see that
$\operatorname*{rev}\left(  \operatorname*{rev}\alpha\right)  =\alpha$ (see,
e.g., \cite[Proposition 3.4]{comps} for a detailed proof) and that $\left\vert
\operatorname*{rev}\alpha\right\vert =\left\vert \alpha\right\vert $ (see,
e.g., \cite[Proposition 3.3]{comps} for a detailed proof). Also, it is clear
(from the definition of $\operatorname*{rev}\alpha$) that $\ell\left(
\operatorname*{rev}\alpha\right)  =\ell\left(  \alpha\right)  $.

Moreover, from $\alpha\in\operatorname*{Comp}\nolimits_{n}$, we can easily
obtain $\operatorname*{rev}\alpha\in\operatorname*{Comp}\nolimits_{n}%
$\ \ \ \ \footnote{\textit{Proof.} Let $\alpha\in\operatorname*{Comp}%
\nolimits_{n}$. Thus, $\alpha$ is a composition of $n$. In other words,
$\alpha$ is a composition such that $\left\vert \alpha\right\vert =n$. Hence,
$\left\vert \operatorname*{rev}\alpha\right\vert =\left\vert \alpha\right\vert
=n$. Thus, $\operatorname*{rev}\alpha$ is a composition of $n$. In other
words, $\operatorname*{rev}\alpha\in\operatorname*{Comp}\nolimits_{n}$.}.
Thus, Lemma \ref{lem.eta.S2rev} (applied to $\operatorname*{rev}\alpha$
instead of $\alpha$) yields%
\[
S\left(  \eta_{\operatorname*{rev}\left(  \operatorname*{rev}\alpha\right)
}^{\left(  q\right)  }\right)  =\left(  -1\right)  ^{\ell\left(
\operatorname*{rev}\alpha\right)  }\sum_{\substack{\beta\in
\operatorname*{Comp}\nolimits_{n};\\D\left(  \beta\right)  \subseteq D\left(
\operatorname*{rev}\alpha\right)  }}\left(  q-1\right)  ^{\ell\left(
\operatorname*{rev}\alpha\right)  -\ell\left(  \beta\right)  }\eta_{\beta
}^{\left(  q\right)  }.
\]
In view of $\operatorname*{rev}\left(  \operatorname*{rev}\alpha\right)
=\alpha$ and $\ell\left(  \operatorname*{rev}\alpha\right)  =\ell\left(
\alpha\right)  $, we can rewrite this as%
\[
S\left(  \eta_{\alpha}^{\left(  q\right)  }\right)  =\left(  -1\right)
^{\ell\left(  \alpha\right)  }\sum_{\substack{\beta\in\operatorname*{Comp}%
\nolimits_{n};\\D\left(  \beta\right)  \subseteq D\left(  \operatorname*{rev}%
\alpha\right)  }}\left(  q-1\right)  ^{\ell\left(  \alpha\right)  -\ell\left(
\beta\right)  }\eta_{\beta}^{\left(  q\right)  }.
\]
Hence, Theorem \ref{thm.eta.S2} is proved.
\end{proof}
\end{verlong}

\subsection{The coproduct of $\eta_{\alpha}^{\left(  q\right)  }$}

The \emph{concatenation} of two compositions $\beta=\left(  \beta_{1}%
,\beta_{2},\ldots,\beta_{i}\right)  $ and $\gamma=\left(  \gamma_{1}%
,\gamma_{2},\ldots,\gamma_{j}\right)  $ is defined to be the composition
$\left(  \beta_{1},\beta_{2},\ldots,\beta_{i},\gamma_{1},\gamma_{2}%
,\ldots,\gamma_{j}\right)  $. It is denoted by $\beta\gamma$.

The coproduct of the Hopf algebra $\operatorname*{QSym}$ is a $\mathbf{k}%
$-linear map $\Delta:\operatorname*{QSym}\rightarrow\operatorname*{QSym}%
\otimes\operatorname*{QSym}$ that can be described by the formula%
\begin{equation}
\Delta\left(  M_{\alpha}\right)  =\sum_{\substack{\beta,\gamma\in
\operatorname*{Comp};\\\alpha=\beta\gamma}}M_{\beta}\otimes M_{\gamma},
\label{eq.Delta-M}%
\end{equation}
which holds for all $\alpha\in\operatorname*{Comp}$. (See \cite[\S 5.1]%
{GriRei} for the definition of $\Delta$, and see \cite[Proposition
5.1.7]{GriRei} for a proof of (\ref{eq.Delta-M}).)

We claim the following simple formula for $\Delta\left(  \eta_{\alpha
}^{\left(  q\right)  }\right)  $ (analogous to (\ref{eq.Delta-M})):

\begin{theorem}
\label{thm.Delta-eta}Let $\alpha\in\operatorname*{Comp}$. Then,%
\[
\Delta\left(  \eta_{\alpha}^{\left(  q\right)  }\right)  =\sum
_{\substack{\beta,\gamma\in\operatorname*{Comp};\\\alpha=\beta\gamma}%
}\eta_{\beta}^{\left(  q\right)  }\otimes\eta_{\gamma}^{\left(  q\right)  }.
\]

\end{theorem}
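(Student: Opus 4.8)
The plan is to prove Theorem~\ref{thm.Delta-eta} by reducing it to the known coproduct formula \eqref{eq.Delta-M} for the monomial basis, using the defining expansion \eqref{eq.def.etaalpha.def} of $\eta_\alpha^{(q)}$ in the $M$-basis. First I would fix $n = \left\vert \alpha \right\vert$ and expand the left-hand side: applying $\Delta$ to \eqref{eq.def.etaalpha.def} and using $\mathbf{k}$-linearity of $\Delta$ together with \eqref{eq.Delta-M}, I obtain
\begin{equation}
\Delta\left(  \eta_{\alpha}^{\left(  q\right)  }\right)  =\sum
_{\substack{\beta\in\operatorname*{Comp}\nolimits_{n};\\D\left(  \beta\right)
\subseteq D\left(  \alpha\right)  }}r^{\ell\left(  \beta\right)  }
\sum_{\substack{\mu,\nu\in\operatorname*{Comp};\\\beta=\mu\nu}}M_{\mu}\otimes
M_{\nu}. \label{eq.plan.LHS}
\end{equation}
The idea is then to reorganize this double sum by first choosing the splitting point inside $\beta$, i.e.\ by choosing an integer $k$ with $0 \le k \le n$ at which the concatenation breaks, so that $\mu \in \operatorname*{Comp}_k$ and $\nu \in \operatorname*{Comp}_{n-k}$. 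The multiplicativity of the power of $r$, namely $r^{\ell(\beta)} = r^{\ell(\mu)} r^{\ell(\nu)}$ (which holds since $\ell(\beta) = \ell(\mu) + \ell(\nu)$ whenever $\beta = \mu\nu$), is what will let the two tensor factors decouple.

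The key combinatorial step is to translate the condition $D(\beta) \subseteq D(\alpha)$ into conditions on $\mu$ and $\nu$ separately. For the right-hand side, I would expand each $\eta_\beta^{(q)} \otimes \eta_\gamma^{(q)}$ via \eqref{eq.def.etaalpha.def} and similarly collect the $M_\mu \otimes M_\nu$ coefficients. To make the two sides match, I need the following dictionary, which I expect to be the heart of the argument: writing $\alpha$ as a concatenation $\alpha = \beta\gamma$ with $\left\vert \beta \right\vert = k$ corresponds exactly to requiring $k \in D(\alpha) \cup \{0,n\}$, and under this correspondence a composition $\mu$ of $k$ satisfies $D(\mu) \subseteq D(\beta)$ together with a composition $\nu$ of $n-k$ satisfying $D(\nu) \subseteq D(\gamma)$ if and only if the concatenation $\mu\nu \in \operatorname*{Comp}_n$ satisfies $D(\mu\nu) \subseteq D(\alpha)$ and $k \in D(\alpha) \cup \{0,n\}$. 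Here I would use the standard partial-sum identities $D(\beta\gamma) = D(\beta) \cup \{k\} \cup (k + D(\gamma))$ (when both $\beta,\gamma$ are nonempty) and the fact that the partial sums of $\mu\nu$ below $k$ are those of $\mu$ while those above $k$ are $k$ plus those of $\nu$.

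The forward-and-backward bijection I would set up is therefore: triples $(\beta,\mu,\nu)$ with $\beta = \mu\nu \in \operatorname*{Comp}_n$, $D(\beta) \subseteq D(\alpha)$, correspond bijectively to quadruples $(\beta',\gamma',\mu,\nu)$ with $\alpha = \beta'\gamma'$, $\mu \in \operatorname*{Comp}_{\left\vert\beta'\right\vert}$ satisfying $D(\mu) \subseteq D(\beta')$, and $\nu \in \operatorname*{Comp}_{\left\vert\gamma'\right\vert}$ satisfying $D(\nu) \subseteq D(\gamma')$. The subtle point is that the breakpoint $k = \left\vert \mu \right\vert$ must itself lie in $D(\alpha) \cup \{0,n\}$ for a valid concatenation decomposition of $\alpha$ to exist; this is automatic because $D(\mu\nu) \subseteq D(\alpha)$ and $k$ is forced to be a partial sum of $\alpha$ (or $0$ or $n$) precisely when we can split $\alpha$ there. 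I would verify this equivalence carefully, since it is where an off-by-one or an edge case (empty $\mu$ or empty $\nu$, corresponding to $k=0$ or $k=n$) is most likely to trip up the accounting. Once the index sets are shown to agree and the coefficients $r^{\ell(\mu)} r^{\ell(\nu)}$ match term by term, both sides of the claimed identity reduce to the same sum $\sum_{\mu,\nu} r^{\ell(\mu)}r^{\ell(\nu)} M_\mu \otimes M_\nu$ over the common index set, completing the proof.

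The main obstacle, as indicated, will be the clean handling of the breakpoint condition and the empty-composition boundary cases: ensuring that ``$\alpha = \beta\gamma$ for some $\beta \in \operatorname*{Comp}_k$, $\gamma \in \operatorname*{Comp}_{n-k}$'' is genuinely equivalent to ``$k \in D(\alpha) \cup \{0,n\}$,'' and that this meshes consistently with the subset conditions on the $D$-sets. I expect no analytic difficulty — only careful bookkeeping of partial sums under concatenation — so the bulk of the work is verifying the bijection of summation indices rather than any deep structural fact.
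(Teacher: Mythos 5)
Your proposal is correct and follows essentially the same route as the paper's direct proof: expand $\eta_{\alpha}^{\left(  q\right)  }$ in the $M$-basis, apply the deconcatenation formula (\ref{eq.Delta-M}), decouple the powers of $r$ via $r^{\ell\left(  \mu\nu\right)  }=r^{\ell\left(  \mu\right)  }r^{\ell\left(  \nu\right)  }$, and reindex the resulting double sum. The bijection of index sets that you plan to verify by hand (including the breakpoint condition and the empty-composition edge cases) is precisely the content of Proposition \ref{prop.concat.sum=sum}, which the paper invokes from the ancillary reference \cite{comps} rather than proving inline.
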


This generalizes \cite[Corollary 2.7]{Hsiao07}.

\begin{vershort}
We shall first give a direct proof of Theorem \ref{thm.Delta-eta}; later we
will outline another one, which is more circuitous.
\end{vershort}

\begin{verlong}
We shall give two proofs of Theorem \ref{thm.Delta-eta}: a direct one now, and
a more circuitous one later.
\end{verlong}

The direct proof uses the following notion:

\begin{definition}
Let $\gamma$ be a composition. Then, $C\left(  \gamma\right)  $ shall denote
the set of all compositions $\beta\in\operatorname*{Comp}\nolimits_{\left\vert
\gamma\right\vert }$ satisfying $D\left(  \beta\right)  \subseteq D\left(
\gamma\right)  $. (The compositions belonging to $C\left(  \gamma\right)  $
are often called the \emph{coarsenings} of $\gamma$.)
\end{definition}

For instance, $C\left(  2,1,3\right)  =\left\{  \left(  2,1,3\right)
,\ \left(  3,3\right)  ,\ \left(  2,4\right)  ,\ \left(  6\right)  \right\}  $.

Using the notion of $C\left(  \gamma\right)  $, we can restate the definition
of $\eta_{\gamma}^{\left(  q\right)  }$:

\begin{proposition}
\label{prop.eta.through-Cgamma}For any $\gamma\in\operatorname*{Comp}$, we
have%
\[
\eta_{\gamma}^{\left(  q\right)  }=\sum_{\nu\in C\left(  \gamma\right)
}r^{\ell\left(  \nu\right)  }M_{\nu}.
\]

\end{proposition}

\begin{proof}
[Proof of Proposition \ref{prop.eta.through-Cgamma}.]Let $\gamma
\in\operatorname*{Comp}$. Then, $\gamma\in\operatorname*{Comp}%
\nolimits_{\left\vert \gamma\right\vert }$. Thus, (\ref{eq.def.etaalpha.def})
(applied to $\left\vert \gamma\right\vert $ and $\gamma$ instead of $n$ and
$\alpha$) yields%
\[
\eta_{\gamma}^{\left(  q\right)  }=\sum_{\substack{\beta\in
\operatorname*{Comp}\nolimits_{\left\vert \gamma\right\vert };\\D\left(
\beta\right)  \subseteq D\left(  \gamma\right)  }}r^{\ell\left(  \beta\right)
}M_{\beta}=\sum_{\beta\in C\left(  \gamma\right)  }r^{\ell\left(
\beta\right)  }M_{\beta}%
\]
(since the compositions $\beta$ that the previous sum was ranging over are
precisely the elements of $C\left(  \gamma\right)  $). Renaming the summation
index $\beta$ as $\nu$ on the right hand side of this equality, we obtain%
\[
\eta_{\gamma}^{\left(  q\right)  }=\sum_{\nu\in C\left(  \gamma\right)
}r^{\ell\left(  \nu\right)  }M_{\nu}.
\]
This proves Proposition \ref{prop.eta.through-Cgamma}.
\end{proof}

We shall also use a simple summation formula (\cite[Proposition 5.17]{comps}):

\begin{proposition}
\label{prop.concat.sum=sum}Let $\left(  A,+,0\right)  $ be an abelian group.
Let $u_{\mu,\nu}$ be an element of $A$ for each pair $\left(  \mu,\nu\right)
\in\operatorname*{Comp}\times\operatorname*{Comp}$ of two compositions. Let
$\alpha\in\operatorname*{Comp}$. Then,%
\[
\sum_{\substack{\mu,\nu\in\operatorname*{Comp};\\\mu\nu\in C\left(
\alpha\right)  }}u_{\mu,\nu}=\sum_{\substack{\beta,\gamma\in
\operatorname*{Comp};\\\beta\gamma=\alpha}}\ \ \sum_{\mu\in C\left(
\beta\right)  }\ \ \sum_{\nu\in C\left(  \gamma\right)  }u_{\mu,\nu}.
\]

\end{proposition}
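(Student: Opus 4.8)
The plan is to prove the identity by bijective reindexing. Collapsed into a single sum, the right-hand side ranges over all quadruples $\left(  \beta,\gamma,\mu,\nu\right)  $ of compositions satisfying $\beta\gamma=\alpha$, $\mu\in C\left(  \beta\right)  $ and $\nu\in C\left(  \gamma\right)  $, with summand $u_{\mu,\nu}$; the left-hand side ranges over all pairs $\left(  \mu,\nu\right)  $ with $\mu\nu\in C\left(  \alpha\right)  $, with the same summand. Since in both cases the summand depends only on $\left(  \mu,\nu\right)  $, it suffices to show that the forgetful map $\left(  \beta,\gamma,\mu,\nu\right)  \mapsto\left(  \mu,\nu\right)  $ is a bijection from the first index set to the second. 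The whole proof then reduces to checking this is well-defined, surjective, and injective.

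The computational backbone is the behaviour of the partial-sum map $D$ from Definition \ref{def.comps.D-comp} under concatenation: for any two compositions $\sigma,\tau$ one reads off the partial sums of $\sigma\tau$ to get
\[
D\left(  \sigma\tau\right)  =D\left(  \sigma\right)  \cup\left\{  \left\vert \sigma\right\vert \right\}  \cup\left(  \left\vert \sigma\right\vert +D\left(  \tau\right)  \right)  ,
\]
a disjoint union, where $\left\vert \sigma\right\vert +D\left(  \tau\right)  :=\left\{  \left\vert \sigma\right\vert +s\ \mid\ s\in D\left(  \tau\right)  \right\}  $; note that $\left\vert \sigma\right\vert \in D\left(  \sigma\tau\right)  $ precisely when $\tau\neq\varnothing$. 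To see the forgetful map is well-defined, I would take a quadruple $\left(  \beta,\gamma,\mu,\nu\right)  $ from the first index set. Then $\left\vert \mu\right\vert =\left\vert \beta\right\vert $ and $\left\vert \nu\right\vert =\left\vert \gamma\right\vert $ (from $\mu\in C\left(  \beta\right)  $ and $\nu\in C\left(  \gamma\right)  $), so $\left\vert \mu\nu\right\vert =\left\vert \alpha\right\vert $, and applying the concatenation formula to both $\mu\nu$ and $\alpha=\beta\gamma$ reduces $D\left(  \mu\nu\right)  \subseteq D\left(  \alpha\right)  $ to the three inclusions $D\left(  \mu\right)  \subseteq D\left(  \beta\right)  $, $\left\{  \left\vert \mu\right\vert \right\}  =\left\{  \left\vert \beta\right\vert \right\}  \subseteq D\left(  \alpha\right)  $ and $\left\vert \mu\right\vert +D\left(  \nu\right)  \subseteq\left\vert \beta\right\vert +D\left(  \gamma\right)  $, each of which follows from $D\left(  \mu\right)  \subseteq D\left(  \beta\right)  $, $D\left(  \nu\right)  \subseteq D\left(  \gamma\right)  $ and $\left\vert \mu\right\vert =\left\vert \beta\right\vert $. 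Hence $\mu\nu\in C\left(  \alpha\right)  $.

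For the inverse I would recover $\left(  \beta,\gamma\right)  $ from a given pair $\left(  \mu,\nu\right)  $ with $\mu\nu\in C\left(  \alpha\right)  $ by cutting $\alpha$ at size $k:=\left\vert \mu\right\vert $. The key point is that $k$ is an admissible cut, i.e. $k\in\left\{  0,\left\vert \alpha\right\vert \right\}  \cup D\left(  \alpha\right)  $: if $\mu$ and $\nu$ are both nonempty this holds because $k=\left\vert \mu\right\vert \in D\left(  \mu\nu\right)  \subseteq D\left(  \alpha\right)  $, while the empty cases give $k=0$ or $k=\left\vert \alpha\right\vert $. Thus there is a unique decomposition $\alpha=\beta\gamma$ with $\left\vert \beta\right\vert =k$, namely $\beta$ the prefix of $\alpha$ of size $k$. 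It then remains to verify $\mu\in C\left(  \beta\right)  $ and $\nu\in C\left(  \gamma\right)  $; using $D\left(  \beta\right)  =D\left(  \alpha\right)  \cap\left[  k-1\right]  $ and $k+D\left(  \gamma\right)  =\left\{  s\in D\left(  \alpha\right)  \ \mid\ s>k\right\}  $ (both consequences of the concatenation formula), the inclusions $D\left(  \mu\right)  \subseteq D\left(  \beta\right)  $ and $D\left(  \nu\right)  \subseteq D\left(  \gamma\right)  $ drop out of $D\left(  \mu\right)  \cup\left\{  k\right\}  \cup\left(  k+D\left(  \nu\right)  \right)  =D\left(  \mu\nu\right)  \subseteq D\left(  \alpha\right)  $ once one notes that every element of $D\left(  \mu\right)  $ is $<k$ while every element of $k+D\left(  \nu\right)  $ is $>k$. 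This construction is manifestly inverse to the forgetful map (the cut size $k=\left\vert \mu\right\vert $ determines $\left(  \beta,\gamma\right)  $ uniquely), completing the bijection and hence the identity.

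I expect the main obstacle to be bookkeeping rather than anything conceptual: correctly handling the empty-composition edge cases (when $\mu$ or $\nu$ is $\varnothing$, so that $\left\vert \mu\right\vert \notin D\left(  \mu\nu\right)  $ and one must fall back on $k\in\left\{  0,\left\vert \alpha\right\vert \right\}  $), and being careful that cutting $\alpha$ at size $k$ really produces genuine compositions lying in $C\left(  \beta\right)  $ and $C\left(  \gamma\right)  $ rather than merely truncated index sets. Once the concatenation formula for $D$ is in hand and the admissible-cut observation is isolated, the verification in both directions is routine set arithmetic.
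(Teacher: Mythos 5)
Your proof is correct. There is nothing in the paper's text to compare it against: the paper states this proposition without proof, citing it from the ancillary reference \cite[Proposition 5.17]{comps}, and your bijective argument — matching a quadruple $\left(\beta,\gamma,\mu,\nu\right)$ with the pair $\left(\mu,\nu\right)$ by cutting $\alpha$ at size $k=\left\vert\mu\right\vert$ — is the natural way to prove it. One small correction for a full write-up: your parenthetical claim that $\left\vert\sigma\right\vert\in D\left(\sigma\tau\right)$ precisely when $\tau\neq\varnothing$ should read \textquotedblleft precisely when both $\sigma\neq\varnothing$ and $\tau\neq\varnothing$\textquotedblright, since $D$-sets contain only positive integers and hence never contain $\left\vert\varnothing\right\vert=0$. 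Correspondingly, in your well-definedness step the inclusion $\left\{\left\vert\beta\right\vert\right\}\subseteq D\left(\alpha\right)$ is not always true (it fails when $\beta=\varnothing$ or $\gamma=\varnothing$), but it is only needed in the case where $\mu$ and $\nu$ are both nonempty, which forces $\beta$ and $\gamma$ to be both nonempty, and then it does hold. With that bookkeeping — which you already flag as the main remaining task — the argument goes through: well-definedness, the admissible-cut observation for surjectivity, and the uniqueness of the prefix of given size for injectivity are all sound.
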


We are now ready to prove Theorem \ref{thm.Delta-eta}:

\begin{proof}
[Proof of Theorem \ref{thm.Delta-eta}.]Proposition
\ref{prop.eta.through-Cgamma} (applied to $\gamma=\alpha$) yields%
\[
\eta_{\alpha}^{\left(  q\right)  }=\sum_{\nu\in C\left(  \alpha\right)
}r^{\ell\left(  \nu\right)  }M_{\nu}=\sum_{\lambda\in C\left(  \alpha\right)
}r^{\ell\left(  \lambda\right)  }M_{\lambda}.
\]
Applying the map $\Delta$ to both sides of this equality, we find%
\begin{align}
\Delta\left(  \eta_{\alpha}^{\left(  q\right)  }\right)   &  =\Delta\left(
\sum_{\lambda\in C\left(  \alpha\right)  }r^{\ell\left(  \lambda\right)
}M_{\lambda}\right)  =\sum_{\lambda\in C\left(  \alpha\right)  }r^{\ell\left(
\lambda\right)  }\underbrace{\Delta\left(  M_{\lambda}\right)  }%
_{\substack{=\sum_{\substack{\mu,\nu\in\operatorname*{Comp};\\\lambda=\mu\nu
}}M_{\mu}\otimes M_{\nu}\\\text{(by (\ref{eq.Delta-M}), with the}%
\\\text{letters }\alpha,\beta,\gamma\text{ renamed as }\lambda,\mu,\nu
\text{)}}}\nonumber\\
&  \ \ \ \ \ \ \ \ \ \ \ \ \ \ \ \ \ \ \ \ \left(  \text{since the map }%
\Delta\text{ is }\mathbf{k}\text{-linear}\right) \nonumber\\
&  =\sum_{\lambda\in C\left(  \alpha\right)  }r^{\ell\left(  \lambda\right)
}\sum_{\substack{\mu,\nu\in\operatorname*{Comp};\\\lambda=\mu\nu}}M_{\mu
}\otimes M_{\nu}=\underbrace{\sum_{\lambda\in C\left(  \alpha\right)
}\ \ \sum_{\substack{\mu,\nu\in\operatorname*{Comp};\\\lambda=\mu\nu}}}%
_{=\sum_{\substack{\mu,\nu\in\operatorname*{Comp};\\\mu\nu\in C\left(
\alpha\right)  }}}\underbrace{r^{\ell\left(  \lambda\right)  }}%
_{\substack{=r^{\ell\left(  \mu\nu\right)  }\\\text{(since }\lambda=\mu
\nu\text{)}}}M_{\mu}\otimes M_{\nu}\nonumber\\
&  =\sum_{\substack{\mu,\nu\in\operatorname*{Comp};\\\mu\nu\in C\left(
\alpha\right)  }}r^{\ell\left(  \mu\nu\right)  }M_{\mu}\otimes M_{\nu
}\nonumber\\
&  =\sum_{\substack{\beta,\gamma\in\operatorname*{Comp};\\\beta\gamma=\alpha
}}\ \ \sum_{\mu\in C\left(  \beta\right)  }\ \ \sum_{\nu\in C\left(
\gamma\right)  }r^{\ell\left(  \mu\nu\right)  }M_{\mu}\otimes M_{\nu}
\label{pf.thm.Delta-eta.LHS}%
\end{align}
(by Proposition \ref{prop.concat.sum=sum}, applied to $A=\operatorname*{QSym}%
\otimes\operatorname*{QSym}$ and $u_{\mu,\nu}=r^{\ell\left(  \mu\nu\right)
}M_{\mu}\otimes M_{\nu}$).

On the other hand, if $\mu,\nu\in\operatorname*{Comp}$ are any two
compositions, then%
\[
\ell\left(  \mu\nu\right)  =\ell\left(  \mu\right)  +\ell\left(  \nu\right)
\ \ \ \ \ \ \ \ \ \ \left(  \text{by \cite[Proposition 5.2 \textbf{(a)}%
]{comps}}\right)
\]
and thus%
\begin{equation}
r^{\ell\left(  \mu\nu\right)  }=r^{\ell\left(  \mu\right)  +\ell\left(
\nu\right)  }=r^{\ell\left(  \mu\right)  }r^{\ell\left(  \nu\right)  }.
\label{pf.thm.Delta-eta.rr}%
\end{equation}

Now,%
\begin{align*}
&  \sum_{\substack{\beta,\gamma\in\operatorname*{Comp};\\\alpha=\beta\gamma
}}\underbrace{\eta_{\beta}^{\left(  q\right)  }}_{\substack{=\sum_{\mu\in
C\left(  \beta\right)  }r^{\ell\left(  \mu\right)  }M_{\mu}\\\text{(by
Proposition \ref{prop.eta.through-Cgamma})}}}\otimes\underbrace{\eta_{\gamma
}^{\left(  q\right)  }}_{\substack{=\sum_{\nu\in C\left(  \gamma\right)
}r^{\ell\left(  \nu\right)  }M_{\nu}\\\text{(by Proposition
\ref{prop.eta.through-Cgamma})}}}\\
&  =\sum_{\substack{\beta,\gamma\in\operatorname*{Comp};\\\alpha=\beta\gamma
}}\underbrace{\left(  \sum_{\mu\in C\left(  \beta\right)  }r^{\ell\left(
\mu\right)  }M_{\mu}\right)  \otimes\left(  \sum_{\nu\in C\left(
\gamma\right)  }r^{\ell\left(  \nu\right)  }M_{\nu}\right)  }_{=\sum_{\mu\in
C\left(  \beta\right)  }\ \ \sum_{\nu\in C\left(  \gamma\right)  }%
r^{\ell\left(  \mu\right)  }r^{\ell\left(  \nu\right)  }M_{\mu}\otimes M_{\nu
}}\\
&  =\underbrace{\sum_{\substack{\beta,\gamma\in\operatorname*{Comp}%
;\\\alpha=\beta\gamma}}}_{=\sum_{\substack{\beta,\gamma\in\operatorname*{Comp}%
;\\\beta\gamma=\alpha}}}\ \ \sum_{\mu\in C\left(  \beta\right)  }\ \ \sum
_{\nu\in C\left(  \gamma\right)  }\underbrace{r^{\ell\left(  \mu\right)
}r^{\ell\left(  \nu\right)  }}_{\substack{=r^{\ell\left(  \mu\nu\right)
}\\\text{(by (\ref{pf.thm.Delta-eta.rr}))}}}M_{\mu}\otimes M_{\nu}\\
&  =\sum_{\substack{\beta,\gamma\in\operatorname*{Comp};\\\beta\gamma=\alpha
}}\ \ \sum_{\mu\in C\left(  \beta\right)  }\ \ \sum_{\nu\in C\left(
\gamma\right)  }r^{\ell\left(  \mu\nu\right)  }M_{\mu}\otimes M_{\nu}.
\end{align*}

Comparing this with (\ref{pf.thm.Delta-eta.LHS}), we obtain%
\[
\Delta\left(  \eta_{\alpha}^{\left(  q\right)  }\right)  =\sum
_{\substack{\beta,\gamma\in\operatorname*{Comp};\\\alpha=\beta\gamma}%
}\eta_{\beta}^{\left(  q\right)  }\otimes\eta_{\gamma}^{\left(  q\right)  }.
\]
This proves Theorem \ref{thm.Delta-eta}.
\end{proof}

\subsection{The coalgebra morphism $T_{r}$}

We define a $\mathbf{k}$-linear map $T_{r}:\operatorname*{QSym}\rightarrow
\operatorname*{QSym}$ by setting%
\[
T_{r}\left(  M_{\alpha}\right)  =r^{\ell\left(  \alpha\right)  }M_{\alpha
}\ \ \ \ \ \ \ \ \ \ \text{for each }\alpha\in\operatorname*{Comp}.
\]
This definition is legitimate, since $\left(  M_{\alpha}\right)  _{\alpha
\in\operatorname*{Comp}}$ is a basis of the $\mathbf{k}$-module
$\operatorname*{QSym}$ (and since a $\mathbf{k}$-linear map on a free
$\mathbf{k}$-module can be defined by specifying its values on a basis). The
map $T_{r}$ is usually not a $\mathbf{k}$-algebra homomorphism, but it is
always a $\mathbf{k}$-coalgebra homomorphism. This chiefly relies on the
following lemma:

\begin{lemma}
\label{lem.Tr.Delta}We have $\Delta\circ T_{r}=\left(  T_{r}\otimes
T_{r}\right)  \circ\Delta$ as maps from $\operatorname*{QSym}$ to
$\operatorname*{QSym}\otimes\operatorname*{QSym}$.
\end{lemma}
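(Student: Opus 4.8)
The plan is to verify the identity $\Delta \circ T_r = (T_r \otimes T_r) \circ \Delta$ by checking it on the monomial basis $\left(M_\alpha\right)_{\alpha \in \operatorname{Comp}}$ of $\operatorname{QSym}$. Since both sides of the claimed equality are $\mathbf{k}$-linear maps (being composites of $\mathbf{k}$-linear maps), and since $\left(M_\alpha\right)_{\alpha \in \operatorname{Comp}}$ is a basis of the $\mathbf{k}$-module $\operatorname{QSym}$, it suffices to show that the two maps agree on each basis element $M_\alpha$. This reduces the lemma to a purely combinatorial computation governed by the explicit formula \eqref{eq.Delta-M} for the coproduct $\Delta$ and the defining formula $T_r(M_\alpha) = r^{\ell(\alpha)} M_\alpha$.

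First I would compute the left-hand side. Fix $\alpha \in \operatorname{Comp}$. Applying $T_r$ first gives $T_r(M_\alpha) = r^{\ell(\alpha)} M_\alpha$, and then applying $\Delta$ and using the formula \eqref{eq.Delta-M} yields
\[
(\Delta \circ T_r)(M_\alpha) = r^{\ell(\alpha)} \Delta(M_\alpha) = r^{\ell(\alpha)} \sum_{\substack{\beta,\gamma \in \operatorname{Comp};\\ \alpha = \beta\gamma}} M_\beta \otimes M_\gamma .
\]
Next I would compute the right-hand side: applying $\Delta$ first gives the same sum $\sum_{\alpha = \beta\gamma} M_\beta \otimes M_\gamma$ (again by \eqref{eq.Delta-M}), and then applying $T_r \otimes T_r$ termwise produces
\[
\left((T_r \otimes T_r) \circ \Delta\right)(M_\alpha) = \sum_{\substack{\beta,\gamma \in \operatorname{Comp};\\ \alpha = \beta\gamma}} T_r(M_\beta) \otimes T_r(M_\gamma) = \sum_{\substack{\beta,\gamma \in \operatorname{Comp};\\ \alpha = \beta\gamma}} r^{\ell(\beta)} r^{\ell(\gamma)}\, M_\beta \otimes M_\gamma .
\]

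The two expressions will match precisely because of the additivity of length under concatenation: whenever $\alpha = \beta\gamma$, we have $\ell(\alpha) = \ell(\beta) + \ell(\gamma)$, so that $r^{\ell(\alpha)} = r^{\ell(\beta)} r^{\ell(\gamma)}$. (This is exactly the fact \cite[Proposition 5.2 \textbf{(a)}]{comps} already invoked in \eqref{pf.thm.Delta-eta.rr}.) Pulling the scalar $r^{\ell(\alpha)}$ inside the sum on the left-hand side and rewriting it as $r^{\ell(\beta)} r^{\ell(\gamma)}$ in each summand makes the two sides term-by-term identical, completing the verification on $M_\alpha$. Since $\alpha$ was arbitrary and both maps are $\mathbf{k}$-linear, the equality of maps follows.

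There is essentially no obstacle here: the argument is a direct basis computation, and the only substantive input is the multiplicativity relation $r^{\ell(\alpha)} = r^{\ell(\beta)} r^{\ell(\gamma)}$ for $\alpha = \beta\gamma$. The only point warranting minor care is the bookkeeping of the summation indices and the correct placement of the scalars relative to the tensor factors, but this is routine. In the eventual Hopf-algebraic upshot, this lemma combined with compatibility with the counit will give that $T_r$ is a $\mathbf{k}$-coalgebra homomorphism; that consequence, however, lies beyond the statement we are asked to prove here.
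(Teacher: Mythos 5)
Your proposal is correct and follows essentially the same route as the paper's own proof: evaluate both sides on the basis element $M_{\alpha}$ using the formula (\ref{eq.Delta-M}) and the definition of $T_{r}$, then match the two resulting sums via the additivity $\ell\left(  \beta\gamma\right)  =\ell\left(  \beta\right)  +\ell\left(  \gamma\right)  $, and finally conclude by $\mathbf{k}$-linearity. No gaps; nothing to add.
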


\begin{vershort}

\begin{proof}
[Proof of Lemma \ref{lem.Tr.Delta}.]Let $\alpha\in\operatorname*{Comp}$ be
arbitrary. The definition of $T_{r}$ yields $T_{r}\left(  M_{\alpha}\right)
=r^{\ell\left(  \alpha\right)  }M_{\alpha}$. Applying the map $\Delta$ to both
sides of this identity, we obtain%
\begin{align*}
\Delta\left(  T_{r}\left(  M_{\alpha}\right)  \right)   &  =\Delta\left(
r^{\ell\left(  \alpha\right)  }M_{\alpha}\right)  =r^{\ell\left(
\alpha\right)  }\Delta\left(  M_{\alpha}\right)  =r^{\ell\left(
\alpha\right)  }\sum_{\substack{\beta,\gamma\in\operatorname*{Comp}%
;\\\alpha=\beta\gamma}}M_{\beta}\otimes M_{\gamma}\ \ \ \ \ \ \ \ \ \ \left(
\text{by (\ref{eq.Delta-M})}\right) \\
&  =\sum_{\substack{\beta,\gamma\in\operatorname*{Comp};\\\alpha=\beta\gamma
}}\underbrace{r^{\ell\left(  \alpha\right)  }}_{\substack{=r^{\ell\left(
\beta\gamma\right)  }\\\text{(since }\alpha=\beta\gamma\text{)}}}M_{\beta
}\otimes M_{\gamma}=\sum_{\substack{\beta,\gamma\in\operatorname*{Comp}%
;\\\alpha=\beta\gamma}}\underbrace{r^{\ell\left(  \beta\gamma\right)  }%
}_{\substack{=r^{\ell\left(  \beta\right)  +\ell\left(  \gamma\right)
}\\\text{(since the definition}\\\text{of concatenation}\\\text{yields }%
\ell\left(  \beta\gamma\right)  =\ell\left(  \beta\right)  +\ell\left(
\gamma\right)  \text{)}}}M_{\beta}\otimes M_{\gamma}\\
&  =\sum_{\substack{\beta,\gamma\in\operatorname*{Comp};\\\alpha=\beta\gamma
}}\underbrace{r^{\ell\left(  \beta\right)  +\ell\left(  \gamma\right)
}M_{\beta}\otimes M_{\gamma}}_{=r^{\ell\left(  \beta\right)  }M_{\beta}\otimes
r^{\ell\left(  \gamma\right)  }M_{\gamma}}=\sum_{\substack{\beta,\gamma
\in\operatorname*{Comp};\\\alpha=\beta\gamma}}r^{\ell\left(  \beta\right)
}M_{\beta}\otimes r^{\ell\left(  \gamma\right)  }M_{\gamma}.
\end{align*}
Comparing this with%
\begin{align*}
\left(  T_{r}\otimes T_{r}\right)  \left(  \Delta\left(  M_{\alpha}\right)
\right)   &  =\left(  T_{r}\otimes T_{r}\right)  \left(  \sum_{\substack{\beta
,\gamma\in\operatorname*{Comp};\\\alpha=\beta\gamma}}M_{\beta}\otimes
M_{\gamma}\right)  \ \ \ \ \ \ \ \ \ \ \left(  \text{by (\ref{eq.Delta-M}%
)}\right) \\
&  =\sum_{\substack{\beta,\gamma\in\operatorname*{Comp};\\\alpha=\beta\gamma
}}\underbrace{T_{r}\left(  M_{\beta}\right)  }_{\substack{=r^{\ell\left(
\beta\right)  }M_{\beta}\\\text{(by the definition of }T_{r}\text{)}}%
}\otimes\underbrace{T_{r}\left(  M_{\gamma}\right)  }_{\substack{=r^{\ell
\left(  \gamma\right)  }M_{\gamma}\\\text{(by the definition of }T_{r}%
\text{)}}}\\
&  =\sum_{\substack{\beta,\gamma\in\operatorname*{Comp};\\\alpha=\beta\gamma
}}r^{\ell\left(  \beta\right)  }M_{\beta}\otimes r^{\ell\left(  \gamma\right)
}M_{\gamma},
\end{align*}
we obtain
\[
\Delta\left(  T_{r}\left(  M_{\alpha}\right)  \right)  =\left(  T_{r}\otimes
T_{r}\right)  \left(  \Delta\left(  M_{\alpha}\right)  \right)  =\left(
\left(  T_{r}\otimes T_{r}\right)  \circ\Delta\right)  \left(  M_{\alpha
}\right)  .
\]
Hence, $\left(  \Delta\circ T_{r}\right)  \left(  M_{\alpha}\right)
=\Delta\left(  T_{r}\left(  M_{\alpha}\right)  \right)  =\left(  \left(
T_{r}\otimes T_{r}\right)  \circ\Delta\right)  \left(  M_{\alpha}\right)  $.

Forget that we fixed $\alpha$. We thus have proved that $\left(  \Delta\circ
T_{r}\right)  \left(  M_{\alpha}\right)  =\left(  \left(  T_{r}\otimes
T_{r}\right)  \circ\Delta\right)  \left(  M_{\alpha}\right)  $ for each
$\alpha\in\operatorname*{Comp}$. In other words, the two maps $\Delta\circ
T_{r}$ and $\left(  T_{r}\otimes T_{r}\right)  \circ\Delta$ agree on every
element of the basis $\left(  M_{\alpha}\right)  _{\alpha\in
\operatorname*{Comp}}$ of $\operatorname*{QSym}$. Since these two maps both
are $\mathbf{k}$-linear, this entails that they are completely identical. In
other words, we have $\Delta\circ T_{r}=\left(  T_{r}\otimes T_{r}\right)
\circ\Delta$. This proves Lemma \ref{lem.Tr.Delta}.
\end{proof}
\end{vershort}

\begin{verlong}

\begin{proof}
[Proof of Lemma \ref{lem.Tr.Delta}.]Let $\alpha\in\operatorname*{Comp}$ be
arbitrary. If $\beta,\gamma\in\operatorname*{Comp}$ are two compositions
satisfying $\alpha=\beta\gamma$, then%
\begin{align*}
\ell\left(  \alpha\right)   &  =\ell\left(  \beta\gamma\right)
\ \ \ \ \ \ \ \ \ \ \left(  \text{since }\alpha=\beta\gamma\right) \\
&  =\ell\left(  \beta\right)  +\ell\left(  \gamma\right)
\ \ \ \ \ \ \ \ \ \ \left(  \text{by \cite[Proposition 5.2 \textbf{(a)}%
]{comps}}\right)
\end{align*}
and therefore%
\begin{equation}
r^{\ell\left(  \alpha\right)  }=r^{\ell\left(  \beta\right)  +\ell\left(
\gamma\right)  }=r^{\ell\left(  \beta\right)  }r^{\ell\left(  \gamma\right)
}. \label{pf.prop.Tr.coalgmor.rl}%
\end{equation}

Now,
\begin{align}
\left(  \Delta\circ T_{r}\right)  \left(  M_{\alpha}\right)   &
=\Delta\left(  \underbrace{T_{r}\left(  M_{\alpha}\right)  }%
_{\substack{=r^{\ell\left(  \alpha\right)  }M_{\alpha}\\\text{(by the
definition of }T_{r}\text{)}}}\right)  =\Delta\left(  r^{\ell\left(
\alpha\right)  }M_{\alpha}\right) \nonumber\\
&  =r^{\ell\left(  \alpha\right)  }\underbrace{\Delta\left(  M_{\alpha
}\right)  }_{\substack{=\sum_{\substack{\beta,\gamma\in\operatorname*{Comp}%
;\\\alpha=\beta\gamma}}M_{\beta}\otimes M_{\gamma}\\\text{(by
(\ref{eq.Delta-M}))}}}\ \ \ \ \ \ \ \ \ \ \left(  \text{since the map }%
\Delta\text{ is }\mathbf{k}\text{-linear}\right) \nonumber\\
&  =r^{\ell\left(  \alpha\right)  }\sum_{\substack{\beta,\gamma\in
\operatorname*{Comp};\\\alpha=\beta\gamma}}M_{\beta}\otimes M_{\gamma}%
=\sum_{\substack{\beta,\gamma\in\operatorname*{Comp};\\\alpha=\beta\gamma
}}\underbrace{r^{\ell\left(  \alpha\right)  }}_{\substack{=r^{\ell\left(
\beta\right)  }r^{\ell\left(  \gamma\right)  }\\\text{(by
(\ref{pf.prop.Tr.coalgmor.rl}))}}}M_{\beta}\otimes M_{\gamma}\nonumber\\
&  =\sum_{\substack{\beta,\gamma\in\operatorname*{Comp};\\\alpha=\beta\gamma
}}\underbrace{r^{\ell\left(  \beta\right)  }r^{\ell\left(  \gamma\right)
}M_{\beta}\otimes M_{\gamma}}_{=r^{\ell\left(  \beta\right)  }M_{\beta}\otimes
r^{\ell\left(  \gamma\right)  }M_{\gamma}}\nonumber\\
&  =\sum_{\substack{\beta,\gamma\in\operatorname*{Comp};\\\alpha=\beta\gamma
}}r^{\ell\left(  \beta\right)  }M_{\beta}\otimes r^{\ell\left(  \gamma\right)
}M_{\gamma}. \label{pf.lem.Tr.Delta.1}%
\end{align}

On the other hand, applying the map $T_{r}\otimes T_{r}$ to both sides of the
equality (\ref{eq.Delta-M}), we find%
\begin{align*}
\left(  T_{r}\otimes T_{r}\right)  \left(  \Delta\left(  M_{\alpha}\right)
\right)   &  =\left(  T_{r}\otimes T_{r}\right)  \left(  \sum_{\substack{\beta
,\gamma\in\operatorname*{Comp};\\\alpha=\beta\gamma}}M_{\beta}\otimes
M_{\gamma}\right) \\
&  =\sum_{\substack{\beta,\gamma\in\operatorname*{Comp};\\\alpha=\beta\gamma
}}\underbrace{T_{r}\left(  M_{\beta}\right)  }_{\substack{=r^{\ell\left(
\beta\right)  }M_{\beta}\\\text{(by the definition of }T_{r}\text{)}}%
}\otimes\underbrace{T_{r}\left(  M_{\gamma}\right)  }_{\substack{=r^{\ell
\left(  \gamma\right)  }M_{\gamma}\\\text{(by the definition of }T_{r}%
\text{)}}}\\
&  \ \ \ \ \ \ \ \ \ \ \ \ \ \ \ \ \ \ \ \ \left(  \text{by the definition of
the map }T_{r}\otimes T_{r}\right) \\
&  =\sum_{\substack{\beta,\gamma\in\operatorname*{Comp};\\\alpha=\beta\gamma
}}r^{\ell\left(  \beta\right)  }M_{\beta}\otimes r^{\ell\left(  \gamma\right)
}M_{\gamma}.
\end{align*}
Comparing this with (\ref{pf.lem.Tr.Delta.1}), we obtain%
\[
\left(  \Delta\circ T_{r}\right)  \left(  M_{\alpha}\right)  =\left(
T_{r}\otimes T_{r}\right)  \left(  \Delta\left(  M_{\alpha}\right)  \right)
=\left(  \left(  T_{r}\otimes T_{r}\right)  \circ\Delta\right)  \left(
M_{\alpha}\right)  .
\]

Forget that we fixed $\alpha$. We thus have proved that $\left(  \Delta\circ
T_{r}\right)  \left(  M_{\alpha}\right)  =\left(  \left(  T_{r}\otimes
T_{r}\right)  \circ\Delta\right)  \left(  M_{\alpha}\right)  $ for each
$\alpha\in\operatorname*{Comp}$. In other words, the two maps $\Delta\circ
T_{r}$ and $\left(  T_{r}\otimes T_{r}\right)  \circ\Delta$ agree on every
element of the basis $\left(  M_{\alpha}\right)  _{\alpha\in
\operatorname*{Comp}}$ of $\operatorname*{QSym}$. Since these two maps both
are $\mathbf{k}$-linear, this entails that they are completely identical. In
other words, we have $\Delta\circ T_{r}=\left(  T_{r}\otimes T_{r}\right)
\circ\Delta$. This proves Lemma \ref{lem.Tr.Delta}.
\end{proof}
\end{verlong}

\begin{proposition}
\label{prop.Tr.coalgmor}The map $T_{r}:\operatorname*{QSym}\rightarrow
\operatorname*{QSym}$ is a $\mathbf{k}$-coalgebra homomorphism.
\end{proposition}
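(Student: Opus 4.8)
The plan is to show that $T_r$ commutes with both the comultiplication $\Delta$ and the counit $\varepsilon$ of $\operatorname{QSym}$. A $\mathbf{k}$-coalgebra homomorphism is precisely a $\mathbf{k}$-linear map $f$ satisfying $\Delta \circ f = (f \otimes f) \circ \Delta$ and $\varepsilon \circ f = \varepsilon$. The first of these two conditions is exactly the content of Lemma \ref{lem.Tr.Delta}, which we are free to invoke. So the only remaining work is to verify the counit condition $\varepsilon \circ T_r = \varepsilon$, and then to assemble the two pieces into the definition of a coalgebra morphism.

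First I would recall how the counit $\varepsilon$ acts on the monomial basis: $\varepsilon(M_\alpha) = [\alpha = \varnothing]$ for every $\alpha \in \operatorname{Comp}$, using the Iverson bracket of Convention \ref{conv.iverson}. (This is the standard formula; $\varepsilon$ picks out the constant term, and $M_\varnothing = 1$ while $M_\alpha$ has zero constant term for nonempty $\alpha$.) Then I would compute, for an arbitrary composition $\alpha \in \operatorname{Comp}$,
\[
\varepsilon\left(T_r\left(M_\alpha\right)\right) = \varepsilon\left(r^{\ell(\alpha)} M_\alpha\right) = r^{\ell(\alpha)} \varepsilon(M_\alpha) = r^{\ell(\alpha)} [\alpha = \varnothing].
\]
The key observation is that this equals $\varepsilon(M_\alpha) = [\alpha = \varnothing]$: indeed, when $\alpha \neq \varnothing$ both sides vanish because of the bracket, while when $\alpha = \varnothing$ we have $\ell(\alpha) = 0$, so $r^{\ell(\alpha)} = r^0 = 1$ and both sides equal $1$. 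Hence $\varepsilon(T_r(M_\alpha)) = \varepsilon(M_\alpha)$ for every $\alpha$.

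Since $\left(M_\alpha\right)_{\alpha \in \operatorname{Comp}}$ is a basis of the $\mathbf{k}$-module $\operatorname{QSym}$ and both $\varepsilon \circ T_r$ and $\varepsilon$ are $\mathbf{k}$-linear, the agreement on this basis forces $\varepsilon \circ T_r = \varepsilon$ on all of $\operatorname{QSym}$. Combining this with Lemma \ref{lem.Tr.Delta}, which gives $\Delta \circ T_r = (T_r \otimes T_r) \circ \Delta$, we conclude that $T_r$ is a $\mathbf{k}$-coalgebra homomorphism, proving Proposition \ref{prop.Tr.coalgmor}. There is no real obstacle here: the comultiplication compatibility is already established as a lemma, and the counit compatibility reduces to the elementary remark that $r^{\ell(\alpha)}$ equals $1$ on the only composition $(\alpha = \varnothing)$ where the counit is nonzero. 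The only point requiring minor care is stating the counit formula $\varepsilon(M_\alpha) = [\alpha = \varnothing]$ correctly, which I would cite (e.g.\ from \cite[\S 5.1]{GriRei}) rather than reprove.
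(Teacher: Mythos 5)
Your proposal is correct and follows essentially the same route as the paper's own proof: invoke Lemma \ref{lem.Tr.Delta} for the compatibility with $\Delta$, then check $\varepsilon \circ T_{r} = \varepsilon$ on the monomial basis via $\varepsilon\left(  M_{\alpha}\right)  = \left[  \alpha=\varnothing\right]$ and the observation that $r^{\ell\left(  \alpha\right)  } = 1$ when $\alpha = \varnothing$. No gaps; the argument is complete as stated.
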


\begin{vershort}

\begin{proof}
[Proof of Proposition \ref{prop.Tr.coalgmor}.]Let $\varepsilon
:\operatorname*{QSym}\rightarrow\mathbf{k}$ be the counit of the $\mathbf{k}%
$-coalgebra $\operatorname*{QSym}$. It is well-known (and easy to see) that
each composition $\alpha\in\operatorname*{Comp}$ satisfies $\varepsilon\left(
M_{\alpha}\right)  =\left[  \alpha=\varnothing\right]  $ (where we are using
Convention \ref{conv.iverson} again). This, in turn, makes it straightforward
to check that $\left(  \varepsilon\circ T_{r}\right)  \left(  M_{\alpha
}\right)  =\varepsilon\left(  M_{\alpha}\right)  $ for each $\alpha
\in\operatorname*{Comp}$. In other words, the two maps $\varepsilon\circ
T_{r}$ and $\varepsilon$ agree on every element of the basis $\left(
M_{\alpha}\right)  _{\alpha\in\operatorname*{Comp}}$ of $\operatorname*{QSym}%
$. Since these two maps both are $\mathbf{k}$-linear, this entails that they
are completely identical. In other words, we have $\varepsilon\circ
T_{r}=\varepsilon$. Combining this with the equality $\Delta\circ
T_{r}=\left(  T_{r}\otimes T_{r}\right)  \circ\Delta$ from Lemma
\ref{lem.Tr.Delta}, we conclude that the linear map $T_{r}$ is a $\mathbf{k}%
$-coalgebra homomorphism. This proves Proposition \ref{prop.Tr.coalgmor}.
\end{proof}
\end{vershort}

\begin{verlong}

\begin{proof}
[Proof of Proposition \ref{prop.Tr.coalgmor}.]Let $\varepsilon
:\operatorname*{QSym}\rightarrow\mathbf{k}$ be the counit of the $\mathbf{k}%
$-coalgebra $\operatorname*{QSym}$. This map $\varepsilon$ is graded (since
$\operatorname*{QSym}$ is a graded Hopf algebra). Thus, it is easy to see that
each composition $\alpha\in\operatorname*{Comp}$ satisfies\footnote{We are
using Convention \ref{conv.iverson} again here.}%
\begin{equation}
\varepsilon\left(  M_{\alpha}\right)  =\left[  \alpha=\varnothing\right]  .
\label{pf.prop.Tr.coalgmor.eps}%
\end{equation}

[\textit{Proof of (\ref{pf.prop.Tr.coalgmor.eps}):} Let $\alpha\in
\operatorname*{Comp}$. We must prove (\ref{pf.prop.Tr.coalgmor.eps}).

Recall that $M_{\varnothing}=1$. Thus, $\varepsilon\left(  M_{\varnothing
}\right)  =\varepsilon\left(  1\right)  =1$. Comparing this with $\left[
\varnothing=\varnothing\right]  =1$ (which holds because $\varnothing
=\varnothing$), we obtain $\varepsilon\left(  M_{\varnothing}\right)  =\left[
\varnothing=\varnothing\right]  $. In other words,
(\ref{pf.prop.Tr.coalgmor.eps}) holds for $\alpha=\varnothing$. Hence, for the
rest of this proof of (\ref{pf.prop.Tr.coalgmor.eps}), we WLOG assume that
$\alpha\neq\varnothing$.

If we had $\left\vert \alpha\right\vert =0$, then we would have $\alpha
=\varnothing$ (by \cite[Proposition 2.4]{comps}), which would contradict
$\alpha\neq\varnothing$. Hence, we cannot have $\left\vert \alpha\right\vert
=0$. Thus, $\left\vert \alpha\right\vert \neq0$.

However, the element $M_{\alpha}\in\operatorname*{QSym}$ is homogeneous of
degree $\left\vert \alpha\right\vert $ (this follows easily from its
definition). Thus, its image $\varepsilon\left(  M_{\alpha}\right)  $ is
homogeneous of degree $\left\vert \alpha\right\vert $ as well (since the map
$\varepsilon$ is graded). Hence, $\varepsilon\left(  M_{\alpha}\right)  $ is
homogeneous of degree $\neq0$ (since $\left\vert \alpha\right\vert \neq0$).

But the graded $\mathbf{k}$-module $\mathbf{k}$ is concentrated in degree $0$;
that is, its only nonzero graded component is the $0$-th graded component. In
other words, every homogeneous element $\lambda\in\mathbf{k}$ of degree
$\neq0$ is $0$. Applying this to $\lambda=\varepsilon\left(  M_{\alpha
}\right)  $, we conclude that $\varepsilon\left(  M_{\alpha}\right)  $ is $0$
(since $\varepsilon\left(  M_{\alpha}\right)  $ is a homogeneous element of
degree $\neq0$). In other words, $\varepsilon\left(  M_{\alpha}\right)  =0$.

But $\left[  \alpha=\varnothing\right]  =0$ as well (since $\alpha
\neq\varnothing$). Comparing this with $\varepsilon\left(  M_{\alpha}\right)
=0$, we obtain $\varepsilon\left(  M_{\alpha}\right)  =\left[  \alpha
=\varnothing\right]  $. Thus, (\ref{pf.prop.Tr.coalgmor.eps}) is proved.]
\medskip

Now, let $\alpha\in\operatorname*{Comp}$ be arbitrary. Then,
\[
\left(  \varepsilon\circ T_{r}\right)  \left(  M_{\alpha}\right)
=\varepsilon\left(  \underbrace{T_{r}\left(  M_{\alpha}\right)  }%
_{\substack{=r^{\ell\left(  \alpha\right)  }M_{\alpha}\\\text{(by the
definition of }T_{r}\text{)}}}\right)  =\varepsilon\left(  r^{\ell\left(
\alpha\right)  }M_{\alpha}\right)  =r^{\ell\left(  \alpha\right)  }%
\varepsilon\left(  M_{\alpha}\right)
\]
(since the map $\varepsilon$ is $\mathbf{k}$-linear). Hence,%
\begin{equation}
\left(  \varepsilon\circ T_{r}\right)  \left(  M_{\alpha}\right)
=r^{\ell\left(  \alpha\right)  }\underbrace{\varepsilon\left(  M_{\alpha
}\right)  }_{\substack{=\left[  \alpha=\varnothing\right]  \\\text{(by
(\ref{pf.prop.Tr.coalgmor.eps}))}}}=r^{\ell\left(  \alpha\right)  }\left[
\alpha=\varnothing\right]  . \label{pf.prop.Tr.coalgmor.1}%
\end{equation}
We shall now prove the equality
\begin{equation}
r^{\ell\left(  \alpha\right)  }\left[  \alpha=\varnothing\right]  =\left[
\alpha=\varnothing\right]  . \label{pf.prop.Tr.coalgmor.2}%
\end{equation}

[\textit{Proof of (\ref{pf.prop.Tr.coalgmor.2}):} The equality
(\ref{pf.prop.Tr.coalgmor.2}) is obvious when $\left[  \alpha=\varnothing
\right]  =0$ (because it boils down to $r^{\ell\left(  \alpha\right)  }%
\cdot0=0$ in this case). Thus, for the rest of its proof, we WLOG assume that
$\left[  \alpha=\varnothing\right]  \neq0$. Hence, $\alpha=\varnothing$ must
be true. Therefore, $\ell\left(  \alpha\right)  =\ell\left(  \varnothing
\right)  =0$, so that $r^{\ell\left(  \alpha\right)  }=r^{0}=1$ and thus
$\underbrace{r^{\ell\left(  \alpha\right)  }}_{=1}\left[  \alpha
=\varnothing\right]  =\left[  \alpha=\varnothing\right]  $. Hence,
(\ref{pf.prop.Tr.coalgmor.2}) is proved.]

Now, (\ref{pf.prop.Tr.coalgmor.1}) becomes
\[
\left(  \varepsilon\circ T_{r}\right)  \left(  M_{\alpha}\right)
=r^{\ell\left(  \alpha\right)  }\left[  \alpha=\varnothing\right]  =\left[
\alpha=\varnothing\right]  =\varepsilon\left(  M_{\alpha}\right)
\ \ \ \ \ \ \ \ \ \ \left(  \text{by (\ref{pf.prop.Tr.coalgmor.eps})}\right)
.
\]

Forget that we fixed $\alpha$. We thus have proved that $\left(
\varepsilon\circ T_{r}\right)  \left(  M_{\alpha}\right)  =\varepsilon\left(
M_{\alpha}\right)  $ for each $\alpha\in\operatorname*{Comp}$. In other words,
the two maps $\varepsilon\circ T_{r}$ and $\varepsilon$ agree on every element
of the basis $\left(  M_{\alpha}\right)  _{\alpha\in\operatorname*{Comp}}$ of
$\operatorname*{QSym}$. Since these two maps both are $\mathbf{k}$-linear,
this entails that they are completely identical. In other words, we have
$\varepsilon\circ T_{r}=\varepsilon$. Combining this with the equality
$\Delta\circ T_{r}=\left(  T_{r}\otimes T_{r}\right)  \circ\Delta$ from Lemma
\ref{lem.Tr.Delta}, we conclude that the linear map $T_{r}$ is a $\mathbf{k}%
$-coalgebra homomorphism. This proves Proposition \ref{prop.Tr.coalgmor}.
\end{proof}
\end{verlong}

To us, the map $T_{r}$ becomes useful thanks to the following slick expression
for $\eta_{\alpha}^{\left(  q\right)  }$ that it allows:

\begin{theorem}
\label{thm.Tr.eta}Let $S:\operatorname*{QSym}\rightarrow\operatorname*{QSym}$
be the antipode of the Hopf algebra $\operatorname*{QSym}$. Let $\alpha
\in\operatorname*{Comp}$. Then,%
\[
\eta_{\alpha}^{\left(  q\right)  }=\left(  -1\right)  ^{\ell\left(
\alpha\right)  }T_{r}\left(  S\left(  M_{\operatorname*{rev}\alpha}\right)
\right)  .
\]

\end{theorem}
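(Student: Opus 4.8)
The plan is to expand the right-hand side $\left(-1\right)^{\ell\left(\alpha\right)}T_{r}\left(S\left(M_{\operatorname*{rev}\alpha}\right)\right)$ by inserting the known formula \eqref{eq.SMalpha} for the antipode of a monomial quasisymmetric function, then applying the map $T_{r}$ termwise, and finally recognizing the result as $\eta_{\alpha}^{\left(q\right)}$ via the defining formula \eqref{eq.def.etaalpha.def}. First I would set $n=\left\vert\alpha\right\vert$, so that $\alpha\in\operatorname*{Comp}\nolimits_{n}$ and $\operatorname*{rev}\alpha\in\operatorname*{Comp}\nolimits_{n}$ as well (both have size $n$). Writing $\alpha=\left(\alpha_{1},\alpha_{2},\ldots,\alpha_{\ell}\right)$ with $\ell=\ell\left(\alpha\right)$, Definition \ref{def.rev} gives $\operatorname*{rev}\alpha=\left(\alpha_{\ell},\alpha_{\ell-1},\ldots,\alpha_{1}\right)$, and $\ell\left(\operatorname*{rev}\alpha\right)=\ell$. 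Applying \eqref{eq.SMalpha} to $\operatorname*{rev}\alpha$ in place of $\alpha$ (so its reversal is $\operatorname*{rev}\left(\operatorname*{rev}\alpha\right)=\alpha$, and its length is $\ell$), I obtain
\begin{equation}
S\left(M_{\operatorname*{rev}\alpha}\right)=\left(-1\right)^{\ell}\sum_{\substack{\beta\in\operatorname*{Comp}\nolimits_{n};\\D\left(\beta\right)\subseteq D\left(\alpha\right)}}M_{\beta}.
\label{pf.thm.Tr.eta.plan.S}
\end{equation}
Here the inner reversal cancels cleanly: the sum in \eqref{eq.SMalpha} ranges over $\beta$ with $D\left(\beta\right)\subseteq D\left(\operatorname*{rev}\left(\operatorname*{rev}\alpha\right)\right)=D\left(\alpha\right)$, which is exactly the index set appearing in the definition of $\eta_{\alpha}^{\left(q\right)}$.

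Next I would apply the $\mathbf{k}$-linear map $T_{r}$ to both sides of \eqref{pf.thm.Tr.eta.plan.S}. Since $T_{r}\left(M_{\beta}\right)=r^{\ell\left(\beta\right)}M_{\beta}$ by definition, linearity yields
\begin{equation}
T_{r}\left(S\left(M_{\operatorname*{rev}\alpha}\right)\right)=\left(-1\right)^{\ell}\sum_{\substack{\beta\in\operatorname*{Comp}\nolimits_{n};\\D\left(\beta\right)\subseteq D\left(\alpha\right)}}r^{\ell\left(\beta\right)}M_{\beta}.
\label{pf.thm.Tr.eta.plan.T}
\end{equation}
Multiplying \eqref{pf.thm.Tr.eta.plan.T} by $\left(-1\right)^{\ell}$ and using $\left(-1\right)^{\ell}\left(-1\right)^{\ell}=1$, I get
\[
\left(-1\right)^{\ell\left(\alpha\right)}T_{r}\left(S\left(M_{\operatorname*{rev}\alpha}\right)\right)=\sum_{\substack{\beta\in\operatorname*{Comp}\nolimits_{n};\\D\left(\beta\right)\subseteq D\left(\alpha\right)}}r^{\ell\left(\beta\right)}M_{\beta},
\]
and the right-hand side is precisely $\eta_{\alpha}^{\left(q\right)}$ by \eqref{eq.def.etaalpha.def}. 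This completes the argument.

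This proof is almost purely formal, so there is no serious obstacle; the only point requiring care is the bookkeeping of reversals, namely verifying that $\operatorname*{rev}\left(\operatorname*{rev}\alpha\right)=\alpha$ and $\ell\left(\operatorname*{rev}\alpha\right)=\ell\left(\alpha\right)$ so that the descent condition in \eqref{eq.SMalpha} lands on $D\left(\alpha\right)$ rather than on $D\left(\operatorname*{rev}\alpha\right)$. These two facts about the reversal are elementary (and cited elsewhere in the paper, e.g.\ \cite[Proposition 3.4]{comps}), so I would invoke them directly. I should also confirm that \eqref{eq.SMalpha} is being applied in the correct direction: the formula computes $S\left(M_{\delta}\right)$ in terms of compositions whose partial-sum set is contained in $D\left(\operatorname*{rev}\delta\right)$, and taking $\delta=\operatorname*{rev}\alpha$ is what produces the desired $D\left(\alpha\right)$. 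One subtlety worth a remark is that Theorem \ref{thm.Tr.eta} holds for all $n\in\mathbb{N}$ including $n=0$, where $\alpha=\varnothing$, $\ell\left(\alpha\right)=0$, and both sides equal $M_{\varnothing}=1$; the general computation above covers this case as well, since the sum then has the single term $\beta=\varnothing$.
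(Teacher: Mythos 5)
Your proof is correct and follows essentially the same route as the paper's own argument: apply the antipode formula \eqref{eq.SMalpha} to $\operatorname*{rev}\alpha$ (so that the inner reversal cancels and the descent condition lands on $D\left(  \alpha\right)  $), push $T_{r}$ through the sum term by term, and cancel the two signs $\left(  -1\right)  ^{\ell\left(  \alpha\right)  }$ to recover the defining expansion \eqref{eq.def.etaalpha.def} of $\eta_{\alpha}^{\left(  q\right)  }$. The bookkeeping of $\operatorname*{rev}\left(  \operatorname*{rev}\alpha\right)  =\alpha$ and $\ell\left(  \operatorname*{rev}\alpha\right)  =\ell\left(  \alpha\right)  $ is handled exactly as in the paper, so there is nothing to add.
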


\begin{proof}
[Proof of Theorem \ref{thm.Tr.eta}.]Write the composition $\alpha$ in the form
$\alpha=\left(  \alpha_{1},\alpha_{2},\ldots,\alpha_{\ell}\right)  $. Thus,
the definition of $\ell\left(  \alpha\right)  $ yields $\ell\left(
\alpha\right)  =\ell$, whereas the definition of $\operatorname*{rev}\alpha$
yields $\operatorname*{rev}\alpha=\left(  \alpha_{\ell},\alpha_{\ell-1}%
,\ldots,\alpha_{1}\right)  $. Moreover, a trivial fact (\cite[Proposition
3.3]{comps}) yields $\left\vert \operatorname*{rev}\alpha\right\vert
=\left\vert \alpha\right\vert $.

\begin{vershort}
Set $n=\left\vert \alpha\right\vert $. Thus, $\alpha\in\operatorname*{Comp}%
\nolimits_{n}$. Furthermore, $\left\vert \operatorname*{rev}\alpha\right\vert
=\left\vert \alpha\right\vert =n$, so that $\operatorname*{rev}\alpha
\in\operatorname*{Comp}\nolimits_{n}$. Also, as we know, we have
$\operatorname*{rev}\alpha=\left(  \alpha_{\ell},\alpha_{\ell-1},\ldots
,\alpha_{1}\right)  $. Hence, (\ref{eq.SMalpha}) (applied to
$\operatorname*{rev}\alpha$ and $\alpha_{\ell+1-i}$ instead of $\alpha$ and
$\alpha_{i}$) yields%
\[
S\left(  M_{\operatorname*{rev}\alpha}\right)  =\left(  -1\right)  ^{\ell}%
\sum_{\substack{\gamma\in\operatorname*{Comp}\nolimits_{n};\\D\left(
\gamma\right)  \subseteq D\left(  \alpha_{1},\alpha_{2},\ldots,\alpha_{\ell
}\right)  }}M_{\gamma}=\left(  -1\right)  ^{\ell}\sum_{\substack{\gamma
\in\operatorname*{Comp}\nolimits_{n};\\D\left(  \gamma\right)  \subseteq
D\left(  \alpha\right)  }}M_{\gamma}%
\]
(since $\left(  \alpha_{1},\alpha_{2},\ldots,\alpha_{\ell}\right)  =\alpha$).
Applying the map $T_{r}$ to both sides of this equality, we obtain%
\begin{align*}
T_{r}\left(  S\left(  M_{\operatorname*{rev}\alpha}\right)  \right)   &
=T_{r}\left(  \left(  -1\right)  ^{\ell}\sum_{\substack{\gamma\in
\operatorname*{Comp}\nolimits_{n};\\D\left(  \gamma\right)  \subseteq D\left(
\alpha\right)  }}M_{\gamma}\right)  =\left(  -1\right)  ^{\ell}\sum
_{\substack{\gamma\in\operatorname*{Comp}\nolimits_{n};\\D\left(
\gamma\right)  \subseteq D\left(  \alpha\right)  }}\underbrace{T_{r}\left(
M_{\gamma}\right)  }_{\substack{=r^{\ell\left(  \gamma\right)  }M_{\gamma
}\\\text{(by the definition of }T_{r}\text{)}}}\\
&  =\left(  -1\right)  ^{\ell}\underbrace{\sum_{\substack{\gamma
\in\operatorname*{Comp}\nolimits_{n};\\D\left(  \gamma\right)  \subseteq
D\left(  \alpha\right)  }}r^{\ell\left(  \gamma\right)  }M_{\gamma}%
}_{\substack{=\sum_{\substack{\beta\in\operatorname*{Comp}\nolimits_{n}%
;\\D\left(  \beta\right)  \subseteq D\left(  \alpha\right)  }}r^{\ell\left(
\beta\right)  }M_{\beta}\\=\eta_{\alpha}^{\left(  q\right)  }\\\text{(by
(\ref{eq.def.etaalpha.def}))}}}=\left(  -1\right)  ^{\ell}\eta_{\alpha
}^{\left(  q\right)  }.
\end{align*}

\end{vershort}

\begin{verlong}
Set $n=\left\vert \alpha\right\vert $. Thus, $\alpha\in\operatorname*{Comp}%
\nolimits_{n}$. Furthermore, $\left\vert \operatorname*{rev}\alpha\right\vert
=\left\vert \alpha\right\vert =n$, so that $\operatorname*{rev}\alpha
\in\operatorname*{Comp}\nolimits_{n}$. Also, as we know, we have
$\operatorname*{rev}\alpha=\left(  \alpha_{\ell},\alpha_{\ell-1},\ldots
,\alpha_{1}\right)  $. Hence, (\ref{eq.SMalpha}) (applied to
$\operatorname*{rev}\alpha$ and $\alpha_{\ell+1-i}$ instead of $\alpha$ and
$\alpha_{i}$) yields%
\[
S\left(  M_{\operatorname*{rev}\alpha}\right)  =\left(  -1\right)  ^{\ell}%
\sum_{\substack{\gamma\in\operatorname*{Comp}\nolimits_{n};\\D\left(
\gamma\right)  \subseteq D\left(  \alpha_{1},\alpha_{2},\ldots,\alpha_{\ell
}\right)  }}M_{\gamma}=\left(  -1\right)  ^{\ell}\sum_{\substack{\gamma
\in\operatorname*{Comp}\nolimits_{n};\\D\left(  \gamma\right)  \subseteq
D\left(  \alpha\right)  }}M_{\gamma}%
\]
(since $\left(  \alpha_{1},\alpha_{2},\ldots,\alpha_{\ell}\right)  =\alpha$).
Applying the map $T_{r}$ to both sides of this equality, we obtain%
\begin{align*}
T_{r}\left(  S\left(  M_{\operatorname*{rev}\alpha}\right)  \right)   &
=T_{r}\left(  \left(  -1\right)  ^{\ell}\sum_{\substack{\gamma\in
\operatorname*{Comp}\nolimits_{n};\\D\left(  \gamma\right)  \subseteq D\left(
\alpha\right)  }}M_{\gamma}\right) \\
&  =\left(  -1\right)  ^{\ell}\sum_{\substack{\gamma\in\operatorname*{Comp}%
\nolimits_{n};\\D\left(  \gamma\right)  \subseteq D\left(  \alpha\right)
}}\underbrace{T_{r}\left(  M_{\gamma}\right)  }_{\substack{=r^{\ell\left(
\gamma\right)  }M_{\gamma}\\\text{(by the definition of }T_{r}\text{)}%
}}\ \ \ \ \ \ \ \ \ \ \left(
\begin{array}
[c]{c}%
\text{since the map }T_{r}\\
\text{is }\mathbf{k}\text{-linear}%
\end{array}
\right) \\
&  =\left(  -1\right)  ^{\ell}\underbrace{\sum_{\substack{\gamma
\in\operatorname*{Comp}\nolimits_{n};\\D\left(  \gamma\right)  \subseteq
D\left(  \alpha\right)  }}r^{\ell\left(  \gamma\right)  }M_{\gamma}%
}_{\substack{=\sum_{\substack{\beta\in\operatorname*{Comp}\nolimits_{n}%
;\\D\left(  \beta\right)  \subseteq D\left(  \alpha\right)  }}r^{\ell\left(
\beta\right)  }M_{\beta}\\=\eta_{\alpha}^{\left(  q\right)  }\\\text{(by
(\ref{eq.def.etaalpha.def}))}}}=\left(  -1\right)  ^{\ell}\eta_{\alpha
}^{\left(  q\right)  }.
\end{align*}

\end{verlong}

\noindent Multiplying both sides of this equality by $\left(  -1\right)
^{\ell\left(  \alpha\right)  }$, we obtain%
\[
\left(  -1\right)  ^{\ell\left(  \alpha\right)  }T_{r}\left(  S\left(
M_{\operatorname*{rev}\alpha}\right)  \right)  =\underbrace{\left(  -1\right)
^{\ell\left(  \alpha\right)  }}_{\substack{=\left(  -1\right)  ^{\ell
}\\\text{(since }\ell\left(  \alpha\right)  =\ell\text{)}}}\left(  -1\right)
^{\ell}\eta_{\alpha}^{\left(  q\right)  }=\underbrace{\left(  -1\right)
^{\ell}\left(  -1\right)  ^{\ell}}_{\substack{=\left(  -1\right)  ^{\ell+\ell
}=1\\\text{(since }\ell+\ell=2\ell\text{ is even)}}}\eta_{\alpha}^{\left(
q\right)  }=\eta_{\alpha}^{\left(  q\right)  }.
\]
This proves Theorem \ref{thm.Tr.eta}.
\end{proof}

\begin{verlong}

\begin{corollary}
\label{cor.Tr.etarev}Let $S:\operatorname*{QSym}\rightarrow
\operatorname*{QSym}$ be the antipode of the Hopf algebra
$\operatorname*{QSym}$. Let $\delta\in\operatorname*{Comp}$. Then,%
\[
T_{r}\left(  S\left(  M_{\delta}\right)  \right)  =\left(  -1\right)
^{\ell\left(  \delta\right)  }\eta_{\operatorname*{rev}\delta}^{\left(
q\right)  }.
\]

\end{corollary}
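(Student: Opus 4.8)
The plan is to deduce this corollary directly from Theorem \ref{thm.Tr.eta} by a change of variables, with no new computation required. Theorem \ref{thm.Tr.eta} expresses $\eta_{\alpha}^{\left(q\right)}$ in terms of $T_{r}\left(S\left(M_{\operatorname*{rev}\alpha}\right)\right)$ for an arbitrary composition $\alpha$; the idea is to feed it the composition $\alpha=\operatorname*{rev}\delta$, so that the quantity $M_{\operatorname*{rev}\alpha}$ occurring inside becomes $M_{\delta}$, which is exactly what appears in the corollary.

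Concretely, I would first record two elementary facts about the reversal operation from Definition \ref{def.rev}: that it is an involution, i.e.\ $\operatorname*{rev}\left(\operatorname*{rev}\delta\right)=\delta$ (this is \cite[Proposition 3.4]{comps}, already invoked in the proof of Theorem \ref{thm.eta.S2}), and that it preserves length, i.e.\ $\ell\left(\operatorname*{rev}\delta\right)=\ell\left(\delta\right)$ (immediate from the definition, as it merely reorders the entries). Then I would apply Theorem \ref{thm.Tr.eta} to the composition $\operatorname*{rev}\delta$ in place of $\alpha$, obtaining
\[
\eta_{\operatorname*{rev}\delta}^{\left(q\right)}=\left(-1\right)^{\ell\left(\operatorname*{rev}\delta\right)}T_{r}\left(S\left(M_{\operatorname*{rev}\left(\operatorname*{rev}\delta\right)}\right)\right)=\left(-1\right)^{\ell\left(\delta\right)}T_{r}\left(S\left(M_{\delta}\right)\right),
\]
where the second equality uses the two facts just recalled.

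Finally, I would solve this equation for $T_{r}\left(S\left(M_{\delta}\right)\right)$ by multiplying both sides by $\left(-1\right)^{\ell\left(\delta\right)}$ and noting that $\left(-1\right)^{\ell\left(\delta\right)}\left(-1\right)^{\ell\left(\delta\right)}=\left(-1\right)^{2\ell\left(\delta\right)}=1$, which yields
\[
T_{r}\left(S\left(M_{\delta}\right)\right)=\left(-1\right)^{\ell\left(\delta\right)}\eta_{\operatorname*{rev}\delta}^{\left(q\right)},
\]
as claimed. There is no genuine obstacle here: the entire argument is a substitution plus the cancellation of the signs, and the only things to check are the involutivity and length-invariance of $\operatorname*{rev}$, both of which are trivial and already available in the cited reference. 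The corollary is best viewed simply as a restatement of Theorem \ref{thm.Tr.eta} solved for the other side.
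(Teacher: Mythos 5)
Your proposal is correct and matches the paper's own proof essentially line for line: the paper likewise applies Theorem \ref{thm.Tr.eta} to $\alpha=\operatorname*{rev}\delta$, invokes $\operatorname*{rev}\left(\operatorname*{rev}\delta\right)=\delta$ and $\ell\left(\operatorname*{rev}\delta\right)=\ell\left(\delta\right)$, and then multiplies by $\left(-1\right)^{\ell\left(\delta\right)}$ to cancel the sign. Nothing is missing.
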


\begin{proof}
[Proof of Corollary \ref{cor.Tr.etarev}.]Theorem \ref{thm.Tr.eta} (applied to
$\alpha=\operatorname*{rev}\delta$) yields%
\begin{equation}
\eta_{\operatorname*{rev}\delta}^{\left(  q\right)  }=\left(  -1\right)
^{\ell\left(  \operatorname*{rev}\delta\right)  }T_{r}\left(  S\left(
M_{\operatorname*{rev}\left(  \operatorname*{rev}\delta\right)  }\right)
\right)  . \label{pf.cor.Tr.etarev.1}%
\end{equation}
However, it is easy to see that $\operatorname*{rev}\left(
\operatorname*{rev}\delta\right)  =\delta$ (by \cite[Proposition 3.4]{comps},
applied to $\alpha=\delta$) and $\ell\left(  \operatorname*{rev}\delta\right)
=\ell\left(  \delta\right)  $ (this is clear from the definition of
$\operatorname*{rev}\delta$). Using these equalities, we can rewrite
(\ref{pf.cor.Tr.etarev.1}) as%
\[
\eta_{\operatorname*{rev}\delta}^{\left(  q\right)  }=\left(  -1\right)
^{\ell\left(  \delta\right)  }T_{r}\left(  S\left(  M_{\delta}\right)
\right)  .
\]
Multiplying this equality by $\left(  -1\right)  ^{\ell\left(  \delta\right)
}$, we obtain%
\[
\left(  -1\right)  ^{\ell\left(  \delta\right)  }\eta_{\operatorname*{rev}%
\delta}^{\left(  q\right)  }=\underbrace{\left(  -1\right)  ^{\ell\left(
\delta\right)  }\left(  -1\right)  ^{\ell\left(  \delta\right)  }}_{=\left(
\left(  -1\right)  \left(  -1\right)  \right)  ^{\ell\left(  \delta\right)
}=1^{\ell\left(  \delta\right)  }=1}T_{r}\left(  S\left(  M_{\delta}\right)
\right)  =T_{r}\left(  S\left(  M_{\delta}\right)  \right)  .
\]
This proves Corollary \ref{cor.Tr.etarev}.
\end{proof}
\end{verlong}

\subsection{Another proof of $\Delta\left(  \eta_{\alpha}^{\left(  q\right)
}\right)  $}

\begin{vershort}
Let us now outline another proof of Theorem \ref{thm.Delta-eta}.

\begin{proof}
[Second proof of Theorem \ref{thm.Delta-eta} (sketched).]For every $\alpha
\in\operatorname*{Comp}$, we have%
\begin{align}
\Delta\left(  M_{\operatorname*{rev}\alpha}\right)   &  =\sum_{\substack{\beta
,\gamma\in\operatorname*{Comp};\\\operatorname*{rev}\alpha=\beta\gamma
}}M_{\beta}\otimes M_{\gamma}\ \ \ \ \ \ \ \ \ \ \left(  \text{by
(\ref{eq.Delta-M})}\right) \nonumber\\
&  =\sum_{\substack{\beta,\gamma\in\operatorname*{Comp};\\\operatorname*{rev}%
\alpha=\left(  \operatorname*{rev}\gamma\right)  \left(  \operatorname*{rev}%
\beta\right)  }}M_{\operatorname*{rev}\gamma}\otimes M_{\operatorname*{rev}%
\beta}\ \ \ \ \ \ \ \ \ \ \left(
\begin{array}
[c]{c}%
\text{here, we have}\\
\text{substituted }\operatorname*{rev}\gamma\text{ and }\operatorname*{rev}%
\beta\\
\text{for }\beta\text{ and }\gamma\text{ in the sum}%
\end{array}
\right) \nonumber\\
&  =\sum_{\substack{\beta,\gamma\in\operatorname*{Comp};\\\operatorname*{rev}%
\alpha=\operatorname*{rev}\left(  \beta\gamma\right)  }}M_{\operatorname*{rev}%
\gamma}\otimes M_{\operatorname*{rev}\beta}\ \ \ \ \ \ \ \ \ \ \left(
\begin{array}
[c]{c}%
\text{since it is easy to see}\\
\text{that }\left(  \operatorname*{rev}\gamma\right)  \left(
\operatorname*{rev}\beta\right)  =\operatorname*{rev}\left(  \beta
\gamma\right) \\
\text{for any }\beta,\gamma\in\operatorname*{Comp}%
\end{array}
\right) \nonumber\\
&  =\sum_{\substack{\beta,\gamma\in\operatorname*{Comp};\\\alpha=\beta\gamma
}}M_{\operatorname*{rev}\gamma}\otimes M_{\operatorname*{rev}\beta}
\label{pf.thm.Delta-eta.short.1}%
\end{align}
(since the condition \textquotedblleft$\operatorname*{rev}\alpha
=\operatorname*{rev}\left(  \beta\gamma\right)  $\textquotedblright\ is
equivalent to \textquotedblleft$\alpha=\beta\gamma$\textquotedblright).

Let $S:\operatorname*{QSym}\rightarrow\operatorname*{QSym}$ be the antipode of
the Hopf algebra $\operatorname*{QSym}$. A classical result (see, e.g.,
\cite[Exercise 1.4.28]{GriRei}) says that the antipode of any Hopf algebra is
a $\mathbf{k}$-coalgebra anti-endomorphism (see \cite[Definition
1.4.8]{GriRei} for the definition of this concept). Thus, in particular, the
antipode $S$ of $\operatorname*{QSym}$ is a $\mathbf{k}$-coalgebra
anti-endomorphism. In particular, this says that
\[
\Delta\circ S=T\circ\left(  S\otimes S\right)  \circ\Delta,
\]
where $T:\operatorname*{QSym}\otimes\operatorname*{QSym}\rightarrow
\operatorname*{QSym}\otimes\operatorname*{QSym}$ is the $\mathbf{k}$-linear
map that sends each pure tensor $x\otimes y$ to $y\otimes x$. Applying both
sides of this equality to $M_{\operatorname*{rev}\alpha}$, we obtain%
\begin{align}
\left(  \Delta\circ S\right)  \left(  M_{\operatorname*{rev}\alpha}\right)
&  =\left(  T\circ\left(  S\otimes S\right)  \circ\Delta\right)  \left(
M_{\operatorname*{rev}\alpha}\right)  =T\left(  \left(  S\otimes S\right)
\left(  \Delta\left(  M_{\operatorname*{rev}\alpha}\right)  \right)  \right)
\nonumber\\
&  =T\left(  \left(  S\otimes S\right)  \left(  \sum_{\substack{\beta
,\gamma\in\operatorname*{Comp};\\\alpha=\beta\gamma}}M_{\operatorname*{rev}%
\gamma}\otimes M_{\operatorname*{rev}\beta}\right)  \right)
\ \ \ \ \ \ \ \ \ \ \left(  \text{by (\ref{pf.thm.Delta-eta.short.1})}\right)
\nonumber\\
&  =\sum_{\substack{\beta,\gamma\in\operatorname*{Comp};\\\alpha=\beta\gamma
}}S\left(  M_{\operatorname*{rev}\beta}\right)  \otimes S\left(
M_{\operatorname*{rev}\gamma}\right)  \label{pf.thm.Delta-eta.short.4}%
\end{align}
(by the definitions of the maps $T$ and $S\otimes S$).

However, Lemma \ref{lem.Tr.Delta} yields $\Delta\circ T_{r}=\left(
T_{r}\otimes T_{r}\right)  \circ\Delta$, so that%
\begin{align*}
\left(  \Delta\circ T_{r}\right)  \left(  S\left(  M_{\operatorname*{rev}%
\alpha}\right)  \right)   &  =\left(  \left(  T_{r}\otimes T_{r}\right)
\circ\Delta\right)  \left(  S\left(  M_{\operatorname*{rev}\alpha}\right)
\right)  =\left(  T_{r}\otimes T_{r}\right)  \left(  \left(  \Delta\circ
S\right)  \left(  M_{\operatorname*{rev}\alpha}\right)  \right) \\
&  =\left(  T_{r}\otimes T_{r}\right)  \left(  \sum_{\substack{\beta,\gamma
\in\operatorname*{Comp};\\\alpha=\beta\gamma}}S\left(  M_{\operatorname*{rev}%
\beta}\right)  \otimes S\left(  M_{\operatorname*{rev}\gamma}\right)  \right)
\ \ \ \ \ \ \ \ \ \ \left(  \text{by (\ref{pf.thm.Delta-eta.short.4})}\right)
\\
&  =\sum_{\substack{\beta,\gamma\in\operatorname*{Comp};\\\alpha=\beta\gamma
}}\underbrace{T_{r}\left(  S\left(  M_{\operatorname*{rev}\beta}\right)
\right)  }_{\substack{=\eta_{\beta}^{\left(  q\right)  }/\left(  -1\right)
^{\ell\left(  \beta\right)  }\\\text{(since Theorem \ref{thm.Tr.eta}%
}\\\text{yields }\eta_{\beta}^{\left(  q\right)  }=\left(  -1\right)
^{\ell\left(  \beta\right)  }T_{r}\left(  S\left(  M_{\operatorname*{rev}%
\beta}\right)  \right)  \text{)}}}\otimes\underbrace{T_{r}\left(  S\left(
M_{\operatorname*{rev}\gamma}\right)  \right)  }_{\substack{=\eta_{\gamma
}^{\left(  q\right)  }/\left(  -1\right)  ^{\ell\left(  \gamma\right)
}\\\text{(since Theorem \ref{thm.Tr.eta}}\\\text{yields }\eta_{\gamma
}^{\left(  q\right)  }=\left(  -1\right)  ^{\ell\left(  \gamma\right)  }%
T_{r}\left(  S\left(  M_{\operatorname*{rev}\gamma}\right)  \right)  \text{)}%
}}\\
&  =\sum_{\substack{\beta,\gamma\in\operatorname*{Comp};\\\alpha=\beta\gamma
}}\left(  \eta_{\beta}^{\left(  q\right)  }/\left(  -1\right)  ^{\ell\left(
\beta\right)  }\right)  \otimes\left(  \eta_{\gamma}^{\left(  q\right)
}/\left(  -1\right)  ^{\ell\left(  \gamma\right)  }\right) \\
&  =\sum_{\substack{\beta,\gamma\in\operatorname*{Comp};\\\alpha=\beta\gamma
}}\left(  \eta_{\beta}^{\left(  q\right)  }\otimes\eta_{\gamma}^{\left(
q\right)  }\right)  /\underbrace{\left(  -1\right)  ^{\ell\left(
\beta\right)  +\ell\left(  \gamma\right)  }}_{\substack{=\left(  -1\right)
^{\ell\left(  \alpha\right)  }\\\text{(since it is easy to see that }%
\ell\left(  \beta\right)  +\ell\left(  \gamma\right)  =\ell\left(  \beta
\gamma\right)  \text{,}\\\text{but using }\alpha=\beta\gamma\text{ we can
rewrite}\\\text{this as }\ell\left(  \beta\right)  +\ell\left(  \gamma\right)
=\ell\left(  \alpha\right)  \text{)}}}\\
&  =\sum_{\substack{\beta,\gamma\in\operatorname*{Comp};\\\alpha=\beta\gamma
}}\left(  \eta_{\beta}^{\left(  q\right)  }\otimes\eta_{\gamma}^{\left(
q\right)  }\right)  /\left(  -1\right)  ^{\ell\left(  \alpha\right)  }.
\end{align*}
Multiplying this equality by $\left(  -1\right)  ^{\ell\left(  \alpha\right)
}$, we find%
\[
\left(  -1\right)  ^{\ell\left(  \alpha\right)  }\left(  \Delta\circ
T_{r}\right)  \left(  S\left(  M_{\operatorname*{rev}\alpha}\right)  \right)
=\sum_{\substack{\beta,\gamma\in\operatorname*{Comp};\\\alpha=\beta\gamma
}}\eta_{\beta}^{\left(  q\right)  }\otimes\eta_{\gamma}^{\left(  q\right)  },
\]
so that%
\begin{align*}
\sum_{\substack{\beta,\gamma\in\operatorname*{Comp};\\\alpha=\beta\gamma}%
}\eta_{\beta}^{\left(  q\right)  }\otimes\eta_{\gamma}^{\left(  q\right)  }
&  =\left(  -1\right)  ^{\ell\left(  \alpha\right)  }\left(  \Delta\circ
T_{r}\right)  \left(  S\left(  M_{\operatorname*{rev}\alpha}\right)  \right)
\\
&  =\left(  -1\right)  ^{\ell\left(  \alpha\right)  }\Delta\left(
T_{r}\left(  S\left(  M_{\operatorname*{rev}\alpha}\right)  \right)  \right)
\\
&  =\Delta\left(  \underbrace{\left(  -1\right)  ^{\ell\left(  \alpha\right)
}T_{r}\left(  S\left(  M_{\operatorname*{rev}\alpha}\right)  \right)
}_{\substack{=\eta_{\alpha}^{\left(  q\right)  }\\\text{(by Theorem
\ref{thm.Tr.eta})}}}\right)  =\Delta\left(  \eta_{\alpha}^{\left(  q\right)
}\right)  .
\end{align*}
This proves Theorem \ref{thm.Delta-eta} again.
\end{proof}
\end{vershort}

\begin{verlong}
We shall now give another proof of Theorem \ref{thm.Delta-eta}. We shall first
prove it in a slightly restated form:

\begin{lemma}
\label{lem.Delta-eta.rev}Let $\alpha\in\operatorname*{Comp}$. Then,%
\[
\Delta\left(  \eta_{\operatorname*{rev}\alpha}^{\left(  q\right)  }\right)
=\sum_{\substack{\beta,\gamma\in\operatorname*{Comp};\\\alpha=\beta\gamma
}}\eta_{\operatorname*{rev}\gamma}^{\left(  q\right)  }\otimes\eta
_{\operatorname*{rev}\beta}^{\left(  q\right)  }.
\]

\end{lemma}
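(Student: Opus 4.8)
The plan is to deduce Lemma~\ref{lem.Delta-eta.rev} from Theorem~\ref{thm.Delta-eta} by applying $\operatorname*{rev}$ to the summation indices, rather than reproving it from scratch. The two statements are equivalent reformulations of each other, and Lemma~\ref{lem.Delta-eta.rev} is the restated form that the second proof of Theorem~\ref{thm.Delta-eta} uses as a stepping stone. Since the excerpt has already established Theorem~\ref{thm.Delta-eta} (via the direct proof using $C\left(  \gamma\right)  $ and Proposition~\ref{prop.concat.sum=sum}), I may freely invoke it.

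First I would apply Theorem~\ref{thm.Delta-eta} with $\operatorname*{rev}\alpha$ in the role of $\alpha$. This yields
\[
\Delta\left(  \eta_{\operatorname*{rev}\alpha}^{\left(  q\right)  }\right)
=\sum_{\substack{\mu,\nu\in\operatorname*{Comp};\\\operatorname*{rev}
\alpha=\mu\nu}}\eta_{\mu}^{\left(  q\right)  }\otimes\eta_{\nu}^{\left(
q\right)  }.
\]
The next step is to reindex this sum by substituting $\mu=\operatorname*{rev}\gamma$ and $\nu=\operatorname*{rev}\beta$. Since $\operatorname*{rev}:\operatorname*{Comp}\rightarrow\operatorname*{Comp}$ is an involution (hence a bijection on $\operatorname*{Comp}$), this substitution is legitimate, and as $\left(  \beta,\gamma\right)$ ranges over all pairs of compositions, so does $\left(  \mu,\nu\right)=\left(  \operatorname*{rev}\gamma,\operatorname*{rev}\beta\right)$. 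The key rewriting of the summation condition uses the identity $\left(  \operatorname*{rev}\gamma\right)\left(  \operatorname*{rev}\beta\right)=\operatorname*{rev}\left(  \beta\gamma\right)$, which is a trivial consequence of the definitions of concatenation and reversal (and appears already in the excerpt's sketched second proof of Theorem~\ref{thm.Delta-eta}). Thus the condition $\operatorname*{rev}\alpha=\mu\nu$ becomes $\operatorname*{rev}\alpha=\operatorname*{rev}\left(  \beta\gamma\right)$, which — because $\operatorname*{rev}$ is injective — is equivalent to $\alpha=\beta\gamma$. Carrying out the substitution then gives exactly
\[
\Delta\left(  \eta_{\operatorname*{rev}\alpha}^{\left(  q\right)  }\right)
=\sum_{\substack{\beta,\gamma\in\operatorname*{Comp};\\\alpha=\beta\gamma
}}\eta_{\operatorname*{rev}\gamma}^{\left(  q\right)  }\otimes\eta
_{\operatorname*{rev}\beta}^{\left(  q\right)  },
\]
which is the assertion of the lemma.

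There is essentially no obstacle here: the whole argument is a change of summation variable under the involution $\operatorname*{rev}$, and the only nontrivial ingredient is the anti-homomorphism property $\left(  \operatorname*{rev}\gamma\right)\left(  \operatorname*{rev}\beta\right)=\operatorname*{rev}\left(  \beta\gamma\right)$ of reversal with respect to concatenation, which I would cite from \cite{comps} or verify in one line. The cleanest presentation is a single displayed chain of equalities with the substitution and the equivalence of conditions annotated inline, mirroring the style of equation~(\ref{pf.thm.Delta-eta.short.1}) in the sketched second proof. I expect the verlong version simply to expand each of these annotations (the bijectivity of $\operatorname*{rev}$, the anti-homomorphism identity, and the injectivity giving the equivalence $\operatorname*{rev}\alpha=\operatorname*{rev}\left(  \beta\gamma\right)\Longleftrightarrow\alpha=\beta\gamma$) into footnotes or separate sentences.
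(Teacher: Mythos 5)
Your proposal is a correct proof of the stated lemma: applying Theorem \ref{thm.Delta-eta} to $\operatorname*{rev}\alpha$ and reindexing the sum via the bijection $\operatorname*{rev}$, using $\left(\operatorname*{rev}\gamma\right)\left(\operatorname*{rev}\beta\right)=\operatorname*{rev}\left(\beta\gamma\right)$ and the injectivity of $\operatorname*{rev}$, does yield exactly the claimed formula, and the first (direct) proof of Theorem \ref{thm.Delta-eta} is indeed available at this point in the paper. However, this is not the paper's route, and the difference is structural rather than cosmetic. The paper proves Lemma \ref{lem.Delta-eta.rev} \emph{independently} of Theorem \ref{thm.Delta-eta}: it starts from Corollary \ref{cor.Tr.etarev} (the identity $T_{r}\left(S\left(M_{\alpha}\right)\right)=\left(-1\right)^{\ell\left(\alpha\right)}\eta_{\operatorname*{rev}\alpha}^{\left(q\right)}$, a consequence of Theorem \ref{thm.Tr.eta}), applies $\Delta$ to it, and then uses two pieces of Hopf-algebraic machinery — Lemma \ref{lem.Tr.Delta} (that is, $\Delta\circ T_{r}=\left(T_{r}\otimes T_{r}\right)\circ\Delta$) and the fact that the antipode is a coalgebra anti-endomorphism — together with the deconcatenation formula (\ref{eq.Delta-M}) for $\Delta\left(M_{\alpha}\right)$. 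The reason for this longer route is that the lemma is the engine of the paper's \emph{second} proof of Theorem \ref{thm.Delta-eta}, so it must not presuppose that theorem; your derivation, which invokes Theorem \ref{thm.Delta-eta}, is perfectly valid as a proof of the lemma in isolation, but it would render that second proof circular. In short, your approach buys brevity (a pure change of summation variable), while the paper's buys logical independence and an alternative, antipode-based derivation of the coproduct formula; indeed, your reindexing computation is essentially the same one the paper performs in the opposite direction when it afterwards deduces Theorem \ref{thm.Delta-eta} from the lemma.
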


\begin{proof}
[Proof of Lemma \ref{lem.Delta-eta.rev}.]Let us first recall a simple fact: If
$\beta$ and $\gamma$ are two compositions, then%
\[
\ell\left(  \beta\gamma\right)  =\ell\left(  \beta\right)  +\ell\left(
\gamma\right)
\]
(by \cite[Proposition 5.2 \textbf{(a)}]{comps}) and thus%
\begin{align}
\left(  -1\right)  ^{\ell\left(  \beta\gamma\right)  }  &  =\left(  -1\right)
^{\ell\left(  \beta\right)  +\ell\left(  \gamma\right)  }\nonumber\\
&  =\left(  -1\right)  ^{\ell\left(  \beta\right)  }\left(  -1\right)
^{\ell\left(  \gamma\right)  }. \label{pf.lem.Delta-eta.rev.-1l}%
\end{align}

For any two $\mathbf{k}$-modules $V$ and $W$, we let $T_{V,W}:V\otimes
W\rightarrow W\otimes V$ be the $\mathbf{k}$-linear map that sends every pure
tensor $v\otimes w$ to $w\otimes v$. (This map $T_{V,W}$ is commonly called
the \emph{twist map}.)

Let $S:\operatorname*{QSym}\rightarrow\operatorname*{QSym}$ be the antipode of
the Hopf algebra $\operatorname*{QSym}$. It is well-known (see, e.g.,
\cite[Exercise 1.4.28]{GriRei}) that the antipode of any Hopf algebra $H$ is a
coalgebra anti-endomorphism (i.e., a $\mathbf{k}$-linear map $f:H\rightarrow
H$ satisfying $\Delta\circ f=T_{H,H}\circ\left(  f\otimes f\right)
\circ\Delta$ and $\varepsilon\circ f=\varepsilon$). Applying this to
$H=\operatorname*{QSym}$, we conclude that the antipode $S$ of
$\operatorname*{QSym}$ is a coalgebra anti-endomorphism. In particular, it
thus satisfies
\[
\Delta\circ S=T_{\operatorname*{QSym},\operatorname*{QSym}}\circ\left(
S\otimes S\right)  \circ\Delta.
\]

Now, Corollary \ref{cor.Tr.etarev} (applied to $\delta=\alpha$) yields%
\[
T_{r}\left(  S\left(  M_{\alpha}\right)  \right)  =\left(  -1\right)
^{\ell\left(  \alpha\right)  }\eta_{\operatorname*{rev}\alpha}^{\left(
q\right)  }.
\]
Applying the map $\Delta$ to this equality, we find%
\[
\Delta\left(  T_{r}\left(  S\left(  M_{\alpha}\right)  \right)  \right)
=\Delta\left(  \left(  -1\right)  ^{\ell\left(  \alpha\right)  }%
\eta_{\operatorname*{rev}\alpha}^{\left(  q\right)  }\right)  =\left(
-1\right)  ^{\ell\left(  \alpha\right)  }\Delta\left(  \eta
_{\operatorname*{rev}\alpha}^{\left(  q\right)  }\right)
\]
(since the map $\Delta$ is $\mathbf{k}$-linear). Hence,
\begin{align*}
&  \left(  -1\right)  ^{\ell\left(  \alpha\right)  }\Delta\left(
\eta_{\operatorname*{rev}\alpha}^{\left(  q\right)  }\right) \\
&  =\Delta\left(  T_{r}\left(  S\left(  M_{\alpha}\right)  \right)  \right) \\
&  =\left(  \underbrace{\Delta\circ T_{r}}_{\substack{=\left(  T_{r}\otimes
T_{r}\right)  \circ\Delta\\\text{(by Lemma \ref{lem.Tr.Delta})}}}\circ
S\right)  \left(  M_{\alpha}\right) \\
&  =\left(  \left(  T_{r}\otimes T_{r}\right)  \circ\underbrace{\Delta\circ
S}_{=T_{\operatorname*{QSym},\operatorname*{QSym}}\circ\left(  S\otimes
S\right)  \circ\Delta}\right)  \left(  M_{\alpha}\right) \\
&  =\left(  \left(  T_{r}\otimes T_{r}\right)  \circ T_{\operatorname*{QSym}%
,\operatorname*{QSym}}\circ\left(  S\otimes S\right)  \circ\Delta\right)
\left(  M_{\alpha}\right) \\
&  =\left(  \left(  T_{r}\otimes T_{r}\right)  \circ T_{\operatorname*{QSym}%
,\operatorname*{QSym}}\circ\left(  S\otimes S\right)  \right)
\underbrace{\left(  \Delta\left(  M_{\alpha}\right)  \right)  }%
_{\substack{=\sum_{\substack{\beta,\gamma\in\operatorname*{Comp}%
;\\\alpha=\beta\gamma}}M_{\beta}\otimes M_{\gamma}\\\text{(by
(\ref{eq.Delta-M}))}}}\\
&  =\left(  \left(  T_{r}\otimes T_{r}\right)  \circ T_{\operatorname*{QSym}%
,\operatorname*{QSym}}\circ\left(  S\otimes S\right)  \right)  \left(
\sum_{\substack{\beta,\gamma\in\operatorname*{Comp};\\\alpha=\beta\gamma
}}M_{\beta}\otimes M_{\gamma}\right) \\
&  =\sum_{\substack{\beta,\gamma\in\operatorname*{Comp};\\\alpha=\beta\gamma
}}\underbrace{\left(  \left(  T_{r}\otimes T_{r}\right)  \circ
T_{\operatorname*{QSym},\operatorname*{QSym}}\circ\left(  S\otimes S\right)
\right)  \left(  M_{\beta}\otimes M_{\gamma}\right)  }_{=\left(  T_{r}\otimes
T_{r}\right)  \left(  T_{\operatorname*{QSym},\operatorname*{QSym}}\left(
\left(  S\otimes S\right)  \left(  M_{\beta}\otimes M_{\gamma}\right)
\right)  \right)  }\\
&  \ \ \ \ \ \ \ \ \ \ \ \ \ \ \ \ \ \ \ \ \left(  \text{since the map
}\left(  T_{r}\otimes T_{r}\right)  \circ T_{\operatorname*{QSym}%
,\operatorname*{QSym}}\circ\left(  S\otimes S\right)  \text{ is }%
\mathbf{k}\text{-linear}\right) \\
&  =\sum_{\substack{\beta,\gamma\in\operatorname*{Comp};\\\alpha=\beta\gamma
}}\left(  T_{r}\otimes T_{r}\right)  \left(  T_{\operatorname*{QSym}%
,\operatorname*{QSym}}\left(  \underbrace{\left(  S\otimes S\right)  \left(
M_{\beta}\otimes M_{\gamma}\right)  }_{=S\left(  M_{\beta}\right)  \otimes
S\left(  M_{\gamma}\right)  }\right)  \right) \\
&  =\sum_{\substack{\beta,\gamma\in\operatorname*{Comp};\\\alpha=\beta\gamma
}}\left(  T_{r}\otimes T_{r}\right)  \left(
\underbrace{T_{\operatorname*{QSym},\operatorname*{QSym}}\left(  S\left(
M_{\beta}\right)  \otimes S\left(  M_{\gamma}\right)  \right)  }%
_{\substack{=S\left(  M_{\gamma}\right)  \otimes S\left(  M_{\beta}\right)
\\\text{(by the definition of the map }T_{\operatorname*{QSym}%
,\operatorname*{QSym}}\text{)}}}\right)
\end{align*}%
\begin{align*}
&  =\sum_{\substack{\beta,\gamma\in\operatorname*{Comp};\\\alpha=\beta\gamma
}}\underbrace{\left(  T_{r}\otimes T_{r}\right)  \left(  S\left(  M_{\gamma
}\right)  \otimes S\left(  M_{\beta}\right)  \right)  }_{=T_{r}\left(
S\left(  M_{\gamma}\right)  \right)  \otimes T_{r}\left(  S\left(  M_{\beta
}\right)  \right)  }\\
&  =\sum_{\substack{\beta,\gamma\in\operatorname*{Comp};\\\alpha=\beta\gamma
}}\underbrace{T_{r}\left(  S\left(  M_{\gamma}\right)  \right)  }%
_{\substack{=\left(  -1\right)  ^{\ell\left(  \gamma\right)  }\eta
_{\operatorname*{rev}\gamma}^{\left(  q\right)  }\\\text{(by Corollary
\ref{cor.Tr.etarev})}}}\otimes\underbrace{T_{r}\left(  S\left(  M_{\beta
}\right)  \right)  }_{\substack{=\left(  -1\right)  ^{\ell\left(
\beta\right)  }\eta_{\operatorname*{rev}\beta}^{\left(  q\right)  }\\\text{(by
Corollary \ref{cor.Tr.etarev})}}}\\
&  =\sum_{\substack{\beta,\gamma\in\operatorname*{Comp};\\\alpha=\beta\gamma
}}\underbrace{\left(  -1\right)  ^{\ell\left(  \gamma\right)  }\eta
_{\operatorname*{rev}\gamma}^{\left(  q\right)  }\otimes\left(  -1\right)
^{\ell\left(  \beta\right)  }\eta_{\operatorname*{rev}\beta}^{\left(
q\right)  }}_{=\left(  -1\right)  ^{\ell\left(  \beta\right)  }\left(
-1\right)  ^{\ell\left(  \gamma\right)  }\eta_{\operatorname*{rev}\gamma
}^{\left(  q\right)  }\otimes\eta_{\operatorname*{rev}\beta}^{\left(
q\right)  }}\\
&  =\sum_{\substack{\beta,\gamma\in\operatorname*{Comp};\\\alpha=\beta\gamma
}}\underbrace{\left(  -1\right)  ^{\ell\left(  \beta\right)  }\left(
-1\right)  ^{\ell\left(  \gamma\right)  }}_{\substack{=\left(  -1\right)
^{\ell\left(  \beta\gamma\right)  }\\\text{(by (\ref{pf.lem.Delta-eta.rev.-1l}%
))}}}\eta_{\operatorname*{rev}\gamma}^{\left(  q\right)  }\otimes
\eta_{\operatorname*{rev}\beta}^{\left(  q\right)  }\\
&  =\sum_{\substack{\beta,\gamma\in\operatorname*{Comp};\\\alpha=\beta\gamma
}}\underbrace{\left(  -1\right)  ^{\ell\left(  \beta\gamma\right)  }%
}_{\substack{=\left(  -1\right)  ^{\ell\left(  \alpha\right)  }\\\text{(since
}\beta\gamma=\alpha\text{)}}}\eta_{\operatorname*{rev}\gamma}^{\left(
q\right)  }\otimes\eta_{\operatorname*{rev}\beta}^{\left(  q\right)  }\\
&  =\sum_{\substack{\beta,\gamma\in\operatorname*{Comp};\\\alpha=\beta\gamma
}}\left(  -1\right)  ^{\ell\left(  \alpha\right)  }\eta_{\operatorname*{rev}%
\gamma}^{\left(  q\right)  }\otimes\eta_{\operatorname*{rev}\beta}^{\left(
q\right)  }=\left(  -1\right)  ^{\ell\left(  \alpha\right)  }\sum
_{\substack{\beta,\gamma\in\operatorname*{Comp};\\\alpha=\beta\gamma}%
}\eta_{\operatorname*{rev}\gamma}^{\left(  q\right)  }\otimes\eta
_{\operatorname*{rev}\beta}^{\left(  q\right)  }.
\end{align*}
We can divide this equality by $\left(  -1\right)  ^{\ell\left(
\alpha\right)  }$ (since $\left(  -1\right)  ^{\ell\left(  \alpha\right)  }%
\in\left\{  1,-1\right\}  $ is clearly invertible), and thus obtain%
\[
\Delta\left(  \eta_{\operatorname*{rev}\alpha}^{\left(  q\right)  }\right)
=\sum_{\substack{\beta,\gamma\in\operatorname*{Comp};\\\alpha=\beta\gamma
}}\eta_{\operatorname*{rev}\gamma}^{\left(  q\right)  }\otimes\eta
_{\operatorname*{rev}\beta}^{\left(  q\right)  }.
\]
This proves Lemma \ref{lem.Delta-eta.rev}.
\end{proof}

\begin{proof}
[Second proof of Theorem \ref{thm.Delta-eta}.]It is easy to show (see, e.g.,
\cite[Proposition 5.3]{comps}) that
\begin{equation}
\operatorname*{rev}\left(  \beta\gamma\right)  =\left(  \operatorname*{rev}%
\gamma\right)  \left(  \operatorname*{rev}\beta\right)
\label{pf.thm.Delta-eta.revbg}%
\end{equation}
for any two compositions $\beta,\gamma\in\operatorname*{Comp}$.

Furthermore, the map
\begin{align*}
\operatorname*{Comp}  &  \rightarrow\operatorname*{Comp},\\
\delta &  \mapsto\operatorname*{rev}\delta
\end{align*}
is a bijection (this is essentially trivial, but see \cite[Corollary
3.5]{comps} for a detailed proof). Moreover, $\operatorname*{rev}\left(
\operatorname*{rev}\alpha\right)  =\alpha$ (by \cite[Proposition 3.4]{comps}).

Applying Lemma \ref{lem.Delta-eta.rev} to $\operatorname*{rev}\alpha$ instead
of $\alpha$, we find%
\begin{equation}
\Delta\left(  \eta_{\operatorname*{rev}\left(  \operatorname*{rev}%
\alpha\right)  }^{\left(  q\right)  }\right)  =\sum_{\substack{\beta,\gamma
\in\operatorname*{Comp};\\\operatorname*{rev}\alpha=\beta\gamma}%
}\eta_{\operatorname*{rev}\gamma}^{\left(  q\right)  }\otimes\eta
_{\operatorname*{rev}\beta}^{\left(  q\right)  }. \label{pf.thm.Delta-eta.4}%
\end{equation}
However, the map%
\begin{align*}
\operatorname*{Comp}  &  \rightarrow\operatorname*{Comp},\\
\delta &  \mapsto\operatorname*{rev}\delta
\end{align*}
is injective (since it is a bijection). In other words, for any two
compositions $\mu$ and $\nu$, we have the logical equivalence%
\begin{equation}
\left(  \mu=\nu\right)  \ \Longleftrightarrow\ \left(  \operatorname*{rev}%
\mu=\operatorname*{rev}\nu\right)  . \label{pf.thm.Delta-eta.revinj}%
\end{equation}

Now, for any two compositions $\beta,\gamma\in\operatorname*{Comp}$, we have
the following chain of logical equivalences:%
\begin{align*}
&  \ \left(  \operatorname*{rev}\alpha=\beta\gamma\right) \\
\Longleftrightarrow &  \ \left(  \underbrace{\operatorname*{rev}\left(
\operatorname*{rev}\alpha\right)  }_{=\alpha}=\underbrace{\operatorname*{rev}%
\left(  \beta\gamma\right)  }_{\substack{=\left(  \operatorname*{rev}%
\gamma\right)  \left(  \operatorname*{rev}\beta\right)  \\\text{(by
(\ref{pf.thm.Delta-eta.revbg}))}}}\right)  \ \ \ \ \ \ \ \ \ \ \left(
\text{by (\ref{pf.thm.Delta-eta.revinj}), applied to }\mu=\operatorname*{rev}%
\alpha\text{ and }\nu=\beta\gamma\right) \\
\Longleftrightarrow &  \ \left(  \alpha=\left(  \operatorname*{rev}%
\gamma\right)  \left(  \operatorname*{rev}\beta\right)  \right)  .
\end{align*}
Thus, the condition \textquotedblleft$\operatorname*{rev}\alpha=\beta\gamma
$\textquotedblright\ under the summation sign in (\ref{pf.thm.Delta-eta.4})
can be replaced by the equivalent condition \textquotedblleft$\alpha=\left(
\operatorname*{rev}\gamma\right)  \left(  \operatorname*{rev}\beta\right)
$\textquotedblright. Hence, (\ref{pf.thm.Delta-eta.4}) rewrites as
\[
\Delta\left(  \eta_{\operatorname*{rev}\left(  \operatorname*{rev}%
\alpha\right)  }^{\left(  q\right)  }\right)  =\sum_{\substack{\beta,\gamma
\in\operatorname*{Comp};\\\alpha=\left(  \operatorname*{rev}\gamma\right)
\left(  \operatorname*{rev}\beta\right)  }}\eta_{\operatorname*{rev}\gamma
}^{\left(  q\right)  }\otimes\eta_{\operatorname*{rev}\beta}^{\left(
q\right)  }.
\]
We can rewrite this further as%
\begin{equation}
\Delta\left(  \eta_{\alpha}^{\left(  q\right)  }\right)  =\sum
_{\substack{\beta,\gamma\in\operatorname*{Comp};\\\alpha=\left(
\operatorname*{rev}\gamma\right)  \left(  \operatorname*{rev}\beta\right)
}}\eta_{\operatorname*{rev}\gamma}^{\left(  q\right)  }\otimes\eta
_{\operatorname*{rev}\beta}^{\left(  q\right)  } \label{pf.thm.Delta-eta.9}%
\end{equation}
(since $\operatorname*{rev}\left(  \operatorname*{rev}\alpha\right)  =\alpha$).

Now,%
\begin{align*}
&  \underbrace{\sum_{\substack{\beta,\gamma\in\operatorname*{Comp}%
;\\\alpha=\left(  \operatorname*{rev}\gamma\right)  \left(
\operatorname*{rev}\beta\right)  }}}_{=\sum_{\beta\in\operatorname*{Comp}%
}\ \ \sum_{\substack{\gamma\in\operatorname*{Comp};\\\alpha=\left(
\operatorname*{rev}\gamma\right)  \left(  \operatorname*{rev}\beta\right)  }%
}}\eta_{\operatorname*{rev}\gamma}^{\left(  q\right)  }\otimes\eta
_{\operatorname*{rev}\beta}^{\left(  q\right)  }\\
&  =\sum_{\beta\in\operatorname*{Comp}}\ \ \sum_{\substack{\gamma
\in\operatorname*{Comp};\\\alpha=\left(  \operatorname*{rev}\gamma\right)
\left(  \operatorname*{rev}\beta\right)  }}\eta_{\operatorname*{rev}\gamma
}^{\left(  q\right)  }\otimes\eta_{\operatorname*{rev}\beta}^{\left(
q\right)  }\\
&  =\sum_{\nu\in\operatorname*{Comp}}\ \ \sum_{\substack{\gamma\in
\operatorname*{Comp};\\\alpha=\left(  \operatorname*{rev}\gamma\right)  \nu
}}\eta_{\operatorname*{rev}\gamma}^{\left(  q\right)  }\otimes\eta_{\nu
}^{\left(  q\right)  }\ \ \ \ \ \ \ \ \ \ \left(
\begin{array}
[c]{c}%
\text{here, we have substituted }\nu\text{ for }\operatorname*{rev}\beta\\
\text{in the outer sum, since the}\\
\text{map }\operatorname*{Comp}\rightarrow\operatorname*{Comp},\ \delta
\mapsto\operatorname*{rev}\delta\\
\text{is a bijection}%
\end{array}
\right) \\
&  =\sum_{\nu\in\operatorname*{Comp}}\ \ \sum_{\substack{\mu\in
\operatorname*{Comp};\\\alpha=\mu\nu}}\eta_{\mu}^{\left(  q\right)  }%
\otimes\eta_{\nu}^{\left(  q\right)  }\ \ \ \ \ \ \ \ \ \ \left(
\begin{array}
[c]{c}%
\text{here, we have substituted }\mu\text{ for }\operatorname*{rev}\gamma\\
\text{in the inner sum, since the}\\
\text{map }\operatorname*{Comp}\rightarrow\operatorname*{Comp},\ \delta
\mapsto\operatorname*{rev}\delta\\
\text{is a bijection}%
\end{array}
\right) \\
&  =\underbrace{\sum_{\gamma\in\operatorname*{Comp}}\ \ \sum_{\substack{\beta
\in\operatorname*{Comp};\\\alpha=\beta\gamma}}}_{=\sum_{\substack{\beta
,\gamma\in\operatorname*{Comp};\\\alpha=\beta\gamma}}}\eta_{\beta}^{\left(
q\right)  }\otimes\eta_{\gamma}^{\left(  q\right)  }%
\ \ \ \ \ \ \ \ \ \ \left(
\begin{array}
[c]{c}%
\text{here, we have renamed the}\\
\text{summation indices }\nu\text{ and }\mu\text{ as }\gamma\text{ and }\beta
\end{array}
\right) \\
&  =\sum_{\substack{\beta,\gamma\in\operatorname*{Comp};\\\alpha=\beta\gamma
}}\eta_{\beta}^{\left(  q\right)  }\otimes\eta_{\gamma}^{\left(  q\right)  }.
\end{align*}
In view of this, we can rewrite (\ref{pf.thm.Delta-eta.9}) as%
\[
\Delta\left(  \eta_{\alpha}^{\left(  q\right)  }\right)  =\sum
_{\substack{\beta,\gamma\in\operatorname*{Comp};\\\alpha=\beta\gamma}%
}\eta_{\beta}^{\left(  q\right)  }\otimes\eta_{\gamma}^{\left(  q\right)  }.
\]
This proves Theorem \ref{thm.Delta-eta} once again.
\end{proof}
\end{verlong}

\section{\label{sec.dual}The dual eta basis of $\operatorname*{NSym}$}

\subsection{$\operatorname*{NSym}$ and the duality pairing}

Let $\operatorname*{NSym}$ denote the free $\mathbf{k}$-algebra with
generators $H_{1},H_{2},H_{3},\ldots$ (that is, the tensor algebra of the free
$\mathbf{k}$-module with basis $\left(  H_{1},H_{2},H_{3},\ldots\right)  $).
This $\mathbf{k}$-algebra $\operatorname*{NSym}$ is known as the \emph{ring of
noncommutative symmetric functions} over $\mathbf{k}$. We refer to
\cite[\S 5.4]{GriRei}, \cite{GKLLRT94} and \cite[\S 6.1]{Meliot17} for more
about this $\mathbf{k}$-algebra\footnote{We note some notational differences:
\par
\begin{itemize}
\item What we call $H_{\alpha}$ is called $S_{\alpha}$ in \cite{GKLLRT94} and
in \cite{Meliot17}.
\par
\item The algebra $\operatorname*{NSym}$ is denoted by $\operatorname*{NCSym}$
in \cite{Meliot17} (unfortunately, since $\operatorname*{NCSym}$ also has a
different meaning).
\end{itemize}
}; we will only need a few basic properties.

We set $H_{0}:=1\in\operatorname*{NSym}$. Thus, an element $H_{n}$ of
$\operatorname*{NSym}$ is defined for each $n\in\mathbb{N}$. For any
composition $\alpha=\left(  \alpha_{1},\alpha_{2},\ldots,\alpha_{k}\right)
\in\operatorname*{Comp}$, we set%
\[
H_{\alpha}:=H_{\alpha_{1}}H_{\alpha_{2}}\cdots H_{\alpha_{k}}\in
\operatorname*{NSym}.
\]
The family $\left(  H_{\alpha}\right)  _{\alpha\in\operatorname*{Comp}}$ is
then a basis of the $\mathbf{k}$-module $\operatorname*{NSym}$. (Note that
$H_{\left(  n\right)  }=H_{n}$ for each $n>0$.)

The $\mathbf{k}$-algebra $\operatorname*{NSym}$ is graded, with each generator
$H_{n}$ being homogeneous of degree $n$ (and thus each basis element
$H_{\alpha}$ being homogeneous of degree $\left\vert \alpha\right\vert $). It
becomes a connected graded $\mathbf{k}$-bialgebra if we define its coproduct
$\Delta:\operatorname*{NSym}\rightarrow\operatorname*{NSym}\otimes
\operatorname*{NSym}$ and its counit $\varepsilon:\operatorname*{NSym}%
\rightarrow\mathbf{k}$ as follows:

\begin{itemize}
\item The coproduct $\Delta:\operatorname*{NSym}\rightarrow
\operatorname*{NSym}\otimes\operatorname*{NSym}$ is the $\mathbf{k}$-algebra
homomorphism that sends each generator $H_{n}$ to $\sum_{i=0}^{n}H_{i}\otimes
H_{n-i}$.

\item The counit $\varepsilon:\operatorname*{NSym}\rightarrow\mathbf{k}$ is
the $\mathbf{k}$-algebra homomorphism that sends each generator $H_{n}$ (with
$n>0$) to $0$.
\end{itemize}

Therefore, $\operatorname*{NSym}$ becomes a Hopf algebra (since any connected
graded $\mathbf{k}$-bialgebra is a Hopf algebra). Its antipode $S$ is
described in \cite[(5.4.12)]{GriRei}.

Most importantly to us, the Hopf algebra $\operatorname*{NSym}$ is isomorphic
to the graded dual of $\operatorname*{QSym}$. Specifically, we can define a
$\mathbf{k}$-bilinear form $\left\langle \cdot,\cdot\right\rangle
:\operatorname*{NSym}\times\operatorname*{QSym}\rightarrow\mathbf{k}$ by
requiring that%
\begin{equation}
\left\langle H_{\alpha},M_{\beta}\right\rangle =\left[  \alpha=\beta\right]
\label{eq.dual.HM}%
\end{equation}
for all $\alpha,\beta\in\operatorname*{Comp}$ (where we are using Convention
\ref{conv.iverson})\footnote{This bilinear form $\left\langle \cdot
,\cdot\right\rangle $ is denoted by $\left(  \cdot,\cdot\right)  $ in
\cite[\S 5.4]{GriRei}.}. It can be seen that this $\mathbf{k}$-bilinear form
produces a canonical isomorphism%
\begin{align*}
\operatorname*{NSym}  &  \rightarrow\operatorname*{QSym}\nolimits^{o},\\
f  &  \mapsto\left\langle f,\cdot\right\rangle
\end{align*}
of graded Hopf algebras, where $\operatorname*{QSym}\nolimits^{o}$ is the
graded dual of the Hopf algebra $\operatorname*{QSym}$. Thus, we can identify
$\operatorname*{NSym}$ with the graded dual of the Hopf algebra
$\operatorname*{QSym}$. (In \cite[\S 5.4]{GriRei}, this is used as a
definition of $\operatorname*{NSym}$, while the properties that we used to
define $\operatorname*{NSym}$ above are stated as \cite[Theorem 5.4.2]{GriRei}.)

\subsection{The dual eta basis}

We shall now construct a basis of $\operatorname*{NSym}$ that is dual to the
basis $\left(  \eta_{\alpha}^{\left(  q\right)  }\right)  _{\alpha
\in\operatorname*{Comp}}$ of $\operatorname*{QSym}$. This requires the
assumption that $r$ is invertible (since this assumption ensures that $\left(
\eta_{\alpha}^{\left(  q\right)  }\right)  _{\alpha\in\operatorname*{Comp}}$
is a basis of $\operatorname*{QSym}$ in the first place\footnote{by Theorem
\ref{thm.eta.basis}}). Thus, we make the following convention:

\begin{convention}
\label{conv.r-ible}For the rest of Section \ref{sec.dual}, we assume that $r$
is invertible in $\mathbf{k}$.
\end{convention}

\begin{definition}
\label{def.etastar}For each $n\in\mathbb{N}$ and each composition $\alpha$ of
$n$, we define an element
\[
\eta_{\alpha}^{\ast\left(  q\right)  }:=\sum_{\substack{\beta\in
\operatorname*{Comp}\nolimits_{n};\\D\left(  \alpha\right)  \subseteq D\left(
\beta\right)  }}\dfrac{1}{r^{\ell\left(  \beta\right)  }}\left(  -1\right)
^{\ell\left(  \beta\right)  -\ell\left(  \alpha\right)  }H_{\beta}%
\in\operatorname*{NSym}.
\]

\end{definition}

\begin{example}
We have
\begin{align*}
\eta_{\left(  {}\right)  }^{\ast\left(  q\right)  }  &  =H_{\left(  {}\right)
}=1_{\operatorname*{NSym}};\\
\eta_{\left(  1\right)  }^{\ast\left(  q\right)  }  &  =\dfrac{1}{r}H_{\left(
1\right)  };\\
\eta_{\left(  2\right)  }^{\ast\left(  q\right)  }  &  =\dfrac{1}{r}H_{\left(
2\right)  }-\dfrac{1}{r^{2}}H_{\left(  1,1\right)  };\\
\eta_{\left(  1,1\right)  }^{\ast\left(  q\right)  }  &  =\dfrac{1}{r^{2}%
}H_{\left(  1,1\right)  }.
\end{align*}

\end{example}

We now claim the following:

\begin{proposition}
\label{prop.etastar-dual-basis}\ \ 

\begin{enumerate}
\item[\textbf{(a)}] The family $\left(  \eta_{\alpha}^{\ast\left(  q\right)
}\right)  _{\alpha\in\operatorname*{Comp}}$ is the basis of
$\operatorname*{NSym}$ dual to the basis $\left(  \eta_{\alpha}^{\left(
q\right)  }\right)  _{\alpha\in\operatorname*{Comp}}$ of $\operatorname*{QSym}%
$ with respect to the bilinear form $\left\langle \cdot,\cdot\right\rangle $.

Here, the notion of a \textquotedblleft dual basis\textquotedblright\ should
be understood in the graded sense, as explained in \cite[\S 1.6]{GriRei}.
Concretely, our claim is saying that $\left(  \eta_{\alpha}^{\ast\left(
q\right)  }\right)  _{\alpha\in\operatorname*{Comp}}$ is a graded basis of
$\operatorname*{NSym}$ and satisfies
\begin{equation}
\left\langle \eta_{\alpha}^{\ast\left(  q\right)  },\eta_{\beta}^{\left(
q\right)  }\right\rangle =\left[  \alpha=\beta\right]
\label{eq.prop.etastar-dual-basis.duality}%
\end{equation}
for all $\alpha,\beta\in\operatorname*{Comp}$.

\item[\textbf{(b)}] Let $n\in\mathbb{N}$. Consider the $n$-th graded
components $\operatorname*{QSym}\nolimits_{n}$ and $\operatorname*{NSym}%
\nolimits_{n}$ of the graded $\mathbf{k}$-modules $\operatorname*{QSym}$ and
$\operatorname*{NSym}$. Then, the family $\left(  \eta_{\alpha}^{\ast\left(
q\right)  }\right)  _{\alpha\in\operatorname*{Comp}\nolimits_{n}}$ is the
basis of $\operatorname*{NSym}\nolimits_{n}$ dual to the basis $\left(
\eta_{\alpha}^{\left(  q\right)  }\right)  _{\alpha\in\operatorname*{Comp}%
\nolimits_{n}}$ of $\operatorname*{QSym}\nolimits_{n}$ with respect to the
bilinear form $\left\langle \cdot,\cdot\right\rangle $.
\end{enumerate}
\end{proposition}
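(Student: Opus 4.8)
The plan is to reduce everything to the single duality identity (\ref{eq.prop.etastar-dual-basis.duality}), from which both the basis assertion in \textbf{(a)} and the graded statement \textbf{(b)} follow by the standard theory of graded dual bases. First I would record two grading observations: each $\eta_{\alpha}^{\ast\left(q\right)}$ is, by Definition \ref{def.etastar}, a $\mathbf{k}$-linear combination of the $H_{\beta}$ with $\beta\in\operatorname*{Comp}\nolimits_{\left\vert\alpha\right\vert}$, hence homogeneous of degree $\left\vert\alpha\right\vert$; and the pairing $\left\langle\cdot,\cdot\right\rangle$ is graded, since (\ref{eq.dual.HM}) gives $\left\langle H_{\gamma},M_{\delta}\right\rangle=\left[\gamma=\delta\right]$, which vanishes unless $\left\vert\gamma\right\vert=\left\vert\delta\right\vert$. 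In particular, if $\left\vert\alpha\right\vert\neq\left\vert\beta\right\vert$ then both sides of (\ref{eq.prop.etastar-dual-basis.duality}) vanish, so I may assume $\left\vert\alpha\right\vert=\left\vert\beta\right\vert=:n$ throughout.

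To prove (\ref{eq.prop.etastar-dual-basis.duality}) in this case, I would expand $\eta_{\alpha}^{\ast\left(q\right)}$ via Definition \ref{def.etastar} and $\eta_{\beta}^{\left(q\right)}$ via (\ref{eq.def.etaalpha.def}), and pair them using bilinearity together with the fundamental relation $\left\langle H_{\gamma},M_{\delta}\right\rangle=\left[\gamma=\delta\right]$ from (\ref{eq.dual.HM}). Only the diagonal terms $\gamma=\delta$ survive, and the powers of $r$ cancel (since $\tfrac{1}{r^{\ell\left(\gamma\right)}}\cdot r^{\ell\left(\gamma\right)}=1$), leaving
\[
\left\langle\eta_{\alpha}^{\ast\left(q\right)},\eta_{\beta}^{\left(q\right)}\right\rangle=\sum_{\substack{\gamma\in\operatorname*{Comp}\nolimits_{n};\\D\left(\alpha\right)\subseteq D\left(\gamma\right)\subseteq D\left(\beta\right)}}\left(-1\right)^{\ell\left(\gamma\right)-\ell\left(\alpha\right)}.
\]
Using Lemma \ref{lem.comps.l-vs-size} \textbf{(b)} to rewrite $\ell\left(\gamma\right)-\ell\left(\alpha\right)=\left\vert D\left(\gamma\right)\right\vert-\left\vert D\left(\alpha\right)\right\vert$ and then substituting $I=D\left(\gamma\right)$ (legitimate since $D:\operatorname*{Comp}\nolimits_{n}\to\mathcal{P}\left(\left[n-1\right]\right)$ is a bijection), this becomes $\sum_{D\left(\alpha\right)\subseteq I\subseteq D\left(\beta\right)}\left(-1\right)^{\left\vert I\right\vert-\left\vert D\left(\alpha\right)\right\vert}$. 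This alternating sum over a Boolean interval is exactly the content of Lemma \ref{lem.altsum.1}: applying it with $S=D\left(\beta\right)$ and $T=D\left(\alpha\right)$, after the harmless reindexing $I\mapsto D\left(\alpha\right)\cup\left(D\left(\beta\right)\setminus I\right)$ (an involution of the interval that turns the exponent $\left\vert I\right\vert-\left\vert D\left(\alpha\right)\right\vert$ into $\left\vert D\left(\beta\right)\right\vert-\left\vert I\right\vert$), gives $\left[D\left(\beta\right)=D\left(\alpha\right)\right]$, which equals $\left[\alpha=\beta\right]$ by injectivity of $D$. This establishes (\ref{eq.prop.etastar-dual-basis.duality}).

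Finally I would deduce the basis statements. By Convention \ref{conv.r-ible} the hypothesis of Theorem \ref{thm.eta.basis} holds, so $\left(\eta_{\alpha}^{\left(q\right)}\right)_{\alpha\in\operatorname*{Comp}\nolimits_{n}}$ is a basis of the finite free $\mathbf{k}$-module $\operatorname*{QSym}\nolimits_{n}$, and the restriction of $\left\langle\cdot,\cdot\right\rangle$ to $\operatorname*{NSym}\nolimits_{n}\times\operatorname*{QSym}\nolimits_{n}$ is a perfect pairing (the families $\left(H_{\gamma}\right)$ and $\left(M_{\delta}\right)$ being dual under (\ref{eq.dual.HM})). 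Hence this pairing admits a unique dual basis in $\operatorname*{NSym}\nolimits_{n}$, and the identity (\ref{eq.prop.etastar-dual-basis.duality}) just proved identifies that dual basis with $\left(\eta_{\alpha}^{\ast\left(q\right)}\right)_{\alpha\in\operatorname*{Comp}\nolimits_{n}}$; in particular the latter is a basis of $\operatorname*{NSym}\nolimits_{n}$, proving \textbf{(b)}. Part \textbf{(a)} then follows by taking the union over all $n\in\mathbb{N}$, since $\operatorname*{QSym}$ and $\operatorname*{NSym}$ are the direct sums of their graded components and the graded-dual-basis framework of \cite[\S 1.6]{GriRei} applies. The only genuinely computational step is the evaluation of the alternating sum, which I expect to be the main (though modest) obstacle; the remainder is grading bookkeeping and the cancellation of the $r$-powers.
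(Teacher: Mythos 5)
Your proposal is correct, but it takes a genuinely different route from the paper. You compute the pairing $\left\langle \eta_{\alpha}^{\ast\left(  q\right)  },\eta_{\beta}^{\left(  q\right)  }\right\rangle$ head-on: expand both factors via Definition \ref{def.etastar} and (\ref{eq.def.etaalpha.def}), use (\ref{eq.dual.HM}) to kill the off-diagonal terms and cancel the $r$-powers, and then evaluate the resulting alternating sum over the Boolean interval $\left[  D\left(  \alpha\right)  ,D\left(  \beta\right)  \right]$ by Lemma \ref{lem.altsum.1} (your complementation involution $I\mapsto D\left(  \alpha\right)  \cup\left(  D\left(  \beta\right)  \setminus I\right)$ does convert the exponent correctly, since $D\left(  \alpha\right)$ is disjoint from $D\left(  \beta\right)  \setminus I$). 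The basis claim then follows because the pairing restricted to $\operatorname*{NSym}\nolimits_{n}\times\operatorname*{QSym}\nolimits_{n}$ is perfect, so the dual family of any basis of $\operatorname*{QSym}\nolimits_{n}$ exists, is unique, and is itself a basis. The paper never computes this pairing at all: it observes that the matrix $c_{\alpha,\beta}$ expressing $M_{\beta}$ in the $\eta^{\left(  q\right)  }$-basis (via Proposition \ref{prop.eta.M-through-eta}) is literally the same matrix expressing $\eta_{\alpha}^{\ast\left(  q\right)  }$ in the $H$-basis (via Definition \ref{def.etastar}), and then invokes a bespoke linear-algebra lemma (Lemma \ref{lem.dual-bases-crit}) stating that equal transition matrices on the two sides of a dual pair of bases again produce a dual pair of bases, including the spanning and independence of the new family. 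So the alternating-sum cancellation that you perform explicitly is, in the paper, hidden inside the already-proven Proposition \ref{prop.eta.M-through-eta}. Your route is more self-contained and makes the duality identity (\ref{eq.prop.etastar-dual-basis.duality}) visibly a computation; the paper's route avoids redoing that computation and makes the duality a formal consequence of the inversion formula. Both rely on Theorem \ref{thm.eta.basis} \textbf{(b)} (hence on the invertibility of $r$) and both deduce part \textbf{(a)} from part \textbf{(b)} by summing over degrees, so the proofs agree in their overall architecture even though the core step differs.
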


To prove this, we will use a general fact about dual bases of $\mathbf{k}%
$-modules that should be known from linear algebra, but is hard to find
explicitly in the literature. This fact is a close relative of the classical
linear-algebraic result that the transpose of a matrix represents the adjoint
of its linear map:

\begin{lemma}
\label{lem.dual-bases-crit}Let $F$ and $U$ be two free $\mathbf{k}$-modules,
and let $\left\langle \cdot,\cdot\right\rangle :F\times U\rightarrow
\mathbf{k}$ be a $\mathbf{k}$-bilinear form. Let $A$ be a finite set. Let
$\left(  f_{\alpha}\right)  _{\alpha\in A}$ be a basis of the $\mathbf{k}%
$-module $F$, and let $\left(  g_{\alpha}\right)  _{\alpha\in A}$ be a further
family of elements of $F$. Let $\left(  u_{\alpha}\right)  _{\alpha\in A}$ and
$\left(  v_{\alpha}\right)  _{\alpha\in A}$ be two bases of the $\mathbf{k}%
$-module $U$.

Assume that the basis $\left(  u_{\alpha}\right)  _{\alpha\in A}$ of $U$ is
dual to the basis $\left(  f_{\alpha}\right)  _{\alpha\in A}$ of $F$; in other
words, assume that%
\begin{equation}
\left\langle f_{\beta},u_{\alpha}\right\rangle =\left[  \beta=\alpha\right]
\label{eq.lem.dual-bases-crit.dual}%
\end{equation}
for all $\beta,\alpha\in A$.

Furthermore, let $c_{\alpha,\beta}$ be an element of $\mathbf{k}$ for each
pair $\left(  \alpha,\beta\right)  \in A\times A$. Assume that%
\begin{equation}
u_{\beta}=\sum_{\alpha\in A}c_{\alpha,\beta}v_{\alpha}%
\ \ \ \ \ \ \ \ \ \ \text{for each }\beta\in A.
\label{eq.lem.dual-bases-crit.u-v}%
\end{equation}
Assume furthermore that%
\begin{equation}
g_{\alpha}=\sum_{\beta\in A}c_{\alpha,\beta}f_{\beta}%
\ \ \ \ \ \ \ \ \ \ \text{for each }\alpha\in A.
\label{eq.lem.dual-bases-crit.g-f}%
\end{equation}
Then, the families $\left(  v_{\alpha}\right)  _{\alpha\in A}$ and $\left(
g_{\alpha}\right)  _{\alpha\in A}$ are mutually dual bases of the $\mathbf{k}%
$-modules $U$ and $F$, respectively.
\end{lemma}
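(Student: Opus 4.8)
The plan is to reduce the entire statement to one matrix identity over the commutative ring $\mathbf{k}$. Since $A$ is finite, I would organize the data into two $A\times A$ matrices: the coefficient matrix $C=\left(  c_{\alpha,\beta}\right)  _{\alpha,\beta\in A}$ appearing in both \eqref{eq.lem.dual-bases-crit.u-v} and \eqref{eq.lem.dual-bases-crit.g-f}, and the ``mixed Gram matrix'' $P=\left(  \left\langle f_{\gamma},v_{\delta}\right\rangle \right)  _{\gamma,\delta\in A}$ recording the pairing between the basis $\left(  f_{\gamma}\right)$ of $F$ and the (as yet not provably relevant) basis $\left(  v_{\delta}\right)$ of $U$. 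The first key computation is to feed \eqref{eq.lem.dual-bases-crit.u-v} into the duality hypothesis \eqref{eq.lem.dual-bases-crit.dual}: for any $\gamma,\alpha\in A$, bilinearity gives
\[
\left[  \gamma=\alpha\right]  =\left\langle f_{\gamma},u_{\alpha}\right\rangle
=\left\langle f_{\gamma},\ \sum_{\delta\in A}c_{\delta,\alpha}v_{\delta
}\right\rangle =\sum_{\delta\in A}\left\langle f_{\gamma},v_{\delta
}\right\rangle c_{\delta,\alpha},
\]
which is exactly the statement that $PC=I_{A}$ (the $A\times A$ identity matrix).

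The second step is the one I expect to be the main obstacle, though it is a standard fact rather than a hard one: I must upgrade the one-sided identity $PC=I_{A}$ to the two-sided identity $CP=I_{A}$. Over a general commutative ring this is not automatic from abstract nonsense, but it does hold for square matrices: from $PC=I_{A}$ one gets $\det(P)\det(C)=1$, so $\det(C)$ is a unit in $\mathbf{k}$, whence $C$ is invertible (via its adjugate) and its unique two-sided inverse must be $P$. I would invoke this explicitly. Granting $CP=I_{A}$, the dual-pairing relation \eqref{eq.prop.etastar-dual-basis.duality} for $\left(  g_{\alpha}\right)$ and $\left(  v_{\beta}\right)$ falls out immediately: using \eqref{eq.lem.dual-bases-crit.g-f} and bilinearity,
\[
\left\langle g_{\alpha},v_{\beta}\right\rangle =\left\langle \sum_{\gamma\in
A}c_{\alpha,\gamma}f_{\gamma},\ v_{\beta}\right\rangle =\sum_{\gamma\in
A}c_{\alpha,\gamma}\left\langle f_{\gamma},v_{\beta}\right\rangle =\left(
CP\right)  _{\alpha,\beta}=\left[  \alpha=\beta\right]  .
\]

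Finally I would verify the basis claim. The family $\left(  v_{\alpha}\right)$ is a basis of $U$ by hypothesis, so only $\left(  g_{\alpha}\right)$ needs attention. But \eqref{eq.lem.dual-bases-crit.g-f} says precisely that $\left(  g_{\alpha}\right)$ is obtained from the basis $\left(  f_{\alpha}\right)$ of $F$ by applying the matrix $C$, and $C$ was just shown to be invertible over $\mathbf{k}$; an invertible change of coordinates carries a basis to a basis, so $\left(  g_{\alpha}\right)$ is a basis of $F$. Combining this with the pairing identity $\left\langle g_{\alpha},v_{\beta}\right\rangle =\left[  \alpha=\beta\right]$ shows that $\left(  v_{\alpha}\right)$ and $\left(  g_{\alpha}\right)$ are mutually dual bases, as desired. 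When this lemma is later applied to prove Proposition \ref{prop.etastar-dual-basis}, the graded (degree-by-degree) version follows by running the argument separately on each finite set $\operatorname*{Comp}\nolimits_{n}$, which is why the finiteness of $A$ is all that the lemma needs.
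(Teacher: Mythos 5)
Your proof is correct, but it takes a genuinely different route from the one in the paper. You pivot everything on a single matrix identity: setting $C=\left(  c_{\alpha,\beta}\right)  $ and $P=\left(  \left\langle f_{\gamma},v_{\delta}\right\rangle \right)  $, you deduce $PC=I$ from the hypotheses and then upgrade it to $CP=I$ via the determinant--adjugate argument (a one-sided inverse of a square matrix over a commutative ring is automatically two-sided), after which both the pairing identity $\left\langle g_{\alpha},v_{\beta}\right\rangle =\left[  \alpha=\beta\right]  $ and the basis property of $\left(  g_{\alpha}\right)  _{\alpha\in A}$ fall out by matrix multiplication. The paper never forms these matrices and never touches determinants: it first proves the dual-basis expansion $w=\sum_{\alpha\in A}\left\langle w,u_{\alpha}\right\rangle f_{\alpha}$ for all $w\in F$, substitutes the two change-of-basis relations to obtain $w=\sum_{\alpha\in A}\left\langle w,v_{\alpha}\right\rangle g_{\alpha}$ (whence $\left(  g_{\alpha}\right)  _{\alpha\in A}$ spans $F$), proves linear independence of $\left(  g_{\alpha}\right)  _{\alpha\in A}$ by introducing the $\mathbf{k}$-linear functional $M:U\rightarrow\mathbf{k}$ determined by $M\left(  v_{\alpha}\right)  =m_{\alpha}$ and showing that it kills every $u_{\beta}$ and hence vanishes, and finally reads off $\left\langle g_{\beta},v_{\alpha}\right\rangle =\left[  \beta=\alpha\right]  $ by comparing two expansions of $g_{\beta}$ in the basis $\left(  g_{\alpha}\right)  _{\alpha\in A}$. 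The two arguments encode the same underlying linear algebra --- the paper's functional trick is in effect a determinant-free proof of the ``one-sided implies two-sided'' fact you invoke --- but yours is shorter and makes crisply visible where finiteness of $A$ and commutativity of $\mathbf{k}$ enter (square matrices, multiplicativity of $\det$, the adjugate), while the paper's is self-contained at the level of free-module generalities, needing only the universal property of free modules and dual-basis expansion. Both routes genuinely require $A$ finite and $\mathbf{k}$ commutative, so neither is more general than the other; your appeal to the standard matrix fact is legitimate, though if maximal self-containment is desired you should cite or reprove it, since that is the one nontrivial ingredient your argument imports.
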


\begin{proof}
[Proof of Lemma \ref{lem.dual-bases-crit}.]We shall prove several claims:

\begin{statement}
\textit{Claim 1:} Each $w\in F$ satisfies%
\begin{equation}
w=\sum_{\alpha\in A}\left\langle w,u_{\alpha}\right\rangle f_{\alpha}.
\label{pf.lem.dual-bases-crit.1}%
\end{equation}

\end{statement}

\begin{proof}
[Proof of Claim 1.]Let $w\in F$. The equality (\ref{pf.lem.dual-bases-crit.1})
is a well-known property of dual bases (since the basis $\left(  u_{\alpha
}\right)  _{\alpha\in A}$ of $U$ is dual to the basis $\left(  f_{\alpha
}\right)  _{\alpha\in A}$ of $F$), but let us recall its proof for the sake of completeness.

The family $\left(  f_{\alpha}\right)  _{\alpha\in A}$ is a basis of $F$, thus
spans $F$. Hence, $w\in F$ can be written as a $\mathbf{k}$-linear combination
of this family. In other words, there exists a family $\left(  d_{\alpha
}\right)  _{\alpha\in A}\in\mathbf{k}^{A}$ of coefficients such that%
\begin{equation}
w=\sum_{\alpha\in A}d_{\alpha}f_{\alpha}.
\label{pf.pf.lem.dual-bases-crit.1.1}%
\end{equation}

\begin{vershort}
\noindent Consider this family $\left(  d_{\alpha}\right)  _{\alpha\in A}$.
Then,
\[
w=\sum_{\alpha\in A}d_{\alpha}f_{\alpha}=\sum_{\beta\in A}d_{\beta}f_{\beta}%
\]
(here, we have renamed the summation index $\alpha$ as $\beta$). Hence, each
$\alpha\in A$ satisfies%
\begin{equation}
\left\langle w,u_{\alpha}\right\rangle =\left\langle \sum_{\beta\in A}%
d_{\beta}f_{\beta},u_{\alpha}\right\rangle =\sum_{\beta\in A}d_{\beta
}\underbrace{\left\langle f_{\beta},u_{\alpha}\right\rangle }%
_{\substack{=\left[  \beta=\alpha\right]  \\\text{(by
(\ref{eq.lem.dual-bases-crit.dual}))}}}=\sum_{\beta\in A}d_{\beta}\left[
\beta=\alpha\right]  =d_{\alpha}\nonumber
\end{equation}
(since the only nonzero addend in the sum $\sum_{\beta\in A}d_{\beta}\left[
\beta=\alpha\right]  $ is the addend for $\beta=\alpha$, and this latter
addend equals $d_{\alpha}$). Hence, we can rewrite
(\ref{pf.pf.lem.dual-bases-crit.1.1}) as%
\[
w=\sum_{\alpha\in A}\left\langle w,u_{\alpha}\right\rangle f_{\alpha}.
\]

\end{vershort}

\begin{verlong}
\noindent Consider this family $\left(  d_{\alpha}\right)  _{\alpha\in A}$.
Then,
\[
w=\sum_{\alpha\in A}d_{\alpha}f_{\alpha}=\sum_{\beta\in A}d_{\beta}f_{\beta}%
\]
(here, we have renamed the summation index $\alpha$ as $\beta$). Hence, each
$\alpha\in A$ satisfies%
\begin{align}
\left\langle w,u_{\alpha}\right\rangle  &  =\left\langle \sum_{\beta\in
A}d_{\beta}f_{\beta},u_{\alpha}\right\rangle =\sum_{\beta\in A}d_{\beta
}\underbrace{\left\langle f_{\beta},u_{\alpha}\right\rangle }%
_{\substack{=\left[  \beta=\alpha\right]  \\\text{(by
(\ref{eq.lem.dual-bases-crit.dual}))}}}\ \ \ \ \ \ \ \ \ \ \left(
\begin{array}
[c]{c}%
\text{since the form }\left\langle \cdot,\cdot\right\rangle \\
\text{is }\mathbf{k}\text{-bilinear}%
\end{array}
\right) \nonumber\\
&  =\sum_{\beta\in A}d_{\beta}\left[  \beta=\alpha\right]  =d_{\alpha
}\underbrace{\left[  \alpha=\alpha\right]  }_{\substack{=1\\\text{(since
}\alpha=\alpha\text{)}}}+\sum_{\substack{\beta\in A;\\\beta\neq\alpha
}}d_{\beta}\underbrace{\left[  \beta=\alpha\right]  }%
_{\substack{=0\\\text{(since }\beta\neq\alpha\text{)}}}\nonumber\\
&  \ \ \ \ \ \ \ \ \ \ \ \ \ \ \ \ \ \ \ \ \left(
\begin{array}
[c]{c}%
\text{here, we have split the}\\
\text{addend for }\beta=\alpha\text{ from the sum}%
\end{array}
\right) \nonumber\\
&  =d_{\alpha}+\underbrace{\sum_{\substack{\beta\in A;\\\beta\neq\alpha
}}d_{\beta}0}_{=0}=d_{\alpha}. \label{pf.pf.lem.dual-bases-crit.1.2}%
\end{align}

Now, (\ref{pf.pf.lem.dual-bases-crit.1.1}) becomes%
\[
w=\sum_{\alpha\in A}\underbrace{d_{\alpha}}_{\substack{=\left\langle
w,u_{\alpha}\right\rangle \\\text{(by (\ref{pf.pf.lem.dual-bases-crit.1.2}))}%
}}f_{\alpha}=\sum_{\alpha\in A}\left\langle w,u_{\alpha}\right\rangle
f_{\alpha}.
\]

\end{verlong}

\noindent This proves (\ref{pf.lem.dual-bases-crit.1}). Thus, Claim 1 is proved.
\end{proof}

\begin{statement}
\textit{Claim 2:} Each $w\in F$ satisfies%
\begin{equation}
w=\sum_{\alpha\in A}\left\langle w,v_{\alpha}\right\rangle g_{\alpha}.
\label{pf.lem.dual-bases-crit.3}%
\end{equation}

\end{statement}

\begin{vershort}

\begin{proof}
[Proof of Claim 2.]Let $w\in F$. Then,
\begin{align*}
\sum_{\alpha\in A}\left\langle w,v_{\alpha}\right\rangle \underbrace{g_{\alpha
}}_{\substack{=\sum_{\beta\in A}c_{\alpha,\beta}f_{\beta}\\\text{(by
(\ref{eq.lem.dual-bases-crit.g-f}))}}}  &  =\sum_{\alpha\in A}\left\langle
w,v_{\alpha}\right\rangle \sum_{\beta\in A}c_{\alpha,\beta}f_{\beta}%
=\sum_{\beta\in A}\ \ \underbrace{\sum_{\alpha\in A}c_{\alpha,\beta
}\left\langle w,v_{\alpha}\right\rangle }_{=\left\langle w,\sum_{\alpha\in
A}c_{\alpha,\beta}v_{\alpha}\right\rangle }f_{\beta}\\
&  =\sum_{\beta\in A}\left\langle w,\underbrace{\sum_{\alpha\in A}%
c_{\alpha,\beta}v_{\alpha}}_{\substack{=u_{\beta}\\\text{(by
(\ref{eq.lem.dual-bases-crit.u-v}))}}}\right\rangle f_{\beta}=\sum_{\beta\in
A}\left\langle w,u_{\beta}\right\rangle f_{\beta}\\
&  =\sum_{\alpha\in A}\left\langle w,u_{\alpha}\right\rangle f_{\alpha
}=w\ \ \ \ \ \ \ \ \ \ \left(  \text{by (\ref{pf.lem.dual-bases-crit.1}%
)}\right)  .
\end{align*}
This proves Claim 2.
\end{proof}
\end{vershort}

\begin{verlong}

\begin{proof}
[Proof of Claim 2.]Let $w\in F$. Then,
\begin{align*}
\sum_{\alpha\in A}\left\langle w,v_{\alpha}\right\rangle \underbrace{g_{\alpha
}}_{\substack{=\sum_{\beta\in A}c_{\alpha,\beta}f_{\beta}\\\text{(by
(\ref{eq.lem.dual-bases-crit.g-f}))}}}  &  =\sum_{\alpha\in A}\left\langle
w,v_{\alpha}\right\rangle \sum_{\beta\in A}c_{\alpha,\beta}f_{\beta
}=\underbrace{\sum_{\alpha\in A}\ \ \sum_{\beta\in A}}_{=\sum_{\beta\in
A}\ \ \sum_{\alpha\in A}}\underbrace{\left\langle w,v_{\alpha}\right\rangle
c_{\alpha,\beta}}_{=c_{\alpha,\beta}\left\langle w,v_{\alpha}\right\rangle
}f_{\beta}\\
&  =\sum_{\beta\in A}\ \ \sum_{\alpha\in A}c_{\alpha,\beta}\left\langle
w,v_{\alpha}\right\rangle f_{\beta}=\sum_{\beta\in A}\underbrace{\left(
\sum_{\alpha\in A}c_{\alpha,\beta}\left\langle w,v_{\alpha}\right\rangle
\right)  }_{\substack{=\left\langle w,\sum_{\alpha\in A}c_{\alpha,\beta
}v_{\alpha}\right\rangle \\\text{(since the form }\left\langle \cdot
,\cdot\right\rangle \\\text{is }\mathbf{k}\text{-bilinear)}}}f_{\beta}\\
&  =\sum_{\beta\in A}\left\langle w,\underbrace{\sum_{\alpha\in A}%
c_{\alpha,\beta}v_{\alpha}}_{\substack{=u_{\beta}\\\text{(by
(\ref{eq.lem.dual-bases-crit.u-v}))}}}\right\rangle f_{\beta}=\sum_{\beta\in
A}\left\langle w,u_{\beta}\right\rangle f_{\beta}\\
&  =\sum_{\alpha\in A}\left\langle w,u_{\alpha}\right\rangle f_{\alpha
}\ \ \ \ \ \ \ \ \ \ \left(
\begin{array}
[c]{c}%
\text{here, we have renamed}\\
\text{the summation index }\beta\text{ as }\alpha
\end{array}
\right) \\
&  =w\ \ \ \ \ \ \ \ \ \ \left(  \text{by (\ref{pf.lem.dual-bases-crit.1}%
)}\right)  ,
\end{align*}
so that $w=\sum_{\alpha\in A}\left\langle w,v_{\alpha}\right\rangle g_{\alpha
}$. This proves Claim 2.
\end{proof}
\end{verlong}

\begin{statement}
\textit{Claim 3:} The family $\left(  g_{\alpha}\right)  _{\alpha\in A}$ spans
the $\mathbf{k}$-module $F$.
\end{statement}

\begin{proof}
[Proof of Claim 3.]Let $w\in F$. Then, Claim 2 yields $w=\sum_{\alpha\in
A}\left\langle w,v_{\alpha}\right\rangle g_{\alpha}$. Hence, $w$ belongs to
the span of the family $\left(  g_{\alpha}\right)  _{\alpha\in A}$.

Forget that we fixed $w$. We thus have proved that each $w\in F$ belongs to
the span of the family $\left(  g_{\alpha}\right)  _{\alpha\in A}$. Hence,
this family $\left(  g_{\alpha}\right)  _{\alpha\in A}$ spans the $\mathbf{k}%
$-module $F$. This proves Claim 3.
\end{proof}

\begin{statement}
\textit{Claim 4:} The family $\left(  g_{\alpha}\right)  _{\alpha\in A}$ is
$\mathbf{k}$-linearly independent.
\end{statement}

\begin{proof}
[Proof of Claim 4.]Let $\left(  m_{\alpha}\right)  _{\alpha\in A}\in
\mathbf{k}^{A}$ be a family of scalars such that $\sum_{\alpha\in A}m_{\alpha
}g_{\alpha}=0$. We shall show that $m_{\alpha}=0$ for each $\alpha\in A$.

Recall that $\left(  v_{\alpha}\right)  _{\alpha\in A}$ is a basis of the
$\mathbf{k}$-module $U$. Hence, we can define a $\mathbf{k}$-linear map
$M:U\rightarrow\mathbf{k}$ by requiring that%
\[
M\left(  v_{\alpha}\right)  =m_{\alpha}\ \ \ \ \ \ \ \ \ \ \text{for each
}\alpha\in A
\]
(because a $\mathbf{k}$-linear map from a module can be uniquely defined by
specifying its values on a given basis of this module). Consider this map $M$.

\begin{vershort}
Now, for each $\beta\in A$, we can apply the map $M$ to both sides of
(\ref{eq.lem.dual-bases-crit.u-v}), and obtain%
\begin{align}
M\left(  u_{\beta}\right)   &  =M\left(  \sum_{\alpha\in A}c_{\alpha,\beta
}v_{\alpha}\right)  =\sum_{\alpha\in A}c_{\alpha,\beta}\underbrace{M\left(
v_{\alpha}\right)  }_{\substack{=m_{\alpha}\\\text{(by the definition of
}M\text{)}}}\nonumber\\
&  =\sum_{\alpha\in A}c_{\alpha,\beta}m_{\alpha}.
\label{pf.pf.lem.dual-bases-crit.5.short.3}%
\end{align}

From $\sum_{\alpha\in A}m_{\alpha}g_{\alpha}=0$, we obtain%
\begin{align*}
0  &  =\sum_{\alpha\in A}m_{\alpha}g_{\alpha}=\sum_{\alpha\in A}m_{\alpha}%
\sum_{\beta\in A}c_{\alpha,\beta}f_{\beta}\ \ \ \ \ \ \ \ \ \ \left(  \text{by
(\ref{eq.lem.dual-bases-crit.g-f})}\right) \\
&  =\sum_{\beta\in A}\underbrace{\left(  \sum_{\alpha\in A}c_{\alpha,\beta
}m_{\alpha}\right)  }_{\substack{=M\left(  u_{\beta}\right)  \\\text{(by
(\ref{pf.pf.lem.dual-bases-crit.5.short.3}))}}}f_{\beta}=\sum_{\beta\in
A}M\left(  u_{\beta}\right)  f_{\beta}.
\end{align*}

\end{vershort}

\begin{verlong}
Now, for each $\beta\in A$, we have%
\begin{align}
M\left(  u_{\beta}\right)   &  =M\left(  \sum_{\alpha\in A}c_{\alpha,\beta
}v_{\alpha}\right)  \ \ \ \ \ \ \ \ \ \ \left(  \text{by
(\ref{eq.lem.dual-bases-crit.u-v})}\right) \nonumber\\
&  =\sum_{\alpha\in A}c_{\alpha,\beta}\underbrace{M\left(  v_{\alpha}\right)
}_{\substack{=m_{\alpha}\\\text{(by the definition of }M\text{)}%
}}\ \ \ \ \ \ \ \ \ \ \left(  \text{since the map }M\text{ is }\mathbf{k}%
\text{-linear}\right) \nonumber\\
&  =\sum_{\alpha\in A}c_{\alpha,\beta}m_{\alpha}.
\label{pf.pf.lem.dual-bases-crit.5.3}%
\end{align}

From $\sum_{\alpha\in A}m_{\alpha}g_{\alpha}=0$, we obtain%
\begin{align*}
0  &  =\sum_{\alpha\in A}m_{\alpha}\underbrace{g_{\alpha}}_{\substack{=\sum
_{\beta\in A}c_{\alpha,\beta}f_{\beta}\\\text{(by
(\ref{eq.lem.dual-bases-crit.g-f}))}}}=\sum_{\alpha\in A}m_{\alpha}\sum
_{\beta\in A}c_{\alpha,\beta}f_{\beta}=\underbrace{\sum_{\alpha\in A}%
\ \ \sum_{\beta\in A}}_{=\sum_{\beta\in A}\ \ \sum_{\alpha\in A}%
}\underbrace{m_{\alpha}c_{\alpha,\beta}}_{=c_{\alpha,\beta}m_{\alpha}}%
f_{\beta}\\
&  =\sum_{\beta\in A}\ \ \sum_{\alpha\in A}c_{\alpha,\beta}m_{\alpha}f_{\beta
}=\sum_{\beta\in A}\underbrace{\left(  \sum_{\alpha\in A}c_{\alpha,\beta
}m_{\alpha}\right)  }_{\substack{=M\left(  u_{\beta}\right)  \\\text{(by
(\ref{pf.pf.lem.dual-bases-crit.5.3}))}}}f_{\beta}=\sum_{\beta\in A}M\left(
u_{\beta}\right)  f_{\beta}.
\end{align*}

\end{verlong}

In other words,
\begin{equation}
\sum_{\beta\in A}M\left(  u_{\beta}\right)  f_{\beta}=0.
\label{pf.pf.lem.dual-bases-crit.5.1}%
\end{equation}

However, the family $\left(  f_{\alpha}\right)  _{\alpha\in A}$ is a basis of
the $\mathbf{k}$-module $F$. In other words, the family $\left(  f_{\beta
}\right)  _{\beta\in A}$ is a basis of the $\mathbf{k}$-module $F$ (here, we
have renamed the index $\alpha$ as $\beta$). Hence, this family is
$\mathbf{k}$-linearly independent. In other words, if $\left(  n_{\beta
}\right)  _{\beta\in A}\in\mathbf{k}^{A}$ is a family of scalars satisfying
$\sum_{\beta\in A}n_{\beta}f_{\beta}=0$, then $n_{\beta}=0$ for each $\beta\in
A$.

We can apply this to $n_{\beta}=M\left(  u_{\beta}\right)  $ (since the family
$\left(  M\left(  u_{\beta}\right)  \right)  _{\beta\in A}\in\mathbf{k}^{A}$
satisfies $\sum_{\beta\in A}M\left(  u_{\beta}\right)  f_{\beta}=0$ (by
(\ref{pf.pf.lem.dual-bases-crit.5.1}))). Thus, we conclude that%
\begin{equation}
M\left(  u_{\beta}\right)  =0\ \ \ \ \ \ \ \ \ \ \text{for each }\beta\in A.
\label{pf.pf.lem.dual-bases-crit.5.2}%
\end{equation}

Recall that the family $\left(  u_{\alpha}\right)  _{\alpha\in A}$ is a basis
of the $\mathbf{k}$-module $U$. In other words, the family $\left(  u_{\beta
}\right)  _{\beta\in A}$ is a basis of the $\mathbf{k}$-module $U$ (here, we
have renamed the index $\alpha$ as $\beta$). Hence, this family spans $U$.

\begin{vershort}
Now, recall a general fact from linear algebra: If a $\mathbf{k}$-linear map
$f:V\rightarrow W$ sends a basis of its domain $V$ to $0$ (that is, if it
satisfies $f\left(  v\right)  =0$ for all vectors $v$ in a certain basis of
$V$), then $f=0$ as a map. The equality (\ref{pf.pf.lem.dual-bases-crit.5.2})
shows that the $\mathbf{k}$-linear map $M:U\rightarrow\mathbf{k}$ sends a
basis of $U$ to $0$ (namely, the basis $\left(  u_{\beta}\right)  _{\beta\in
A}$); therefore, the previous sentence shows that $M=0$ as a map.

Now, let $\alpha\in A$ be arbitrary. Then, $M\left(  v_{\alpha}\right)  =0$
(since we have just shown that $M=0$). However, the definition of $M$ yields
$M\left(  v_{\alpha}\right)  =m_{\alpha}$. Thus, $m_{\alpha}=M\left(
v_{\alpha}\right)  =0$.
\end{vershort}

\begin{verlong}
Now, let $\alpha\in A$ be arbitrary. Then, $v_{\alpha}\in U$. Thus,
$v_{\alpha}$ can be written as a $\mathbf{k}$-linear combination of the family
$\left(  u_{\beta}\right)  _{\beta\in A}$ (since this family $\left(
u_{\beta}\right)  _{\beta\in A}$ spans $U$). In other words, there exists a
family $\left(  p_{\beta}\right)  _{\beta\in A}\in\mathbf{k}^{A}$ of scalars
satisfying $v_{\alpha}=\sum_{\beta\in A}p_{\beta}u_{\beta}$. Consider this
family $\left(  p_{\beta}\right)  _{\beta\in A}$. Thus,%
\begin{align*}
M\left(  v_{\alpha}\right)   &  =M\left(  \sum_{\beta\in A}p_{\beta}u_{\beta
}\right)  \ \ \ \ \ \ \ \ \ \ \left(  \text{since }v_{\alpha}=\sum_{\beta\in
A}p_{\beta}u_{\beta}\right) \\
&  =\sum_{\beta\in A}p_{\beta}\underbrace{M\left(  u_{\beta}\right)
}_{\substack{=0\\\text{(by (\ref{pf.pf.lem.dual-bases-crit.5.2}))}%
}}\ \ \ \ \ \ \ \ \ \ \left(
\begin{array}
[c]{c}%
\text{since the map }M\\
\text{is }\mathbf{k}\text{-linear}%
\end{array}
\right) \\
&  =\sum_{\beta\in A}p_{\beta}0=0.
\end{align*}
Comparing this with%
\[
M\left(  v_{\alpha}\right)  =m_{\alpha}\ \ \ \ \ \ \ \ \ \ \left(  \text{by
the definition of }M\right)  ,
\]
we obtain $m_{\alpha}=0$.
\end{verlong}

Forget that we fixed $\alpha$. We thus have shown that $m_{\alpha}=0$ for each
$\alpha\in A$.

Forget that we fixed $\left(  m_{\alpha}\right)  _{\alpha\in A}$. We thus have
proved that if $\left(  m_{\alpha}\right)  _{\alpha\in A}\in\mathbf{k}^{A}$ is
a family of scalars such that $\sum_{\alpha\in A}m_{\alpha}g_{\alpha}=0$, then
$m_{\alpha}=0$ for each $\alpha\in A$. In other words, the family $\left(
g_{\alpha}\right)  _{\alpha\in A}$ is $\mathbf{k}$-linearly independent. This
proves Claim 4.
\end{proof}

Now, we know that the family $\left(  g_{\alpha}\right)  _{\alpha\in A}$ spans
the $\mathbf{k}$-module $F$ (by Claim 3) and is $\mathbf{k}$-linearly
independent (by Claim 4). In other words, this family is a basis of $F$.

Next, we claim the following:

\begin{statement}
\textit{Claim 5:} We have $\left\langle g_{\beta},v_{\alpha}\right\rangle
=\left[  \beta=\alpha\right]  $ for all $\alpha,\beta\in A$.
\end{statement}

\begin{proof}
[Proof of Claim 5.]Fix $\beta\in A$.

\begin{vershort}
The sum $\sum_{\alpha\in A}\left[  \beta=\alpha\right]  g_{\alpha}$ equals
$g_{\beta}$, since all its addends except for the $\alpha=\beta$ addend are
$0$. Thus,%
\begin{equation}
\sum_{\alpha\in A}\left[  \beta=\alpha\right]  g_{\alpha}=g_{\beta}%
=\sum_{\alpha\in A}\left\langle g_{\beta},v_{\alpha}\right\rangle g_{\alpha}
\label{pf.lem.dual-bases-crit.b.1}%
\end{equation}
(by Claim 2, applied to $w=g_{\beta}$).
\end{vershort}

\begin{verlong}
We have%
\begin{align*}
\sum_{\alpha\in A}\left[  \beta=\alpha\right]  g_{\alpha}  &
=\underbrace{\left[  \beta=\beta\right]  }_{\substack{=1\\\text{(since }%
\beta=\beta\text{)}}}g_{\beta}+\sum_{\substack{\alpha\in A;\\\alpha\neq\beta
}}\underbrace{\left[  \beta=\alpha\right]  }_{\substack{=0\\\text{(since
}\beta\neq\alpha\\\text{(because }\alpha\neq\beta\text{))}}}g_{\alpha}\\
&  \ \ \ \ \ \ \ \ \ \ \ \ \ \ \ \ \ \ \ \ \left(
\begin{array}
[c]{c}%
\text{here, we have split off the addend}\\
\text{for }\alpha=\beta\text{ from the sum}%
\end{array}
\right) \\
&  =g_{\beta}+\underbrace{\sum_{\substack{\alpha\in A;\\\alpha\neq\beta
}}0g_{\alpha}}_{=0}=g_{\beta}=\sum_{\alpha\in A}\left\langle g_{\beta
},v_{\alpha}\right\rangle g_{\alpha}%
\end{align*}
(by Claim 2, applied to $w=g_{\beta}$). Now,%
\begin{align*}
\sum_{\alpha\in A}\left(  \left\langle g_{\beta},v_{\alpha}\right\rangle
-\left[  \beta=\alpha\right]  \right)  g_{\alpha}  &  =\sum_{\alpha\in
A}\left\langle g_{\beta},v_{\alpha}\right\rangle g_{\alpha}-\underbrace{\sum
_{\alpha\in A}\left[  \beta=\alpha\right]  g_{\alpha}}_{=\sum_{\alpha\in
A}\left\langle g_{\beta},v_{\alpha}\right\rangle g_{\alpha}}\\
&  =\sum_{\alpha\in A}\left\langle g_{\beta},v_{\alpha}\right\rangle
g_{\alpha}-\sum_{\alpha\in A}\left\langle g_{\beta},v_{\alpha}\right\rangle
g_{\alpha}=0.
\end{align*}

\end{verlong}

\begin{vershort}
Now, Claim 4 says that the family $\left(  g_{\alpha}\right)  _{\alpha\in A}$
is $\mathbf{k}$-linearly independent. In other words, if $\left(  m_{\alpha
}\right)  _{\alpha\in A}\in\mathbf{k}^{A}$ and $\left(  n_{\alpha}\right)
_{\alpha\in A}\in\mathbf{k}^{A}$ are two families of scalars such that
$\sum_{\alpha\in A}m_{\alpha}g_{\alpha}=\sum_{\alpha\in A}n_{\alpha}g_{\alpha
}$, then $m_{\alpha}=n_{\alpha}$ for each $\alpha\in A$. Applying this to
$m_{\alpha}=\left\langle g_{\beta},v_{\alpha}\right\rangle $ and $n_{\alpha
}=\left[  \beta=\alpha\right]  $, we conclude that
\[
\left\langle g_{\beta},v_{\alpha}\right\rangle =\left[  \beta=\alpha\right]
\ \ \ \ \ \ \ \ \ \ \text{for each }\alpha\in A
\]
(since (\ref{pf.lem.dual-bases-crit.b.1}) yields $\sum_{\alpha\in
A}\left\langle g_{\beta},v_{\alpha}\right\rangle g_{\alpha}=\sum_{\alpha\in
A}\left[  \beta=\alpha\right]  g_{\alpha}$). This proves Claim 5. \qedhere

\end{vershort}

\begin{verlong}
Now, Claim 4 says that the family $\left(  g_{\alpha}\right)  _{\alpha\in A}$
is $\mathbf{k}$-linearly independent. In other words, if $\left(  m_{\alpha
}\right)  _{\alpha\in A}\in\mathbf{k}^{A}$ is a family of scalars such that
$\sum_{\alpha\in A}m_{\alpha}g_{\alpha}=0$, then $m_{\alpha}=0$ for each
$\alpha\in A$. Applying this to $m_{\alpha}=\left\langle g_{\beta},v_{\alpha
}\right\rangle -\left[  \beta=\alpha\right]  $, we conclude that
\[
\left\langle g_{\beta},v_{\alpha}\right\rangle -\left[  \beta=\alpha\right]
=0\ \ \ \ \ \ \ \ \ \ \text{for each }\alpha\in A
\]
(since $\sum_{\alpha\in A}\left(  \left\langle g_{\beta},v_{\alpha
}\right\rangle -\left[  \beta=\alpha\right]  \right)  g_{\alpha}=0$). In other
words,%
\[
\left\langle g_{\beta},v_{\alpha}\right\rangle =\left[  \beta=\alpha\right]
\ \ \ \ \ \ \ \ \ \ \text{for each }\alpha\in A.
\]
This proves Claim 5. \qedhere

\end{verlong}
\end{proof}

Now, recall that $\left(  g_{\alpha}\right)  _{\alpha\in A}$ is a basis of the
$\mathbf{k}$-module $F$, whereas $\left(  v_{\alpha}\right)  _{\alpha\in A}$
is a basis of the $\mathbf{k}$-module $U$. Furthermore, Claim 5 shows that the
bases $\left(  v_{\alpha}\right)  _{\alpha\in A}$ and $\left(  g_{\alpha
}\right)  _{\alpha\in A}$ of $U$ and $F$ are mutually dual. This completes the
proof of Lemma \ref{lem.dual-bases-crit}.
\end{proof}

We are now ready to prove Proposition \ref{prop.etastar-dual-basis}:

\begin{proof}
[Proof of Proposition \ref{prop.etastar-dual-basis}.]

\begin{verlong}
\textbf{(b)} We will use Convention \ref{conv.iverson}.

Recall that the families $\left(  H_{\alpha}\right)  _{\alpha\in
\operatorname*{Comp}\nolimits_{n}}$ and $\left(  M_{\alpha}\right)
_{\alpha\in\operatorname*{Comp}\nolimits_{n}}$ are mutually dual bases of
$\operatorname*{NSym}\nolimits_{n}$ and $\operatorname*{QSym}\nolimits_{n}$, respectively.
\end{verlong}

\begin{vershort}
\textbf{(b)} The scalar $r$ is invertible (by Convention \ref{conv.r-ible}).
Thus, we define an element%
\[
c_{\alpha,\beta}:=%
\begin{cases}
\dfrac{1}{r^{\ell\left(  \beta\right)  }}\left(  -1\right)  ^{\ell\left(
\beta\right)  -\ell\left(  \alpha\right)  }, & \text{if }D\left(
\alpha\right)  \subseteq D\left(  \beta\right)  ;\\
0, & \text{otherwise}%
\end{cases}
\]
of $\mathbf{k}$ for every $\alpha,\beta\in\operatorname*{Comp}\nolimits_{n}$.
\end{vershort}

\begin{verlong}
The scalar $r$ is invertible (by Convention \ref{conv.r-ible}). Thus, its
power $r^{\ell\left(  \beta\right)  }$ is invertible for each $\beta
\in\operatorname*{Comp}\nolimits_{n}$. We define a scalar%
\[
c_{\alpha,\beta}:=\dfrac{1}{r^{\ell\left(  \beta\right)  }}\left(  -1\right)
^{\ell\left(  \beta\right)  -\ell\left(  \alpha\right)  }\left[  D\left(
\alpha\right)  \subseteq D\left(  \beta\right)  \right]  \in\mathbf{k}%
\]
for every $\alpha,\beta\in\operatorname*{Comp}\nolimits_{n}$ (indeed, this
expression is well-defined\footnote{\textit{Proof.} The previous sentence
shows that $r^{\ell\left(  \beta\right)  }$ is invertible. Hence, $\dfrac
{1}{r^{\ell\left(  \beta\right)  }}$ is well-defined. Also, $\left(
-1\right)  ^{\ell\left(  \beta\right)  -\ell\left(  \alpha\right)  }$ is
well-defined, since $-1$ is invertible.}). Thus, if $\alpha,\beta
\in\operatorname*{Comp}\nolimits_{n}$ are two compositions that satisfy
$D\left(  \alpha\right)  \not \subseteq D\left(  \beta\right)  $, then%
\begin{align}
c_{\alpha,\beta}  &  =\dfrac{1}{r^{\ell\left(  \beta\right)  }}\left(
-1\right)  ^{\ell\left(  \beta\right)  -\ell\left(  \alpha\right)
}\underbrace{\left[  D\left(  \alpha\right)  \subseteq D\left(  \beta\right)
\right]  }_{\substack{=0\\\text{(since }D\left(  \alpha\right)
\not \subseteq D\left(  \beta\right)  \text{)}}}\nonumber\\
&  =0. \label{pf.prop.etastar-dual-basis.c=0}%
\end{align}
On the other hand, if $\alpha,\beta\in\operatorname*{Comp}\nolimits_{n}$ are
two compositions that satisfy $D\left(  \alpha\right)  \subseteq D\left(
\beta\right)  $, then%
\begin{align}
c_{\alpha,\beta}  &  =\dfrac{1}{r^{\ell\left(  \beta\right)  }}\left(
-1\right)  ^{\ell\left(  \beta\right)  -\ell\left(  \alpha\right)
}\underbrace{\left[  D\left(  \alpha\right)  \subseteq D\left(  \beta\right)
\right]  }_{\substack{=1\\\text{(since }D\left(  \alpha\right)  \subseteq
D\left(  \beta\right)  \text{)}}}\nonumber\\
&  =\dfrac{1}{r^{\ell\left(  \beta\right)  }}\left(  -1\right)  ^{\ell\left(
\beta\right)  -\ell\left(  \alpha\right)  }.
\label{pf.prop.etastar-dual-basis.c=1}%
\end{align}

\end{verlong}

Now, let $\beta\in\operatorname*{Comp}\nolimits_{n}$. Then, Proposition
\ref{prop.eta.M-through-eta} shows that%
\begin{equation}
r^{\ell\left(  \beta\right)  }M_{\beta}=\sum_{\substack{\alpha\in
\operatorname*{Comp}\nolimits_{n};\\D\left(  \alpha\right)  \subseteq D\left(
\beta\right)  }}\left(  -1\right)  ^{\ell\left(  \beta\right)  -\ell\left(
\alpha\right)  }\eta_{\alpha}^{\left(  q\right)  }.
\label{pf.prop.etastar-dual-basis.1}%
\end{equation}

\begin{vershort}
\noindent Dividing both sides of this equality by $r^{\ell\left(
\beta\right)  }$, we obtain%
\[
M_{\beta}=\sum_{\substack{\alpha\in\operatorname*{Comp}\nolimits_{n}%
;\\D\left(  \alpha\right)  \subseteq D\left(  \beta\right)  }}\dfrac
{1}{r^{\ell\left(  \beta\right)  }}\left(  -1\right)  ^{\ell\left(
\beta\right)  -\ell\left(  \alpha\right)  }\eta_{\alpha}^{\left(  q\right)
}=\sum_{\alpha\in\operatorname*{Comp}\nolimits_{n}}c_{\alpha,\beta}%
\eta_{\alpha}^{\left(  q\right)  }%
\]
(since our definition of $c_{\alpha,\beta}$ ensures that all addends of the
sum $\sum_{\alpha\in\operatorname*{Comp}\nolimits_{n}}c_{\alpha,\beta}%
\eta_{\alpha}^{\left(  q\right)  }$ vanish except for the addends that satisfy
$D\left(  \alpha\right)  \subseteq D\left(  \beta\right)  $, but these latter
addends are precisely the addends of $\sum_{\substack{\alpha\in
\operatorname*{Comp}\nolimits_{n};\\D\left(  \alpha\right)  \subseteq D\left(
\beta\right)  }}\dfrac{1}{r^{\ell\left(  \beta\right)  }}\left(  -1\right)
^{\ell\left(  \beta\right)  -\ell\left(  \alpha\right)  }\eta_{\alpha
}^{\left(  q\right)  }$).
\end{vershort}

\begin{verlong}
However, every $\alpha\in\operatorname*{Comp}\nolimits_{n}$ satisfies either
$D\left(  \alpha\right)  \subseteq D\left(  \beta\right)  $ or $D\left(
\alpha\right)  \not \subseteq D\left(  \beta\right)  $ (but not both at the
same time). Hence,%
\begin{align*}
\sum_{\alpha\in\operatorname*{Comp}\nolimits_{n}}c_{\alpha,\beta}\eta_{\alpha
}^{\left(  q\right)  }  &  =\sum_{\substack{\alpha\in\operatorname*{Comp}%
\nolimits_{n};\\D\left(  \alpha\right)  \subseteq D\left(  \beta\right)
}}\underbrace{c_{\alpha,\beta}}_{\substack{=\dfrac{1}{r^{\ell\left(
\beta\right)  }}\left(  -1\right)  ^{\ell\left(  \beta\right)  -\ell\left(
\alpha\right)  }\\\text{(by (\ref{pf.prop.etastar-dual-basis.c=1}))}}%
}\eta_{\alpha}^{\left(  q\right)  }+\sum_{\substack{\alpha\in
\operatorname*{Comp}\nolimits_{n};\\D\left(  \alpha\right)  \not \subseteq
D\left(  \beta\right)  }}\underbrace{c_{\alpha,\beta}}%
_{\substack{=0\\\text{(by (\ref{pf.prop.etastar-dual-basis.c=0}))}}%
}\eta_{\alpha}^{\left(  q\right)  }\\
&  =\sum_{\substack{\alpha\in\operatorname*{Comp}\nolimits_{n};\\D\left(
\alpha\right)  \subseteq D\left(  \beta\right)  }}\dfrac{1}{r^{\ell\left(
\beta\right)  }}\left(  -1\right)  ^{\ell\left(  \beta\right)  -\ell\left(
\alpha\right)  }\eta_{\alpha}^{\left(  q\right)  }+\underbrace{\sum
_{\substack{\alpha\in\operatorname*{Comp}\nolimits_{n};\\D\left(
\alpha\right)  \not \subseteq D\left(  \beta\right)  }}0\eta_{\alpha}^{\left(
q\right)  }}_{=0}\\
&  =\sum_{\substack{\alpha\in\operatorname*{Comp}\nolimits_{n};\\D\left(
\alpha\right)  \subseteq D\left(  \beta\right)  }}\dfrac{1}{r^{\ell\left(
\beta\right)  }}\left(  -1\right)  ^{\ell\left(  \beta\right)  -\ell\left(
\alpha\right)  }\eta_{\alpha}^{\left(  q\right)  }\\
&  =\dfrac{1}{r^{\ell\left(  \beta\right)  }}\underbrace{\sum
_{\substack{\alpha\in\operatorname*{Comp}\nolimits_{n};\\D\left(
\alpha\right)  \subseteq D\left(  \beta\right)  }}\left(  -1\right)
^{\ell\left(  \beta\right)  -\ell\left(  \alpha\right)  }\eta_{\alpha
}^{\left(  q\right)  }}_{\substack{=r^{\ell\left(  \beta\right)  }M_{\beta
}\\\text{(by (\ref{pf.prop.etastar-dual-basis.1}))}}}=\dfrac{1}{r^{\ell\left(
\beta\right)  }}r^{\ell\left(  \beta\right)  }M_{\beta}=M_{\beta},
\end{align*}
so that $M_{\beta}=\sum_{\alpha\in\operatorname*{Comp}\nolimits_{n}}%
c_{\alpha,\beta}\eta_{\alpha}^{\left(  q\right)  }$.
\end{verlong}

Forget that we fixed $\beta$. We thus have proved that%
\begin{equation}
M_{\beta}=\sum_{\alpha\in\operatorname*{Comp}\nolimits_{n}}c_{\alpha,\beta
}\eta_{\alpha}^{\left(  q\right)  } \label{pf.prop.etastar-dual-basis.3}%
\end{equation}
holds for each $\beta\in\operatorname*{Comp}\nolimits_{n}$.

\begin{vershort}
Furthermore, for each $\alpha\in\operatorname*{Comp}\nolimits_{n}$, we have%
\begin{align*}
\sum_{\beta\in\operatorname*{Comp}\nolimits_{n}}c_{\alpha,\beta}H_{\beta}  &
=\sum_{\substack{\beta\in\operatorname*{Comp}\nolimits_{n};\\D\left(
\alpha\right)  \subseteq D\left(  \beta\right)  }}\dfrac{1}{r^{\ell\left(
\beta\right)  }}\left(  -1\right)  ^{\ell\left(  \beta\right)  -\ell\left(
\alpha\right)  }H_{\beta}\ \ \ \ \ \ \ \ \ \ \left(  \text{by the definition
of the }c_{\alpha,\beta}\right) \\
&  =\eta_{\alpha}^{\ast\left(  q\right)  }\ \ \ \ \ \ \ \ \ \ \left(  \text{by
Definition \ref{def.etastar}}\right)
\end{align*}

\end{vershort}

\begin{verlong}
Furthermore, for each $\alpha\in\operatorname*{Comp}\nolimits_{n}$, we have%
\begin{align*}
&  \sum_{\beta\in\operatorname*{Comp}\nolimits_{n}}c_{\alpha,\beta}H_{\beta}\\
&  =\sum_{\substack{\beta\in\operatorname*{Comp}\nolimits_{n};\\D\left(
\alpha\right)  \subseteq D\left(  \beta\right)  }}\underbrace{c_{\alpha,\beta
}}_{\substack{=\dfrac{1}{r^{\ell\left(  \beta\right)  }}\left(  -1\right)
^{\ell\left(  \beta\right)  -\ell\left(  \alpha\right)  }\\\text{(by
(\ref{pf.prop.etastar-dual-basis.c=1}))}}}H_{\beta}+\sum_{\substack{\beta
\in\operatorname*{Comp}\nolimits_{n};\\D\left(  \alpha\right)  \not \subseteq
D\left(  \beta\right)  }}\underbrace{c_{\alpha,\beta}}%
_{\substack{=0\\\text{(by (\ref{pf.prop.etastar-dual-basis.c=0}))}}}H_{\beta
}\\
&  \ \ \ \ \ \ \ \ \ \ \ \ \ \ \ \ \ \ \ \ \left(
\begin{array}
[c]{c}%
\text{since each }\beta\in\operatorname*{Comp}\nolimits_{n}\text{ satisfies
either }D\left(  \alpha\right)  \subseteq D\left(  \beta\right) \\
\text{or }D\left(  \alpha\right)  \not \subseteq D\left(  \beta\right)  \text{
(but not both simultaneously)}%
\end{array}
\right) \\
&  =\sum_{\substack{\beta\in\operatorname*{Comp}\nolimits_{n};\\D\left(
\alpha\right)  \subseteq D\left(  \beta\right)  }}\dfrac{1}{r^{\ell\left(
\beta\right)  }}\left(  -1\right)  ^{\ell\left(  \beta\right)  -\ell\left(
\alpha\right)  }H_{\beta}+\underbrace{\sum_{\substack{\beta\in
\operatorname*{Comp}\nolimits_{n};\\D\left(  \alpha\right)  \not \subseteq
D\left(  \beta\right)  }}0H_{\beta}}_{=0}\\
&  =\sum_{\substack{\beta\in\operatorname*{Comp}\nolimits_{n};\\D\left(
\alpha\right)  \subseteq D\left(  \beta\right)  }}\dfrac{1}{r^{\ell\left(
\beta\right)  }}\left(  -1\right)  ^{\ell\left(  \beta\right)  -\ell\left(
\alpha\right)  }H_{\beta}=\eta_{\alpha}^{\ast\left(  q\right)  }%
\ \ \ \ \ \ \ \ \ \ \left(  \text{by Definition \ref{def.etastar}}\right)
\end{align*}

\end{verlong}

\noindent and thus%
\begin{equation}
\eta_{\alpha}^{\ast\left(  q\right)  }=\sum_{\beta\in\operatorname*{Comp}%
\nolimits_{n}}c_{\alpha,\beta}H_{\beta}. \label{pf.prop.etastar-dual-basis.4}%
\end{equation}

Altogether, we now know the following:

\begin{itemize}
\item The $\mathbf{k}$-modules $\operatorname*{NSym}\nolimits_{n}$ and
$\operatorname*{QSym}\nolimits_{n}$ are free, and $\left\langle \cdot
,\cdot\right\rangle :\operatorname*{NSym}\nolimits_{n}\times
\operatorname*{QSym}\nolimits_{n}\rightarrow\mathbf{k}$ is a $\mathbf{k}%
$-bilinear form.

\item The set $\operatorname*{Comp}\nolimits_{n}$ is a finite set.

\item The family $\left(  H_{\alpha}\right)  _{\alpha\in\operatorname*{Comp}%
\nolimits_{n}}$ is a basis of the $\mathbf{k}$-module $\operatorname*{NSym}%
\nolimits_{n}$, and the family $\left(  \eta_{\alpha}^{\ast\left(  q\right)
}\right)  _{\alpha\in\operatorname*{Comp}\nolimits_{n}}$ is a further family
of elements of $\operatorname*{NSym}\nolimits_{n}$ (since
(\ref{pf.prop.etastar-dual-basis.4}) readily yields that $\eta_{\alpha}%
^{\ast\left(  q\right)  }\in\operatorname*{NSym}\nolimits_{n}$ for each
$\alpha\in\operatorname*{Comp}\nolimits_{n}$).

\item The families $\left(  M_{\alpha}\right)  _{\alpha\in\operatorname*{Comp}%
\nolimits_{n}}$ and $\left(  \eta_{\alpha}^{\left(  q\right)  }\right)
_{\alpha\in\operatorname*{Comp}\nolimits_{n}}$ are two bases of the
$\mathbf{k}$-module $\operatorname*{QSym}\nolimits_{n}$ (by Theorem
\ref{thm.eta.basis} \textbf{(b)}).

\item The basis $\left(  M_{\alpha}\right)  _{\alpha\in\operatorname*{Comp}%
\nolimits_{n}}$ of $\operatorname*{QSym}\nolimits_{n}$ is dual to the basis
$\left(  H_{\alpha}\right)  _{\alpha\in\operatorname*{Comp}\nolimits_{n}}$ of
$\operatorname*{NSym}\nolimits_{n}$ (because of (\ref{eq.dual.HM})).

\item The elements $c_{\alpha,\beta}\in\mathbf{k}$ are defined for all
$\left(  \alpha,\beta\right)  \in\operatorname*{Comp}\nolimits_{n}%
\times\operatorname*{Comp}\nolimits_{n}$, and satisfy
(\ref{pf.prop.etastar-dual-basis.3}) for each $\beta\in\operatorname*{Comp}%
\nolimits_{n}$ and (\ref{pf.prop.etastar-dual-basis.4}) for each $\alpha
\in\operatorname*{Comp}\nolimits_{n}$.
\end{itemize}

Thus, Lemma \ref{lem.dual-bases-crit} (applied to $F=\operatorname*{NSym}%
\nolimits_{n}$, $U=\operatorname*{QSym}\nolimits_{n}$, $A=\operatorname*{Comp}%
\nolimits_{n}$, $f_{\alpha}=H_{\alpha}$, $g_{\alpha}=\eta_{\alpha}%
^{\ast\left(  q\right)  }$, $u_{\alpha}=M_{\alpha}$ and $v_{\alpha}%
=\eta_{\alpha}^{\left(  q\right)  }$) shows that the families $\left(
\eta_{\alpha}^{\left(  q\right)  }\right)  _{\alpha\in\operatorname*{Comp}%
\nolimits_{n}}$ and $\left(  \eta_{\alpha}^{\ast\left(  q\right)  }\right)
_{\alpha\in\operatorname*{Comp}\nolimits_{n}}$ are mutually dual bases of
$\operatorname*{QSym}\nolimits_{n}$ and $\operatorname*{NSym}\nolimits_{n}$,
respectively. In other words, the family $\left(  \eta_{\alpha}^{\ast\left(
q\right)  }\right)  _{\alpha\in\operatorname*{Comp}\nolimits_{n}}$ is the
basis of $\operatorname*{NSym}\nolimits_{n}$ dual to the basis $\left(
\eta_{\alpha}^{\left(  q\right)  }\right)  _{\alpha\in\operatorname*{Comp}%
\nolimits_{n}}$ of $\operatorname*{QSym}\nolimits_{n}$. This proves
Proposition \ref{prop.etastar-dual-basis} \textbf{(b)}. \medskip

\begin{vershort}
\textbf{(a)} This follows from part \textbf{(b)} by taking the direct sum over
all $n$. \qedhere

\end{vershort}

\begin{verlong}
\textbf{(a)} Since $\operatorname*{NSym}$ is a graded $\mathbf{k}$-module, we
have $\operatorname*{NSym}=\bigoplus\limits_{n\in\mathbb{N}}%
\operatorname*{NSym}\nolimits_{n}$.

Proposition \ref{prop.etastar-dual-basis} \textbf{(b)} shows that for each
$n\in\mathbb{N}$, the family $\left(  \eta_{\alpha}^{\ast\left(  q\right)
}\right)  _{\alpha\in\operatorname*{Comp}\nolimits_{n}}$ is a basis of the
$\mathbf{k}$-module $\operatorname*{NSym}\nolimits_{n}$. Hence, the union
$\left(  \eta_{\alpha}^{\ast\left(  q\right)  }\right)  _{n\in\mathbb{N}%
,\ \alpha\in\operatorname*{Comp}\nolimits_{n}}$ of all these families is a
basis of the direct sum $\bigoplus\limits_{n\in\mathbb{N}}\operatorname*{NSym}%
\nolimits_{n}=\operatorname*{NSym}$. In other words, the family $\left(
\eta_{\alpha}^{\ast\left(  q\right)  }\right)  _{\alpha\in\operatorname*{Comp}%
}$ is a basis of the direct sum $\operatorname*{NSym}$ (since the family
$\left(  \eta_{\alpha}^{\ast\left(  q\right)  }\right)  _{n\in\mathbb{N}%
,\ \alpha\in\operatorname*{Comp}\nolimits_{n}}$ is just a reindexing of the
family $\left(  \eta_{\alpha}^{\ast\left(  q\right)  }\right)  _{\alpha
\in\operatorname*{Comp}}$ (because $\operatorname*{Comp}=\bigsqcup
\limits_{n\in\mathbb{N}}\operatorname*{Comp}\nolimits_{n}$)). Note that this
basis is actually a graded basis of $\operatorname*{NSym}$ (since its
subfamily $\left(  \eta_{\alpha}^{\ast\left(  q\right)  }\right)  _{\alpha
\in\operatorname*{Comp}\nolimits_{n}}$ is a basis of the $n$-th graded
component $\operatorname*{NSym}\nolimits_{n}$ for each $n\in\mathbb{N}$).

Let us also recall that the family $\left(  \eta_{\alpha}^{\left(  q\right)
}\right)  _{\alpha\in\operatorname*{Comp}}$ is a basis of the $\mathbf{k}%
$-module $\operatorname*{QSym}$ (by Theorem \ref{thm.eta.basis} \textbf{(a)}).
Again, this basis is actually a graded basis of $\operatorname*{QSym}$ (since
Theorem \ref{thm.eta.basis} \textbf{(b)} shows that its subfamily $\left(
\eta_{\alpha}^{\left(  q\right)  }\right)  _{\alpha\in\operatorname*{Comp}%
\nolimits_{n}}$ is a basis of the $n$-th graded component
$\operatorname*{QSym}\nolimits_{n}$ for each $n\in\mathbb{N}$).

Now, we claim that%
\begin{equation}
\left\langle \eta_{\gamma}^{\ast\left(  q\right)  },\eta_{\delta}^{\left(
q\right)  }\right\rangle =\left[  \gamma=\delta\right]
\label{pf.prop.etastar-dual-basis.a.4}%
\end{equation}
for all $\gamma,\delta\in\operatorname*{Comp}$.

[\textit{Proof of (\ref{pf.prop.etastar-dual-basis.a.4}):} Let $\gamma
,\delta\in\operatorname*{Comp}$. We must prove
(\ref{pf.prop.etastar-dual-basis.a.4}). We note that the element $\eta
_{\gamma}^{\ast\left(  q\right)  }$ of $\operatorname*{NSym}$ is homogeneous
of degree $\left\vert \gamma\right\vert $ (this follows easily from Definition
\ref{def.etastar}), whereas the element $\eta_{\delta}^{\left(  q\right)  }$
of $\operatorname*{QSym}$ is homogeneous of degree $\left\vert \delta
\right\vert $ (this follows easily from Definition \ref{def.etaalpha}). Now,
we are in one of the following two cases:

\textit{Case 1:} We have $\left\vert \gamma\right\vert =\left\vert
\delta\right\vert $.

\textit{Case 2:} We have $\left\vert \gamma\right\vert \neq\left\vert
\delta\right\vert $.

Let us first consider Case 1. In this case, we have $\left\vert \gamma
\right\vert =\left\vert \delta\right\vert $. Set $n:=\left\vert \delta
\right\vert $. Thus, $\gamma\in\operatorname*{Comp}\nolimits_{n}$ (since
$\left\vert \gamma\right\vert =\left\vert \delta\right\vert =n$) and
$\delta\in\operatorname*{Comp}\nolimits_{n}$ (since $\left\vert \delta
\right\vert =n$). However, Proposition \ref{prop.etastar-dual-basis}
\textbf{(b)} shows that the family $\left(  \eta_{\alpha}^{\ast\left(
q\right)  }\right)  _{\alpha\in\operatorname*{Comp}\nolimits_{n}}$ is the
basis of $\operatorname*{NSym}\nolimits_{n}$ dual to the basis $\left(
\eta_{\alpha}^{\left(  q\right)  }\right)  _{\alpha\in\operatorname*{Comp}%
\nolimits_{n}}$ of $\operatorname*{QSym}\nolimits_{n}$. Hence,
\[
\left\langle \eta_{\alpha}^{\ast\left(  q\right)  },\eta_{\beta}^{\left(
q\right)  }\right\rangle =\left[  \alpha=\beta\right]
\]
for every $\alpha,\beta\in\operatorname*{Comp}\nolimits_{n}$. Applying this to
$\alpha=\gamma$ and $\beta=\delta$, we find%
\[
\left\langle \eta_{\gamma}^{\ast\left(  q\right)  },\eta_{\delta}^{\left(
q\right)  }\right\rangle =\left[  \gamma=\delta\right]  .
\]
Hence, (\ref{pf.prop.etastar-dual-basis.a.4}) is proved in Case 1.

Let us now consider Case 2. In this case, we have $\left\vert \gamma
\right\vert \neq\left\vert \delta\right\vert $. Hence, $\gamma\neq\delta$, so
that $\left[  \gamma=\delta\right]  =0$. On the other hand, recall that the
element $\eta_{\gamma}^{\ast\left(  q\right)  }$ of $\operatorname*{NSym}$ is
homogeneous of degree $\left\vert \gamma\right\vert $, whereas the element
$\eta_{\delta}^{\left(  q\right)  }$ of $\operatorname*{QSym}$ is homogeneous
of degree $\left\vert \delta\right\vert $. Since the degrees $\left\vert
\gamma\right\vert $ and $\left\vert \delta\right\vert $ are different (because
$\left\vert \gamma\right\vert \neq\left\vert \delta\right\vert $), we thus
conclude that the elements $\eta_{\gamma}^{\ast\left(  q\right)  }%
\in\operatorname*{NSym}$ and $\eta_{\delta}^{\left(  q\right)  }%
\in\operatorname*{QSym}$ are homogeneous of different degrees.

However, the form $\left\langle \cdot,\cdot\right\rangle $ is graded; in other
words, it satisfies $\left\langle f,g\right\rangle =0$ whenever $f\in
\operatorname*{NSym}$ and $g\in\operatorname*{QSym}$ are homogeneous elements
of different degrees. Applying this to $f=\eta_{\gamma}^{\ast\left(  q\right)
}$ and $g=\eta_{\delta}^{\left(  q\right)  }$, we conclude that $\left\langle
\eta_{\gamma}^{\ast\left(  q\right)  },\eta_{\delta}^{\left(  q\right)
}\right\rangle =0$ (since the elements $\eta_{\gamma}^{\ast\left(  q\right)
}\in\operatorname*{NSym}$ and $\eta_{\delta}^{\left(  q\right)  }%
\in\operatorname*{QSym}$ are homogeneous of different degrees). Comparing this
with $\left[  \gamma=\delta\right]  =0$, we obtain
\[
\left\langle \eta_{\gamma}^{\ast\left(  q\right)  },\eta_{\delta}^{\left(
q\right)  }\right\rangle =\left[  \gamma=\delta\right]  .
\]
Hence, (\ref{pf.prop.etastar-dual-basis.a.4}) is proved in Case 2.

We have now proved (\ref{pf.prop.etastar-dual-basis.a.4}) in both Cases 1 and
2. Thus, (\ref{pf.prop.etastar-dual-basis.a.4}) always holds.]

Now, recall that the two families $\left(  \eta_{\alpha}^{\ast\left(
q\right)  }\right)  _{\alpha\in\operatorname*{Comp}}$ and $\left(
\eta_{\alpha}^{\left(  q\right)  }\right)  _{\alpha\in\operatorname*{Comp}}$
are graded bases of the graded $\mathbf{k}$-modules $\operatorname*{NSym}$ and
$\operatorname*{QSym}$, respectively. Hence,
(\ref{pf.prop.etastar-dual-basis.a.4}) shows that these two bases are mutually
dual. In other words, the family $\left(  \eta_{\alpha}^{\ast\left(  q\right)
}\right)  _{\alpha\in\operatorname*{Comp}}$ is the basis of
$\operatorname*{NSym}$ dual to the basis $\left(  \eta_{\alpha}^{\left(
q\right)  }\right)  _{\alpha\in\operatorname*{Comp}}$ of $\operatorname*{QSym}%
$ with respect to the bilinear form $\left\langle \cdot,\cdot\right\rangle $.
This proves Proposition \ref{prop.etastar-dual-basis} \textbf{(a)}. \qedhere

\end{verlong}

\begin{noncompile}
Old stuff, no longer needed:

Let us first show that this family is a basis of the $\mathbf{k}$-module
$\operatorname*{NSym}$ in the first place.

Indeed, we fix $n\in\mathbb{N}$. Consider the $n$-th graded component
$\operatorname*{NSym}\nolimits_{n}$ of the graded $\mathbf{k}$-module
$\operatorname*{NSym}$.

Define a partial order $\prec$ on the finite set $\operatorname*{Comp}%
\nolimits_{n}$ by setting $\beta\prec\alpha$ if and only if $\ell\left(
\beta\right)  >\ell\left(  \alpha\right)  $.

The definition of $\eta_{\alpha}^{\ast\left(  q\right)  }$ shows that%
\begin{align*}
\eta_{\alpha}^{\ast\left(  q\right)  }  &  =\sum_{\substack{\beta
\in\operatorname*{Comp}\nolimits_{n};\\D\left(  \alpha\right)  \subseteq
D\left(  \beta\right)  }}\dfrac{1}{r^{\ell\left(  \beta\right)  }}\left(
-1\right)  ^{\ell\left(  \beta\right)  -\ell\left(  \alpha\right)  }H_{\beta
}\\
&  =\dfrac{1}{r^{\ell\left(  \alpha\right)  }}\underbrace{\left(  -1\right)
^{\ell\left(  \alpha\right)  -\ell\left(  \alpha\right)  }}_{=\left(
-1\right)  ^{0}=1}H_{\alpha}\\
&  \ \ \ \ \ \ \ \ \ \ +\left(  \text{a linear combination of }H_{\beta}\text{
with }\beta\in\operatorname*{Comp}\nolimits_{n}\text{ satisfying }\ell\left(
\beta\right)  >\ell\left(  \alpha\right)  \right) \\
&  =\dfrac{1}{r^{\ell\left(  \alpha\right)  }}H_{\alpha}+\left(  \text{a
linear combination of }H_{\beta}\text{ with }\beta\in\operatorname*{Comp}%
\nolimits_{n}\text{ satisfying }\beta\prec\alpha\right)
\end{align*}
for each $\alpha\in\operatorname*{Comp}\nolimits_{n}$. Since $r$ is
invertible, this shows that the family $\left(  \eta_{\alpha}^{\ast\left(
q\right)  }\right)  _{\alpha\in\operatorname*{Comp}\nolimits_{n}}$ expands
invertibly triangularly in the family $\left(  H_{\alpha}\right)  _{\alpha
\in\operatorname*{Comp}\nolimits_{n}}$ with respect to the partial order
$\prec$ (where we are using the terminology from \cite[\S 11.1]{GriRei}).
Hence, \cite[Corollary 11.1.19(e)]{GriRei} shows that the family $\left(
\eta_{\alpha}^{\ast\left(  q\right)  }\right)  _{\alpha\in\operatorname*{Comp}%
\nolimits_{n}}$ is a basis of the $\mathbf{k}$-module $\operatorname*{NSym}%
\nolimits_{n}$ (since the family $\left(  H_{\alpha}\right)  _{\alpha
\in\operatorname*{Comp}\nolimits_{n}}$ is a basis of $\operatorname*{NSym}%
\nolimits_{n}$).

Forget that we fixed $n$. Thus, we have shown that the family $\left(
\eta_{\alpha}^{\ast\left(  q\right)  }\right)  _{\alpha\in\operatorname*{Comp}%
\nolimits_{n}}$ is a basis of the $\mathbf{k}$-module $\operatorname*{NSym}%
\nolimits_{n}$ for each $n\in\mathbb{N}$. Hence, the family $\left(
\eta_{\alpha}^{\ast\left(  q\right)  }\right)  _{\alpha\in\operatorname*{Comp}%
}$ is a basis of the $\mathbf{k}$-module $\bigoplus_{n\in\mathbb{N}%
}\operatorname*{NSym}\nolimits_{n}=\operatorname*{NSym}$.

It remains to show that this basis is dual to the basis $\left(  \eta_{\alpha
}\right)  _{\alpha\in\operatorname*{Comp}}$ of $\operatorname*{QSym}$. In
other words, it remains to show that
\begin{equation}
\left\langle \eta_{\alpha}^{\ast\left(  q\right)  },\eta_{\beta}^{\left(
q\right)  }\right\rangle =\left[  \alpha=\beta\right]
\label{pf.prop.etastar-dual-basis.goal}%
\end{equation}
for any $\alpha,\beta\in\operatorname*{Comp}$.

So let us show this now. We fix $\alpha,\beta\in\operatorname*{Comp}$. Then......
\end{noncompile}
\end{proof}

\subsection{The dual eta basis: product}

We shall now study the multiplicative structure of the dual eta basis $\left(
\eta_{\alpha}^{\ast\left(  q\right)  }\right)  _{\alpha\in\operatorname*{Comp}%
}$. First, we introduce a notation for the simplest entries of this basis:

\begin{definition}
\label{def.etastarn}For each positive integer $n$, we let%
\begin{align}
\eta_{n}^{\ast\left(  q\right)  }:=  &  \ \eta_{\left(  n\right)  }%
^{\ast\left(  q\right)  }=\sum_{\beta\in\operatorname*{Comp}\nolimits_{n}%
}\dfrac{1}{r^{\ell\left(  \beta\right)  }}\left(  -1\right)  ^{\ell\left(
\beta\right)  -1}H_{\beta}\label{eq.def.etastarn.eq}\\
\in &  \ \operatorname*{NSym}.\nonumber
\end{align}
(The second equality sign here is easy to check.\footnotemark)
\end{definition}

\begin{vershort}
\footnotetext{Indeed, Definition \ref{def.etastar} yields%
\[
\eta_{\left(  n\right)  }^{\ast\left(  q\right)  }=\sum_{\substack{\beta
\in\operatorname*{Comp}\nolimits_{n};\\D\left(  \left(  n\right)  \right)
\subseteq D\left(  \beta\right)  }}\dfrac{1}{r^{\ell\left(  \beta\right)  }%
}\left(  -1\right)  ^{\ell\left(  \beta\right)  -\ell\left(  \left(  n\right)
\right)  }H_{\beta}.
\]
However, every $\beta\in\operatorname*{Comp}\nolimits_{n}$ automatically
satisfies $D\left(  \left(  n\right)  \right)  \subseteq D\left(
\beta\right)  $ (because $D\left(  \left(  n\right)  \right)  =\varnothing$).
Hence, the condition \textquotedblleft$D\left(  \left(  n\right)  \right)
\subseteq D\left(  \beta\right)  $\textquotedblright\ under the summation sign
is superfluous. Also, $\ell\left(  \left(  n\right)  \right)  =1$. Thus, the
above equality simplifies to%
\[
\eta_{\left(  n\right)  }^{\ast\left(  q\right)  }=\sum_{\beta\in
\operatorname*{Comp}\nolimits_{n}}\dfrac{1}{r^{\ell\left(  \beta\right)  }%
}\left(  -1\right)  ^{\ell\left(  \beta\right)  -1}H_{\beta}.
\]
}
\end{vershort}

\begin{verlong}
\footnotetext{\textit{Proof.} Definition \ref{def.etastar} yields%
\begin{equation}
\eta_{\left(  n\right)  }^{\ast\left(  q\right)  }=\sum_{\substack{\beta
\in\operatorname*{Comp}\nolimits_{n};\\D\left(  \left(  n\right)  \right)
\subseteq D\left(  \beta\right)  }}\dfrac{1}{r^{\ell\left(  \beta\right)  }%
}\left(  -1\right)  ^{\ell\left(  \beta\right)  -\ell\left(  \left(  n\right)
\right)  }H_{\beta}. \label{eq.def.etastarn.fn1.1}%
\end{equation}
However, every $\beta\in\operatorname*{Comp}\nolimits_{n}$ automatically
satisfies $D\left(  \left(  n\right)  \right)  \subseteq D\left(
\beta\right)  $ (because $D\left(  \left(  n\right)  \right)  =\varnothing
\subseteq D\left(  \beta\right)  $). Hence, the condition \textquotedblleft%
$D\left(  \left(  n\right)  \right)  \subseteq D\left(  \beta\right)
$\textquotedblright\ under the summation sign $\sum_{\substack{\beta
\in\operatorname*{Comp}\nolimits_{n};\\D\left(  \left(  n\right)  \right)
\subseteq D\left(  \beta\right)  }}$ is superfluous. Therefore,
\[
\sum_{\substack{\beta\in\operatorname*{Comp}\nolimits_{n};\\D\left(  \left(
n\right)  \right)  \subseteq D\left(  \beta\right)  }}=\sum_{\beta
\in\operatorname*{Comp}\nolimits_{n}}%
\]
(an equality between summation signs). Also, $\ell\left(  \left(  n\right)
\right)  =1$. Thus, the equality (\ref{eq.def.etastarn.fn1.1}) becomes%
\[
\eta_{\left(  n\right)  }^{\ast\left(  q\right)  }=\underbrace{\sum
_{\substack{\beta\in\operatorname*{Comp}\nolimits_{n};\\D\left(  \left(
n\right)  \right)  \subseteq D\left(  \beta\right)  }}}_{=\sum_{\beta
\in\operatorname*{Comp}\nolimits_{n}}}\dfrac{1}{r^{\ell\left(  \beta\right)
}}\underbrace{\left(  -1\right)  ^{\ell\left(  \beta\right)  -\ell\left(
\left(  n\right)  \right)  }}_{\substack{=\left(  -1\right)  ^{\ell\left(
\beta\right)  -1}\\\text{(since }\ell\left(  \left(  n\right)  \right)
=1\text{)}}}H_{\beta}=\sum_{\beta\in\operatorname*{Comp}\nolimits_{n}}%
\dfrac{1}{r^{\ell\left(  \beta\right)  }}\left(  -1\right)  ^{\ell\left(
\beta\right)  -1}H_{\beta}.
\]
}
\end{verlong}

It turns out that we can easily express $\eta_{\alpha}^{\ast\left(  q\right)
}$ for any composition $\alpha$ using these $\eta_{n}^{\ast\left(  q\right)
}$:

\begin{proposition}
\label{prop.etastar.mult}We have%
\[
\eta_{\alpha}^{\ast\left(  q\right)  }=\eta_{\alpha_{1}}^{\ast\left(
q\right)  }\eta_{\alpha_{2}}^{\ast\left(  q\right)  }\cdots\eta_{\alpha_{k}%
}^{\ast\left(  q\right)  }\ \ \ \ \ \ \ \ \ \ \text{for each composition
}\alpha=\left(  \alpha_{1},\alpha_{2},\ldots,\alpha_{k}\right)  .
\]

\end{proposition}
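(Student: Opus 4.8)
The plan is to reduce the claim to the two-factor identity $\eta_{\mu\nu}^{\ast\left(  q\right)  }=\eta_{\mu}^{\ast\left(  q\right)  }\eta_{\nu}^{\ast\left(  q\right)  }$ for arbitrary compositions $\mu,\nu\in\operatorname*{Comp}$, and then to obtain the general statement by a straightforward induction on $k$ (writing $\alpha=\left(  \alpha_{1}\right)  \left(  \alpha_{2},\ldots,\alpha_{k}\right)  $ and using $\eta_{\alpha_{1}}^{\ast\left(  q\right)  }=\eta_{\left(  \alpha_{1}\right)  }^{\ast\left(  q\right)  }$ from Definition \ref{def.etastarn}). To prove the two-factor identity I would exploit that the pairing $\left\langle \cdot,\cdot\right\rangle $ separates elements of $\operatorname*{NSym}$: by Proposition \ref{prop.etastar-dual-basis}, the families $\left(  \eta_{\gamma}^{\ast\left(  q\right)  }\right)  $ and $\left(  \eta_{\delta}^{\left(  q\right)  }\right)  $ are mutually dual bases, so each $x\in\operatorname*{NSym}$ satisfies $x=\sum_{\delta\in\operatorname*{Comp}}\left\langle x,\eta_{\delta}^{\left(  q\right)  }\right\rangle \eta_{\delta}^{\ast\left(  q\right)  }$ (a finite sum in each degree). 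Hence two elements of $\operatorname*{NSym}$ coincide as soon as they have equal pairings with every $\eta_{\delta}^{\left(  q\right)  }$, and it suffices to check $\left\langle \eta_{\mu}^{\ast\left(  q\right)  }\eta_{\nu}^{\ast\left(  q\right)  },\eta_{\delta}^{\left(  q\right)  }\right\rangle =\left\langle \eta_{\mu\nu}^{\ast\left(  q\right)  },\eta_{\delta}^{\left(  q\right)  }\right\rangle $ for all $\delta\in\operatorname*{Comp}$.

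The main tool is the compatibility of the product on $\operatorname*{NSym}$ with the coproduct on $\operatorname*{QSym}$, namely
\[
\left\langle ab,f\right\rangle =\left\langle a\otimes b,\Delta\left(  f\right)  \right\rangle \qquad\left(  a,b\in\operatorname*{NSym},\ f\in\operatorname*{QSym}\right)  ,
\]
where the right-hand pairing is the induced one on tensor squares (sending $a\otimes b,\ f\otimes g$ to $\left\langle a,f\right\rangle \left\langle b,g\right\rangle $). This is precisely the statement that the multiplication of $\operatorname*{NSym}\cong\operatorname*{QSym}\nolimits^{o}$ is dual to the comultiplication of $\operatorname*{QSym}$, which follows from the graded Hopf-algebra identification recalled around \eqref{eq.dual.HM}. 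Should a self-contained justification be preferred, both sides are bilinear in $\left(  a,b\right)  $ and linear in $f$, so it is enough to verify them on $a=H_{\mu}$, $b=H_{\nu}$, $f=M_{\delta}$: there $H_{\mu}H_{\nu}=H_{\mu\nu}$ gives $\left\langle H_{\mu\nu},M_{\delta}\right\rangle =\left[  \mu\nu=\delta\right]  $ on the left, while \eqref{eq.Delta-M} together with \eqref{eq.dual.HM} gives $\sum_{\delta=\beta\gamma}\left[  \mu=\beta\right]  \left[  \nu=\gamma\right]  =\left[  \mu\nu=\delta\right]  $ on the right.

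Granting this, I would apply Theorem \ref{thm.Delta-eta} to expand $\Delta\left(  \eta_{\delta}^{\left(  q\right)  }\right)  =\sum_{\beta\gamma=\delta}\eta_{\beta}^{\left(  q\right)  }\otimes\eta_{\gamma}^{\left(  q\right)  }$, so that
\[
\left\langle \eta_{\mu}^{\ast\left(  q\right)  }\eta_{\nu}^{\ast\left(  q\right)  },\eta_{\delta}^{\left(  q\right)  }\right\rangle =\sum_{\substack{\beta,\gamma\in\operatorname*{Comp};\\\beta\gamma=\delta}}\left\langle \eta_{\mu}^{\ast\left(  q\right)  },\eta_{\beta}^{\left(  q\right)  }\right\rangle \left\langle \eta_{\nu}^{\ast\left(  q\right)  },\eta_{\gamma}^{\left(  q\right)  }\right\rangle =\sum_{\substack{\beta,\gamma\in\operatorname*{Comp};\\\beta\gamma=\delta}}\left[  \mu=\beta\right]  \left[  \nu=\gamma\right]  ,
\]
using the duality \eqref{eq.prop.etastar-dual-basis.duality}. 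The only surviving term has $\beta=\mu$ and $\gamma=\nu$, which forces $\delta=\mu\nu$; hence the whole sum equals $\left[  \mu\nu=\delta\right]  $. On the other hand $\left\langle \eta_{\mu\nu}^{\ast\left(  q\right)  },\eta_{\delta}^{\left(  q\right)  }\right\rangle =\left[  \mu\nu=\delta\right]  $ directly by \eqref{eq.prop.etastar-dual-basis.duality}. The two pairings agree for every $\delta$, so $\eta_{\mu}^{\ast\left(  q\right)  }\eta_{\nu}^{\ast\left(  q\right)  }=\eta_{\mu\nu}^{\ast\left(  q\right)  }$, which settles the two-factor case and hence the proposition.

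I do not anticipate a serious obstacle: the content is essentially the Hopf duality between $\operatorname*{NSym}$ and $\operatorname*{QSym}$, with Theorem \ref{thm.Delta-eta} supplying the crucial coproduct expansion. The one point requiring care is the compatibility formula $\left\langle ab,f\right\rangle =\left\langle a\otimes b,\Delta\left(  f\right)  \right\rangle $; if a self-contained argument is wanted, I must spell out the reduction to the basis elements $H_{\mu},H_{\nu},M_{\delta}$ and check that the bilinearity-in-$\left(  a,b\right)  $ and linearity-in-$f$ reductions are legitimate (they are, since both sides are genuinely multilinear). Everything else—the passage from two factors to $k$ factors and the collapse of the concatenation sum—is routine.
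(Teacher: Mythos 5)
Your proposal is correct and follows essentially the same route as the paper: the paper also reduces to the two-factor identity $\eta_{\mu}^{\ast\left(q\right)}\eta_{\nu}^{\ast\left(q\right)}=\eta_{\mu\nu}^{\ast\left(q\right)}$ (its Proposition \ref{prop.etastar.concat2}), proves it by pairing against $\eta_{\delta}^{\left(q\right)}$ via Lemma \ref{lem.NSym-duality.1} and the coproduct formula of Theorem \ref{thm.Delta-eta}, and then passes to $k$ factors by induction (Corollary \ref{cor.etastar.concat}) and specializes to singleton compositions. Your self-contained check of $\left\langle ab,f\right\rangle=\left\langle a\otimes b,\Delta\left(f\right)\right\rangle$ on the basis elements $H_{\mu},H_{\nu},M_{\delta}$ is a minor addition the paper handles by citing the Hopf-duality identification, but it does not change the argument.
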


The main idea of the proof of Proposition \ref{prop.etastar.mult} is to
recognize that if $n=\left\vert \alpha\right\vert $, then the compositions
$\beta\in\operatorname*{Comp}\nolimits_{n}$ satisfying $D\left(
\alpha\right)  \subseteq D\left(  \beta\right)  $ are precisely the
compositions obtained from $\alpha$ by breaking up each entry of $\alpha$ into
pieces. A slicker way to formalize this proof proceeds using the notion of concatenation:

\begin{definition}
If $\alpha=\left(  \alpha_{1},\alpha_{2},\ldots,\alpha_{\ell}\right)  $ and
$\beta=\left(  \beta_{1},\beta_{2},\ldots,\beta_{k}\right)  $ are two
compositions, then the composition $\alpha\beta$ is defined by%
\[
\alpha\beta=\left(  \alpha_{1},\alpha_{2},\ldots,\alpha_{\ell},\beta_{1}%
,\beta_{2},\ldots,\beta_{k}\right)  .
\]
This composition $\alpha\beta$ is called the \emph{concatenation} of $\alpha$
and $\beta$. The operation of concatenation (sending any two compositions
$\alpha$ and $\beta$ to $\alpha\beta$) is associative, and the empty
composition $\varnothing$ is a neutral element for it; thus, the set of all
compositions is a monoid under this operation.
\end{definition}

The following proposition is saying (in the jargon of combinatorial Hopf
algebras) that the basis $\left(  \eta_{\alpha}^{\ast\left(  q\right)
}\right)  _{\alpha\in\operatorname*{Comp}}$ of $\operatorname*{NSym}$ is multiplicative:

\begin{proposition}
\label{prop.etastar.concat2}Let $\alpha$ and $\beta$ be two compositions.
Then,%
\[
\eta_{\alpha}^{\ast\left(  q\right)  }\eta_{\beta}^{\ast\left(  q\right)
}=\eta_{\alpha\beta}^{\ast\left(  q\right)  }.
\]

\end{proposition}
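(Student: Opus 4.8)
The plan is to expand both factors via Definition \ref{def.etastar}, multiply them out (using the multiplicativity $H_{\gamma}H_{\delta}=H_{\gamma\delta}$ of the complete homogeneous basis), and then recognize the resulting sum as $\eta_{\alpha\beta}^{\ast(q)}$ through a reindexing by concatenation. Write $m:=\left\vert \alpha\right\vert$ and $n:=\left\vert \beta\right\vert$, so that $\alpha\beta\in\operatorname*{Comp}\nolimits_{m+n}$. First I would dispose of the boundary cases $\alpha=\varnothing$ and $\beta=\varnothing$: here $\eta_{\varnothing}^{\ast(q)}=H_{\varnothing}=1$ and $\alpha\beta$ reduces to $\beta$ or $\alpha$, so the claim is immediate. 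Thus I may assume $\alpha$ and $\beta$ are both nonempty, i.e.\ $0<m<m+n$, which is exactly the regime in which the splitting point $m$ lies in $\left[ m+n-1\right]$ and hence can occur in a partial-sum set.

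The engine of the argument is the concatenation bijection. The map $\left(\gamma,\delta\right)\mapsto\gamma\delta$ is a bijection from $\operatorname*{Comp}\nolimits_{m}\times\operatorname*{Comp}\nolimits_{n}$ onto the set of those $\varepsilon\in\operatorname*{Comp}\nolimits_{m+n}$ satisfying $m\in D\left(\varepsilon\right)$. Its relevant properties, which I would either cite from \cite{comps} or verify directly, are three: the multiplicativity $H_{\gamma}H_{\delta}=H_{\gamma\delta}$ (immediate from $H_{\varphi}=H_{\varphi_{1}}\cdots H_{\varphi_{k}}$); the length additivity $\ell\left(\gamma\delta\right)=\ell\left(\gamma\right)+\ell\left(\delta\right)$ (as in \cite[Proposition 5.2]{comps}); and the partial-sum decomposition $D\left(\gamma\delta\right)=D\left(\gamma\right)\cup\left\{m\right\}\cup\left(m+D\left(\delta\right)\right)$, where $m+D\left(\delta\right):=\left\{m+d\ \mid\ d\in D\left(\delta\right)\right\}$. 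Applying the same decomposition to $\alpha\beta$ gives $D\left(\alpha\beta\right)=D\left(\alpha\right)\cup\left\{m\right\}\cup\left(m+D\left(\beta\right)\right)$.

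With these in hand I would carry out the computation. Expanding the product and using $H_{\gamma}H_{\delta}=H_{\gamma\delta}$ yields
\[
\eta_{\alpha}^{\ast(q)}\eta_{\beta}^{\ast(q)}=\sum_{\substack{\gamma\in\operatorname*{Comp}\nolimits_{m};\\D\left(\alpha\right)\subseteq D\left(\gamma\right)}}\ \ \sum_{\substack{\delta\in\operatorname*{Comp}\nolimits_{n};\\D\left(\beta\right)\subseteq D\left(\delta\right)}}\dfrac{\left(-1\right)^{\ell\left(\gamma\right)-\ell\left(\alpha\right)}}{r^{\ell\left(\gamma\right)}}\cdot\dfrac{\left(-1\right)^{\ell\left(\delta\right)-\ell\left(\beta\right)}}{r^{\ell\left(\delta\right)}}H_{\gamma\delta}.
\]
Substituting $\varepsilon=\gamma\delta$ along the bijection converts this double sum into a single sum over $\varepsilon\in\operatorname*{Comp}\nolimits_{m+n}$ with $m\in D\left(\varepsilon\right)$. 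The key observation is that, because the three regions $\left[m-1\right]$, $\left\{m\right\}$ and $\left\{m+1,\ldots,m+n-1\right\}$ are disjoint, the inclusion $D\left(\alpha\beta\right)\subseteq D\left(\varepsilon\right)$ holds if and only if $D\left(\alpha\right)\subseteq D\left(\gamma\right)$ and $D\left(\beta\right)\subseteq D\left(\delta\right)$ both hold (the middle element $m$ being automatic); moreover $D\left(\alpha\beta\right)\subseteq D\left(\varepsilon\right)$ already forces $m\in D\left(\varepsilon\right)$, so the index set of the reindexed sum is precisely the one appearing in $\eta_{\alpha\beta}^{\ast(q)}$. Finally the coefficient factors as
\[
\dfrac{\left(-1\right)^{\left(\ell\left(\gamma\right)+\ell\left(\delta\right)\right)-\left(\ell\left(\alpha\right)+\ell\left(\beta\right)\right)}}{r^{\ell\left(\gamma\right)+\ell\left(\delta\right)}}=\dfrac{\left(-1\right)^{\ell\left(\varepsilon\right)-\ell\left(\alpha\beta\right)}}{r^{\ell\left(\varepsilon\right)}},
\]
using length additivity for both $\gamma\delta$ and $\alpha\beta$. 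Comparing with Definition \ref{def.etastar} applied to $\alpha\beta$ then gives $\eta_{\alpha}^{\ast(q)}\eta_{\beta}^{\ast(q)}=\eta_{\alpha\beta}^{\ast(q)}$.

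The main obstacle here is not any single deep step but the careful bookkeeping of the set-theoretic reindexing: confirming that concatenation is a bijection onto exactly the compositions split at $m$, and that the subset condition decomposes region by region. The boundary cases $m=0$ and $n=0$ are precisely where the element $m$ drops out of the partial-sum sets, so isolating them at the outset is what keeps the main argument clean; everything else is routine verification.
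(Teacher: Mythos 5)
Your proof is correct, but it takes a genuinely different route from the paper's. The paper proves Proposition \ref{prop.etastar.concat2} by duality: it computes $\left\langle \eta_{\alpha}^{\ast\left(  q\right)  }\eta_{\beta}^{\ast\left(  q\right)  },\eta_{\gamma}^{\left(  q\right)  }\right\rangle$ for an arbitrary composition $\gamma$ using the adjointness of multiplication in $\operatorname*{NSym}$ and comultiplication in $\operatorname*{QSym}$ (Lemma \ref{lem.NSym-duality.1}) together with the already-established deconcatenation coproduct formula $\Delta\left(  \eta_{\gamma}^{\left(  q\right)  }\right)  =\sum_{\gamma=\varphi\psi}\eta_{\varphi}^{\left(  q\right)  }\otimes\eta_{\psi}^{\left(  q\right)  }$ (Theorem \ref{thm.Delta-eta}), finding that both sides of the claim pair to $\left[  \gamma=\alpha\beta\right]$; nondegeneracy of the pairing then finishes the job. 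You instead work directly from Definition \ref{def.etastar}: expand, use $H_{\gamma}H_{\delta}=H_{\gamma\delta}$, and reindex via the bijection $\left(  \gamma,\delta\right)  \mapsto\gamma\delta$ onto the compositions of $m+n$ whose partial-sum set contains $m$, checking that the subset condition and the coefficient both decompose correctly. Your argument is more elementary and self-contained --- it needs neither Theorem \ref{thm.Delta-eta} nor the dual-basis statement (Proposition \ref{prop.etastar-dual-basis}), only the combinatorics of $D$ and concatenation --- and it would in fact allow one to reverse the paper's logical flow, establishing multiplicativity of the $\eta^{\ast}$ family first and then deducing the coproduct formula for the $\eta_{\alpha}^{\left(  q\right)  }$ by dualizing. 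What the paper's route buys is brevity and conceptual clarity once the duality machinery is in place, and it avoids the set-theoretic bookkeeping (disjointness of the three regions, the automatic membership of $m$, the boundary cases) that your proof must handle explicitly; your handling of those details, including isolating the cases $\alpha=\varnothing$ and $\beta=\varnothing$ up front, is accurate. It is worth noting that the paper's source contains, in a disabled comment block, a sketch of essentially your computation as an ``alternative proof,'' so the authors evidently regarded both routes as viable and chose the duality one for the published text.
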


In order to prove this, we will use the comultiplication $\Delta
:\operatorname*{QSym}\rightarrow\operatorname*{QSym}\otimes
\operatorname*{QSym}$ of the Hopf algebra $\operatorname*{QSym}$ as well as
the duality between $\operatorname*{NSym}$ and $\operatorname*{QSym}$:

\begin{lemma}
\label{lem.NSym-duality.1}Let $f,g\in\operatorname*{NSym}$ and $h\in
\operatorname*{QSym}$ be arbitrary. Let the tensor $\Delta\left(  h\right)
\in\operatorname*{QSym}\otimes\operatorname*{QSym}$ be written in the form
$\Delta\left(  h\right)  =\sum_{i\in I}s_{i}\otimes t_{i}$, where $I$ is a
finite set and where $s_{i},t_{i}\in\operatorname*{QSym}$ for each $i\in I$.
Then,%
\[
\left\langle fg,h\right\rangle =\sum_{i\in I}\left\langle f,s_{i}\right\rangle
\left\langle g,t_{i}\right\rangle .
\]

\end{lemma}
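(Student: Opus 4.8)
The plan is to prove this lemma as a direct consequence of the defining property of the duality pairing between $\operatorname*{NSym}$ and $\operatorname*{QSym}$, namely that this pairing identifies $\operatorname*{NSym}$ with the graded dual of $\operatorname*{QSym}$ as Hopf algebras. The key conceptual point is that under this identification, the multiplication of $\operatorname*{NSym}$ is dual to the comultiplication $\Delta$ of $\operatorname*{QSym}$. More precisely, the pairing satisfies the adjunction
\[
\left\langle fg,h\right\rangle =\left\langle f\otimes g,\Delta\left(
h\right)  \right\rangle
\]
for all $f,g\in\operatorname*{NSym}$ and $h\in\operatorname*{QSym}$, where on the right-hand side $\left\langle \cdot,\cdot\right\rangle$ denotes the induced pairing between $\operatorname*{NSym}\otimes\operatorname*{NSym}$ and $\operatorname*{QSym}\otimes\operatorname*{QSym}$ (sending $f\otimes g$ and $s\otimes t$ to $\left\langle f,s\right\rangle \left\langle g,t\right\rangle$). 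This is exactly the compatibility that makes the product on $\operatorname*{NSym}$ the adjoint of the coproduct on $\operatorname*{QSym}$; it is part of what it means for the pairing to be a Hopf-algebra duality pairing, as recalled in the text's identification of $\operatorname*{NSym}$ with $\operatorname*{QSym}\nolimits^{o}$ (see \cite[\S 5.4]{GriRei}).

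First I would establish (or simply invoke) this adjunction formula. If I want to prove it from scratch rather than cite it, I can verify it on basis elements: write $f=H_{\mu}$, $g=H_{\nu}$, $h=M_{\alpha}$, and use \eqref{eq.dual.HM} together with the product rule $H_{\mu}H_{\nu}=H_{\mu\nu}$ for $\operatorname*{NSym}$ and the coproduct formula \eqref{eq.Delta-M} for $\operatorname*{QSym}$. Concretely, $\left\langle H_{\mu}H_{\nu},M_{\alpha}\right\rangle =\left\langle H_{\mu\nu},M_{\alpha}\right\rangle =\left[\mu\nu=\alpha\right]$, whereas $\sum \left\langle H_{\mu},M_{\beta}\right\rangle \left\langle H_{\nu},M_{\gamma}\right\rangle$ over the deconcatenations $\alpha=\beta\gamma$ picks out exactly the term with $\beta=\mu$ and $\gamma=\nu$, again giving $\left[\mu\nu=\alpha\right]$. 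By bilinearity, the adjunction then holds for all $f,g,h$.

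Once the adjunction is in hand, the lemma follows by expanding $\Delta\left(h\right)=\sum_{i\in I}s_{i}\otimes t_{i}$ and using bilinearity of the induced pairing:
\[
\left\langle fg,h\right\rangle =\left\langle f\otimes g,\Delta\left(
h\right)  \right\rangle =\left\langle f\otimes g,\sum_{i\in I}s_{i}\otimes
t_{i}\right\rangle =\sum_{i\in I}\left\langle f,s_{i}\right\rangle
\left\langle g,t_{i}\right\rangle ,
\]
which is precisely the claim. I do not expect any genuine obstacle here: the only subtlety is whether one should cite the Hopf-duality compatibility or prove it inline, and I would opt for a short inline verification on the $H$- and $M$-bases (as sketched above) to keep the argument self-contained, since that verification is completely mechanical. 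The main thing to be careful about is notational: making sure the induced pairing on tensor products is defined consistently so that $\left\langle f\otimes g,\,s\otimes t\right\rangle=\left\langle f,s\right\rangle\left\langle g,t\right\rangle$, and that the expansion of $\Delta(h)$ is finite (guaranteed since $h$ has bounded degree, so $\Delta(h)$ lies in a finite-dimensional graded piece).
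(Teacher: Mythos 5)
Your proof is correct and takes essentially the same route as the paper: both reduce the claim to the adjunction between the multiplication of $\operatorname*{NSym}$ and the comultiplication of $\operatorname*{QSym}$, which is part of the statement that $\left\langle \cdot,\cdot\right\rangle$ identifies $\operatorname*{NSym}$ with the graded dual $\operatorname*{QSym}\nolimits^{o}$ as Hopf algebras. The paper simply cites this adjunction (stated in Sweedler notation, with a reference to \cite[(1.2.3)]{GriRei}), whereas you additionally offer a correct inline verification on the bases $H_{\mu}$, $H_{\nu}$, $M_{\alpha}$ using \eqref{eq.dual.HM}, $H_{\mu}H_{\nu}=H_{\mu\nu}$ and \eqref{eq.Delta-M}; this makes the argument self-contained but does not change its substance.
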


\begin{proof}
Recall that the $\mathbf{k}$-bilinear form $\left\langle \cdot,\cdot
\right\rangle $ identifies $\operatorname*{NSym}$ with the graded dual
$\operatorname*{QSym}\nolimits^{o}$ as Hopf algebras. Thus, in particular, the
multiplication of $\operatorname*{NSym}$ and the comultiplication of
$\operatorname*{QSym}$ are mutually adjoint with respect to this form. In
other words, if $f,g\in\operatorname*{NSym}$ and $h\in\operatorname*{QSym}$,
then%
\[
\left\langle fg,h\right\rangle =\sum_{\left(  h\right)  }\left\langle
f,h_{\left(  1\right)  }\right\rangle \left\langle g,h_{\left(  2\right)
}\right\rangle ,
\]
where we are using the Sweedler notation $\sum_{\left(  h\right)  }h_{\left(
1\right)  }\otimes h_{\left(  2\right)  }$ for $\Delta\left(  h\right)  $
(see, e.g., \cite[(1.2.3)]{GriRei}). Lemma \ref{lem.NSym-duality.1} is just
restating this fact without using the Sweedler notation.
\end{proof}

\begin{proof}
[Proof of Proposition \ref{prop.etastar.concat2}.]This follows by dualization
from Theorem \ref{thm.Delta-eta}. Here are the details:

Forget that we fixed $\alpha$ and $\beta$. Proposition
\ref{prop.etastar-dual-basis} \textbf{(a)} shows that the families $\left(
\eta_{\alpha}^{\ast\left(  q\right)  }\right)  _{\alpha\in\operatorname*{Comp}%
}$ and $\left(  \eta_{\alpha}^{\left(  q\right)  }\right)  _{\alpha
\in\operatorname*{Comp}}$ are mutually dual bases of $\operatorname*{NSym}$
and $\operatorname*{QSym}$ with respect to the bilinear form $\left\langle
\cdot,\cdot\right\rangle $. This shows that%
\begin{equation}
\left\langle \eta_{\lambda}^{\ast\left(  q\right)  },\eta_{\mu}^{\left(
q\right)  }\right\rangle =\left[  \lambda=\mu\right]
\label{pf.prop.etastar.concat2.duality}%
\end{equation}
for all $\lambda,\mu\in\operatorname*{Comp}$. But another consequence of this
duality is that the bilinear form $\left\langle \cdot,\cdot\right\rangle $ is
nondegenerate (since only nondegenerate forms have dual bases), and that the
family $\left(  \eta_{\alpha}^{\left(  q\right)  }\right)  _{\alpha
\in\operatorname*{Comp}}$ is a basis of $\operatorname*{QSym}$. Hence, in
order to prove that two elements $f,g\in\operatorname*{NSym}$ are equal, it
suffices to show that $\left\langle f,\eta_{\gamma}^{\left(  q\right)
}\right\rangle =\left\langle g,\eta_{\gamma}^{\left(  q\right)  }\right\rangle
$ holds for each $\gamma\in\operatorname*{Comp}$.

We shall use this strategy to prove $\eta_{\alpha}^{\ast\left(  q\right)
}\eta_{\beta}^{\ast\left(  q\right)  }=\eta_{\alpha\beta}^{\ast\left(
q\right)  }$ for all $\alpha,\beta\in\operatorname*{Comp}$. Thus, we need to
show that $\left\langle \eta_{\alpha}^{\ast\left(  q\right)  }\eta_{\beta
}^{\ast\left(  q\right)  },\eta_{\gamma}^{\left(  q\right)  }\right\rangle
=\left\langle \eta_{\alpha\beta}^{\ast\left(  q\right)  },\eta_{\gamma
}^{\left(  q\right)  }\right\rangle $ holds for all $\alpha,\beta,\gamma
\in\operatorname*{Comp}$.

To show this, we fix $\alpha,\beta,\gamma\in\operatorname*{Comp}$. Theorem
\ref{thm.Delta-eta} (with the letters $\alpha,\beta,\gamma$ replaced by
$\gamma,\varphi,\psi$) says that%
\[
\Delta\left(  \eta_{\gamma}^{\left(  q\right)  }\right)  =\sum
_{\substack{\varphi,\psi\in\operatorname*{Comp};\\\gamma=\varphi\psi}%
}\eta_{\varphi}^{\left(  q\right)  }\otimes\eta_{\psi}^{\left(  q\right)  }.
\]

Hence, Lemma \ref{lem.NSym-duality.1} (applied to $f=\eta_{\alpha}%
^{\ast\left(  q\right)  }$ and $g=\eta_{\beta}^{\ast\left(  q\right)  }$ and
$h=\eta_{\gamma}^{\left(  q\right)  }$ and
\[
I=\left\{  \left(  \varphi,\psi\right)  \in\operatorname*{Comp}\times
\operatorname*{Comp}\ \mid\ \gamma=\varphi\psi\right\}
\]
and $s_{\left(  \varphi,\psi\right)  }=\eta_{\varphi}^{\left(  q\right)  }$
and $t_{\left(  \varphi,\psi\right)  }=\eta_{\psi}^{\left(  q\right)  }$)
yields%
\begin{align}
\left\langle \eta_{\alpha}^{\ast\left(  q\right)  }\eta_{\beta}^{\ast\left(
q\right)  },\eta_{\gamma}^{\left(  q\right)  }\right\rangle  &  =\sum
_{\substack{\varphi,\psi\in\operatorname*{Comp};\\\gamma=\varphi\psi
}}\underbrace{\left\langle \eta_{\alpha}^{\ast\left(  q\right)  }%
,\eta_{\varphi}^{\left(  q\right)  }\right\rangle }_{\substack{=\left[
\alpha=\varphi\right]  \\\text{(by (\ref{pf.prop.etastar.concat2.duality}))}%
}}\underbrace{\left\langle \eta_{\beta}^{\ast\left(  q\right)  },\eta_{\psi
}^{\left(  q\right)  }\right\rangle }_{\substack{=\left[  \beta=\psi\right]
\\\text{(by (\ref{pf.prop.etastar.concat2.duality}))}}}=\sum
_{\substack{\varphi,\psi\in\operatorname*{Comp};\\\gamma=\varphi\psi
}}\underbrace{\left[  \alpha=\varphi\right]  \cdot\left[  \beta=\psi\right]
}_{=\left[  \alpha=\varphi\text{ and }\beta=\psi\right]  }\nonumber\\
&  =\sum_{\substack{\varphi,\psi\in\operatorname*{Comp};\\\gamma=\varphi\psi
}}\left[  \alpha=\varphi\text{ and }\beta=\psi\right]  .
\label{pf.prop.etastar.concat2.4}%
\end{align}

The sum on the right hand side of this equality has at most one nonzero addend
-- namely the addend for $\varphi=\alpha$ and $\psi=\beta$, if this addend
exists. Of course, this addend exists if and only if $\gamma=\alpha\beta$, and
equals $1$ in this case. Thus, the sum equals $1$ if $\gamma=\alpha\beta$, and
otherwise equals $0$. In other words, this sum equals the truth value $\left[
\gamma=\alpha\beta\right]  $. Hence, we can rewrite
(\ref{pf.prop.etastar.concat2.4}) as%
\[
\left\langle \eta_{\alpha}^{\ast\left(  q\right)  }\eta_{\beta}^{\ast\left(
q\right)  },\eta_{\gamma}^{\left(  q\right)  }\right\rangle =\left[
\gamma=\alpha\beta\right]  .
\]
Comparing this with%
\begin{align*}
\left\langle \eta_{\alpha\beta}^{\ast\left(  q\right)  },\eta_{\gamma
}^{\left(  q\right)  }\right\rangle  &  =\left[  \alpha\beta=\gamma\right]
\ \ \ \ \ \ \ \ \ \ \left(  \text{by (\ref{pf.prop.etastar.concat2.duality}%
)}\right) \\
&  =\left[  \gamma=\alpha\beta\right]  ,
\end{align*}
we obtain $\left\langle \eta_{\alpha}^{\ast\left(  q\right)  }\eta_{\beta
}^{\ast\left(  q\right)  },\eta_{\gamma}^{\left(  q\right)  }\right\rangle
=\left\langle \eta_{\alpha\beta}^{\ast\left(  q\right)  },\eta_{\gamma
}^{\left(  q\right)  }\right\rangle $.

Forget that we fixed $\gamma$. We thus have shown that $\left\langle
\eta_{\alpha}^{\ast\left(  q\right)  }\eta_{\beta}^{\ast\left(  q\right)
},\eta_{\gamma}^{\left(  q\right)  }\right\rangle =\left\langle \eta
_{\alpha\beta}^{\ast\left(  q\right)  },\eta_{\gamma}^{\left(  q\right)
}\right\rangle $ for each $\gamma\in\operatorname*{Comp}$. Since $\left(
\eta_{\gamma}^{\left(  q\right)  }\right)  _{\gamma\in\operatorname*{Comp}}$
is a basis of the $\mathbf{k}$-module $\operatorname*{QSym}$, and since the
bilinear form $\left\langle \cdot,\cdot\right\rangle $ is nondegenerate, we
thus conclude that $\eta_{\alpha}^{\ast\left(  q\right)  }\eta_{\beta}%
^{\ast\left(  q\right)  }=\eta_{\alpha\beta}^{\ast\left(  q\right)  }$. This
proves Proposition \ref{prop.etastar.concat2}. \qedhere

\begin{verlong}
T0D0: verlong proof.
\end{verlong}

\begin{noncompile}
Alternative proof: The definition of a concatenation easily yields that
$\ell\left(  \gamma\right)  +\ell\left(  \delta\right)  =\ell\left(
\gamma\delta\right)  $ for any two compositions $\gamma$ and $\delta$. Thus,
in particular, $\ell\left(  \alpha\beta\right)  =\ell\left(  \alpha\right)
+\ell\left(  \beta\right)  $.

Let $n=\left\vert \alpha\right\vert $ and $m=\left\vert \beta\right\vert $.
Thus, $\alpha\in\operatorname*{Comp}\nolimits_{n}$ and $\beta\in
\operatorname*{Comp}\nolimits_{m}$, so that $\alpha\beta\in
\operatorname*{Comp}\nolimits_{n+m}$.

The definitions of $\eta_{\alpha}^{\ast\left(  q\right)  }$ and $\eta_{\beta
}^{\ast\left(  q\right)  }$ yield%
\begin{align*}
\eta_{\alpha}^{\ast\left(  q\right)  }  &  =\sum_{\substack{\gamma
\in\operatorname*{Comp}\nolimits_{n};\\D\left(  \alpha\right)  \subseteq
D\left(  \gamma\right)  }}\dfrac{1}{r^{\ell\left(  \gamma\right)  }}\left(
-1\right)  ^{\ell\left(  \gamma\right)  -\ell\left(  \alpha\right)  }%
H_{\gamma}\ \ \ \ \ \ \ \ \ \ \text{and}\\
\eta_{\beta}^{\ast\left(  q\right)  }  &  =\sum_{\substack{\delta
\in\operatorname*{Comp}\nolimits_{m};\\D\left(  \beta\right)  \subseteq
D\left(  \delta\right)  }}\dfrac{1}{r^{\ell\left(  \delta\right)  }}\left(
-1\right)  ^{\ell\left(  \delta\right)  -\ell\left(  \beta\right)  }H_{\delta
}.
\end{align*}
Multiplying these two equalities, we obtain%
\begin{align}
\eta_{\alpha}^{\ast\left(  q\right)  }\eta_{\beta}^{\ast\left(  q\right)  }
&  =\left(  \sum_{\substack{\gamma\in\operatorname*{Comp}\nolimits_{n}%
;\\D\left(  \alpha\right)  \subseteq D\left(  \gamma\right)  }}\dfrac
{1}{r^{\ell\left(  \gamma\right)  }}\left(  -1\right)  ^{\ell\left(
\gamma\right)  -\ell\left(  \alpha\right)  }H_{\gamma}\right)  \left(
\sum_{\substack{\delta\in\operatorname*{Comp}\nolimits_{m};\\D\left(
\beta\right)  \subseteq D\left(  \delta\right)  }}\dfrac{1}{r^{\ell\left(
\delta\right)  }}\left(  -1\right)  ^{\ell\left(  \delta\right)  -\ell\left(
\beta\right)  }H_{\delta}\right) \nonumber\\
&  =\sum_{\substack{\left(  \gamma,\delta\right)  \in\operatorname*{Comp}%
\nolimits_{n}\times\operatorname*{Comp}\nolimits_{m};\\D\left(  \alpha\right)
\subseteq D\left(  \gamma\right)  \text{ and }D\left(  \beta\right)  \subseteq
D\left(  \delta\right)  }}\underbrace{\dfrac{1}{r^{\ell\left(  \gamma\right)
}}\cdot\dfrac{1}{r^{\ell\left(  \delta\right)  }}}_{=\dfrac{1}{r^{\ell\left(
\gamma\right)  +\ell\left(  \delta\right)  }}=\dfrac{1}{r^{\ell\left(
\gamma\delta\right)  }}}\underbrace{\left(  -1\right)  ^{\ell\left(
\gamma\right)  -\ell\left(  \alpha\right)  }\left(  -1\right)  ^{\ell\left(
\delta\right)  -\ell\left(  \beta\right)  }}_{\substack{=\left(  -1\right)
^{\left(  \ell\left(  \gamma\right)  -\ell\left(  \alpha\right)  \right)
+\left(  \ell\left(  \delta\right)  -\ell\left(  \beta\right)  \right)
}\\=\left(  -1\right)  ^{\left(  \ell\left(  \gamma\right)  +\ell\left(
\delta\right)  \right)  -\left(  \ell\left(  \alpha\right)  +\ell\left(
\beta\right)  \right)  }\\=\left(  -1\right)  ^{\ell\left(  \gamma
\delta\right)  -\ell\left(  \alpha\beta\right)  }}}\underbrace{H_{\gamma
}H_{\delta}}_{=H_{\gamma\delta}}\nonumber\\
&  =\sum_{\substack{\left(  \gamma,\delta\right)  \in\operatorname*{Comp}%
\nolimits_{n}\times\operatorname*{Comp}\nolimits_{m};\\D\left(  \alpha\right)
\subseteq D\left(  \gamma\right)  \text{ and }D\left(  \beta\right)  \subseteq
D\left(  \delta\right)  }}\dfrac{1}{r^{\ell\left(  \gamma\delta\right)  }%
}\left(  -1\right)  ^{\ell\left(  \gamma\delta\right)  -\ell\left(
\alpha\beta\right)  }H_{\gamma\delta}. \label{pf.prop.etastar.concat2.prod3}%
\end{align}
But it is easy to see that every two compositions $\gamma\in
\operatorname*{Comp}\nolimits_{n}$ and $\delta\in\operatorname*{Comp}%
\nolimits_{m}$ satisfy $D\left(  \gamma\delta\right)  =D\left(  \gamma\right)
\cup\left\{  n\right\}  \cup\left(  D\left(  \delta\right)  +n\right)  $,
where $D\left(  \delta\right)  +n$ denotes the set $\left\{  d+n\ \mid\ d\in
D\left(  \delta\right)  \right\}  $. Using this fact, it is easy to see that
the map%
\begin{align*}
&  \left\{  \left(  \gamma,\delta\right)  \in\operatorname*{Comp}%
\nolimits_{n}\times\operatorname*{Comp}\nolimits_{m}\ \mid\ D\left(
\alpha\right)  \subseteq D\left(  \gamma\right)  \text{ and }D\left(
\beta\right)  \subseteq D\left(  \delta\right)  \right\} \\
&  \rightarrow\left\{  \zeta\in\operatorname*{Comp}\nolimits_{n+m}%
\ \mid\ D\left(  \alpha\beta\right)  \subseteq D\left(  \zeta\right)
\right\}  ,\\
\left(  \gamma,\delta\right)   &  \mapsto\gamma\delta
\end{align*}
is well-defined and is a bijection. Hence, we can substitute $\zeta$ for
$\gamma\delta$ in the sum on the right hand side of
(\ref{pf.prop.etastar.concat2.prod3}). Thus,
(\ref{pf.prop.etastar.concat2.prod3}) rewrites as%
\[
\eta_{\alpha}^{\ast\left(  q\right)  }\eta_{\beta}^{\ast\left(  q\right)
}=\sum_{\substack{\zeta\in\operatorname*{Comp}\nolimits_{n+m};\\D\left(
\alpha\beta\right)  \subseteq D\left(  \zeta\right)  }}\dfrac{1}%
{r^{\ell\left(  \zeta\right)  }}\left(  -1\right)  ^{\ell\left(  \zeta\right)
-\ell\left(  \alpha\beta\right)  }H_{\zeta}.
\]
Comparing this with%
\[
\eta_{\alpha\beta}^{\ast\left(  q\right)  }=\sum_{\substack{\zeta
\in\operatorname*{Comp}\nolimits_{n+m};\\D\left(  \alpha\beta\right)
\subseteq D\left(  \zeta\right)  }}\dfrac{1}{r^{\ell\left(  \zeta\right)  }%
}\left(  -1\right)  ^{\ell\left(  \zeta\right)  -\ell\left(  \alpha
\beta\right)  }H_{\zeta}\ \ \ \ \ \ \ \ \ \ \left(  \text{by
(\ref{eq.def.etastarn.eq})}\right)  ,
\]
we obtain $\eta_{\alpha}^{\ast\left(  q\right)  }\eta_{\beta}^{\ast\left(
q\right)  }=\eta_{\alpha\beta}^{\ast\left(  q\right)  }$. This proves
Proposition \ref{prop.etastar.concat2}.
\end{noncompile}
\end{proof}

\begin{corollary}
\label{cor.etastar.concat}Let $\beta_{1},\beta_{2},\ldots,\beta_{k}$ be
finitely many compositions. Then,%
\[
\eta_{\beta_{1}}^{\ast\left(  q\right)  }\eta_{\beta_{2}}^{\ast\left(
q\right)  }\cdots\eta_{\beta_{k}}^{\ast\left(  q\right)  }=\eta_{\beta
_{1}\beta_{2}\cdots\beta_{k}}^{\ast\left(  q\right)  }.
\]

\end{corollary}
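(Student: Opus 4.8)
The plan is to prove Corollary~\ref{cor.etastar.concat} by induction on $k$, using Proposition~\ref{prop.etastar.concat2} as the engine for the induction step. This is the natural approach: Proposition~\ref{prop.etastar.concat2} already tells us that $\eta_{\alpha}^{\ast\left(q\right)}\eta_{\beta}^{\ast\left(q\right)}=\eta_{\alpha\beta}^{\ast\left(q\right)}$ for any two compositions $\alpha$ and $\beta$, so the corollary is simply the iterated version of this single-step fact, and the associativity of concatenation makes the bookkeeping clean.

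For the base cases, I would handle $k=0$ and $k=1$ first. When $k=0$, the left-hand side is an empty product, which equals $1_{\operatorname*{NSym}}$, while the right-hand side is $\eta_{\varnothing}^{\ast\left(q\right)}$, and the example computation in the text ($\eta_{\left(\,\right)}^{\ast\left(q\right)}=H_{\left(\,\right)}=1_{\operatorname*{NSym}}$) confirms these agree; here I use that the empty concatenation $\beta_{1}\beta_{2}\cdots\beta_{k}$ for $k=0$ is the empty composition $\varnothing$. When $k=1$, both sides are literally $\eta_{\beta_{1}}^{\ast\left(q\right)}$, so the claim is trivially true.

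For the induction step, assume the claim holds for some $k\geq1$, i.e.\ $\eta_{\beta_{1}}^{\ast\left(q\right)}\eta_{\beta_{2}}^{\ast\left(q\right)}\cdots\eta_{\beta_{k}}^{\ast\left(q\right)}=\eta_{\beta_{1}\beta_{2}\cdots\beta_{k}}^{\ast\left(q\right)}$. Given $k+1$ compositions $\beta_{1},\beta_{2},\ldots,\beta_{k+1}$, I would write
\[
\eta_{\beta_{1}}^{\ast\left(q\right)}\cdots\eta_{\beta_{k+1}}^{\ast\left(q\right)}=\left(\eta_{\beta_{1}}^{\ast\left(q\right)}\cdots\eta_{\beta_{k}}^{\ast\left(q\right)}\right)\eta_{\beta_{k+1}}^{\ast\left(q\right)}=\eta_{\beta_{1}\cdots\beta_{k}}^{\ast\left(q\right)}\eta_{\beta_{k+1}}^{\ast\left(q\right)},
\]
using associativity of multiplication in $\operatorname*{NSym}$ and then the induction hypothesis. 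Applying Proposition~\ref{prop.etastar.concat2} with $\alpha=\beta_{1}\cdots\beta_{k}$ and $\beta=\beta_{k+1}$ gives $\eta_{\beta_{1}\cdots\beta_{k}}^{\ast\left(q\right)}\eta_{\beta_{k+1}}^{\ast\left(q\right)}=\eta_{\left(\beta_{1}\cdots\beta_{k}\right)\beta_{k+1}}^{\ast\left(q\right)}$, and since concatenation is associative we have $\left(\beta_{1}\cdots\beta_{k}\right)\beta_{k+1}=\beta_{1}\beta_{2}\cdots\beta_{k+1}$, completing the step.

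There is essentially no hard part here: this is a routine induction, and the only subtlety worth mentioning is getting the empty-product conventions right at $k=0$ so that the corollary also covers the degenerate case. (Alternatively, one could note that Proposition~\ref{prop.etastar.mult} is itself the special case of this corollary where each $\beta_{i}$ is a one-part composition $\left(\alpha_{i}\right)$, so one might instead first prove the corollary and then deduce Proposition~\ref{prop.etastar.mult} as a consequence; but since Proposition~\ref{prop.etastar.concat2} is already available, the direct induction above is the most economical route.)
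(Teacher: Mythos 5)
Your proof is correct and follows exactly the paper's own route: an induction on $k$ with the base case $k=0$ settled by $\eta_{\varnothing}^{\ast\left(q\right)}=1$, and the induction step carried out by Proposition~\ref{prop.etastar.concat2} together with the associativity of concatenation. Nothing is missing.
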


\begin{proof}
This follows by induction on $k$ using Proposition \ref{prop.etastar.concat2}.
(The base case, $k=0$, follows from $\eta_{\left(  {}\right)  }^{\ast\left(
q\right)  }=1$.) \qedhere

\begin{verlong}
T0D0: verlong proof.
\end{verlong}
\end{proof}

\begin{proof}
[Proof of Proposition \ref{prop.etastar.mult}.]Let $\alpha=\left(  \alpha
_{1},\alpha_{2},\ldots,\alpha_{k}\right)  $ be a composition. Then, applying
Corollary \ref{cor.etastar.concat} to the $1$-element compositions $\beta
_{i}=\left(  \alpha_{i}\right)  $, we obtain%
\[
\eta_{\left(  \alpha_{1}\right)  }^{\ast\left(  q\right)  }\eta_{\left(
\alpha_{2}\right)  }^{\ast\left(  q\right)  }\cdots\eta_{\left(  \alpha
_{k}\right)  }^{\ast\left(  q\right)  }=\eta_{\left(  \alpha_{1}\right)
\left(  \alpha_{2}\right)  \cdots\left(  \alpha_{k}\right)  }^{\ast\left(
q\right)  }=\eta_{\alpha}^{\ast\left(  q\right)  }%
\]
(since the concatenation $\left(  \alpha_{1}\right)  \left(  \alpha
_{2}\right)  \cdots\left(  \alpha_{k}\right)  $ equals $\left(  \alpha
_{1},\alpha_{2},\ldots,\alpha_{k}\right)  =\alpha$). Thus,%
\[
\eta_{\alpha}^{\ast\left(  q\right)  }=\eta_{\left(  \alpha_{1}\right)
}^{\ast\left(  q\right)  }\eta_{\left(  \alpha_{2}\right)  }^{\ast\left(
q\right)  }\cdots\eta_{\left(  \alpha_{k}\right)  }^{\ast\left(  q\right)
}=\eta_{\alpha_{1}}^{\ast\left(  q\right)  }\eta_{\alpha_{2}}^{\ast\left(
q\right)  }\cdots\eta_{\alpha_{k}}^{\ast\left(  q\right)  }%
\]
(since $\eta_{\left(  n\right)  }^{\ast\left(  q\right)  }=\eta_{n}%
^{\ast\left(  q\right)  }$ for each $n>0$). This proves Proposition
\ref{prop.etastar.mult}.
\end{proof}

\subsection{The dual eta basis: generating function}

We shall now work in the ring $\operatorname*{NSym}\left[  \left[  t\right]
\right]  $ of formal power series in the indeterminate $t$ over the ring
$\operatorname*{NSym}$. This ring $\operatorname*{NSym}\left[  \left[
t\right]  \right]  $ is noncommutative (since $\operatorname*{NSym}$ is), but
the indeterminate $t$ commutes with all its elements.

We furthermore define two special series in this ring:

\begin{definition}
Define the formal power series%
\[
H\left(  t\right)  :=\sum_{n\geq0}H_{n}t^{n}\in\operatorname*{NSym}\left[
\left[  t\right]  \right]
\]
and
\[
G\left(  t\right)  :=\sum_{n\geq1}\eta_{n}^{\ast\left(  q\right)  }t^{n}%
\in\operatorname*{NSym}\left[  \left[  t\right]  \right]  .
\]

\end{definition}

Now, it is easy to see the following:

\begin{proposition}
\label{prop.Gt-through-Ht}We have%
\[
G\left(  t\right)  =1-\dfrac{1}{1+\dfrac{H\left(  t\right)  -1}{r}}%
=\dfrac{H\left(  t\right)  -1}{H\left(  t\right)  +q}.
\]

\end{proposition}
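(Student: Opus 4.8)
The plan is to compute $G(t)$ directly from the definition \eqref{eq.def.etastarn.eq} of the $\eta_n^{\ast(q)}$ and recognize the result as a geometric series in $\operatorname{NSym}\left[\left[t\right]\right]$. I would first dispose of the equality between the two right-hand sides, which is pure algebra. Abbreviate $A:=H\left(t\right)-1$. Since $r=q+1$, we have $H\left(t\right)+q=\left(H\left(t\right)-1\right)+r=A+r$, and the first expression rewrites as
\[
1-\frac{1}{1+\dfrac{A}{r}}=1-\frac{r}{r+A}=\frac{A}{r+A}=\frac{H\left(t\right)-1}{H\left(t\right)+q}.
\]
This manipulation is legitimate once $1+A/r$ (equivalently $r+A$, equivalently $H\left(t\right)+q$) is invertible, which I verify below. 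So it suffices to prove $G\left(t\right)=A/\left(r+A\right)$.

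Next I would expand $G\left(t\right)$. Substituting \eqref{eq.def.etastarn.eq} and writing each $\beta\in\operatorname{Comp}\nolimits_{n}$ as $\left(\beta_{1},\beta_{2},\ldots,\beta_{k}\right)$ with $k=\ell\left(\beta\right)$ (so that $H_{\beta}=H_{\beta_{1}}H_{\beta_{2}}\cdots H_{\beta_{k}}$), I obtain
\[
G\left(t\right)=\sum_{n\geq1}\ \sum_{\beta\in\operatorname{Comp}\nolimits_{n}}\frac{\left(-1\right)^{\ell\left(\beta\right)-1}}{r^{\ell\left(\beta\right)}}H_{\beta}\,t^{n}.
\]
The key reindexing is that running over all $n\geq1$ and all compositions of $n$ is the same as running over all $k\geq1$ and all tuples $\left(\beta_{1},\ldots,\beta_{k}\right)\in\left\{1,2,3,\ldots\right\}^{k}$. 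Moreover, since $t$ is central and $n=\beta_{1}+\cdots+\beta_{k}$, each monomial factors as $H_{\beta_{1}}\cdots H_{\beta_{k}}\,t^{n}=\left(H_{\beta_{1}}t^{\beta_{1}}\right)\cdots\left(H_{\beta_{k}}t^{\beta_{k}}\right)$. Pulling the sum over tuples inside as a $k$-fold product by the distributive law, and using $\sum_{m\geq1}H_{m}t^{m}=H\left(t\right)-H_{0}=A$, gives
\[
G\left(t\right)=\sum_{k\geq1}\frac{\left(-1\right)^{k-1}}{r^{k}}\,A^{k}=\sum_{k\geq1}\left(-1\right)^{k-1}\left(\frac{A}{r}\right)^{k}.
\]

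Finally I would sum the geometric series. Setting $B:=A/r$, the powers $B^{k}$ all commute with one another, and $B$ has zero constant term because $A=H_{1}t+H_{2}t^{2}+\cdots$ has none; hence $1+B$ is invertible in $\operatorname{NSym}\left[\left[t\right]\right]$ and $\sum_{k\geq1}\left(-1\right)^{k-1}B^{k}=B\left(1+B\right)^{-1}=1-\left(1+B\right)^{-1}$. This is precisely the first claimed expression for $G\left(t\right)$, and the opening identity then rewrites it as $\left(H\left(t\right)-1\right)/\left(H\left(t\right)+q\right)$, proving the proposition. I expect the only genuine subtlety to be bookkeeping around the noncommutativity of $\operatorname{NSym}\left[\left[t\right]\right]$: one must confirm that the factorization into a $k$-fold product and the geometric-series summation are valid. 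They are, because every step involves only powers of the single element $B$ (which commute among themselves), the distributive law, and the centrality of $t$, while invertibility of $1+B$ and $t$-adic convergence are both guaranteed by $B$ having no constant term.
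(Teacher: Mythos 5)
Your proof is correct and takes essentially the same route as the paper's: expand $G\left(t\right)$ from the definition of $\eta_{n}^{\ast\left(q\right)}$, reindex compositions by their length $k$, factor $H_{\beta_{1}}\cdots H_{\beta_{k}}t^{n}$ into $\left(H_{\beta_{1}}t^{\beta_{1}}\right)\cdots\left(H_{\beta_{k}}t^{\beta_{k}}\right)$ using the centrality of $t$, and sum the resulting geometric series in $\left(H\left(t\right)-1\right)/r$. The only (immaterial) difference is that you verify the algebraic identity between the two closed forms at the outset, whereas the paper does so at the end.
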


\begin{proof}
The power series $H\left(  t\right)  $ has constant term $H_{0}=1$. Thus, the
power series $H\left(  t\right)  -1$ has constant term $0$. Hence, the power
series $1+\dfrac{H\left(  t\right)  -1}{r}$ has constant term $1+\dfrac{0}%
{r}=1$, and thus is invertible (since every formal power series with constant
term $1$ is invertible). The fraction $\dfrac{1}{1+\dfrac{H\left(  t\right)
-1}{r}}$ is thus well-defined.

If $u\in\operatorname*{NSym}\left[  \left[  t\right]  \right]  $ is a formal
power series with constant term $0$, then the geometric series formula yields%
\[
\dfrac{1}{1-u}=\sum_{k\geq0}u^{k}=\underbrace{u^{0}}_{=1}+\sum_{k\geq1}%
u^{k}=1+\sum_{k\geq1}u^{k},
\]
so that%
\begin{equation}
\sum_{k\geq1}u^{k}=\dfrac{1}{1-u}-1. \label{pf.prop.Gt-through-Ht.sumuk=}%
\end{equation}

We shall use this result in a somewhat modified form: If $u\in
\operatorname*{NSym}\left[  \left[  t\right]  \right]  $ is a formal power
series with constant term $0$, then $\dfrac{-u}{r}$ is also a formal power
series with constant term $0$, and we have%
\begin{align}
\sum_{k\geq1}\dfrac{1}{r^{k}}\underbrace{\left(  -1\right)  ^{k-1}}_{=-\left(
-1\right)  ^{k}}u^{k}  &  =-\sum_{k\geq1}\underbrace{\dfrac{1}{r^{k}}\left(
-1\right)  ^{k}u^{k}}_{=\left(  \dfrac{-u}{r}\right)  ^{k}}=-\underbrace{\sum
_{k\geq1}\left(  \dfrac{-u}{r}\right)  ^{k}}_{\substack{=\dfrac{1}%
{1-\dfrac{-u}{r}}-1\\\text{(by (\ref{pf.prop.Gt-through-Ht.sumuk=}%
),}\\\text{applied to }\dfrac{-u}{r}\text{ instead of }u\text{)}}}\nonumber\\
&  =-\left(  \dfrac{1}{1-\dfrac{-u}{r}}-1\right)  =1-\dfrac{1}{1-\dfrac{-u}%
{r}}\nonumber\\
&  =1-\dfrac{1}{1+\dfrac{u}{r}}. \label{pf.prop.Gt-through-Ht.sum-uk=}%
\end{align}

We have $H\left(  t\right)  =\sum_{n\geq0}H_{n}t^{n}=\underbrace{H_{0}}%
_{=1}\underbrace{t^{0}}_{=1}+\sum_{n\geq1}H_{n}t^{n}=1+\sum_{n\geq1}H_{n}%
t^{n}$, so that%
\[
\sum_{n\geq1}H_{n}t^{n}=H\left(  t\right)  -1.
\]

The definition of $G\left(  t\right)  $ yields%
\begin{align*}
G\left(  t\right)   &  =\sum_{n\geq1}\eta_{n}^{\ast\left(  q\right)  }t^{n}\\
&  =\sum_{n\geq1}\left(  \sum_{\beta\in\operatorname*{Comp}\nolimits_{n}%
}\dfrac{1}{r^{\ell\left(  \beta\right)  }}\left(  -1\right)  ^{\ell\left(
\beta\right)  -1}H_{\beta}\right)  t^{n}\ \ \ \ \ \ \ \ \ \ \left(  \text{by
(\ref{eq.def.etastarn.eq})}\right) \\
&  =\sum_{n\geq1}\ \ \sum_{\beta\in\operatorname*{Comp}\nolimits_{n}}\dfrac
{1}{r^{\ell\left(  \beta\right)  }}\left(  -1\right)  ^{\ell\left(
\beta\right)  -1}H_{\beta}\underbrace{t^{n}}_{\substack{=t^{\left\vert
\beta\right\vert }\\\text{(since }\beta\in\operatorname*{Comp}\nolimits_{n}%
\text{)}}}\\
&  =\underbrace{\sum_{n\geq1}\ \ \sum_{\beta\in\operatorname*{Comp}%
\nolimits_{n}}}_{\substack{=\sum_{\substack{\beta\in\operatorname*{Comp}%
;\\\left\vert \beta\right\vert \geq1}}\\=\sum_{\substack{\beta\in
\operatorname*{Comp};\\\ell\left(  \beta\right)  \geq1}}\\\text{(because for a
composition }\beta\text{,}\\\text{the condition \textquotedblleft}\left\vert
\beta\right\vert \geq1\text{\textquotedblright\ is}\\\text{equivalent to
\textquotedblleft}\ell\left(  \beta\right)  \geq1\text{\textquotedblright)}%
}}\dfrac{1}{r^{\ell\left(  \beta\right)  }}\left(  -1\right)  ^{\ell\left(
\beta\right)  -1}H_{\beta}t^{\left\vert \beta\right\vert }\\
&  =\underbrace{\sum_{\substack{\beta\in\operatorname*{Comp};\\\ell\left(
\beta\right)  \geq1}}}_{=\sum_{k\geq1}\ \ \sum_{\substack{\beta\in
\operatorname*{Comp};\\\ell\left(  \beta\right)  =k}}}\dfrac{1}{r^{\ell\left(
\beta\right)  }}\left(  -1\right)  ^{\ell\left(  \beta\right)  -1}H_{\beta
}t^{\left\vert \beta\right\vert }\\
&  =\sum_{k\geq1}\ \ \sum_{\substack{\beta\in\operatorname*{Comp}%
;\\\ell\left(  \beta\right)  =k}}\underbrace{\dfrac{1}{r^{\ell\left(
\beta\right)  }}\left(  -1\right)  ^{\ell\left(  \beta\right)  -1}%
}_{\substack{=\dfrac{1}{r^{k}}\left(  -1\right)  ^{k-1}\\\text{(since }%
\ell\left(  \beta\right)  =k\text{)}}}H_{\beta}t^{\left\vert \beta\right\vert
}\\
&  =\sum_{k\geq1}\dfrac{1}{r^{k}}\left(  -1\right)  ^{k-1}\underbrace{\sum
_{\substack{\beta\in\operatorname*{Comp};\\\ell\left(  \beta\right)
=k}}H_{\beta}t^{\left\vert \beta\right\vert }}_{\substack{=\sum_{\left(
n_{1},n_{2},\ldots,n_{k}\right)  \in\operatorname*{Comp}}H_{\left(
n_{1},n_{2},\ldots,n_{k}\right)  }t^{n_{1}+n_{2}+\cdots+n_{k}}\\\text{(here,
we have renamed the summation index }\beta\text{ as }\left(  n_{1}%
,n_{2},\ldots,n_{k}\right)  \text{)}}}\\
&  =\sum_{k\geq1}\dfrac{1}{r^{k}}\left(  -1\right)  ^{k-1}\underbrace{\sum
_{\left(  n_{1},n_{2},\ldots,n_{k}\right)  \in\operatorname*{Comp}}}%
_{=\sum_{n_{1},n_{2},\ldots,n_{k}\geq1}}\underbrace{H_{\left(  n_{1}%
,n_{2},\ldots,n_{k}\right)  }t^{n_{1}+n_{2}+\cdots+n_{k}}}_{\substack{=\left(
H_{n_{1}}H_{n_{2}}\cdots H_{n_{k}}\right)  \left(  t^{n_{1}}t^{n_{2}}\cdots
t^{n_{k}}\right)  \\=\left(  H_{n_{1}}t^{n_{1}}\right)  \left(  H_{n_{2}%
}t^{n_{2}}\right)  \cdots\left(  H_{n_{k}}t^{n_{k}}\right)  }}
\end{align*}%
\begin{align*}
&  =\sum_{k\geq1}\dfrac{1}{r^{k}}\left(  -1\right)  ^{k-1}\underbrace{\sum
_{n_{1},n_{2},\ldots,n_{k}\geq1}\left(  H_{n_{1}}t^{n_{1}}\right)  \left(
H_{n_{2}}t^{n_{2}}\right)  \cdots\left(  H_{n_{k}}t^{n_{k}}\right)
}_{\substack{=\left(  \sum_{n\geq1}H_{n}t^{n}\right)  ^{k}\\\text{(by the
product rule)}}}\\
&  =\sum_{k\geq1}\dfrac{1}{r^{k}}\left(  -1\right)  ^{k-1}\left(
\underbrace{\sum_{n\geq1}H_{n}t^{n}}_{=H\left(  t\right)  -1}\right)
^{k}=\sum_{k\geq1}\dfrac{1}{r^{k}}\left(  -1\right)  ^{k-1}\left(  H\left(
t\right)  -1\right)  ^{k}\\
&  =1-\dfrac{1}{1+\dfrac{H\left(  t\right)  -1}{r}}\ \ \ \ \ \ \ \ \ \ \left(
\text{by (\ref{pf.prop.Gt-through-Ht.sum-uk=}), applied to }u=H\left(
t\right)  -1\right) \\
&  =1-\dfrac{r}{H\left(  t\right)  +r-1}=\dfrac{H\left(  t\right)
-1}{H\left(  t\right)  +r-1}=\dfrac{H\left(  t\right)  -1}{H\left(  t\right)
+q}%
\end{align*}
(since $r-1=q$ (because $r=q+1$)). This proves Proposition
\ref{prop.Gt-through-Ht}. \qedhere

\begin{verlong}
T0D0: verlong proof.
\end{verlong}
\end{proof}

\begin{proposition}
\label{prop.etastar.Gtk}Let $k\in\mathbb{N}$. Then,%
\begin{equation}
G\left(  t\right)  ^{k}=\sum_{\substack{\beta\in\operatorname*{Comp}%
;\\\ell\left(  \beta\right)  =k}}\eta_{\beta}^{\ast\left(  q\right)
}t^{\left\vert \beta\right\vert }. \label{pf.thm.Deltaetastara.Gtk}%
\end{equation}

\end{proposition}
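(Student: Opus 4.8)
The plan is to expand the $k$-th power of $G\left(  t\right)  $ directly as a $k$-fold product in the ring $\operatorname*{NSym}\left[  \left[  t\right]  \right]  $ and then recognize the resulting sum as the right-hand side of (\ref{pf.thm.Deltaetastara.Gtk}), using the multiplicativity of the dual eta basis established in Proposition \ref{prop.etastar.mult}. This is essentially the same manipulation as the ``product rule'' step in the proof of Proposition \ref{prop.Gt-through-Ht}.

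First I would recall that $G\left(  t\right)  =\sum_{n\geq1}\eta_{n}^{\ast\left(  q\right)  }t^{n}$, and expand the product
\[
G\left(  t\right)  ^{k}=\Bigl(  \sum_{n\geq1}\eta_{n}^{\ast\left(  q\right)  }t^{n}\Bigr)  ^{k}=\sum_{n_{1},n_{2},\ldots,n_{k}\geq1}\bigl(  \eta_{n_{1}}^{\ast\left(  q\right)  }t^{n_{1}}\bigr)  \bigl(  \eta_{n_{2}}^{\ast\left(  q\right)  }t^{n_{2}}\bigr)  \cdots\bigl(  \eta_{n_{k}}^{\ast\left(  q\right)  }t^{n_{k}}\bigr)  ,
\]
where the sum ranges over all $k$-tuples $\left(  n_{1},n_{2},\ldots,n_{k}\right)  $ of positive integers. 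Since the indeterminate $t$ is central in $\operatorname*{NSym}\left[  \left[  t\right]  \right]  $, I can collect all powers of $t$, rewriting each summand as
\[
\eta_{n_{1}}^{\ast\left(  q\right)  }\eta_{n_{2}}^{\ast\left(  q\right)  }\cdots\eta_{n_{k}}^{\ast\left(  q\right)  }\cdot t^{n_{1}+n_{2}+\cdots+n_{k}}.
\]

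Next I would observe that a $k$-tuple $\left(  n_{1},n_{2},\ldots,n_{k}\right)  $ of positive integers is precisely the same datum as a composition $\beta=\left(  n_{1},n_{2},\ldots,n_{k}\right)  $ with $\ell\left(  \beta\right)  =k$, and that under this identification $n_{1}+n_{2}+\cdots+n_{k}=\left\vert \beta\right\vert $. Moreover, Proposition \ref{prop.etastar.mult} (applied to $\beta$) gives $\eta_{n_{1}}^{\ast\left(  q\right)  }\eta_{n_{2}}^{\ast\left(  q\right)  }\cdots\eta_{n_{k}}^{\ast\left(  q\right)  }=\eta_{\beta}^{\ast\left(  q\right)  }$, since $\eta_{n}^{\ast\left(  q\right)  }=\eta_{\left(  n\right)  }^{\ast\left(  q\right)  }$ for each positive integer $n$ (by Definition \ref{def.etastarn}). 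Substituting these identities reindexes the sum over $k$-tuples as the desired sum over compositions $\beta$ with $\ell\left(  \beta\right)  =k$, yielding (\ref{pf.thm.Deltaetastara.Gtk}).

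There is essentially no hard step here: the only points demanding care are the centrality of $t$ (so that the $t$-powers may be gathered term by term after applying the product rule) and the bookkeeping that identifies $k$-tuples of positive integers with length-$k$ compositions. The base case $k=0$ is consistent as well, since $G\left(  t\right)  ^{0}=1$ while the right-hand side reduces to the single term $\eta_{\varnothing}^{\ast\left(  q\right)  }t^{0}=1$ coming from the empty composition.
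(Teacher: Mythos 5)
Your proposal is correct and follows essentially the same route as the paper's own proof: expand $G\left(t\right)^{k}$ by the product rule into a sum over $k$-tuples of positive integers, collect the $t$-powers, identify each $k$-tuple with a length-$k$ composition $\beta$, and invoke Proposition \ref{prop.etastar.mult} to replace $\eta_{n_{1}}^{\ast\left(q\right)}\eta_{n_{2}}^{\ast\left(q\right)}\cdots\eta_{n_{k}}^{\ast\left(q\right)}$ by $\eta_{\beta}^{\ast\left(q\right)}$. The only difference is cosmetic (your explicit check of the $k=0$ case, which the paper leaves implicit).
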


\begin{proof}
From $G\left(  t\right)  =\sum_{n\geq1}\eta_{n}^{\ast\left(  q\right)  }t^{n}%
$, we obtain%
\begin{align*}
G\left(  t\right)  ^{k}  &  =\left(  \sum_{n\geq1}\eta_{n}^{\ast\left(
q\right)  }t^{n}\right)  ^{k}=\sum_{n_{1},n_{2},\ldots,n_{k}\geq
1}\underbrace{\left(  \eta_{n_{1}}^{\ast\left(  q\right)  }t^{n_{1}}\right)
\left(  \eta_{n_{2}}^{\ast\left(  q\right)  }t^{n_{2}}\right)  \cdots\left(
\eta_{n_{k}}^{\ast\left(  q\right)  }t^{n_{k}}\right)  }_{=\eta_{n_{1}}%
^{\ast\left(  q\right)  }\eta_{n_{2}}^{\ast\left(  q\right)  }\cdots
\eta_{n_{k}}^{\ast\left(  q\right)  }t^{n_{1}+n_{2}+\cdots+n_{k}}}\\
&  \ \ \ \ \ \ \ \ \ \ \ \ \ \ \ \ \ \ \ \ \left(  \text{by the product
rule}\right) \\
&  =\sum_{n_{1},n_{2},\ldots,n_{k}\geq1}\eta_{n_{1}}^{\ast\left(  q\right)
}\eta_{n_{2}}^{\ast\left(  q\right)  }\cdots\eta_{n_{k}}^{\ast\left(
q\right)  }t^{n_{1}+n_{2}+\cdots+n_{k}}\\
&  =\sum_{\beta=\left(  \beta_{1},\beta_{2},\ldots,\beta_{k}\right)
\in\operatorname*{Comp}}\ \ \underbrace{\eta_{\beta_{1}}^{\ast\left(
q\right)  }\eta_{\beta_{2}}^{\ast\left(  q\right)  }\cdots\eta_{\beta_{k}%
}^{\ast\left(  q\right)  }}_{\substack{=\eta_{\beta}^{\ast\left(  q\right)
}\\\text{(by Proposition \ref{prop.etastar.mult})}}}\ \ \underbrace{t^{\beta
_{1}+\beta_{2}+\cdots+\beta_{k}}}_{=t^{\left\vert \beta\right\vert }}\\
&  \ \ \ \ \ \ \ \ \ \ \ \ \ \ \ \ \ \ \ \ \left(  \text{here, we have renamed
}n_{1},n_{2},\ldots,n_{k}\text{ as }\beta_{1},\beta_{2},\ldots,\beta
_{k}\right) \\
&  =\underbrace{\sum_{\beta=\left(  \beta_{1},\beta_{2},\ldots,\beta
_{k}\right)  \in\operatorname*{Comp}}}_{=\sum_{\substack{\beta\in
\operatorname*{Comp};\\\ell\left(  \beta\right)  =k}}}\eta_{\beta}%
^{\ast\left(  q\right)  }t^{\left\vert \beta\right\vert }=\sum
_{\substack{\beta\in\operatorname*{Comp};\\\ell\left(  \beta\right)  =k}%
}\eta_{\beta}^{\ast\left(  q\right)  }t^{\left\vert \beta\right\vert }.
\end{align*}
This proves Proposition \ref{prop.etastar.Gtk}.
\end{proof}

\subsection{The dual eta basis: coproduct}

Consider the comultiplication $\Delta:\operatorname*{NSym}\rightarrow
\operatorname*{NSym}\otimes\operatorname*{NSym}$ of the Hopf algebra
$\operatorname*{NSym}$. We again recall the Iverson bracket notation
(Convention \ref{conv.iverson}).

\begin{theorem}
\label{thm.Deltaetastara}For any positive integer $n$, we have%
\[
\Delta\left(  \eta_{n}^{\ast\left(  q\right)  }\right)  =\sum_{\substack{\beta
,\gamma\in\operatorname*{Comp};\\\left\vert \beta\right\vert +\left\vert
\gamma\right\vert =n;\\\left\vert \ell\left(  \beta\right)  -\ell\left(
\gamma\right)  \right\vert \leq1}}\left(  -q\right)  ^{\max\left\{
\ell\left(  \beta\right)  ,\ell\left(  \gamma\right)  \right\}  -1}\left(
q-1\right)  ^{\left[  \ell\left(  \beta\right)  =\ell\left(  \gamma\right)
\right]  }\eta_{\beta}^{\ast\left(  q\right)  }\otimes\eta_{\gamma}%
^{\ast\left(  q\right)  }.
\]

\end{theorem}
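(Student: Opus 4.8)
The plan is to pass to generating functions and exploit that $\Delta$ is a $\mathbf{k}$-algebra homomorphism. First I would extend $\Delta$ to a continuous $\mathbf{k}\left[\left[t\right]\right]$-algebra homomorphism $\operatorname*{NSym}\left[\left[t\right]\right]\to\left(\operatorname*{NSym}\otimes\operatorname*{NSym}\right)\left[\left[t\right]\right]$ acting coefficientwise on powers of $t$. Writing $H_{1}\left(t\right):=H\left(t\right)\otimes1$ and $H_{2}\left(t\right):=1\otimes H\left(t\right)$, which are two commuting series, the defining rule $\Delta\left(H_{n}\right)=\sum_{i=0}^{n}H_{i}\otimes H_{n-i}$ gives directly $\Delta\left(H\left(t\right)\right)=H_{1}\left(t\right)H_{2}\left(t\right)$. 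Likewise I set $G_{1}\left(t\right):=G\left(t\right)\otimes1$ and $G_{2}\left(t\right):=1\otimes G\left(t\right)$, which again commute.

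Next, since $\Delta$ is an algebra homomorphism and Proposition \ref{prop.Gt-through-Ht} writes $G\left(t\right)=\left(H\left(t\right)-1\right)\left(H\left(t\right)+q\right)^{-1}$, applying $\Delta$ yields $\Delta\left(G\left(t\right)\right)=\left(H_{1}H_{2}-1\right)\left(H_{1}H_{2}+q\right)^{-1}$; this is legitimate because $H_{1}H_{2}+q$ has the invertible constant term $r\cdot\left(1\otimes1\right)$ (using Convention \ref{conv.r-ible}). Inverting the relation of Proposition \ref{prop.Gt-through-Ht} gives $H=\left(1+qG\right)\left(1-G\right)^{-1}$, hence $H_{1}=\left(1+qG_{1}\right)\left(1-G_{1}\right)^{-1}$ and $H_{2}=\left(1+qG_{2}\right)\left(1-G_{2}\right)^{-1}$. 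Substituting and simplifying inside the commutative subring generated by $G_{1},G_{2}$ is a short computation: the numerator $\left(1+qG_{1}\right)\left(1+qG_{2}\right)-\left(1-G_{1}\right)\left(1-G_{2}\right)$ factors as $r\left[G_{1}+G_{2}+\left(q-1\right)G_{1}G_{2}\right]$, while the denominator $\left(1+qG_{1}\right)\left(1+qG_{2}\right)+q\left(1-G_{1}\right)\left(1-G_{2}\right)$ equals $r\left(1+qG_{1}G_{2}\right)$, so the two factors $r$ cancel and
\[
\Delta\left(G\left(t\right)\right)=\frac{G_{1}+G_{2}+\left(q-1\right)G_{1}G_{2}}{1+qG_{1}G_{2}}.
\]

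Now I would expand the geometric series $\dfrac{1}{1+qG_{1}G_{2}}=\sum_{m\geq0}\left(-q\right)^{m}\left(G_{1}G_{2}\right)^{m}$ (valid since $G_{1}G_{2}$ has zero constant term), obtaining
\[
\Delta\left(G\left(t\right)\right)=\sum_{m\geq0}\left(-q\right)^{m}\Bigl[G_{1}^{\,m+1}G_{2}^{\,m}+G_{1}^{\,m}G_{2}^{\,m+1}+\left(q-1\right)G_{1}^{\,m+1}G_{2}^{\,m+1}\Bigr].
\]
Proposition \ref{prop.etastar.Gtk} then converts powers back to the dual eta basis via $G_{1}^{a}G_{2}^{b}=\sum_{\ell\left(\beta\right)=a,\ \ell\left(\gamma\right)=b}\eta_{\beta}^{\ast\left(q\right)}\otimes\eta_{\gamma}^{\ast\left(q\right)}\,t^{\left\vert\beta\right\vert+\left\vert\gamma\right\vert}$. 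Extracting the coefficient of $t^{n}$ on both sides of $\Delta\left(G\left(t\right)\right)=\sum_{n\geq1}\Delta\left(\eta_{n}^{\ast\left(q\right)}\right)t^{n}$ produces a sum over pairs $\left(\beta,\gamma\right)$ with $\left\vert\beta\right\vert+\left\vert\gamma\right\vert=n$ and $\left(\ell\left(\beta\right),\ell\left(\gamma\right)\right)$ running through the exponent pairs $\left(m+1,m\right)$, $\left(m,m+1\right)$, $\left(m+1,m+1\right)$.

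The final step is purely bookkeeping. These three exponent families are precisely the pairs $\left(a,b\right)$ with $\left\vert a-b\right\vert\leq1$, and in each of them $\max\left\{a,b\right\}=m+1$, so the attached scalar $\left(-q\right)^{m}\left(q-1\right)^{\left[a=b\right]}$ rewrites as $\left(-q\right)^{\max\left\{a,b\right\}-1}\left(q-1\right)^{\left[a=b\right]}$, matching the claimed coefficient. I expect this uniform matching — together with correctly reading the boundary summands where $\beta$ or $\gamma$ is the empty composition (these sit at $\left(a,b\right)=\left(0,1\right)$ and $\left(1,0\right)$ and give the $1\otimes\eta_{n}^{\ast\left(q\right)}$ and $\eta_{n}^{\ast\left(q\right)}\otimes1$ terms, while $\left(0,0\right)$ never occurs since $n\geq1$) — to be the only genuinely delicate point; the algebraic manipulation in the middle is routine.
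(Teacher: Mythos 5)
Your proposal is correct and follows essentially the same route as the paper's proof: pass to the generating function $G\left(t\right)$, use that $\Delta$ is an algebra homomorphism together with $\Delta\left(H\left(t\right)\right)=\left(H\left(t\right)\otimes1\right)\left(1\otimes H\left(t\right)\right)$, establish $\Delta\left(G\left(t\right)\right)=\dfrac{G_{1}+G_{2}+\left(q-1\right)G_{1}G_{2}}{1+qG_{1}G_{2}}$, expand the geometric series, and convert back via Proposition \ref{prop.etastar.Gtk}. The only cosmetic difference is the direction of the middle verification (you invert to $H=\left(1+qG\right)\left(1-G\right)^{-1}$ and substitute, with the factor $r$ cancelling, whereas the paper substitutes $g_{i}=\left(h_{i}-1\right)\left(h_{i}+q\right)^{-1}$ into the right-hand side), and your bookkeeping of the boundary cases, including the vacuous pair $\left(0,0\right)$, matches the paper's.
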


\begin{example}
For $n=2$, there are exactly three pairs $\left(  \beta,\gamma\right)  $ of
compositions $\beta,\gamma\in\operatorname*{Comp}$ satisfying $\left\vert
\beta\right\vert +\left\vert \gamma\right\vert =n$ and $\left\vert \ell\left(
\beta\right)  -\ell\left(  \gamma\right)  \right\vert \leq1$: namely, the
pairs $\left(  \varnothing,\left(  2\right)  \right)  $, $\left(  \left(
1\right)  ,\left(  1\right)  \right)  $ and $\left(  \left(  2\right)
,\varnothing\right)  $. Hence, Theorem \ref{thm.Deltaetastara} (applied to
$n=2$) yields%
\begin{align*}
\Delta\left(  \eta_{2}^{\ast\left(  q\right)  }\right)   &  =\left(
-q\right)  ^{1-1}\left(  q-1\right)  ^{0}\eta_{\varnothing}^{\ast\left(
q\right)  }\otimes\eta_{\left(  2\right)  }^{\ast\left(  q\right)  }+\left(
-q\right)  ^{1-1}\left(  q-1\right)  ^{1}\eta_{\left(  1\right)  }%
^{\ast\left(  q\right)  }\otimes\eta_{\left(  1\right)  }^{\ast\left(
q\right)  }\\
&  \ \ \ \ \ \ \ \ \ \ +\left(  -q\right)  ^{1-1}\left(  q-1\right)  ^{0}%
\eta_{\left(  2\right)  }^{\ast\left(  q\right)  }\otimes\eta_{\varnothing
}^{\ast\left(  q\right)  }\\
&  =\eta_{\varnothing}^{\ast\left(  q\right)  }\otimes\eta_{\left(  2\right)
}^{\ast\left(  q\right)  }+\left(  q-1\right)  \eta_{\left(  1\right)  }%
^{\ast\left(  q\right)  }\otimes\eta_{\left(  1\right)  }^{\ast\left(
q\right)  }+\eta_{\left(  2\right)  }^{\ast\left(  q\right)  }\otimes
\eta_{\varnothing}^{\ast\left(  q\right)  }\\
&  =1\otimes\eta_{2}^{\ast\left(  q\right)  }+\left(  q-1\right)  \eta
_{1}^{\ast\left(  q\right)  }\otimes\eta_{1}^{\ast\left(  q\right)  }+\eta
_{2}^{\ast\left(  q\right)  }\otimes1
\end{align*}
(since $\eta_{\left(  2\right)  }^{\ast\left(  q\right)  } =\eta_{2}%
^{\ast\left(  q\right)  }$ and $\eta_{\left(  1\right)  }^{\ast\left(
q\right)  } =\eta_{1}^{\ast\left(  q\right)  }$ and $\eta_{\varnothing}%
^{\ast\left(  q\right)  }=1$).

Similar computations show that%
\[
\Delta\left(  \eta_{1}^{\ast\left(  q\right)  }\right)  =1\otimes\eta
_{1}^{\ast\left(  q\right)  }+\eta_{1}^{\ast\left(  q\right)  }\otimes1
\]
and%
\begin{align*}
\Delta\left(  \eta_{3}^{\ast\left(  q\right)  }\right)   &  =1\otimes\eta
_{3}^{\ast\left(  q\right)  }+\left(  q-1\right)  \eta_{1}^{\ast\left(
q\right)  }\otimes\eta_{2}^{\ast\left(  q\right)  }-q\eta_{1}^{\ast\left(
q\right)  }\otimes\left(  \eta_{1}^{\ast\left(  q\right)  }\right)  ^{2}\\
&  \ \ \ \ \ \ \ \ \ \ -q\left(  \eta_{1}^{\ast\left(  q\right)  }\right)
^{2}\otimes\eta_{1}^{\ast\left(  q\right)  }+\left(  q-1\right)  \eta
_{2}^{\ast\left(  q\right)  }\otimes\eta_{1}^{\ast\left(  q\right)  }+\eta
_{3}^{\ast\left(  q\right)  }\otimes1
\end{align*}
(since Proposition \ref{prop.etastar.mult} yields $\eta_{\left(  1,1\right)
}^{\ast\left(  q\right)  }=\left(  \eta_{1}^{\ast\left(  q\right)  }\right)
^{2}$).
\end{example}

Our proof of Theorem \ref{thm.Deltaetastara} will use the following general
fact from abstract algebra:

\begin{lemma}
\label{lem.AtBt}Let $A$ and $B$ be any two $\mathbf{k}$-algebras. Then, there
is a canonical $\mathbf{k}$-algebra homomorphism%
\begin{align*}
\iota:A\left[  \left[  t\right]  \right]  \otimes_{\mathbf{k}\left[  \left[
t\right]  \right]  }B\left[  \left[  t\right]  \right]   &  \rightarrow\left(
A\otimes B\right)  \left[  \left[  t\right]  \right]  ,\\
\left(  \sum_{i\in\mathbb{N}}a_{i}t^{i}\right)  \otimes\left(  \sum
_{j\in\mathbb{N}}b_{j}t^{j}\right)   &  \mapsto\left(  \sum_{i\in\mathbb{N}%
}\left(  a_{i}\otimes1\right)  t^{i}\right)  \left(  \sum_{j\in\mathbb{N}%
}\left(  1\otimes b_{j}\right)  t^{j}\right)  .
\end{align*}

\end{lemma}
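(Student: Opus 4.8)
The plan is to realize $\iota$ via the universal property of the tensor product of (not necessarily commutative) $\mathbf{k}\left[\left[t\right]\right]$-algebras with respect to pairs of homomorphisms with commuting images. The key preliminary observation is that any $\mathbf{k}$-algebra homomorphism $\varphi\colon A\rightarrow C$ induces a $\mathbf{k}\left[\left[t\right]\right]$-algebra homomorphism $A\left[\left[t\right]\right]\rightarrow C\left[\left[t\right]\right]$ acting coefficientwise (i.e., $\sum_{i}a_{i}t^{i}\mapsto\sum_{i}\varphi\left(a_{i}\right)t^{i}$); this is a routine check using the Cauchy product formula for multiplication of power series. I note also that $\mathbf{k}\left[\left[t\right]\right]$ maps into the center of $A\left[\left[t\right]\right]$ (since $\mathbf{k}$ is central in $A$ and $t$ is central in $A\left[\left[t\right]\right]$), so that $A\left[\left[t\right]\right]$, $B\left[\left[t\right]\right]$ and $\left(A\otimes B\right)\left[\left[t\right]\right]$ are genuinely $\mathbf{k}\left[\left[t\right]\right]$-algebras, and the tensor product $A\left[\left[t\right]\right]\otimes_{\mathbf{k}\left[\left[t\right]\right]}B\left[\left[t\right]\right]$ is again a $\mathbf{k}\left[\left[t\right]\right]$-algebra.

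Next I would apply the preceding fact to the two structural $\mathbf{k}$-algebra homomorphisms $A\rightarrow A\otimes B,\ a\mapsto a\otimes1$ and $B\rightarrow A\otimes B,\ b\mapsto1\otimes b$. This yields two $\mathbf{k}\left[\left[t\right]\right]$-algebra homomorphisms $\lambda\colon A\left[\left[t\right]\right]\rightarrow\left(A\otimes B\right)\left[\left[t\right]\right]$ and $\rho\colon B\left[\left[t\right]\right]\rightarrow\left(A\otimes B\right)\left[\left[t\right]\right]$ given by $\lambda\left(\sum_{i}a_{i}t^{i}\right)=\sum_{i}\left(a_{i}\otimes1\right)t^{i}$ and $\rho\left(\sum_{j}b_{j}t^{j}\right)=\sum_{j}\left(1\otimes b_{j}\right)t^{j}$. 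Their images commute in $\left(A\otimes B\right)\left[\left[t\right]\right]$: it suffices to check that a coefficient $a\otimes1$ of $\lambda\left(\alpha\right)$ commutes with a coefficient $1\otimes b$ of $\rho\left(\beta\right)$, and indeed $\left(a\otimes1\right)\left(1\otimes b\right)=a\otimes b=\left(1\otimes b\right)\left(a\otimes1\right)$; since $t$ is central, this propagates to the full power series.

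Having produced $\lambda$ and $\rho$, I would invoke the universal property of the tensor product of $\mathbf{k}\left[\left[t\right]\right]$-algebras: given $\mathbf{k}\left[\left[t\right]\right]$-algebra homomorphisms from $A\left[\left[t\right]\right]$ and $B\left[\left[t\right]\right]$ into a common $\mathbf{k}\left[\left[t\right]\right]$-algebra whose images commute, there is a unique $\mathbf{k}\left[\left[t\right]\right]$-algebra homomorphism $\iota\colon A\left[\left[t\right]\right]\otimes_{\mathbf{k}\left[\left[t\right]\right]}B\left[\left[t\right]\right]\rightarrow\left(A\otimes B\right)\left[\left[t\right]\right]$ satisfying $\iota\left(\alpha\otimes\beta\right)=\lambda\left(\alpha\right)\rho\left(\beta\right)$. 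Evaluating this on $\alpha=\sum_{i}a_{i}t^{i}$ and $\beta=\sum_{j}b_{j}t^{j}$ reproduces exactly the formula in the statement, and $\iota$ is in particular a $\mathbf{k}$-algebra homomorphism (being even a $\mathbf{k}\left[\left[t\right]\right]$-algebra homomorphism). This proves the lemma.

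The main obstacle is conceptual rather than computational: because $A$ and $B$ (e.g., $\operatorname*{NSym}$) are noncommutative, one cannot use the usual universal property of $\otimes$ as a coproduct in the category of commutative algebras, and must instead use the version for arbitrary algebras over a central base ring, which requires the commuting-images hypothesis verified above. The only genuinely computational points—that coefficientwise application of a ring homomorphism respects the Cauchy product, and that commuting coefficients force the power series to commute—are both elementary.
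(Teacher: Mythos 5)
Your proposal is correct and follows essentially the same route as the paper's own proof: both hinge on exactly the two verifications that the map is $\mathbf{k}\left[\left[t\right]\right]$-balanced (so that it descends to the tensor product over $\mathbf{k}\left[\left[t\right]\right]$) and that every series $\sum_{i}\left(a_{i}\otimes1\right)t^{i}$ commutes with every series $\sum_{j}\left(1\otimes b_{j}\right)t^{j}$ (which gives multiplicativity). You merely package these checks as an invocation of the standard universal property of the tensor product of noncommutative algebras with commuting images, whereas the paper performs them directly at the module level; the mathematical content is identical.
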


\begin{proof}
[Proof of Lemma \ref{lem.AtBt} (sketched).]To construct the map $\iota$, we
need $A$ and $B$ only to be $\mathbf{k}$-modules, not $\mathbf{k}$-algebras.
The well-definedness follows easily from the fact that $\sum_{k\in\mathbb{N}%
}\left(  \lambda_{k}1\otimes1\right)  t^{k}=\sum_{k\in\mathbb{N}}\left(
1\otimes\lambda_{k}1\right)  t^{k}$ for any formal power series $\sum
_{k\in\mathbb{N}}\lambda_{k}t^{k}\in\mathbf{k}\left[  \left[  t\right]
\right]  $. Finally, to prove that $\iota$ is a $\mathbf{k}$-algebra
homomorphism, observe that any power series of the form $\sum_{i\in\mathbb{N}%
}\left(  a_{i}\otimes1\right)  t^{i}\in\left(  A\otimes B\right)  \left[
\left[  t\right]  \right]  $ commutes with any power series of the form
$\sum_{j\in\mathbb{N}}\left(  1\otimes b_{j}\right)  t^{j}\in\left(  A\otimes
B\right)  \left[  \left[  t\right]  \right]  $. The details are left to the reader.
\end{proof}

\begin{proof}
[Proof of Theorem \ref{thm.Deltaetastara}.]The comultiplication $\Delta
:\operatorname*{NSym}\rightarrow\operatorname*{NSym}\otimes
\operatorname*{NSym}$ is a $\mathbf{k}$-algebra homomorphism (since
$\operatorname*{NSym}$ is a $\mathbf{k}$-bialgebra), and thus induces a
$\mathbf{k}\left[  \left[  t\right]  \right]  $-algebra homomorphism%
\[
\Delta_{t}:\operatorname*{NSym}\left[  \left[  t\right]  \right]
\rightarrow\left(  \operatorname*{NSym}\otimes\operatorname*{NSym}\right)
\left[  \left[  t\right]  \right]
\]
that sends each formal power series $\sum_{i\in\mathbb{N}}a_{i}t^{i}$ to
$\sum_{i\in\mathbb{N}}\Delta\left(  a_{i}\right)  t^{i}$. This $\Delta_{t}$ is
a $\mathbf{k}$-algebra homomorphism as well (since any $\mathbf{k}\left[
\left[  t\right]  \right]  $-algebra homomorphism is a $\mathbf{k}$-algebra homomorphism).

Furthermore, Lemma \ref{lem.AtBt} shows that there is a canonical $\mathbf{k}%
$-algebra homomorphism%
\begin{align*}
\iota:\operatorname*{NSym}\left[  \left[  t\right]  \right]  \otimes
_{\mathbf{k}\left[  \left[  t\right]  \right]  }\operatorname*{NSym}\left[
\left[  t\right]  \right]   &  \rightarrow\left(  \operatorname*{NSym}%
\otimes\operatorname*{NSym}\right)  \left[  \left[  t\right]  \right]  ,\\
\left(  \sum_{i\in\mathbb{N}}a_{i}t^{i}\right)  \otimes\left(  \sum
_{j\in\mathbb{N}}b_{j}t^{j}\right)   &  \mapsto\left(  \sum_{i\in\mathbb{N}%
}\left(  a_{i}\otimes1\right)  t^{i}\right)  \left(  \sum_{j\in\mathbb{N}%
}\left(  1\otimes b_{j}\right)  t^{j}\right)  .
\end{align*}
Unlike some authors, we will not treat $\iota$ as an embedding, but we will
often use the fact that $\iota$ is a $\mathbf{k}$-algebra homomorphism.

We recall that $\mathbf{k}$-algebra homomorphisms are always ring
homomorphisms, and thus respect quotients. That is, if $f:A\rightarrow B$ is a
$\mathbf{k}$-algebra homomorphism, and if $a_{1}$ and $a_{2}$ are two elements
of $A$ such that $a_{2}$ is invertible in $A$, then $f\left(  a_{2}\right)  $
is again invertible and we have $f\left(  \dfrac{a_{1}}{a_{2}}\right)
=\dfrac{f\left(  a_{1}\right)  }{f\left(  a_{2}\right)  }$. We will use this
fact without further comment a few times.

For the sake of brevity, we define the shorthands%
\[
\mathbf{G}:=G\left(  t\right)  \ \ \ \ \ \ \ \ \ \ \text{and}%
\ \ \ \ \ \ \ \ \ \ \mathbf{H}:=H\left(  t\right)  .
\]

Proposition \ref{prop.Gt-through-Ht} says that $G\left(  t\right)
=\dfrac{H\left(  t\right)  -1}{H\left(  t\right)  +q}$. Using our
abbreviations $\mathbf{G}$ and $\mathbf{H}$, we can rewrite this as%
\begin{equation}
\mathbf{G}=\dfrac{\mathbf{H}-1}{\mathbf{H}+q}.
\label{pf.thm.Deltaetastara.G-via-H}%
\end{equation}
Hence,%
\begin{equation}
\Delta_{t}\left(  \mathbf{G}\right)  =\Delta_{t}\left(  \dfrac{\mathbf{H}%
-1}{\mathbf{H}+q}\right)  =\dfrac{\Delta_{t}\left(  \mathbf{H}\right)
-1}{\Delta_{t}\left(  \mathbf{H}\right)  +q}
\label{pf.thm.Deltaetastara.G-via-H2}%
\end{equation}
(since $\Delta_{t}$ is a $\mathbf{k}$-algebra homomorphism).

Next, we observe the following:

\begin{statement}
\textit{Claim 1:} We have%
\begin{equation}
\Delta_{t}\left(  \mathbf{H}\right)  =\iota\left(  \mathbf{H}\otimes
\mathbf{H}\right)  . \label{pf.thm.Deltaetastara.DeltaH}%
\end{equation}

\end{statement}

[\textit{Proof of Claim 1:} From $\mathbf{H}=H\left(  t\right)  =\sum
_{n\in\mathbb{N}}H_{n}t^{n}=\sum_{i\in\mathbb{N}}H_{i}t^{i}$ and
$\mathbf{H}=H\left(  t\right)  =\sum_{n\in\mathbb{N}}H_{n}t^{n}=\sum
_{j\in\mathbb{N}}H_{j}t^{j}$, we obtain%
\[
\mathbf{H}\otimes\mathbf{H}=\left(  \sum_{i\in\mathbb{N}}H_{i}t^{i}\right)
\otimes\left(  \sum_{j\in\mathbb{N}}H_{j}t^{j}\right)  .
\]
By the definition of $\iota$, this entails%
\begin{align}
\iota\left(  \mathbf{H}\otimes\mathbf{H}\right)   &  =\left(  \sum
_{i\in\mathbb{N}}\left(  H_{i}\otimes1\right)  t^{i}\right)  \left(
\sum_{j\in\mathbb{N}}\left(  1\otimes H_{j}\right)  t^{j}\right) \nonumber\\
&  =\sum_{n\in\mathbb{N}}\left(  \sum_{\substack{i,j\in\mathbb{N}%
;\\i+j=n}}\left(  H_{i}\otimes1\right)  \left(  1\otimes H_{j}\right)
\right)  t^{n} \label{pf.thm.Deltaetastara.DeltaH.c1.pf.1}%
\end{align}
(by the definition of the product of two power series). On the other hand,
$\mathbf{H}=\sum_{n\in\mathbb{N}}H_{n}t^{n}$. Thus,%
\begin{equation}
\Delta_{t}\left(  \mathbf{H}\right)  =\Delta_{t}\left(  \sum_{n\in\mathbb{N}%
}H_{n}t^{n}\right)  =\sum_{n\in\mathbb{N}}\Delta\left(  H_{n}\right)  t^{n}
\label{pf.thm.Deltaetastara.DeltaH.c1.pf.2}%
\end{equation}
(by the definition of $\Delta_{t}$). However, for each $n\in\mathbb{N}$, we
have%
\begin{align*}
\Delta\left(  H_{n}\right)   &  =\sum_{\substack{i,j\in\mathbb{N}%
;\\i+j=n}}\ \ \underbrace{H_{i}\otimes H_{j}}_{=\left(  H_{i}\otimes1\right)
\left(  1\otimes H_{j}\right)  }\ \ \ \ \ \ \ \ \ \ \left(  \text{by
\cite[(5.4.2)]{GriRei}}\right) \\
&  =\sum_{\substack{i,j\in\mathbb{N};\\i+j=n}}\left(  H_{i}\otimes1\right)
\left(  1\otimes H_{j}\right)  .
\end{align*}
Hence, the right hand sides of the equalities
(\ref{pf.thm.Deltaetastara.DeltaH.c1.pf.2}) and
(\ref{pf.thm.Deltaetastara.DeltaH.c1.pf.1}) are equal. Therefore, so are their
left hand sides. In other words, we have $\Delta_{t}\left(  \mathbf{H}\right)
=\iota\left(  \mathbf{H}\otimes\mathbf{H}\right)  $. This proves Claim 1.]
\medskip

Define four elements $h_{1}$, $h_{2}$, $g_{1}$ and $g_{2}$ of $\left(
\operatorname*{NSym}\otimes\operatorname*{NSym}\right)  \left[  \left[
t\right]  \right]  $ by%
\begin{align*}
h_{1}  &  =\iota\left(  \mathbf{H}\otimes1\right)
\ \ \ \ \ \ \ \ \ \ \text{and}\ \ \ \ \ \ \ \ \ \ h_{2}=\iota\left(
1\otimes\mathbf{H}\right)  \ \ \ \ \ \ \ \ \ \ \text{and}\\
g_{1}  &  =\iota\left(  \mathbf{G}\otimes1\right)
\ \ \ \ \ \ \ \ \ \ \text{and}\ \ \ \ \ \ \ \ \ \ g_{2}=\iota\left(
1\otimes\mathbf{G}\right)  .
\end{align*}

The equality (\ref{pf.thm.Deltaetastara.DeltaH}) becomes%
\begin{align}
\Delta_{t}\left(  \mathbf{H}\right)   &  =\iota\left(  \underbrace{\mathbf{H}%
\otimes\mathbf{H}}_{=\left(  \mathbf{H}\otimes1\right)  \left(  1\otimes
\mathbf{H}\right)  }\right)  =\iota\left(  \left(  \mathbf{H}\otimes1\right)
\left(  1\otimes\mathbf{H}\right)  \right) \nonumber\\
&  =\underbrace{\iota\left(  \mathbf{H}\otimes1\right)  }_{=h_{1}}%
\cdot\underbrace{\iota\left(  1\otimes\mathbf{H}\right)  }_{=h_{2}%
}\ \ \ \ \ \ \ \ \ \ \left(  \text{since }\iota\text{ is a ring homomorphism}%
\right) \nonumber\\
&  =h_{1}h_{2} \label{pf.thm.Deltaetastara.DeltaH2}%
\end{align}
and
\begin{align*}
\Delta_{t}\left(  \mathbf{H}\right)   &  =\iota\left(  \underbrace{\mathbf{H}%
\otimes\mathbf{H}}_{=\left(  1\otimes\mathbf{H}\right)  \left(  \mathbf{H}%
\otimes1\right)  }\right)  =\iota\left(  \left(  1\otimes\mathbf{H}\right)
\left(  \mathbf{H}\otimes1\right)  \right) \\
&  =\underbrace{\iota\left(  1\otimes\mathbf{H}\right)  }_{=h_{2}}%
\cdot\underbrace{\iota\left(  \mathbf{H}\otimes1\right)  }_{=h_{1}%
}\ \ \ \ \ \ \ \ \ \ \left(  \text{since }\iota\text{ is a ring homomorphism}%
\right) \\
&  =h_{2}h_{1}.
\end{align*}
Comparing these two equalities, we obtain $h_{1}h_{2}=h_{2}h_{1}$. In other
words, the elements $h_{1}$ and $h_{2}$ commute. The elements $\dfrac{1}%
{h_{1}+q}$, $\dfrac{1}{h_{2}+q}$ and $\dfrac{1}{h_{1}h_{2}+q}$ (which are
easily seen to be well-defined\footnote{\textit{Proof.} The power series
$h_{1}=\iota\left(  \mathbf{H}\otimes1\right)  $ has constant term $1$ (since
$\mathbf{H}=\sum_{n\in\mathbb{N}}H_{n}t^{n}$ entails $\iota\left(
\mathbf{H}\otimes1\right)  =\sum_{n\in\mathbb{N}}\left(  H_{n}\otimes1\right)
t^{n}$, and this latter series has constant term $\underbrace{H_{0}}%
_{=1}\otimes1=1\otimes1=1$). Thus, the power series $h_{1}+q$ has constant
term $1+q=q+1=r$, which is invertible (by Convention \ref{conv.r-ible}). Thus,
the power series $h_{1}+q$ itself is invertible (since a formal power series
whose constant term is invertible must itself be invertible). In other words,
$\dfrac{1}{h_{1}+q}$ is well-defined. Similarly, $\dfrac{1}{h_{2}+q}$ and
$\dfrac{1}{h_{1}h_{2}+q}$ are well-defined.}) are rational functions in these
commuting elements $h_{1}$ and $h_{2}$, and therefore also commute with them
(and with each other). Thus, the five elements $h_{1}$, $h_{2}$, $\dfrac
{1}{h_{1}+q}$, $\dfrac{1}{h_{2}+q}$ and $\dfrac{1}{h_{1}h_{2}+q}$ generate a
commutative $\mathbf{k}$-subalgebra of $\left(  \operatorname*{NSym}%
\otimes\operatorname*{NSym}\right)  \left[  \left[  t\right]  \right]  $. Let
us denote this commutative $\mathbf{k}$-subalgebra by $\mathcal{H}$. Clearly,
the elements $h_{1}+q$, $h_{2}+q$ and $h_{1}h_{2}+q$ are invertible in
$\mathcal{H}$. Also, the element $q+1=r$ is invertible in $\mathcal{H}$ (since
it is invertible in $\mathbf{k}$ already).

From (\ref{pf.thm.Deltaetastara.G-via-H}), we obtain
\begin{equation}
g_{1}=\dfrac{h_{1}-1}{h_{1}+q}\ \ \ \ \ \ \ \ \ \ \text{and}%
\ \ \ \ \ \ \ \ \ \ g_{2}=\dfrac{h_{2}-1}{h_{2}+q}
\label{pf.thm.Deltaetastara.g1g2}%
\end{equation}
(since $\iota$ is a $\mathbf{k}$-algebra homomorphism\footnote{Let us give
some more details here: Let $\iota_{1}$ be the map
\begin{align*}
\operatorname*{NSym}\left[  \left[  t\right]  \right]   &  \rightarrow
\operatorname*{NSym}\left[  \left[  t\right]  \right]  \otimes_{\mathbf{k}%
\left[  \left[  t\right]  \right]  }\operatorname*{NSym}\left[  \left[
t\right]  \right]  ,\\
z  &  \mapsto z\otimes1.
\end{align*}
Then, $\iota_{1}$ is a $\mathbf{k}$-algebra homomorphism. Since $\iota$ is a
$\mathbf{k}$-algebra homomorphism as well, we conclude that the composition
$\iota\circ\iota_{1}$ is a $\mathbf{k}$-algebra homomorphism. However, the
definition of $\iota_{1}$ yields $\iota_{1}\left(  \mathbf{G}\right)
=\mathbf{G}\otimes1$, so that
\[
\left(  \iota\circ\iota_{1}\right)  \left(  \mathbf{G}\right)  =\iota\left(
\underbrace{\iota_{1}\left(  \mathbf{G}\right)  }_{=\mathbf{G}\otimes
1}\right)  =\iota\left(  \mathbf{G}\otimes1\right)  =g_{1}%
\]
(by the definition of $g_{1}$). Similarly, $\left(  \iota\circ\iota
_{1}\right)  \left(  \mathbf{H}\right)  =h_{1}$. Now, applying the map
$\iota\circ\iota_{1}$ to both sides of the equality
(\ref{pf.thm.Deltaetastara.G-via-H}), we obtain%
\[
\left(  \iota\circ\iota_{1}\right)  \left(  \mathbf{G}\right)  =\left(
\iota\circ\iota_{1}\right)  \left(  \dfrac{\mathbf{H}-1}{\mathbf{H}+q}\right)
=\dfrac{\left(  \iota\circ\iota_{1}\right)  \left(  \mathbf{H}\right)
-1}{\left(  \iota\circ\iota_{1}\right)  \left(  \mathbf{H}\right)  +q}%
\]
(since $\iota\circ\iota_{1}$ is a $\mathbf{k}$-algebra homomorphism). In view
of $\left(  \iota\circ\iota_{1}\right)  \left(  \mathbf{G}\right)  =g_{1}$ and
$\left(  \iota\circ\iota_{1}\right)  \left(  \mathbf{H}\right)  =h_{1}$, we
can rewrite this as $g_{1}=\dfrac{h_{1}-1}{h_{1}+q}$. Similarly, we can show
that $g_{2}=\dfrac{h_{2}-1}{h_{2}+q}$. Thus, (\ref{pf.thm.Deltaetastara.g1g2})
is proved.}).

Thus, the elements $g_{1}$ and $g_{2}$ also belong to the commutative
$\mathbf{k}$-subalgebra $\mathcal{H}$ generated by $h_{1}$, $h_{2}$,
$\dfrac{1}{h_{1}+q}$, $\dfrac{1}{h_{2}+q}$ and $\dfrac{1}{h_{1}h_{2}+q}$.
Straightforward computations using (\ref{pf.thm.Deltaetastara.g1g2}) (and the
commutativity of $\mathcal{H}$) show that%
\[
1+qg_{1}g_{2}=\dfrac{\left(  q+1\right)  \left(  h_{1}h_{2}+q\right)
}{\left(  h_{1}+q\right)  \left(  h_{2}+q\right)  }.
\]
Thus, $1+qg_{1}g_{2}$ is invertible in $\mathcal{H}$ (since $q+1$, $h_{1}%
h_{2}+q$, $h_{1}+q$ and $h_{2}+q$ are invertible in $\mathcal{H}$).

From (\ref{pf.thm.Deltaetastara.DeltaH2}), we obtain%
\begin{align}
\dfrac{\Delta_{t}\left(  \mathbf{H}\right)  -1}{\Delta_{t}\left(
\mathbf{H}\right)  +q}  &  =\dfrac{h_{1}h_{2}-1}{h_{1}h_{2}+q}\nonumber\\
&  =\dfrac{g_{1}+g_{2}+\left(  q-1\right)  g_{1}g_{2}}{1+qg_{1}g_{2}}.
\label{pf.thm.Deltaetastara.abab2}%
\end{align}
(Indeed, the last equality sign can easily be verified by straightforward
computations in the commutative $\mathbf{k}$-algebra $\mathcal{H}$, using the
equalities (\ref{pf.thm.Deltaetastara.g1g2}). For example, you can plug
(\ref{pf.thm.Deltaetastara.g1g2}) into $\dfrac{g_{1}+g_{2}+\left(  q-1\right)
g_{1}g_{2}}{1+qg_{1}g_{2}}$ and simplify; the result will be $\dfrac
{h_{1}h_{2}-1}{h_{1}h_{2}+q}$.)

From $g_{1}=\iota\left(  \mathbf{G}\otimes1\right)  $ and $g_{2}=\iota\left(
1\otimes\mathbf{G}\right)  $, we obtain%
\begin{align}
g_{1}g_{2}  &  =\iota\left(  \mathbf{G}\otimes1\right)  \cdot\iota\left(
1\otimes\mathbf{G}\right)  =\iota\left(  \underbrace{\left(  \mathbf{G}%
\otimes1\right)  \cdot\left(  1\otimes\mathbf{G}\right)  }_{=\mathbf{G}%
\otimes\mathbf{G}}\right) \nonumber\\
&  \ \ \ \ \ \ \ \ \ \ \ \ \ \ \ \ \ \ \ \ \left(  \text{since }\iota\text{ is
a }\mathbf{k}\text{-algebra homomorphism}\right) \nonumber\\
&  =\iota\left(  \mathbf{G}\otimes\mathbf{G}\right)  .
\label{pf.thm.Deltaetastara.g1g2o}%
\end{align}
Note that the formal power series $qg_{1}g_{2}$ has constant term $0$ (since
it is easy to see that both $g_{1}$ and $g_{2}$ have constant term $0$).

Now, (\ref{pf.thm.Deltaetastara.G-via-H2}) becomes%
\begin{align*}
\Delta_{t}\left(  \mathbf{G}\right)   &  =\dfrac{\Delta_{t}\left(
\mathbf{H}\right)  -1}{\Delta_{t}\left(  \mathbf{H}\right)  +q}=\dfrac
{g_{1}+g_{2}+\left(  q-1\right)  g_{1}g_{2}}{1+qg_{1}g_{2}}%
\ \ \ \ \ \ \ \ \ \ \left(  \text{by (\ref{pf.thm.Deltaetastara.abab2}%
)}\right) \\
&  =\underbrace{\dfrac{1}{1+qg_{1}g_{2}}}_{\substack{=\sum_{i\in\mathbb{N}%
}\left(  -qg_{1}g_{2}\right)  ^{i}\\\text{(by the geometric series formula)}%
}}\cdot\left(  g_{1}+g_{2}+\left(  q-1\right)  g_{1}g_{2}\right) \\
&  =\sum_{i\in\mathbb{N}}\underbrace{\left(  -qg_{1}g_{2}\right)  ^{i}%
}_{=\left(  -q\right)  ^{i}\left(  g_{1}g_{2}\right)  ^{i}}\left(  g_{1}%
+g_{2}+\left(  q-1\right)  g_{1}g_{2}\right) \\
&  =\sum_{i\in\mathbb{N}}\left(  -q\right)  ^{i}\left(  \underbrace{g_{1}%
g_{2}}_{\substack{=\iota\left(  \mathbf{G}\otimes\mathbf{G}\right)
\\\text{(by (\ref{pf.thm.Deltaetastara.g1g2o}))}}}\right)  ^{i}\left(
\underbrace{g_{1}}_{=\iota\left(  \mathbf{G}\otimes1\right)  }%
+\underbrace{g_{2}}_{=\iota\left(  1\otimes\mathbf{G}\right)  }+\left(
q-1\right)  \underbrace{g_{1}g_{2}}_{\substack{=\iota\left(  \mathbf{G}%
\otimes\mathbf{G}\right)  \\\text{(by (\ref{pf.thm.Deltaetastara.g1g2o}))}%
}}\right) \\
&  =\sum_{i\in\mathbb{N}}\left(  -q\right)  ^{i}\underbrace{\left(
\iota\left(  \mathbf{G}\otimes\mathbf{G}\right)  \right)  ^{i}\cdot\left(
\iota\left(  \mathbf{G}\otimes1\right)  +\iota\left(  1\otimes\mathbf{G}%
\right)  +\left(  q-1\right)  \iota\left(  \mathbf{G}\otimes\mathbf{G}\right)
\right)  }_{\substack{=\iota\left(  \left(  \mathbf{G}\otimes\mathbf{G}%
\right)  ^{i}\left(  \mathbf{G}\otimes1+1\otimes\mathbf{G}+\left(  q-1\right)
\mathbf{G}\otimes\mathbf{G}\right)  \right)  \\\text{(since }\iota\text{ is a
}\mathbf{k}\text{-algebra homomorphism)}}}\\
&  =\sum_{i\in\mathbb{N}}\left(  -q\right)  ^{i}\iota\left(  \left(
\mathbf{G}\otimes\mathbf{G}\right)  ^{i}\left(  \mathbf{G}\otimes
1+1\otimes\mathbf{G}+\left(  q-1\right)  \mathbf{G}\otimes\mathbf{G}\right)
\right)  .
\end{align*}

In order to simplify the right hand side, we need two further claims:

\begin{statement}
\textit{Claim 2:} Let $u,v\in\mathbb{N}$. Then,%
\[
\iota\left(  \mathbf{G}^{u}\otimes\mathbf{G}^{v}\right)  =\sum
_{\substack{\beta,\gamma\in\operatorname*{Comp};\\\ell\left(  \beta\right)
=u\text{ and }\ell\left(  \gamma\right)  =v}}\left(  \eta_{\beta}^{\ast\left(
q\right)  }\otimes\eta_{\gamma}^{\ast\left(  q\right)  }\right)  t^{\left\vert
\beta\right\vert +\left\vert \gamma\right\vert }.
\]

\end{statement}

[\textit{Proof of Claim 2:} From $\mathbf{G}=G\left(  t\right)  $, we obtain%
\begin{align}
\mathbf{G}^{u}  &  =G\left(  t\right)  ^{u}=\sum_{\substack{\beta
\in\operatorname*{Comp};\\\ell\left(  \beta\right)  =u}}\eta_{\beta}%
^{\ast\left(  q\right)  }t^{\left\vert \beta\right\vert }%
\ \ \ \ \ \ \ \ \ \ \left(  \text{by (\ref{pf.thm.Deltaetastara.Gtk}), applied
to }k=u\right) \nonumber\\
&  =\sum_{i\in\mathbb{N}}\ \ \sum_{\substack{\beta\in\operatorname*{Comp}%
;\\\ell\left(  \beta\right)  =u;\\\left\vert \beta\right\vert =i}}\eta_{\beta
}^{\ast\left(  q\right)  }\underbrace{t^{\left\vert \beta\right\vert }%
}_{\substack{=t^{i}\\\text{(since }\left\vert \beta\right\vert =i\text{)}%
}}\nonumber\\
&  =\sum_{i\in\mathbb{N}}\left(  \sum_{\substack{\beta\in\operatorname*{Comp}%
;\\\ell\left(  \beta\right)  =u;\\\left\vert \beta\right\vert =i}}\eta_{\beta
}^{\ast\left(  q\right)  }\right)  t^{i}.
\label{pf.thm.Deltaetastara.g1g2o.c2.pf.beta}%
\end{align}
The same argument (applied to $v$ instead of $u$) shows that%
\[
\mathbf{G}^{v}=\sum_{i\in\mathbb{N}}\left(  \sum_{\substack{\beta
\in\operatorname*{Comp};\\\ell\left(  \beta\right)  =v;\\\left\vert
\beta\right\vert =i}}\eta_{\beta}^{\ast\left(  q\right)  }\right)  t^{i}%
=\sum_{j\in\mathbb{N}}\left(  \sum_{\substack{\gamma\in\operatorname*{Comp}%
;\\\ell\left(  \gamma\right)  =v;\\\left\vert \gamma\right\vert =j}%
}\eta_{\gamma}^{\ast\left(  q\right)  }\right)  t^{j}%
\]
(here, we have renamed the summation indices $i$ and $\beta$ as $j$ and
$\gamma$). Substituting these two equalities into $\mathbf{G}^{u}%
\otimes\mathbf{G}^{v}$, we obtain%
\[
\mathbf{G}^{u}\otimes\mathbf{G}^{v}=\left(  \sum_{i\in\mathbb{N}}\left(
\sum_{\substack{\beta\in\operatorname*{Comp};\\\ell\left(  \beta\right)
=u;\\\left\vert \beta\right\vert =i}}\eta_{\beta}^{\ast\left(  q\right)
}\right)  t^{i}\right)  \otimes\left(  \sum_{j\in\mathbb{N}}\left(
\sum_{\substack{\gamma\in\operatorname*{Comp};\\\ell\left(  \gamma\right)
=v;\\\left\vert \gamma\right\vert =j}}\eta_{\gamma}^{\ast\left(  q\right)
}\right)  t^{j}\right)
\]
(note that the two outer sums here are infinite, so that we cannot simply
expand them out using the bilinearity of the tensor product). Applying the map
$\iota$ to both sides of this equality, we obtain%
\begin{align*}
\iota\left(  \mathbf{G}^{u}\otimes\mathbf{G}^{v}\right)   &  =\iota\left(
\left(  \sum_{i\in\mathbb{N}}\left(  \sum_{\substack{\beta\in
\operatorname*{Comp};\\\ell\left(  \beta\right)  =u;\\\left\vert
\beta\right\vert =i}}\eta_{\beta}^{\ast\left(  q\right)  }\right)
t^{i}\right)  \otimes\left(  \sum_{j\in\mathbb{N}}\left(  \sum
_{\substack{\gamma\in\operatorname*{Comp};\\\ell\left(  \gamma\right)
=v;\\\left\vert \gamma\right\vert =j}}\eta_{\gamma}^{\ast\left(  q\right)
}\right)  t^{j}\right)  \right) \\
&  =\left(  \sum_{i\in\mathbb{N}}\left(  \left(  \sum_{\substack{\beta
\in\operatorname*{Comp};\\\ell\left(  \beta\right)  =u;\\\left\vert
\beta\right\vert =i}}\eta_{\beta}^{\ast\left(  q\right)  }\right)
\otimes1\right)  t^{i}\right)  \left(  \sum_{j\in\mathbb{N}}\left(
1\otimes\left(  \sum_{\substack{\gamma\in\operatorname*{Comp};\\\ell\left(
\gamma\right)  =v;\\\left\vert \gamma\right\vert =j}}\eta_{\gamma}%
^{\ast\left(  q\right)  }\right)  \right)  t^{j}\right) \\
&  \ \ \ \ \ \ \ \ \ \ \ \ \ \ \ \ \ \ \ \ \left(  \text{by the definition of
}\iota\right) \\
&  =\sum_{i\in\mathbb{N}}\ \ \sum_{j\in\mathbb{N}}\underbrace{\left(  \left(
\sum_{\substack{\beta\in\operatorname*{Comp};\\\ell\left(  \beta\right)
=u;\\\left\vert \beta\right\vert =i}}\eta_{\beta}^{\ast\left(  q\right)
}\right)  \otimes1\right)  \left(  1\otimes\left(  \sum_{\substack{\gamma
\in\operatorname*{Comp};\\\ell\left(  \gamma\right)  =v;\\\left\vert
\gamma\right\vert =j}}\eta_{\gamma}^{\ast\left(  q\right)  }\right)  \right)
}_{\substack{=\left(  \sum_{\substack{\beta\in\operatorname*{Comp}%
;\\\ell\left(  \beta\right)  =u;\\\left\vert \beta\right\vert =i}}\eta_{\beta
}^{\ast\left(  q\right)  }\right)  \otimes\left(  \sum_{\substack{\gamma
\in\operatorname*{Comp};\\\ell\left(  \gamma\right)  =v;\\\left\vert
\gamma\right\vert =j}}\eta_{\gamma}^{\ast\left(  q\right)  }\right)
\\=\sum_{\substack{\beta,\gamma\in\operatorname*{Comp};\\\ell\left(
\beta\right)  =u\text{ and }\ell\left(  \gamma\right)  =v;\\\left\vert
\beta\right\vert =i\text{ and }\left\vert \gamma\right\vert =j}}\eta_{\beta
}^{\ast\left(  q\right)  }\otimes\eta_{\gamma}^{\ast\left(  q\right)
}\\\text{(here, we expanded the tensor product, }\\\text{since both sums
involved are finite)}}}\underbrace{t^{i}t^{j}}_{=t^{i+j}}\\
&  =\sum_{i\in\mathbb{N}}\ \ \sum_{j\in\mathbb{N}}\ \ \sum_{\substack{\beta
,\gamma\in\operatorname*{Comp};\\\ell\left(  \beta\right)  =u\text{ and }%
\ell\left(  \gamma\right)  =v;\\\left\vert \beta\right\vert =i\text{ and
}\left\vert \gamma\right\vert =j}}\left(  \eta_{\beta}^{\ast\left(  q\right)
}\otimes\eta_{\gamma}^{\ast\left(  q\right)  }\right)  \underbrace{t^{i+j}%
}_{\substack{=t^{\left\vert \beta\right\vert +\left\vert \gamma\right\vert
}\\\text{(since }i=\left\vert \beta\right\vert \text{ and }j=\left\vert
\gamma\right\vert \text{)}}}\\
&  =\underbrace{\sum_{i\in\mathbb{N}}\ \ \sum_{j\in\mathbb{N}}\ \ \sum
_{\substack{\beta,\gamma\in\operatorname*{Comp};\\\ell\left(  \beta\right)
=u\text{ and }\ell\left(  \gamma\right)  =v;\\\left\vert \beta\right\vert
=i\text{ and }\left\vert \gamma\right\vert =j}}}_{=\sum_{\substack{\beta
,\gamma\in\operatorname*{Comp};\\\ell\left(  \beta\right)  =u\text{ and }%
\ell\left(  \gamma\right)  =v}}}\left(  \eta_{\beta}^{\ast\left(  q\right)
}\otimes\eta_{\gamma}^{\ast\left(  q\right)  }\right)  t^{\left\vert
\beta\right\vert +\left\vert \gamma\right\vert }\\
&  =\sum_{\substack{\beta,\gamma\in\operatorname*{Comp};\\\ell\left(
\beta\right)  =u\text{ and }\ell\left(  \gamma\right)  =v}}\left(  \eta
_{\beta}^{\ast\left(  q\right)  }\otimes\eta_{\gamma}^{\ast\left(  q\right)
}\right)  t^{\left\vert \beta\right\vert +\left\vert \gamma\right\vert }.
\end{align*}
This proves Claim 2.]

\begin{statement}
\textit{Claim 3:} Let $i\in\mathbb{N}$. Then,%
\begin{align*}
&  \iota\left(  \left(  \mathbf{G}\otimes\mathbf{G}\right)  ^{i}\left(
\mathbf{G}\otimes1+1\otimes\mathbf{G}+\left(  q-1\right)  \mathbf{G}%
\otimes\mathbf{G}\right)  \right) \\
&  =\sum_{\substack{\beta,\gamma\in\operatorname*{Comp};\\\left\vert
\ell\left(  \beta\right)  -\ell\left(  \gamma\right)  \right\vert \leq
1;\\\max\left\{  \ell\left(  \beta\right)  ,\ell\left(  \gamma\right)
\right\}  =i+1}}\left(  q-1\right)  ^{\left[  \ell\left(  \beta\right)
=\ell\left(  \gamma\right)  \right]  }\left(  \eta_{\beta}^{\ast\left(
q\right)  }\otimes\eta_{\gamma}^{\ast\left(  q\right)  }\right)  t^{\left\vert
\beta\right\vert +\left\vert \gamma\right\vert }.
\end{align*}

\end{statement}

[\textit{Proof of Claim 3:} We have%
\begin{align*}
&  \underbrace{\left(  \mathbf{G}\otimes\mathbf{G}\right)  ^{i}}%
_{=\mathbf{G}^{i}\otimes\mathbf{G}^{i}}\left(  \mathbf{G}\otimes
1+1\otimes\mathbf{G}+\left(  q-1\right)  \mathbf{G}\otimes\mathbf{G}\right) \\
&  =\left(  \mathbf{G}^{i}\otimes\mathbf{G}^{i}\right)  \left(  \mathbf{G}%
\otimes1+1\otimes\mathbf{G}+\left(  q-1\right)  \mathbf{G}\otimes
\mathbf{G}\right) \\
&  =\mathbf{G}^{i+1}\otimes\mathbf{G}^{i}+\mathbf{G}^{i}\otimes\mathbf{G}%
^{i+1}+\left(  q-1\right)  \mathbf{G}^{i+1}\otimes\mathbf{G}^{i+1}.
\end{align*}
Applying the map $\iota$ to both sides of this equality, we obtain%
\begin{align}
&  \iota\left(  \left(  \mathbf{G}\otimes\mathbf{G}\right)  ^{i}\left(
\mathbf{G}\otimes1+1\otimes\mathbf{G}+\left(  q-1\right)  \mathbf{G}%
\otimes\mathbf{G}\right)  \right) \nonumber\\
&  =\iota\left(  \mathbf{G}^{i+1}\otimes\mathbf{G}^{i}+\mathbf{G}^{i}%
\otimes\mathbf{G}^{i+1}+\left(  q-1\right)  \mathbf{G}^{i+1}\otimes
\mathbf{G}^{i+1}\right) \nonumber\\
&  =\underbrace{\iota\left(  \mathbf{G}^{i+1}\otimes\mathbf{G}^{i}\right)
}_{\substack{=\sum_{\substack{\beta,\gamma\in\operatorname*{Comp}%
;\\\ell\left(  \beta\right)  =i+1\text{ and }\ell\left(  \gamma\right)
=i}}\left(  \eta_{\beta}^{\ast\left(  q\right)  }\otimes\eta_{\gamma}%
^{\ast\left(  q\right)  }\right)  t^{\left\vert \beta\right\vert +\left\vert
\gamma\right\vert }\\\text{(by Claim 2)}}}+\underbrace{\iota\left(
\mathbf{G}^{i}\otimes\mathbf{G}^{i+1}\right)  }_{\substack{=\sum
_{\substack{\beta,\gamma\in\operatorname*{Comp};\\\ell\left(  \beta\right)
=i\text{ and }\ell\left(  \gamma\right)  =i+1}}\left(  \eta_{\beta}%
^{\ast\left(  q\right)  }\otimes\eta_{\gamma}^{\ast\left(  q\right)  }\right)
t^{\left\vert \beta\right\vert +\left\vert \gamma\right\vert }\\\text{(by
Claim 2)}}}\nonumber\\
&  \ \ \ \ \ \ \ \ \ \ +\left(  q-1\right)  \underbrace{\iota\left(
\mathbf{G}^{i+1}\otimes\mathbf{G}^{i+1}\right)  }_{\substack{=\sum
_{\substack{\beta,\gamma\in\operatorname*{Comp};\\\ell\left(  \beta\right)
=i+1\text{ and }\ell\left(  \gamma\right)  =i+1}}\left(  \eta_{\beta}%
^{\ast\left(  q\right)  }\otimes\eta_{\gamma}^{\ast\left(  q\right)  }\right)
t^{\left\vert \beta\right\vert +\left\vert \gamma\right\vert }\\\text{(by
Claim 2)}}}\nonumber\\
&  \ \ \ \ \ \ \ \ \ \ \ \ \ \ \ \ \ \ \ \ \left(  \text{since }\iota\text{ is
a }\mathbf{k}\text{-algebra homomorphism}\right) \nonumber\\
&  =\sum_{\substack{\beta,\gamma\in\operatorname*{Comp};\\\ell\left(
\beta\right)  =i+1\text{ and }\ell\left(  \gamma\right)  =i}}\left(
\eta_{\beta}^{\ast\left(  q\right)  }\otimes\eta_{\gamma}^{\ast\left(
q\right)  }\right)  t^{\left\vert \beta\right\vert +\left\vert \gamma
\right\vert }+\sum_{\substack{\beta,\gamma\in\operatorname*{Comp}%
;\\\ell\left(  \beta\right)  =i\text{ and }\ell\left(  \gamma\right)
=i+1}}\left(  \eta_{\beta}^{\ast\left(  q\right)  }\otimes\eta_{\gamma}%
^{\ast\left(  q\right)  }\right)  t^{\left\vert \beta\right\vert +\left\vert
\gamma\right\vert }\nonumber\\
&  \ \ \ \ \ \ \ \ \ \ +\left(  q-1\right)  \sum_{\substack{\beta,\gamma
\in\operatorname*{Comp};\\\ell\left(  \beta\right)  =i+1\text{ and }%
\ell\left(  \gamma\right)  =i+1}}\left(  \eta_{\beta}^{\ast\left(  q\right)
}\otimes\eta_{\gamma}^{\ast\left(  q\right)  }\right)  t^{\left\vert
\beta\right\vert +\left\vert \gamma\right\vert }.
\label{pf.thm.Deltaetastara.g1g2o.c3.pf.2}%
\end{align}

On the other hand, let us observe that two integers $u$ and $v$ satisfy the
two conditions%
\[
\left\vert u-v\right\vert \leq1\ \ \ \ \ \ \ \ \ \ \text{and}%
\ \ \ \ \ \ \ \ \ \ \max\left\{  u,v\right\}  =i+1
\]
if and only if they satisfy one of the three mutually exclusive conditions%
\begin{align*}
&  \left(  u=i+1\text{ and }v=i\right)  ,\\
&  \left(  u=i\text{ and }v=i+1\right)  \ \ \ \ \ \ \ \ \ \ \text{and}\\
&  \left(  u=i+1\text{ and }v=i+1\right)  .
\end{align*}
Hence, two compositions $\beta,\gamma\in\operatorname*{Comp}$ satisfy the two
conditions%
\[
\left\vert \ell\left(  \beta\right)  -\ell\left(  \gamma\right)  \right\vert
\leq1\ \ \ \ \ \ \ \ \ \ \text{and}\ \ \ \ \ \ \ \ \ \ \max\left\{
\ell\left(  \beta\right)  ,\ell\left(  \gamma\right)  \right\}  =i+1
\]
if and only if they satisfy one of the three mutually exclusive conditions%
\begin{align*}
&  \left(  \ell\left(  \beta\right)  =i+1\text{ and }\ell\left(
\gamma\right)  =i\right)  ,\\
&  \left(  \ell\left(  \beta\right)  =i\text{ and }\ell\left(  \gamma\right)
=i+1\right)  \ \ \ \ \ \ \ \ \ \ \text{and}\\
&  \left(  \ell\left(  \beta\right)  =i+1\text{ and }\ell\left(
\gamma\right)  =i+1\right)  .
\end{align*}
Hence,%
\begin{align*}
&  \sum_{\substack{\beta,\gamma\in\operatorname*{Comp};\\\left\vert
\ell\left(  \beta\right)  -\ell\left(  \gamma\right)  \right\vert \leq
1;\\\max\left\{  \ell\left(  \beta\right)  ,\ell\left(  \gamma\right)
\right\}  =i+1}}\left(  q-1\right)  ^{\left[  \ell\left(  \beta\right)
=\ell\left(  \gamma\right)  \right]  }\left(  \eta_{\beta}^{\ast\left(
q\right)  }\otimes\eta_{\gamma}^{\ast\left(  q\right)  }\right)  t^{\left\vert
\beta\right\vert +\left\vert \gamma\right\vert }\\
&  =\sum_{\substack{\beta,\gamma\in\operatorname*{Comp};\\\ell\left(
\beta\right)  =i+1\text{ and }\ell\left(  \gamma\right)  =i}%
}\ \ \underbrace{\left(  q-1\right)  ^{\left[  \ell\left(  \beta\right)
=\ell\left(  \gamma\right)  \right]  }}_{\substack{=1\\\text{(since }%
\ell\left(  \beta\right)  =i+1>i=\ell\left(  \gamma\right)  \\\text{and thus
}\left[  \ell\left(  \beta\right)  =\ell\left(  \gamma\right)  \right]
=0\text{)}}}\left(  \eta_{\beta}^{\ast\left(  q\right)  }\otimes\eta_{\gamma
}^{\ast\left(  q\right)  }\right)  t^{\left\vert \beta\right\vert +\left\vert
\gamma\right\vert }\\
&  \ \ \ \ \ \ \ \ \ \ +\sum_{\substack{\beta,\gamma\in\operatorname*{Comp}%
;\\\ell\left(  \beta\right)  =i\text{ and }\ell\left(  \gamma\right)
=i+1}}\ \ \underbrace{\left(  q-1\right)  ^{\left[  \ell\left(  \beta\right)
=\ell\left(  \gamma\right)  \right]  }}_{\substack{=1\\\text{(since }%
\ell\left(  \beta\right)  =i<i+1=\ell\left(  \gamma\right)  \\\text{and thus
}\left[  \ell\left(  \beta\right)  =\ell\left(  \gamma\right)  \right]
=0\text{)}}}\left(  \eta_{\beta}^{\ast\left(  q\right)  }\otimes\eta_{\gamma
}^{\ast\left(  q\right)  }\right)  t^{\left\vert \beta\right\vert +\left\vert
\gamma\right\vert }\\
&  \ \ \ \ \ \ \ \ \ \ +\sum_{\substack{\beta,\gamma\in\operatorname*{Comp}%
;\\\ell\left(  \beta\right)  =i+1\text{ and }\ell\left(  \gamma\right)
=i+1}}\ \ \underbrace{\left(  q-1\right)  ^{\left[  \ell\left(  \beta\right)
=\ell\left(  \gamma\right)  \right]  }}_{\substack{=q-1\\\text{(since }%
\ell\left(  \beta\right)  =i+1=\ell\left(  \gamma\right)  \\\text{and thus
}\left[  \ell\left(  \beta\right)  =\ell\left(  \gamma\right)  \right]
=1\text{)}}}\left(  \eta_{\beta}^{\ast\left(  q\right)  }\otimes\eta_{\gamma
}^{\ast\left(  q\right)  }\right)  t^{\left\vert \beta\right\vert +\left\vert
\gamma\right\vert }\\
&  =\sum_{\substack{\beta,\gamma\in\operatorname*{Comp};\\\ell\left(
\beta\right)  =i+1\text{ and }\ell\left(  \gamma\right)  =i}}\left(
\eta_{\beta}^{\ast\left(  q\right)  }\otimes\eta_{\gamma}^{\ast\left(
q\right)  }\right)  t^{\left\vert \beta\right\vert +\left\vert \gamma
\right\vert }+\sum_{\substack{\beta,\gamma\in\operatorname*{Comp}%
;\\\ell\left(  \beta\right)  =i\text{ and }\ell\left(  \gamma\right)
=i+1}}\left(  \eta_{\beta}^{\ast\left(  q\right)  }\otimes\eta_{\gamma}%
^{\ast\left(  q\right)  }\right)  t^{\left\vert \beta\right\vert +\left\vert
\gamma\right\vert }\\
&  \ \ \ \ \ \ \ \ \ \ +\left(  q-1\right)  \sum_{\substack{\beta,\gamma
\in\operatorname*{Comp};\\\ell\left(  \beta\right)  =i+1\text{ and }%
\ell\left(  \gamma\right)  =i+1}}\left(  \eta_{\beta}^{\ast\left(  q\right)
}\otimes\eta_{\gamma}^{\ast\left(  q\right)  }\right)  t^{\left\vert
\beta\right\vert +\left\vert \gamma\right\vert }.
\end{align*}
Comparing this with (\ref{pf.thm.Deltaetastara.g1g2o.c3.pf.2}), we obtain%
\begin{align*}
&  \iota\left(  \left(  \mathbf{G}\otimes\mathbf{G}\right)  ^{i}\left(
\mathbf{G}\otimes1+1\otimes\mathbf{G}+\left(  q-1\right)  \mathbf{G}%
\otimes\mathbf{G}\right)  \right) \\
&  =\sum_{\substack{\beta,\gamma\in\operatorname*{Comp};\\\left\vert
\ell\left(  \beta\right)  -\ell\left(  \gamma\right)  \right\vert \leq
1;\\\max\left\{  \ell\left(  \beta\right)  ,\ell\left(  \gamma\right)
\right\}  =i+1}}\left(  q-1\right)  ^{\left[  \ell\left(  \beta\right)
=\ell\left(  \gamma\right)  \right]  }\left(  \eta_{\beta}^{\ast\left(
q\right)  }\otimes\eta_{\gamma}^{\ast\left(  q\right)  }\right)  t^{\left\vert
\beta\right\vert +\left\vert \gamma\right\vert }.
\end{align*}
This proves Claim 3.] \medskip

Now, we can finish our computation of $\Delta_{t}\left(  \mathbf{G}\right)  $:
As we know,
\begin{align*}
&  \Delta_{t}\left(  \mathbf{G}\right) \\
&  =\sum_{i\in\mathbb{N}}\left(  -q\right)  ^{i}\underbrace{\iota\left(
\left(  \mathbf{G}\otimes\mathbf{G}\right)  ^{i}\left(  \mathbf{G}%
\otimes1+1\otimes\mathbf{G}+\left(  q-1\right)  \mathbf{G}\otimes
\mathbf{G}\right)  \right)  }_{\substack{=\sum_{\substack{\beta,\gamma
\in\operatorname*{Comp};\\\left\vert \ell\left(  \beta\right)  -\ell\left(
\gamma\right)  \right\vert \leq1;\\\max\left\{  \ell\left(  \beta\right)
,\ell\left(  \gamma\right)  \right\}  =i+1}}\left(  q-1\right)  ^{\left[
\ell\left(  \beta\right)  =\ell\left(  \gamma\right)  \right]  }\left(
\eta_{\beta}^{\ast\left(  q\right)  }\otimes\eta_{\gamma}^{\ast\left(
q\right)  }\right)  t^{\left\vert \beta\right\vert +\left\vert \gamma
\right\vert }\\\text{(by Claim 3)}}}\\
&  =\sum_{i\in\mathbb{N}}\left(  -q\right)  ^{i}\sum_{\substack{\beta
,\gamma\in\operatorname*{Comp};\\\left\vert \ell\left(  \beta\right)
-\ell\left(  \gamma\right)  \right\vert \leq1;\\\max\left\{  \ell\left(
\beta\right)  ,\ell\left(  \gamma\right)  \right\}  =i+1}}\left(  q-1\right)
^{\left[  \ell\left(  \beta\right)  =\ell\left(  \gamma\right)  \right]
}\left(  \eta_{\beta}^{\ast\left(  q\right)  }\otimes\eta_{\gamma}%
^{\ast\left(  q\right)  }\right)  t^{\left\vert \beta\right\vert +\left\vert
\gamma\right\vert }\\
&  =\sum_{j>0}\left(  -q\right)  ^{j-1}\sum_{\substack{\beta,\gamma
\in\operatorname*{Comp};\\\left\vert \ell\left(  \beta\right)  -\ell\left(
\gamma\right)  \right\vert \leq1;\\\max\left\{  \ell\left(  \beta\right)
,\ell\left(  \gamma\right)  \right\}  =j}}\left(  q-1\right)  ^{\left[
\ell\left(  \beta\right)  =\ell\left(  \gamma\right)  \right]  }\left(
\eta_{\beta}^{\ast\left(  q\right)  }\otimes\eta_{\gamma}^{\ast\left(
q\right)  }\right)  t^{\left\vert \beta\right\vert +\left\vert \gamma
\right\vert }\\
&  \ \ \ \ \ \ \ \ \ \ \ \ \ \ \ \ \ \ \ \ \left(  \text{here, we have
substituted }j-1\text{ for }i\text{ in the outer sum}\right) \\
&  =\sum_{j>0}\ \ \sum_{\substack{\beta,\gamma\in\operatorname*{Comp}%
;\\\left\vert \ell\left(  \beta\right)  -\ell\left(  \gamma\right)
\right\vert \leq1;\\\max\left\{  \ell\left(  \beta\right)  ,\ell\left(
\gamma\right)  \right\}  =j}}\ \ \underbrace{\left(  -q\right)  ^{j-1}%
}_{\substack{=\left(  -q\right)  ^{\max\left\{  \ell\left(  \beta\right)
,\ell\left(  \gamma\right)  \right\}  -1}\\\text{(since }j=\max\left\{
\ell\left(  \beta\right)  ,\ell\left(  \gamma\right)  \right\}  \text{)}%
}}\left(  q-1\right)  ^{\left[  \ell\left(  \beta\right)  =\ell\left(
\gamma\right)  \right]  }\left(  \eta_{\beta}^{\ast\left(  q\right)  }%
\otimes\eta_{\gamma}^{\ast\left(  q\right)  }\right)  t^{\left\vert
\beta\right\vert +\left\vert \gamma\right\vert }\\
&  =\underbrace{\sum_{j>0}\ \ \sum_{\substack{\beta,\gamma\in
\operatorname*{Comp};\\\left\vert \ell\left(  \beta\right)  -\ell\left(
\gamma\right)  \right\vert \leq1;\\\max\left\{  \ell\left(  \beta\right)
,\ell\left(  \gamma\right)  \right\}  =j}}}_{=\sum_{\substack{\beta,\gamma
\in\operatorname*{Comp};\\\left\vert \ell\left(  \beta\right)  -\ell\left(
\gamma\right)  \right\vert \leq1;\\\max\left\{  \ell\left(  \beta\right)
,\ell\left(  \gamma\right)  \right\}  >0}}}\left(  -q\right)  ^{\max\left\{
\ell\left(  \beta\right)  ,\ell\left(  \gamma\right)  \right\}  -1}\left(
q-1\right)  ^{\left[  \ell\left(  \beta\right)  =\ell\left(  \gamma\right)
\right]  }\left(  \eta_{\beta}^{\ast\left(  q\right)  }\otimes\eta_{\gamma
}^{\ast\left(  q\right)  }\right)  t^{\left\vert \beta\right\vert +\left\vert
\gamma\right\vert }\\
&  =\sum_{\substack{\beta,\gamma\in\operatorname*{Comp};\\\left\vert
\ell\left(  \beta\right)  -\ell\left(  \gamma\right)  \right\vert \leq
1;\\\max\left\{  \ell\left(  \beta\right)  ,\ell\left(  \gamma\right)
\right\}  >0}}\left(  -q\right)  ^{\max\left\{  \ell\left(  \beta\right)
,\ell\left(  \gamma\right)  \right\}  -1}\left(  q-1\right)  ^{\left[
\ell\left(  \beta\right)  =\ell\left(  \gamma\right)  \right]  }\left(
\eta_{\beta}^{\ast\left(  q\right)  }\otimes\eta_{\gamma}^{\ast\left(
q\right)  }\right)  t^{\left\vert \beta\right\vert +\left\vert \gamma
\right\vert }.
\end{align*}
Comparing this with%
\begin{align*}
\Delta_{t}\left(  \mathbf{G}\right)   &  =\Delta_{t}\left(  \sum_{n\geq1}%
\eta_{n}^{\ast\left(  q\right)  }t^{n}\right)  \ \ \ \ \ \ \ \ \ \ \left(
\text{since }\mathbf{G}=G\left(  t\right)  =\sum_{n\geq1}\eta_{n}^{\ast\left(
q\right)  }t^{n}\right) \\
&  =\sum_{n\geq1}\Delta\left(  \eta_{n}^{\ast\left(  q\right)  }\right)
t^{n}\ \ \ \ \ \ \ \ \ \ \left(  \text{by the definition of }\Delta
_{t}\right)  ,
\end{align*}
we obtain%
\begin{align*}
&  \sum_{n\geq1}\Delta\left(  \eta_{n}^{\ast\left(  q\right)  }\right)
t^{n}\\
&  =\sum_{\substack{\beta,\gamma\in\operatorname*{Comp};\\\left\vert
\ell\left(  \beta\right)  -\ell\left(  \gamma\right)  \right\vert \leq
1;\\\max\left\{  \ell\left(  \beta\right)  ,\ell\left(  \gamma\right)
\right\}  >0}}\left(  -q\right)  ^{\max\left\{  \ell\left(  \beta\right)
,\ell\left(  \gamma\right)  \right\}  -1}\left(  q-1\right)  ^{\left[
\ell\left(  \beta\right)  =\ell\left(  \gamma\right)  \right]  }\left(
\eta_{\beta}^{\ast\left(  q\right)  }\otimes\eta_{\gamma}^{\ast\left(
q\right)  }\right)  t^{\left\vert \beta\right\vert +\left\vert \gamma
\right\vert }.
\end{align*}
Comparing coefficients of $t^{n}$ on both sides of this equality, we obtain%
\begin{equation}
\Delta\left(  \eta_{n}^{\ast\left(  q\right)  }\right)  =\sum_{\substack{\beta
,\gamma\in\operatorname*{Comp};\\\left\vert \ell\left(  \beta\right)
-\ell\left(  \gamma\right)  \right\vert \leq1;\\\max\left\{  \ell\left(
\beta\right)  ,\ell\left(  \gamma\right)  \right\}  >0;\\\left\vert
\beta\right\vert +\left\vert \gamma\right\vert =n}}\left(  -q\right)
^{\max\left\{  \ell\left(  \beta\right)  ,\ell\left(  \gamma\right)  \right\}
-1}\left(  q-1\right)  ^{\left[  \ell\left(  \beta\right)  =\ell\left(
\gamma\right)  \right]  }\eta_{\beta}^{\ast\left(  q\right)  }\otimes
\eta_{\gamma}^{\ast\left(  q\right)  } \label{pf.thm.Deltaetastara.at}%
\end{equation}
for each $n\geq1$ (in fact, the condition \textquotedblleft$\left\vert
\beta\right\vert +\left\vert \gamma\right\vert =n$\textquotedblright\ under
the summation sign ensures that the monomial $t^{\left\vert \beta\right\vert
+\left\vert \gamma\right\vert }$ is $t^{n}$).

Now, fix a positive integer $n$. Then, any two compositions $\beta$ and
$\gamma$ that satisfy $\left\vert \beta\right\vert +\left\vert \gamma
\right\vert =n$ will automatically satisfy $\max\left\{  \ell\left(
\beta\right)  ,\ell\left(  \gamma\right)  \right\}  >0$ (since otherwise, they
would satisfy $\max\left\{  \ell\left(  \beta\right)  ,\ell\left(
\gamma\right)  \right\}  =0$ and thus $\beta=\varnothing$ and $\gamma
=\varnothing$, which would lead to $\left\vert \beta\right\vert +\left\vert
\gamma\right\vert =\left\vert \varnothing\right\vert +\left\vert
\varnothing\right\vert =0+0=0$, but this would contradict $\left\vert
\beta\right\vert +\left\vert \gamma\right\vert =n>0$). Hence, in the summation
sign on the right hand side of (\ref{pf.thm.Deltaetastara.at}), the condition
\textquotedblleft$\max\left\{  \ell\left(  \beta\right)  ,\ell\left(
\gamma\right)  \right\}  >0$\textquotedblright\ is redundant. We can thus
rewrite this summation sign as follows:%
\[
\sum_{\substack{\beta,\gamma\in\operatorname*{Comp};\\\left\vert \ell\left(
\beta\right)  -\ell\left(  \gamma\right)  \right\vert \leq1;\\\max\left\{
\ell\left(  \beta\right)  ,\ell\left(  \gamma\right)  \right\}
>0;\\\left\vert \beta\right\vert +\left\vert \gamma\right\vert =n}%
}=\sum_{\substack{\beta,\gamma\in\operatorname*{Comp};\\\left\vert \ell\left(
\beta\right)  -\ell\left(  \gamma\right)  \right\vert \leq1;\\\left\vert
\beta\right\vert +\left\vert \gamma\right\vert =n}}=\sum_{\substack{\beta
,\gamma\in\operatorname*{Comp};\\\left\vert \beta\right\vert +\left\vert
\gamma\right\vert =n;\\\left\vert \ell\left(  \beta\right)  -\ell\left(
\gamma\right)  \right\vert \leq1}}.
\]
Hence, (\ref{pf.thm.Deltaetastara.at}) rewrites as%
\[
\Delta\left(  \eta_{n}^{\ast\left(  q\right)  }\right)  =\sum_{\substack{\beta
,\gamma\in\operatorname*{Comp};\\\left\vert \beta\right\vert +\left\vert
\gamma\right\vert =n;\\\left\vert \ell\left(  \beta\right)  -\ell\left(
\gamma\right)  \right\vert \leq1}}\left(  -q\right)  ^{\max\left\{
\ell\left(  \beta\right)  ,\ell\left(  \gamma\right)  \right\}  -1}\left(
q-1\right)  ^{\left[  \ell\left(  \beta\right)  =\ell\left(  \gamma\right)
\right]  }\eta_{\beta}^{\ast\left(  q\right)  }\otimes\eta_{\gamma}%
^{\ast\left(  q\right)  }.
\]
This proves Theorem \ref{thm.Deltaetastara}. \qedhere

\begin{verlong}
T0D0: verlong proof.
\end{verlong}
\end{proof}

Using Theorem \ref{thm.Deltaetastara}, we can easily compute the coproduct of
any $\eta_{\alpha}^{\ast\left(  q\right)  }$:\footnote{The symbol
\textquotedblleft\#\textquotedblright\ means \textquotedblleft
number\textquotedblright. Thus, e.g., we have $\left(  \text{\# of odd numbers
}i\in\left[  2n\right]  \right)  =n$ for each $n\in\mathbb{N}$.}

\begin{corollary}
\label{cor.Deltaetastaral}Let $\alpha=\left(  \alpha_{1},\alpha_{2}%
,\ldots,\alpha_{k}\right)  $ be any composition. Then,%
\begin{align*}
\Delta\left(  \eta_{\alpha}^{\ast\left(  q\right)  }\right)   &
=\sum_{\substack{\beta_{1},\beta_{2},\ldots,\beta_{k}\in\operatorname*{Comp}%
;\\\gamma_{1},\gamma_{2},\ldots,\gamma_{k}\in\operatorname*{Comp};\\\left\vert
\beta_{s}\right\vert +\left\vert \gamma_{s}\right\vert =\alpha_{s}\text{ for
all }s;\\\left\vert \ell\left(  \beta_{s}\right)  -\ell\left(  \gamma
_{s}\right)  \right\vert \leq1\text{ for all }s}}\left(  -q\right)
^{\sum_{s=1}^{k}\max\left\{  \ell\left(  \beta_{s}\right)  ,\ell\left(
\gamma_{s}\right)  \right\}  -k}\\
&  \ \ \ \ \ \ \ \ \ \ \ \ \ \ \ \ \ \ \ \ \cdot\left(  q-1\right)  ^{\left(
\text{\# of all }s\in\left[  k\right]  \text{ such that }\ell\left(  \beta
_{s}\right)  =\ell\left(  \gamma_{s}\right)  \right)  }\left(  \eta_{\beta
_{1}\beta_{2}\cdots\beta_{k}}^{\ast\left(  q\right)  }\otimes\eta_{\gamma
_{1}\gamma_{2}\cdots\gamma_{k}}^{\ast\left(  q\right)  }\right)  .
\end{align*}

\end{corollary}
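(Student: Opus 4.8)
The plan is to derive this from Theorem \ref{thm.Deltaetastara} by exploiting the multiplicativity of the dual eta basis together with the fact that the coproduct $\Delta:\operatorname*{NSym}\rightarrow\operatorname*{NSym}\otimes\operatorname*{NSym}$ is a $\mathbf{k}$-algebra homomorphism (since $\operatorname*{NSym}$ is a $\mathbf{k}$-bialgebra). First I would use Proposition \ref{prop.etastar.mult} to write $\eta_{\alpha}^{\ast\left(  q\right)  }=\eta_{\alpha_{1}}^{\ast\left(  q\right)  }\eta_{\alpha_{2}}^{\ast\left(  q\right)  }\cdots\eta_{\alpha_{k}}^{\ast\left(  q\right)  }$, and then apply $\Delta$, using that it respects products, to obtain
\[
\Delta\left(  \eta_{\alpha}^{\left(  q\right)  \ast}\right)  =\Delta\left(
\eta_{\alpha_{1}}^{\ast\left(  q\right)  }\right)  \Delta\left(  \eta
_{\alpha_{2}}^{\ast\left(  q\right)  }\right)  \cdots\Delta\left(
\eta_{\alpha_{k}}^{\ast\left(  q\right)  }\right)  ,
\]
where the product is taken in the algebra $\operatorname*{NSym}\otimes\operatorname*{NSym}$ in the order $s=1,2,\ldots,k$.

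Next I would substitute into each factor the expansion from Theorem \ref{thm.Deltaetastara}, writing $\Delta\left(  \eta_{\alpha_{s}}^{\ast\left(  q\right)  }\right)$ as the sum over pairs $\left(  \beta_{s},\gamma_{s}\right)$ with $\left\vert \beta_{s}\right\vert +\left\vert \gamma_{s}\right\vert =\alpha_{s}$ and $\left\vert \ell\left(  \beta_{s}\right)  -\ell\left(  \gamma_{s}\right)  \right\vert \leq1$. Each such sum is \emph{finite} (for fixed $\alpha_{s}$ there are only finitely many admissible pairs), so I may expand the product of the $k$ sums into a single sum over tuples $\left(  \beta_{1},\ldots,\beta_{k}\right)$ and $\left(  \gamma_{1},\ldots,\gamma_{k}\right)$ with the constraints imposed on each index $s$. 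The key algebraic step is then the factorwise multiplication rule in the tensor-product algebra, namely $\left(  a_{1}\otimes b_{1}\right)  \cdots\left(  a_{k}\otimes b_{k}\right)  =\left(  a_{1}\cdots a_{k}\right)  \otimes\left(  b_{1}\cdots b_{k}\right)$, which holds for noncommutative factors as long as the order is preserved. Applying it to $a_{s}=\eta_{\beta_{s}}^{\ast\left(  q\right)  }$ and $b_{s}=\eta_{\gamma_{s}}^{\ast\left(  q\right)  }$, and then invoking Corollary \ref{cor.etastar.concat} in each tensor factor, turns the left tensorand into $\eta_{\beta_{1}\beta_{2}\cdots\beta_{k}}^{\ast\left(  q\right)  }$ and the right one into $\eta_{\gamma_{1}\gamma_{2}\cdots\gamma_{k}}^{\ast\left(  q\right)  }$, exactly as in the claimed formula.

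Finally I would collect the scalar coefficients: the factors $\left(  -q\right)  ^{\max\left\{  \ell\left(  \beta_{s}\right)  ,\ell\left(  \gamma_{s}\right)  \right\}  -1}$ multiply to $\left(  -q\right)  ^{\sum_{s=1}^{k}\max\left\{  \ell\left(  \beta_{s}\right)  ,\ell\left(  \gamma_{s}\right)  \right\}  -k}$, while the factors $\left(  q-1\right)  ^{\left[  \ell\left(  \beta_{s}\right)  =\ell\left(  \gamma_{s}\right)  \right]  }$ multiply to $\left(  q-1\right)  ^{\left(  \text{\# of }s\in\left[  k\right]  \text{ with }\ell\left(  \beta_{s}\right)  =\ell\left(  \gamma_{s}\right)  \right)  }$, since the exponent is the sum of the Iverson brackets. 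This matches the statement. There is no serious obstacle here; the proof is essentially bookkeeping. The one point requiring genuine care is justifying the expansion of the product of the $k$ coproduct sums into one sum indexed by tuples and confirming that the ordering of factors is respected when Corollary \ref{cor.etastar.concat} is applied (so that the concatenations come out as $\beta_{1}\beta_{2}\cdots\beta_{k}$ and $\gamma_{1}\gamma_{2}\cdots\gamma_{k}$ in the correct order, rather than some permutation), both of which are straightforward but must be stated cleanly given that $\operatorname*{NSym}$ is noncommutative.
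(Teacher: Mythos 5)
Your proposal is correct and follows essentially the same route as the paper's own proof: write $\eta_{\alpha}^{\ast\left(  q\right)  }$ as a product via Proposition \ref{prop.etastar.mult}, apply $\Delta$ as a $\mathbf{k}$-algebra homomorphism, substitute Theorem \ref{thm.Deltaetastara} into each factor, expand by the product rule, and recombine the tensorands via Corollary \ref{cor.etastar.concat} while collecting the powers of $-q$ and $q-1$. The bookkeeping points you flag (finiteness of each factor's sum and preservation of the order of the factors in $\operatorname*{NSym}\otimes\operatorname*{NSym}$) are exactly the ones the paper handles.
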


\begin{proof}
We agree to understand an expression of the form $\prod_{s=1}^{k}u_{s}$ to
mean $u_{1}u_{2}\cdots u_{k}$ whenever $u_{1},u_{2},\ldots,u_{k}$ are any $k$
elements of any (not necessarily commutative) ring.

The comultiplication $\Delta$ of the $\mathbf{k}$-bialgebra
$\operatorname*{NSym}$ is a $\mathbf{k}$-algebra homomorphism (indeed, this is
true for any $\mathbf{k}$-bialgebra), and thus respects products. However,
Proposition \ref{prop.etastar.mult} yields $\eta_{\alpha}^{\ast\left(
q\right)  }=\eta_{\alpha_{1}}^{\ast\left(  q\right)  }\eta_{\alpha_{2}}%
^{\ast\left(  q\right)  }\cdots\eta_{\alpha_{k}}^{\ast\left(  q\right)  }$.
Hence,%
\[
\Delta\left(  \eta_{\alpha}^{\ast\left(  q\right)  }\right)  =\Delta\left(
\eta_{\alpha_{1}}^{\ast\left(  q\right)  }\eta_{\alpha_{2}}^{\ast\left(
q\right)  }\cdots\eta_{\alpha_{k}}^{\ast\left(  q\right)  }\right)
=\Delta\left(  \eta_{\alpha_{1}}^{\ast\left(  q\right)  }\right)
\Delta\left(  \eta_{\alpha_{2}}^{\ast\left(  q\right)  }\right)  \cdots
\Delta\left(  \eta_{\alpha_{k}}^{\ast\left(  q\right)  }\right)
\]
(since $\Delta$ respects products). In other words (using the notation
$\prod_{s=1}^{k}u_{s}$ as explained above),%
\begin{equation}
\Delta\left(  \eta_{\alpha}^{\ast\left(  q\right)  }\right)  =\prod_{s=1}%
^{k}\Delta\left(  \eta_{\alpha_{s}}^{\ast\left(  q\right)  }\right)  .
\label{pf.cor.Deltaetastaral.1}%
\end{equation}

However, Theorem \ref{thm.Deltaetastara} shows that for each $s\in\left[
k\right]  $, we have%
\[
\Delta\left(  \eta_{\alpha_{s}}^{\ast\left(  q\right)  }\right)
=\sum_{\substack{\beta,\gamma\in\operatorname*{Comp};\\\left\vert
\beta\right\vert +\left\vert \gamma\right\vert =\alpha_{s};\\\left\vert
\ell\left(  \beta\right)  -\ell\left(  \gamma\right)  \right\vert \leq
1}}\left(  -q\right)  ^{\max\left\{  \ell\left(  \beta\right)  ,\ell\left(
\gamma\right)  \right\}  -1}\left(  q-1\right)  ^{\left[  \ell\left(
\beta\right)  =\ell\left(  \gamma\right)  \right]  }\eta_{\beta}^{\ast\left(
q\right)  }\otimes\eta_{\gamma}^{\ast\left(  q\right)  }.
\]
Multiplying these equalities for all $s\in\left[  k\right]  $, we obtain%
\begin{align*}
&  \prod_{s=1}^{k}\Delta\left(  \eta_{\alpha_{s}}^{\ast\left(  q\right)
}\right) \\
&  =\prod_{s=1}^{k}\ \ \sum_{\substack{\beta,\gamma\in\operatorname*{Comp}%
;\\\left\vert \beta\right\vert +\left\vert \gamma\right\vert =\alpha
_{s};\\\left\vert \ell\left(  \beta\right)  -\ell\left(  \gamma\right)
\right\vert \leq1}}\left(  -q\right)  ^{\max\left\{  \ell\left(  \beta\right)
,\ell\left(  \gamma\right)  \right\}  -1}\left(  q-1\right)  ^{\left[
\ell\left(  \beta\right)  =\ell\left(  \gamma\right)  \right]  }\eta_{\beta
}^{\ast\left(  q\right)  }\otimes\eta_{\gamma}^{\ast\left(  q\right)  }\\
&  =\sum_{\substack{\beta_{1},\beta_{2},\ldots,\beta_{k}\in
\operatorname*{Comp};\\\gamma_{1},\gamma_{2},\ldots,\gamma_{k}\in
\operatorname*{Comp};\\\left\vert \beta_{s}\right\vert +\left\vert \gamma
_{s}\right\vert =\alpha_{s}\text{ for all }s;\\\left\vert \ell\left(
\beta_{s}\right)  -\ell\left(  \gamma_{s}\right)  \right\vert \leq1\text{ for
all }s}}\ \ \prod_{s=1}^{k}\left(  \left(  -q\right)  ^{\max\left\{
\ell\left(  \beta_{s}\right)  ,\ell\left(  \gamma_{s}\right)  \right\}
-1}\left(  q-1\right)  ^{\left[  \ell\left(  \beta_{s}\right)  =\ell\left(
\gamma_{s}\right)  \right]  }\eta_{\beta_{s}}^{\ast\left(  q\right)  }%
\otimes\eta_{\gamma_{s}}^{\ast\left(  q\right)  }\right) \\
&  \ \ \ \ \ \ \ \ \ \ \ \ \ \ \ \ \ \ \ \ \left(  \text{by the product
rule}\right) \\
&  =\sum_{\substack{\beta_{1},\beta_{2},\ldots,\beta_{k}\in
\operatorname*{Comp};\\\gamma_{1},\gamma_{2},\ldots,\gamma_{k}\in
\operatorname*{Comp};\\\left\vert \beta_{s}\right\vert +\left\vert \gamma
_{s}\right\vert =\alpha_{s}\text{ for all }s;\\\left\vert \ell\left(
\beta_{s}\right)  -\ell\left(  \gamma_{s}\right)  \right\vert \leq1\text{ for
all }s}}\underbrace{\left(  \prod_{s=1}^{k}\left(  -q\right)  ^{\max\left\{
\ell\left(  \beta_{s}\right)  ,\ell\left(  \gamma_{s}\right)  \right\}
-1}\right)  }_{\substack{=\left(  -q\right)  ^{\sum_{s=1}^{k}\left(
\max\left\{  \ell\left(  \beta_{s}\right)  ,\ell\left(  \gamma_{s}\right)
\right\}  -1\right)  }\\=\left(  -q\right)  ^{\sum_{s=1}^{k}\max\left\{
\ell\left(  \beta_{s}\right)  ,\ell\left(  \gamma_{s}\right)  \right\}  -k}%
}}\\
&  \ \ \ \ \ \ \ \ \ \ \ \ \ \ \ \ \ \ \ \ \cdot\underbrace{\left(
\prod_{s=1}^{k}\left(  q-1\right)  ^{\left[  \ell\left(  \beta_{s}\right)
=\ell\left(  \gamma_{s}\right)  \right]  }\right)  }_{\substack{=\left(
q-1\right)  ^{\sum_{s=1}^{k}\left[  \ell\left(  \beta_{s}\right)  =\ell\left(
\gamma_{s}\right)  \right]  }\\=\left(  q-1\right)  ^{\left(  \text{\# of all
}s\in\left[  k\right]  \text{ such that }\ell\left(  \beta_{s}\right)
=\ell\left(  \gamma_{s}\right)  \right)  }}}\underbrace{\left(  \prod
_{s=1}^{k}\left(  \eta_{\beta_{s}}^{\ast\left(  q\right)  }\otimes\eta
_{\gamma_{s}}^{\ast\left(  q\right)  }\right)  \right)  }_{\substack{=\eta
_{\beta_{1}}^{\ast\left(  q\right)  }\eta_{\beta_{2}}^{\ast\left(  q\right)
}\cdots\eta_{\beta_{k}}^{\ast\left(  q\right)  }\otimes\eta_{\gamma_{1}}%
^{\ast\left(  q\right)  }\eta_{\gamma_{2}}^{\ast\left(  q\right)  }\cdots
\eta_{\gamma_{k}}^{\ast\left(  q\right)  }\\=\eta_{\beta_{1}\beta_{2}%
\cdots\beta_{k}}^{\ast\left(  q\right)  }\otimes\eta_{\gamma_{1}\gamma
_{2}\cdots\gamma_{k}}^{\ast\left(  q\right)  }\\\text{(since Corollary
\ref{cor.etastar.concat}}\\\text{yields }\eta_{\beta_{1}}^{\ast\left(
q\right)  }\eta_{\beta_{2}}^{\ast\left(  q\right)  }\cdots\eta_{\beta_{k}%
}^{\ast\left(  q\right)  }=\eta_{\beta_{1}\beta_{2}\cdots\beta_{k}}%
^{\ast\left(  q\right)  }\\\text{and }\eta_{\gamma_{1}}^{\ast\left(  q\right)
}\eta_{\gamma_{2}}^{\ast\left(  q\right)  }\cdots\eta_{\gamma_{k}}%
^{\ast\left(  q\right)  }=\eta_{\gamma_{1}\gamma_{2}\cdots\gamma_{k}}%
^{\ast\left(  q\right)  }\text{)}}}\\
&  =\sum_{\substack{\beta_{1},\beta_{2},\ldots,\beta_{k}\in
\operatorname*{Comp};\\\gamma_{1},\gamma_{2},\ldots,\gamma_{k}\in
\operatorname*{Comp};\\\left\vert \beta_{s}\right\vert +\left\vert \gamma
_{s}\right\vert =\alpha_{s}\text{ for all }s;\\\left\vert \ell\left(
\beta_{s}\right)  -\ell\left(  \gamma_{s}\right)  \right\vert \leq1\text{ for
all }s}}\left(  -q\right)  ^{\sum_{s=1}^{k}\max\left\{  \ell\left(  \beta
_{s}\right)  ,\ell\left(  \gamma_{s}\right)  \right\}  -k}\\
&  \ \ \ \ \ \ \ \ \ \ \ \ \ \ \ \ \ \ \ \ \cdot\left(  q-1\right)  ^{\left(
\text{\# of all }s\in\left[  k\right]  \text{ such that }\ell\left(  \beta
_{s}\right)  =\ell\left(  \gamma_{s}\right)  \right)  }\left(  \eta_{\beta
_{1}\beta_{2}\cdots\beta_{k}}^{\ast\left(  q\right)  }\otimes\eta_{\gamma
_{1}\gamma_{2}\cdots\gamma_{k}}^{\ast\left(  q\right)  }\right)  .
\end{align*}

In view of (\ref{pf.cor.Deltaetastaral.1}), we can rewrite this as
\begin{align*}
\Delta\left(  \eta_{\alpha}^{\ast\left(  q\right)  }\right)   &
=\sum_{\substack{\beta_{1},\beta_{2},\ldots,\beta_{k}\in\operatorname*{Comp}%
;\\\gamma_{1},\gamma_{2},\ldots,\gamma_{k}\in\operatorname*{Comp};\\\left\vert
\beta_{s}\right\vert +\left\vert \gamma_{s}\right\vert =\alpha_{s}\text{ for
all }s;\\\left\vert \ell\left(  \beta_{s}\right)  -\ell\left(  \gamma
_{s}\right)  \right\vert \leq1\text{ for all }s}}\left(  -q\right)
^{\sum_{s=1}^{k}\max\left\{  \ell\left(  \beta_{s}\right)  ,\ell\left(
\gamma_{s}\right)  \right\}  -k}\\
&  \ \ \ \ \ \ \ \ \ \ \ \ \ \ \ \ \ \ \ \ \cdot\left(  q-1\right)  ^{\left(
\text{\# of all }s\in\left[  k\right]  \text{ such that }\ell\left(  \beta
_{s}\right)  =\ell\left(  \gamma_{s}\right)  \right)  }\left(  \eta_{\beta
_{1}\beta_{2}\cdots\beta_{k}}^{\ast\left(  q\right)  }\otimes\eta_{\gamma
_{1}\gamma_{2}\cdots\gamma_{k}}^{\ast\left(  q\right)  }\right)  .
\end{align*}
This proves Corollary \ref{cor.Deltaetastaral}. \qedhere

\begin{verlong}
T0D0: verlong proof.
\end{verlong}
\end{proof}

\section{\label{sec.prod}The product rule for $\eta_{\alpha}^{\left(
q\right)  }$}

We now approach the most intricate of the rules for the $\eta_{\alpha
}^{\left(  q\right)  }$ functions: the product rule, i.e., the expression of a
product $\eta_{\delta}^{\left(  q\right)  }\eta_{\varepsilon}^{\left(
q\right)  }$ as a $\mathbb{Z}\left[  q\right]  $-linear combination of other
$\eta_{\alpha}^{\left(  q\right)  }$'s. We shall give three different versions
of this rule, all equivalent but using somewhat different indexing sets. Only
the first version will be proved in detail, as it suffices for the
applications we have in mind.

\subsection{The product rule in terms of compositions}

Our first version of the product rule is as follows:\footnote{The symbol
\textquotedblleft\#\textquotedblright\ means \textquotedblleft
number\textquotedblright\ (so that, e.g., we have $\left(  \text{\# of odd
numbers }i\in\left[  2n\right]  \right)  =n$ for each $n\in\mathbb{N}$).}

\begin{theorem}
\label{thm.product.1}Let $\delta$ and $\varepsilon$ be two compositions. Then,%
\begin{align*}
\eta_{\delta}^{\left(  q\right)  }\eta_{\varepsilon}^{\left(  q\right)  }  &
=\sum_{\substack{k\in\mathbb{N};\\\beta_{1},\beta_{2},\ldots,\beta_{k}%
\in\operatorname*{Comp};\\\gamma_{1},\gamma_{2},\ldots,\gamma_{k}%
\in\operatorname*{Comp};\\\beta_{1}\beta_{2}\cdots\beta_{k}=\delta
;\\\gamma_{1}\gamma_{2}\cdots\gamma_{k}=\varepsilon;\\\left\vert \ell\left(
\beta_{s}\right)  -\ell\left(  \gamma_{s}\right)  \right\vert \leq1\text{ for
all }s;\\\ell\left(  \beta_{s}\right)  +\ell\left(  \gamma_{s}\right)
>0\text{ for all }s}}\left(  -q\right)  ^{\sum_{s=1}^{k}\max\left\{
\ell\left(  \beta_{s}\right)  ,\ell\left(  \gamma_{s}\right)  \right\}  -k}\\
&  \ \ \ \ \ \ \ \ \ \ \ \ \ \ \ \ \ \ \ \ \cdot\left(  q-1\right)  ^{\left(
\text{\# of all }s\in\left[  k\right]  \text{ such that }\ell\left(  \beta
_{s}\right)  =\ell\left(  \gamma_{s}\right)  \right)  }\\
&  \ \ \ \ \ \ \ \ \ \ \ \ \ \ \ \ \ \ \ \ \cdot\eta_{\left(  \left\vert
\beta_{1}\right\vert +\left\vert \gamma_{1}\right\vert ,\ \left\vert \beta
_{2}\right\vert +\left\vert \gamma_{2}\right\vert ,\ \ldots,\ \left\vert
\beta_{k}\right\vert +\left\vert \gamma_{k}\right\vert \right)  }^{\left(
q\right)  }.
\end{align*}

\end{theorem}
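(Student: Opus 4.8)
The plan is to prove Theorem \ref{thm.product.1} by dualization: since the families $\left(  \eta_{\alpha}^{\left(  q\right)  }\right)  _{\alpha\in\operatorname*{Comp}}$ and $\left(  \eta_{\alpha}^{\ast\left(  q\right)  }\right)  _{\alpha\in\operatorname*{Comp}}$ are mutually dual bases of $\operatorname*{QSym}$ and $\operatorname*{NSym}$ (by Proposition \ref{prop.etastar-dual-basis}), the structure constants of the product on the $\eta^{\left(  q\right)  }$ side are exactly the structure constants of the coproduct on the $\eta^{\ast\left(  q\right)  }$ side. Concretely, for any compositions $\delta,\varepsilon,\alpha$, the coefficient of $\eta_{\alpha}^{\left(  q\right)  }$ in $\eta_{\delta}^{\left(  q\right)  }\eta_{\varepsilon}^{\left(  q\right)  }$ equals $\left\langle \eta_{\delta}^{\ast\left(  q\right)  }\otimes\eta_{\varepsilon}^{\ast\left(  q\right)  },\Delta\left(  \eta_{\alpha}^{\left(  q\right)  }\right)  \right\rangle$, but a cleaner route is to compute $\left\langle \eta_{\alpha}^{\ast\left(  q\right)  },\eta_{\delta}^{\left(  q\right)  }\eta_{\varepsilon}^{\left(  q\right)  }\right\rangle$ via Lemma \ref{lem.NSym-duality.1} and the coproduct formula for $\eta_{\alpha}^{\ast\left(  q\right)  }$ from Corollary \ref{cor.Deltaetastaral}. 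This requires the assumption that $r$ is invertible (so that the dual basis exists), but one expects the final identity, having polynomial coefficients in $q$, to hold for all $\mathbf{k}$ by a universality/specialization argument; I would note this reduction at the start or the end.

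The main computation would go as follows. Fix $\delta$ and $\varepsilon$, and for each composition $\alpha$ extract the coefficient of $\eta_{\alpha}^{\left(  q\right)  }$ in the product by pairing against $\eta_{\alpha}^{\ast\left(  q\right)  }$. By Lemma \ref{lem.NSym-duality.1}, writing $\Delta\left(  \eta_{\alpha}^{\left(  q\right)  }\right)  $ is not what I want; instead I would use the adjointness the other way: the coefficient equals $\left\langle \Delta\left(  \eta_{\alpha}^{\ast\left(  q\right)  }\right)  ,\eta_{\delta}^{\left(  q\right)  }\otimes\eta_{\varepsilon}^{\left(  q\right)  }\right\rangle$. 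Now I plug in Corollary \ref{cor.Deltaetastaral}, which expands $\Delta\left(  \eta_{\alpha}^{\ast\left(  q\right)  }\right)  $ as a sum over tuples $\left(  \beta_{s}\right)  ,\left(  \gamma_{s}\right)  $ of pairs $\eta_{\beta_{1}\cdots\beta_{k}}^{\ast\left(  q\right)  }\otimes\eta_{\gamma_{1}\cdots\gamma_{k}}^{\ast\left(  q\right)  }$ weighted by the powers of $-q$ and $q-1$ appearing in the theorem. By the duality \eqref{eq.prop.etastar-dual-basis.duality}, each such term pairs to the truth value $\left[  \beta_{1}\cdots\beta_{k}=\delta\right]  \cdot\left[  \gamma_{1}\cdots\gamma_{k}=\varepsilon\right]$. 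Thus the coefficient of $\eta_{\alpha}^{\left(  q\right)  }$ is the sum of the weights over those tuples with $\beta_{1}\cdots\beta_{k}=\delta$, $\gamma_{1}\cdots\gamma_{k}=\varepsilon$, and $\alpha=\left(  \left\vert \beta_{1}\right\vert +\left\vert \gamma_{1}\right\vert ,\ldots,\left\vert \beta_{k}\right\vert +\left\vert \gamma_{k}\right\vert \right)$ (the last because $\alpha=\left(  \alpha_{1},\ldots,\alpha_{k}\right)$ forces $\left\vert \beta_{s}\right\vert +\left\vert \gamma_{s}\right\vert =\alpha_{s}$). Summing over all $\alpha$ then collapses the constraint on $\alpha$ and recovers exactly the indexing in the theorem.

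The bookkeeping obstacle is matching the index sets precisely. In Corollary \ref{cor.Deltaetastaral} the value $k=\ell\left(  \alpha\right)$ is fixed by $\alpha$, and each $\alpha_{s}=\left\vert \beta_{s}\right\vert +\left\vert \gamma_{s}\right\vert$ is constrained to be positive, which in the final sum-over-$\alpha$ translates into the condition $\ell\left(  \beta_{s}\right)  +\ell\left(  \gamma_{s}\right)  >0$ (equivalently $\left\vert \beta_{s}\right\vert +\left\vert \gamma_{s}\right\vert >0$) in Theorem \ref{thm.product.1}; I would check carefully that a segment $s$ with $\beta_{s}=\gamma_{s}=\varnothing$ is exactly what gets excluded, so that the $k$ in the theorem (now summed over $\mathbb{N}$) genuinely ranges over $\ell\left(  \alpha\right)$ as $\alpha$ varies. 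I also need to confirm that the exponent $\sum_{s}\max\left\{  \ell\left(  \beta_{s}\right)  ,\ell\left(  \gamma_{s}\right)  \right\}  -k$ and the $\left(  q-1\right)$-exponent carry over verbatim, and that the condition $\left\vert \ell\left(  \beta_{s}\right)  -\ell\left(  \gamma_{s}\right)  \right\vert \leq1$ is preserved.

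The step I expect to be most delicate is the reindexing from ``sum over $\alpha$ and over tuples with prescribed $\left\vert \beta_{s}\right\vert +\left\vert \gamma_{s}\right\vert =\alpha_{s}$'' to ``free sum over $k$ and tuples $\left(  \beta_{s}\right)  ,\left(  \gamma_{s}\right)$ with $\beta_{1}\cdots\beta_{k}=\delta$ and $\gamma_{1}\cdots\gamma_{k}=\varepsilon$,'' since this is where the positivity condition $\ell\left(  \beta_{s}\right)  +\ell\left(  \gamma_{s}\right)  >0$ enters to guarantee a bijection between the two indexings (otherwise one could pad tuples with empty segments and over- or under-count). Establishing that bijection cleanly, and verifying that the output composition $\left(  \left\vert \beta_{1}\right\vert +\left\vert \gamma_{1}\right\vert ,\ldots\right)$ is well-defined as a genuine composition (all entries positive), is the crux; the weight computation itself is routine once the index matching is pinned down.
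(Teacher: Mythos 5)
Your proposal is correct and follows essentially the same route as the paper's own proof: extracting the coefficient of $\eta_{\alpha}^{\left(q\right)}$ by pairing against $\eta_{\alpha}^{\ast\left(q\right)}$, applying the multiplication--comultiplication adjointness together with Corollary \ref{cor.Deltaetastaral} and the duality \eqref{eq.prop.etastar-dual-basis.duality}, and then reindexing (with the condition $\ell\left(\beta_{s}\right)+\ell\left(\gamma_{s}\right)>0$ replacing the constraint that the entries of $\alpha$ be positive). The universality/specialization step you sketch to remove the invertibility assumption on $r$ is exactly what the paper does, passing through $\mathbb{Q}\left(X\right)$, then $\mathbb{Z}\left[X\right]$, then specializing $X\mapsto q$.
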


\begin{remark}
The compositions $\beta_{1},\beta_{2},\ldots,\beta_{k}$ and $\gamma_{1}%
,\gamma_{2},\ldots,\gamma_{k}$ in the sum on the right hand side of Theorem
\ref{thm.product.1} are allowed to be empty. Nevertheless, the sum is finite.
Indeed, if $k\in\mathbb{N}$ and $\beta_{1},\beta_{2},\ldots,\beta_{k}%
\in\operatorname*{Comp}$ and $\gamma_{1},\gamma_{2},\ldots,\gamma_{k}%
\in\operatorname*{Comp}$ satisfy%
\begin{align*}
\beta_{1}\beta_{2}\cdots\beta_{k}  &  =\delta\ \ \ \ \ \ \ \ \ \ \text{and}%
\ \ \ \ \ \ \ \ \ \ \gamma_{1}\gamma_{2}\cdots\gamma_{k}=\varepsilon
\ \ \ \ \ \ \ \ \ \ \text{and}\\
\left\vert \ell\left(  \beta_{s}\right)  -\ell\left(  \gamma_{s}\right)
\right\vert  &  \leq1\text{ for all }s\ \ \ \ \ \ \ \ \ \ \text{and}%
\ \ \ \ \ \ \ \ \ \ \ell\left(  \beta_{s}\right)  +\ell\left(  \gamma
_{s}\right)  >0\text{ for all }s,
\end{align*}
then $k\leq\ell\left(  \delta\right)  +\ell\left(  \varepsilon\right)  $,
because
\begin{align*}
&  \underbrace{\ell\left(  \delta\right)  }_{\substack{=\ell\left(  \beta
_{1}\right)  +\ell\left(  \beta_{2}\right)  +\cdots+\ell\left(  \beta
_{k}\right)  \\\text{(since }\delta=\beta_{1}\beta_{2}\cdots\beta_{k}\text{)}%
}}+\underbrace{\ell\left(  \varepsilon\right)  }_{\substack{=\ell\left(
\gamma_{1}\right)  +\ell\left(  \gamma_{2}\right)  +\cdots+\ell\left(
\gamma_{k}\right)  \\\text{(since }\varepsilon=\gamma_{1}\gamma_{2}%
\cdots\gamma_{k}\text{)}}}\\
&  =\left(  \ell\left(  \beta_{1}\right)  +\ell\left(  \beta_{2}\right)
+\cdots+\ell\left(  \beta_{k}\right)  \right)  +\left(  \ell\left(  \gamma
_{1}\right)  +\ell\left(  \gamma_{2}\right)  +\cdots+\ell\left(  \gamma
_{k}\right)  \right) \\
&  =\sum_{s=1}^{k}\ell\left(  \beta_{s}\right)  +\sum_{s=1}^{k}\ell\left(
\gamma_{s}\right)  =\sum_{s=1}^{k}\underbrace{\left(  \ell\left(  \beta
_{s}\right)  +\ell\left(  \gamma_{s}\right)  \right)  }_{\substack{\geq
1\\\text{(since our above assumptions}\\\text{yield }\ell\left(  \beta
_{s}\right)  +\ell\left(  \gamma_{s}\right)  >0\text{,}\\\text{but }%
\ell\left(  \beta_{s}\right)  +\ell\left(  \gamma_{s}\right)  \text{ is an
integer)}}}\geq\sum_{s=1}^{k}1=k.
\end{align*}
This narrows down the options for $k$ to the finite set $\left\{
0,1,\ldots,\ell\left(  \delta\right)  +\ell\left(  \varepsilon\right)
\right\}  $, and thus leaves only finitely many options for $\beta_{1}%
,\beta_{2},\ldots,\beta_{k}$ (since there are only finitely many ways to
decompose the composition $\delta$ as a concatenation $\delta=\beta_{1}%
\beta_{2}\cdots\beta_{k}$ when $k$ is fixed) and for $\gamma_{1},\gamma
_{2},\ldots,\gamma_{k}$ (similarly). Thus, the sum is finite.
\end{remark}

\begin{example}
\label{exa.product.1.ab*c}Let $\delta$ and $\varepsilon$ be two compositions
of the form $\delta=\left(  a,b\right)  $ and $\varepsilon=\left(  c\right)  $
for some positive integers $a,b,c$. Then, Theorem \ref{thm.product.1}
expresses the product $\eta_{\delta}^{\left(  q\right)  }\eta_{\varepsilon
}^{\left(  q\right)  }=\eta_{\left(  a,b\right)  }^{\left(  q\right)  }%
\eta_{\left(  c\right)  }^{\left(  q\right)  }$ as a sum over all choices of
$k\in\mathbb{N}$ and of $k$ compositions $\beta_{1},\beta_{2},\ldots,\beta
_{k}\in\operatorname*{Comp}$ and of $k$ further compositions $\gamma
_{1},\gamma_{2},\ldots,\gamma_{k}\in\operatorname*{Comp}$ satisfying%
\begin{align*}
\beta_{1}\beta_{2}\cdots\beta_{k}  &  =\delta\ \ \ \ \ \ \ \ \ \ \text{and}%
\ \ \ \ \ \ \ \ \ \ \gamma_{1}\gamma_{2}\cdots\gamma_{k}=\varepsilon
\ \ \ \ \ \ \ \ \ \ \text{and}\\
\left\vert \ell\left(  \beta_{s}\right)  -\ell\left(  \gamma_{s}\right)
\right\vert  &  \leq1\text{ for all }s\ \ \ \ \ \ \ \ \ \ \text{and}%
\ \ \ \ \ \ \ \ \ \ \ell\left(  \beta_{s}\right)  +\ell\left(  \gamma
_{s}\right)  >0\text{ for all }s.
\end{align*}
These choices are

\begin{enumerate}
\item having $k=1$ and $\beta_{1}=\delta=\left(  a,b\right)  $ and $\gamma
_{1}=\varepsilon=\left(  c\right)  $;

\item having $k=2$ and $\beta_{1}=\left(  a\right)  $ and $\beta_{2}=\left(
b\right)  $ and $\gamma_{1}=\varnothing$ and $\gamma_{2}=\left(  c\right)  $;

\item having $k=2$ and $\beta_{1}=\left(  a\right)  $ and $\beta_{2}=\left(
b\right)  $ and $\gamma_{1}=\left(  c\right)  $ and $\gamma_{2}=\varnothing$;

\item having $k=3$ and $\beta_{1}=\varnothing$ and $\beta_{2}=\left(
a\right)  $ and $\beta_{3}=\left(  b\right)  $ and $\gamma_{1}=\left(
c\right)  $ and $\gamma_{2}=\varnothing$ and $\gamma_{3}=\varnothing$;

\item having $k=3$ and $\beta_{1}=\left(  a\right)  $ and $\beta
_{2}=\varnothing$ and $\beta_{3}=\left(  b\right)  $ and $\gamma
_{1}=\varnothing$ and $\gamma_{2}=\left(  c\right)  $ and $\gamma
_{3}=\varnothing$;

\item having $k=3$ and $\beta_{1}=\left(  a\right)  $ and $\beta_{2}=\left(
b\right)  $ and $\beta_{3}=\varnothing$ and $\gamma_{1}=\varnothing$ and
$\gamma_{2}=\varnothing$ and $\gamma_{3}=\left(  c\right)  $.
\end{enumerate}

Thus, Theorem \ref{thm.product.1} yields%
\begin{align*}
&  \eta_{\left(  a,b\right)  }^{\left(  q\right)  }\eta_{\left(  c\right)
}^{\left(  q\right)  }\\
&  =\left(  -q\right)  ^{2-1}\left(  q-1\right)  ^{0}\eta_{\left(
a+b+c\right)  }^{\left(  q\right)  }+\left(  -q\right)  ^{1+1-2}\left(
q-1\right)  ^{1}\eta_{\left(  a,\ b+c\right)  }^{\left(  q\right)  }\\
&  \ \ \ \ \ \ \ \ \ \ +\left(  -q\right)  ^{1+1-2}\left(  q-1\right)
^{1}\eta_{\left(  a+c,\ b\right)  }^{\left(  q\right)  }+\left(  -q\right)
^{1+1+1-3}\left(  q-1\right)  ^{0}\eta_{\left(  c,a,b\right)  }^{\left(
q\right)  }\\
&  \ \ \ \ \ \ \ \ \ \ +\left(  -q\right)  ^{1+1+1-3}\left(  q-1\right)
^{0}\eta_{\left(  a,c,b\right)  }^{\left(  q\right)  }+\left(  -q\right)
^{1+1+1-3}\left(  q-1\right)  ^{0}\eta_{\left(  a,b,c\right)  }^{\left(
q\right)  }\\
&  =-q\eta_{\left(  a+b+c\right)  }^{\left(  q\right)  }+\left(  q-1\right)
\eta_{\left(  a,\ b+c\right)  }^{\left(  q\right)  }+\left(  q-1\right)
\eta_{\left(  a+c,\ b\right)  }^{\left(  q\right)  }+\eta_{\left(
c,a,b\right)  }^{\left(  q\right)  }+\eta_{\left(  a,c,b\right)  }^{\left(
q\right)  }+\eta_{\left(  a,b,c\right)  }^{\left(  q\right)  }.
\end{align*}
Note that the last three addends $\eta_{\left(  c,a,b\right)  }^{\left(
q\right)  },\ \eta_{\left(  a,c,b\right)  }^{\left(  q\right)  }%
,\ \eta_{\left(  a,b,c\right)  }^{\left(  q\right)  }$ here come from those
choices in which $\min\left\{  \ell\left(  \beta_{s}\right)  ,\ell\left(
\gamma_{s}\right)  \right\}  =0$ for each $s\in\left[  k\right]  $ (that is,
for each $s\in\left[  k\right]  $, one of the two compositions $\beta_{s}$ and
$\gamma_{s}$ is empty). In these choices, the two powers%
\[
\left(  -q\right)  ^{\sum_{s=1}^{k}\max\left\{  \ell\left(  \beta_{s}\right)
,\ell\left(  \gamma_{s}\right)  \right\}  -k}\ \ \ \ \ \ \ \ \ \ \text{and}%
\ \ \ \ \ \ \ \ \ \ \left(  q-1\right)  ^{\left(  \text{\# of all }s\in\left[
k\right]  \text{ such that }\ell\left(  \beta_{s}\right)  =\ell\left(
\gamma_{s}\right)  \right)  }%
\]
are equal to $1$ (because the exponents are easily seen to be $0$), whereas
the composition $\left(  \left\vert \beta_{1}\right\vert +\left\vert
\gamma_{1}\right\vert ,\ \left\vert \beta_{2}\right\vert +\left\vert
\gamma_{2}\right\vert ,\ \ldots,\ \left\vert \beta_{k}\right\vert +\left\vert
\gamma_{k}\right\vert \right)  $ is a shuffle of $\delta$ with $\varepsilon$.
Thus, these choices contribute terms of the form $\eta_{\varphi}^{\left(
q\right)  }$, where $\varphi$ is a shuffle of $\delta$ with $\varepsilon$, to
the right hand side of Theorem \ref{thm.product.1}, and these terms all have
coefficient $1$. These are the only choices of $k,\ \beta_{1},\beta_{2}%
,\ldots,\beta_{k},\ \gamma_{1},\gamma_{2},\ldots,\gamma_{k}$ that have
$k=\ell\left(  \delta\right)  +\ell\left(  \varepsilon\right)  $. All other
choices have $k<\ell\left(  \delta\right)  +\ell\left(  \varepsilon\right)  $,
and these choices lead to addends that involve either a nontrivial power of
$-q$ or a nontrivial power of $q-1$ (or both). In this sense, we can view
Theorem \ref{thm.product.1} as a deformation of the overlapping shuffle
product formula for $M_{\delta}M_{\varepsilon}$ (see, e.g., \cite[Proposition
5.1.3]{GriRei}), although the concept of a \textquotedblleft
deformation\textquotedblright\ must be understood in a wide sense (we cannot
obtain the latter just by specializing the former).
\end{example}

We will derive Theorem \ref{thm.product.1} from Corollary
\ref{cor.Deltaetastaral}. For this, we will again use the duality between
$\operatorname*{NSym}$ and $\operatorname*{QSym}$:

\begin{lemma}
\label{lem.NSym-duality.2}Let $f,g\in\operatorname*{QSym}$ and $h\in
\operatorname*{NSym}$ be arbitrary. Let the tensor $\Delta\left(  h\right)
\in\operatorname*{NSym}\otimes\operatorname*{NSym}$ be written in the form
$\Delta\left(  h\right)  =\sum_{i\in I}s_{i}\otimes t_{i}$, where $I$ is a
finite set and where $s_{i},t_{i}\in\operatorname*{NSym}$ for each $i\in I$.
Then,%
\[
\left\langle h,fg\right\rangle =\sum_{i\in I}\left\langle s_{i},f\right\rangle
\left\langle t_{i},g\right\rangle .
\]

\end{lemma}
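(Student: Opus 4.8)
The statement to be proved, Lemma \ref{lem.NSym-duality.2}, is the exact mirror image of Lemma \ref{lem.NSym-duality.1}, with the roles of $\operatorname*{NSym}$ and $\operatorname*{QSym}$ interchanged: here $f,g$ live in $\operatorname*{QSym}$ and $h$ in $\operatorname*{NSym}$, and it is the coproduct of $\operatorname*{NSym}$ rather than that of $\operatorname*{QSym}$ that appears. The plan is therefore to reuse exactly the same structural fact that drove the proof of Lemma \ref{lem.NSym-duality.1}, namely that the bilinear form $\left\langle \cdot,\cdot\right\rangle$ realizes $\operatorname*{NSym}$ as the graded dual of $\operatorname*{QSym}$ (equivalently, $\operatorname*{QSym}$ as the graded dual of $\operatorname*{NSym}$). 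Under this identification, the multiplication of $\operatorname*{QSym}$ and the comultiplication of $\operatorname*{NSym}$ are mutually adjoint with respect to $\left\langle \cdot,\cdot\right\rangle$, which is precisely what the statement asserts once we unfold Sweedler notation.

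Concretely, I would first recall that the duality pairing between $\operatorname*{NSym}$ and $\operatorname*{QSym}$ is a pairing of Hopf algebras, so that the product on either factor is adjoint to the coproduct on the other. The adjointness relevant to Lemma \ref{lem.NSym-duality.1} was ``product on $\operatorname*{NSym}$ adjoint to coproduct on $\operatorname*{QSym}$''; the adjointness I need here is the symmetric one, ``product on $\operatorname*{QSym}$ adjoint to coproduct on $\operatorname*{NSym}$''. In Sweedler notation this reads
\[
\left\langle h, fg\right\rangle = \sum_{\left(  h\right)  }\left\langle h_{\left(  1\right)  }, f\right\rangle \left\langle h_{\left(  2\right)  }, g\right\rangle,
\]
where $\sum_{\left(  h\right)  }h_{\left(  1\right)  }\otimes h_{\left(  2\right)  }$ denotes $\Delta\left(  h\right)  \in\operatorname*{NSym}\otimes\operatorname*{NSym}$. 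Writing $\Delta\left(  h\right)  =\sum_{i\in I}s_{i}\otimes t_{i}$ as in the statement and substituting turns the right-hand side into $\sum_{i\in I}\left\langle s_{i},f\right\rangle \left\langle t_{i},g\right\rangle$, which is exactly the claimed formula. So the whole proof amounts to invoking this adjointness and translating out of Sweedler notation, just as was done for Lemma \ref{lem.NSym-duality.1}.

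The only genuine point requiring care — and the one I would expect to be the main (minor) obstacle — is justifying that the pairing is symmetric enough that \emph{both} adjointness relations hold, not merely the one used for Lemma \ref{lem.NSym-duality.1}. This is automatic from the fact that $\left\langle \cdot,\cdot\right\rangle$ induces an isomorphism $\operatorname*{NSym}\xrightarrow{\sim}\operatorname*{QSym}\nolimits^{o}$ of graded Hopf algebras: a Hopf-algebra dual pairing is by definition compatible with both the algebra and coalgebra structures on each side, so the product of $\operatorname*{QSym}$ is adjoint to the coproduct of $\operatorname*{NSym}$ by the very same token that the product of $\operatorname*{NSym}$ is adjoint to the coproduct of $\operatorname*{QSym}$. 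I would state this explicitly, citing \cite[\S 1.6 or \S 5.4]{GriRei} for the general principle that in a graded Hopf-algebra duality the multiplication on one factor and the comultiplication on the dual factor are mutually adjoint, and then note that applying it in the present direction yields the displayed Sweedler identity. The passage from Sweedler notation to the finite-sum form $\Delta\left(  h\right)  =\sum_{i\in I}s_{i}\otimes t_{i}$ is purely notational and needs no computation, exactly mirroring the final sentence of the proof of Lemma \ref{lem.NSym-duality.1}.
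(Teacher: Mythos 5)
Your proposal is correct and follows essentially the same route as the paper: the paper proves Lemma \ref{lem.NSym-duality.2} simply by noting that it is the mirror image of Lemma \ref{lem.NSym-duality.1} with the roles of $\operatorname*{QSym}$ and $\operatorname*{NSym}$ switched, which is exactly your argument via the Hopf-algebra duality making the product of $\operatorname*{QSym}$ adjoint to the coproduct of $\operatorname*{NSym}$. Your explicit remark that a Hopf-algebra dual pairing yields \emph{both} adjointness relations is precisely the justification implicit in the paper's one-line proof.
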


\begin{proof}
This is analogous to Lemma \ref{lem.NSym-duality.1}, except that the roles of
$\operatorname*{QSym}$ and $\operatorname*{NSym}$ have now been switched.
\end{proof}

For the sake of convenience, let us extend Lemma \ref{lem.NSym-duality.2} to
infinite sums with only finitely many nonzero addends:

\begin{lemma}
\label{lem.NSym-duality.2inf}Let $f,g\in\operatorname*{QSym}$ and
$h\in\operatorname*{NSym}$ be arbitrary. Let the tensor $\Delta\left(
h\right)  \in\operatorname*{NSym}\otimes\operatorname*{NSym}$ be written in
the form $\Delta\left(  h\right)  =\sum_{i\in I}s_{i}\otimes t_{i}$, where $I$
is a set and where $s_{i},t_{i}\in\operatorname*{NSym}$ for each $i\in I$ are
chosen such that only finitely many $i\in I$ satisfy $s_{i}\neq0$. Then,%
\[
\left\langle h,fg\right\rangle =\sum_{i\in I}\left\langle s_{i},f\right\rangle
\left\langle t_{i},g\right\rangle .
\]

\end{lemma}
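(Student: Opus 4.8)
The plan is to deduce Lemma \ref{lem.NSym-duality.2inf} from Lemma \ref{lem.NSym-duality.2} by discarding the (infinitely many) addends that vanish. The only obstacle separating the two lemmas is that in the infinite version the index set $I$ may be infinite; but the hypothesis guarantees that only finitely many $i\in I$ satisfy $s_{i}\neq0$, and I will use this to reduce everything to a finite subsum.

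First I would let $J:=\left\{i\in I\ \mid\ s_{i}\neq0\right\}$, which is a finite set by hypothesis. For every $i\in I\setminus J$ we have $s_{i}=0$, and therefore both $s_{i}\otimes t_{i}=0$ and $\left\langle s_{i},f\right\rangle =0$ (the latter because $\left\langle \cdot,\cdot\right\rangle$ is $\mathbf{k}$-bilinear, so it sends $0$ in its first argument to $0$). Consequently $\Delta\left(h\right)=\sum_{i\in I}s_{i}\otimes t_{i}=\sum_{i\in J}s_{i}\otimes t_{i}$, since all the omitted addends are zero. This rewrites $\Delta\left(h\right)$ as a finite sum indexed by $J$, which is exactly the shape required to apply Lemma \ref{lem.NSym-duality.2}.

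Next I would apply Lemma \ref{lem.NSym-duality.2} with the finite set $J$ in place of $I$ (and the same $f,g,h$ and the same families $\left(s_{i}\right)_{i\in J}$, $\left(t_{i}\right)_{i\in J}$). This gives $\left\langle h,fg\right\rangle =\sum_{i\in J}\left\langle s_{i},f\right\rangle \left\langle t_{i},g\right\rangle$. Finally I would re-extend the sum back to all of $I$: for each $i\in I\setminus J$ the term $\left\langle s_{i},f\right\rangle \left\langle t_{i},g\right\rangle$ vanishes (since $\left\langle s_{i},f\right\rangle =0$ as noted above), so $\sum_{i\in J}\left\langle s_{i},f\right\rangle \left\langle t_{i},g\right\rangle =\sum_{i\in I}\left\langle s_{i},f\right\rangle \left\langle t_{i},g\right\rangle$. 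Chaining these equalities yields $\left\langle h,fg\right\rangle =\sum_{i\in I}\left\langle s_{i},f\right\rangle \left\langle t_{i},g\right\rangle$, as desired.

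There is essentially no hard part here: the lemma is a purely bookkeeping extension, and the entire argument rests on the single observation that bilinearity kills the $s_{i}=0$ addends on both sides simultaneously, so passing between the finite subsum over $J$ and the full (mostly-zero) sum over $I$ is harmless. The only point requiring a word of care is the well-definedness of the infinite expression $\sum_{i\in I}\left\langle s_{i},f\right\rangle \left\langle t_{i},g\right\rangle$ itself, but this too is immediate: only finitely many of its addends are nonzero (namely those with $i\in J$), so the sum is really finite and needs no convergence considerations.
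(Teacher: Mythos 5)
Your proof is correct and follows exactly the paper's own (one-line) argument: restrict to the finite subset $I^{\prime}=\left\{ i\in I\ \mid\ s_{i}\neq0\right\}$, apply Lemma \ref{lem.NSym-duality.2} to it, and note that bilinearity makes the discarded addends vanish on both sides. Your write-up simply spells out the details the paper leaves implicit.
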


\begin{proof}
This is easily reduced to Lemma \ref{lem.NSym-duality.2} (just replace the set
$I$ by its subset $I^{\prime}:=\left\{  i\in I\ \mid\ s_{i}\neq0\right\}  $).
\end{proof}

\begin{proof}
[Proof of Theorem \ref{thm.product.1}.]Forget that we fixed $\delta$ and
$\varepsilon$. For any three compositions $\alpha=\left(  \alpha_{1}%
,\alpha_{2},\ldots,\alpha_{k}\right)  $, $\delta$ and $\varepsilon$, we define
a polynomial%
\begin{align}
d_{\delta,\varepsilon}^{\alpha}\left(  X\right)   &  :=\sum_{\substack{\beta
_{1},\beta_{2},\ldots,\beta_{k}\in\operatorname*{Comp};\\\gamma_{1},\gamma
_{2},\ldots,\gamma_{k}\in\operatorname*{Comp};\\\beta_{1}\beta_{2}\cdots
\beta_{k}=\delta;\\\gamma_{1}\gamma_{2}\cdots\gamma_{k}=\varepsilon
;\\\left\vert \ell\left(  \beta_{s}\right)  -\ell\left(  \gamma_{s}\right)
\right\vert \leq1\text{ for all }s;\\\left\vert \beta_{s}\right\vert
+\left\vert \gamma_{s}\right\vert =\alpha_{s}\text{ for all }s}}\left(
-X\right)  ^{\sum_{s=1}^{k}\max\left\{  \ell\left(  \beta_{s}\right)
,\ell\left(  \gamma_{s}\right)  \right\}  -k}\nonumber\\
&  \ \ \ \ \ \ \ \ \ \ \ \ \ \ \ \ \ \ \ \ \cdot\left(  X-1\right)  ^{\left(
\text{\# of all }s\in\left[  k\right]  \text{ such that }\ell\left(  \beta
_{s}\right)  =\ell\left(  \gamma_{s}\right)  \right)  }%
\label{pf.thm.product.1.dX=}\\
&  \in\mathbb{Z}\left[  X\right] \nonumber
\end{align}
(this really is a polynomial, since the exponent $\sum_{s=1}^{k}\max\left\{
\ell\left(  \beta_{s}\right)  ,\ell\left(  \gamma_{s}\right)  \right\}  -k$ is
easily seen to be a nonnegative integer). Thus, clearly, for any three
compositions $\alpha=\left(  \alpha_{1},\alpha_{2},\ldots,\alpha_{k}\right)
$, $\delta$ and $\varepsilon$, we have%
\begin{align}
d_{\delta,\varepsilon}^{\alpha}\left(  q\right)   &  =\sum_{\substack{\beta
_{1},\beta_{2},\ldots,\beta_{k}\in\operatorname*{Comp};\\\gamma_{1},\gamma
_{2},\ldots,\gamma_{k}\in\operatorname*{Comp};\\\beta_{1}\beta_{2}\cdots
\beta_{k}=\delta;\\\gamma_{1}\gamma_{2}\cdots\gamma_{k}=\varepsilon
;\\\left\vert \ell\left(  \beta_{s}\right)  -\ell\left(  \gamma_{s}\right)
\right\vert \leq1\text{ for all }s;\\\left\vert \beta_{s}\right\vert
+\left\vert \gamma_{s}\right\vert =\alpha_{s}\text{ for all }s}}\left(
-q\right)  ^{\sum_{s=1}^{k}\max\left\{  \ell\left(  \beta_{s}\right)
,\ell\left(  \gamma_{s}\right)  \right\}  -k}\nonumber\\
&  \ \ \ \ \ \ \ \ \ \ \ \ \ \ \ \ \ \ \ \ \cdot\left(  q-1\right)  ^{\left(
\text{\# of all }s\in\left[  k\right]  \text{ such that }\ell\left(  \beta
_{s}\right)  =\ell\left(  \gamma_{s}\right)  \right)  }
\label{pf.thm.product.1.dq=}%
\end{align}

Note that the sums on the right hand sides of (\ref{pf.thm.product.1.dX=}) and
(\ref{pf.thm.product.1.dq=}) are finite (because for a given $k\in\mathbb{N}$
and given compositions $\delta$ and $\varepsilon$, there are only finitely
many ways to decompose $\delta$ as $\delta=\beta_{1}\beta_{2}\cdots\beta_{k}$,
and only finitely many ways to decompose $\varepsilon$ as $\varepsilon
=\gamma_{1}\gamma_{2}\cdots\gamma_{k}$). All sums that will appear in this
proof will be finite or essentially finite (i.e., have only finitely many
nonzero addends). We note that $\mathbf{k}$-linear maps always respect such sums.

Now, we shall proceed by proving several claims. Our first claim is a
restatement of Corollary \ref{cor.Deltaetastaral}:

\begin{statement}
\textit{Claim 1:} Let $\alpha$ be any composition. Assume that $r$ is
invertible. Then,%
\begin{equation}
\Delta\left(  \eta_{\alpha}^{\ast\left(  q\right)  }\right)  =\sum
_{\delta,\varepsilon\in\operatorname*{Comp}}d_{\delta,\varepsilon}^{\alpha
}\left(  q\right)  \eta_{\delta}^{\ast\left(  q\right)  }\otimes
\eta_{\varepsilon}^{\ast\left(  q\right)  }. \label{pf.thm.product.c1}%
\end{equation}

\end{statement}

\begin{proof}
[Proof of Claim 1.]Write the composition $\alpha$ as $\alpha=\left(
\alpha_{1},\alpha_{2},\ldots,\alpha_{k}\right)  $. Then, Corollary
\ref{cor.Deltaetastaral} yields
\begin{align*}
&  \Delta\left(  \eta_{\alpha}^{\ast\left(  q\right)  }\right) \\
&  =\underbrace{\sum_{\substack{\beta_{1},\beta_{2},\ldots,\beta_{k}%
\in\operatorname*{Comp};\\\gamma_{1},\gamma_{2},\ldots,\gamma_{k}%
\in\operatorname*{Comp};\\\left\vert \beta_{s}\right\vert +\left\vert
\gamma_{s}\right\vert =\alpha_{s}\text{ for all }s;\\\left\vert \ell\left(
\beta_{s}\right)  -\ell\left(  \gamma_{s}\right)  \right\vert \leq1\text{ for
all }s}}}_{\substack{=\sum_{\substack{\beta_{1},\beta_{2},\ldots,\beta_{k}%
\in\operatorname*{Comp};\\\gamma_{1},\gamma_{2},\ldots,\gamma_{k}%
\in\operatorname*{Comp};\\\left\vert \ell\left(  \beta_{s}\right)
-\ell\left(  \gamma_{s}\right)  \right\vert \leq1\text{ for all }%
s;\\\left\vert \beta_{s}\right\vert +\left\vert \gamma_{s}\right\vert
=\alpha_{s}\text{ for all }s}}\\=\sum_{\delta,\varepsilon\in
\operatorname*{Comp}}\ \ \sum_{\substack{_{\substack{\beta_{1},\beta
_{2},\ldots,\beta_{k}\in\operatorname*{Comp};\\\gamma_{1},\gamma_{2}%
,\ldots,\gamma_{k}\in\operatorname*{Comp};\\\beta_{1}\beta_{2}\cdots\beta
_{k}=\delta;\\\gamma_{1}\gamma_{2}\cdots\gamma_{k}=\varepsilon;\\\left\vert
\ell\left(  \beta_{s}\right)  -\ell\left(  \gamma_{s}\right)  \right\vert
\leq1\text{ for all }s;\\\left\vert \beta_{s}\right\vert +\left\vert
\gamma_{s}\right\vert =\alpha_{s}\text{ for all }s}}\\\text{(here, we have
split up the sum according}\\\text{to the values of }\beta_{1}\beta_{2}%
\cdots\beta_{k}\text{ and }\gamma_{1}\gamma_{2}\cdots\gamma_{k}\text{)}}%
}}}\left(  -q\right)  ^{\sum_{s=1}^{k}\max\left\{  \ell\left(  \beta
_{s}\right)  ,\ell\left(  \gamma_{s}\right)  \right\}  -k}\\
&  \ \ \ \ \ \ \ \ \ \ \ \ \ \ \ \ \ \ \ \ \cdot\left(  q-1\right)  ^{\left(
\text{\# of all }s\in\left[  k\right]  \text{ such that }\ell\left(  \beta
_{s}\right)  =\ell\left(  \gamma_{s}\right)  \right)  }\left(  \eta_{\beta
_{1}\beta_{2}\cdots\beta_{k}}^{\ast\left(  q\right)  }\otimes\eta_{\gamma
_{1}\gamma_{2}\cdots\gamma_{k}}^{\ast\left(  q\right)  }\right) \\
&  =\sum_{\delta,\varepsilon\in\operatorname*{Comp}}\ \ \sum_{\substack{\beta
_{1},\beta_{2},\ldots,\beta_{k}\in\operatorname*{Comp};\\\gamma_{1},\gamma
_{2},\ldots,\gamma_{k}\in\operatorname*{Comp};\\\beta_{1}\beta_{2}\cdots
\beta_{k}=\delta;\\\gamma_{1}\gamma_{2}\cdots\gamma_{k}=\varepsilon
;\\\left\vert \ell\left(  \beta_{s}\right)  -\ell\left(  \gamma_{s}\right)
\right\vert \leq1\text{ for all }s;\\\left\vert \beta_{s}\right\vert
+\left\vert \gamma_{s}\right\vert =\alpha_{s}\text{ for all }s}}\left(
-q\right)  ^{\sum_{s=1}^{k}\max\left\{  \ell\left(  \beta_{s}\right)
,\ell\left(  \gamma_{s}\right)  \right\}  -k}\\
&  \ \ \ \ \ \ \ \ \ \ \ \ \ \ \ \ \ \ \ \ \cdot\left(  q-1\right)  ^{\left(
\text{\# of all }s\in\left[  k\right]  \text{ such that }\ell\left(  \beta
_{s}\right)  =\ell\left(  \gamma_{s}\right)  \right)  }\underbrace{\left(
\eta_{\beta_{1}\beta_{2}\cdots\beta_{k}}^{\ast\left(  q\right)  }\otimes
\eta_{\gamma_{1}\gamma_{2}\cdots\gamma_{k}}^{\ast\left(  q\right)  }\right)
}_{\substack{=\eta_{\delta}^{\ast\left(  q\right)  }\otimes\eta_{\varepsilon
}^{\ast\left(  q\right)  }\\\text{(since }\beta_{1}\beta_{2}\cdots\beta
_{k}=\delta\\\text{and }\gamma_{1}\gamma_{2}\cdots\gamma_{k}=\varepsilon
\text{)}}}
\end{align*}%
\begin{align*}
&  =\sum_{\delta,\varepsilon\in\operatorname*{Comp}}\ \ \sum_{\substack{\beta
_{1},\beta_{2},\ldots,\beta_{k}\in\operatorname*{Comp};\\\gamma_{1},\gamma
_{2},\ldots,\gamma_{k}\in\operatorname*{Comp};\\\beta_{1}\beta_{2}\cdots
\beta_{k}=\delta;\\\gamma_{1}\gamma_{2}\cdots\gamma_{k}=\varepsilon
;\\\left\vert \ell\left(  \beta_{s}\right)  -\ell\left(  \gamma_{s}\right)
\right\vert \leq1\text{ for all }s;\\\left\vert \beta_{s}\right\vert
+\left\vert \gamma_{s}\right\vert =\alpha_{s}\text{ for all }s}}\left(
-q\right)  ^{\sum_{s=1}^{k}\max\left\{  \ell\left(  \beta_{s}\right)
,\ell\left(  \gamma_{s}\right)  \right\}  -k}\\
&  \ \ \ \ \ \ \ \ \ \ \ \ \ \ \ \ \ \ \ \ \cdot\left(  q-1\right)  ^{\left(
\text{\# of all }s\in\left[  k\right]  \text{ such that }\ell\left(  \beta
_{s}\right)  =\ell\left(  \gamma_{s}\right)  \right)  }\eta_{\delta}%
^{\ast\left(  q\right)  }\otimes\eta_{\varepsilon}^{\ast\left(  q\right)  }\\
&  =\sum_{\delta,\varepsilon\in\operatorname*{Comp}}\underbrace{\left(
\sum_{\substack{\beta_{1},\beta_{2},\ldots,\beta_{k}\in\operatorname*{Comp}%
;\\\gamma_{1},\gamma_{2},\ldots,\gamma_{k}\in\operatorname*{Comp};\\\beta
_{1}\beta_{2}\cdots\beta_{k}=\delta;\\\gamma_{1}\gamma_{2}\cdots\gamma
_{k}=\varepsilon;\\\left\vert \ell\left(  \beta_{s}\right)  -\ell\left(
\gamma_{s}\right)  \right\vert \leq1\text{ for all }s;\\\left\vert \beta
_{s}\right\vert +\left\vert \gamma_{s}\right\vert =\alpha_{s}\text{ for all
}s}}\left(  -q\right)  ^{\sum_{s=1}^{k}\max\left\{  \ell\left(  \beta
_{s}\right)  ,\ell\left(  \gamma_{s}\right)  \right\}  -k}\left(  q-1\right)
^{\left(  \text{\# of all }s\in\left[  k\right]  \text{ such that }\ell\left(
\beta_{s}\right)  =\ell\left(  \gamma_{s}\right)  \right)  }\right)
}_{\substack{=d_{\delta,\varepsilon}^{\alpha}\left(  q\right)  \\\text{(by
(\ref{pf.thm.product.1.dq=}))}}}\\
&  \ \ \ \ \ \ \ \ \ \ \ \ \ \ \ \ \ \ \ \ \eta_{\delta}^{\ast\left(
q\right)  }\otimes\eta_{\varepsilon}^{\ast\left(  q\right)  }\\
&  =\sum_{\delta,\varepsilon\in\operatorname*{Comp}}d_{\delta,\varepsilon
}^{\alpha}\left(  q\right)  \eta_{\delta}^{\ast\left(  q\right)  }\otimes
\eta_{\varepsilon}^{\ast\left(  q\right)  }.
\end{align*}
Thus, Claim 1 is proved.
\end{proof}

\begin{statement}
\textit{Claim 2:} Let $\delta$ and $\varepsilon$ be two compositions. If $r$
is invertible, then%
\[
\eta_{\delta}^{\left(  q\right)  }\eta_{\varepsilon}^{\left(  q\right)  }%
=\sum_{\alpha\in\operatorname*{Comp}}d_{\delta,\varepsilon}^{\alpha}\left(
q\right)  \eta_{\alpha}^{\left(  q\right)  }.
\]

\end{statement}

\begin{proof}
[Proof of Claim 2.]Essentially, this follows by duality (Lemma
\ref{lem.NSym-duality.2}) from Claim 1. Here are the details:

Assume that $r$ is invertible. For any composition $\alpha$, we have%
\begin{equation}
\Delta\left(  \eta_{\alpha}^{\ast\left(  q\right)  }\right)  =\sum
_{\lambda,\mu\in\operatorname*{Comp}}d_{\lambda,\mu}^{\alpha}\left(  q\right)
\eta_{\lambda}^{\ast\left(  q\right)  }\otimes\eta_{\mu}^{\ast\left(
q\right)  } \label{pf.thm.product.c2.pf.Delta}%
\end{equation}
(by Claim 1, with the letters $\delta$ and $\varepsilon$ renamed as $\lambda$
and $\mu$).

Let $I$ be the set $\operatorname*{Comp}\times\operatorname*{Comp}$. Then, we
can rewrite (\ref{pf.thm.product.c2.pf.Delta}) as follows: For any composition
$\alpha$, we have%
\begin{equation}
\Delta\left(  \eta_{\alpha}^{\ast\left(  q\right)  }\right)  =\sum_{\left(
\lambda,\mu\right)  \in I}d_{\lambda,\mu}^{\alpha}\left(  q\right)
\eta_{\lambda}^{\ast\left(  q\right)  }\otimes\eta_{\mu}^{\ast\left(
q\right)  }. \label{pf.thm.product.c2.pf.DeltaI}%
\end{equation}

Then, Theorem \ref{thm.eta.basis} \textbf{(a)} shows that the family $\left(
\eta_{\alpha}^{\left(  q\right)  }\right)  _{\alpha\in\operatorname*{Comp}}$
is a basis of the $\mathbf{k}$-module $\operatorname*{QSym}$. In other words,
the family $\left(  \eta_{\beta}^{\left(  q\right)  }\right)  _{\beta
\in\operatorname*{Comp}}$ is a basis of the $\mathbf{k}$-module
$\operatorname*{QSym}$. Hence, we can write the quasisymmetric function
$\eta_{\delta}^{\left(  q\right)  }\eta_{\varepsilon}^{\left(  q\right)  }%
\in\operatorname*{QSym}$ as%
\begin{equation}
\eta_{\delta}^{\left(  q\right)  }\eta_{\varepsilon}^{\left(  q\right)  }%
=\sum_{\beta\in\operatorname*{Comp}}c_{\beta}\eta_{\beta}^{\left(  q\right)
}, \label{pf.thm.product.c2.pf.1}%
\end{equation}
where $\left(  c_{\beta}\right)  _{\beta\in\operatorname*{Comp}}\in
\mathbf{k}^{\operatorname*{Comp}}$ is a family of coefficients (with
$c_{\beta}=0$ for all but finitely many $\beta\in\operatorname*{Comp}$).
Consider this family.

For every $\alpha\in\operatorname*{Comp}$, we have%
\begin{align*}
\left\langle \eta_{\alpha}^{\ast\left(  q\right)  },\ \eta_{\delta}^{\left(
q\right)  }\eta_{\varepsilon}^{\left(  q\right)  }\right\rangle  &
=\left\langle \eta_{\alpha}^{\ast\left(  q\right)  },\ \sum_{\beta
\in\operatorname*{Comp}}c_{\beta}\eta_{\beta}^{\left(  q\right)
}\right\rangle \ \ \ \ \ \ \ \ \ \ \left(  \text{by
(\ref{pf.thm.product.c2.pf.1})}\right) \\
&  =\sum_{\beta\in\operatorname*{Comp}}c_{\beta}\underbrace{\left\langle
\eta_{\alpha}^{\ast\left(  q\right)  },\eta_{\beta}^{\left(  q\right)
}\right\rangle }_{\substack{=\left[  \alpha=\beta\right]  \\\text{(by
(\ref{eq.prop.etastar-dual-basis.duality}))}}}=\sum_{\beta\in
\operatorname*{Comp}}c_{\beta}\left[  \alpha=\beta\right]  =c_{\alpha}%
\end{align*}
(since all addends of the sum $\sum_{\beta\in\operatorname*{Comp}}c_{\beta
}\left[  \alpha=\beta\right]  $ except for the $\beta=\alpha$ addend are $0$)
and therefore%
\begin{align}
c_{\alpha}  &  =\left\langle \eta_{\alpha}^{\ast\left(  q\right)  }%
,\ \eta_{\delta}^{\left(  q\right)  }\eta_{\varepsilon}^{\left(  q\right)
}\right\rangle \nonumber\\
&  =\underbrace{\sum_{\left(  \lambda,\mu\right)  \in I}}_{=\sum_{\lambda
,\mu\in\operatorname*{Comp}}}\underbrace{\left\langle d_{\lambda,\mu}^{\alpha
}\left(  q\right)  \eta_{\lambda}^{\ast\left(  q\right)  },\ \eta_{\delta
}^{\left(  q\right)  }\right\rangle }_{=d_{\lambda,\mu}^{\alpha}\left(
q\right)  \left\langle \eta_{\lambda}^{\ast\left(  q\right)  },\ \eta_{\delta
}^{\left(  q\right)  }\right\rangle }\left\langle \eta_{\mu}^{\ast\left(
q\right)  },\ \eta_{\varepsilon}^{\left(  q\right)  }\right\rangle \nonumber\\
&  \ \ \ \ \ \ \ \ \ \ \ \ \ \ \ \ \ \ \ \ \left(
\begin{array}
[c]{c}%
\text{by Lemma \ref{lem.NSym-duality.2inf}, applied to }f=\eta_{\delta
}^{\left(  q\right)  }\text{ and }g=\eta_{\varepsilon}^{\left(  q\right)  }\\
\text{and }h=\eta_{\alpha}^{\ast\left(  q\right)  }\text{ and }s_{\left(
\lambda,\mu\right)  }=d_{\lambda,\mu}^{\alpha}\left(  q\right)  \eta_{\lambda
}^{\ast\left(  q\right)  }\text{ and }t_{\left(  \lambda,\mu\right)  }%
=\eta_{\mu}^{\ast\left(  q\right)  }\\
\text{(since (\ref{pf.thm.product.c2.pf.DeltaI}) yields }\Delta\left(
\eta_{\alpha}^{\ast\left(  q\right)  }\right)  =\sum_{\left(  \lambda
,\mu\right)  \in I}d_{\lambda,\mu}^{\alpha}\left(  q\right)  \eta_{\lambda
}^{\ast\left(  q\right)  }\otimes\eta_{\mu}^{\ast\left(  q\right)  }\text{)}%
\end{array}
\right) \nonumber\\
&  =\sum_{\lambda,\mu\in\operatorname*{Comp}}d_{\lambda,\mu}^{\alpha}\left(
q\right)  \underbrace{\left\langle \eta_{\lambda}^{\ast\left(  q\right)
},\ \eta_{\delta}^{\left(  q\right)  }\right\rangle }_{\substack{=\left[
\lambda=\delta\right]  \\\text{(by (\ref{eq.prop.etastar-dual-basis.duality}%
))}}}\underbrace{\left\langle \eta_{\mu}^{\ast\left(  q\right)  }%
,\ \eta_{\varepsilon}^{\left(  q\right)  }\right\rangle }_{\substack{=\left[
\mu=\varepsilon\right]  \\\text{(by (\ref{eq.prop.etastar-dual-basis.duality}%
))}}}\nonumber\\
&  =\sum_{\lambda,\mu\in\operatorname*{Comp}}d_{\lambda,\mu}^{\alpha}\left(
q\right)  \underbrace{\left[  \lambda=\delta\right]  \cdot\left[
\mu=\varepsilon\right]  }_{\substack{=\left[  \lambda=\delta\text{ and }%
\mu=\varepsilon\right]  \\=\left[  \left(  \lambda,\mu\right)  =\left(
\delta,\varepsilon\right)  \right]  }}=\sum_{\lambda,\mu\in
\operatorname*{Comp}}d_{\lambda,\mu}^{\alpha}\left(  q\right)  \left[  \left(
\lambda,\mu\right)  =\left(  \delta,\varepsilon\right)  \right] \nonumber\\
&  =d_{\delta,\varepsilon}^{\alpha}\left(  q\right)
\label{pf.thm.product.c2.pf.4}%
\end{align}
(since all addends of the sum $\sum_{\lambda,\mu\in\operatorname*{Comp}%
}d_{\lambda,\mu}^{\alpha}\left(  q\right)  \left[  \left(  \lambda,\mu\right)
=\left(  \delta,\varepsilon\right)  \right]  $ except for the $\left(
\lambda,\mu\right)  =\left(  \delta,\varepsilon\right)  $ addend are $0$).

Now, (\ref{pf.thm.product.c2.pf.1}) becomes%
\[
\eta_{\delta}^{\left(  q\right)  }\eta_{\varepsilon}^{\left(  q\right)  }%
=\sum_{\beta\in\operatorname*{Comp}}c_{\beta}\eta_{\beta}^{\left(  q\right)
}=\sum_{\alpha\in\operatorname*{Comp}}\underbrace{c_{\alpha}}%
_{\substack{=d_{\delta,\varepsilon}^{\alpha}\left(  q\right)  \\\text{(by
(\ref{pf.thm.product.c2.pf.4}))}}}\eta_{\alpha}^{\left(  q\right)  }%
=\sum_{\alpha\in\operatorname*{Comp}}d_{\delta,\varepsilon}^{\alpha}\left(
q\right)  \eta_{\alpha}^{\left(  q\right)  }.
\]
This proves Claim 2.
\end{proof}

In the rest of this proof, we will use several different base rings. Thus, we
shall use the notation $\operatorname*{QSym}\nolimits_{\mathbf{k}}$ for what
we have previously been calling $\operatorname*{QSym}$ (that is, the ring of
quasisymmetric functions over the ring $\mathbf{k}$). Clearly, any ring
homomorphism $f:\mathbf{k}\rightarrow\mathbf{l}$ between two commutative rings
$\mathbf{k}$ and $\mathbf{l}$ canonically induces a ring homomorphism
$\operatorname*{QSym}\nolimits_{\mathbf{k}}\rightarrow\operatorname*{QSym}%
\nolimits_{\mathbf{l}}$, which we denote by $\operatorname*{QSym}%
\nolimits_{f}$. Moreover, if $\mathbf{k}$ is a subring of a commutative ring
$\mathbf{l}$, then $\operatorname*{QSym}\nolimits_{\mathbf{k}}$ canonically
becomes a subring of $\operatorname*{QSym}\nolimits_{\mathbf{l}}$.

We note that the definition of the power series $\eta_{\alpha}^{\left(
q\right)  }$ does not depend on the base ring. Thus, if $f:\mathbf{k}%
\rightarrow\mathbf{l}$ is a ring homomorphism between two commutative rings
$\mathbf{k}$ and $\mathbf{l}$, then any $\alpha\in\operatorname*{Comp}$ and
any $q\in\mathbf{k}$ satisfy%
\begin{equation}
\operatorname*{QSym}\nolimits_{f}\left(  \eta_{\alpha}^{\left(  q\right)
}\right)  =\eta_{\alpha}^{\left(  f\left(  q\right)  \right)  }
\label{pf.thm.product.feta}%
\end{equation}
(where the $\eta_{\alpha}^{\left(  q\right)  }$ on the left hand side is
defined in $\operatorname*{QSym}\nolimits_{\mathbf{k}}$, whereas the
$\eta_{\alpha}^{\left(  f\left(  q\right)  \right)  }$ on the right hand side
is defined in $\operatorname*{QSym}\nolimits_{\mathbf{l}}$). Likewise, if
$\mathbf{k}$ is a subring of $\mathbf{l}$, then the $\eta_{\alpha}^{\left(
q\right)  }$ in $\operatorname*{QSym}\nolimits_{\mathbf{k}}$ equals the
$\eta_{\alpha}^{\left(  q\right)  }$ in $\operatorname*{QSym}%
\nolimits_{\mathbf{l}}$. We shall use this tacitly soon.

\begin{statement}
\textit{Claim 3:} Let $\delta$ and $\varepsilon$ be two compositions. If
$\mathbf{k}$ is the polynomial ring $\mathbb{Z}\left[  X\right]  $, and if $q$
is the indeterminate $X$ in this ring, then%
\[
\eta_{\delta}^{\left(  q\right)  }\eta_{\varepsilon}^{\left(  q\right)  }%
=\sum_{\alpha\in\operatorname*{Comp}}d_{\delta,\varepsilon}^{\alpha}\left(
q\right)  \eta_{\alpha}^{\left(  q\right)  }.
\]
In other words, we have
\begin{equation}
\eta_{\delta}^{\left(  X\right)  }\eta_{\varepsilon}^{\left(  X\right)  }%
=\sum_{\alpha\in\operatorname*{Comp}}d_{\delta,\varepsilon}^{\alpha}\left(
X\right)  \eta_{\alpha}^{\left(  X\right)  }\ \ \ \ \ \ \ \ \ \ \text{in
}\operatorname*{QSym}\nolimits_{\mathbb{Z}\left[  X\right]  }.
\label{pf.thm.product.c3.equiv}%
\end{equation}

\end{statement}

\begin{proof}
[Proof of Claim 3.]Consider the field $\mathbb{Q}\left(  X\right)  $ of
rational functions in $X$ over $\mathbb{Q}$. Clearly, $\mathbb{Z}\left[
X\right]  $ is a subring of $\mathbb{Q}\left(  X\right)  $. Thus,
$\operatorname*{QSym}\nolimits_{\mathbb{Z}\left[  X\right]  }$ becomes a
subring of $\operatorname*{QSym}\nolimits_{\mathbb{Q}\left(  X\right)  }$.

In the ring $\mathbb{Q}\left(  X\right)  $, the polynomial $X+1$ is
invertible. Thus, Claim 2 (applied to $\mathbb{Q}\left(  X\right)  $, $X$ and
$X+1$ instead of $\mathbf{k}$, $q$ and $r$) yields that%
\begin{equation}
\eta_{\delta}^{\left(  X\right)  }\eta_{\varepsilon}^{\left(  X\right)  }%
=\sum_{\alpha\in\operatorname*{Comp}}d_{\delta,\varepsilon}^{\alpha}\left(
X\right)  \eta_{\alpha}^{\left(  X\right)  }\ \ \ \ \ \ \ \ \ \ \text{in
}\operatorname*{QSym}\nolimits_{\mathbb{Q}\left(  X\right)  }.
\label{pf.thm.product.c3.pf.1}%
\end{equation}
But $\operatorname*{QSym}\nolimits_{\mathbb{Z}\left[  X\right]  }$ is a
subring of $\operatorname*{QSym}\nolimits_{\mathbb{Q}\left(  X\right)  }$, and
both sides of the equality (\ref{pf.thm.product.c3.pf.1}) belong to
$\operatorname*{QSym}\nolimits_{\mathbb{Z}\left[  X\right]  }$ (since
$d_{\delta,\varepsilon}^{\alpha}\left(  X\right)  \in\mathbb{Z}\left[
X\right]  $ and $\eta_{\alpha}^{\left(  X\right)  }\in\operatorname*{QSym}%
\nolimits_{\mathbb{Z}\left[  X\right]  }$ for all $\alpha\in
\operatorname*{Comp}$), and do not depend on the base ring\footnote{Indeed,
the power series $\eta_{\alpha}^{\left(  X\right)  }$ defined over
$\mathbb{Z}\left[  X\right]  $ equals the power series $\eta_{\alpha}^{\left(
X\right)  }$ defined over $\mathbb{Q}\left(  X\right)  $.}. Hence, the
equality (\ref{pf.thm.product.c3.pf.1}) holds in $\operatorname*{QSym}%
\nolimits_{\mathbb{Z}\left[  X\right]  }$ as well. In other words, we have
\[
\eta_{\delta}^{\left(  X\right)  }\eta_{\varepsilon}^{\left(  X\right)  }%
=\sum_{\alpha\in\operatorname*{Comp}}d_{\delta,\varepsilon}^{\alpha}\left(
X\right)  \eta_{\alpha}^{\left(  X\right)  }\ \ \ \ \ \ \ \ \ \ \text{in
}\operatorname*{QSym}\nolimits_{\mathbb{Z}\left[  X\right]  }.
\]
In other words, (\ref{pf.thm.product.c3.equiv}) holds. This proves Claim 3.
\end{proof}

\begin{statement}
\textit{Claim 4:} Let $\delta$ and $\varepsilon$ be two compositions. Then,%
\[
\eta_{\delta}^{\left(  q\right)  }\eta_{\varepsilon}^{\left(  q\right)  }%
=\sum_{\alpha\in\operatorname*{Comp}}d_{\delta,\varepsilon}^{\alpha}\left(
q\right)  \eta_{\alpha}^{\left(  q\right)  }.
\]

\end{statement}

\begin{proof}
[Proof of Claim 4.]Consider the polynomial ring $\mathbb{Z}\left[  X\right]
$. By the universal property of a polynomial ring, there exists a unique
$\mathbb{Z}$-algebra homomorphism $f:\mathbb{Z}\left[  X\right]
\rightarrow\mathbf{k}$ that sends $X$ to $q$. Consider this $f$. Explicitly,
$f$ is given by%
\begin{equation}
f\left(  u\left(  X\right)  \right)  =u\left(  q\right)
\ \ \ \ \ \ \ \ \ \ \text{for any polynomial }u\left(  X\right)  \in
\mathbb{Z}\left[  X\right]  . \label{pf.thm.product.c4.fuX}%
\end{equation}
The map $f$ is a $\mathbb{Z}$-algebra homomorphism, thus a ring homomorphism,
and therefore induces a ring homomorphism $\operatorname*{QSym}\nolimits_{f}%
:\operatorname*{QSym}\nolimits_{\mathbb{Z}\left[  X\right]  }\rightarrow
\operatorname*{QSym}\nolimits_{\mathbf{k}}$. Applying this ring homomorphism
$\operatorname*{QSym}\nolimits_{f}$ to both sides of
(\ref{pf.thm.product.c3.equiv}), we obtain%
\begin{align*}
\operatorname*{QSym}\nolimits_{f}\left(  \eta_{\delta}^{\left(  X\right)
}\right)  \cdot\operatorname*{QSym}\nolimits_{f}\left(  \eta_{\varepsilon
}^{\left(  X\right)  }\right)   &  =\sum_{\alpha\in\operatorname*{Comp}%
}\operatorname*{QSym}\nolimits_{f}\left(  d_{\delta,\varepsilon}^{\alpha
}\left(  X\right)  \right)  \cdot\operatorname*{QSym}\nolimits_{f}\left(
\eta_{\alpha}^{\left(  X\right)  }\right) \\
&  \ \ \ \ \ \ \ \ \ \ \text{in }\operatorname*{QSym}\nolimits_{\mathbf{k}}.
\end{align*}

Since every composition $\alpha\in\operatorname*{Comp}$ satisfies%
\begin{align*}
\operatorname*{QSym}\nolimits_{f}\left(  \eta_{\alpha}^{\left(  X\right)
}\right)   &  =\eta_{\alpha}^{\left(  f\left(  X\right)  \right)
}\ \ \ \ \ \ \ \ \ \ \left(  \text{by (\ref{pf.thm.product.feta})}\right) \\
&  =\eta_{\alpha}^{\left(  q\right)  }\ \ \ \ \ \ \ \ \ \ \left(  \text{since
}f\left(  X\right)  =q\right)  ,
\end{align*}
we can rewrite this as%
\begin{align*}
\eta_{\delta}^{\left(  q\right)  }\eta_{\varepsilon}^{\left(  q\right)  }  &
=\sum_{\alpha\in\operatorname*{Comp}}\underbrace{\operatorname*{QSym}%
\nolimits_{f}\left(  d_{\delta,\varepsilon}^{\alpha}\left(  X\right)  \right)
}_{\substack{=f\left(  d_{\delta,\varepsilon}^{\alpha}\left(  X\right)
\right)  \\\text{(since the homomorphism }\operatorname*{QSym}\nolimits_{f}%
\\\text{acts as }f\text{ on }\mathbb{Z}\left[  X\right]  \text{)}}%
}\eta_{\alpha}^{\left(  q\right)  }\\
&  =\sum_{\alpha\in\operatorname*{Comp}}\underbrace{f\left(  d_{\delta
,\varepsilon}^{\alpha}\left(  X\right)  \right)  }_{\substack{=d_{\delta
,\varepsilon}^{\alpha}\left(  q\right)  \\\text{(by
(\ref{pf.thm.product.c4.fuX}))}}}\eta_{\alpha}^{\left(  q\right)  }%
=\sum_{\alpha\in\operatorname*{Comp}}d_{\delta,\varepsilon}^{\alpha}\left(
q\right)  \eta_{\alpha}^{\left(  q\right)  }.
\end{align*}
This proves Claim 4.
\end{proof}

\begin{statement}
\textit{Claim 5:} Let $\alpha=\left(  \alpha_{1},\alpha_{2},\ldots,\alpha
_{k}\right)  $, $\delta$ and $\varepsilon$ be three compositions. Then,%
\begin{align}
d_{\delta,\varepsilon}^{\alpha}\left(  q\right)   &  =\sum_{\substack{\beta
_{1},\beta_{2},\ldots,\beta_{k}\in\operatorname*{Comp};\\\gamma_{1},\gamma
_{2},\ldots,\gamma_{k}\in\operatorname*{Comp};\\\beta_{1}\beta_{2}\cdots
\beta_{k}=\delta;\\\gamma_{1}\gamma_{2}\cdots\gamma_{k}=\varepsilon
;\\\left\vert \ell\left(  \beta_{s}\right)  -\ell\left(  \gamma_{s}\right)
\right\vert \leq1\text{ for all }s;\\\alpha=\left(  \left\vert \beta
_{1}\right\vert +\left\vert \gamma_{1}\right\vert ,\ \left\vert \beta
_{2}\right\vert +\left\vert \gamma_{2}\right\vert ,\ \ldots,\ \left\vert
\beta_{k}\right\vert +\left\vert \gamma_{k}\right\vert \right)  }}\left(
-q\right)  ^{\sum_{s=1}^{k}\max\left\{  \ell\left(  \beta_{s}\right)
,\ell\left(  \gamma_{s}\right)  \right\}  -k}\nonumber\\
&  \ \ \ \ \ \ \ \ \ \ \ \ \ \ \ \ \ \ \ \ \cdot\left(  q-1\right)  ^{\left(
\text{\# of all }s\in\left[  k\right]  \text{ such that }\ell\left(  \beta
_{s}\right)  =\ell\left(  \gamma_{s}\right)  \right)  }.
\label{pf.thm.product.1.dq=3}%
\end{align}

\end{statement}

\begin{proof}
[Proof of Claim 5.]The condition \textquotedblleft$\left\vert \beta
_{s}\right\vert +\left\vert \gamma_{s}\right\vert =\alpha_{s}$ for all
$s$\textquotedblright\ under the summation sign in (\ref{pf.thm.product.1.dq=}%
) is equivalent to the condition \textquotedblleft$\alpha=\left(  \left\vert
\beta_{1}\right\vert +\left\vert \gamma_{1}\right\vert ,\ \left\vert \beta
_{2}\right\vert +\left\vert \gamma_{2}\right\vert ,\ \ldots,\ \left\vert
\beta_{k}\right\vert +\left\vert \gamma_{k}\right\vert \right)  $%
\textquotedblright\ (since $\alpha=\left(  \alpha_{1},\alpha_{2},\ldots
,\alpha_{k}\right)  $). Thus, we can replace the former condition in
(\ref{pf.thm.product.1.dq=}) by the latter. The result of this replacement is
precisely the equality (\ref{pf.thm.product.1.dq=3}). Hence, Claim 5 is proved.
\end{proof}

Now, Theorem \ref{thm.product.1} is just a restatement of Claim 4. Indeed, let
$\delta$ and $\varepsilon$ be two compositions. Then,%
\begin{align*}
&  \eta_{\delta}^{\left(  q\right)  }\eta_{\varepsilon}^{\left(  q\right)  }\\
&  =\underbrace{\sum_{\alpha\in\operatorname*{Comp}}}_{\substack{=\sum
_{k\in\mathbb{N}}\ \ \sum_{\alpha=\left(  \alpha_{1},\alpha_{2},\ldots
,\alpha_{k}\right)  \in\operatorname*{Comp}}\\\text{(since any composition}%
\\\text{has a unique length)}}}d_{\delta,\varepsilon}^{\alpha}\left(
q\right)  \eta_{\alpha}^{\left(  q\right)  }\ \ \ \ \ \ \ \ \ \ \left(
\text{by Claim 4}\right) \\
&  =\sum_{k\in\mathbb{N}}\ \ \sum_{\alpha=\left(  \alpha_{1},\alpha_{2}%
,\ldots,\alpha_{k}\right)  \in\operatorname*{Comp}}d_{\delta,\varepsilon
}^{\alpha}\left(  q\right)  \eta_{\alpha}^{\left(  q\right)  }\\
&  =\sum_{k\in\mathbb{N}}\ \ \sum_{\alpha=\left(  \alpha_{1},\alpha_{2}%
,\ldots,\alpha_{k}\right)  \in\operatorname*{Comp}}\left(  \sum
_{\substack{\beta_{1},\beta_{2},\ldots,\beta_{k}\in\operatorname*{Comp}%
;\\\gamma_{1},\gamma_{2},\ldots,\gamma_{k}\in\operatorname*{Comp};\\\beta
_{1}\beta_{2}\cdots\beta_{k}=\delta;\\\gamma_{1}\gamma_{2}\cdots\gamma
_{k}=\varepsilon;\\\left\vert \ell\left(  \beta_{s}\right)  -\ell\left(
\gamma_{s}\right)  \right\vert \leq1\text{ for all }s;\\\alpha=\left(
\left\vert \beta_{1}\right\vert +\left\vert \gamma_{1}\right\vert
,\ \left\vert \beta_{2}\right\vert +\left\vert \gamma_{2}\right\vert
,\ \ldots,\ \left\vert \beta_{k}\right\vert +\left\vert \gamma_{k}\right\vert
\right)  }}\left(  -q\right)  ^{\sum_{s=1}^{k}\max\left\{  \ell\left(
\beta_{s}\right)  ,\ell\left(  \gamma_{s}\right)  \right\}  -k}\right. \\
&  \ \ \ \ \ \ \ \ \ \ \ \ \ \ \ \ \ \ \ \ \left.  \cdot\left(  q-1\right)
^{\left(  \text{\# of all }s\in\left[  k\right]  \text{ such that }\ell\left(
\beta_{s}\right)  =\ell\left(  \gamma_{s}\right)  \right)  }\right)  \cdot
\eta_{\alpha}^{\left(  q\right)  }\\
&  \ \ \ \ \ \ \ \ \ \ \ \ \ \ \ \ \ \ \ \ \ \ \ \ \ \ \ \ \ \ \left(
\text{by (\ref{pf.thm.product.1.dq=3})}\right) \\
&  =\sum_{k\in\mathbb{N}}\ \ \sum_{\alpha=\left(  \alpha_{1},\alpha_{2}%
,\ldots,\alpha_{k}\right)  \in\operatorname*{Comp}}\ \ \sum_{\substack{\beta
_{1},\beta_{2},\ldots,\beta_{k}\in\operatorname*{Comp};\\\gamma_{1},\gamma
_{2},\ldots,\gamma_{k}\in\operatorname*{Comp};\\\beta_{1}\beta_{2}\cdots
\beta_{k}=\delta;\\\gamma_{1}\gamma_{2}\cdots\gamma_{k}=\varepsilon
;\\\left\vert \ell\left(  \beta_{s}\right)  -\ell\left(  \gamma_{s}\right)
\right\vert \leq1\text{ for all }s;\\\alpha=\left(  \left\vert \beta
_{1}\right\vert +\left\vert \gamma_{1}\right\vert ,\ \left\vert \beta
_{2}\right\vert +\left\vert \gamma_{2}\right\vert ,\ \ldots,\ \left\vert
\beta_{k}\right\vert +\left\vert \gamma_{k}\right\vert \right)  }}\left(
-q\right)  ^{\sum_{s=1}^{k}\max\left\{  \ell\left(  \beta_{s}\right)
,\ell\left(  \gamma_{s}\right)  \right\}  -k}\\
&  \ \ \ \ \ \ \ \ \ \ \ \ \ \ \ \ \ \ \ \ \cdot\left(  q-1\right)  ^{\left(
\text{\# of all }s\in\left[  k\right]  \text{ such that }\ell\left(  \beta
_{s}\right)  =\ell\left(  \gamma_{s}\right)  \right)  }\\
&  \ \ \ \ \ \ \ \ \ \ \ \ \ \ \ \ \ \ \ \ \cdot\underbrace{\eta_{\alpha
}^{\left(  q\right)  }}_{\substack{=\eta_{\left(  \left\vert \beta
_{1}\right\vert +\left\vert \gamma_{1}\right\vert ,\ \left\vert \beta
_{2}\right\vert +\left\vert \gamma_{2}\right\vert ,\ \ldots,\ \left\vert
\beta_{k}\right\vert +\left\vert \gamma_{k}\right\vert \right)  }^{\left(
q\right)  }\\\text{(since }\alpha=\left(  \left\vert \beta_{1}\right\vert
+\left\vert \gamma_{1}\right\vert ,\ \left\vert \beta_{2}\right\vert
+\left\vert \gamma_{2}\right\vert ,\ \ldots,\ \left\vert \beta_{k}\right\vert
+\left\vert \gamma_{k}\right\vert \right)  \text{)}}}
\end{align*}%
\begin{align}
&  =\sum_{k\in\mathbb{N}}\ \ \sum_{\alpha=\left(  \alpha_{1},\alpha_{2}%
,\ldots,\alpha_{k}\right)  \in\operatorname*{Comp}}\ \ \sum_{\substack{\beta
_{1},\beta_{2},\ldots,\beta_{k}\in\operatorname*{Comp};\\\gamma_{1},\gamma
_{2},\ldots,\gamma_{k}\in\operatorname*{Comp};\\\beta_{1}\beta_{2}\cdots
\beta_{k}=\delta;\\\gamma_{1}\gamma_{2}\cdots\gamma_{k}=\varepsilon
;\\\left\vert \ell\left(  \beta_{s}\right)  -\ell\left(  \gamma_{s}\right)
\right\vert \leq1\text{ for all }s;\\\alpha=\left(  \left\vert \beta
_{1}\right\vert +\left\vert \gamma_{1}\right\vert ,\ \left\vert \beta
_{2}\right\vert +\left\vert \gamma_{2}\right\vert ,\ \ldots,\ \left\vert
\beta_{k}\right\vert +\left\vert \gamma_{k}\right\vert \right)  }}\left(
-q\right)  ^{\sum_{s=1}^{k}\max\left\{  \ell\left(  \beta_{s}\right)
,\ell\left(  \gamma_{s}\right)  \right\}  -k}\nonumber\\
&  \ \ \ \ \ \ \ \ \ \ \ \ \ \ \ \ \ \ \ \ \cdot\left(  q-1\right)  ^{\left(
\text{\# of all }s\in\left[  k\right]  \text{ such that }\ell\left(  \beta
_{s}\right)  =\ell\left(  \gamma_{s}\right)  \right)  }\nonumber\\
&  \ \ \ \ \ \ \ \ \ \ \ \ \ \ \ \ \ \ \ \ \cdot\eta_{\left(  \left\vert
\beta_{1}\right\vert +\left\vert \gamma_{1}\right\vert ,\ \left\vert \beta
_{2}\right\vert +\left\vert \gamma_{2}\right\vert ,\ \ldots,\ \left\vert
\beta_{k}\right\vert +\left\vert \gamma_{k}\right\vert \right)  }^{\left(
q\right)  }. \label{pf.thm.product.1.at2}%
\end{align}

However, for each $k\in\mathbb{N}$, we have the following equality of
summation signs:%
\begin{align*}
&  \sum_{\alpha=\left(  \alpha_{1},\alpha_{2},\ldots,\alpha_{k}\right)
\in\operatorname*{Comp}}\ \ \sum_{\substack{\beta_{1},\beta_{2},\ldots
,\beta_{k}\in\operatorname*{Comp};\\\gamma_{1},\gamma_{2},\ldots,\gamma_{k}%
\in\operatorname*{Comp};\\\beta_{1}\beta_{2}\cdots\beta_{k}=\delta
;\\\gamma_{1}\gamma_{2}\cdots\gamma_{k}=\varepsilon;\\\left\vert \ell\left(
\beta_{s}\right)  -\ell\left(  \gamma_{s}\right)  \right\vert \leq1\text{ for
all }s;\\\alpha=\left(  \left\vert \beta_{1}\right\vert +\left\vert \gamma
_{1}\right\vert ,\ \left\vert \beta_{2}\right\vert +\left\vert \gamma
_{2}\right\vert ,\ \ldots,\ \left\vert \beta_{k}\right\vert +\left\vert
\gamma_{k}\right\vert \right)  }}\\
&  =\sum_{\substack{\beta_{1},\beta_{2},\ldots,\beta_{k}\in
\operatorname*{Comp};\\\gamma_{1},\gamma_{2},\ldots,\gamma_{k}\in
\operatorname*{Comp};\\\beta_{1}\beta_{2}\cdots\beta_{k}=\delta;\\\gamma
_{1}\gamma_{2}\cdots\gamma_{k}=\varepsilon;\\\left\vert \ell\left(  \beta
_{s}\right)  -\ell\left(  \gamma_{s}\right)  \right\vert \leq1\text{ for all
}s;\\\left(  \left\vert \beta_{1}\right\vert +\left\vert \gamma_{1}\right\vert
,\ \left\vert \beta_{2}\right\vert +\left\vert \gamma_{2}\right\vert
,\ \ldots,\ \left\vert \beta_{k}\right\vert +\left\vert \gamma_{k}\right\vert
\right)  \in\operatorname*{Comp}}}\\
&  \ \ \ \ \ \ \ \ \ \ \ \ \ \ \ \ \ \ \ \ \left(
\begin{array}
[c]{c}%
\text{since the condition \textquotedblleft}\alpha=\left(  \left\vert
\beta_{1}\right\vert +\left\vert \gamma_{1}\right\vert ,\ \left\vert \beta
_{2}\right\vert +\left\vert \gamma_{2}\right\vert ,\ \ldots,\ \left\vert
\beta_{k}\right\vert +\left\vert \gamma_{k}\right\vert \right)
\text{\textquotedblright}\\
\text{under the second summation sign uniquely determines }\alpha
\end{array}
\right) \\
&  =\sum_{\substack{\beta_{1},\beta_{2},\ldots,\beta_{k}\in
\operatorname*{Comp};\\\gamma_{1},\gamma_{2},\ldots,\gamma_{k}\in
\operatorname*{Comp};\\\beta_{1}\beta_{2}\cdots\beta_{k}=\delta;\\\gamma
_{1}\gamma_{2}\cdots\gamma_{k}=\varepsilon;\\\left\vert \ell\left(  \beta
_{s}\right)  -\ell\left(  \gamma_{s}\right)  \right\vert \leq1\text{ for all
}s;\\\left\vert \beta_{s}\right\vert +\left\vert \gamma_{s}\right\vert
>0\text{ for all }s}}\ \ \ \ \ \ \ \ \ \ \left(
\begin{array}
[c]{c}%
\text{since the condition}\\
\text{\textquotedblleft}\left(  \left\vert \beta_{1}\right\vert +\left\vert
\gamma_{1}\right\vert ,\ \left\vert \beta_{2}\right\vert +\left\vert
\gamma_{2}\right\vert ,\ \ldots,\ \left\vert \beta_{k}\right\vert +\left\vert
\gamma_{k}\right\vert \right)  \in\operatorname*{Comp}\text{\textquotedblright%
}\\
\text{is equivalent to \textquotedblleft}\left\vert \beta_{s}\right\vert
+\left\vert \gamma_{s}\right\vert >0\text{ for all }s\text{\textquotedblright}%
\end{array}
\right) \\
&  =\sum_{\substack{\beta_{1},\beta_{2},\ldots,\beta_{k}\in
\operatorname*{Comp};\\\gamma_{1},\gamma_{2},\ldots,\gamma_{k}\in
\operatorname*{Comp};\\\beta_{1}\beta_{2}\cdots\beta_{k}=\delta;\\\gamma
_{1}\gamma_{2}\cdots\gamma_{k}=\varepsilon;\\\left\vert \ell\left(  \beta
_{s}\right)  -\ell\left(  \gamma_{s}\right)  \right\vert \leq1\text{ for all
}s;\\\ell\left(  \beta_{s}\right)  +\ell\left(  \gamma_{s}\right)  >0\text{
for all }s}}\ \ \ \ \ \ \ \ \ \ \left(
\begin{array}
[c]{c}%
\text{since the condition \textquotedblleft}\left\vert \beta_{s}\right\vert
+\left\vert \gamma_{s}\right\vert >0\text{\textquotedblright}\\
\text{on two compositions }\beta_{s}\text{ and }\gamma_{s}\\
\text{is equivalent to \textquotedblleft}\ell\left(  \beta_{s}\right)
+\ell\left(  \gamma_{s}\right)  >0\text{\textquotedblright}\\
\text{(indeed, both conditions are}\\
\text{equivalent to }\left(  \beta_{s},\gamma_{s}\right)  \neq\left(
\varnothing,\varnothing\right)  \text{)}%
\end{array}
\right)  .
\end{align*}
Hence, we can rewrite (\ref{pf.thm.product.1.at2}) as%
\begin{align*}
\eta_{\delta}^{\left(  q\right)  }\eta_{\varepsilon}^{\left(  q\right)  }  &
=\sum_{k\in\mathbb{N}}\ \ \sum_{\substack{\beta_{1},\beta_{2},\ldots,\beta
_{k}\in\operatorname*{Comp};\\\gamma_{1},\gamma_{2},\ldots,\gamma_{k}%
\in\operatorname*{Comp};\\\beta_{1}\beta_{2}\cdots\beta_{k}=\delta
;\\\gamma_{1}\gamma_{2}\cdots\gamma_{k}=\varepsilon;\\\left\vert \ell\left(
\beta_{s}\right)  -\ell\left(  \gamma_{s}\right)  \right\vert \leq1\text{ for
all }s;\\\ell\left(  \beta_{s}\right)  +\ell\left(  \gamma_{s}\right)
>0\text{ for all }s}}\left(  -q\right)  ^{\sum_{s=1}^{k}\max\left\{
\ell\left(  \beta_{s}\right)  ,\ell\left(  \gamma_{s}\right)  \right\}  -k}\\
&  \ \ \ \ \ \ \ \ \ \ \ \ \ \ \ \ \ \ \ \ \cdot\left(  q-1\right)  ^{\left(
\text{\# of all }s\in\left[  k\right]  \text{ such that }\ell\left(  \beta
_{s}\right)  =\ell\left(  \gamma_{s}\right)  \right)  }\\
&  \ \ \ \ \ \ \ \ \ \ \ \ \ \ \ \ \ \ \ \ \cdot\eta_{\left(  \left\vert
\beta_{1}\right\vert +\left\vert \gamma_{1}\right\vert ,\ \left\vert \beta
_{2}\right\vert +\left\vert \gamma_{2}\right\vert ,\ \ldots,\ \left\vert
\beta_{k}\right\vert +\left\vert \gamma_{k}\right\vert \right)  }^{\left(
q\right)  }.
\end{align*}
Combining the two summation signs here into a single sum, we can rewrite this
as%
\begin{align*}
\eta_{\delta}^{\left(  q\right)  }\eta_{\varepsilon}^{\left(  q\right)  }  &
=\sum_{\substack{k\in\mathbb{N};\\\beta_{1},\beta_{2},\ldots,\beta_{k}%
\in\operatorname*{Comp};\\\gamma_{1},\gamma_{2},\ldots,\gamma_{k}%
\in\operatorname*{Comp};\\\beta_{1}\beta_{2}\cdots\beta_{k}=\delta
;\\\gamma_{1}\gamma_{2}\cdots\gamma_{k}=\varepsilon;\\\left\vert \ell\left(
\beta_{s}\right)  -\ell\left(  \gamma_{s}\right)  \right\vert \leq1\text{ for
all }s;\\\ell\left(  \beta_{s}\right)  +\ell\left(  \gamma_{s}\right)
>0\text{ for all }s}}\left(  -q\right)  ^{\sum_{s=1}^{k}\max\left\{
\ell\left(  \beta_{s}\right)  ,\ell\left(  \gamma_{s}\right)  \right\}  -k}\\
&  \ \ \ \ \ \ \ \ \ \ \ \ \ \ \ \ \ \ \ \ \cdot\left(  q-1\right)  ^{\left(
\text{\# of all }s\in\left[  k\right]  \text{ such that }\ell\left(  \beta
_{s}\right)  =\ell\left(  \gamma_{s}\right)  \right)  }\\
&  \ \ \ \ \ \ \ \ \ \ \ \ \ \ \ \ \ \ \ \ \cdot\eta_{\left(  \left\vert
\beta_{1}\right\vert +\left\vert \gamma_{1}\right\vert ,\ \left\vert \beta
_{2}\right\vert +\left\vert \gamma_{2}\right\vert ,\ \ldots,\ \left\vert
\beta_{k}\right\vert +\left\vert \gamma_{k}\right\vert \right)  }^{\left(
q\right)  }.
\end{align*}
Thus, Theorem \ref{thm.product.1} is proved.
\end{proof}

\subsection{The product rule in terms of stufflers}

We will next rewrite Theorem \ref{thm.product.1} in a somewhat different
language, using certain surjective maps instead of factorizations of
compositions. First, we introduce several pieces of notation:

\begin{definition}
Let $i$ and $j$ be two integers. Then, we write $i\approx j$ (and say that $i$
is \emph{nearly equal} to $j$) if and only if $\left\vert i-j\right\vert
\leq1$.

(Of course, $\approx$ is not an equivalence relation.)
\end{definition}

\begin{definition}
\label{def.stufufuffler}Let $\delta=\left(  \delta_{1},\delta_{2}%
,\ldots,\delta_{\ell}\right)  $ and $\varepsilon=\left(  \varepsilon
_{1},\varepsilon_{2},\ldots,\varepsilon_{m}\right)  $ be two compositions.

Fix two chains (i.e., totally ordered sets) $P=\left\{  p_{1}<p_{2}%
<\cdots<p_{\ell}\right\}  $ and $Q=\left\{  q_{1}<q_{2}<\cdots<q_{m}\right\}
$, and let%
\[
U=P\sqcup Q
\]
be their disjoint union. This $U$ is a poset with $\ell+m$ elements
$p_{1},p_{2},\ldots,p_{\ell},q_{1},q_{2},\ldots,q_{m}$, whose relations are
given by $p_{1}<p_{2}<\cdots<p_{\ell}$ and $q_{1}<q_{2}<\cdots<q_{m}$ (while
each $p_{i}$ is incomparable to each $q_{j}$).

If $f:U\rightarrow X$ is a map from $U$ to any set $X$, and if $s\in X$ is any
element, then we define the two sets%
\begin{align*}
f_{P}^{-1}\left(  s\right)   &  :=\left\{  u\in\left[  \ell\right]
\ \mid\ f\left(  p_{u}\right)  =s\right\}  \ \ \ \ \ \ \ \ \ \ \text{and}\\
f_{Q}^{-1}\left(  s\right)   &  :=\left\{  v\in\left[  m\right]
\ \mid\ f\left(  q_{v}\right)  =s\right\}  .
\end{align*}
(Essentially, $f_{P}^{-1}\left(  s\right)  $ and $f_{Q}^{-1}\left(  s\right)
$ are the sets of the preimages of $s$ in $P$ and $Q$, respectively, except
that they consist of numbers instead of actual elements of $P$ and $Q$.)

A \emph{stufufuffler} for $\delta$ and $\varepsilon$ shall mean a surjective
and weakly order-preserving map%
\[
f:U\rightarrow\left\{  1<2<\cdots<k\right\}  \ \ \ \ \ \ \ \ \ \ \text{for
some }k\in\mathbb{N}%
\]
with the property that each $s\in\left\{  1<2<\cdots<k\right\}  $ satisfies%
\begin{equation}
\left\vert f_{P}^{-1}\left(  s\right)  \right\vert \approx\left\vert
f_{Q}^{-1}\left(  s\right)  \right\vert .
\label{eq.def.limstuffler.liminality}%
\end{equation}

(\textquotedblleft Weakly order-preserving\textquotedblright\ means that if
$u$ and $v$ are two elements of the poset $U$ satisfying $u<v$, then $f\left(
u\right)  \leq f\left(  v\right)  $.)

If $f:U\rightarrow\left\{  1<2<\cdots<k\right\}  $ is a stufufuffler for
$\delta$ and $\varepsilon$, then we define three further concepts:

\begin{itemize}
\item We define the \emph{weight} $\operatorname*{wt}\left(  f\right)  $ of
$f$ to be the composition $\left(  \operatorname*{wt}\nolimits_{1}\left(
f\right)  ,\operatorname*{wt}\nolimits_{2}\left(  f\right)  ,\ldots
,\operatorname*{wt}\nolimits_{k}\left(  f\right)  \right)  $, where%
\begin{align*}
\operatorname*{wt}\nolimits_{s}\left(  f\right)   &  =\sum_{u\in f_{P}%
^{-1}\left(  s\right)  }\delta_{u}+\sum_{v\in f_{Q}^{-1}\left(  s\right)
}\varepsilon_{v}\\
&  =\sum_{\substack{u\in\left[  \ell\right]  ;\\f\left(  p_{u}\right)
=s}}\delta_{u}+\sum_{\substack{v\in\left[  m\right]  ;\\f\left(  q_{v}\right)
=s}}\varepsilon_{v}\ \ \ \ \ \ \ \ \ \ \text{for each }s\in\left[  k\right]  .
\end{align*}
(Note that (\ref{eq.def.limstuffler.liminality}) ensures that the two sums on
the right hand side here have nearly equal numbers of addends. Moreover, the
surjectivity of $f$ ensures that at least one of these two sums has at least
one addend, and thus $\operatorname*{wt}\nolimits_{s}\left(  f\right)  $ is a
positive integer; therefore, $\operatorname*{wt}\left(  f\right)  $ is a composition.)

\item We define the \emph{loss} of $f$ to be the nonnegative integer%
\[
\operatorname*{loss}\left(  f\right)  :=\sum_{s=1}^{k}\max\left\{  \left\vert
f_{P}^{-1}\left(  s\right)  \right\vert ,\ \ \left\vert f_{Q}^{-1}\left(
s\right)  \right\vert \right\}  -k.
\]
(This really is a nonnegative integer, since the surjectivity of $f$ yields
that $\max\left\{  \left\vert f_{P}^{-1}\left(  s\right)  \right\vert
,\ \ \left\vert f_{Q}^{-1}\left(  s\right)  \right\vert \right\}  \geq1$ for
each $s\in\left[  k\right]  $, and thus $\operatorname*{loss}\left(  f\right)
=\sum_{s=1}^{k}\underbrace{\max\left\{  \left\vert f_{P}^{-1}\left(  s\right)
\right\vert ,\ \ \left\vert f_{Q}^{-1}\left(  s\right)  \right\vert \right\}
}_{\geq1}-\,k\geq\underbrace{\sum_{s=1}^{k}1}_{=k}-\,k=0$.)

\item We define the \emph{poise} of $f$ to be the nonnegative integer%
\[
\operatorname*{poise}\left(  f\right)  :=\left(  \text{\# of all }s\in\left[
k\right]  \text{ such that }\left\vert f_{P}^{-1}\left(  s\right)  \right\vert
=\left\vert f_{Q}^{-1}\left(  s\right)  \right\vert \right)  .
\]

\end{itemize}
\end{definition}

\begin{example}
Let $\delta=\left(  a,b\right)  $ and $\varepsilon=\left(  c,d,e\right)  $ be
two compositions. Then, the poset $U$ in Definition \ref{def.stufufuffler} is
$U=\left\{  p_{1}<p_{2}\right\}  \sqcup\left\{  q_{1}<q_{2}<q_{3}\right\}  $.
The following maps (written in two-line notation) are stufufufflers for
$\delta$ and $\varepsilon$:%
\begin{align*}
&  \left(
\begin{array}
[c]{ccccc}%
p_{1} & p_{2} & q_{1} & q_{2} & q_{3}\\
1 & 2 & 3 & 4 & 5
\end{array}
\right)  ,\ \ \ \ \ \ \ \ \ \ \left(
\begin{array}
[c]{ccccc}%
p_{1} & p_{2} & q_{1} & q_{2} & q_{3}\\
2 & 5 & 1 & 3 & 4
\end{array}
\right)  ,\\
&  \left(
\begin{array}
[c]{ccccc}%
p_{1} & p_{2} & q_{1} & q_{2} & q_{3}\\
1 & 1 & 1 & 2 & 3
\end{array}
\right)  ,\ \ \ \ \ \ \ \ \ \ \left(
\begin{array}
[c]{ccccc}%
p_{1} & p_{2} & q_{1} & q_{2} & q_{3}\\
1 & 2 & 2 & 2 & 3
\end{array}
\right)  ,\\
&  \left(
\begin{array}
[c]{ccccc}%
p_{1} & p_{2} & q_{1} & q_{2} & q_{3}\\
2 & 2 & 1 & 2 & 3
\end{array}
\right)  ,\ \ \ \ \ \ \ \ \ \ \left(
\begin{array}
[c]{ccccc}%
p_{1} & p_{2} & q_{1} & q_{2} & q_{3}\\
1 & 1 & 1 & 1 & 1
\end{array}
\right)  ,\\
&  \left(
\begin{array}
[c]{ccccc}%
p_{1} & p_{2} & q_{1} & q_{2} & q_{3}\\
1 & 1 & 1 & 1 & 2
\end{array}
\right)  ,\ \ \ \ \ \ \ \ \ \ \left(
\begin{array}
[c]{ccccc}%
p_{1} & p_{2} & q_{1} & q_{2} & q_{3}\\
1 & 2 & 1 & 1 & 2
\end{array}
\right)  .
\end{align*}
(The list is not exhaustive -- there are many more stufufufflers for $\delta$
and $\varepsilon$.)

On the other hand, here are some maps (in two-line notation) that are not
stufufufflers for $\delta$ and $\varepsilon$:

\begin{itemize}
\item The map $\left(
\begin{array}
[c]{ccccc}%
p_{1} & p_{2} & q_{1} & q_{2} & q_{3}\\
1 & 2 & 1 & 1 & 1
\end{array}
\right)  $ is not a stufufuffler, since it violates
(\ref{eq.def.limstuffler.liminality}) for $s=1$.

\item The map $\left(
\begin{array}
[c]{ccccc}%
p_{1} & p_{2} & q_{1} & q_{2} & q_{3}\\
1 & 2 & 2 & 1 & 2
\end{array}
\right)  $ is not a stufufuffler, since it is not weakly increasing ($f\left(
q_{1}\right)  >f\left(  q_{2}\right)  $).

\item The map $\left(
\begin{array}
[c]{ccccc}%
p_{1} & p_{2} & q_{1} & q_{2} & q_{3}\\
2 & 2 & 2 & 2 & 2
\end{array}
\right)  $ is not a stufufuffler, since it fails to be surjective onto
$\left\{  1<2<\cdots<k\right\}  $ whatever $k$ is.
\end{itemize}

Here are the weights of the eight stufufufflers listed above:%
\begin{align*}
\operatorname*{wt}\left(
\begin{array}
[c]{ccccc}%
p_{1} & p_{2} & q_{1} & q_{2} & q_{3}\\
1 & 2 & 3 & 4 & 5
\end{array}
\right)   &  =\left(  a,b,c,d,e\right)  ,\\
\operatorname*{wt}\left(
\begin{array}
[c]{ccccc}%
p_{1} & p_{2} & q_{1} & q_{2} & q_{3}\\
2 & 5 & 1 & 3 & 4
\end{array}
\right)   &  =\left(  c,a,d,e,b\right)  ,\\
\operatorname*{wt}\left(
\begin{array}
[c]{ccccc}%
p_{1} & p_{2} & q_{1} & q_{2} & q_{3}\\
1 & 1 & 1 & 2 & 3
\end{array}
\right)   &  =\left(  a+b+c,\ d,\ e\right)  ,\\
\operatorname*{wt}\left(
\begin{array}
[c]{ccccc}%
p_{1} & p_{2} & q_{1} & q_{2} & q_{3}\\
1 & 2 & 2 & 2 & 3
\end{array}
\right)   &  =\left(  a,\ b+c+d,\ e\right)  ,\\
\operatorname*{wt}\left(
\begin{array}
[c]{ccccc}%
p_{1} & p_{2} & q_{1} & q_{2} & q_{3}\\
2 & 2 & 1 & 2 & 3
\end{array}
\right)   &  =\left(  c,\ a+b+d,\ e\right)  ,\\
\operatorname*{wt}\left(
\begin{array}
[c]{ccccc}%
p_{1} & p_{2} & q_{1} & q_{2} & q_{3}\\
1 & 1 & 1 & 1 & 1
\end{array}
\right)   &  =\left(  a+b+c+d+e\right)  ,\\
\operatorname*{wt}\left(
\begin{array}
[c]{ccccc}%
p_{1} & p_{2} & q_{1} & q_{2} & q_{3}\\
1 & 1 & 1 & 1 & 2
\end{array}
\right)   &  =\left(  a+b+c+d,\ e\right)  ,\\
\operatorname*{wt}\left(
\begin{array}
[c]{ccccc}%
p_{1} & p_{2} & q_{1} & q_{2} & q_{3}\\
1 & 2 & 1 & 1 & 2
\end{array}
\right)   &  =\left(  a+c+d,\ b+e\right)  .
\end{align*}
The losses of these stufufufflers are $0$, $0$, $1$, $1$, $1$, $2$, $1$ and
$1$, respectively. Their poises are $0$, $0$, $0$, $0$, $0$, $0$, $1$ and $1$, respectively.
\end{example}

Intuitively, the composition $\operatorname*{wt}\left(  f\right)  $ in
Definition \ref{def.stufufuffler} can be thought of as a variant of a
stuffle\footnote{\textquotedblleft Stuffles\textquotedblright\ are also known
as \textquotedblleft overlapping shuffles\textquotedblright; see
\cite[Proposition 5.1.3 and Example 5.1.4]{GriRei} for the meaning of this
concept (and \cite{DEMT17} for more).} of $\delta$ with $\varepsilon$, but
instead of adding an entry of $\delta$ with an entry of $\varepsilon$, it
allows adding $i$ consecutive entries of $\delta$ and $j$ consecutive entries
of $\varepsilon$ whenever $i$ and $j$ are integers satisfying $i\approx j$ and
$i+j>0$. (Such a sum can be obtained by starting with $0$ and taking turns at
adding the next available entry from $\delta$ or from $\varepsilon$; thus the
name \textquotedblleft stufufuffle\textquotedblright.) The poise statistic
$\operatorname*{poise}\left(  f\right)  $ tells us how often this $i\approx j$
relation becomes an equality. The loss statistic $\operatorname*{loss}\left(
f\right)  $ tells how much is being added, i.e., how far this
\textquotedblleft stufufuffle\textquotedblright\ deviates from a stuffle.

Now we can restate the multiplication rule for $\eta_{\delta}^{\left(
q\right)  }\eta_{\varepsilon}^{\left(  q\right)  }$ in terms of stufufufflers:

\begin{theorem}
\label{thm.etaaetab}Let $\delta$ and $\varepsilon$ be two compositions. Then,%
\[
\eta_{\delta}^{\left(  q\right)  }\eta_{\varepsilon}^{\left(  q\right)  }%
=\sum_{\substack{f\text{ is a stufufuffler}\\\text{for }\delta\text{ and
}\varepsilon}}\left(  -q\right)  ^{\operatorname*{loss}\left(  f\right)
}\left(  q-1\right)  ^{\operatorname*{poise}\left(  f\right)  }\eta
_{\operatorname*{wt}\left(  f\right)  }^{\left(  q\right)  }.
\]

\end{theorem}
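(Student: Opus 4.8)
The plan is to prove Theorem \ref{thm.etaaetab} by showing that it is merely a repackaging of Theorem \ref{thm.product.1}, with the factorizations of $\delta$ and $\varepsilon$ encoded instead as stufufufflers. Both theorems express $\eta_{\delta}^{\left(  q\right)  }\eta_{\varepsilon}^{\left(  q\right)  }$ as a $\mathbb{Z}\left[  q\right]  $-linear combination of functions $\eta_{\alpha}^{\left(  q\right)  }$, so it suffices to exhibit a bijection between the index sets of the two sums that preserves the coefficient $\left(  -q\right)  ^{\ast}\left(  q-1\right)  ^{\ast}$ and the index $\alpha$ of the $\eta$-function being multiplied.

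Concretely, write $\delta=\left(  \delta_{1},\delta_{2},\ldots,\delta_{\ell}\right)  $ and $\varepsilon=\left(  \varepsilon_{1},\varepsilon_{2},\ldots,\varepsilon_{m}\right)  $, and set $P=\left\{  p_{1}<\cdots<p_{\ell}\right\}  $ and $Q=\left\{  q_{1}<\cdots<q_{m}\right\}  $ and $U=P\sqcup Q$ as in Definition \ref{def.stufufuffler}. First I would construct, for each stufufuffler $f:U\rightarrow\left\{  1<2<\cdots<k\right\}  $, the tuple of compositions $\left(  \beta_{1},\ldots,\beta_{k},\gamma_{1},\ldots,\gamma_{k}\right)  $ appearing in Theorem \ref{thm.product.1}: namely, for each $s\in\left[  k\right]  $, let $\beta_{s}$ consist of those entries $\delta_{u}$ with $f\left(  p_{u}\right)  =s$ (listed in increasing order of $u$), and let $\gamma_{s}$ consist of those entries $\varepsilon_{v}$ with $f\left(  q_{v}\right)  =s$ (listed in increasing order of $v$). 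The key observations are then routine translations of definitions: the weakly-order-preserving and surjective nature of $f$ guarantees exactly that $\beta_{1}\beta_{2}\cdots\beta_{k}=\delta$ and $\gamma_{1}\gamma_{2}\cdots\gamma_{k}=\varepsilon$ (the $f_{P}^{-1}\left(  s\right)  $ partition $\left[  \ell\right]  $ into consecutive blocks, and likewise $f_{Q}^{-1}\left(  s\right)  $ for $\left[  m\right]  $); moreover $\ell\left(  \beta_{s}\right)  =\left\vert f_{P}^{-1}\left(  s\right)  \right\vert $ and $\ell\left(  \gamma_{s}\right)  =\left\vert f_{Q}^{-1}\left(  s\right)  \right\vert $. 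Consequently the condition \eqref{eq.def.limstuffler.liminality} becomes $\left\vert \ell\left(  \beta_{s}\right)  -\ell\left(  \gamma_{s}\right)  \right\vert \leq1$, and surjectivity of $f$ at $s$ becomes $\ell\left(  \beta_{s}\right)  +\ell\left(  \gamma_{s}\right)  >0$ — matching precisely the two constraints in the sum of Theorem \ref{thm.product.1}.

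With this dictionary in hand, the remaining identifications are direct: $\operatorname*{wt}\nolimits_{s}\left(  f\right)  =\left\vert \beta_{s}\right\vert +\left\vert \gamma_{s}\right\vert $, so $\eta_{\operatorname*{wt}\left(  f\right)  }^{\left(  q\right)  }=\eta_{\left(  \left\vert \beta_{1}\right\vert +\left\vert \gamma_{1}\right\vert ,\ \ldots,\ \left\vert \beta_{k}\right\vert +\left\vert \gamma_{k}\right\vert \right)  }^{\left(  q\right)  }$; the loss satisfies $\operatorname*{loss}\left(  f\right)  =\sum_{s=1}^{k}\max\left\{  \ell\left(  \beta_{s}\right)  ,\ell\left(  \gamma_{s}\right)  \right\}  -k$, matching the exponent of $-q$; and the poise satisfies $\operatorname*{poise}\left(  f\right)  =\left(  \text{\# of }s\in\left[  k\right]  \text{ with }\ell\left(  \beta_{s}\right)  =\ell\left(  \gamma_{s}\right)  \right)  $, matching the exponent of $q-1$. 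Thus each stufufuffler contributes exactly the addend prescribed by Theorem \ref{thm.product.1}, and the two sums are termwise equal.

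The one point requiring genuine care — and the main obstacle — is verifying that the assignment $f\mapsto\left(  \beta_{1},\ldots,\beta_{k},\gamma_{1},\ldots,\gamma_{k}\right)  $ is a \emph{bijection} onto the index set of Theorem \ref{thm.product.1}, rather than merely a well-defined map. For injectivity and surjectivity I would construct the inverse explicitly: given a valid tuple $\left(  \beta_{1},\ldots,\beta_{k},\gamma_{1},\ldots,\gamma_{k}\right)  $, recover $k$ and define $f\left(  p_{u}\right)  $ to be the unique $s$ such that the $u$-th entry of $\delta$ lies in the block $\beta_{s}$ (and analogously for $f\left(  q_{v}\right)  $ using $\varepsilon$); one then checks that this $f$ is weakly order-preserving and surjective, that it satisfies \eqref{eq.def.limstuffler.liminality}, and that the two assignments are mutually inverse. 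The subtlety is purely bookkeeping: ensuring that the blocks $\beta_{s}$ and $\gamma_{s}$ are \emph{consecutive} runs of entries (forced by the order-preserving property) so that they reassemble uniquely into $\delta$ and $\varepsilon$, and that empty blocks are handled correctly (an empty $\beta_{s}$ simply means no $p_{u}$ maps to $s$, which is allowed as long as some $q_{v}$ does). Once the bijection is established, Theorem \ref{thm.etaaetab} follows immediately by reindexing the sum in Theorem \ref{thm.product.1}.
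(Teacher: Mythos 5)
Your proposal is correct and follows essentially the same route as the paper: the paper also derives Theorem \ref{thm.etaaetab} from Theorem \ref{thm.product.1} by the very bijection you describe (sending a stufufuffler $f$ to the pair of tuples $\left(  \beta_{1},\ldots,\beta_{k}\right)  ,\left(  \gamma_{1},\ldots,\gamma_{k}\right)  $ built from the fibers $f_{P}^{-1}\left(  s\right)  $ and $f_{Q}^{-1}\left(  s\right)  $, with the inverse given positionally exactly as you indicate), checking that it preserves weight, loss and poise, and then reindexing the sum. The only cosmetic difference is that the paper organizes the bijection weight-by-weight (fixing $\alpha$ and matching the coefficient $d_{\delta,\varepsilon}^{\alpha}\left(  q\right)  $), whereas you run it over the whole index set at once; this changes nothing of substance.
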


\begin{example}
Let $\delta=\left(  a,b\right)  $ and $\varepsilon=\left(  c,d\right)  $ be
two compositions of length $2$. Let us compute $\eta_{\left(  a,b\right)
}^{\left(  q\right)  }\eta_{\left(  c,d\right)  }^{\left(  q\right)  }$ using
Theorem \ref{thm.etaaetab}. The stufufufflers for $\delta$ and $\varepsilon$
are the maps (written here in two-line notation)%
\begin{align*}
&  \left(
\begin{array}
[c]{cccc}%
p_{1} & p_{2} & q_{1} & q_{2}\\
1 & 2 & 3 & 4
\end{array}
\right)  ,\ \ \ \ \ \ \ \ \ \ \left(
\begin{array}
[c]{cccc}%
p_{1} & p_{2} & q_{1} & q_{2}\\
1 & 3 & 2 & 4
\end{array}
\right)  ,\ \ \ \ \ \ \ \ \ \ \left(
\begin{array}
[c]{cccc}%
p_{1} & p_{2} & q_{1} & q_{2}\\
1 & 4 & 2 & 3
\end{array}
\right)  ,\\
&  \left(
\begin{array}
[c]{cccc}%
p_{1} & p_{2} & q_{1} & q_{2}\\
2 & 3 & 1 & 4
\end{array}
\right)  ,\ \ \ \ \ \ \ \ \ \ \left(
\begin{array}
[c]{cccc}%
p_{1} & p_{2} & q_{1} & q_{2}\\
2 & 4 & 1 & 3
\end{array}
\right)  ,\ \ \ \ \ \ \ \ \ \ \left(
\begin{array}
[c]{cccc}%
p_{1} & p_{2} & q_{1} & q_{2}\\
3 & 4 & 1 & 2
\end{array}
\right)  ,\\
&  \left(
\begin{array}
[c]{cccc}%
p_{1} & p_{2} & q_{1} & q_{2}\\
1 & 2 & 2 & 2
\end{array}
\right)  ,\ \ \ \ \ \ \ \ \ \ \left(
\begin{array}
[c]{cccc}%
p_{1} & p_{2} & q_{1} & q_{2}\\
2 & 2 & 1 & 2
\end{array}
\right)  ,\ \ \ \ \ \ \ \ \ \ \left(
\begin{array}
[c]{cccc}%
p_{1} & p_{2} & q_{1} & q_{2}\\
1 & 1 & 1 & 2
\end{array}
\right)  ,\\
&  \left(
\begin{array}
[c]{cccc}%
p_{1} & p_{2} & q_{1} & q_{2}\\
1 & 2 & 1 & 1
\end{array}
\right)  ,\ \ \ \ \ \ \ \ \ \ \left(
\begin{array}
[c]{cccc}%
p_{1} & p_{2} & q_{1} & q_{2}\\
1 & 2 & 1 & 2
\end{array}
\right)  ,\ \ \ \ \ \ \ \ \ \ \left(
\begin{array}
[c]{cccc}%
p_{1} & p_{2} & q_{1} & q_{2}\\
1 & 1 & 1 & 1
\end{array}
\right)  ,\\
&  \left(
\begin{array}
[c]{cccc}%
p_{1} & p_{2} & q_{1} & q_{2}\\
1 & 2 & 1 & 3
\end{array}
\right)  ,\ \ \ \ \ \ \ \ \ \ \left(
\begin{array}
[c]{cccc}%
p_{1} & p_{2} & q_{1} & q_{2}\\
1 & 3 & 1 & 2
\end{array}
\right)  ,\ \ \ \ \ \ \ \ \ \ \left(
\begin{array}
[c]{cccc}%
p_{1} & p_{2} & q_{1} & q_{2}\\
1 & 3 & 2 & 3
\end{array}
\right)  ,\\
&  \left(
\begin{array}
[c]{cccc}%
p_{1} & p_{2} & q_{1} & q_{2}\\
2 & 3 & 1 & 3
\end{array}
\right)  ,\ \ \ \ \ \ \ \ \ \ \left(
\begin{array}
[c]{cccc}%
p_{1} & p_{2} & q_{1} & q_{2}\\
1 & 2 & 2 & 3
\end{array}
\right)  ,\ \ \ \ \ \ \ \ \ \ \left(
\begin{array}
[c]{cccc}%
p_{1} & p_{2} & q_{1} & q_{2}\\
2 & 3 & 1 & 2
\end{array}
\right)  .
\end{align*}
Their respective weights are
\begin{align*}
&  \left(  a,b,c,d\right)  ,\ \ \ \ \ \ \ \ \ \ \left(  a,c,b,d\right)
,\ \ \ \ \ \ \ \ \ \ \left(  a,c,d,b\right)  ,\\
&  \left(  c,a,b,d\right)  ,\ \ \ \ \ \ \ \ \ \ \left(  c,a,d,b\right)
,\ \ \ \ \ \ \ \ \ \ \left(  c,d,a,b\right)  ,\\
&  \left(  a,\ b+c+d\right)  ,\ \ \ \ \ \ \ \ \ \ \left(  c,\ a+b+d\right)
,\ \ \ \ \ \ \ \ \ \ \left(  a+b+c,\ d\right)  ,\\
&  \left(  a+c+d,\ b\right)  ,\ \ \ \ \ \ \ \ \ \ \left(  a+c,\ b+d\right)
,\ \ \ \ \ \ \ \ \ \ \left(  a+b+c+d\right)  ,\\
&  \left(  a+c,\ b,\ d\right)  ,\ \ \ \ \ \ \ \ \ \ \left(
a+c,\ d,\ b\right)  ,\ \ \ \ \ \ \ \ \ \ \left(  a,\ c,\ b+d\right)  ,\\
&  \left(  c,\ a,\ b+d\right)  ,\ \ \ \ \ \ \ \ \ \ \left(
a,\ b+c,\ d\right)  ,\ \ \ \ \ \ \ \ \ \ \left(  c,\ a+d,\ b\right)  ;
\end{align*}
their respective losses are
\begin{align*}
&  0,0,0,\\
&  0,0,0,\\
&  1,1,1,\\
&  1,0,1,\\
&  0,0,0,\\
&  0,0,0,
\end{align*}
whereas their respective poises are%
\begin{align*}
&  0,0,0,\\
&  0,0,0,\\
&  0,0,0,\\
&  0,2,1,\\
&  1,1,1,\\
&  1,1,1.
\end{align*}
Thus, Theorem \ref{thm.etaaetab} yields%
\begin{align*}
\eta_{\left(  a,b\right)  }^{\left(  q\right)  }\eta_{\left(  c,d\right)
}^{\left(  q\right)  }  &  =\eta_{\left(  a,b,c,d\right)  }^{\left(  q\right)
}+\eta_{\left(  a,c,b,d\right)  }^{\left(  q\right)  }+\eta_{\left(
a,c,d,b\right)  }^{\left(  q\right)  }\\
&  \ \ \ \ \ \ \ \ \ \ +\eta_{\left(  c,a,b,d\right)  }^{\left(  q\right)
}+\eta_{\left(  c,a,d,b\right)  }^{\left(  q\right)  }+\eta_{\left(
c,d,a,b\right)  }^{\left(  q\right)  }\\
&  \ \ \ \ \ \ \ \ \ \ -q\eta_{\left(  a,\ b+c+d\right)  }^{\left(  q\right)
}-q\eta_{\left(  c,\ a+b+d\right)  }^{\left(  q\right)  }-q\eta_{\left(
a+b+c,\ d\right)  }^{\left(  q\right)  }\\
&  \ \ \ \ \ \ \ \ \ \ -q\eta_{\left(  a+c+d,\ b\right)  }^{\left(  q\right)
}+\left(  q-1\right)  ^{2}\eta_{\left(  a+c,\ b+d\right)  }^{\left(  q\right)
}-q\left(  q-1\right)  \eta_{\left(  a+b+c+d\right)  }^{\left(  q\right)  }\\
&  \ \ \ \ \ \ \ \ \ \ +\left(  q-1\right)  \eta_{\left(  a+c,\ b,\ d\right)
}^{\left(  q\right)  }+\left(  q-1\right)  \eta_{\left(  a+c,\ d,\ b\right)
}^{\left(  q\right)  }+\left(  q-1\right)  \eta_{\left(  a,\ c,\ b+d\right)
}^{\left(  q\right)  }\\
&  \ \ \ \ \ \ \ \ \ \ +\left(  q-1\right)  \eta_{\left(  c,\ a,\ b+d\right)
}^{\left(  q\right)  }+\left(  q-1\right)  \eta_{\left(  a,\ b+c,\ d\right)
}^{\left(  q\right)  }+\left(  q-1\right)  \eta_{\left(  c,\ a+d,\ b\right)
}^{\left(  q\right)  }.
\end{align*}

\end{example}

Let us now outline how Theorem \ref{thm.etaaetab} can be derived from Theorem
\ref{thm.product.1}.

\begin{proof}
[Proof of Theorem \ref{thm.etaaetab} (sketched).]Let us define the polynomials
$d_{\delta,\varepsilon}^{\alpha}\left(  X\right)  \in\mathbb{Z}\left[
X\right]  $ as in the proof of Theorem \ref{thm.product.1}. Then, Claim 4 in
said proof shows that%
\begin{equation}
\eta_{\delta}^{\left(  q\right)  }\eta_{\varepsilon}^{\left(  q\right)  }%
=\sum_{\alpha\in\operatorname*{Comp}}d_{\delta,\varepsilon}^{\alpha}\left(
q\right)  \eta_{\alpha}^{\left(  q\right)  }. \label{pf.thm.etaaetab.old}%
\end{equation}

Now, fix a composition $\alpha=\left(  \alpha_{1},\alpha_{2},\ldots,\alpha
_{k}\right)  $. Let $\mathbf{P}$ be the set of all pairs%
\[
\left(  \left(  \beta_{1},\beta_{2},\ldots,\beta_{k}\right)  ,\ \left(
\gamma_{1},\gamma_{2},\ldots,\gamma_{k}\right)  \right)
\]
satisfying the six conditions%
\begin{align*}
\beta_{1},\beta_{2},\ldots,\beta_{k}  &  \in\operatorname*{Comp}%
;\ \ \ \ \ \ \ \ \ \ \gamma_{1},\gamma_{2},\ldots,\gamma_{k}\in
\operatorname*{Comp};\\
\beta_{1}\beta_{2}\cdots\beta_{k}  &  =\delta;\ \ \ \ \ \ \ \ \ \ \gamma
_{1}\gamma_{2}\cdots\gamma_{k}=\varepsilon;\\
\left\vert \ell\left(  \beta_{s}\right)  -\ell\left(  \gamma_{s}\right)
\right\vert  &  \leq1\text{ for each }s;\\
\left\vert \beta_{s}\right\vert +\left\vert \gamma_{s}\right\vert  &
=\alpha_{s}\text{ for each }s.
\end{align*}
Then, the equality (\ref{pf.thm.product.1.dq=}) rewrites as
\begin{align}
d_{\delta,\varepsilon}^{\alpha}\left(  q\right)   &  =\sum_{\left(  \left(
\beta_{1},\beta_{2},\ldots,\beta_{k}\right)  ,\ \left(  \gamma_{1},\gamma
_{2},\ldots,\gamma_{k}\right)  \right)  \in\mathbf{P}}\left(  -q\right)
^{\sum_{s=1}^{k}\max\left\{  \ell\left(  \beta_{s}\right)  ,\ell\left(
\gamma_{s}\right)  \right\}  -k}\nonumber\\
&  \ \ \ \ \ \ \ \ \ \ \ \ \ \ \ \ \ \ \ \ \cdot\left(  q-1\right)  ^{\left(
\text{\# of all }s\in\left[  k\right]  \text{ such that }\ell\left(  \beta
_{s}\right)  =\ell\left(  \gamma_{s}\right)  \right)  }.
\label{pf.thm.etaaetab.dq=}%
\end{align}

On the other hand, let $\mathbf{S}$ be the set of all stufufufflers $f$ for
$\delta$ and $\varepsilon$ satisfying $\operatorname*{wt}\left(  f\right)
=\alpha$.

We shall construct a bijection $\Phi$ from $\mathbf{S}$ to $\mathbf{P}$.
Namely, $\Phi$ shall send any stufufuffler $f\in\mathbf{S}$ to the pair
\[
\left(  \left(  \beta_{1},\beta_{2},\ldots,\beta_{k}\right)  ,\ \left(
\gamma_{1},\gamma_{2},\ldots,\gamma_{k}\right)  \right)  ,
\]
where
\begin{align*}
\beta_{s}  &  =\left(  \text{the composition consisting of the }\delta
_{u}\text{ for all }u\in f_{P}^{-1}\left(  s\right)  \right. \\
&  \ \ \ \ \ \ \ \ \ \ \left.  \text{(in the order of increasing }%
u\text{)}\vphantom{f_P^{-1}}\right)  \ \ \ \ \ \ \ \ \ \ \text{and}\\
\gamma_{s}  &  =\left(  \text{the composition consisting of the }%
\varepsilon_{v}\text{ for all }v\in f_{Q}^{-1}\left(  s\right)  \right. \\
&  \ \ \ \ \ \ \ \ \ \ \left.  \text{(in the order of increasing }%
v\text{)}\vphantom{f_P^{-1}}\right)  \ \ \ \ \ \ \ \ \ \ \text{for all }%
s\in\left[  k\right]  .
\end{align*}
(We are here using the fact that our stufufuffler $f$ must necessarily be a
map from $U$ to $\left\{  1<2<\cdots<k\right\}  $, because its weight
$\operatorname*{wt}\left(  f\right)  =\alpha$ is a composition of length $k$.)
It is easy to see that this pair really belongs to $\mathbf{P}$, and that
$\Phi$ is indeed a bijection\footnote{Its inverse map $\Phi^{-1}$ can easily
be constructed: It sends each pair $\left(  \left(  \beta_{1},\beta_{2}%
,\ldots,\beta_{k}\right)  ,\ \left(  \gamma_{1},\gamma_{2},\ldots,\gamma
_{k}\right)  \right)  \in\mathbf{P}$ to the map $f:U\rightarrow\left[
k\right]  $ that is given by%
\[
f\left(  p_{u}\right)  =\min\left\{  s\in\left[  k\right]  \ \mid\ \ell\left(
\beta_{1}\beta_{2}\cdots\beta_{s}\right)  \geq u\right\}
\ \ \ \ \ \ \ \ \ \ \text{for all }u\in\left[  \ell\right]
\]
and%
\[
f\left(  q_{v}\right)  =\min\left\{  s\in\left[  k\right]  \ \mid\ \ell\left(
\gamma_{1}\gamma_{2}\cdots\gamma_{s}\right)  \geq v\right\}
\ \ \ \ \ \ \ \ \ \ \text{for all }v\in\left[  m\right]  .
\]
The idea behind this is that $f\left(  p_{u}\right)  $ is the number $s$ such
that the $u$-th entry of the concatenated composition $\beta_{1}\beta
_{2}\cdots\beta_{k}$ is taken from its $s$-th factor $\beta_{s}$ (and
similarly $f\left(  q_{v}\right)  $).}.

This bijection $\Phi$ has a further useful property: If $\Phi$ sends a
stufufuffler $f$ to a pair $\left(  \left(  \beta_{1},\beta_{2},\ldots
,\beta_{k}\right)  ,\ \left(  \gamma_{1},\gamma_{2},\ldots,\gamma_{k}\right)
\right)  $, then%
\begin{align*}
\sum_{s=1}^{k}\max\left\{  \ell\left(  \beta_{s}\right)  ,\ell\left(
\gamma_{s}\right)  \right\}  -k  &  =\operatorname*{loss}\left(  f\right)
\ \ \ \ \ \ \ \ \ \ \text{and}\\
\left(  \text{\# of all }s\in\left[  k\right]  \text{ such that }\ell\left(
\beta_{s}\right)  =\ell\left(  \gamma_{s}\right)  \right)   &
=\operatorname*{poise}\left(  f\right)  .
\end{align*}
(This is easily seen from the definitions of $\Phi$, of the loss and of the poise.)

Thus, we can use the bijection $\Phi$ to rewrite (\ref{pf.thm.etaaetab.dq=})
as%
\begin{align}
d_{\delta,\varepsilon}^{\alpha}\left(  q\right)   &  =\sum_{f\in\mathbf{S}%
}\left(  -q\right)  ^{\operatorname*{loss}\left(  f\right)  }\left(
q-1\right)  ^{\operatorname*{poise}\left(  f\right)  }\nonumber\\
&  =\sum_{\substack{f\text{ is a stufufuffler}\\\text{for }\delta\text{ and
}\varepsilon;\\\operatorname*{wt}\left(  f\right)  =\alpha}}\left(  -q\right)
^{\operatorname*{loss}\left(  f\right)  }\left(  q-1\right)
^{\operatorname*{poise}\left(  f\right)  } \label{pf.thm.etaaetab.dq=stuf}%
\end{align}
(by the definition of $\mathbf{S}$).

Forget that we fixed $\alpha$. We thus have proved
(\ref{pf.thm.etaaetab.dq=stuf}) for each composition $\alpha\in
\operatorname*{Comp}$. Hence, we can rewrite (\ref{pf.thm.etaaetab.old}) as%
\begin{align*}
\eta_{\delta}^{\left(  q\right)  }\eta_{\varepsilon}^{\left(  q\right)  }  &
=\sum_{\alpha\in\operatorname*{Comp}}\left(  \sum_{\substack{f\text{ is a
stufufuffler}\\\text{for }\delta\text{ and }\varepsilon;\\\operatorname*{wt}%
\left(  f\right)  =\alpha}}\left(  -q\right)  ^{\operatorname*{loss}\left(
f\right)  }\left(  q-1\right)  ^{\operatorname*{poise}\left(  f\right)
}\right)  \eta_{\alpha}^{\left(  q\right)  }\\
&  =\sum_{\alpha\in\operatorname*{Comp}}\ \ \sum_{\substack{f\text{ is a
stufufuffler}\\\text{for }\delta\text{ and }\varepsilon;\\\operatorname*{wt}%
\left(  f\right)  =\alpha}}\left(  -q\right)  ^{\operatorname*{loss}\left(
f\right)  }\left(  q-1\right)  ^{\operatorname*{poise}\left(  f\right)
}\underbrace{\eta_{\alpha}^{\left(  q\right)  }}_{\substack{=\eta
_{\operatorname*{wt}\left(  f\right)  }^{\left(  q\right)  }\\\text{(since
}\alpha=\operatorname*{wt}\left(  f\right)  \text{)}}}\\
&  =\underbrace{\sum_{\alpha\in\operatorname*{Comp}}\ \ \sum
_{\substack{f\text{ is a stufufuffler}\\\text{for }\delta\text{ and
}\varepsilon;\\\operatorname*{wt}\left(  f\right)  =\alpha}}}_{=\sum
_{\substack{f\text{ is a stufufuffler}\\\text{for }\delta\text{ and
}\varepsilon}}}\left(  -q\right)  ^{\operatorname*{loss}\left(  f\right)
}\left(  q-1\right)  ^{\operatorname*{poise}\left(  f\right)  }\eta
_{\operatorname*{wt}\left(  f\right)  }^{\left(  q\right)  }\\
&  =\sum_{\substack{f\text{ is a stufufuffler}\\\text{for }\delta\text{ and
}\varepsilon}}\left(  -q\right)  ^{\operatorname*{loss}\left(  f\right)
}\left(  q-1\right)  ^{\operatorname*{poise}\left(  f\right)  }\eta
_{\operatorname*{wt}\left(  f\right)  }^{\left(  q\right)  }.
\end{align*}
This proves Theorem \ref{thm.etaaetab}.
\end{proof}

\subsection{The product rule in terms of subsets}

Finally, let us state the product rule for the $\eta_{\alpha}^{\left(
q\right)  }$ (Theorem \ref{thm.product.1}) in yet another form, using
classical shuffles (\cite[Corollary 1]{GriVas22}):

\begin{definition}
If $T$ is any set of integers, then $T-1$ shall denote the set $\left\{
t-1\ \mid\ t\in T\right\}  $.
\end{definition}

\begin{definition}
Let $\alpha=\left(  \alpha_{1},\alpha_{2},\ldots,\alpha_{n}\right)  $ be a
composition with $n$ entries. For any $i\in\left[  n-1\right]  $, we let
$\alpha^{\downarrow i}$ denote the following composition with $n-1$ entries:
\[
\alpha^{\downarrow i}:=\left(  \alpha_{1},\dots,\alpha_{i-1},{\alpha
_{i}+\alpha_{i+1}},\alpha_{i+2},\dots,\alpha_{n}\right)  .
\]
Furthermore, for any subset $I\subseteq\left[  n-1 \right] $, we set
\[
\alpha^{\downarrow I}:=\left(  \left(  \cdots\left(  \alpha^{\downarrow i_{k}%
}\right)  \cdots\right)  ^{\downarrow i_{2}}\right)  ^{\downarrow i_{1}},
\]
where $i_{1},i_{2},\ldots,i_{k}$ are the elements of $I$ in increasing order.

Finally, if $I$ and $J$ are two subsets of $[n-1]$ with $1 \notin J$, then we
set
\[
\alpha^{\downarrow I\downarrow\downarrow J}:=\alpha^{\downarrow K}%
,\ \ \ \ \ \ \ \ \ \ \text{where }K=I\cup J\cup(J-1).
\]

\end{definition}

\begin{example}
Let $\alpha=\left(  a,b,c,d,e,f,g\right)  $ be a composition with $7$ entries.
Then,%
\begin{align*}
\alpha^{\downarrow2}  &  =\left(  a,\ b+c,\ d,\ e,\ f,\ g\right)  ;\\
\alpha^{\downarrow\left\{  2,4,5\right\}  }  &  =\left(
a,\ b+c,\ d+e+f,\ g\right)  ;\\
\alpha^{\downarrow\left\{  2\right\}  \downarrow\downarrow\left\{  6\right\}
}  &  =\alpha^{\downarrow\left\{  2,5,6\right\}  }=\left(
a,\ b+c,\ d,\ e+f+g\right)  .
\end{align*}

\end{example}

\begin{theorem}
\label{thm.etaeta.EEE} Let $\delta=\left(  \delta_{1},\delta_{2},\ldots
,\delta_{n}\right)  $ and $\varepsilon=\left(  \varepsilon_{1},\varepsilon
_{2},\ldots,\varepsilon_{m}\right)  $ be two compositions.

If $T$ is any $m$-element subset of $\left[  n+m\right]  $, then we define the
$T$\emph{-shuffle} of $\delta$ with $\varepsilon$ to be the composition
\[
\delta\left\lfloor T\right\rfloor \varepsilon:=\left(  \gamma_{1},\gamma
_{2},\ldots,\gamma_{n+m}\right)  ,
\]
where
\[
\gamma_{k}:=%
\begin{cases}
\delta_{i}, & \text{if }k\text{ is the }i\text{-th smallest element of
}\left[  n+m\right]  \setminus T;\\
\varepsilon_{j}, & \text{if }k\text{ is the }j\text{-th smallest element of
}T.
\end{cases}
\]

Furthermore, if $T$ is any subset of $\left[  n+m\right]  $, then we define a
further subset%
\[
T^{\prime}:=\left(  T\setminus\left(  T-1\right)  \right)  \setminus\left\{
n+m\right\}  .
\]

Then,%
\[
\eta_{\delta}^{(q)}\eta_{\varepsilon}^{(q)}=\sum_{\substack{\text{triples
}\left(  T,I,J\right)  ;\\T\subseteq\left[  n+m\right]  ;\\\left\vert
T\right\vert =m;\\I\subseteq T^{\prime};\\J\subseteq T^{\prime}\setminus
\left\{  1\right\}  ;\\I\cap J=\varnothing}}\left(  -q\right)  ^{\left\vert
J\right\vert }\left(  q-1\right)  ^{\left\vert I\right\vert }\eta_{\left(
\delta\left\lfloor T\right\rfloor \varepsilon\right)  ^{\downarrow
I\downarrow\downarrow J}}^{\left(  q\right)  }.
\]

\end{theorem}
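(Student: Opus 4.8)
The plan is to derive Theorem \ref{thm.etaeta.EEE} from Theorem \ref{thm.product.1} by constructing a bijection between the two indexing sets that preserves the relevant statistics. Writing $\delta = (\delta_1, \ldots, \delta_n)$ and $\varepsilon = (\varepsilon_1, \ldots, \varepsilon_m)$, recall that Theorem \ref{thm.product.1} expresses $\eta_\delta^{(q)}\eta_\varepsilon^{(q)}$ as a sum over tuples $(k; \beta_1, \ldots, \beta_k; \gamma_1, \ldots, \gamma_k)$ with $\beta_1\cdots\beta_k = \delta$, $\gamma_1\cdots\gamma_k = \varepsilon$, $|\ell(\beta_s) - \ell(\gamma_s)| \le 1$ and $\ell(\beta_s) + \ell(\gamma_s) > 0$ for all $s$, each weighted by $(-q)^{\sum_s \max\{\ell(\beta_s),\ell(\gamma_s)\} - k}(q-1)^{\#\{s : \ell(\beta_s) = \ell(\gamma_s)\}}$ and contributing $\eta_{(|\beta_1|+|\gamma_1|, \ldots, |\beta_k|+|\gamma_k|)}^{(q)}$. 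It therefore suffices to biject these tuples with the triples $(T, I, J)$ of Theorem \ref{thm.etaeta.EEE} in such a way that the weight composition, the exponent of $-q$ (the ``loss'' $\sum_s \max\{\ell(\beta_s),\ell(\gamma_s)\}-k$ versus $|J|$), and the exponent of $q-1$ (the number of balanced blocks versus $|I|$) all agree.

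First I would set up the dictionary starting from a triple $(T, I, J)$. The subset $T$ with $|T| = m$ records the full interleaving $\omega := \delta \lfloor T \rfloor \varepsilon$ of length $n+m$ together with the two-coloring in which position $k$ is $\varepsilon$-colored iff $k \in T$ and $\delta$-colored otherwise. The merge set $K := I \cup J \cup (J-1)$ cuts $[n+m]$ into the maximal runs of positions linked by $K$; reading the $\delta$-colored entries of the $s$-th run as $\beta_s$ and its $\varepsilon$-colored entries as $\gamma_s$ yields a factorization with $k = (n+m) - |K|$ blocks, whose weight composition is precisely $\omega^{\downarrow K} = (\delta \lfloor T \rfloor \varepsilon)^{\downarrow I \downarrow\downarrow J}$. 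A key preliminary observation is that $T'$ contains no two consecutive integers (if $t, t+1 \in T'$ then $t+1 \in T$, while $t \in T'$ forces $t+1 \notin T$, a contradiction); since $I, J \subseteq T'$, it follows that $I$, $J$ and $J-1$ are pairwise disjoint, so that $|K| = |I| + 2|J|$ and $k = (n+m) - |I| - 2|J|$.

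The heart of the argument is a local analysis of a single run. The constraints $I, J \subseteq T'$ (so every merge sits at an $\varepsilon\delta$-boundary), $1 \notin J$ and $n+m \notin T'$ (so a double merge always has a predecessor and no merge runs off the end), and $I \cap J = \varnothing$ should force, for each run, that $|J \cap \text{run}| = \max\{\ell(\beta_s), \ell(\gamma_s)\} - 1$ and that the run is balanced (i.e. $\ell(\beta_s) = \ell(\gamma_s)$) exactly when it contains a necessarily unique element of $I$. Granting this, summing over runs gives $|J| = \sum_s(\max\{\ell(\beta_s),\ell(\gamma_s)\} - 1) = \operatorname{loss}$ and $|I| = \#\{s : \ell(\beta_s) = \ell(\gamma_s)\} = \operatorname{poise}$, so the weights match termwise. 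To finish I would exhibit the inverse map: from a tuple, each block determines its color word and hence its contribution to the interleaving (and thus to $T$), the single-merge boundary of a balanced block goes into $I$, and the remaining internal boundaries go into $J$; one then checks that the resulting triple satisfies all the defining constraints and that the two maps are mutually inverse.

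The main obstacle is precisely this local per-run analysis together with the verification that the inverse map lands among the admissible triples. Unlike in the stufufuffler picture of Theorem \ref{thm.etaaetab}, the runs here need not be color-alternating (for instance a run can read $\varepsilon\varepsilon\delta$), so one cannot simply invoke a unique alternating interleaving; instead one must track, boundary by boundary, how the memberships in $I$, $J$ and $J - 1$ determine the colors and conversely. I would organize this as a case analysis on a maximal run according to whether $\ell(\beta_s)$ exceeds, equals, or falls short of $\ell(\gamma_s)$, checking in each case that the admissible placements of merges into $I$ and $J$ match the possible color words, and then glue the runs together using the no-two-consecutive property of $T'$ to guarantee that distinct runs impose independent constraints. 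Once this bijection is established, Theorem \ref{thm.etaeta.EEE} follows immediately by substituting the matched contributions into the expansion of $\eta_\delta^{(q)}\eta_\varepsilon^{(q)}$ provided by Theorem \ref{thm.product.1}.
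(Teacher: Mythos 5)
Your proposal is correct and takes exactly the route this paper indicates but omits (the paper dismisses the details of this bijection as ``somewhat bothersome'' and defers to an enriched-$P$-partition proof elsewhere): namely, a statistic-preserving bijection between the triples $\left(  T,I,J\right)  $ and the factorizations of Theorem \ref{thm.product.1} (equivalently, the stufufufflers of Theorem \ref{thm.etaaetab}), and your key structural claims --- that $T^{\prime}$ has no two consecutive elements, so $I$, $J$, $J-1$ are pairwise disjoint; that each run contains at most one element of $I$ (which is forced to be its first merge), and exactly one precisely when the run is balanced; and that $\left\vert J\cap\text{run}\right\vert =\max\left\{  \ell\left(  \beta_{s}\right)  ,\ell\left(  \gamma_{s}\right)  \right\}  -1$ --- are all true, so the weights and weight compositions match termwise as you say. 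One slip to repair in your description of the inverse map: the internal boundaries of a block other than the $I$-merge do not all go into $J$ (that would place consecutive integers into $J$, contradicting $J\subseteq T^{\prime}$, and would make $\left\vert J\right\vert $ exceed the loss); rather, they decompose into consecutive pairs, and only the right member of each pair --- the $\varepsilon\delta$-boundary --- is placed in $J$, its left partner then lying in $J-1$ automatically.
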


\begin{example}
Let $\delta=\left(  a\right)  $ and $\varepsilon=\left(  b,c\right)  $ be two
compositions. Then, applying Theorem \ref{thm.etaeta.EEE} (with $n=1$ and
$m=2$), we see that $\eta_{\delta}^{(q)}\eta_{\varepsilon}^{(q)}=\eta_{\left(
a\right)  }^{(q)}\eta_{\left(  b,c\right)  }^{(q)}$ is a sum over all triples
$\left(  T,I,J\right)  $ satisfying%
\[
T\subseteq\left[  3\right]  ,\ \ \ \ \ \ \ \ \ \ \left\vert T\right\vert
=2,\ \ \ \ \ \ \ \ \ \ I\subseteq T^{\prime},\ \ \ \ \ \ \ \ \ \ J\subseteq
T^{\prime}\setminus\left\{  1\right\}  ,\ \ \ \ \ \ \ \ \ \ I\cap
J=\varnothing.
\]
There are exactly six such triples $\left(  T,I,J\right)  $, namely%
\begin{align*}
&  \left(  \left\{  1,2\right\}  ,\ \varnothing,\ \varnothing\right)
,\ \left(  \left\{  1,2\right\}  ,\ \varnothing,\ \left\{  2\right\}  \right)
,\ \left(  \left\{  1,2\right\}  ,\ \left\{  2\right\}  ,\ \varnothing\right)
,\\
&  \left(  \left\{  1,3\right\}  ,\ \varnothing,\ \varnothing\right)
,\ \left(  \left\{  1,3\right\}  ,\ \left\{  1\right\}  ,\ \varnothing\right)
,\ \left(  \left\{  2,3\right\}  ,\ \varnothing,\ \varnothing\right)  .
\end{align*}
Thus, the claim of Theorem \ref{thm.etaeta.EEE} becomes%
\[
\eta_{\left(  a\right)  }^{\left(  q\right)  }\eta_{\left(  b,c\right)
}^{\left(  q\right)  }=\eta_{\left(  b,c,a\right)  }^{\left(  q\right)
}-q\eta_{\left(  a+b+c\right)  }^{\left(  q\right)  }+\left(  q-1\right)
\eta_{\left(  b,\ a+c\right)  }^{\left(  q\right)  }+\eta_{\left(
b,a,c\right)  }^{\left(  q\right)  }+\left(  q-1\right)  \eta_{\left(
a+b,\ c\right)  }^{\left(  q\right)  }+\eta_{\left(  a,b,c\right)  }^{\left(
q\right)  }%
\]
(here, we have listed the addends in the same order in which the corresponding
triples were listed above).
\end{example}

Theorem \ref{thm.etaeta.EEE} can be derived from Theorem \ref{thm.etaaetab} by
constructing a bijection between the stufufufflers of $\delta$ and
$\varepsilon$ and the triples $\left(  T,I,J\right)  $ from Theorem
\ref{thm.etaeta.EEE}. The details of this bijection are somewhat bothersome,
so we shall omit them, not least because Theorem \ref{thm.etaeta.EEE} can also
be proved in a different way (using enriched $P$-partitions). The latter proof
has been outlined in \cite[Corollary 1]{GriVas22} and will be elaborated upon
in forthcoming work.

\section{\label{sec.app}Applications}

We shall now discuss some applications of the basis $\left(  \eta_{\alpha
}^{\left(  q\right)  }\right)  _{\alpha\in\operatorname*{Comp}}$ and its features.

\subsection{\label{subsec.app.subalg}Hopf subalgebras of $\operatorname*{QSym}%
$}

The $q=1$ case in particular is useful for constructing Hopf subalgebras of
$\operatorname*{QSym}$, such as the peak subalgebra $\Pi$ introduced by
Stembridge \cite[\S 3]{Stembr97} and later studied by various authors
(\cite[\S 6, particularly Proposition 6.5]{AgBeSo14}, \cite{BMSW99},
\cite[\S 5]{BMSW00}, \cite{Hsiao07} etc.). We shall now briefly survey some
Hopf subalgebras that can be obtained in this way.

\begin{convention}
For the rest of Subsection \ref{subsec.app.subalg}, we fix a set $T$ of
compositions (i.e., a subset $T$ of $\operatorname*{Comp}$).

We let $\operatorname*{QSym}\nolimits_{T}^{\left(  q\right)  }$ be the
$\mathbf{k}$-submodule of $\operatorname*{QSym}$ spanned by the family
$\left(  \eta_{\alpha}^{\left(  q\right)  }\right)  _{\alpha\in T}$.
\end{convention}

When is this $\mathbf{k}$-submodule $\operatorname*{QSym}\nolimits_{T}%
^{\left(  q\right)  }$ a subcoalgebra of $\operatorname*{QSym}$? The answer is
simple:\footnote{We are being sloppy: For us here, a \textquotedblleft
subcoalgebra\textquotedblright\ of a coalgebra $C$ means a $\mathbf{k}%
$-submodule $D$ of $C$ that satisfies
\[
\Delta\left(  D\right)  \subseteq\left(  \text{image of the canonical map
}D\otimes D\rightarrow C\otimes C\right)  .
\]
This is \textbf{not} the algebraically literate definition of a
\textquotedblleft subcoalgebra\textquotedblright, as it does not imply that
$D$ itself becomes a $\mathbf{k}$-coalgebra (after all, the canonical map
$D\otimes D\rightarrow C\otimes C$ might fail to be injective, and then it is
not clear how to \textquotedblleft restrict\textquotedblright\ $\Delta$ to a
map $D\rightarrow D\otimes D$). Fortunately, the two definitions are
equivalent when $\mathbf{k}$ is a field (or when $D$ is a direct addend of $C$
as a $\mathbf{k}$-module).}

\begin{proposition}
\label{prop.app.subalg.subcoalg}For any subset $Y$ of $\left\{  1,2,3,\ldots
\right\}  $, we let%
\begin{align*}
Y^{\ast}:=\  &  \left\{  \text{all compositions whose entries all belong to
}Y\right\} \\
=\  &  \left\{  \left(  \alpha_{1},\alpha_{2},\ldots,\alpha_{k}\right)
\in\operatorname*{Comp}\ \mid\ \alpha_{i}\in Y\text{ for each }i\right\}  .
\end{align*}

\begin{enumerate}
\item[\textbf{(a)}] If $T=Y^{\ast}$ for some subset $Y$ of $\left\{
1,2,3,\ldots\right\}  $, then $\operatorname*{QSym}\nolimits_{T}^{\left(
q\right)  }$ is a subcoalgebra of $\operatorname*{QSym}$.

\item[\textbf{(b)}] More generally: If $T$ has the property that every two
compositions $\beta$ and $\gamma$ satisfying $\beta\gamma\in T$ satisfy
$\beta\in T$ and $\gamma\in T$, then $\operatorname*{QSym}\nolimits_{T}%
^{\left(  q\right)  }$ is a subcoalgebra of $\operatorname*{QSym}$.

\item[\textbf{(c)}] If $\mathbf{k}$ is a field and $r\neq0$, then the converse
of part \textbf{(b)} holds as well: If $\operatorname*{QSym}\nolimits_{T}%
^{\left(  q\right)  }$ is a subcoalgebra of $\operatorname*{QSym}$, then every
two compositions $\beta$ and $\gamma$ satisfying $\beta\gamma\in T$ satisfy
$\beta\in T$ and $\gamma\in T$.
\end{enumerate}
\end{proposition}

\begin{proof}
[Proof sketch.]\textbf{(b)} This follows from Theorem \ref{thm.Delta-eta}.

\textbf{(a)} Particular case of \textbf{(b)}.

\textbf{(c)} Use the graded dual $\operatorname*{NSym}$ of
$\operatorname*{QSym}$ and Proposition \ref{prop.etastar.concat2}. (The
orthogonal complement of a subcoalgebra is an ideal.)
\end{proof}

In some cases, $\operatorname*{QSym}\nolimits_{T}^{\left(  q\right)  }$ is
even a Hopf subalgebra of $\operatorname*{QSym}$. Here is a sufficient
criterion (with no claims of necessity):

\begin{proposition}
\label{prop.app.subalg.subhopf.Y+Y}Let $Y$ be a subset of $\left\{
1,2,3,\ldots\right\}  $ that is closed under addition (i.e., satisfies $y+z\in
Y$ for every $y,z\in Y$). Let $T:=Y^{\ast}$. Then, $\operatorname*{QSym}%
\nolimits_{T}^{\left(  q\right)  }$ is a Hopf subalgebra of
$\operatorname*{QSym}$.
\end{proposition}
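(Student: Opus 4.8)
The plan is to verify the three defining closure properties of a Hopf subalgebra separately: that $\operatorname*{QSym}\nolimits_{T}^{\left(  q\right)  }$ is closed under multiplication (and contains the unit), that it is a subcoalgebra, and that it is stable under the antipode $S$. Throughout, the decisive combinatorial input is that $Y$ is closed under addition; since $T = Y^{\ast}$ consists precisely of the compositions all of whose entries lie in $Y$, this will force every indexing composition produced by the relevant structure formulas to land back in $T$. The subcoalgebra property is in fact immediate: because $T = Y^{\ast}$, Proposition \ref{prop.app.subalg.subcoalg}\textbf{(a)} applies verbatim, so $\operatorname*{QSym}\nolimits_{T}^{\left(  q\right)  }$ is a subcoalgebra of $\operatorname*{QSym}$. (Conceptually, Theorem \ref{thm.Delta-eta} gives a deconcatenation coproduct $\Delta\left(  \eta_{\alpha}^{\left(  q\right)  }\right)  = \sum_{\alpha=\beta\gamma}\eta_{\beta}^{\left(  q\right)  }\otimes\eta_{\gamma}^{\left(  q\right)  }$, and both factors of any concatenation $\beta\gamma=\alpha$ inherit from $\alpha$ the property of having all entries in $Y$.)

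Next I would show $\operatorname*{QSym}\nolimits_{T}^{\left(  q\right)  }$ is a subalgebra. The unit is unproblematic, since $\eta_{\varnothing}^{\left(  q\right)  } = 1$ and $\varnothing \in Y^{\ast}$. For closure under multiplication, fix $\delta,\varepsilon \in T$ and expand $\eta_{\delta}^{\left(  q\right)  }\eta_{\varepsilon}^{\left(  q\right)  }$ by Theorem \ref{thm.product.1}. Each term of that expansion is a scalar multiple of $\eta_{\left(  \lvert\beta_{1}\rvert+\lvert\gamma_{1}\rvert,\ \ldots,\ \lvert\beta_{k}\rvert+\lvert\gamma_{k}\rvert\right)  }^{\left(  q\right)  }$, where $\beta_{1}\beta_{2}\cdots\beta_{k}=\delta$ and $\gamma_{1}\gamma_{2}\cdots\gamma_{k}=\varepsilon$. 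Since $\delta \in Y^{\ast}$, each factor $\beta_{s}$ is a (possibly empty) block of consecutive entries of $\delta$, so all entries of $\beta_{s}$ lie in $Y$; hence $\lvert\beta_{s}\rvert$ is either $0$ (when $\beta_{s}=\varnothing$) or a sum of at least one element of $Y$, which lies in $Y$ by closure under addition. The same holds for $\lvert\gamma_{s}\rvert$. The running condition $\ell\left(  \beta_{s}\right)  +\ell\left(  \gamma_{s}\right)  >0$ guarantees that at least one of $\beta_{s},\gamma_{s}$ is nonempty, so $\lvert\beta_{s}\rvert+\lvert\gamma_{s}\rvert$ is either a single element of $Y$ or a sum of two elements of $Y$, hence an element of $Y$ in either case. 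Therefore every indexing composition $\left(  \lvert\beta_{1}\rvert+\lvert\gamma_{1}\rvert,\ \ldots,\ \lvert\beta_{k}\rvert+\lvert\gamma_{k}\rvert\right)$ belongs to $Y^{\ast}=T$, whence $\eta_{\delta}^{\left(  q\right)  }\eta_{\varepsilon}^{\left(  q\right)  } \in \operatorname*{QSym}\nolimits_{T}^{\left(  q\right)  }$.

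Finally I would check antipode-stability using Theorem \ref{thm.eta.S2}: for $\alpha \in T$, that theorem expresses $S\left(  \eta_{\alpha}^{\left(  q\right)  }\right)$ as a $\mathbf{k}$-linear combination of the $\eta_{\beta}^{\left(  q\right)  }$ with $D\left(  \beta\right)  \subseteq D\left(  \operatorname*{rev}\alpha\right)$, i.e.\ with $\beta$ ranging over the coarsenings of $\operatorname*{rev}\alpha$. Since $\alpha \in Y^{\ast}$, its reversal $\operatorname*{rev}\alpha$ again has all entries in $Y$, and every entry of a coarsening $\beta$ is a sum of one or more consecutive entries of $\operatorname*{rev}\alpha$, hence lies in $Y$ by closure under addition; thus $\beta \in Y^{\ast}=T$ and $S\left(  \eta_{\alpha}^{\left(  q\right)  }\right)  \in \operatorname*{QSym}\nolimits_{T}^{\left(  q\right)  }$. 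Combining the three closure statements yields the claim. I expect the only genuinely delicate point to be conceptual rather than computational: one must take \textquotedblleft Hopf subalgebra\textquotedblright\ in the same slightly weak sense as the \textquotedblleft subcoalgebra\textquotedblright\ of Proposition \ref{prop.app.subalg.subcoalg} when $r$ is not assumed invertible, since then the $\eta_{\alpha}^{\left(  q\right)  }$ need not form a basis and $\operatorname*{QSym}\nolimits_{T}^{\left(  q\right)  }$ is merely a graded $\mathbf{k}$-submodule. When $r$ is invertible, Theorem \ref{thm.eta.basis} makes $\operatorname*{QSym}\nolimits_{T}^{\left(  q\right)  }$ a free graded direct summand of $\operatorname*{QSym}$, and the three closure properties then literally produce a Hopf subalgebra with no further care needed.
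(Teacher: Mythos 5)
Your proof is correct, and its first two thirds (the unit, the subcoalgebra property via Proposition \ref{prop.app.subalg.subcoalg}\textbf{(a)}, and the closure under multiplication via Theorem \ref{thm.product.1} together with the observation that $\left\vert \beta_{s}\right\vert +\left\vert \gamma_{s}\right\vert \in Y$) coincide with the paper's argument almost word for word. Where you genuinely diverge is the final step: you establish antipode-stability \emph{directly}, by invoking Theorem \ref{thm.eta.S2} and checking that every coarsening $\beta$ of $\operatorname*{rev}\alpha$ has all entries in $Y$ (each entry being a sum of one or more consecutive entries of $\operatorname*{rev}\alpha$, hence in $Y$ by closure under addition). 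The paper instead never touches the antipode formula: having shown that $\operatorname*{QSym}\nolimits_{T}^{\left(  q\right)  }$ is a connected graded subbialgebra, it invokes Takeuchi's theorem (\cite[Proposition 1.4.16]{GriRei}) to conclude that this subbialgebra is itself a Hopf algebra, and then the automatic-Hopf-morphism fact (\cite[Corollary 1.4.27]{GriRei}) to see that the inclusion is a Hopf algebra morphism, so that the subbialgebra is a Hopf subalgebra. Your route buys explicitness --- one sees concretely that $S\left(  \eta_{\alpha}^{\left(  q\right)  }\right)$ stays in the span, with an explicit expansion --- and it is available here because Theorem \ref{thm.eta.S2} holds with no hypothesis on $q$. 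The paper's route buys independence from any antipode formula (it would survive verbatim in settings where no such formula is known) and sidesteps the need to reason about $\operatorname*{rev}\alpha$ and coarsenings a second time. Your closing caveat about the weak sense of \textquotedblleft subcoalgebra\textquotedblright/\textquotedblleft Hopf subalgebra\textquotedblright\ when $r$ is not invertible is apt and mirrors the paper's own footnoted sloppiness; neither argument resolves it, and both are clean when $r$ is invertible or $\mathbf{k}$ is a field.
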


\begin{proof}
[Proof sketch.]Clearly, $1=\eta_{\varnothing}^{\left(  q\right)  }%
\in\operatorname*{QSym}\nolimits_{T}^{\left(  q\right)  }$, and Proposition
\ref{prop.app.subalg.subcoalg} \textbf{(a)} shows that $\operatorname*{QSym}%
\nolimits_{T}^{\left(  q\right)  }$ is a subcoalgebra of $\operatorname*{QSym}%
$. Next, we will show that $\operatorname*{QSym}\nolimits_{T}^{\left(
q\right)  }$ is closed under multiplication. In view of Theorem
\ref{thm.product.1}, this will follow once we can show the following claim:

\begin{statement}
\textit{Claim 1:} Let $k\in\mathbb{N}$. Let $\delta\in Y^{\ast}$ and
$\varepsilon\in Y^{\ast}$ be two compositions all of whose entries are $\in
Y$. Let $\beta_{1},\beta_{2},\ldots,\beta_{k}\in\operatorname*{Comp}$ and
$\gamma_{1},\gamma_{2},\ldots,\gamma_{k}\in\operatorname*{Comp}$ be $2k$
compositions satisfying%
\begin{align*}
\beta_{1}\beta_{2}\cdots\beta_{k}  &  =\delta\ \ \ \ \ \ \ \ \ \ \text{and}%
\ \ \ \ \ \ \ \ \ \ \gamma_{1}\gamma_{2}\cdots\gamma_{k}=\varepsilon\\
\text{and }\ \ \ \ \ \ \ \ \ \ \ell\left(  \beta_{s}\right)  +\ell\left(
\gamma_{s}\right)   &  >0\text{ for all }s.
\end{align*}
Then,
\[
\left(  \left\vert \beta_{1}\right\vert +\left\vert \gamma_{1}\right\vert
,\ \left\vert \beta_{2}\right\vert +\left\vert \gamma_{2}\right\vert
,\ \ldots,\ \left\vert \beta_{k}\right\vert +\left\vert \gamma_{k}\right\vert
\right)  \in Y^{\ast}.
\]

\end{statement}

\begin{proof}
[Proof of Claim 1.]We need to show that $\left\vert \beta_{s}\right\vert
+\left\vert \gamma_{s}\right\vert \in Y$ for each $s\in\left[  k\right]  $. To
do so, we fix $s\in\left[  k\right]  $. Then, $\ell\left(  \beta_{s}\right)
+\ell\left(  \gamma_{s}\right)  >0$ (by assumption). In other words, at least
one of the compositions $\beta_{s}$ and $\gamma_{s}$ is nonempty.

However, all entries of the composition $\beta_{s}$ are entries of the
composition $\beta_{1}\beta_{2}\cdots\beta_{k}=\delta$, and thus belong to $Y$
(since $\delta\in Y^{\ast}$). Thus, the sum of all entries of $\beta_{s}$
either equals $0$ or belongs to $Y$ (since $Y$ is closed under addition). In
other words, the size $\left\vert \beta_{s}\right\vert $ either equals $0$ or
belongs to $Y$. Similarly, $\left\vert \gamma_{s}\right\vert $ either equals
$0$ or belongs to $Y$. Hence, the sum $\left\vert \beta_{s}\right\vert
+\left\vert \gamma_{s}\right\vert $ either equals $0$ or belongs to $Y$ as
well (since $Y$ is closed under addition). Since $\left\vert \beta
_{s}\right\vert +\left\vert \gamma_{s}\right\vert $ cannot equal $0$ (because
at least one of the compositions $\beta_{s}$ and $\gamma_{s}$ is nonempty), we
thus conclude that $\left\vert \beta_{s}\right\vert +\left\vert \gamma
_{s}\right\vert $ belongs to $Y$. In other words, $\left\vert \beta
_{s}\right\vert +\left\vert \gamma_{s}\right\vert \in Y$. As we said, this
completes the proof of Claim 1.
\end{proof}

Now, Claim 1 (together with $1\in\operatorname*{QSym}\nolimits_{T}^{\left(
q\right)  }$) shows that $\operatorname*{QSym}\nolimits_{T}^{\left(  q\right)
}$ is a $\mathbf{k}$-subalgebra of $\operatorname*{QSym}$. As we saw above,
$\operatorname*{QSym}\nolimits_{T}^{\left(  q\right)  }$ is a $\mathbf{k}%
$-subcoalgebra of $\operatorname*{QSym}$ as well, and thus is a $\mathbf{k}%
$-subbialgebra of $\operatorname*{QSym}$. This bialgebra $\operatorname*{QSym}%
\nolimits_{T}^{\left(  q\right)  }$ is connected graded, and therefore a Hopf
algebra (by Takeuchi's famous result \cite[Proposition 1.4.16]{GriRei}). The
inclusion map $\operatorname*{QSym}\nolimits_{T}^{\left(  q\right)
}\rightarrow\operatorname*{QSym}$ is a bialgebra morphism between two Hopf
algebras, and thus automatically a Hopf algebra morphism (by another
well-known result: \cite[Corollary 1.4.27]{GriRei}). Hence,
$\operatorname*{QSym}\nolimits_{T}^{\left(  q\right)  }$ is a Hopf subalgebra
of $\operatorname*{QSym}$. This proves Proposition
\ref{prop.app.subalg.subhopf.Y+Y}.
\end{proof}

\begin{example}
The subset $\left\{  2,4,6,8,\ldots\right\}  $ of $\left\{  1,2,3,\ldots
\right\}  $ is closed under addition. Thus, Proposition
\ref{prop.app.subalg.subhopf.Y+Y} shows that $\operatorname*{QSym}%
\nolimits_{T}^{\left(  q\right)  }$ is a Hopf subalgebra of
$\operatorname*{QSym}$ for $Y:=\left\{  2,4,6,8,\ldots\right\}  $ and
$T:=Y^{\ast}$. This Hopf subalgebra can be viewed as a copy of
$\operatorname*{QSym}$ in the indeterminates $x_{1}^{2},x_{2}^{2},x_{3}%
^{2},\ldots$, and thus is isomorphic to $\operatorname*{QSym}$.
\end{example}

\begin{example}
The subset $\left\{  2,3,4,5,\ldots\right\}  $ of $\left\{  1,2,3,\ldots
\right\}  $ is closed under addition. Thus, Proposition
\ref{prop.app.subalg.subhopf.Y+Y} shows that $\operatorname*{QSym}%
\nolimits_{T}^{\left(  q\right)  }$ is a Hopf subalgebra of
$\operatorname*{QSym}$ for $Y:=\left\{  2,3,4,5,\ldots\right\}  $ and
$T:=Y^{\ast}$.
\end{example}

Proposition \ref{prop.app.subalg.subhopf.Y+Y} is not very surprising. In fact,
(\ref{eq.def.etaalpha.def}) shows that (under the assumptions of Proposition
\ref{prop.app.subalg.subhopf.Y+Y}) the space $\operatorname*{QSym}%
\nolimits_{T}^{\left(  q\right)  }$ is just the $\mathbf{k}$-linear span of
the functions $r^{\ell\left(  \alpha\right)  }M_{\alpha}$ with $\alpha\in
Y^{\ast}$; but the latter span is easily seen to be a Hopf subalgebra (using
\cite[Proposition 5.1.3]{GriRei} and (\ref{eq.Delta-M})).

If $q\neq1$ and if $r$ is invertible, then Proposition
\ref{prop.app.subalg.subhopf.Y+Y} has a converse (i.e., $\operatorname*{QSym}%
\nolimits_{T}^{\left(  q\right)  }$ is a Hopf subalgebra of
$\operatorname*{QSym}$ only when $Y$ is closed under addition), since it is
easy to see that
\[
\eta_{\left(  a\right)  }^{\left(  q\right)  }\eta_{\left(  b\right)
}^{\left(  q\right)  }=\left(  q-1\right)  \eta_{\left(  a+b\right)
}^{\left(  q\right)  }+\eta_{\left(  a,b\right)  }^{\left(  q\right)  }%
+\eta_{\left(  b,a\right)  }^{\left(  q\right)  }\ \ \ \ \ \ \ \ \ \ \text{for
any }a,b\geq1.
\]
However, more interesting behavior emerges when $q=1$:

\begin{proposition}
\label{prop.app.subalg.subhopf.Y+Y+Y}Let $Y$ be a subset of $\left\{
1,2,3,\ldots\right\}  $ that is closed under ternary addition (i.e., satisfies
$y+z+w\in Y$ for every $y,z,w\in Y$). Let $T:=Y^{\ast}$. Then,
$\operatorname*{QSym}\nolimits_{T}^{\left(  1\right)  }$ is a Hopf subalgebra
of $\operatorname*{QSym}$.
\end{proposition}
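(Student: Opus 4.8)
The plan is to follow the template of the proof of Proposition \ref{prop.app.subalg.subhopf.Y+Y}, showing that $\operatorname*{QSym}\nolimits_{T}^{\left(  1\right)  }$ is a connected graded sub-bialgebra of $\operatorname*{QSym}$ and then invoking Takeuchi's theorem. Two of the three ingredients are immediate: since $\varnothing\in Y^{\ast}=T$, we have $1=\eta_{\varnothing}^{\left(  1\right)  }\in\operatorname*{QSym}\nolimits_{T}^{\left(  1\right)  }$, and since $T=Y^{\ast}$, Proposition \ref{prop.app.subalg.subcoalg} \textbf{(a)} shows that $\operatorname*{QSym}\nolimits_{T}^{\left(  1\right)  }$ is a subcoalgebra of $\operatorname*{QSym}$. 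The whole difficulty is therefore concentrated in proving that $\operatorname*{QSym}\nolimits_{T}^{\left(  1\right)  }$ is closed under multiplication, i.e., that $\eta_{\delta}^{\left(  1\right)  }\eta_{\varepsilon}^{\left(  1\right)  }\in\operatorname*{QSym}\nolimits_{T}^{\left(  1\right)  }$ for any two $\delta,\varepsilon\in Y^{\ast}$.

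To prove this, I would specialize Theorem \ref{thm.product.1} to $q=1$. The crucial effect is that the factor $\left(  q-1\right)  ^{\left(  \text{\# of all }s\in\left[  k\right]  \text{ such that }\ell\left(  \beta_{s}\right)  =\ell\left(  \gamma_{s}\right)  \right)  }$ becomes $0^{\left(  \cdots\right)  }$, which vanishes unless its exponent is $0$. Hence, at $q=1$, only those terms survive in which $\ell\left(  \beta_{s}\right)  \neq\ell\left(  \gamma_{s}\right)  $ for every $s\in\left[  k\right]  $. Combining this with the two constraints $\left\vert \ell\left(  \beta_{s}\right)  -\ell\left(  \gamma_{s}\right)  \right\vert \leq1$ and $\ell\left(  \beta_{s}\right)  +\ell\left(  \gamma_{s}\right)  >0$ already present under the summation sign in Theorem \ref{thm.product.1}, I would deduce that each surviving term satisfies $\left\vert \ell\left(  \beta_{s}\right)  -\ell\left(  \gamma_{s}\right)  \right\vert =1$, so that the sum $\ell\left(  \beta_{s}\right)  +\ell\left(  \gamma_{s}\right)  $ is an \emph{odd} positive integer for every $s$. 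All that would then remain is to check that the resulting composition $\left(  \left\vert \beta_{1}\right\vert +\left\vert \gamma_{1}\right\vert ,\ \ldots,\ \left\vert \beta_{k}\right\vert +\left\vert \gamma_{k}\right\vert \right)  $ lies in $Y^{\ast}=T$; granting this, every surviving $\eta$-function on the right-hand side of Theorem \ref{thm.product.1} belongs to $\operatorname*{QSym}\nolimits_{T}^{\left(  1\right)  }$, and hence so does the product.

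The heart of the matter is thus the following combinatorial fact, which I would isolate as a lemma: \emph{if $Y$ is closed under ternary addition, then any sum of an odd number (at least one) of elements of $Y$ again belongs to $Y$.} This is proved by induction on the number $2k+1$ of summands: the case $k=0$ is trivial, and for $k\geq1$ one replaces the first three summands $y_{1}+y_{2}+y_{3}$ by the single element $y_{1}+y_{2}+y_{3}\in Y$ (ternary closure), reducing to a sum of $2\left(  k-1\right)  +1$ elements of $Y$. Granting this lemma, I would finish as follows: for each $s$, all entries of $\beta_{s}$ and $\gamma_{s}$ are entries of $\delta$ or $\varepsilon$ and hence lie in $Y$, so $\left\vert \beta_{s}\right\vert +\left\vert \gamma_{s}\right\vert $ is a sum of $\ell\left(  \beta_{s}\right)  +\ell\left(  \gamma_{s}\right)  $ elements of $Y$; since this count is odd and positive, the lemma gives $\left\vert \beta_{s}\right\vert +\left\vert \gamma_{s}\right\vert \in Y$, whence the weight composition lies in $Y^{\ast}$.

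Putting everything together, $\operatorname*{QSym}\nolimits_{T}^{\left(  1\right)  }$ is a connected graded sub-bialgebra of $\operatorname*{QSym}$, and therefore a Hopf subalgebra by Takeuchi's theorem (\cite[Proposition 1.4.16]{GriRei}) together with \cite[Corollary 1.4.27]{GriRei}, exactly as in the proof of Proposition \ref{prop.app.subalg.subhopf.Y+Y}. The only genuinely new obstacle compared with that earlier proposition is the parity bookkeeping in the second paragraph: one must observe that killing the positive-poise terms at $q=1$ forces $\ell\left(  \beta_{s}\right)  +\ell\left(  \gamma_{s}\right)  $ to be odd, which is precisely what makes ternary (rather than binary) closure the correct hypothesis.
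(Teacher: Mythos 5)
Your proposal is correct and follows essentially the same route as the paper's own proof: specialize Theorem \ref{thm.product.1} to $q=1$, observe that the positive-poise terms vanish so that $\left\vert \ell\left(  \beta_{s}\right)  -\ell\left(  \gamma_{s}\right)  \right\vert =1$ for all surviving terms, and conclude via the fact that odd-length sums of elements of $Y$ stay in $Y$ (which the paper dismisses as an ``easy induction exercise'' and you rightly prove by replacing the first three summands using ternary closure). The concluding appeal to Takeuchi's theorem and \cite[Corollary 1.4.27]{GriRei} matches the paper's argument as well.
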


\begin{proof}
[Proof sketch.]This is similar to Proposition
\ref{prop.app.subalg.subhopf.Y+Y}, but now we set $q=1$ and observe that all
addends on the right hand side of Theorem \ref{thm.product.1} that satisfy%
\[
\ell\left(  \beta_{s}\right)  =\ell\left(  \gamma_{s}\right)
\ \ \ \ \ \ \ \ \ \ \text{for at least one }s\in\left[  k\right]
\]
are $0$ (because they include the factor $\left(  1-1\right)  ^{\text{a
positive integer}}$, which vanishes), and all the remaining addends have the
property that $\left\vert \ell\left(  \beta_{s}\right)  -\ell\left(
\gamma_{s}\right)  \right\vert =1$ for all $s$ (since $\left\vert \ell\left(
\beta_{s}\right)  -\ell\left(  \gamma_{s}\right)  \right\vert \leq1$ and
$\ell\left(  \beta_{s}\right)  \neq\ell\left(  \gamma_{s}\right)  $). Hence,
the following claim now replaces Claim 1:

\begin{statement}
\textit{Claim 1':} Let $k\in\mathbb{N}$. Let $\delta\in Y^{\ast}$ and
$\varepsilon\in Y^{\ast}$ be two compositions all of whose entries are $\in
Y$. Let $\beta_{1},\beta_{2},\ldots,\beta_{k}\in\operatorname*{Comp}$ and
$\gamma_{1},\gamma_{2},\ldots,\gamma_{k}\in\operatorname*{Comp}$ be $2k$
compositions satisfying%
\begin{align*}
\beta_{1}\beta_{2}\cdots\beta_{k}  &  =\delta\ \ \ \ \ \ \ \ \ \ \text{and}%
\ \ \ \ \ \ \ \ \ \ \gamma_{1}\gamma_{2}\cdots\gamma_{k}=\varepsilon\\
\text{and }\ \ \ \ \ \ \ \ \ \ \left\vert \ell\left(  \beta_{s}\right)
-\ell\left(  \gamma_{s}\right)  \right\vert  &  =1\text{ for all }s.
\end{align*}
Then,
\[
\left(  \left\vert \beta_{1}\right\vert +\left\vert \gamma_{1}\right\vert
,\ \left\vert \beta_{2}\right\vert +\left\vert \gamma_{2}\right\vert
,\ \ldots,\ \left\vert \beta_{k}\right\vert +\left\vert \gamma_{k}\right\vert
\right)  \in Y^{\ast}.
\]

\end{statement}

\begin{proof}
[Proof of Claim 1'.]We need to show that $\left\vert \beta_{s}\right\vert
+\left\vert \gamma_{s}\right\vert \in Y$ for each $s\in\left[  k\right]  $. To
do so, we fix $s\in\left[  k\right]  $. Then, $\left\vert \ell\left(
\beta_{s}\right)  -\ell\left(  \gamma_{s}\right)  \right\vert =1$ (by
assumption), and thus $\ell\left(  \beta_{s}\right)  +\ell\left(  \gamma
_{s}\right)  $ is odd. Hence, $\left\vert \beta_{s}\right\vert +\left\vert
\gamma_{s}\right\vert $ is a sum of an odd number of entries of $\delta$ and
$\varepsilon$, and therefore a sum of an odd number of elements of $Y$ (since
$\delta$ and $\varepsilon$ belong to $Y^{\ast}$). But $Y$ is closed under
ternary addition, and therefore any sum of an odd number of elements of $Y$
must belong to $Y$ (easy induction exercise). Hence, $\left\vert \beta
_{s}\right\vert +\left\vert \gamma_{s}\right\vert \in Y$, and thus Claim 1' is proved.
\end{proof}

The rest of the proof proceeds as for Proposition
\ref{prop.app.subalg.subhopf.Y+Y}.
\end{proof}

\begin{example}
The subset $\left\{  1,3,5,7,\ldots\right\}  $ of $\left\{  1,2,3,\ldots
\right\}  $ is closed under ternary addition. Thus, Proposition
\ref{prop.app.subalg.subhopf.Y+Y+Y} shows that $\operatorname*{QSym}%
\nolimits_{T}^{\left(  1\right)  }$ is a Hopf subalgebra of
$\operatorname*{QSym}$ for $Y:=\left\{  1,3,5,7,\ldots\right\}  $ and
$T:=Y^{\ast}$. This Hopf subalgebra is precisely the peak algebra $\Pi$ of
\cite[\S 3]{Stembr97}, \cite[\S 6, particularly Proposition 6.5]{AgBeSo14},
\cite{BMSW99}, \cite[\S 5]{BMSW00} and \cite{Hsiao07} (since \cite[(2.1) and
(2.2)]{Hsiao07} shows that the $\theta_{\alpha}$ for $\alpha$ odd have the
same span as the $\eta_{\alpha}$ for $\alpha$ odd, but \cite[Proposition
2.1]{Hsiao07} shows that the latter $\eta_{\alpha}$ are precisely our
$\eta_{\alpha}^{\left(  1\right)  }$ up to sign).
\end{example}

\begin{example}
The subset $\left\{  \text{positive integers }\neq2\right\}  =\left\{
1,3,4,5,\ldots\right\}  $ of $\left\{  1,2,3,\ldots\right\}  $ is closed under
ternary addition. Thus, Proposition \ref{prop.app.subalg.subhopf.Y+Y+Y} shows
that $\operatorname*{QSym}\nolimits_{T}^{\left(  1\right)  }$ is a Hopf
subalgebra of $\operatorname*{QSym}$ for $Y:=\left\{  \text{positive integers
}\neq2\right\}  $ and $T:=Y^{\ast}$. This Hopf subalgebra is the Hopf
subalgebra $\Xi$ constructed in \cite[Theorem 5.7]{BMSW00}. (Indeed, both Hopf
subalgebras have the same orthogonal complement: the principal ideal of
$\operatorname*{NSym}$ generated by $\eta_{2}^{\ast}=\dfrac{1}{4}X_{2}%
=\dfrac{1}{4}\left(  2H_{2}-H_{1}H_{1}\right)  $.)
\end{example}

\begin{example}
More generally, if we pick a positive integer $k$ and set
\[
Y:=\left\{  \text{odd positive integers}\right\}  \cup\left\{
k,k+1,k+2,\ldots\right\}
\]
and $T:=Y^{\ast}$, then Proposition \ref{prop.app.subalg.subhopf.Y+Y+Y} shows
that $\operatorname*{QSym}\nolimits_{T}^{\left(  1\right)  }$ is a Hopf
subalgebra of $\operatorname*{QSym}$ (since $Y$ is closed under ternary addition).
\end{example}

The reader can find more examples without trouble. When $\mathbf{k}$ is
nontrivial and $2$ is invertible in $\mathbf{k}$, Proposition
\ref{prop.app.subalg.subhopf.Y+Y+Y} is easily seen to have a converse (using
Example \ref{exa.product.1.ab*c}).

\subsection{A new shuffle algebra}

Next, we shall use the enriched $q$-monomial quasisymmetric functions to
realize a certain deformed version of the shuffle product, which has appeared
in recent work of \cite{BoNoTh22} by Bouillot, Novelli and Thibon
(generalizing the \textquotedblleft block shuffle product\textquotedblright%
\ of Hirose and Sato \cite[$\Diamond$]{HirSat22}).

Shuffle products are a broad and deep subject with a long history and many
applications (e.g., to multiple zeta values, algebraic topology and stochastic
differential equations). An overview of known variants (such as the stuffles,
the \textquotedblleft muffles\textquotedblright, the infiltrations and many
more) can be found in \cite[Table 1]{DEMT17}. In the following, we shall
discuss a variant that does not directly fit into the framework of
\cite{DEMT17}, but is sufficiently similar to enjoy some of the same behavior.
To our knowledge, this variant first appeared in \cite{BoNoTh22}. We will use
the letters $a$ and $b$ for what was called $\alpha$ and $\beta$ in
\cite{BoNoTh22}, as we prefer to use Greek letters for compositions.

Let $\mathcal{F}$ be the free $\mathbf{k}$-algebra with generators
$x_{1},x_{2},x_{3},\ldots$. It has a basis consisting of all words over the
alphabet $\left\{  x_{1},x_{2},x_{3},\ldots\right\}  $; these words are in
bijection with the compositions. In fact, let us set
\begin{equation}
x_{\gamma}:=x_{\gamma_{1}}x_{\gamma_{2}}\cdots x_{\gamma_{k}}
\label{eq.abshuf.xgamma=}%
\end{equation}
for every composition $\gamma=\left(  \gamma_{1},\gamma_{2},\ldots,\gamma
_{k}\right)  $. Then, the bijection sends each composition $\gamma$ to the
word $x_{\gamma}$.

For any $k\in\mathbb{N}$, we let $\zeta_{k}:\mathcal{F}\rightarrow\mathcal{F}$
be the $\mathbf{k}$-linear operator defined by%
\begin{align*}
\zeta_{k}\left(  1\right)   &  =0;\\
\zeta_{k}\left(  x_{i}w\right)   &  =x_{i+k}w\ \ \ \ \ \ \ \ \ \ \text{for
each }i\geq1\text{ and any word }w.
\end{align*}
(Thus, explicitly, the map $\zeta_{k}$ sends $1$ to $0$, and transforms any
nonempty word by adding $k$ to the subscript of its first letter. For example,
$\zeta_{k}\left(  x_{u}x_{v}x_{w}\right)  =x_{u+k}x_{v}x_{w}$ for any
$u,v,w\geq1$.)

Fix two elements $a$ and $b$ of the base ring $\mathbf{k}$.

Let $\#:\mathcal{F}\times\mathcal{F}\rightarrow\mathcal{F}$ be the
$\mathbf{k}$-bilinear map on $\mathcal{F}$ defined recursively by the
requirements%
\begin{align*}
1\#w  &  =w\ \ \ \ \ \ \ \ \ \ \text{for any word }w;\\
w\#1  &  =w\ \ \ \ \ \ \ \ \ \ \text{for any word }w;\\
\left(  x_{i}u\right)  \#\left(  x_{j}v\right)   &  =x_{i}\left(  u\#\left(
x_{j}v\right)  \right)  +x_{j}\left(  \left(  x_{i}u\right)  \#v\right)
+ax_{i+j}\left(  u\#v\right)  +b\zeta_{i+j}\left(  u\#v\right) \\
&  \ \ \ \ \ \ \ \ \ \ \ \ \ \ \ \ \ \ \ \ \text{for any }i,j\geq1\text{ and
any words }u\text{ and }v\text{.}%
\end{align*}
We call this bilinear map $\#$ the \emph{stufufuffle}\footnote{This is a riff
on the notion of \textquotedblleft stuffle\textquotedblright\ (which is
recovered when $a=1$ and $b=0$) and the fact that multiple letters of both
words $u$ and $v$ can get combined into one in $u\#v$.}. Explicitly, we can
compute this operation as follows:

\begin{proposition}
\label{prop.stufufuf.explicit}Let $\delta=\left(  \delta_{1},\delta_{2}%
,\ldots,\delta_{\ell}\right)  $ and $\varepsilon=\left(  \varepsilon
_{1},\varepsilon_{2},\ldots,\varepsilon_{m}\right)  $ be two compositions.
Then, using the notation of (\ref{eq.abshuf.xgamma=}), we have%
\[
x_{\delta}\#x_{\varepsilon}=\sum_{\substack{f\text{ is a stufufuffler}%
\\\text{for }\delta\text{ and }\varepsilon}}b^{\operatorname*{loss}\left(
f\right)  }a^{\operatorname*{poise}\left(  f\right)  }x_{\operatorname*{wt}%
\left(  f\right)  }.
\]

\end{proposition}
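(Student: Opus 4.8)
The plan is to prove Proposition~\ref{prop.stufufuf.explicit} by strong induction on $\ell\left(\delta\right)+\ell\left(\varepsilon\right)$, exploiting the fact that the recursive relations defining $\#$ determine $x_\delta\#x_\varepsilon$ uniquely. Writing $R\left(\delta,\varepsilon\right)$ for the right-hand side $\sum_f b^{\operatorname*{loss}\left(f\right)}a^{\operatorname*{poise}\left(f\right)}x_{\operatorname*{wt}\left(f\right)}$, the sum ranging over all stufufufflers $f$ for $\delta$ and $\varepsilon$, it suffices to show that the family $R\left(\delta,\varepsilon\right)$ obeys the very relations that define $\#$: namely $R\left(\varnothing,\varepsilon\right)=x_\varepsilon$ and $R\left(\delta,\varnothing\right)=x_\delta$, and (when both compositions are nonempty, say $\delta=\left(i\right)\delta'$ and $\varepsilon=\left(j\right)\varepsilon'$)
\[
R\left(\delta,\varepsilon\right)=x_i R\left(\delta',\varepsilon\right)+x_j R\left(\delta,\varepsilon'\right)+a\,x_{i+j}R\left(\delta',\varepsilon'\right)+b\,\zeta_{i+j}\left(R\left(\delta',\varepsilon'\right)\right).
\]
Once this is established, the induction closes: the three sub-products $x_{\delta'}\#x_\varepsilon$, $x_\delta\#x_{\varepsilon'}$, $x_{\delta'}\#x_{\varepsilon'}$ equal $R\left(\delta',\varepsilon\right)$, $R\left(\delta,\varepsilon'\right)$, $R\left(\delta',\varepsilon'\right)$ by the inductive hypothesis, and comparing the recursion above with the defining relation of $\#$ yields $x_\delta\#x_\varepsilon=R\left(\delta,\varepsilon\right)$.

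The base cases (one composition empty) I would verify directly: when $\delta=\varnothing$, the nearly-equal condition $0\approx\left|f_Q^{-1}\left(s\right)\right|$ together with surjectivity forces each block of a stufufuffler to contain exactly one element of $Q$ and none of $P$, so there is a unique stufufuffler, of weight $\varepsilon$, loss $0$ and poise $0$; hence $R\left(\varnothing,\varepsilon\right)=x_\varepsilon$, and symmetrically $R\left(\delta,\varnothing\right)=x_\delta$.

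The inductive step — proving the displayed recursion for $R$ — is the heart of the argument. I would partition the stufufufflers $f$ for $\delta$ and $\varepsilon$ according to the shape $\left(c,d\right):=\left(\left|f_P^{-1}\left(1\right)\right|,\left|f_Q^{-1}\left(1\right)\right|\right)$ of their first block $f^{-1}\left(1\right)$. Since $f$ is weakly order-preserving, $f_P^{-1}\left(1\right)=\left\{1,\ldots,c\right\}$ and $f_Q^{-1}\left(1\right)=\left\{1,\ldots,d\right\}$ are initial segments, and the constraints $c\approx d$ and $c+d\geq1$ leave exactly four possibilities: $\left(c,d\right)=\left(1,0\right)$, $\left(0,1\right)$, $\left(1,1\right)$, and ``$c,d\geq1$ with $c+d\geq3$''. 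To each of the first three classes I would attach the block-deletion bijection that removes $p_1$ (resp.\ $q_1$, resp.\ both $p_1$ and $q_1$) together with the now-empty first block and shifts all block labels down, producing a stufufuffler for $\left(\delta',\varepsilon\right)$, $\left(\delta,\varepsilon'\right)$, $\left(\delta',\varepsilon'\right)$ respectively; to the fourth class I would attach the bijection that merely deletes $p_1$ and $q_1$ from the first block without deleting the block, producing a stufufuffler for $\left(\delta',\varepsilon'\right)$ with nonempty first block. A bookkeeping check then records how the statistics transform: deleting an isolated $p_1$ or $q_1$ leaves loss and poise unchanged and prepends $\delta_1=i$ or $\varepsilon_1=j$ to the weight (giving $x_i R\left(\delta',\varepsilon\right)$ and $x_j R\left(\delta,\varepsilon'\right)$); deleting the block $\left\{p_1,q_1\right\}$ leaves loss unchanged, raises poise by $1$, and prepends $i+j$ (giving $a\,x_{i+j}R\left(\delta',\varepsilon'\right)$); and shrinking the first block raises loss by exactly $1$, leaves poise unchanged, and adds $i+j$ to the first letter of the weight word, which is precisely the effect of $b\,\zeta_{i+j}$ (giving $b\,\zeta_{i+j}\left(R\left(\delta',\varepsilon'\right)\right)$).

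The main obstacle, and where care is needed, is exactly this last statistic bookkeeping — in particular keeping the two ``merging'' classes $\left(1,1\right)$ and ``$c+d\geq3$'' apart, since they differ only in whether the first block is deleted or shrunk, yet contribute the genuinely different factors $a$ (a poise step) and $b$ (a loss step). The loss computation hinges on the fact that $\operatorname*{loss}\left(f\right)=\sum_s\max\left(\left|f_P^{-1}\left(s\right)\right|,\left|f_Q^{-1}\left(s\right)\right|\right)-k$ records both the per-block maxima and the number of blocks $k$, so that deleting a whole block changes $\max$ and $k$ in tandem (net zero) while shrinking a block changes only the relevant $\max$ (net $+1$). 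A secondary subtlety is the behaviour of $\zeta_{i+j}$ on the empty word: the shrinking bijection only ever produces stufufufflers with nonempty first block, and correspondingly $\zeta_{i+j}$ annihilates the lone weight-$\varnothing$ term of $R\left(\delta',\varepsilon'\right)$, so the two sides still agree. I note finally that this proposition is the free, universal counterpart of Theorem~\ref{thm.etaaetab}: specializing $a=q-1$ and $b=-q$ reproduces the same coefficients, so the $\mathbf{k}$-linear map $x_\gamma\mapsto\eta_\gamma^{\left(q\right)}$ intertwines $\#$ with multiplication in $\operatorname*{QSym}$ in that case — but since $a$ and $b$ are independent here, the direct inductive argument above is the appropriate route.
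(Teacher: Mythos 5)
Your proposal is correct and follows essentially the same route as the paper's proof: strong induction on the total length, with the inductive step partitioning the stufufufflers into the same four classes according to the contents of the first block (one element of $P$, one element of $Q$, one of each, or at least three elements in total), matching them bijectively to stufufufflers of the shortened compositions, and checking that loss, poise and weight transform so as to reproduce the four terms of the defining recursion of $\#$. Your treatment is in fact slightly more careful than the paper's at the two points it glosses over (the base case and the vanishing of $\zeta_{i+j}$ on the empty word), and your statistic bookkeeping is accurate throughout.
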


\begin{proof}
[Proof sketch.]Use strong induction on $\ell+m$.

\textit{Induction step:} If $\delta=\varnothing$ or $\varepsilon=\varnothing$,
then the claim is easy to check. Thus, assume WLOG that neither $\delta$ nor
$\varepsilon$ is $\varnothing$. Let $i=\delta_{1}$ and $j=\varepsilon_{1}$ and
$\overline{\delta}=\left(  \delta_{2},\delta_{3},\ldots,\delta_{\ell}\right)
$ and $\overline{\varepsilon}=\left(  \varepsilon_{2},\varepsilon_{3}%
,\ldots,\varepsilon_{m}\right)  $. Hence, $x_{\delta}=x_{i}x_{\overline
{\delta}}$ and $x_{\varepsilon}=x_{j}x_{\overline{\varepsilon}}$, so that%
\begin{align}
x_{\delta}\#x_{\varepsilon}  &  =\left(  x_{i}x_{\overline{\delta}}\right)
\#\left(  x_{j}x_{\overline{\varepsilon}}\right) \nonumber\\
&  =x_{i}\left(  x_{\overline{\delta}}\#\underbrace{\left(  x_{j}%
x_{\overline{\varepsilon}}\right)  }_{=x_{\varepsilon}}\right)  +x_{j}\left(
\underbrace{\left(  x_{i}x_{\overline{\delta}}\right)  }_{=x_{\delta}%
}\#x_{\overline{\varepsilon}}\right)  +ax_{i+j}\left(  x_{\overline{\delta}%
}\#x_{\overline{\varepsilon}}\right)  +b\zeta_{i+j}\left(  x_{\overline
{\delta}}\#x_{\overline{\varepsilon}}\right) \nonumber\\
&  \ \ \ \ \ \ \ \ \ \ \ \ \ \ \ \ \ \ \ \ \left(  \text{by the recursive
definition of }\#\right) \nonumber\\
&  =x_{i}\left(  x_{\overline{\delta}}\#x_{\varepsilon}\right)  +x_{j}\left(
x_{\delta}\#x_{\overline{\varepsilon}}\right)  +ax_{i+j}\left(  x_{\overline
{\delta}}\#x_{\overline{\varepsilon}}\right)  +b\zeta_{i+j}\left(
x_{\overline{\delta}}\#x_{\overline{\varepsilon}}\right)  .
\label{pf.prop.stufufuf.explicit.1}%
\end{align}
On the other hand, the stufufufflers $f$ for $\delta$ and $\varepsilon$ can be
classified into four types:

\begin{enumerate}
\item \textit{Type 1} consists of those stufufufflers $f$ that satisfy
$\left\vert f_{P}^{-1}\left(  1\right)  \right\vert =1$ and $\left\vert
f_{Q}^{-1}\left(  1\right)  \right\vert =0$ (so that the composition
$\operatorname*{wt}\left(  f\right)  $ begins with the entry $\delta_{1}=i$).

\item \textit{Type 2} consists of those stufufufflers $f$ that satisfy
$\left\vert f_{P}^{-1}\left(  1\right)  \right\vert =0$ and $\left\vert
f_{Q}^{-1}\left(  1\right)  \right\vert =1$ (so that the composition
$\operatorname*{wt}\left(  f\right)  $ begins with the entry $\varepsilon
_{1}=j$).

\item \textit{Type 3} consists of those stufufufflers $f$ that satisfy
$\left\vert f_{P}^{-1}\left(  1\right)  \right\vert =1$ and $\left\vert
f_{Q}^{-1}\left(  1\right)  \right\vert =1$ (so that the composition
$\operatorname*{wt}\left(  f\right)  $ begins with the entry $\delta
_{1}+\varepsilon_{1}=i+j$).

\item \textit{Type 4} consists of those stufufufflers $f$ that satisfy
$\left\vert f_{P}^{-1}\left(  1\right)  \right\vert +\left\vert f_{Q}%
^{-1}\left(  1\right)  \right\vert >2$ (so that both numbers $\left\vert
f_{P}^{-1}\left(  1\right)  \right\vert $ and $\left\vert f_{Q}^{-1}\left(
1\right)  \right\vert $ are positive\footnote{by
(\ref{eq.def.limstuffler.liminality}), applied to $s=1$}, and one of them is
at least $2$, and therefore the composition $\operatorname*{wt}\left(
f\right)  $ begins with the entry $\delta_{1}+\varepsilon_{1}+\left(
\text{some further numbers}\right)  $).
\end{enumerate}

A type-1 stufufuffler $f$ for $\delta$ and $\varepsilon$ becomes a
stufufuffler for $\overline{\delta}$ and $\varepsilon$ if we decrease all its
values by $1$ and remove $p_{1}$ from $P$. This is furthermore a bijection
from $\left\{  \text{type-1 stufufufflers for }\delta\text{ and }%
\varepsilon\right\}  $ to $\left\{  \text{stufufufflers for }\overline{\delta
}\text{ and }\varepsilon\right\}  $, and this bijection preserves both loss
and poise while removing the first entry from the weight. Hence, we obtain%
\begin{align*}
&  \sum_{\substack{f\text{ is a type-1 stufufuffler}\\\text{for }\delta\text{
and }\varepsilon}}b^{\operatorname*{loss}\left(  f\right)  }%
a^{\operatorname*{poise}\left(  f\right)  }x_{\operatorname*{wt}\left(
f\right)  }\\
&  =\sum_{\substack{f\text{ is a stufufuffler}\\\text{for }\overline{\delta
}\text{ and }\varepsilon}}b^{\operatorname*{loss}\left(  f\right)
}a^{\operatorname*{poise}\left(  f\right)  }x_{i}x_{\operatorname*{wt}\left(
f\right)  }\\
&  =x_{i}\cdot\underbrace{\sum_{\substack{f\text{ is a stufufuffler}%
\\\text{for }\overline{\delta}\text{ and }\varepsilon}}b^{\operatorname*{loss}%
\left(  f\right)  }a^{\operatorname*{poise}\left(  f\right)  }%
x_{\operatorname*{wt}\left(  f\right)  }}_{\substack{=x_{\overline{\delta}%
}\#x_{\varepsilon}\\\text{(by the induction hypothesis,}\\\text{since
}\overline{\delta}\text{ has length }\ell-1<\ell\text{)}}}\\
&  =x_{i}\left(  x_{\overline{\delta}}\#x_{\varepsilon}\right)  .
\end{align*}
Similar reasoning leads to%
\begin{align*}
\sum_{\substack{f\text{ is a type-2 stufufuffler}\\\text{for }\delta\text{ and
}\varepsilon}}b^{\operatorname*{loss}\left(  f\right)  }%
a^{\operatorname*{poise}\left(  f\right)  }x_{\operatorname*{wt}\left(
f\right)  }  &  =x_{j}\left(  x_{\delta}\#x_{\overline{\varepsilon}}\right)
;\\
\sum_{\substack{f\text{ is a type-3 stufufuffler}\\\text{for }\delta\text{ and
}\varepsilon}}b^{\operatorname*{loss}\left(  f\right)  }%
a^{\operatorname*{poise}\left(  f\right)  }x_{\operatorname*{wt}\left(
f\right)  }  &  =ax_{i+j}\left(  x_{\overline{\delta}}\#x_{\overline
{\varepsilon}}\right)  ;\\
\sum_{\substack{f\text{ is a type-4 stufufuffler}\\\text{for }\delta\text{ and
}\varepsilon}}b^{\operatorname*{loss}\left(  f\right)  }%
a^{\operatorname*{poise}\left(  f\right)  }x_{\operatorname*{wt}\left(
f\right)  }  &  =b\zeta_{i+j}\left(  x_{\overline{\delta}}\#x_{\overline
{\varepsilon}}\right)  .
\end{align*}
Adding these four equalities together (and recalling that each stufufuffler
for $\delta$ and $\varepsilon$ belongs to exactly one of the four types 1, 2,
3 and 4), we obtain%
\begin{align*}
&  \sum_{\substack{f\text{ is a stufufuffler}\\\text{for }\delta\text{ and
}\varepsilon}}b^{\operatorname*{loss}\left(  f\right)  }%
a^{\operatorname*{poise}\left(  f\right)  }x_{\operatorname*{wt}\left(
f\right)  }\\
&  =x_{i}\left(  x_{\overline{\delta}}\#x_{\varepsilon}\right)  +x_{j}\left(
x_{\delta}\#x_{\overline{\varepsilon}}\right)  +ax_{i+j}\left(  x_{\overline
{\delta}}\#x_{\overline{\varepsilon}}\right)  +b\zeta_{i+j}\left(
x_{\overline{\delta}}\#x_{\overline{\varepsilon}}\right)  .
\end{align*}
Comparing this with (\ref{pf.prop.stufufuf.explicit.1}), we obtain%
\[
x_{\delta}\#x_{\varepsilon}=\sum_{\substack{f\text{ is a stufufuffler}%
\\\text{for }\delta\text{ and }\varepsilon}}b^{\operatorname*{loss}\left(
f\right)  }a^{\operatorname*{poise}\left(  f\right)  }x_{\operatorname*{wt}%
\left(  f\right)  }.
\]
This completes the induction step, and thus Proposition
\ref{prop.stufufuf.explicit} is proved.
\end{proof}

\begin{theorem}
\label{thm.stufufuf.ass}The bilinear map $\#$ is commutative and associative,
and the element $1\in\mathcal{F}$ is a neutral element for it. Thus, the
$\mathbf{k}$-module $\mathcal{F}$, equipped with the operation $\#$ (as
multiplication), becomes a commutative $\mathbf{k}$-algebra with unity $1$.
\end{theorem}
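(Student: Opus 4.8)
\textbf{Proof plan for Theorem \ref{thm.stufufuf.ass}.}

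The plan is to transport the algebraic structure of $\operatorname*{QSym}$ (for a suitable choice of $q$) onto $\mathcal{F}$ via the stufufuffler description of Proposition \ref{prop.stufufuf.explicit}, thereby deducing commutativity, associativity, and the unit property from the corresponding known facts about the product of the $\eta_{\alpha}^{\left(q\right)}$. The key observation is that Theorem \ref{thm.etaaetab} expresses $\eta_{\delta}^{\left(q\right)}\eta_{\varepsilon}^{\left(q\right)}$ as a sum over stufufufflers $f$ for $\delta$ and $\varepsilon$, weighted by $\left(-q\right)^{\operatorname*{loss}\left(f\right)}\left(q-1\right)^{\operatorname*{poise}\left(f\right)}\eta_{\operatorname*{wt}\left(f\right)}^{\left(q\right)}$, whereas Proposition \ref{prop.stufufuf.explicit} expresses $x_{\delta}\#x_{\varepsilon}$ as the same sum but weighted by $b^{\operatorname*{loss}\left(f\right)}a^{\operatorname*{poise}\left(f\right)}x_{\operatorname*{wt}\left(f\right)}$. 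These two formulas are structurally identical under the substitution $b\leftrightarrow -q$ and $a\leftrightarrow q-1$, with $x_{\gamma}$ playing the role of $\eta_{\gamma}^{\left(q\right)}$.

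To make this rigorous, I would first reduce to the universal case. Introduce the polynomial ring $\mathbf{k}_{0}:=\mathbb{Z}\left[A,B\right]$ in two indeterminates, set up the stufufuffle product $\#$ over $\mathbf{k}_{0}$ with $a=A$ and $b=B$, and note (by Proposition \ref{prop.stufufuf.explicit}) that its structure constants lie in $\mathbb{Z}\left[A,B\right]$. Since any pair $\left(a,b\right)\in\mathbf{k}^{2}$ arises as the image of $\left(A,B\right)$ under a unique ring homomorphism $\mathbf{k}_{0}\rightarrow\mathbf{k}$, and since ring homomorphisms carry the defining recursion (and hence the structure constants) of $\#$ functorially, it suffices to prove the theorem over $\mathbf{k}_{0}$. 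Next, work over the field of fractions $\mathbb{Q}\left(A,B\right)\supseteq\mathbf{k}_{0}$ and pass to the base ring $\mathbf{k}':=\mathbb{Q}\left(A,B\right)$ with the specialization $q:=1+B$ (so that $-q=-1-B$, $q-1=B$; one then matches the exponents). Here $r=q+1=2+B\neq0$ is invertible in the field $\mathbf{k}'$, so Theorem \ref{thm.eta.basis} guarantees that $\left(\eta_{\alpha}^{\left(q\right)}\right)_{\alpha\in\operatorname*{Comp}}$ is a basis of $\operatorname*{QSym}$ over $\mathbf{k}'$. The plan is then to define the $\mathbf{k}'$-linear map $\Psi:\mathcal{F}\rightarrow\operatorname*{QSym}$ sending $x_{\gamma}\mapsto\eta_{\gamma}^{\left(q\right)}$, observe it is a $\mathbf{k}'$-module isomorphism, and check via the two parallel sum formulas that $\Psi\left(x_{\delta}\#x_{\varepsilon}\right)=\eta_{\delta}^{\left(q\right)}\eta_{\varepsilon}^{\left(q\right)}=\Psi\left(x_{\delta}\right)\cdot\Psi\left(x_{\varepsilon}\right)$, after rewriting the weights $a,b$ in terms of $q$. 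Transporting commutativity, associativity, and unitality of the genuine (commutative, associative, unital) multiplication of $\operatorname*{QSym}$ back through the isomorphism $\Psi$ then gives the result over $\mathbf{k}'$, hence over $\mathbf{k}_{0}$, hence over every $\mathbf{k}$.

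The main obstacle will be the exponent matching between the two weight formulas. Theorem \ref{thm.etaaetab} carries the factors $\left(-q\right)^{\operatorname*{loss}}\left(q-1\right)^{\operatorname*{poise}}$, whereas Proposition \ref{prop.stufufuf.explicit} carries $b^{\operatorname*{loss}}a^{\operatorname*{poise}}$; so I must choose the specialization so that $b$ corresponds to $-q$ and $a$ to $q-1$, i.e.\ set $q:=1+B$ only \emph{after} declaring $b=B$ is to be identified with $-q$. This forces a slightly more careful bookkeeping: I would instead take the generic base ring $\mathbf{k}_{1}:=\mathbb{Z}\left[q\right]$, set $a:=q-1$ and $b:=-q$ over $\mathbf{k}_{1}$, and establish the isomorphism there (passing to $\mathbb{Q}\left(q\right)$ to invoke Theorem \ref{thm.eta.basis}); then specialize to arbitrary $\left(a,b\right)\in\mathbf{k}^{2}$ by first recognizing that the structure constants of $\#$ are \emph{separately} polynomial in $a$ and $b$ (from Proposition \ref{prop.stufufuf.explicit}), so that the identities $x\#y=y\#x$ and $\left(x\#y\right)\#z=x\#\left(y\#z\right)$, being polynomial identities in $a,b$ that hold whenever $\left(a,b\right)=\left(q-1,-q\right)$ for $q$ ranging over an infinite set, must hold identically as polynomials in $a,b$, and thus specialize to all $\left(a,b\right)$. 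The unit law $1\#w=w\#1=w$ is immediate from the recursive definition and needs no deformation argument. The only genuinely delicate point is verifying that the polynomial map $q\mapsto\left(q-1,-q\right)$ has Zariski-dense (equivalently, infinite) image in the $\left(a,b\right)$-plane so that a polynomial identity holding along it holds everywhere; this is clear since it is a nonconstant parametrized line, so the reduction is sound.
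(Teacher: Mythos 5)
Your plan has a genuine gap, and it sits precisely at the point you flagged as ``the only genuinely delicate point'' and then dismissed. The image of the map $q\mapsto\left(  q-1,-q\right)  $ is the line $\left\{  \left(  a,b\right)  \ \mid\ a+b=-1\right\}  $ in the $\left(  a,b\right)  $-plane, and a line is \emph{not} Zariski-dense in the plane (nor is ``infinite'' equivalent to ``Zariski-dense'' in dimension $2$). A polynomial identity in the two variables $a,b$ that holds on this line need not hold identically: the polynomial $a+b+1$ vanishes on the whole line but is nonzero. Since the structure constants of $\#$ are genuinely two-variable polynomials $b^{\operatorname*{loss}\left(  f\right)  }a^{\operatorname*{poise}\left(  f\right)  }$, verifying commutativity and associativity only at the points $\left(  a,b\right)  =\left(  q-1,-q\right)  $ does not let you conclude them for arbitrary $\left(  a,b\right)  \in\mathbf{k}^{2}$. (Your unit-law observation and the transport-of-structure part over $\mathbb{Q}\left(  q\right)  $ -- basis via Theorem \ref{thm.eta.basis}, isomorphism $x_{\gamma}\mapsto\eta_{\gamma}^{\left(  q\right)  }$, matching of weights via Theorem \ref{thm.etaaetab} and Proposition \ref{prop.stufufuf.explicit} -- are all fine; it is only the final specialization that breaks.)

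The paper's proof closes exactly this hole by building a second parameter into the comparison map: instead of $x_{\alpha}\mapsto\eta_{\alpha}^{\left(  q\right)  }$, it uses $x_{\alpha}\mapsto u^{\ell\left(  \alpha\right)  }\eta_{\alpha}^{\left(  q\right)  }$ (Proposition \ref{prop.stufufuf.eta}), which is an algebra homomorphism whenever $a=\left(  q-1\right)  u$ and $b=-qu^{2}$. The two-parameter family $\left(  q,u\right)  \mapsto\left(  \left(  q-1\right)  u,\ -qu^{2}\right)  $ does reach a generic point of the plane: taking $\mathbf{k}=\mathbb{Q}\left(  X,Y\right)  $, $q=-XY^{-1}$, $u=-Y$ gives $a=X+Y$ and $b=XY$, which are algebraically independent, so proving the identities there proves them as polynomial identities in $a,b$. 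The reason this rescaling works -- and the fact you would need to invoke to repair your argument without it -- is the combinatorial identity $2\operatorname*{loss}\left(  f\right)  +\operatorname*{poise}\left(  f\right)  +\ell\left(  \operatorname*{wt}\left(  f\right)  \right)  =\ell\left(  \delta\right)  +\ell\left(  \varepsilon\right)  $, which makes the structure constants weighted-homogeneous for $\deg a=1$, $\deg b=2$; the $u$-scaling is precisely the $\mathbb{G}_{m}$-action for this grading, and saturating your line under it sweeps out a dense subset of the plane. Without either the extra parameter $u$ or an explicit appeal to this homogeneity, the reduction from the line to all of $\mathbf{k}^{2}$ is unsound.
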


It appears possible to prove Theorem \ref{thm.stufufuf.ass} by induction, but
the most convenient method at this point is to deduce this from the properties
of the enriched $q$-monomial basis of $\operatorname*{QSym}$. To wit, the
following proposition connects the map $\#$ to the latter basis:

\begin{proposition}
\label{prop.stufufuf.eta}Let $q$ and $u$ be two elements of $\mathbf{k}$ such
that $a=\left(  q-1\right)  u$ and $b=-qu^{2}$. (Such $q$ and $u$ do not
always exist, of course.)

Let $\operatorname*{eta}:\mathcal{F}\rightarrow\operatorname*{QSym}$ be the
$\mathbf{k}$-linear map that sends the word $x_{\alpha}=x_{\alpha_{1}%
}x_{\alpha_{2}}\cdots x_{\alpha_{k}}\in\mathcal{F}$ to $u^{\ell\left(
\alpha\right)  }\eta_{\alpha}^{\left(  q\right)  }\in\operatorname*{QSym}$ for
each composition $\alpha=\left(  \alpha_{1},\alpha_{2},\ldots,\alpha
_{k}\right)  $. Then, $\operatorname*{eta}\left(  g\#h\right)  =\left(
\operatorname*{eta}g\right)  \cdot\left(  \operatorname*{eta}h\right)  $ for
any $g,h\in\mathcal{F}$.
\end{proposition}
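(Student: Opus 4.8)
The plan is to prove Proposition \ref{prop.stufufuf.eta} by reducing it to the already-established product rule for the $\eta_\alpha^{(q)}$ (Theorem \ref{thm.etaaetab}), using the explicit formula for the stufufuffle product (Proposition \ref{prop.stufufuf.explicit}). Since $\operatorname{eta}$, the multiplication of $\operatorname{QSym}$, and the bilinear map $\#$ are all $\mathbf{k}$-bilinear (resp. $\mathbf{k}$-linear), it suffices to verify the identity $\operatorname{eta}(x_\delta \# x_\varepsilon) = (\operatorname{eta} x_\delta)\cdot(\operatorname{eta} x_\varepsilon)$ on pairs of basis words $x_\delta, x_\varepsilon$, where $\delta, \varepsilon$ range over all compositions. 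This is the content to be checked; everything else is bookkeeping.

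First I would expand the left-hand side. By Proposition \ref{prop.stufufuf.explicit},
\[
x_\delta \# x_\varepsilon = \sum_{\substack{f\text{ is a stufufuffler}\\\text{for }\delta\text{ and }\varepsilon}} b^{\operatorname{loss}(f)} a^{\operatorname{poise}(f)} x_{\operatorname{wt}(f)}.
\]
Applying $\operatorname{eta}$ and using its defining formula $\operatorname{eta}(x_{\operatorname{wt}(f)}) = u^{\ell(\operatorname{wt}(f))} \eta_{\operatorname{wt}(f)}^{(q)}$, together with the substitutions $a = (q-1)u$ and $b = -qu^2$, I would rewrite each coefficient as
\[
b^{\operatorname{loss}(f)} a^{\operatorname{poise}(f)} u^{\ell(\operatorname{wt}(f))} = (-q)^{\operatorname{loss}(f)}(q-1)^{\operatorname{poise}(f)} u^{2\operatorname{loss}(f)+\operatorname{poise}(f)+\ell(\operatorname{wt}(f))}.
\]
The key arithmetic step is to check that the total exponent of $u$ equals $\ell(\delta)+\ell(\varepsilon)$ for every stufufuffler $f$, independently of $f$; this would let me pull a single factor $u^{\ell(\delta)+\ell(\varepsilon)}$ out of the sum.

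The exponent identity is the crux, so let me sketch why it holds. For a stufufuffler $f:U\to\{1<\cdots<k\}$, write $P_s=\lvert f_P^{-1}(s)\rvert$ and $Q_s=\lvert f_Q^{-1}(s)\rvert$. Then $\sum_s P_s = \ell(\delta)$ and $\sum_s Q_s = \ell(\varepsilon)$, so $\ell(\delta)+\ell(\varepsilon)=\sum_s(P_s+Q_s)$. On the other hand $\ell(\operatorname{wt}(f))=k$, and $\operatorname{loss}(f)=\sum_s\max\{P_s,Q_s\}-k$, while $\operatorname{poise}(f)=\#\{s:P_s=Q_s\}$. Since $\lvert P_s-Q_s\rvert\le 1$ by \eqref{eq.def.limstuffler.liminality}, one has $P_s+Q_s = 2\max\{P_s,Q_s\} - \lvert P_s-Q_s\rvert = 2\max\{P_s,Q_s\} - [P_s\ne Q_s]$, where I use that $\lvert P_s-Q_s\rvert\in\{0,1\}$ equals $[P_s\ne Q_s]$. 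Summing over $s$ gives $\ell(\delta)+\ell(\varepsilon) = 2\sum_s\max\{P_s,Q_s\} - \#\{s:P_s\ne Q_s\} = 2(\operatorname{loss}(f)+k) - (k-\operatorname{poise}(f)) = 2\operatorname{loss}(f)+\operatorname{poise}(f)+k$, which is exactly $2\operatorname{loss}(f)+\operatorname{poise}(f)+\ell(\operatorname{wt}(f))$. This is the desired identity.

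With the exponent identity in hand, I would factor out $u^{\ell(\delta)+\ell(\varepsilon)}$ and recognize the remaining sum, namely $\sum_f (-q)^{\operatorname{loss}(f)}(q-1)^{\operatorname{poise}(f)}\eta_{\operatorname{wt}(f)}^{(q)}$, as precisely $\eta_\delta^{(q)}\eta_\varepsilon^{(q)}$ by Theorem \ref{thm.etaaetab}. On the other side, $(\operatorname{eta} x_\delta)\cdot(\operatorname{eta} x_\varepsilon) = u^{\ell(\delta)}\eta_\delta^{(q)}\cdot u^{\ell(\varepsilon)}\eta_\varepsilon^{(q)} = u^{\ell(\delta)+\ell(\varepsilon)}\eta_\delta^{(q)}\eta_\varepsilon^{(q)}$, matching the left-hand side exactly. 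Finally I would invoke bilinearity to extend from basis words to arbitrary $g,h\in\mathcal{F}$, completing the proof. I expect the exponent bookkeeping in the third paragraph to be the only genuine obstacle; once it is settled, the result falls out immediately from Theorem \ref{thm.etaaetab}, and indeed this reduction is also the cleanest route to deducing the associativity and commutativity asserted in Theorem \ref{thm.stufufuf.ass}, since $\operatorname{eta}$ (when $q,u$ exist generically, e.g.\ over a suitable extension ring) is an algebra map into the commutative associative algebra $\operatorname{QSym}$.
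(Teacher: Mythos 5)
Your proof is correct and follows essentially the same route as the paper's: reduce to basis words $x_\delta, x_\varepsilon$, expand $x_\delta \# x_\varepsilon$ via Proposition \ref{prop.stufufuf.explicit}, compare with Theorem \ref{thm.etaaetab}, and verify the exponent identity $2\operatorname{loss}\left(f\right) + \operatorname{poise}\left(f\right) + \ell\left(\operatorname{wt}\left(f\right)\right) = \ell\left(\delta\right) + \ell\left(\varepsilon\right)$. In fact, your third paragraph supplies the combinatorial verification of that identity (via $P_s + Q_s = 2\max\left\{P_s, Q_s\right\} - \left[P_s \neq Q_s\right]$) which the paper leaves to the reader, so your write-up is, if anything, more complete.
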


\begin{proof}
[Proof sketch.]Let $g,h\in\mathcal{F}$. We WLOG assume that $g=x_{\delta}$ and
$h=x_{\varepsilon}$ for two compositions $\delta$ and $\varepsilon$. Consider
these $\delta$ and $\varepsilon$. Thus,%
\[
\operatorname*{eta}g=\operatorname*{eta}x_{\delta}=u^{\ell\left(
\delta\right)  }\eta_{\delta}^{\left(  q\right)  }\ \ \ \ \ \ \ \ \ \ \left(
\text{by the definition of }\operatorname*{eta}\right)
\]
and similarly $\operatorname*{eta}h=u^{\ell\left(  \varepsilon\right)  }%
\eta_{\varepsilon}^{\left(  q\right)  }$. Multiplying these two equalities, we
find%
\begin{align}
&  \left(  \operatorname*{eta}g\right)  \cdot\left(  \operatorname*{eta}%
h\right) \nonumber\\
&  =u^{\ell\left(  \delta\right)  }\eta_{\delta}^{\left(  q\right)  }\cdot
u^{\ell\left(  \varepsilon\right)  }\eta_{\varepsilon}^{\left(  q\right)
}=u^{\ell\left(  \delta\right)  +\ell\left(  \varepsilon\right)  }\eta
_{\delta}^{\left(  q\right)  }\eta_{\varepsilon}^{\left(  q\right)
}\nonumber\\
&  =u^{\ell\left(  \delta\right)  +\ell\left(  \varepsilon\right)  }%
\sum_{\substack{f\text{ is a stufufuffler}\\\text{for }\delta\text{ and
}\varepsilon}}\left(  -q\right)  ^{\operatorname*{loss}\left(  f\right)
}\left(  q-1\right)  ^{\operatorname*{poise}\left(  f\right)  }\eta
_{\operatorname*{wt}\left(  f\right)  }^{\left(  q\right)  }
\label{pf.prop.stufufuf.eta.1}%
\end{align}
(by Theorem \ref{thm.etaaetab}).

On the other hand, from $g=x_{\delta}$ and $h=x_{\varepsilon}$, we obtain%
\[
g\#h=x_{\delta}\#x_{\varepsilon}=\sum_{\substack{f\text{ is a stufufuffler}%
\\\text{for }\delta\text{ and }\varepsilon}}b^{\operatorname*{loss}\left(
f\right)  }a^{\operatorname*{poise}\left(  f\right)  }x_{\operatorname*{wt}%
\left(  f\right)  }%
\]
(by Proposition \ref{prop.stufufuf.explicit}). Hence, by the definition of
$\operatorname*{eta}$, we obtain%
\begin{equation}
\operatorname*{eta}\left(  g\#h\right)  =\sum_{\substack{f\text{ is a
stufufuffler}\\\text{for }\delta\text{ and }\varepsilon}%
}b^{\operatorname*{loss}\left(  f\right)  }a^{\operatorname*{poise}\left(
f\right)  }u^{\ell\left(  \operatorname*{wt}\left(  f\right)  \right)  }%
\eta_{\operatorname*{wt}\left(  f\right)  }^{\left(  q\right)  }.
\label{pf.prop.stufufuf.eta.2}%
\end{equation}

We must prove that the left hand sides of (\ref{pf.prop.stufufuf.eta.2}) and
(\ref{pf.prop.stufufuf.eta.1}) are equal. Of course, it suffices to show that
the right hand sides are equal. For that purpose, it suffices to show that%
\[
b^{\operatorname*{loss}\left(  f\right)  }a^{\operatorname*{poise}\left(
f\right)  }u^{\ell\left(  \operatorname*{wt}\left(  f\right)  \right)
}=u^{\ell\left(  \delta\right)  +\ell\left(  \varepsilon\right)  }\left(
-q\right)  ^{\operatorname*{loss}\left(  f\right)  }\left(  q-1\right)
^{\operatorname*{poise}\left(  f\right)  }%
\]
whenever $f$ is a stufufuffler for $\delta$ and $\varepsilon$. Recalling that
$a=\left(  q-1\right)  u$ and $b=-qu^{2}$, we can easily boil this down to the
fact that every stufufuffler $f$ for $\delta$ and $\varepsilon$ satisfies%
\[
2\operatorname*{loss}\left(  f\right)  +\operatorname*{poise}\left(  f\right)
+\ell\left(  \operatorname*{wt}\left(  f\right)  \right)  =\ell\left(
\delta\right)  +\ell\left(  \varepsilon\right)  ;
\]
but this fact is easily verified combinatorially.
\end{proof}

\begin{proof}
[Proof of Theorem \ref{thm.stufufuf.ass} (sketched).]All claims of this
theorem boil down to polynomial identities in $a$ and $b$. For example,
associativity of $\#$ is saying that the elements $\left(  u\#v\right)  \#w$
and $u\#\left(  v\#w\right)  $ of $\mathcal{F}$ have the same $t$-coefficient
whenever $u,v,w,t$ are four words; but this is easily revealed (upon expanding
all products) to be an equality between two polynomials in $a$ and $b$ (when
$u,v,w,t$ are fixed). Note that all relevant polynomials have integer coefficients.

Thus, in order to prove Theorem \ref{thm.stufufuf.ass}, we can WLOG assume
that $a$ and $b$ are two distinct indeterminates in a polynomial ring over
$\mathbb{Z}$ (for example, $a=X$ and $b=Y$ in the polynomial ring
$\mathbb{Z}\left[  X,Y\right]  $). Even better, we can WLOG assume that $a$
and $b$ are two algebraically independent elements of a $\mathbb{Z}$-algebra.

However, in the ring $\mathbb{Z}\left[  X,Y\right]  $, the two elements $X+Y$
and $XY$ are algebraically independent (since they are the elementary
symmetric polynomials in the indeterminates $X$ and $Y$). Thus, we can WLOG
assume that $\mathbf{k}=\mathbb{Z}\left[  X,Y\right]  $ and that $a=X+Y$ and
$b=XY$. Moreover, we can extend the base ring $\mathbf{k}$ to its quotient
field $\mathbb{Q}\left(  X,Y\right)  $. So we assume that $\mathbf{k}%
=\mathbb{Q}\left(  X,Y\right)  $ and $a=X+Y$ and $b=XY$.

Set $q:=-XY^{-1}$ and $u:=-Y$ in $\mathbf{k}$. Then, simple computations
confirm that $a=\left(  q-1\right)  u$ and $b=-qu^{2}$. Hence, the map
$\operatorname*{eta}:\mathcal{F}\rightarrow\operatorname*{QSym}$ constructed
in Proposition \ref{prop.stufufuf.eta} satisfies
\begin{equation}
\operatorname*{eta}\left(  g\#h\right)  =\left(  \operatorname*{eta}g\right)
\cdot\left(  \operatorname*{eta}h\right)  \ \ \ \ \ \ \ \ \ \ \text{for any
}g,h\in\mathcal{F} \label{pf.thm.stufufuf.ass.eta-nice}%
\end{equation}
(by Proposition \ref{prop.stufufuf.eta}). Moreover, the element $u=-Y\in
\mathbf{k}$ is invertible (since $\mathbf{k}$ is a field), and so is the
element $r:=q+1=-XY^{-1}+1\in\mathbf{k}$ (for the same reason, since $r\neq
0$). Thus, the family $\left(  u^{\ell\left(  \alpha\right)  }\eta_{\alpha
}^{\left(  q\right)  }\right)  _{\alpha\in\operatorname*{Comp}}$ is a basis of
$\operatorname*{QSym}$ (by Theorem \ref{thm.eta.basis} \textbf{(a)}). Hence,
the map $\operatorname*{eta}$ is a $\mathbf{k}$-module isomorphism (since it
sends the basis $\left(  x_{\alpha}\right)  _{\alpha\in\operatorname*{Comp}}$
of $\mathcal{F}$ to the basis $\left(  u^{\ell\left(  \alpha\right)  }%
\eta_{\alpha}^{\left(  q\right)  }\right)  _{\alpha\in\operatorname*{Comp}}$
of $\operatorname*{QSym}$). The equality (\ref{pf.thm.stufufuf.ass.eta-nice})
shows that this isomorphism $\operatorname*{eta}$ transfers the multiplication
of $\operatorname*{QSym}$ to the binary operation $\#$ on $\mathcal{F}$. Since
the former multiplication is associative, we thus conclude that the latter
operation $\#$ is associative as well. Similarly, we can see that $\#$ is
commutative. Finally, it is clear that $1$ is a neutral element for $\#$.
Thus, Theorem \ref{thm.stufufuf.ass} is proved.
\end{proof}

In view of Theorem \ref{thm.stufufuf.ass}, we can restate Proposition
\ref{prop.stufufuf.eta} as follows:

\begin{theorem}
\label{thm.stufufuf.eta-mor}Let $q$ and $u$ be two elements of $\mathbf{k}$
such that $a=\left(  q-1\right)  u$ and $b=-qu^{2}$.

Let $\operatorname*{eta}:\mathcal{F}\rightarrow\operatorname*{QSym}$ be the
$\mathbf{k}$-linear map that sends the word $x_{\alpha}=x_{\alpha_{1}%
}x_{\alpha_{2}}\cdots x_{\alpha_{k}}\in\mathcal{F}$ to $u^{\ell\left(
\alpha\right)  }\eta_{\alpha}^{\left(  q\right)  }\in\operatorname*{QSym}$ for
each composition $\alpha=\left(  \alpha_{1},\alpha_{2},\ldots,\alpha
_{k}\right)  $. Then, $\operatorname*{eta}$ is a $\mathbf{k}$-algebra
homomorphism from the $\mathbf{k}$-algebra $\left(  \mathcal{F},\#\right)  $
to the $\mathbf{k}$-algebra $\operatorname*{QSym}$.
\end{theorem}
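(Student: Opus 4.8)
The plan is to deduce Theorem \ref{thm.stufufuf.eta-mor} directly from the work already done, since it is essentially a repackaging of Proposition \ref{prop.stufufuf.eta} combined with the associativity established in Theorem \ref{thm.stufufuf.ass}. Recall that to be a $\mathbf{k}$-algebra homomorphism, the map $\operatorname*{eta}$ must be $\mathbf{k}$-linear (which it is, by construction), must respect multiplication, and must send the unit of $\left(\mathcal{F},\#\right)$ to the unit of $\operatorname*{QSym}$. The middle condition is precisely the content of Proposition \ref{prop.stufufuf.eta}, so almost all the work is already available; the only genuinely new things to verify are that $\operatorname*{eta}$ is well-defined as a homomorphism of the \emph{algebra} $\left(\mathcal{F},\#\right)$ (which presupposes that $\left(\mathcal{F},\#\right)$ really is a $\mathbf{k}$-algebra, i.e. Theorem \ref{thm.stufufuf.ass}) and that $\operatorname*{eta}$ preserves the unit.

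First I would invoke Theorem \ref{thm.stufufuf.ass} to legitimize the phrase ``the $\mathbf{k}$-algebra $\left(\mathcal{F},\#\right)$'': this theorem tells us that $\#$ is commutative, associative, and has $1\in\mathcal{F}$ as a two-sided neutral element, so $\left(\mathcal{F},\#\right)$ is indeed a commutative $\mathbf{k}$-algebra with unity $1$. Next I would handle the multiplicativity. Given $q$ and $u$ with $a=\left(q-1\right)u$ and $b=-qu^{2}$, Proposition \ref{prop.stufufuf.eta} states exactly that
\[
\operatorname*{eta}\left(g\#h\right)=\left(\operatorname*{eta}g\right)\cdot\left(\operatorname*{eta}h\right)
\]
for all $g,h\in\mathcal{F}$. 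So the multiplicative requirement is immediate. The remaining step is to check that $\operatorname*{eta}$ sends the unit $1\in\mathcal{F}$ to $1\in\operatorname*{QSym}$; this follows because $1=x_{\varnothing}$ corresponds to the empty composition $\varnothing$, and by the definition of $\operatorname*{eta}$ we have $\operatorname*{eta}\left(x_{\varnothing}\right)=u^{\ell\left(\varnothing\right)}\eta_{\varnothing}^{\left(q\right)}=u^{0}\eta_{\varnothing}^{\left(q\right)}=\eta_{\varnothing}^{\left(q\right)}$, and Example \ref{exa.etaalpha.1} \textbf{(b)} records that $\eta_{\varnothing}^{\left(q\right)}=1$.

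There is no real obstacle here: this theorem is a corollary rather than a result requiring fresh ideas, and the only mild subtlety worth flagging in the write-up is that the phrase ``$\mathbf{k}$-algebra homomorphism from $\left(\mathcal{F},\#\right)$'' is only meaningful once Theorem \ref{thm.stufufuf.ass} guarantees that $\left(\mathcal{F},\#\right)$ is an algebra in the first place — so I would cite Theorem \ref{thm.stufufuf.ass} explicitly before combining Proposition \ref{prop.stufufuf.eta} and the unit computation. Thus the proof assembles into a single short paragraph: $\operatorname*{eta}$ is $\mathbf{k}$-linear by definition, preserves products by Proposition \ref{prop.stufufuf.eta}, preserves the unit by Example \ref{exa.etaalpha.1} \textbf{(b)}, and the source $\left(\mathcal{F},\#\right)$ is a genuine $\mathbf{k}$-algebra by Theorem \ref{thm.stufufuf.ass}; hence $\operatorname*{eta}$ is a $\mathbf{k}$-algebra homomorphism, completing the proof.
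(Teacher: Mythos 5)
Your proposal is correct and follows essentially the same route as the paper, which presents Theorem \ref{thm.stufufuf.eta-mor} precisely as a restatement of Proposition \ref{prop.stufufuf.eta} in view of Theorem \ref{thm.stufufuf.ass}. Your explicit verification that $\operatorname*{eta}$ preserves the unit (via $\operatorname*{eta}\left(x_{\varnothing}\right)=u^{0}\eta_{\varnothing}^{\left(q\right)}=1$, using Example \ref{exa.etaalpha.1} \textbf{(b)}) is a detail the paper leaves implicit, and including it is a welcome touch of completeness.
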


We can also turn $\mathcal{F}$ into a coalgebra. In fact, let $\Delta
:\mathcal{F}\rightarrow\mathcal{F}\otimes\mathcal{F}$ be the $\mathbf{k}%
$-linear map that sends each word $w_{1}w_{2}\cdots w_{n}$ to $\sum_{i=0}%
^{n}w_{1}w_{2}\cdots w_{i}\otimes w_{i+1}w_{i+2}\cdots w_{n}$. This map
$\Delta$ is called the \emph{deconcatenation coproduct} (or the \emph{cut
coproduct}). This coproduct turns $\mathcal{F}$ into a coalgebra (with counit
$\varepsilon:\mathcal{F}\rightarrow\mathbf{k}$ sending each word $w_{1}%
w_{2}\cdots w_{n}$ to $%
\begin{cases}
1, & \text{if }n=0;\\
0, & \text{if }n>0
\end{cases}
$). The map $\operatorname*{eta}:\mathcal{F}\rightarrow\operatorname*{QSym}$
from Theorem \ref{thm.stufufuf.eta-mor} is then easily seen to be a
$\mathbf{k}$-coalgebra homomorphism (by Theorem \ref{thm.Delta-eta}).

The stufufuffle product $\#$ on $\mathcal{F}$ respects the deconcatenation
coproduct $\Delta$ of $\mathcal{F}$, in the following sense:

\begin{theorem}
\label{thm.stufufuff.hopf}The $\mathbf{k}$-algebra $\left(  \mathcal{F}%
,\#\right)  $, equipped with the coproduct $\Delta$ and the counit
$\varepsilon$ constructed above, is a commutative connected graded Hopf algebra.
\end{theorem}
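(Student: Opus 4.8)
The plan is to prove Theorem \ref{thm.stufufuff.hopf} by transport of structure via the isomorphism $\operatorname*{eta}$, mirroring the proof of Theorem \ref{thm.stufufuf.ass}. First I would observe that all the claimed properties --- that $\left(\mathcal{F}, \#\right)$ with $\Delta$ and $\varepsilon$ is a bialgebra, that it is connected graded, and hence that it is a Hopf algebra --- can be verified by reducing to polynomial identities in the parameters $a$ and $b$, exactly as in the proof of Theorem \ref{thm.stufufuf.ass}. The only genuinely new content beyond Theorem \ref{thm.stufufuf.ass} (which already established that $\#$ is a commutative associative unital product) is the compatibility between the product $\#$ and the coproduct $\Delta$, i.e.\ that $\Delta$ is a $\mathbf{k}$-algebra homomorphism from $\left(\mathcal{F},\#\right)$ to $\left(\mathcal{F},\#\right)\otimes\left(\mathcal{F},\#\right)$, together with multiplicativity of $\varepsilon$.

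To handle this, I would again work over a generic base ring. Since the bialgebra axioms amount to equalities of $\mathbb{Z}$-coefficient polynomials in $a$ and $b$ (once we fix the relevant words), we may WLOG take $\mathbf{k}=\mathbb{Q}\left(X,Y\right)$ with $a=X+Y$ and $b=XY$, and then set $q:=-XY^{-1}$ and $u:=-Y$, so that $a=\left(q-1\right)u$ and $b=-qu^{2}$ hold and both $u$ and $r=q+1$ are invertible in $\mathbf{k}$. In this situation, Theorem \ref{thm.stufufuf.eta-mor} tells us that $\operatorname*{eta}:\left(\mathcal{F},\#\right)\to\operatorname*{QSym}$ is a $\mathbf{k}$-algebra homomorphism, and the remark following Theorem \ref{thm.stufufuf.eta-mor} (via Theorem \ref{thm.Delta-eta}) tells us that $\operatorname*{eta}$ is a $\mathbf{k}$-coalgebra homomorphism; moreover, as shown in the proof of Theorem \ref{thm.stufufuf.ass}, $\operatorname*{eta}$ is a $\mathbf{k}$-module isomorphism (it sends the basis $\left(x_{\alpha}\right)_{\alpha\in\operatorname*{Comp}}$ to the basis $\left(u^{\ell\left(\alpha\right)}\eta_{\alpha}^{\left(q\right)}\right)_{\alpha\in\operatorname*{Comp}}$ of $\operatorname*{QSym}$, using Theorem \ref{thm.eta.basis}). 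Being simultaneously an algebra isomorphism and a coalgebra isomorphism, $\operatorname*{eta}$ is a bialgebra isomorphism, and therefore the bialgebra axioms on $\left(\mathcal{F},\#,\Delta,\varepsilon\right)$ follow from the corresponding axioms on $\operatorname*{QSym}$, which is a bialgebra (indeed a Hopf algebra).

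Finally I would record the grading and connectedness. The grading on $\mathcal{F}$ is the one in which each word $x_{\alpha}$ is homogeneous of degree $\left\vert\alpha\right\vert$; one checks directly that $\#$ is degree-preserving (every stufufuffler $f$ for $\delta$ and $\varepsilon$ has $\left\vert\operatorname*{wt}\left(f\right)\right\vert=\left\vert\delta\right\vert+\left\vert\varepsilon\right\vert$, which is immediate from the definition of $\operatorname*{wt}$) and that $\Delta$ and $\varepsilon$ are graded, so $\left(\mathcal{F},\#\right)$ is a graded bialgebra. It is connected because the only word of degree $0$ is the empty word $1$, so the degree-$0$ component is $\mathbf{k}\cdot 1$. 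A connected graded bialgebra is automatically a Hopf algebra by Takeuchi's theorem (\cite[Proposition 1.4.16]{GriRei}), and commutativity of $\#$ is part of Theorem \ref{thm.stufufuf.ass}. This completes the proof. The one step requiring the most care is establishing that $\Delta$ is an algebra morphism for $\#$ (equivalently, that $\operatorname*{eta}$ is simultaneously an algebra and coalgebra map), but this is delivered cleanly by combining Theorem \ref{thm.stufufuf.eta-mor} with Theorem \ref{thm.Delta-eta} and the genericity reduction, so no direct combinatorial verification of the compatibility of $\#$ with deconcatenation is needed.
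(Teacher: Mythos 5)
Your proposal is correct and takes essentially the approach the paper intends: the paper leaves this proof to the reader with the remark that it \textquotedblleft follows the same mold\textquotedblright\ as the proof of Theorem \ref{thm.stufufuf.ass}, and your argument is exactly that mold --- reduce the bialgebra axioms to integer-polynomial identities in $a$ and $b$, pass WLOG to $\mathbf{k}=\mathbb{Q}\left(X,Y\right)$ with $a=X+Y$ and $b=XY$, and transport the bialgebra structure of $\operatorname*{QSym}$ back along the isomorphism $\operatorname*{eta}$ (which is an algebra morphism by Theorem \ref{thm.stufufuf.eta-mor} and a coalgebra morphism by Theorem \ref{thm.Delta-eta}). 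Your direct verification of the grading, connectedness, and the appeal to Takeuchi's theorem correctly supplies the remaining details.
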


\begin{theorem}
\label{thm.stufufuf.eta-hmor}Let $q$ and $u$ be two elements of $\mathbf{k}$
such that $a=\left(  q-1\right)  u$ and $b=-qu^{2}$.

Let $\operatorname*{eta}:\mathcal{F}\rightarrow\operatorname*{QSym}$ be the
$\mathbf{k}$-linear map from Theorem \ref{thm.stufufuf.eta-mor}. Then,
$\operatorname*{eta}$ is a Hopf algebra homomorphism from the Hopf algebra
$\left(  \mathcal{F},\#,\Delta,\varepsilon\right)  $ to the Hopf algebra
$\operatorname*{QSym}$.
\end{theorem}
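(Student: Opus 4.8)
The plan is to observe that essentially all the work has already been done, and that the theorem follows by combining Theorem \ref{thm.stufufuf.eta-mor} with the coalgebra compatibility of $\operatorname*{eta}$ and a standard abstract principle. Recall that a $\mathbf{k}$-linear map between two Hopf algebras that is simultaneously a $\mathbf{k}$-algebra homomorphism and a $\mathbf{k}$-coalgebra homomorphism is automatically a Hopf algebra homomorphism (this is a bialgebra homomorphism, and every bialgebra homomorphism between Hopf algebras commutes with the antipodes, by \cite[Corollary 1.4.27]{GriRei}). Theorem \ref{thm.stufufuff.hopf} tells us that $\left(  \mathcal{F},\#,\Delta,\varepsilon\right)  $ is a Hopf algebra, and $\operatorname*{QSym}$ is a Hopf algebra as well. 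Theorem \ref{thm.stufufuf.eta-mor} already shows that $\operatorname*{eta}$ is a $\mathbf{k}$-algebra homomorphism. Thus the only thing left to verify is that $\operatorname*{eta}$ is a $\mathbf{k}$-coalgebra homomorphism, i.e., that it respects both the coproduct and the counit.

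First I would check coproduct compatibility on the basis $\left(  x_{\alpha}\right)  _{\alpha\in\operatorname*{Comp}}$ of $\mathcal{F}$. For a composition $\alpha$, the deconcatenation coproduct gives $\Delta\left(  x_{\alpha}\right)  =\sum_{\beta\gamma=\alpha}x_{\beta}\otimes x_{\gamma}$, so that
\[
\left(  \operatorname*{eta}\otimes\operatorname*{eta}\right)  \left(  \Delta\left(  x_{\alpha}\right)  \right)  =\sum_{\substack{\beta,\gamma\in\operatorname*{Comp};\\\beta\gamma=\alpha}}u^{\ell\left(  \beta\right)  }\eta_{\beta}^{\left(  q\right)  }\otimes u^{\ell\left(  \gamma\right)  }\eta_{\gamma}^{\left(  q\right)  }=u^{\ell\left(  \alpha\right)  }\sum_{\substack{\beta,\gamma\in\operatorname*{Comp};\\\beta\gamma=\alpha}}\eta_{\beta}^{\left(  q\right)  }\otimes\eta_{\gamma}^{\left(  q\right)  },
\]
where I used $\ell\left(  \beta\right)  +\ell\left(  \gamma\right)  =\ell\left(  \beta\gamma\right)  =\ell\left(  \alpha\right)  $. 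By Theorem \ref{thm.Delta-eta}, the remaining sum is exactly $\Delta\left(  \eta_{\alpha}^{\left(  q\right)  }\right)  $, so the right-hand side equals $u^{\ell\left(  \alpha\right)  }\Delta\left(  \eta_{\alpha}^{\left(  q\right)  }\right)  =\Delta\left(  u^{\ell\left(  \alpha\right)  }\eta_{\alpha}^{\left(  q\right)  }\right)  =\Delta\left(  \operatorname*{eta}\left(  x_{\alpha}\right)  \right)  $. Since both $\Delta\circ\operatorname*{eta}$ and $\left(  \operatorname*{eta}\otimes\operatorname*{eta}\right)  \circ\Delta$ are $\mathbf{k}$-linear and agree on the basis $\left(  x_{\alpha}\right)  _{\alpha\in\operatorname*{Comp}}$, they coincide. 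For the counit, I would use that the counit of $\operatorname*{QSym}$ satisfies $\varepsilon\left(  M_{\beta}\right)  =\left[  \beta=\varnothing\right]  $ (as recorded in \eqref{pf.prop.Tr.coalgmor.eps}); combined with \eqref{eq.def.etaalpha.def}, this gives $\varepsilon\left(  \eta_{\alpha}^{\left(  q\right)  }\right)  =\left[  \alpha=\varnothing\right]  $ (the sum in \eqref{eq.def.etaalpha.def} ranges over compositions $\beta$ of $n=\left\vert \alpha\right\vert $, and such a $\beta$ can be empty only when $\alpha=\varnothing$). Hence $\varepsilon\left(  \operatorname*{eta}\left(  x_{\alpha}\right)  \right)  =u^{\ell\left(  \alpha\right)  }\left[  \alpha=\varnothing\right]  =\left[  \alpha=\varnothing\right]  $, which is precisely the counit value $\varepsilon\left(  x_{\alpha}\right)  $ on $\mathcal{F}$.

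Having established that $\operatorname*{eta}$ is both an algebra and a coalgebra homomorphism, I would conclude that it is a bialgebra homomorphism $\left(  \mathcal{F},\#,\Delta,\varepsilon\right)  \rightarrow\operatorname*{QSym}$, and therefore a Hopf algebra homomorphism by the cited general fact. I do not anticipate any real obstacle here: the entire argument is a bookkeeping verification riding on results already proved, with the only mildly delicate point being the careful tracking of the scaling factor $u^{\ell\left(  \alpha\right)  }$ through the coproduct (which works out precisely because $\ell$ is additive under concatenation) and the equally routine check of the counit. The hypothesis $a=\left(  q-1\right)  u$ and $b=-qu^{2}$ plays no role beyond ensuring, via Theorem \ref{thm.stufufuf.eta-mor}, that $\operatorname*{eta}$ is multiplicative in the first place.
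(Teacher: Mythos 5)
Your proof is correct, and it is worth noting that it does \emph{not} follow the route the paper gestures at. The paper leaves this theorem to the reader with the hint that it ``follows the same mold'' as the proof of Theorem \ref{thm.stufufuf.ass}, i.e.\ the genericity argument: reduce to the case where $a$ and $b$ are algebraically independent (say $a=X+Y$, $b=XY$ over $\mathbb{Q}\left(  X,Y\right)  $), where $\operatorname*{eta}$ becomes a module isomorphism, and transfer structure through it. Your argument instead works directly with the given (arbitrary) $q$ and $u$: multiplicativity is Theorem \ref{thm.stufufuf.eta-mor}; comultiplicativity is checked on the basis $\left(  x_{\alpha}\right)  _{\alpha\in\operatorname*{Comp}}$ using $\Delta\left(  x_{\alpha}\right)  =\sum_{\beta\gamma=\alpha}x_{\beta}\otimes x_{\gamma}$, the additivity $\ell\left(  \beta\right)  +\ell\left(  \gamma\right)  =\ell\left(  \beta\gamma\right)  $ and Theorem \ref{thm.Delta-eta}; the counit check $\varepsilon\left(  \eta_{\alpha}^{\left(  q\right)  }\right)  =\left[  \alpha=\varnothing\right]  $ is routine; and the antipode comes for free from \cite[Corollary 1.4.27]{GriRei}, since a bialgebra morphism between Hopf algebras automatically respects antipodes (both sides are Hopf algebras by Theorem \ref{thm.stufufuff.hopf} and by the standard theory of $\operatorname*{QSym}$). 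Your route is arguably the cleaner one for this particular statement: it needs no base change or genericity (indeed, the coalgebra-homomorphism part is exactly what the paper itself declares ``easily seen'' in the paragraph preceding Theorem \ref{thm.stufufuff.hopf}), whereas the genericity mold is uniform machinery that the paper reuses across the section but is overkill here, since comultiplicativity holds identically in $q$ and $u$ rather than only generically. Your closing observation — that the hypotheses $a=\left(  q-1\right)  u$ and $b=-qu^{2}$ enter only through multiplicativity — is also accurate and clarifies the logical structure.
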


We leave the proofs of these two theorems to the reader. (They follow the same
mold as our above proof of Theorem \ref{thm.stufufuf.ass}.)

Likewise, using Theorem \ref{thm.eta.S2} and the proof method of Theorem
\ref{thm.stufufuf.ass} above, we can prove the following:

\begin{theorem}
\label{thm.stufufuff.antipode}Let $S$ be the antipode of the Hopf algebra
$\left(  \mathcal{F},\#\right)  $ constructed in Theorem
\ref{thm.stufufuff.hopf}. Let $n\in\mathbb{N}$ and $\alpha\in
\operatorname*{Comp}\nolimits_{n}$. Then, in $\mathcal{F}$, we have%
\[
S\left(  x_{\alpha}\right)  =\left(  -1\right)  ^{\ell\left(  \alpha\right)
}\sum_{\substack{\beta\in\operatorname*{Comp}\nolimits_{n};\\D\left(
\beta\right)  \subseteq D\left(  \operatorname*{rev}\alpha\right)  }%
}a^{\ell\left(  \alpha\right)  -\ell\left(  \beta\right)  }x_{\beta}.
\]

\end{theorem}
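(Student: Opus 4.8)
The plan is to follow the proof method of Theorem \ref{thm.stufufuf.ass}: reduce the asserted identity to a polynomial identity in $a$ and $b$, and then verify it in a generic setting where the algebra $\left(  \mathcal{F},\#\right)  $ is identified with $\operatorname*{QSym}$ via the isomorphism $\operatorname*{eta}$. First I would observe that, for fixed $\alpha$, both sides of the claimed equality are $\mathbf{k}$-linear combinations of words $x_{\beta}$ whose coefficients are polynomials in $a$ and $b$ with integer coefficients. For the right-hand side this is manifest. For the left-hand side $S\left(  x_{\alpha}\right)  $, recall that the antipode of a connected graded Hopf algebra (such as $\left(  \mathcal{F},\#\right)  $, by Theorem \ref{thm.stufufuff.hopf}) is computable by a finite recursion on the grading: for a homogeneous $w$ of positive degree one has $S\left(  w\right)  =-w-\sum S\left(  w^{\prime}\right)  \# w^{\prime\prime}$, summed over the reduced part of the coproduct $\Delta$. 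By induction on the degree, this shows that the coefficients of $S\left(  x_{\alpha}\right)  $ in the word basis are polynomials in the structure constants of $\#$, which are themselves polynomials in $a$ and $b$. Hence it suffices to prove the theorem when $a$ and $b$ are algebraically independent over $\mathbb{Q}$.

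Next I would specialize to $\mathbf{k}=\mathbb{Q}\left(  X,Y\right)  $ with $a=X+Y$ and $b=XY$; these are algebraically independent (being the elementary symmetric polynomials in $X,Y$), so no generality is lost. Setting $q:=-XY^{-1}$ and $u:=-Y$, a direct computation gives $a=\left(  q-1\right)  u$ and $b=-qu^{2}$, and both $u=-Y$ and $r=q+1$ are nonzero, hence invertible in the field $\mathbf{k}$. By Theorem \ref{thm.stufufuf.eta-hmor}, the map $\operatorname*{eta}$ is then a Hopf algebra homomorphism, and by Theorem \ref{thm.eta.basis} \textbf{(a)} it carries the basis $\left(  x_{\alpha}\right)  $ of $\mathcal{F}$ to the basis $\left(  u^{\ell\left(  \alpha\right)  }\eta_{\alpha}^{\left(  q\right)  }\right)  $ of $\operatorname*{QSym}$ (here $u$ is invertible), so $\operatorname*{eta}$ is in fact a Hopf algebra isomorphism.

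Since any Hopf algebra homomorphism intertwines antipodes (a standard consequence of the uniqueness of antipodes), I would have $\operatorname*{eta}\circ S=S_{\operatorname*{QSym}}\circ\operatorname*{eta}$, where $S_{\operatorname*{QSym}}$ denotes the antipode of $\operatorname*{QSym}$. Applying this to $x_{\alpha}$, using $\operatorname*{eta}\left(  x_{\alpha}\right)  =u^{\ell\left(  \alpha\right)  }\eta_{\alpha}^{\left(  q\right)  }$ and $\mathbf{k}$-linearity of $S_{\operatorname*{QSym}}$, and then invoking Theorem \ref{thm.eta.S2}, I obtain
\begin{align*}
\operatorname*{eta}\left(  S\left(  x_{\alpha}\right)  \right)   &  =u^{\ell\left(  \alpha\right)  }S_{\operatorname*{QSym}}\left(  \eta_{\alpha}^{\left(  q\right)  }\right)  \\
&  =u^{\ell\left(  \alpha\right)  }\left(  -1\right)  ^{\ell\left(  \alpha\right)  }\sum_{\substack{\beta\in\operatorname*{Comp}\nolimits_{n};\\D\left(  \beta\right)  \subseteq D\left(  \operatorname*{rev}\alpha\right)  }}\left(  q-1\right)  ^{\ell\left(  \alpha\right)  -\ell\left(  \beta\right)  }\eta_{\beta}^{\left(  q\right)  }.
\end{align*}
Rewriting each $\eta_{\beta}^{\left(  q\right)  }=u^{-\ell\left(  \beta\right)  }\operatorname*{eta}\left(  x_{\beta}\right)  $ (legitimate since $u$ is invertible) and collecting powers via $u^{\ell\left(  \alpha\right)  -\ell\left(  \beta\right)  }\left(  q-1\right)  ^{\ell\left(  \alpha\right)  -\ell\left(  \beta\right)  }=\left(  \left(  q-1\right)  u\right)  ^{\ell\left(  \alpha\right)  -\ell\left(  \beta\right)  }=a^{\ell\left(  \alpha\right)  -\ell\left(  \beta\right)  }$, I would arrive at $\operatorname*{eta}\left(  S\left(  x_{\alpha}\right)  \right)  =\operatorname*{eta}\bigl(  \left(  -1\right)  ^{\ell\left(  \alpha\right)  }\sum_{\beta}a^{\ell\left(  \alpha\right)  -\ell\left(  \beta\right)  }x_{\beta}\bigr)  $; cancelling the isomorphism $\operatorname*{eta}$ then yields the claim. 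The main obstacle I anticipate is the bookkeeping in the reduction step, specifically being precise that $S\left(  x_{\alpha}\right)  $ depends polynomially on $a$ and $b$—this is where the inductive (Takeuchi-type) description of the antipode of a connected graded Hopf algebra is essential, since the statement only makes sense as a polynomial identity. Once that is secured, the remainder is routine, as every ingredient (the isomorphism property of $\operatorname*{eta}$, the commutation of antipodes with homomorphisms, and Theorem \ref{thm.eta.S2}) is already available.
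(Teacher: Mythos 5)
Your proof is correct and takes essentially the same route the paper intends: the paper establishes this theorem precisely by combining Theorem \ref{thm.eta.S2} with the proof method of Theorem \ref{thm.stufufuf.ass} (reduction to a polynomial identity in $a$ and $b$, generic specialization $\mathbf{k}=\mathbb{Q}\left(  X,Y\right)  $, $a=X+Y$, $b=XY$, $q=-XY^{-1}$, $u=-Y$, and transfer along the Hopf isomorphism $\operatorname*{eta}$ of Theorem \ref{thm.stufufuf.eta-hmor}), which is exactly what you do. Your explicit justification, via the Takeuchi recursion, that the coefficients of $S\left(  x_{\alpha}\right)  $ are integer polynomials in $a$ and $b$ is a detail the paper leaves implicit, and it is handled correctly.
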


The recent work \cite[Theorem 5.2]{BoNoTh22} constructs another basis $\left(
X_{I}\right)  $ of $\operatorname*{QSym}$ (indexed by subsets $I$ of $\left[
n-1\right]  $ instead of compositions $\alpha$, but this difference is
insubstantial) that multiplies according to the stufufuffle product (thus
obtaining another $\mathbf{k}$-algebra homomorphism from $\mathcal{F}$ to
$\operatorname*{QSym}$, and with it another proof of Theorem
\ref{thm.stufufuf.ass}). While similar to ours, it uses the
alphabet-transformed functions $H_{k}\left(  \left(  s-t\right)  A\right)  $
instead of the plain $H_{k}$, which lead to a basis of $\operatorname*{QSym}$
that does not appear to have a simple combinatorial formula like our
$\eta_{\alpha}^{\left(  q\right)  }$.

\begin{remark}
Assume that $\mathbf{k}$ is a field of characteristic $0$. Then, Leray's
theorem (\cite[Theorem 1.7.29(f)]{GriRei}) shows that any commutative
connected graded $\mathbf{k}$-bialgebra $A$ is isomorphic as a graded
$\mathbf{k}$-algebra to the symmetric algebra of a certain graded $\mathbf{k}%
$-module (namely, of $\left( \ker\varepsilon\right)  / \left( \ker
\varepsilon\right) ^{2}$, where $\varepsilon$ is the counit of $A$). In other
words, any such $A$ is isomorphic as a graded $\mathbf{k}$-algebra to a
polynomial ring whose generators are homogeneous of various positive degrees,
with exactly $\dim\left( \left( \ker\varepsilon\right)  / \left(
\ker\varepsilon\right) ^{2}\right) _{i}$ many generators of degree $i$. This
applies, in particular, to our connected graded $\mathbf{k}$-bialgebra
$\left(  \mathcal{F},\#\right)  $. Moreover, using standard Hilbert-series
arguments, it is easy to see that the number of generators of each given
degree does not depend on the parameters $a$ and $b$. Hence, as a graded
$\mathbf{k}$-algebra, our $\left(  \mathcal{F},\#\right)  $ is isomorphic to
the usual shuffle algebra (which is obtained for $a=0$ and $b=0$).

However, this is not a $\mathbf{k}$-coalgebra isomorphism; nor is it canonical
(although we suspect that a canonical $\mathbf{k}$-algebra isomorphism may
exist); nor does it extend to fields of positive characteristic.
\end{remark}

\section{\label{sec.Rq}Appendix: The map $R_{q}$}

We finish by stating yet another formula for $\eta_{\alpha}^{\left(  q\right)
}$, which may eventually prove useful in understanding these functions. This
formula relies on some more notations. We first define a simple combinatorial
operation on compositions:

\begin{definition}
\label{def.composition-complement}Let $\alpha\in\operatorname*{Comp}$, and let
$n=\left\vert \alpha\right\vert $. Then, $\overline{\alpha}$ shall denote the
unique composition $\gamma$ of $n$ such that $D\left(  \gamma\right)  =\left[
n-1\right]  \setminus D\left(  \alpha\right)  $. (This $\gamma$ is indeed
unique, since the map $D$ is a bijection.) This composition $\overline{\alpha
}$ is called the \emph{complement} of $\alpha$.
\end{definition}

For example, $\overline{\left(  2,5,1,1\right)  }=\left(  1,2,1,1,1,3\right)
$. We observe some simple properties of complements of compositions:

\begin{proposition}
\label{prop.composition-complement.basics}\ \ 

\begin{enumerate}
\item[\textbf{(a)}] Every composition $\alpha$ satisfies $\overline
{\overline{\alpha}}=\alpha$.

\item[\textbf{(b)}] For each $n\in\mathbb{N}$, the map $\operatorname*{Comp}%
\nolimits_{n}\rightarrow\operatorname*{Comp}\nolimits_{n},\ \beta
\mapsto\overline{\beta}$ is a bijection.

\item[\textbf{(c)}] If $\alpha$ and $\beta$ are two compositions of $n$ for
some $n\in\mathbb{N}$, then the statements \textquotedblleft$D\left(
\beta\right)  \subseteq D\left(  \alpha\right)  $\textquotedblright\ and
\textquotedblleft$D\left(  \overline{\beta}\right)  \supseteq D\left(
\overline{\alpha}\right)  $\textquotedblright\ are equivalent.

\item[\textbf{(d)}] If $\alpha$ is a composition of a positive integer $n$,
then $\ell\left(  \overline{\alpha}\right)  +\ell\left(  \alpha\right)  =n+1$.
\end{enumerate}
\end{proposition}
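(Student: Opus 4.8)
The plan is to prove Proposition \ref{prop.composition-complement.basics} by reducing every claim to an elementary statement about the subset $D\left(  \alpha\right)  \subseteq\left[  n-1\right]  $, using that the partial-sum map $D:\operatorname*{Comp}\nolimits_{n}\rightarrow\mathcal{P}\left(  \left[  n-1\right]  \right)  $ is a bijection (Definition \ref{def.comps.D-comp}). The key observation is that the complement operation on compositions is, by Definition \ref{def.composition-complement}, nothing but the set-theoretic complementation $S\mapsto\left[  n-1\right]  \setminus S$ transported through this bijection $D$. So I would first record the defining relation $D\left(  \overline{\alpha}\right)  =\left[  n-1\right]  \setminus D\left(  \alpha\right)  $ and keep in mind that $D\left(  \alpha\right)  \subseteq\left[  n-1\right]  $ for every $\alpha\in\operatorname*{Comp}\nolimits_{n}$.

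For part \textbf{(a)}, I would apply the definition twice: $D\left(  \overline{\overline{\alpha}}\right)  =\left[  n-1\right]  \setminus D\left(  \overline{\alpha}\right)  =\left[  n-1\right]  \setminus\left(  \left[  n-1\right]  \setminus D\left(  \alpha\right)  \right)  =D\left(  \alpha\right)  $, using the involutivity of complementation within $\left[  n-1\right]  $; then injectivity of $D$ gives $\overline{\overline{\alpha}}=\alpha$. For part \textbf{(b)}, part \textbf{(a)} shows the map $\beta\mapsto\overline{\beta}$ is its own inverse, hence a bijection; alternatively I note it is the conjugate by $D$ of the complementation bijection on $\mathcal{P}\left(  \left[  n-1\right]  \right)  $. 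For part \textbf{(c)}, I would translate both sides through $D$: the statement $D\left(  \beta\right)  \subseteq D\left(  \alpha\right)  $ is equivalent to $\left[  n-1\right]  \setminus D\left(  \alpha\right)  \subseteq\left[  n-1\right]  \setminus D\left(  \beta\right)  $ (complementation reverses inclusions among subsets of $\left[  n-1\right]  $), which is exactly $D\left(  \overline{\alpha}\right)  \subseteq D\left(  \overline{\beta}\right)  $, i.e.\ $D\left(  \overline{\beta}\right)  \supseteq D\left(  \overline{\alpha}\right)  $.

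For part \textbf{(d)}, the cleanest route uses Lemma \ref{lem.comps.l-vs-size} \textbf{(a)}, which for a composition of a positive integer $n$ (so $\left[  n\neq0\right]  =1$) gives $\ell\left(  \delta\right)  =\left\vert D\left(  \delta\right)  \right\vert +1$. Applying this to both $\alpha$ and $\overline{\alpha}$ and adding, I get $\ell\left(  \alpha\right)  +\ell\left(  \overline{\alpha}\right)  =\left\vert D\left(  \alpha\right)  \right\vert +\left\vert D\left(  \overline{\alpha}\right)  \right\vert +2$. Since $D\left(  \overline{\alpha}\right)  =\left[  n-1\right]  \setminus D\left(  \alpha\right)  $ and $D\left(  \alpha\right)  \subseteq\left[  n-1\right]  $, I have $\left\vert D\left(  \alpha\right)  \right\vert +\left\vert D\left(  \overline{\alpha}\right)  \right\vert =\left\vert \left[  n-1\right]  \right\vert =n-1$, so the sum becomes $\left(  n-1\right)  +2=n+1$, as claimed. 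I anticipate no serious obstacle here; the only point requiring minor care is the positivity hypothesis in \textbf{(d)}, which is exactly what makes $\left[  n\neq0\right]  =1$ and guarantees $n-1\in\mathbb{N}$ so that $\left\vert \left[  n-1\right]  \right\vert =n-1$; I would flag that the formula genuinely fails for the empty composition, mirroring the caveat already noted after Definition \ref{def.comps.D-comp}.
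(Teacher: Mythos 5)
Your proposal is correct and follows essentially the same route as the paper's own proof: all four parts are reduced to set-theoretic facts about $D\left(\alpha\right)\subseteq\left[n-1\right]$ via the bijection $D$, with part \textbf{(d)} handled exactly as in the paper using Lemma \ref{lem.comps.l-vs-size} \textbf{(a)} and the identity $\left\vert D\left(\alpha\right)\right\vert+\left\vert D\left(\overline{\alpha}\right)\right\vert=n-1$. No gaps to report.
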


\begin{proof}
\textbf{(a)} Let $\alpha$ be a composition. Let $n=\left\vert \alpha
\right\vert $. Thus, $D\left(  \alpha\right)  \subseteq\left[  n-1\right]  $.

The definition of $\overline{\alpha}$ yields that $\overline{\alpha}$ is a
composition of $n$ and satisfies $D\left(  \overline{\alpha}\right)  =\left[
n-1\right]  \setminus D\left(  \alpha\right)  $. Hence, the definition of
$\overline{\overline{\alpha}}$ yields that $\overline{\overline{\alpha}}$ is a
composition of $n$ and satisfies%
\[
D\left(  \overline{\overline{\alpha}}\right)  =\left[  n-1\right]
\setminus\underbrace{D\left(  \overline{\alpha}\right)  }_{=\left[
n-1\right]  \setminus D\left(  \alpha\right)  }=\left[  n-1\right]
\setminus\left(  \left[  n-1\right]  \setminus D\left(  \alpha\right)
\right)  =D\left(  \alpha\right)
\]
(since $D\left(  \alpha\right)  \subseteq\left[  n-1\right]  $). Since the map
$D:\operatorname*{Comp}\nolimits_{n}\rightarrow\mathcal{P}\left(  \left[
n-1\right]  \right)  $ is a bijection (and since both $\overline
{\overline{\alpha}}$ and $\alpha$ belong to $\operatorname*{Comp}%
\nolimits_{n}$), we thus conclude that $\overline{\overline{\alpha}}=\alpha$.
This proves Proposition \ref{prop.composition-complement.basics} \textbf{(a)}.
\medskip

\textbf{(b)} Let $n\in\mathbb{N}$. Then, $\overline{\beta}\in
\operatorname*{Comp}\nolimits_{n}$ for each $\beta\in\operatorname*{Comp}%
\nolimits_{n}$ (by the definition of $\overline{\beta}$). Hence, the map
$\operatorname*{Comp}\nolimits_{n}\rightarrow\operatorname*{Comp}%
\nolimits_{n},\ \beta\mapsto\overline{\beta}$ is well-defined. This map is
furthermore its own inverse (by Proposition
\ref{prop.composition-complement.basics} \textbf{(a)}). Thus, it is
invertible, i.e., a bijection. This proves Proposition
\ref{prop.composition-complement.basics} \textbf{(b)}. \medskip

\textbf{(c)} Let $\alpha$ and $\beta$ be two compositions of $n$ for some
$n\in\mathbb{N}$. Then, $D\left(  \alpha\right)  $ and $D\left(  \beta\right)
$ are two subsets of $\left[  n-1\right]  $. Meanwhile, $D\left(
\overline{\alpha}\right)  $ and $D\left(  \overline{\beta}\right)  $ are the
complements of these two subsets (i.e., we have $D\left(  \overline{\alpha
}\right)  =\left[  n-1\right]  \setminus D\left(  \alpha\right)  $ and
$D\left(  \overline{\beta}\right)  =\left[  n-1\right]  \setminus D\left(
\beta\right)  $), by the definition of $\overline{\alpha}$ and $\overline
{\beta}$. However, it is well-known from set theory that complementation
reverses inclusion of subsets: That is, if $X$ and $Y$ are two subsets of a
given set $Z$, then the statement \textquotedblleft$X\subseteq Y$%
\textquotedblright\ is equivalent to the statement \textquotedblleft%
$Z\setminus X\supseteq Z\setminus Y$\textquotedblright. Applying this to
$X=D\left(  \beta\right)  $ and $Y=D\left(  \alpha\right)  $ and $Z=\left[
n-1\right]  $, we conclude that the statement \textquotedblleft$D\left(
\beta\right)  \subseteq D\left(  \alpha\right)  $\textquotedblright\ is
equivalent to the statement \textquotedblleft$\left[  n-1\right]  \setminus
D\left(  \beta\right)  \supseteq\left[  n-1\right]  \setminus D\left(
\alpha\right)  $\textquotedblright. In other words, the statement
\textquotedblleft$D\left(  \beta\right)  \subseteq D\left(  \alpha\right)
$\textquotedblright\ is equivalent to the statement \textquotedblleft$D\left(
\overline{\beta}\right)  \supseteq D\left(  \overline{\alpha}\right)
$\textquotedblright\ (since $D\left(  \overline{\alpha}\right)  =\left[
n-1\right]  \setminus D\left(  \alpha\right)  $ and $D\left(  \overline{\beta
}\right)  =\left[  n-1\right]  \setminus D\left(  \beta\right)  $). This
proves Proposition \ref{prop.composition-complement.basics} \textbf{(c)}.
\medskip

\textbf{(d)} Let $\alpha$ be a composition of a positive integer $n$. Then,
$\left[  n\neq0\right]  =1$ (since $n$ is positive), so that Lemma
\ref{lem.comps.l-vs-size} \textbf{(a)} yields $\ell\left(  \alpha\right)
=\left\vert D\left(  \alpha\right)  \right\vert +\underbrace{\left[
n\neq0\right]  }_{=1}=\left\vert D\left(  \alpha\right)  \right\vert +1$.
Similarly, $\ell\left(  \overline{\alpha}\right)  =\left\vert D\left(
\overline{\alpha}\right)  \right\vert +1$ (since $\overline{\alpha}$ is a
composition of $n$ as well). But the definition of $\overline{\alpha}$ yields
$D\left(  \overline{\alpha}\right)  =\left[  n-1\right]  \setminus D\left(
\alpha\right)  $, so that%
\[
\left\vert D\left(  \overline{\alpha}\right)  \right\vert =\left\vert \left[
n-1\right]  \setminus D\left(  \alpha\right)  \right\vert =\left\vert \left[
n-1\right]  \right\vert -\left\vert D\left(  \alpha\right)  \right\vert
\]
(since $D\left(  \alpha\right)  \subseteq\left[  n-1\right]  $). Therefore,
$\left\vert D\left(  \overline{\alpha}\right)  \right\vert +\left\vert
D\left(  \alpha\right)  \right\vert =\left\vert \left[  n-1\right]
\right\vert =n-1$ (since $n\geq1$). Now,%
\begin{align*}
\underbrace{\ell\left(  \overline{\alpha}\right)  }_{=\left\vert D\left(
\overline{\alpha}\right)  \right\vert +1}+\underbrace{\ell\left(
\alpha\right)  }_{=\left\vert D\left(  \alpha\right)  \right\vert +1}  &
=\left\vert D\left(  \overline{\alpha}\right)  \right\vert +1+\left\vert
D\left(  \alpha\right)  \right\vert +1=\underbrace{\left\vert D\left(
\overline{\alpha}\right)  \right\vert +\left\vert D\left(  \alpha\right)
\right\vert }_{=n-1}+\,2\\
&  =n-1+2=n+1.
\end{align*}
This proves Proposition \ref{prop.composition-complement.basics} \textbf{(d)}.
\qedhere

\begin{verlong}
T0D0: verlong proof.
\end{verlong}
\end{proof}

We now define a linear endomorphism of $\operatorname*{QSym}$:

\begin{definition}
\label{def.Rq}We let $R_{q}$ be the $\mathbf{k}$-linear map from
$\operatorname*{QSym}$ to $\operatorname*{QSym}$ that sends each $M_{\alpha}$
(with $\alpha\in\operatorname*{Comp}$) to $r^{\ell\left(  \overline{\alpha
}\right)  }M_{\overline{\alpha}}$. (This is well-defined, since $\left(
M_{\alpha}\right)  _{\alpha\in\operatorname*{Comp}}$ is a basis of
$\operatorname*{QSym}$.)
\end{definition}

This map $R_{q}$ is neither an algebra endomorphism nor a coalgebra
endomorphism of $\operatorname*{QSym}$ (not even when $r=1$), but it is
exactly what we need for our formula. First, let us observe that the map
$R_{q}$ is \textquotedblleft close to an involution\textquotedblright\ in the
following sense:

\begin{proposition}
\label{prop.Rq.invol}Let $n$ be a positive integer. Let $f\in
\operatorname*{QSym}$ be homogeneous of degree $n$. Then, $\left(  R_{q}\circ
R_{q}\right)  \left(  f\right)  =r^{n+1}f$.
\end{proposition}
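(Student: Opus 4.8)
The statement concerns the map $R_q$ from Definition \ref{def.Rq}, which sends $M_\alpha \mapsto r^{\ell(\overline{\alpha})} M_{\overline{\alpha}}$. Since $f$ is homogeneous of degree $n$, I can write it in the monomial basis as $f = \sum_{\alpha \in \operatorname*{Comp}\nolimits_n} c_\alpha M_\alpha$ for some coefficients $c_\alpha \in \mathbf{k}$. By $\mathbf{k}$-linearity of $R_q$, it suffices to prove the claim for each basis element $M_\alpha$ with $\alpha \in \operatorname*{Comp}\nolimits_n$ separately; that is, I would reduce to showing $(R_q \circ R_q)(M_\alpha) = r^{n+1} M_\alpha$ for every composition $\alpha$ of $n$.

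\textbf{Key steps.} First I would apply $R_q$ once to obtain $R_q(M_\alpha) = r^{\ell(\overline{\alpha})} M_{\overline{\alpha}}$ directly from Definition \ref{def.Rq}. Then I would apply $R_q$ again: since $\overline{\alpha}$ is again a composition of $n$ (by the definition of the complement, or by Proposition \ref{prop.composition-complement.basics} \textbf{(b)}), Definition \ref{def.Rq} gives
\[
(R_q \circ R_q)(M_\alpha) = R_q\!\left( r^{\ell(\overline{\alpha})} M_{\overline{\alpha}} \right) = r^{\ell(\overline{\alpha})} \cdot r^{\ell(\overline{\overline{\alpha}})} M_{\overline{\overline{\alpha}}}.
\]
Now Proposition \ref{prop.composition-complement.basics} \textbf{(a)} yields $\overline{\overline{\alpha}} = \alpha$, so $M_{\overline{\overline{\alpha}}} = M_\alpha$. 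It remains to evaluate the exponent $\ell(\overline{\alpha}) + \ell(\overline{\overline{\alpha}}) = \ell(\overline{\alpha}) + \ell(\alpha)$. Since $\alpha$ is a composition of the positive integer $n$, Proposition \ref{prop.composition-complement.basics} \textbf{(d)} gives $\ell(\overline{\alpha}) + \ell(\alpha) = n+1$. Combining these, $(R_q \circ R_q)(M_\alpha) = r^{n+1} M_\alpha$.

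\textbf{Assembly.} Finally I would reassemble: for the general homogeneous $f = \sum_{\alpha \in \operatorname*{Comp}\nolimits_n} c_\alpha M_\alpha$, linearity gives
\[
(R_q \circ R_q)(f) = \sum_{\alpha \in \operatorname*{Comp}\nolimits_n} c_\alpha (R_q \circ R_q)(M_\alpha) = \sum_{\alpha \in \operatorname*{Comp}\nolimits_n} c_\alpha \, r^{n+1} M_\alpha = r^{n+1} f,
\]
which is the claim. (One subtlety worth noting: a homogeneous element of degree $n$ is automatically a $\mathbf{k}$-linear combination of the $M_\alpha$ with $\alpha \in \operatorname*{Comp}\nolimits_n$, because these $M_\alpha$ form a basis of the degree-$n$ graded component $\operatorname*{QSym}\nolimits_n$; this is what lets me restrict the sum to compositions of $n$.)

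\textbf{Main obstacle.} There is essentially no hard step here — the proof is a direct two-fold application of the definition combined with parts \textbf{(a)} and \textbf{(d)} of Proposition \ref{prop.composition-complement.basics}. The only point requiring any care is ensuring that the hypothesis \textquotedblleft$n$ positive\textquotedblright\ is genuinely used: it is needed precisely for the exponent identity $\ell(\overline{\alpha}) + \ell(\alpha) = n+1$ (Proposition \ref{prop.composition-complement.basics} \textbf{(d)}), which fails for $n=0$. Thus the positivity assumption is not decorative, and I would make sure to invoke part \textbf{(d)} rather than attempting to recompute the lengths from scratch.
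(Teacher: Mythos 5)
Your proof is correct and follows essentially the same route as the paper's: reduce by linearity to the basis elements $M_{\alpha}$ with $\alpha\in\operatorname*{Comp}\nolimits_{n}$, apply Definition \ref{def.Rq} twice, and conclude via Proposition \ref{prop.composition-complement.basics} \textbf{(a)} and \textbf{(d)}. Your remark that the positivity of $n$ enters exactly through part \textbf{(d)} is also accurate.
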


\begin{proof}
Both sides of this equality are $\mathbf{k}$-linear in $f$. Thus, it suffices
to prove this equality for $f=M_{\alpha}$ for all compositions $\alpha
\in\operatorname*{Comp}\nolimits_{n}$ (since the family $\left(  M_{\alpha
}\right)  _{\alpha\in\operatorname*{Comp}\nolimits_{n}}$ is a basis of the
$n$-th graded component of $\operatorname*{QSym}$). In other words, it
suffices to show that $\left(  R_{q}\circ R_{q}\right)  \left(  M_{\alpha
}\right)  =r^{n+1}M_{\alpha}$ for every $\alpha\in\operatorname*{Comp}%
\nolimits_{n}$. However, this is easy:

Let $\alpha\in\operatorname*{Comp}\nolimits_{n}$. Then, the definition of
$R_{q}$ yields $R_{q}\left(  M_{\alpha}\right)  =r^{\ell\left(  \overline
{\alpha}\right)  }M_{\overline{\alpha}}$. Applying the map $R_{q}$ to both
sides of this equality, we find%
\begin{equation}
R_{q}\left(  R_{q}\left(  M_{\alpha}\right)  \right)  =R_{q}\left(
r^{\ell\left(  \overline{\alpha}\right)  }M_{\overline{\alpha}}\right)
=r^{\ell\left(  \overline{\alpha}\right)  }R_{q}\left(  M_{\overline{\alpha}%
}\right)  . \label{pf.prop.Rq.invol.1}%
\end{equation}
But the definition of $R_{q}$ yields $R_{q}\left(  M_{\overline{\alpha}%
}\right)  =r^{\ell\left(  \overline{\overline{\alpha}}\right)  }%
M_{\overline{\overline{\alpha}}}=r^{\ell\left(  \alpha\right)  }M_{\alpha}$
(since Proposition \ref{prop.composition-complement.basics} \textbf{(a)}
yields $\overline{\overline{\alpha}}=\alpha$). Thus, (\ref{pf.prop.Rq.invol.1}%
) becomes%
\[
R_{q}\left(  R_{q}\left(  M_{\alpha}\right)  \right)  =r^{\ell\left(
\overline{\alpha}\right)  }\underbrace{R_{q}\left(  M_{\overline{\alpha}%
}\right)  }_{=r^{\ell\left(  \alpha\right)  }M_{\alpha}}=r^{\ell\left(
\overline{\alpha}\right)  }r^{\ell\left(  \alpha\right)  }M_{\alpha}%
=r^{\ell\left(  \overline{\alpha}\right)  +\ell\left(  \alpha\right)
}M_{\alpha}=r^{n+1}M_{\alpha}%
\]
(since Proposition \ref{prop.composition-complement.basics} \textbf{(d)}
yields $\ell\left(  \overline{\alpha}\right)  +\ell\left(  \alpha\right)
=n+1$). Hence, $\left(  R_{q}\circ R_{q}\right)  \left(  M_{\alpha}\right)
=R_{q}\left(  R_{q}\left(  M_{\alpha}\right)  \right)  =r^{n+1}M_{\alpha}$. As
explained above, this completes the proof of Proposition \ref{prop.Rq.invol}.
\qedhere

\begin{verlong}
T0D0: verlong proof.
\end{verlong}
\end{proof}

Now we can state our final formula for $\eta_{\alpha}^{\left(  q\right)  }$:

\begin{theorem}
\label{thm.Rq.eta}Let $\alpha\in\operatorname*{Comp}$. Then,
\[
\eta_{\alpha}^{\left(  q\right)  }=R_{q}\left(  L_{\overline{\alpha}}\right)
.
\]

\end{theorem}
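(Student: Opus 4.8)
The plan is to compute $R_{q}\left(  L_{\overline{\alpha}}\right)  $ directly and match it termwise against the defining formula \eqref{eq.def.etaalpha.def} for $\eta_{\alpha}^{\left(  q\right)  }$. Set $n=\left\vert \alpha\right\vert $, so that $\alpha\in\operatorname*{Comp}\nolimits_{n}$, and note $\left\vert \overline{\alpha}\right\vert =n$ as well, so $\overline{\alpha}\in\operatorname*{Comp}\nolimits_{n}$. First I would expand $L_{\overline{\alpha}}$ using \eqref{eq.Lalpha.def} (applied to $\overline{\alpha}$ instead of $\alpha$), obtaining $L_{\overline{\alpha}}=\sum_{\beta\in\operatorname*{Comp}\nolimits_{n};\ D\left(  \beta\right)  \supseteq D\left(  \overline{\alpha}\right)  }M_{\beta}$. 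Since $R_{q}$ is $\mathbf{k}$-linear, I can apply it term by term and use its defining property $R_{q}\left(  M_{\beta}\right)  =r^{\ell\left(  \overline{\beta}\right)  }M_{\overline{\beta}}$ (Definition \ref{def.Rq}) to get
\[
R_{q}\left(  L_{\overline{\alpha}}\right)  =\sum_{\substack{\beta\in\operatorname*{Comp}\nolimits_{n};\\D\left(  \beta\right)  \supseteq D\left(  \overline{\alpha}\right)  }}r^{\ell\left(  \overline{\beta}\right)  }M_{\overline{\beta}}.
\]

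Next I would substitute $\gamma=\overline{\beta}$. By Proposition \ref{prop.composition-complement.basics}\textbf{(b)} the map $\beta\mapsto\overline{\beta}$ is a bijection of $\operatorname*{Comp}\nolimits_{n}$, and by part \textbf{(a)} it is its own inverse, so $\beta=\overline{\gamma}$ and the exponent $r^{\ell\left(  \overline{\beta}\right)  }$ is simply $r^{\ell\left(  \gamma\right)  }$. The summation condition $D\left(  \beta\right)  \supseteq D\left(  \overline{\alpha}\right)  $, i.e.\ $D\left(  \overline{\gamma}\right)  \supseteq D\left(  \overline{\alpha}\right)  $, is equivalent to $D\left(  \gamma\right)  \subseteq D\left(  \alpha\right)  $ by Proposition \ref{prop.composition-complement.basics}\textbf{(c)} (with its $\beta$ taken to be our $\gamma$). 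Hence the sum rewrites as $\sum_{\gamma\in\operatorname*{Comp}\nolimits_{n};\ D\left(  \gamma\right)  \subseteq D\left(  \alpha\right)  }r^{\ell\left(  \gamma\right)  }M_{\gamma}$, which, after renaming $\gamma$ back to $\beta$, is exactly $\eta_{\alpha}^{\left(  q\right)  }$ by \eqref{eq.def.etaalpha.def}.

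There is essentially no hard step here: the argument is pure bookkeeping with the partial-sum map $D$ and its complement, and every ingredient I need is already supplied by Proposition \ref{prop.composition-complement.basics}. The only point requiring a moment of care is the direction of the inclusion after complementation (complementation reverses inclusion), and this is precisely what part \textbf{(c)} records. The degenerate case $\alpha=\varnothing$ (that is, $n=0$) can be checked by hand and poses no obstacle: then $\overline{\alpha}=\varnothing$, so $L_{\overline{\alpha}}=L_{\varnothing}=M_{\varnothing}=1$, and $R_{q}$ fixes $1$ (since $\ell\left(  \overline{\varnothing}\right)  =\ell\left(  \varnothing\right)  =0$), while $\eta_{\varnothing}^{\left(  q\right)  }=1$ by Example \ref{exa.etaalpha.1}\textbf{(b)}; thus both sides equal $1$.
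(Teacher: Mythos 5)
Your proof is correct and follows essentially the same route as the paper's: expand $L_{\overline{\alpha}}$ in the monomial basis, apply $R_{q}$, and use the complementation bijection together with parts \textbf{(a)}, \textbf{(b)}, \textbf{(c)} of Proposition \ref{prop.composition-complement.basics} to re-index the sum into the defining formula \eqref{eq.def.etaalpha.def} for $\eta_{\alpha}^{\left(q\right)}$. The only (immaterial) difference is that the paper performs the substitution $\beta\mapsto\overline{\beta}$ inside the sum before applying $R_{q}$, while you apply $R_{q}$ first and substitute afterwards; your separate check of the case $n=0$ is harmless but not needed, since no step of the argument requires $n$ to be positive.
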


\begin{proof}
Let $n=\left\vert \alpha\right\vert $, so that $\alpha\in\operatorname*{Comp}%
\nolimits_{n}$. Therefore, $\overline{\alpha}\in\operatorname*{Comp}%
\nolimits_{n}$ as well (by the definition of $\overline{\alpha}$), so that
$n=\left\vert \overline{\alpha}\right\vert $. Hence, applying
(\ref{eq.Lalpha.def}) to $\overline{\alpha}$ instead of $\alpha$, we obtain%
\begin{equation}
L_{\overline{\alpha}}=\sum_{\substack{\beta\in\operatorname*{Comp}%
\nolimits_{n};\\D\left(  \beta\right)  \supseteq D\left(  \overline{\alpha
}\right)  }}M_{\beta}=\sum_{\substack{\beta\in\operatorname*{Comp}%
\nolimits_{n};\\D\left(  \overline{\beta}\right)  \supseteq D\left(
\overline{\alpha}\right)  }}M_{\overline{\beta}} \label{pf.thm.Rq.eta.1}%
\end{equation}
(here, we have substituted $\overline{\beta}$ for $\beta$ in the sum, since
Proposition \ref{prop.composition-complement.basics} \textbf{(b)} shows that
the map $\operatorname*{Comp}\nolimits_{n}\rightarrow\operatorname*{Comp}%
\nolimits_{n},\ \beta\mapsto\overline{\beta}$ is a bijection). However, the
condition \textquotedblleft$D\left(  \overline{\beta}\right)  \supseteq
D\left(  \overline{\alpha}\right)  $\textquotedblright\ under the summation
sign on the right hand side of (\ref{pf.thm.Rq.eta.1}) is equivalent to
\textquotedblleft$D\left(  \beta\right)  \subseteq D\left(  \alpha\right)
$\textquotedblright\ (by Proposition \ref{prop.composition-complement.basics}
\textbf{(c)}). Hence, we can rewrite (\ref{pf.thm.Rq.eta.1}) as%
\[
L_{\overline{\alpha}}=\sum_{\substack{\beta\in\operatorname*{Comp}%
\nolimits_{n};\\D\left(  \beta\right)  \subseteq D\left(  \alpha\right)
}}M_{\overline{\beta}}.
\]
Applying the map $R_{q}$ to both sides of this equality, we find%
\begin{align*}
R_{q}\left(  L_{\overline{\alpha}}\right)   &  =R_{q}\left(  \sum
_{\substack{\beta\in\operatorname*{Comp}\nolimits_{n};\\D\left(  \beta\right)
\subseteq D\left(  \alpha\right)  }}M_{\overline{\beta}}\right)
=\sum_{\substack{\beta\in\operatorname*{Comp}\nolimits_{n};\\D\left(
\beta\right)  \subseteq D\left(  \alpha\right)  }}\underbrace{R_{q}\left(
M_{\overline{\beta}}\right)  }_{\substack{=r^{\ell\left(  \overline
{\overline{\beta}}\right)  }M_{\overline{\overline{\beta}}}\\\text{(by the
definition of }R_{q}\text{)}}}\\
&  =\sum_{\substack{\beta\in\operatorname*{Comp}\nolimits_{n};\\D\left(
\beta\right)  \subseteq D\left(  \alpha\right)  }}\underbrace{r^{\ell\left(
\overline{\overline{\beta}}\right)  }M_{\overline{\overline{\beta}}}%
}_{\substack{=r^{\ell\left(  \beta\right)  }M_{\beta}\\\text{(since
Proposition \ref{prop.composition-complement.basics} \textbf{(a)}%
}\\\text{yields }\overline{\overline{\beta}}=\beta\text{)}}}=\sum
_{\substack{\beta\in\operatorname*{Comp}\nolimits_{n};\\D\left(  \beta\right)
\subseteq D\left(  \alpha\right)  }}r^{\ell\left(  \beta\right)  }M_{\beta
}=\eta_{\alpha}^{\left(  q\right)  }\ \ \ \ \ \ \ \ \ \ \left(  \text{by
(\ref{eq.def.etaalpha.def})}\right)  .
\end{align*}
This proves Theorem \ref{thm.Rq.eta}. \qedhere

\begin{verlong}
T0D0: verlong proof.
\end{verlong}
\end{proof}

\begin{remark}
Let $n$ be a positive integer, and let $\alpha\in\operatorname*{Comp}%
\nolimits_{n}$. Combining Theorem \ref{thm.Rq.eta} with Proposition
\ref{prop.Rq.invol}, we can easily see that $R_{q}\left(  \eta_{\alpha
}^{\left(  q\right)  }\right)  =r^{n+1}L_{\overline{\alpha}}$. Contrasting
this equality with Theorem \ref{thm.Rq.eta} reveals a symmetry of sorts
between the $\eta_{\alpha}^{\left(  q\right)  }$ and $L_{\overline{\alpha}}$.
This symmetry explains the similarity between Proposition
\ref{prop.eta.through-F} and Proposition \ref{prop.eta.F-through} (and allows
one to derive one of these propositions from the other with a bit of work).
\end{remark}

\end{document}